\documentclass[1 [leqno,11pt]{amsart}
\usepackage{amssymb, amsmath,latexsym,amsfonts,amsbsy, amsthm,mathtools,graphicx,CJKutf8,CJKnumb,CJKulem,color}
\usepackage{float}
\usepackage{hyperref}

\setlength{\oddsidemargin}{0mm}
\setlength{\evensidemargin}{0mm} \setlength{\topmargin}{0mm}
\setlength{\textheight}{220mm} \setlength{\textwidth}{155mm}

\numberwithin{equation}{section}
\allowdisplaybreaks



\let\f=\frac

\let\om=\omega

\let\na=\nabla

\let\pa=\partial



\def\R{\mathbb R}
\def\T{\mathbb T}
\def\Z{\mathbb Z}

\newcommand{\beq}{\begin{equation}}
\newcommand{\eeq}{\end{equation}}
\newcommand{\ben}{\begin{eqnarray}}
\newcommand{\een}{\end{eqnarray}}
\newcommand{\beno}{\begin{eqnarray*}}
\newcommand{\eeno}{\end{eqnarray*}}

\newtheorem{theorem}{Theorem}[section]

\newtheorem{lemma}[theorem]{Lemma}
\newtheorem{proposition}[theorem]{Proposition}

\newtheorem{Theorem}{Theorem}[section]


\begin{document}
\begin{CJK*}{UTF8}{gkai}

\title[Nonlinear inviscid damping for 2-D inhomogeneous Euler equations]
{Nonlinear inviscid damping for 2-D inhomogeneous incompressible Euler equations}

\author[Q. Chen]{Qi Chen}
\address{Academy of Mathematics $\&$ Systems Science, The Chinese Academy of Sciences, Beijing 100190, China}
\email{chenqi@amss.ac.cn}

\author[D. Wei]{Dongyi Wei}
\address{School of Mathematical Science, Peking University, Beijing 100871,  China}
\email{jnwdyi@pku.edu.cn}

\author[P. Zhang]{Ping Zhang}
\address{Academy of Mathematics $\&$ Systems Science, The Chinese Academy of Sciences, Beijing 100190, China,  and School of Mathematical Sciences,
University of Chinese Academy of Sciences, Beijing 100049, China.}
\email{zp@amss.ac.cn}

\author[Z. Zhang]{Zhifei Zhang}
\address{School of Mathematical Science, Peking University, Beijing 100871, China}
\email{zfzhang@math.pku.edu.cn}

\date{\today}

\maketitle

\begin{abstract}
We prove the asymptotic stability of shear flows close to the Couette flow for the 2-D inhomogeneous incompressible Euler equations
on $\T\times \R$. More precisely, if the initial velocity is close to the Couette flow and the initial density is close to a positive constant in the Gevrey class 2, then  2-D inhomogeneous incompressible Euler equations
are globally well-posed and the velocity converges strongly to a shear flow close to the Couette flow, and the vorticity will be driven to small scales by a linear evolution and weakly converges as $t\to \infty$. To our knowledge, this is the first global well-posedness result for the 2-D inhomogeneous incompressible Euler equations.
\end{abstract}

\tableofcontents

\section{Introduction}

We consider 2-D inhomogeneous incompressible Euler equations on $\T\times \R$:
\begin{align}\label{eq:IE}\left\{
\begin{aligned}
&\pa_t\rho+{\rm v}\cdot\na \rho=0,\\
&\rho(\pa_t{\rm v}+{\rm v}\cdot\na {\rm v})+\na p=0,\\
&\text{div}\ {\rm v}=0.
\end{aligned}
\right.
\end{align}
Here $\rho(t,,x,y), {\rm v}(t,x,y)=\left({\rm v}^1(t,x,y),{\rm v}^2(t,x,y)\right), p(t,x,y)$ denote the density, the velocity and the hydrostatic  pressure of the ideal fluid respectively (see \cite{Lions} for instance).

When $\rho=1$, the system \eqref{eq:IE} is reduced to the classical incompressible Euler equations.
It is well known that  the 2-D  classical incompressible  Euler equations are globally well-posed for smooth data \cite{Ch, MB02}, see also \cite{MP} about some classical results on the stability of some special solutions to these equations. To our knowledge,  the global well-posedness for the system \eqref{eq:IE} is still an open question even in the case when  the density is close enough to a positive constant. One may check page 159 of \cite{Lions} for the ``state of the art" concerning the system \eqref{eq:IE}.

In this paper, we study the asymptotic stability of  the Couette flow, i.e., $\rho=1, {\rm v}=(y,0)$, which is a steady solution of  \eqref{eq:IE}.
To this end, we introduce the perturbation
\beno
d=\frac 1 \rho-1, \quad u=(u^1,u^2)={\rm  v}-(y,0),\quad \Pi=p.
\eeno
Then $(d, u, \Pi)$ satisfies
\begin{align}\label{eq:IE-pert}\left\{
\begin{aligned}
&\partial_td+u\cdot\nabla d+y\partial_xd=0,\\
& \partial_tu+y\partial_xu+u\cdot\nabla u+\binom{u^2}{0}+(1+d)\nabla \Pi=0,\\
&\text{div}\ u=0,\\
&d|_{t=0}=d_0,\ u|_{t=0}=u_0.
\end{aligned}
\right.
\end{align}
Next we introduce the vorticity $\omega=\partial_xu^2-\partial_yu^1$ and let $\varphi$ be the stream{ }function which  solves $\Delta \varphi=\omega$.
Then $\om$ satisfies
\begin{align}\label{eq:vorticity}
\partial_t\omega+y\partial_x\omega+u\cdot\nabla \omega=-(\partial_xd\partial_y \Pi-\partial_yd\partial_x\Pi).
\end{align}

For the classical 2-D incompressible  Euler equations on $\T\times \R$, the asymptotic stability of the Couette flow has been proved by Bedrossian and Masmoudi in a breakthrough work \cite{BM}.
Roughly speaking, if the initial velocity is a small perturbation of the Couette flow in the Gevrey class $2-$,  then the velocity converges strongly to a shear flow close to the Couette flow, and the vorticity will be driven to small scales by a linear evolution and weakly converges as $t\to \infty$.  This phenomenon is the  {so-called } inviscid damping, which is an analogue to the Landau damping in the plasma physics found by Landau \cite{Lan}. We also refer to \cite{MV} on the breakthrough of nonlinear Landau damping. Ionescu and Jia \cite{IJ} proved the asymptotic stability of the Couette flow in a finite channel when the initial velocity is a small perturbation of the Couette flow in the Gevrey class $2$ and the initial vorticity is supported in the interior of the channel. It is easy to observe in these works that the regularity of the perturbations plays a crucial role on nonlinear inviscid damping and Landau damping, see \cite{LZ1, LZ2, DM} on negative results in the lower regularity and the instability in the Gevrey class $2+$. For general shear flows, the linear inviscid damping was  proved in a series of works \cite{WZZ-CPAM, WZZ-APDE, WZZ-AM}, see also earlier results \cite{Z1, Z2} and some relevant results \cite{BCV, CZ, GNR, WZZ-CMP, IJ-ARMA}.  Finally let us mention recent breakthrough on nonlinear inviscid damping for stable monotone shear flows, which was independently proved by Ionescu-Jia \cite{IJ-Mon} and Masmoudi-Zhao \cite{MZ}.\smallskip

Now we state our main result of this paper.\smallskip

Let us first introduce the Gevrey class$-\frac 1s$ denoted by $\mathcal{G}^{\lambda, s}$, whose norm is defined by
\beno
\|f\|_{\mathcal{G}^{\lambda,s}}^2=\sum_{k\in \Z}\int_{\R}e^{2\lambda\left(1+k^2+\xi^2\right)^{\frac{s}2}}|\widetilde{f}(k,\xi)|^2d\xi,
\eeno
where $\widetilde{f}$ denotes the Fourier transform of $f$, and $s\in [0,1]$ and $\lambda>0$.

\begin{Theorem}\label{thm:main}
Let $\beta_0\in (0,\frac 1 8]$. There exist $\beta_1=\beta_1(\beta_0)$ and $\overline\epsilon=\overline\epsilon(\beta_0)$ such that if the initial data ${d_0},\ \om_0\in \mathcal{G}^{\lambda, \f12}$ with $\int_{\T\times \R}|y||\om_0(x,y)|dxdy<+\infty$ and $\lambda=\beta_0$ satisfy
\beno
{\|d_0\|_{\mathcal{G}^{\lambda, \f12}}+}\|\om_0\|_{\mathcal{G}^{\lambda, \f12}}\le \epsilon\le \overline{\epsilon},\quad \int_{\T\times \R}\om_{0}(x,y)dxdy=0,
\eeno
then   the system \eqref{eq:IE-pert} has  a unique global-in-time smooth solution $(d, u, \Pi)$ which satisfies:

\begin{itemize}

\item[1.] There exists $\om_\infty(x,y)$ such that
\begin{align}\label{est:w-winfty-0}
&\|\om(t,x+ty+\Phi(t,y),y)-\om_\infty(x,y)\|_{\mathcal{G}^{\beta_1,\f12}}\lesssim_{\beta_0} {\f \epsilon {\langle t\rangle}},
\end{align}
where
\beno
\Phi(t,y)=\int_0^t\overline{u}^1(\tau,y)d\tau,\quad  \overline{u}^1(t,y)=\f1 {2\pi}\int_{\T}u^1(t,x,y)dx.
\eeno

\item[2.] There exists $d_\infty(x,y)$ such that
\begin{align}\label{est:d-dinfty-0}
&\|{d}(t,x+ty+\Phi(t,y),y)-d_\infty(x,y)\|_{\mathcal{G}^{\beta_1,\f12}}\lesssim_{\beta_0} {\f \epsilon {\langle t\rangle}}.
\end{align}

\item[3.]  {There exists $u_\infty(y)\in \mathcal{G}^{\beta_1,\f12}$ so that} the velocity $(u^1,u^2)$ satisfies:
\begin{align}
&\|\overline{u}^1(t,y)-u_\infty(y)\|_{\mathcal{G}^{\beta_1,\f12}}\lesssim_{\beta_0} \f \epsilon {\langle t\rangle^2}, \label{est:u1bar-uinfty-0}\\
&\|u^1(t,x,y)-\overline{u}^1(t,y)\|_{{L^\infty({\T\times\R})}}\lesssim_{\beta_0} \f \epsilon {\langle t\rangle},\label{est:u1-u1bar-0}\\
&\|u^2(t,x,y)\|_{{L^\infty({\T\times\R})}}\lesssim_{\beta_0} \f \epsilon {\langle t\rangle^2}.\label{est:u2-L2-0}
\end{align}

\end{itemize}

\end{Theorem}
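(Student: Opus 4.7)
The plan is to adapt the Bedrossian--Masmoudi nonlinear inviscid damping framework for homogeneous Euler \cite{BM} so as to accommodate the density unknown $d$ and the new baroclinic torque in \eqref{eq:vorticity}. First, pass to coordinates adapted to the mean shear by setting $z=x-ty-\Phi(t,y)$, $v=y$, and introducing the profiles
$$W(t,z,v)=\om(t,z+tv+\Phi(t,v),v),\qquad D(t,z,v)=d(t,z+tv+\Phi(t,v),v),$$
together with the stream-function profile $P$ solving $\Delta_tP=W$, where $\Delta_t=\pa_z^2+(\pa_v-(t+\Phi_v)\pa_z)^2$ is the time-dependent elliptic operator that arises from writing $\Delta$ in the new coordinates. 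In these variables the nonzero-$z$ modes of the transport part of the vorticity equation reduce to pure $z$-translation, and the problem is reduced to proving uniform Gevrey-$\tfrac12$ bounds on $W$, $D$, and on the mean shear corrector $\Phi$.

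\textbf{Functional framework and bootstrap.} I would work with the Gevrey weight $A_k(t,\eta)=e^{\lambda(t)(1+k^2+\eta^2)^{1/4}}$, where $\lambda(t)$ shrinks slowly from $\beta_0$ to $\beta_1$, multiplied by the Bedrossian--Masmoudi toy-model multiplier $M_k(t,\eta)$ that absorbs the echo cascade near critical times $t\sim\eta/k$. Set energies $\mathcal E_W(t)=\|AMW\|_{L^2}^2$, $\mathcal E_D(t)=\|AMD\|_{L^2}^2$, and a higher-regularity energy for $\Phi$. The bootstrap hypothesis is that the sum of these is at most $(8\e)^2$ on $[0,T]$; the aim is to upgrade the constant from $8$ to $4$, which extends $T$ to $+\infty$. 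Closing the bootstrap requires Gevrey product and commutator estimates together with the elliptic bound $\|\na_tP\|_{\mathcal G^{\lambda,1/2}}\lesssim\|W\|_{\mathcal G^{\lambda,1/2}}$ for the variable-coefficient operator $\Delta_t$, which I would establish by a Neumann-series correction to the flat Laplacian in the frequency regime $|\eta|\lesssim t|k|$ as in \cite{BM}.

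\textbf{Main obstacles.} The essential new difficulties are the pressure and the baroclinic torque. Taking the divergence of the momentum equation in \eqref{eq:IE-pert} yields the variable-coefficient Poisson equation
$$\dive\bigl((1+d)\na\Pi\bigr)=-\dive\bigl(y\pa_xu+u\cdot\na u+(u^2,0)^T\bigr),$$
which I would solve by Neumann series in $d$ around the homogeneous Laplacian, so that each extra factor of $d$ costs a power of $\e$ and convergence in $\mathcal G^{\lambda,1/2}$ follows from paradifferential commutator estimates. The baroclinic term $-(\pa_xd\,\pa_y\Pi-\pa_yd\,\pa_x\Pi)$ in \eqref{eq:vorticity} is then the main new source in the $\mathcal E_W$ estimate; in the adapted frame $\pa_y\Pi$ carries a factor $(t+\Phi_v)$, so the most dangerous contribution schematically behaves like $t\,\pa_zD\,\pa_zP$ and must be absorbed using $|\dot\lambda(t)|(1+k^2+\eta^2)^{1/4}$ combined with the Jacobian cancellation structure $\{\cdot,\cdot\}$ of the term itself. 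The sharpest case, as in \cite{BM}, is the reaction regime $\eta\sim kt$ where Gevrey-$\tfrac12$ is critical; I would rely on the same phase analysis, after verifying that the density produces no resonances stronger than $e^{c\sqrt\eta}$, which I expect since $D$ is merely transported and carries no coefficient growing in $t$. I regard this reaction estimate for the baroclinic coupling as the main technical obstacle.

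\textbf{Asymptotic rates.} Once the bootstrap closes, the convergence statements follow quickly. Integrating the transport equations for $W$ and $D$ in time against the nonzero-mode velocity $U^{\ne}=-\pa_zP^{\ne}$ gives the rate $\langle t\rangle^{-1}$ for \eqref{est:w-winfty-0} and \eqref{est:d-dinfty-0}, since $\|U^{\ne}\|$ is $L^1_t$-integrable after the two-derivative gain from $\Delta_t^{-1}$ on nonzero-$z$ modes. The same two-derivative gain applied to zero modes produces the rate $\langle t\rangle^{-2}$ in \eqref{est:u1bar-uinfty-0} and \eqref{est:u2-L2-0}, while the standard one-derivative gain yields \eqref{est:u1-u1bar-0}. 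Undoing the coordinate change then returns the bounds stated in the theorem.
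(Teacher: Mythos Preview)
Your overall strategy---bootstrap in Gevrey-$\tfrac12$ with an echo multiplier, elliptic estimate for the stream function, and deduction of the asymptotic rates from the nonzero-mode decay---matches the paper's architecture. But there is a genuine gap at the heart of the scheme: you propose to control the density $D$ with the \emph{same} weight $A$ used for the vorticity $W$. The paper explicitly records that this natural choice fails. In the vorticity equation the new baroclinic term is $\{P,a\}$ (in the paper's coordinates), and when you estimate $A_k(t,\xi)\,\widetilde{\{P,a\}}$ the Poisson bracket contributes a derivative that must land on $a$; since $P$ carries a decaying weight but $a$ does not, the bilinear weighted estimate forces $a$ to absorb an extra factor of $\langle k,\xi\rangle^{1/2}/\langle t\rangle$ beyond what $A_k$ provides. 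The paper's fix is to introduce the stronger multiplier
\[
A_k^{*}(t,\xi)=A_k(t,\xi)\Bigl(1+\tfrac{k^2+|\xi|}{\langle t\rangle^2}\Bigr)^{1/2}
\]
for the density energy, and a correspondingly tailored weight for $P$ built from the toy model $k^2\langle t-\xi/k\rangle^2\widetilde P=\widetilde\Theta/\langle t-\xi/k\rangle^2$. Without this asymmetry your reaction estimate for the baroclinic coupling will not close: the ``I expect $D$ produces no resonances stronger than $e^{c\sqrt\eta}$ since it is merely transported'' heuristic misses that the issue is not a new echo cascade in $D$ but the derivative loss in the $\{P,a\}$ product inside the $W$-equation.

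A secondary but related point: your pressure treatment---Neumann series in $d$ around the flat Laplacian in the original $(x,y)$ coordinates---is quite different from the paper's, which writes the full variable-coefficient elliptic equation for $P$ in the $(z,v)$ frame and proves closed weighted estimates in norms $\|\cdot\|_A$, $\|\cdot\|_B$, $\|\cdot\|_{A0}$ designed so that the product Lemma~3.3 ($\|[ah]_{\neq}\|_A\lesssim\epsilon_1\|h\|$) holds with the \emph{stronger} density norm $\mathcal E_a$ on the right. Your Neumann approach could in principle be made to work, but it would still require the $A^*$-type strengthening on $d$ to absorb the derivative in $\{d,\Pi\}$, and you have not identified this. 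Finally, note that the paper follows the Ionescu--Jia coordinate change $v=y+\tfrac1t\int_0^t\overline{u}^1\,d\tau$ rather than your $v=y$; this is a framework choice rather than a gap, but it does mean your $\Delta_t$ and the coordinate-function energies will look different from the paper's $V_1,V_2,V_3,\mathcal H$ system.
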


Let us give some remarks on our result.\smallskip

\begin{itemize}

\item[1.] To our knowledge,  our result gives the first global well-posedness result for the 2-D inhomogeneous incompressible  Euler equations.

\item[2.] For simplicity, we consider the problem in the domain $\T\times \R$. It seems possible to generalize our result to a finite channel, i.e., $\T\times [-1,1],$ under the assumption that the initial vorticity is supported in the interior of the channel. In this case, the main trouble lies in the estimate of the pressure.

\item[3.] As in \cite{BMV}, our result could be proved via the inviscid limit.

\item[4.] As in \cite{BVW, MZ-AH}, it is quite interesting to study the stability threshold problem for the 2-D inhomogeneous incompressible Navier-Stokes equations at high Reynolds number ($Re$): find the threshold $\beta_1\ge 0$ and $\beta_2\ge 0$ so that if the initial perturbations satisfy
\beno
\|d_0\|_{H^N}\le c\nu^{\beta_1},\quad \|u_0\|_{H^N}\le c\nu^{\beta_2},
\eeno
with $\nu=Re^{-1}\ll1$, then the corresponding system \eqref{eq:IE-pert} related to  the 2-D inhomogeneous incompressible Navier-Stokes equations is globally stable.

 \end{itemize}
 
 Compared with \cite{BM, IJ}, the main difficulty is to control the growth of the density and the pressure. Under nonlinear coordinate transform \eqref{eq:change variable}, the vorticity satisfies 
 \begin{align*}
   & \partial_tf=(1+V_1)\partial_v\phi_{\neq}\partial_zf-(V_3+(1+V_1)\partial_z\phi)\partial_vf+(1+V_1)\{a,P\}.
\end{align*} 
 To control new nonlinear term $\{a,P\}$, we need to construct suitable multipliers for the density $a$ and the pressure $P$, which match their growth mechanism. The multiplier of $a$ should have similar properties as the multiplier of $f$ introduced in \cite{BM, IJ}. Moreover, the multipliers for $a$ and $P$ need to ensure that nonlinear term $\{a,P\}$ can be controlled by using bilinear weighted estimates. See section 2.1 for the details.

 \medskip

Let us end this section with some {\bf notations} that we shall use in the following context:
\begin{align*}
&f_{\neq}={\mathbb{P}_{\neq 0}f}=f-\overline{f}, \qquad \overline{f}{=\mathbb{P}_0f}=\f1 {2\pi}\int_{\T}f(x,y)dx,\\
& \langle a\rangle=(1+a^2)^\f12,\qquad {\langle a,b\rangle=(1+a^2+b^2)^{\f12}}.
\end{align*}
Let $\{f_1,f_2\}=\partial_vf_1\partial_zf_2-\partial_zf_1\partial_vf_2$ be the poisson bracket and {$(\mathcal{F}f)(t,k,\xi)= \widetilde{f}(t,k,\xi)$} denotes the Fourier transform of $f(t,z,v)$ in $(z,v)$.

\section{Main equations and bootstrap proposition}

\subsection{Main equations and sketch of the idea}
The proof of Theorem \ref{thm:main} follows the framework in \cite{IJ}.
To explain main difficulties and new ideas of this paper, we first recall two key ingredients  in \cite{BM, IJ}. \smallskip

{\bf The first key {ingredient}} is to introduce the following  nonlinear coordinate transform:
\begin{align}\label{eq:change variable}
  v=y+\dfrac{1}{t}\int_{0}^{t}\overline{u}^1(\tau,y)\mathrm{d}\tau,\quad z=x-tv.
\end{align}
 Then we define
\begin{align*}
   &f(t,z,v)=\omega(t,x,y),\quad a(t,z,v)=d(t,x,z),\\
   &\phi(t,z,v)=\varphi(t,x,y),\quad P(t,z,v)=\Pi(t,x,y),\\
   &V_1(t,v)=\partial_yv(t,y)-1,\quad V_2(t,v)=\partial_y^2v(t,y),\quad V_3(t,v)=\partial_tv(t,y),\\
   &\mathcal{H}(t,z,v)=t(1+V_1(t,v))\partial_vV_3(t,v)=-V_1(t,v)-{(\mathbb{P}_0 f)(t,v)}.
\end{align*}

It is easy to verify that
\begin{align*}
   & \partial_tf=(1+V_1)\partial_v\phi_{\neq}\partial_zf-(V_3+(1+V_1)\partial_z\phi)\partial_vf+(1+V_1)\{a,P\}.
\end{align*}
Then the new unknowns $f,\ a,\ V_1,\ \mathcal{H}$ satisfy the following system
\begin{align}
   & \partial_tf+V_3\partial_vf=(1+V_1)\{\phi_{\neq},f\}-(1+V_1)\{P,a\},\label{eq:f}\\
   &\partial_ta+V_3\partial_va=(1+V_1)\{\phi_{\neq},a\},\label{eq:a}\\
   &\partial_tV_1+V_3\partial_vV_1=\mathcal{H}/t,\label{eq:V1}\\
   &\partial_t\mathcal{H}+V_3\partial_v\mathcal{H}+\mathcal{H}/t =-(1+V_1){\mathbb{P}_0\big(\{\phi_{\neq},f\}-\{P,a\}\big)}.\label{eq:H}
\end{align}
Moreover, $\phi, V_2, V_3$ satisfy
\begin{align}
   &\partial_z^2\phi+(V_1+1)^2(\partial_v-t\partial_z)^2\phi +V_2(\partial_v-t\partial_z)\phi=f,\label{eq:phi}\\
   &\partial_vV_3=\mathcal{H}/\big(t(V_1+1)\big),\quad V_2=(V_1+1)\partial_vV_1,\label{eq:V2V3}
\end{align}
and the pressure $P$ satisfies
\begin{align}\label{eq:P}
 &\partial_z^2P+(V_1+1)^2(\partial_v-t\partial_z)^2P +V_2(\partial_v-t\partial_z)P +\partial_z(a\partial_zP)\\
   &\qquad+(V_1+1)(\partial_v-t\partial_z) \big(a(V_1+1)(\partial_v-t\partial_z)P\big)+q=0,\nonumber
\end{align}
where $q$ is given by (in the Euler coordinates)
\begin{align*}
   q(t,z,v)=&\text{div}(u\cdot \nabla u)+2\partial_xu^2=2(\partial_{x}\partial_y\varphi)^2 -2\partial_{x}^2\varphi\partial_y^2\varphi+2\partial_x^2\varphi\\
   =&2\partial_x^2\varphi(1-\omega) + 2(\partial_x^2\varphi)^2+2(\partial_y\partial_{x}\varphi)^2.
\end{align*}
In terms of new unknowns, we have
\begin{align}\label{eq:q}
 q=2\partial_z^2\phi(1-f)+2(\partial_z^2\phi)^2 +2\big[(V_1+1)(\partial_v-t\partial_z)\partial_z\phi\big]^2.
\end{align}
\if0We decompose $P=P_1+P_2$, where $P_1$ and $P_2$ solve
\begin{align}
   &\partial_z^2P_1+(V_1+1)^2(\partial_v-t\partial_z)^2P_1 +V_2(\partial_v-t\partial_z)P_1+q=0,\label{eq:P1}\\
   & \partial_z^2P_2+(V_1+1)^2(\partial_v-t\partial_z)^2P_2 +V_2(\partial_v-t\partial_z)P_2 +\partial_z(a\partial_zP)\label{eq:P2}\\
   &\qquad+(V_1+1)(\partial_v-t\partial_z) \big(a(V_1+1)(\partial_v-t\partial_z)P\big)=0.\nonumber
\end{align}\fi

We also define
\begin{align}\label{eq:Theta}
 \Theta(t,z,v)=\big(\partial_z^2+(\partial_v-t\partial_z)^2\big)\phi(t,z,v).
\end{align}

Compared with \cite{BM, IJ},  the equation of $f$ here contains a new nonlinear term $(1+V_1)\{P,a\}$. Moreover, the control for $a$ and $P$ is rather non-trivial.
\smallskip

{\bf The most key ingredient}  is to introduce time-dependent imbalanced weights like $A_k(t,\xi)$ with the key property
\beno
\frac {A_l(t,\eta)} {A_k(t,\xi)}\approx \Big|\frac \eta {l^2}\Big|\frac 1 {1+|t-\eta/l|},
\eeno
when $k\neq l, \xi=\eta+O(1), k=l+O(1)$, and $t$ is resonant for the frequency $(l,\eta)$. Then the energy functional of $f$ is defined by
\begin{align*}
   \mathcal{E}_f(t)=\sum_{k\in \mathbb{Z}}\int_{\mathbb{R}}A_k(t,\xi)^2|\widetilde{f}(t,k,\xi)|^2\mathrm{d}\xi.
   \end{align*}
 Due to the structural similarity of the equations of $f$ and $a$, it seems natural to introduce the same energy functional
 for $a$. However, this idea does not work due to new nonlinear term $(1+V_1)\{P,a\}$ in the equation of $f$, which requires
 one more derivative of $a$ in the process of energy estimate. Therefore, we have to introduce a new energy functional for $a$, which is defined as follows
 \begin{align*}
  &\mathcal{E}_{a}(t)=\sum_{k\in \mathbb{Z}}\int_{\mathbb{R}}A_k^*(t,\xi)^2|\widetilde{a}(t,k,\xi)|^2\mathrm{d}\xi,
  \end{align*}
 where
 \begin{align*}
   A_k^{*}(t,\xi)=A_{k}(t,\xi)\left(1+\dfrac{k^2+|\xi|}{\langle t\rangle^2}\right)^{\f12}.
\end{align*}
The construction of new weight $A_k^*(t,\xi)$ is rather tricky.  On one hand, $A_k^*(t,\xi)$ should have similar properties as $A_k(t,\xi)$ so that the weighted bilinear estimates for $A_k(t,\xi)$ still hold for new weight $A_k^*(t,\xi)$, see Appendix B and C. On the other hand, the weights for $a$ and $P$  need to ensure that  nonlinear term $(1+V_1)\{P,a\}$ can be controlled by using bilinear weighted estimates.

The construction of  {the multiplier for} the pressure $P$ is partially  motivated by  the following toy models for $P$ and $\phi$ 
\begin{align*}
&\pa_z^2P+(\partial_v-t\partial_z)^2P=\pa_z^2\phi,\quad \pa_z^2\phi+(\partial_v-t\partial_z)^2\phi=\Theta.
\end{align*}
Thus, we obtain
 \beno
 k^2\langle t-\xi/k\rangle^2\widetilde{P}={\f{\widetilde{\Theta}} {\langle t-\xi/k\rangle^2}}.
 \eeno
 Thanks to
\begin{align*}
   &\langle t\rangle +|(k,\xi)|\lesssim |k|\langle \xi/k\rangle\langle t-\xi/k\rangle,
   \end{align*}
we have
\begin{align*}
   & \dfrac{\langle t\rangle\big(\langle t\rangle +|(k,\xi)|\big)}{|k|\langle \xi/k\rangle^2} \lesssim \dfrac{t\langle t-\xi/k\rangle}{\langle \xi/k\rangle},
\end{align*}
from which, we infer that
 \begin{align*}
 \dfrac{k^2\langle t\rangle^{2}(\langle t\rangle^2+|(k,\xi)|^2)\langle t-\xi/k\rangle^4}{\langle\xi/k\rangle^4}|\widetilde{P}(t,k,\xi)|^2 \lesssim&
 \dfrac{\langle t\rangle^2}{\langle \xi/k\rangle^2\langle t-\xi/k\rangle^2}|\widetilde{\Theta}(t,k,\xi)|^2\\
 \lesssim&\dfrac{\langle t\rangle^2k^2}{|\xi|^2+k^2\langle t\rangle^2}|\widetilde{\Theta}(t,k,\xi)|^2\\
 \lesssim&|\widetilde{f}(t,k,\xi)|^2.
   \end{align*}
Furthermore, there holds that  for $k,l\neq 0(\sigma=k-l, \rho=\xi-\eta)$
\begin{align*}
\begin{aligned}
     A_{k}(t,\xi)\dfrac{\langle t\rangle(\langle t\rangle+|(k,\xi)|)}{|k|\langle\xi/k\rangle^2} \lesssim_{\delta}& A_l(t,\eta)A_{\sigma}(t,\rho) \dfrac{|l|\langle t\rangle \langle t-\eta/l\rangle^2}{|\eta|+|l|\langle t\rangle} \\
     &\times \left\{\langle l,\eta\rangle^{-2}+ \langle \sigma,\rho\rangle^{-2}\right\}.
\end{aligned}
\end{align*}
This property is crucial to control nonlinear terms in the equation of $P$.  Based on the above analysis,  we choose  the following energy functional of $P$ and $\Theta$:
 \begin{align}
  &\mathcal{E}_{P}(t)=\sum_{k\in \mathbb{Z}\setminus \{0\}}\int_{\mathbb{R}}A_k(t,\xi)^2\dfrac{k^2\langle t\rangle^{2}(\langle t\rangle^2+|(k,\xi)|^2)\langle t-\xi/k\rangle^4}{\langle\xi/k\rangle^4}|\widetilde{P}(t,k,\xi)|^2\mathrm{d}\xi \label{eq:energy-EP}\\
   &\qquad\qquad+\int_{\mathbb{R}}A_{0}(t,\xi)^2\dfrac{\langle t\rangle^{2}( \langle t\rangle^2+|\xi|^2)^2|\xi|^2}{\langle\xi\rangle^4}|\widetilde{P}(t,0,\xi)|^2\mathrm{d}\xi,\nonumber\\
&\mathcal{E}_{\Theta}(t)=\sum_{k\in \mathbb{Z}\setminus\{0\}}\int_{\mathbb{R}}A_{k}(t,\xi)^2\dfrac{|k|^2 \langle t\rangle^2}{|\xi|^2+|k|^2 \langle t\rangle^2}|\widetilde{\Theta}(t,k,\xi)|^2\mathrm{d}\xi.\label{eq:energy-ETheta}
   \end{align}
Here and in all that follows, we always denote $|(k,\xi)|^2=k^2+\xi^2$.

\subsection{Energy functional and bootstrap proposition}
Let the weights $A_R, A_{NR}, A_k$ be defined by \eqref{eq:ARANR} and \eqref{eq:Ak}.  We define
\begin{align*}
   & A_k^{*}(t,\xi)=A_{k}(t,\xi)\left(1+\dfrac{k^2+|\xi|}{\langle t\rangle^2}\right)^{\f12},
\end{align*}
and let
\beno
\dot{A}_{*}(t,\xi)=(\partial_tA_{*})(t,\xi),\quad \dot{A}^{*}_k(t,\xi)=(\partial_tA_{k}^{*})(t,\xi).
\eeno
Then we introduce the energy functional
\begin{align}
   &\mathcal{E}_f(t)=\sum_{k\in \mathbb{Z}}\int_{\mathbb{R}}A_k(t,\xi)^2|\widetilde{f}(t,k,\xi)|^2\mathrm{d}\xi,\label{eq:energy-Ef}\\
   &\mathcal{E}_{a}(t)=\sum_{k\in \mathbb{Z}}\int_{\mathbb{R}}A_k^*(t,\xi)^2|\widetilde{a}(t,k,\xi)|^2\mathrm{d}\xi,\label{eq:energy-Ea}\\
   &\mathcal{E}_{V_1}(t)=\int_{\mathbb{R}}A_{R}(t,\xi)^2
   |\widetilde{V_1}(t,k,\xi)|^2\mathrm{d}\xi,\label{eq:energy-EV1}\\
   &\mathcal{E}_{\mathcal{H}}(t)=K_{\delta}^2 \int_{\mathbb{R}}A_{NR}(t,\xi)^2(\langle t\rangle/\langle\xi\rangle)^{3/2}|\widetilde{\mathcal{H}}(t,k,\xi)|^2\mathrm{d}\xi,
   \label{eq:energy-EH} 
   \end{align}
and $\mathcal{E}_{P}(t), \mathcal{E}_{\Theta}(t)$ are given respectively by \eqref{eq:energy-EP} and \eqref{eq:energy-ETheta}. We also introduce
 \begin{align}
   &\mathcal{B}_f(t)=\int_{1}^{t}\sum_{k\in \mathbb{Z}}\int_{\mathbb{R}}|\dot{A}_k(s,\xi)|A_k(s,\xi) |\widetilde{f}(s,k,\xi)|^2\mathrm{d}\xi\mathrm{d}s,\label{eq:energy-Bf}\\
     &\mathcal{B}_{a}(t)=\int_{1}^{t}\sum_{k\in \mathbb{Z}}\int_{\mathbb{R}}|\dot{A}^*_k(s,\xi)|A^*_k(s,\xi) |\widetilde{a}(s,k,\xi)|^2\mathrm{d}\xi\mathrm{d}s,\label{eq:energy-Ba}\\
   &\mathcal{B}_{P}(t)= \int_{1}^{t}\int_{\mathbb{R}}|\dot{A}_{0}(s,\xi)|A_{0}(s,\xi) \dfrac{\langle s\rangle^{2}(\langle s\rangle^2+|\xi|^2)^2|\xi|^2}{\langle\xi\rangle^4}|\widetilde{P}(s,0,\xi)|^2 \mathrm{d}\xi\mathrm{d}s\label{eq:energy-BP}\\
   & +\int_{1}^{t}\sum_{k\in \mathbb{Z}\setminus \{0\}}\int_{\mathbb{R}}|\dot{A}_k(s,\xi)|A_k(s,\xi) \dfrac{k^2\langle s\rangle^{2}(\langle s\rangle^2+|(k,\xi)|^2)\langle s-\xi/k\rangle^4}{\langle \xi/k\rangle^4}|\widetilde{P}(s,k,\xi)|^2\mathrm{d}\xi\mathrm{d}s ,\nonumber\\
    &\mathcal{B}_{V_1}(t)=\int_{1}^{t}\int_{\mathbb{R}}|\dot{A}_R(s,\xi) |A_R(s,\xi)|\widetilde{V_1}(s,k,\xi)|^2\mathrm{d}\xi,\label{eq:energy-BV1}\\
   &\mathcal{B}_{\mathcal{H}}(t)=K_{\delta}^2 \int_{\mathbb{R}}|\dot{A}_{NR}(s,\xi)|A_{NR}(s,\xi)(\langle s\rangle/\langle\xi\rangle)^{3/2} |\widetilde{\mathcal{H}}(s,k,\xi)|^2\mathrm{d}\xi\mathrm{d}s,\label{eq:energy-BH}\\
   &\mathcal{B}_{\Theta}(t)=\int_{1}^{t}\sum_{k\in \mathbb{Z}\setminus\{0\}}\int_{\mathbb{R}}|\dot{A}_k(s,\xi)|A_k(s,\xi) \dfrac{|k|^2 \langle s\rangle^2}{|\xi|^2+|k|^2 \langle s\rangle^2}|\widetilde{\Theta}(s,k,\xi)|^2\mathrm{d}\xi\mathrm{d}s.\label{eq:energy-BTheta}
\end{align}

Now the key bootstrap proposition is stated as follows.

\begin{proposition}\label{prop:Bootstrap}
  Assume $T\geq 1$ and let $\omega\in C\big([0,1]:\mathcal{G}^{2\delta_0,1/2})$, $d\in C([0,1]:\mathcal{G}^{2\delta_0,1/2}\big)$ be a sufficiently smooth solution of the system \eqref{eq:IE-pert} with $\|\overline{\omega}\|_{H^{10}}\ll 1$ for all $t\in[0,T]$.  Assume that $\epsilon_1$ is a sufficiently small constant depending on $\delta_0$ and $\delta$ so that
  \begin{align*}
     & \sum_{g\in \{f,V_1,\mathcal{H},\Theta\}}\big[\mathcal{E}_{g}(t)+ \mathcal{B}_{g}(t)\big] \leq \epsilon_1^3\quad \text{for any }\ t\in{[0,1]},\\
     &\mathcal{E}_{a}(t)+ \mathcal{B}_{a}(t)\leq \epsilon_1^3\quad \text{for any}\ t\in{[0,1]},\\
   &\mathcal{E}_{P}(t)+ \mathcal{B}_{P}(t)\leq \epsilon_1^3\quad \text{for any}\ t\in {[0,1]},
  \end{align*}
  and
  \begin{align*}
   & \sum_{g\in\{f,V_1,\mathcal{H},\Theta\}}\big[\mathcal{E}_{g}(t)+ \mathcal{B}_{g}(t)\big]\leq \epsilon_1^2 \quad \text{for any}\ t\in[1,T],\\
   &\mathcal{E}_{a}(t)+ \mathcal{B}_{a}(t)\leq \epsilon_1^2\quad \text{for any}\ t\in[1,T],\\
   &\mathcal{E}_{P}(t)+ \mathcal{B}_{P}(t)\leq \epsilon_1^2\quad \text{for any}\ t\in[1,T].
\end{align*}
Then for any $t\in[1,T]$, there holds the following improved bounds
\begin{align*}
  & \sum_{g\in\{f,V_1,\mathcal{H},\Theta\}}\big[\mathcal{E}_{g}(t)+ \mathcal{B}_{g}(t)\big]\leq \epsilon_1^2/2\quad \text{for any}\ t\in[1,T],\\
   &\mathcal{E}_{a}(t)+ \mathcal{B}_{a}(t)\leq \epsilon_1^2/2\quad \text{for any}\ t\in[1,T],\\
   &\mathcal{E}_{P}(t)+ \mathcal{B}_{P}(t)\leq \epsilon_1^2/2\quad \text{for any}\ t\in[1,T].
\end{align*}
Moreover, for $g\in\{f,\Theta,a,P\}$, we have the stronger bounds
\begin{align}\nonumber
  \sum_{g\in\{f,\Theta,a,P\}}\big[\mathcal{E}_{g}(t)+ \mathcal{B}_{g}(t)\big]\leq {C}\epsilon_1^3\quad \text{for any}\ t\in[1,T].
  \end{align}
\end{proposition}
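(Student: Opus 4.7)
The proof is a continuity/bootstrap argument: assuming the stated $\epsilon_1^2$ bounds on $[1,T]$, the plan is to differentiate each energy functional $\mathcal{E}_g$ in time, identify the Cauchy--Kovalevskaya type good term $\mathcal{B}_g$ coming from $\partial_t A_\bullet$ (where $A_\bullet$ stands for $A_k$, $A_k^*$, $A_R$ or $A_{NR}$ as appropriate), and bound the remaining contributions by cubic combinations of the bootstrap quantities. Provided these cubic products are of size $\epsilon_1^3$, the improved bounds $\epsilon_1^2/2$ and the stronger $C\epsilon_1^3$ estimates both follow after choosing $\epsilon_1$ small enough and integrating in time. Throughout, the bilinear weighted estimates of Appendices B and C, the elliptic estimate for $\phi$ derived from \eqref{eq:phi}, and the coordinate-system identities \eqref{eq:V2V3} would be invoked as black boxes.

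For the vorticity equation \eqref{eq:f}, the transport term $V_3\partial_vf$ and the Euler commutator $(1+V_1)\{\phi_{\neq},f\}$ would be treated as in \cite{IJ,BM}: paraproduct decomposition, use of the resonance ratio $A_l(t,\eta)/A_k(t,\xi) \lesssim |\eta/l^2|/(1+|t-\eta/l|)$, and absorption of the leading transport piece into $\mathcal{B}_f+\mathcal{B}_\Theta$. The genuinely new contribution is the pressure commutator $(1+V_1)\{P,a\}$, and controlling it is the crux. One distributes the weight $A_k$ across $\widetilde a$ and $\widetilde P$ via the bilinear estimate displayed at the end of Section 2.1, thereby absorbing one factor into $A_k^*\,\widetilde{a}$ and the remaining factor into the pressure weight $|k|\langle t\rangle(\langle t\rangle+|(k,\xi)|)\langle t-\xi/k\rangle^2/\langle\xi/k\rangle^2$ applied to $\widetilde P$. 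For the density equation \eqref{eq:a}, the $A_k^*$-weighted estimate produces $\mathcal{B}_a$ together with an extra commutator stemming from the factor $(1+(k^2+|\xi|)/\langle t\rangle^2)^{1/2}$ in $A_k^*$; this commutator is absorbed using the elliptic control on $\phi$ and the bootstrap bounds on $V_1,\mathcal H$.

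For the pressure, \eqref{eq:P} is an $O(\epsilon_1)$ perturbation of $(\partial_z^2+(\partial_v-t\partial_z)^2)P = -q$ with $q$ given by \eqref{eq:q}. The toy-model computation in Section 2.1 yields on the Fourier side the heuristic $k^2\langle t-\xi/k\rangle^2\,\widetilde P \approx \widetilde\Theta/\langle t-\xi/k\rangle^2$, which reduces the bound on $\mathcal{E}_P$ to a bound on $\mathcal{E}_\Theta$ plus quadratic corrections in $a\nabla P$. The zero-mode piece $\widetilde P(0,\xi)$ is estimated separately via the corresponding term in \eqref{eq:energy-EP}, whose prefactor $\langle t\rangle^2(\langle t\rangle^2+|\xi|^2)^2|\xi|^2/\langle\xi\rangle^4$ has been designed precisely so that the bilinear estimate against $A_k^*\widetilde a$ closes. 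For $\mathcal{E}_\Theta$ itself one tests \eqref{eq:phi} against the appropriate multiplier and exploits the ratio $\langle t\rangle+|(k,\xi)| \lesssim |k|\langle\xi/k\rangle\langle t-\xi/k\rangle$ together with the elliptic structure of $\phi$.

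The unknowns $V_1$ and $\mathcal H$ obey \eqref{eq:V1}--\eqref{eq:H} as in \cite{IJ}, with the one modification that \eqref{eq:H} now carries the extra zero-mode pressure contribution $-\mathbb{P}_0\{P,a\}$; this is absorbed using $\mathcal{E}_a$ and $\mathcal{E}_P$. The main obstacle I expect is the simultaneous closure of the $(f,a,P)$ system through the $\{P,a\}$ coupling: success depends on the exact compatibility between the three multipliers $A_k$, $A_k^*$, and the pressure weight of \eqref{eq:energy-EP}, and each bilinear estimate has to be verified at the level of commutator paraproducts rather than quoted from the homogeneous case. Once these match, the $\epsilon_1^3$ improvement for $f,\Theta,a,P$ is automatic, while $V_1$ and $\mathcal H$ only close at the $\epsilon_1^2$ level because their equations admit no inviscid-damping decay.
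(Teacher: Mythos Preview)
Your plan is correct and follows essentially the same architecture as the paper: the bootstrap is closed by Propositions~\ref{prop:Improved-P}, \ref{prop:Improved-f}, \ref{prop:Improved-Theta}, \ref{prop:Improved-a}, and \ref{prop:Improved-V1H}, each proved exactly along the lines you describe (energy differentiation, CK term $\mathcal{B}_g$, paraproduct decomposition, bilinear weighted estimates from Appendices~B--C). Two implementation details worth flagging: (i) the zero-mode pressure $\partial_v\bar P$ is not obtained from the elliptic equation \eqref{eq:P} but from the $v$-component of the momentum equation, which in the new coordinates reads $\partial_v\overline{[(\partial_z\phi)^2+P]}+\overline{a(\partial_v-t\partial_z)P}=0$; this avoids a degenerate zero-mode elliptic problem and gives $\|\partial_v\bar P\|_{A0}\lesssim_\delta\epsilon_1^2$ directly; (ii) the $(V_1,\mathcal H)$ system does not close merely because ``there is no inviscid-damping decay'' but because the genuinely quadratic cross-terms $\mathcal{L}_{1,2}$ and $\mathcal{L}_{2,2}$ (see \eqref{eq:L12}--\eqref{eq:L22}) combine with $-\mathcal{B}_{V_1}-\mathcal{B}_{\mathcal H}$ to a quantity with a favorable sign, as in Lemma~\ref{lem:6.2JiaHao} borrowed from \cite{IJ}; your sketch should make this structural cancellation explicit rather than absorb it into ``as in \cite{IJ}''.
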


Under the bootstrap assumptions, we have the following estimates for the coordinate functions (Lemma 4.2 in \cite{IJ}, more precisely, in arXiv:1808.04026v1).

 \begin{lemma}\label{lem:4.2JiaHao}
 For any $t\in [1,T]$ and $F\in\{V_1,V_1^2,\langle\partial_v\rangle^{-1}V_2\}$, we have
\begin{align}\label{eq:boot-V1}
\begin{aligned}
& \int_{\mathbb{R}}A_R(t,\xi)^2|\widetilde{F}(t,\xi)|^2\mathrm{d}\xi\lesssim_{\delta} \epsilon_1^2,\\
&\int_{1}^{t}\int_{\mathbb{R}}|\dot{A}_R(s,\xi)|A_R(s,\xi)|\widetilde{F}(s,\xi)|^2 \mathrm{d}\xi\mathrm{d}s\lesssim_{\delta} \epsilon_1^2.
\end{aligned}
\end{align}
 Moreover, for any $t\in[1,T]$, there holds
\begin{align}\label{eq:boot-H}
\begin{aligned}
& \int_{\mathbb{R}}A_{NR}(t,\xi)^2\big(1+\langle\xi\rangle^{-3/2}\langle t\rangle^{3/2} \big) | \widetilde{\mathcal{H}}(t,\xi)|^2\mathrm{d}\xi\lesssim_{\delta} \epsilon_1^2,\\
&\int_{1}^{t}\int_{\mathbb{R}}|\dot{A}_{NR}(s,\xi)|A_{NR}(s,\xi) \big(1+\langle\xi\rangle^{-3/2}\langle s\rangle^{3/2} \big) | \widetilde{\mathcal{H}}(s,\xi)|^2 \mathrm{d}\xi\mathrm{d}s\lesssim_{\delta} \epsilon_1^2,
\end{aligned}
\end{align}
and
\begin{align}\label{eq:boot-V3}
\begin{aligned}
& \int_{\mathbb{R}}A_{NR}(t,\xi)^2\big( \langle t\rangle^2+\langle\xi\rangle^{-3/2}\langle t\rangle^{7/2} \big) | \widetilde{\partial_vV_3}(t,\xi)|^2\mathrm{d}\xi\lesssim_{\delta} \epsilon_1^2,\\
&\int_{1}^{t}\int_{\mathbb{R}}|\dot{A}_{NR}(s,\xi)|A_{NR}(s,\xi) \big( \langle s\rangle^2+\langle\xi\rangle^{-3/2}\langle s\rangle^{7/2} \big) | \widetilde{\partial_vV_3}(s,\xi)|^2\mathrm{d}\xi\mathrm{d}s\lesssim_{\delta} \epsilon_1^2.
\end{aligned}
\end{align}
 \end{lemma}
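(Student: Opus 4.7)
The estimates for $F=V_1$ are literally the definitions of $\mathcal{E}_{V_1}$ and $\mathcal{B}_{V_1}$ in \eqref{eq:energy-EV1} and \eqref{eq:energy-BV1}, so they are immediate from the bootstrap assumption. The remaining claims are derived from this together with the identities
\begin{align*}
 V_2=(1+V_1)\partial_vV_1=\partial_vV_1+\tfrac12\partial_vV_1^2,\qquad \mathcal{H}=-V_1-\mathbb{P}_0 f,\qquad \partial_vV_3=\frac{\mathcal{H}}{t(1+V_1)}.
\end{align*}
Thus the entire proof reduces to (i) a product/algebra estimate for the weighted norms defined by $A_R$ and $A_{NR}$, and (ii) a comparison between these weights and the weights $A_k$ appearing in $\mathcal{E}_f$. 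All of this is exactly the content of Lemma 4.2 in Ionescu–Jia (arXiv:1808.04026v1), so the plan is to follow that proof, adapted to our slightly different notation.

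\textbf{Step 1: bilinear estimates for $V_1^2$ and $\langle\partial_v\rangle^{-1}V_2$.} I would first prove the $A_R$-weighted algebra-type inequality
\begin{align*}
\int_{\mathbb R}A_R(t,\xi)^2|\widetilde{gh}(t,\xi)|^2\,\mathrm d\xi \lesssim_\delta \Bigl(\int A_R^2|\widetilde g|^2\Bigr)\Bigl(\int A_R^2|\widetilde h|^2\Bigr),
\end{align*}
and its companion with one factor of $\dot A_R A_R$ on the left (using the convexity/subadditivity of the radial part $e^{\lambda\langle\xi\rangle^{1/2}}$ of $A_R$ together with the frequency-splitting $\xi=\eta+(\xi-\eta)$: on each piece one factor lives at frequency $\gtrsim\tfrac12|\xi|$ so the weight can be transferred to it). Applied with $g=h=V_1$, this yields the bound for $F=V_1^2$. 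Then since $\langle\partial_v\rangle^{-1}V_2=\langle\partial_v\rangle^{-1}\partial_vV_1+\tfrac12\langle\partial_v\rangle^{-1}\partial_vV_1^2$ and $\langle\partial_v\rangle^{-1}\partial_v$ is a bounded Fourier multiplier, the bound for $F=\langle\partial_v\rangle^{-1}V_2$ follows.

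\textbf{Step 2: the $\mathcal H$ estimate.} Using $\mathcal H=-V_1-\mathbb P_0 f$ and the pointwise bound $A_{NR}(t,\xi)\lesssim A_R(t,\xi)$ and $A_{NR}(t,\xi)\lesssim A_0(t,\xi)$ (both are built into the construction of the multipliers in \eqref{eq:ARANR}--\eqref{eq:Ak}), the zero-frequency component of $\mathcal{E}_f$ and the $V_1$ bound from Step 1 already give
\begin{align*}
\int A_{NR}(t,\xi)^2|\widetilde{\mathcal H}(t,\xi)|^2\,\mathrm d\xi\lesssim_\delta\epsilon_1^2,
\end{align*}
and similarly for the $\dot A_{NR}A_{NR}$ time integral. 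The part with the extra factor $\langle\xi\rangle^{-3/2}\langle t\rangle^{3/2}$ is, up to the constant $K_\delta^2$, exactly $\mathcal{E}_{\mathcal H}(t)$ and $\mathcal{B}_{\mathcal H}(t)$; combining the two pieces produces \eqref{eq:boot-H}.

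\textbf{Step 3: the $\partial_vV_3$ estimate.} Write $t\,\partial_vV_3=\mathcal H\cdot(1+V_1)^{-1}$ and expand $(1+V_1)^{-1}=\sum_{n\ge 0}(-V_1)^n$, which converges in the algebra of Step 1 since $\|V_1\|\lesssim\epsilon_1\ll1$. The algebra estimate then gives, for any weight $W\in\{A_{NR},A_{NR}\langle\xi\rangle^{-3/4}\langle t\rangle^{3/4}\}$,
\begin{align*}
\int W(t,\xi)^2|\widetilde{t\,\partial_vV_3}|^2\,\mathrm d\xi\lesssim\int W(t,\xi)^2|\widetilde{\mathcal H}|^2\,\mathrm d\xi\cdot\bigl(1+O(\epsilon_1)\bigr),
\end{align*}
and likewise for the $\dot WW$-weighted version. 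Multiplying by $\langle t\rangle^2/t^2\simeq 1$ converts this to the form \eqref{eq:boot-V3}.

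\textbf{Main obstacle.} The substantive point is the bilinear estimate of Step 1 adapted to the singular time-dependent multipliers $A_R$ and $A_{NR}$: one must handle separately the resonant intervals (where $A_R$ and $A_{NR}$ are very imbalanced) and show that the Gevrey-$\tfrac12$ exponential factor absorbs the polynomial losses created by paraproduct splittings, and that the commutator $\dot A\,A-A\,\dot A$ is compatible with the splittings. Once this technical bilinear lemma is in place (as in Ionescu–Jia), Steps 2 and 3 are short algebraic manipulations.
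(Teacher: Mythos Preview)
Your proposal is correct and follows essentially the same approach as the paper, which simply cites Lemma~4.2 of Ionescu--Jia (arXiv:1808.04026v1) for \eqref{eq:boot-V1}--\eqref{eq:boot-H} and then derives \eqref{eq:boot-V3} exactly as you do in Step~3, via the Neumann series $\partial_vV_3=\sum_{n\ge 0}(-1)^nV_1^n\cdot(\mathcal H/t)$ together with the bilinear multiplier bounds of Lemmas~\ref{lem:8.1JiaHao} and~\ref{lem:8.2JiaHao} and the inequality $A_{NR}\le A_R$. The algebra estimate you outline in Step~1 is precisely Lemma~\ref{lem:8.2JiaHao} (with $\alpha=0$) combined with Lemma~\ref{lem:8.1JiaHao}(i), and the weight comparisons $A_{NR}\le A_R$, $A_{NR}\le A_0$, $|\dot A_{NR}/A_{NR}|\approx_\delta|\dot A_0/A_0|$ needed in Step~2 are immediate from the definitions \eqref{eq:ARANR}--\eqref{eq:Ak} and \eqref{est:7.4JiaHao-5}.
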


\if0\begin{proof}
The inequalities \eqref{eq:boot-V1} and \eqref{eq:boot-H} have been proved in . Due to the lack of the compact support condition on $V_3$, we only need to prove \eqref{eq:boot-V3}.

Thanks to the bootstrap assumption $\mathcal{E}_{V_1}+\mathcal{B}_{V_1}\lesssim\epsilon_1^2$, we get by Lemma \ref{lem:8.1JiaHao} (i) and  Lemma \ref{lem:8.2JiaHao} that
\begin{align}\label{est:boot-V3-prove1}
  \begin{aligned}
  & \int_{\mathbb{R}}A_R^2(t,\xi)|\widetilde{(V_1^n)}(t,\xi)|^2\mathrm{d}\xi\lesssim_{\delta} \epsilon_1^{2n}\lesssim_{\delta} 2^{-2n}\epsilon_1^2,\\
&\int_{1}^{t}\int_{\mathbb{R}}|\dot{A}_R(s,\xi)|A_R(s,\xi)|\widetilde{(V_1^n)}(s,\xi)|^2 \mathrm{d}\xi\mathrm{d}s\lesssim_{\delta} \epsilon_1^{2n}\lesssim_{\delta} 2^{-2n}\epsilon_1^{2}.
  \end{aligned}
\end{align}
Due to $\mathcal{H}/t=(V_1+1)\partial_vV_3$, we have
  \begin{align*}
     &\partial_vV_3=\sum_{n\geq0 }(-1)^{n}V_1^n\cdot (\mathcal{H}/t).
  \end{align*}
Then by using Lemma \ref{lem:8.1JiaHao} (i) and  Lemma \ref{lem:8.2JiaHao} again, the desired bound \eqref{eq:boot-V3} follows from \eqref{eq:boot-H}, \eqref{est:boot-V3-prove1} and $A_{NR}(t,\xi)\leq A_{R}(t,\xi)$.
\end{proof}\fi

The following lemma will be constantly used (Lemma 4.3 in \cite{IJ}).

 \begin{lemma}\label{lem:tr-1}
   Let $a,b\in{\mathbb{R}^{n}}$ with ${n\geq1},\ \beta\in[0,1]$. Then
   \begin{align}
   \langle b\rangle \geq \beta\langle a-b\rangle\qquad &\text{implies}\qquad \langle a\rangle^{1/2}\leq \langle b\rangle^{1/2}+(1-\sqrt{\beta}/2)\langle a-b\rangle^{1/2}.\label{eq:tr-3}
   \end{align}
 \end{lemma}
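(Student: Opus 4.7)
The plan is to reduce the claim to an elementary inequality between Euclidean norms. First I would establish the bracket-level triangle inequality $\langle a\rangle \le \langle b\rangle + \langle a-b\rangle$, which follows from
\[
(\langle b\rangle + \langle a-b\rangle)^2 = 2 + |b|^2 + |a-b|^2 + 2\langle b\rangle\langle a-b\rangle \ge 2 + (|b|+|a-b|)^2 \ge 1 + \langle a\rangle^2,
\]
using $\langle x\rangle \ge |x|$ and the usual triangle inequality $|a| \le |b|+|a-b|$. Taking square roots reduces the target to the cleaner estimate
\[
(\langle b\rangle + \langle a-b\rangle)^{1/2} \le \langle b\rangle^{1/2} + (1 - \sqrt{\beta}/2)\,\langle a-b\rangle^{1/2}.
\]

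Setting $X = \langle b\rangle^{1/2}$ and $Y = \langle a-b\rangle^{1/2}$, the hypothesis reads $X \ge \sqrt{\beta}\,Y$ and the goal becomes $\sqrt{X^2 + Y^2} \le X + c\,Y$ with $c := 1 - \sqrt{\beta}/2 \in [1/2,1]$. The case $Y = 0$ is trivial; otherwise, squaring both (nonnegative) sides and cancelling $X^2$ reduces the inequality to
\[
(1-c^2)\,Y \le 2c\,X, \qquad \text{i.e.,} \qquad \frac{X}{Y} \ge \frac{1 - c^2}{2c} = \frac{\sqrt{\beta}-\beta/4}{2-\sqrt{\beta}}.
\]
Combined with $X/Y \ge \sqrt{\beta}$, it is enough to verify $\sqrt{\beta}\,(2-\sqrt{\beta}) \ge \sqrt{\beta} - \beta/4$, i.e., $\sqrt{\beta} \ge 3\beta/4$, equivalently $3\sqrt{\beta}/4 \le 1$, which is immediate for $\beta \in [0,1]$.

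There is no genuine obstacle here: the statement is a purely algebraic refinement of the triangle inequality for Japanese brackets, and the entire argument is driven by the single hypothesis-derived inequality $X/Y \ge \sqrt{\beta}$. The one bookkeeping point is that the improvement must be obtained at the $\langle\cdot\rangle^{1/2}$ scale: starting from the plain triangle inequality at the $\langle\cdot\rangle$ scale and then squaring out the square-root target produces precisely the $\sqrt{\beta}$ (rather than $\beta$) that is encoded in the improvement factor $1-\sqrt{\beta}/2$.
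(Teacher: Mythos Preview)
Your proof is correct. The only cosmetic remark is that $Y=\langle a-b\rangle^{1/2}\ge 1$ is never zero, so the ``case $Y=0$'' is vacuous; but this does not affect the argument.

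As for comparison: the paper does not actually prove this lemma. It is quoted verbatim as Lemma~4.3 from Ionescu--Jia \cite{IJ} and used as a black box throughout. Your elementary argument --- reducing via the bracket triangle inequality $\langle a\rangle\le\langle b\rangle+\langle a-b\rangle$ to the scalar inequality $\sqrt{X^2+Y^2}\le X+cY$ under the constraint $X\ge\sqrt{\beta}\,Y$ --- is a clean self-contained proof and fills that gap.
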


 \subsection{Proof of Theorem \ref{thm:main}}
 With  the key bootstrap Proposition \ref{prop:Bootstrap} at hand,  the proof of Theorem \ref{thm:main}  is quite similar to those in \cite{IJ}.
 Let us just give a sketch.\smallskip

 The first step is to prove a local regularity lemma(see Lemma 3.1 in \cite{IJ}), which ensures that the bootstrap assumptions in Proposition \ref{prop:Bootstrap} hold in a time interval $[0,T]$ for some $T\ge 1$. Then we apply  Proposition \ref{prop:Bootstrap} to conclude that for any $t\in [0,\infty)$, there holds
  \begin{align*}
  & \sum_{g\in\{f,V_1,\mathcal{H},\Theta\}}\big[\mathcal{E}_{g}(t)+ \mathcal{B}_{g}(t)\big]\leq \epsilon_1^2,\\
   &\mathcal{E}_{a}(t)+ \mathcal{B}_{a}(t)\leq \epsilon_1^2,\quad \mathcal{E}_{P}(t)+ \mathcal{B}_{P}(t)\leq \epsilon_1^2,\\
   & \sum_{g\in\{f,\Theta\}}\big[\mathcal{E}_{g}(t)+ \mathcal{B}_{g}(t)\big]\lesssim_{\delta} \epsilon_1^3.
\end{align*}
Notice that  $A_k(t,\xi)\geq \mathrm{e}^{\delta_0\langle k,\xi\rangle^{1/2}}$ and $A_R(t,\xi)\geq A_{NR}(t,\xi)\geq \mathrm{e}^{\delta_0\langle\xi\rangle^{1/2}}$ for any $(t,\xi,k)\in[0,\infty)\times \mathbb{R}\times \mathbb{Z}$. Then we deduce that
\begin{align}\label{est:f-Theta-V1-00}
  & \big\|\mathrm{e}^{\delta_0\langle k,\xi\rangle^{1/2}}\widetilde{f}(t,k,\xi)\big\|_{L^2_{k,\xi}}+ \big\|\mathbf{1}_{k\neq 0}\mathrm{e}^{(\delta_0/2)\langle k,\xi\rangle^{1/2}}\widetilde{\Theta}(t,k,\xi)\big\|_{L^2_{k,\xi}}\\
  &\qquad +\big\|\mathrm{e}^{\delta_0\langle\xi\rangle^{1/2}}\widetilde{V_1}(t,\xi)\big\|_{L^2_{\xi}}\lesssim \epsilon_1,\nonumber
\end{align}
and
\begin{align}\label{est:f-Theta-00}
   &\big\|\mathrm{e}^{\delta_0\langle k,\xi\rangle^{1/2}}\widetilde{f}(t,k,\xi)\big\|_{L^2_{k,\xi}}+ \big\|\mathbf{1}_{k\neq 0}\mathrm{e}^{(\delta_0/2)\langle k,\xi\rangle^{1/2}}\widetilde{\Theta}(t,k,\xi)\big\|_{L^2_{k,\xi}} \lesssim_{\delta} \epsilon_1^{3/2},
\end{align}
and
{\begin{equation}\label{eq:a-P-00}
\begin{aligned} & \big\|\mathrm{e}^{\delta_0\langle k,\xi\rangle^{1/2}} \widetilde{a}(t,k,\xi)\big|_{L^2_{k,\xi}} +\langle t\rangle^4 \big\|\mathrm{e}^{(\delta_0/2)\langle k,\xi\rangle^{1/2}}\widetilde{\partial_zP}(t,k,\xi)\big\|_{L^2_{k,\xi}} \\
&\qquad+\langle t\rangle^3\big\|\mathrm{e}^{(\delta_0/2)\langle\xi\rangle^{1/2}}\widetilde{\partial_v\overline{P}}(t,\xi)\big\|_{L^2_{\xi}} \lesssim \epsilon_1.
\end{aligned}
\end{equation}}

The second step is to show that for some $\delta_1=\delta_1(\beta_0)>0$,
\begin{align}\label{est:prtv-00}
   & \left\|\mathrm{e}^{\delta_1\langle \xi\rangle^{1/2}}\widetilde{\partial_tv}(t,\xi)\right\|_{L^2_{\xi}}\lesssim \epsilon_1^2\langle t\rangle^{-2}.
\end{align}

Notice that
\begin{align}\label{eq:partv-formula-00}
   & \partial_tv(t,y)= \dfrac{1}{t}\left[-\dfrac{1}{t}\int_{0}^{t}\overline{u}^1(\tau,y)\mathrm{d}\tau +\overline{u}^1(t,y)\right]=\dfrac{1}{t^2}\int_{0}^{t}\int_{\tau}^{t}\partial_s\overline{u}^1(s,y) \mathrm{d}s\mathrm{d}\tau,
\end{align}
and
\begin{align*}
   & \partial_t\overline{u}^1(t,y)+\mathbb{P}_0(u^2\partial_yu^1)(t,y)+ \mathbb{P}_0(d\partial_x\Pi)(t,y)=0.
\end{align*}
Thanks to $\mathbb{P}_0(u^2\partial_yu^1)=-\mathbb{P}_0(\partial_x\varphi\partial_y^2\varphi)$, we have
\begin{align}\label{eq:u1-bar-0}
   & \partial_t\overline{u}^1(t,y) =F_1(t,v(t,y))+F_2(t,v(t,y)),
\end{align}
where
\begin{equation}\nonumber
  \begin{aligned}
  &F_1(t,v)=  |V_1+1|^2\mathbb{P}_0\big(\partial_z\phi (\partial_v-t\partial_z)^2 \mathbb{P}_{\neq}\phi\big)(t,v) +V_2\mathbb{P}_0\big(\partial_z\phi(\partial_v-t\partial_z)\mathbb{P}_{\neq}\phi\big)(t,v),\\
  &F_2(t,v)=-\mathbb{P}_0\big(a\partial_zP\big)(t,v).
  \end{aligned}
\end{equation}
By \eqref{est:f-Theta-V1-00}, Lemma \ref{lem:8.1JiaHao} and $V_2=(V_1+1)\partial_vV_1$, we have
\begin{align*}
   & \big\|\mathrm{e}^{(\delta_0/4)\langle \xi\rangle^{1/2}}\widetilde{F_1}(t,\xi)\big\|_{L^2_{\xi}}\lesssim \epsilon_1^2\langle t\rangle^{-3},
\end{align*}
and by \eqref{eq:a-P-00} and Lemma \ref{lem:8.1JiaHao}, we have
\begin{align*}
   &\big\|\mathrm{e}^{(\delta_0/4)\langle \xi\rangle^{1/2}}\widetilde{F_2}(t,\xi)\big\|_{L^2_{\xi}}\lesssim \epsilon_1^2\langle t\rangle^{-3}.
\end{align*}
Notice that $\partial_v\mathcal{Y}(t,v)=(1/V')(t,v)$, where $\mathcal{Y}(t,\cdot)$ is the inverse of the function $y\rightarrow v(t,y)$. By \eqref{est:f-Theta-V1-00}, we can get, for some constant $K_3=K_3(\beta_0)$,
\begin{align}\label{eq:y-Y-00}
  & |D^{\alpha}_v\mathcal{Y}(t,v)|\leq K_3^m(m+1)^{2m},\qquad |D^{\alpha}_yv(t,y)|\leq K_3^m(m+1)^{2m},
\end{align}
for all $m\geq 1$ and $|\alpha|\in[1,m]$. Then can deduce that
\begin{align}\label{eq:part-baru1-00}
  & \left\|\mathrm{e}^{\delta_1\langle \xi\rangle^{1/2}}\widetilde{\partial_t\overline{u}^1}(t,\xi)\right\|_{L^2_{\xi}}\lesssim \epsilon_1^2\langle t\rangle^{-2},
\end{align}
for some $\delta_1=\delta_1(\beta_0)>0$. From \eqref{eq:partv-formula-00} and \eqref{eq:part-baru1-00}, we get
\begin{align*}
   & \left\|\mathrm{e}^{\delta_1\langle \xi\rangle^{1/2}}\widetilde{\partial_tv}(t,\xi)\right\|_{L^2_{\xi}} \lesssim \dfrac{1}{\langle t\rangle^2}\int_{0}^{t}\int_{\tau}^{t}\left\|\mathrm{e}^{\delta_1\langle \xi\rangle^{1/2}}\widetilde{\partial_t\overline{u}^1}(s,\xi)\right\|_{L^2_{\xi}}\mathrm{d}s \mathrm{d}\tau\lesssim \epsilon_1^2\langle t\rangle^{-2},
\end{align*}
which gives \eqref{est:prtv-00}. Consequently, $v_{\infty}:=\lim_{t\rightarrow\infty}v(t,y)$ exists in $\mathcal{G}^{\delta_1,1/2}$ and
\begin{align}\label{eq:v-vinfty-00}
   & \left\|\mathrm{e}^{\delta_1\langle \xi\rangle^{1/2}}\big[\widetilde{v}(t,\xi)-\widetilde{v}_{\infty}(\xi)\big]\right\|_{L^2_{\xi}}\lesssim \epsilon_1^2\langle t\rangle^{-1}.
\end{align}

The third step is to prove the convergence of the profiles $f$ and $a$. Recall that
\begin{align}\label{eq:partf-00}
   & \partial_tf=-V_3\partial_vf+(1+V_1)\{\phi_{\neq},f\}-(1+V_1)\{P,a\}.
\end{align}
Using the bounds \eqref{est:f-Theta-V1-00} on $\mathcal{E}_{\Theta}$, we have
\begin{align}\label{eq:phi-00}
   & \left\|\mathbf{1}_{k\neq 0}\mathrm{e}^{(\delta_0/3)\langle k,\xi\rangle^{1/2}}\widetilde{\phi}(t,k,\xi)\right\|_{L^2_{k,\xi}}\lesssim \epsilon_1\langle t\rangle^{-2}.
\end{align}
We use \eqref{est:prtv-00}, \eqref{eq:y-Y-00} to conclude
\begin{align}\label{eq:V3-00}
   &  \left\|\mathrm{e}^{\delta'_1\langle \xi\rangle^{1/2}}\widetilde{V_3}(t,\xi)\right\|_{L^2_{\xi}}\lesssim \epsilon_1\langle t\rangle^{-2},
\end{align}
for some $\delta'_1=\delta'_1(\beta_0)>0$. Using \eqref{eq:partf-00}, \eqref{eq:phi-00}, \eqref{eq:V3-00}, \eqref{est:f-Theta-V1-00} and \eqref{eq:a-P-00}, we have
\begin{align}\nonumber
   & \left\|\mathrm{e}^{\delta_2\langle \xi\rangle^{1/2}}\widetilde{\partial_tf}(t,\xi)\right\|_{L^2_{\xi}}\lesssim \epsilon_1^2\langle t\rangle^{-2}
\end{align}
for some $\delta_2=\delta_2(\delta_0)>0$. In particular, $f(t,z,v)$ converges to $f_{\infty}(z,v)$ in $\mathcal{G}^{\delta_2,1/2}$ with
\begin{align}\label{eq:f-finfty-00}
    & \left\|\mathrm{e}^{\delta_2\langle \xi\rangle^{1/2}}\big[\widetilde{f}(t,k,\xi)-\widetilde{f}_{\infty}(k,\xi)\big] \right\|_{L^2_{k,\xi}}\lesssim \epsilon_1^2\langle t\rangle^{-1}.
 \end{align}
 Thus, we infer from \eqref{eq:v-vinfty-00} and \eqref{eq:f-finfty-00} that
 \begin{align}\nonumber
    &\omega(t,x+tv(t,y),y)=f(t,x,v(t,y))
 \end{align}
 converges to $f_{\infty}(x,v_{\infty}(y))$ with
 \begin{align}\label{eq:finfty-00}
    & \left\|\mathrm{e}^{\delta'_2\langle k,\xi\rangle^{1/2}} \big[\mathcal{F}(\omega(t,x+tv(t,y),y))(t,k,\xi)- \mathcal{F}(f_{\infty}(x,v_{\infty}(y)))(k,\xi)\big]\right\|_{L^2_{k,\xi}}\lesssim \epsilon_1^2\langle t\rangle^{-1}
 \end{align}
 for some $\delta_2'=\delta_2'(\delta_0)>0$.

 Similarly,  we can deduce by using \eqref{eq:a}, \eqref{eq:phi-00}, \eqref{eq:V3-00}, \eqref{est:f-Theta-V1-00} and \eqref{eq:a-P-00} that
\begin{align}\nonumber
   & \left\|\mathrm{e}^{\delta_3\langle \xi\rangle^{1/2}}\widetilde{\partial_ta}(t,\xi)\right\|_{L^2_{\xi}}\lesssim \epsilon_1^2\langle t\rangle^{-2}
\end{align}
for some $\delta_3=\delta_3(\delta_0)>0$.
In particular, $a(t,z,v)$ converges to $a_{\infty}(z,v)$ in $\mathcal{G}^{\delta_3,1/2}$ with
\begin{align}\label{eq:a-ainfty-00}
    & \left\|\mathrm{e}^{\delta_3\langle \xi\rangle^{1/2}}\big[\widetilde{a}(t,k,\xi)-\widetilde{a}_{\infty}(k,\xi)\big] \right\|_{L^2_{k,\xi}}\lesssim \epsilon_1^2\langle t\rangle^{-1}.
 \end{align}
 Moreover, we have
  \begin{align}\label{eq:ainfty-00}
    & \left\|\mathrm{e}^{\delta'_3\langle k,\xi\rangle^{1/2}} \big[\mathcal{F}(d(t,x+tv(t,y),y))(t,k,\xi)- \mathcal{F}(a_{\infty}(x,v_{\infty}(y)))(k,\xi)\big]\right\|_{L^2_{k,\xi}}\lesssim \epsilon_1^2\langle t\rangle^{-1}
 \end{align}
 for some $\delta_3'=\delta_3'(\delta_0)>0$.\smallskip

 Finally, we are in a position to prove \eqref{est:w-winfty-0}-\eqref{est:u2-L2-0}.
 Let $\omega_\infty(x,y)=f_{\infty}(x,v_{\infty}(y))$ and $d_{\infty}(x,y)=a_{\infty}(x,v_{\infty})$. The bounds \eqref{est:w-winfty-0} and \eqref{est:d-dinfty-0} follow from \eqref{eq:ainfty-00}, \eqref{eq:f-finfty-00} and the definition of $v(t,y)$ and $\Phi(t,y)$. Moreover, let $u_{\infty}(y)=\lim_{t\rightarrow \infty}\overline{u}^{1} (t,y)$. The existence of the limit in $\mathcal{G}^{\delta_1,1/2}$ follows from \eqref{eq:part-baru1-00}, and the bound \eqref{est:u1bar-uinfty-0} follows from the definition.

 Notice that
 \begin{equation}\label{eq:u1u2-00}
    \begin{aligned}
    & u^{1}(t,x,y)=-\partial_y\varphi(t,x,y)=-V_1[(\partial_v-t\partial_z)\phi](t,z,v),\\
    & u^{2}(t,x,y)=\partial_x\varphi(t,x,y)=\partial_z\phi(t,z,v).
 \end{aligned}
 \end{equation}
By \eqref{est:f-Theta-V1-00}, \eqref{eq:phi-00} and Lemma \ref{lem:8.1JiaHao}, we have
 \begin{align*}
   & \left\|\mathbf{1}_{k\neq 0}\mathrm{e}^{(\delta_0/4)\langle k,\xi\rangle^{1/2}}\widetilde{(V_1[(\partial_v-t\partial_z)\phi])}(t,k,\xi)\right\|_{L^2_{k,\xi}}\lesssim \epsilon_1\langle t\rangle^{-1},\\
    &\left\|\mathrm{e}^{(\delta_0/4)\langle k,\xi\rangle^{1/2}}\widetilde{\partial_z\phi}(t,k,\xi)\right\|_{L^2_{k,\xi}}\lesssim \epsilon_1\langle t\rangle^{-2}.
 \end{align*}
 Thus, we obtain
 \begin{align*}
    &  \left\|V_1[(\partial_v-t\partial_z)\phi_{\neq}]\right\|_{\mathcal{G}^{\delta_4,1/2}}+ \langle t\rangle \left\|\partial_z\phi\right\|_{\mathcal{G}^{\delta_4,1/2}}\lesssim \epsilon_1\langle t\rangle^{-1}
 \end{align*}
 for some $\delta_4=\delta_4(\delta_0)>0$. Hence,
\begin{align*}
   & \left\|V_1[(\partial_v-t\partial_z)\phi_{\neq}]\right\|_{L^\infty_{z,v}}+ \langle t\rangle \left\|\partial_z\phi\right\|_{L^\infty_{z,v}}\lesssim \epsilon_1\langle t\rangle^{-1},
\end{align*}
which along with \eqref{eq:u1u2-00} yields \eqref{est:u1-u1bar-0} and \eqref{est:u2-L2-0}.

\section{Improved control of the pressure}

In this section, we prove an improved control for the pressure under the bootstrap assumptions in Proposition \ref{prop:Bootstrap}.
This part is completely new.

 \begin{proposition}\label{prop:Improved-P}
   With the definitions and assumptions in Proposition \ref{prop:Bootstrap}, there exits $c(\delta)>0$, such that if $\epsilon_1\leq c(\delta)$, then it holds that
   \begin{align}\nonumber
      & \mathcal{E}_P(t)+\mathcal{B}_{P}(t) \lesssim_{\delta}\epsilon_1^4  +\mathcal{E}_{\Theta}(t)+\mathcal{B}_{\Theta}(t).\quad \text{for any}\ t\in[1,T].
   \end{align}
 \end{proposition}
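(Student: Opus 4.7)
The plan is to prove Proposition~\ref{prop:Improved-P} as a weighted elliptic estimate for the pressure equation~\eqref{eq:P}. The principal symbol of the leading operator $\partial_z^2+(\partial_v-t\partial_z)^2$ is $k^2+(\xi-kt)^2\sim k^2\langle t-\xi/k\rangle^2$, and the toy-model computation displayed in the introduction shows that this is precisely the gain needed to convert $\mathcal{E}_\Theta$-control of the forcing $q$ (which at leading order is linear in $\partial_z^2\phi$, hence in $\Theta$) into $\mathcal{E}_P$-control of $P$. My task is to make this rigorous in the presence of the variable coefficients $(V_1+1)^2$, $V_2$ and the $a$-dependent quasilinear corrections, and to upgrade the $A_k^2$-bound into its dissipative companion $|\dot A_k|A_k$ so as to also produce $\mathcal{B}_P$.

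I would Fourier-transform~\eqref{eq:P} and pair it with
\[
A_k(t,\xi)^2\,\frac{\langle t\rangle^2(\langle t\rangle^2+|(k,\xi)|^2)\langle t-\xi/k\rangle^2}{\langle\xi/k\rangle^4}\,\overline{\widetilde{P}(t,k,\xi)}
\]
for $k\neq0$ (and an analogous $k=0$ multiplier built from the second line of~\eqref{eq:energy-EP}), sum in $k$ and integrate in $\xi$. The principal part contributes the factor $k^2\langle t-\xi/k\rangle^2$, which together with the multiplier reassembles exactly $\mathcal{E}_P(t)$; the usual splitting of $\partial_tA_k^2=2A_k\dot A_k$ into its dissipative companion $|\dot A_k|A_k$ produces $\mathcal{B}_P(t)$ on the LHS after integrating in time. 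On the RHS, the linear-in-$\Theta$ piece $2\partial_z^2\phi$ of $q$ is reproduced via the pointwise chain already verified in the introduction,
\[
\frac{k^2\langle t\rangle^{2}(\langle t\rangle^2+|(k,\xi)|^2)\langle t-\xi/k\rangle^4}{\langle\xi/k\rangle^4}|\widetilde{P}|^2\lesssim\frac{\langle t\rangle^2}{\langle\xi/k\rangle^2\langle t-\xi/k\rangle^2}|\widetilde{\Theta}|^2\lesssim\frac{k^2\langle t\rangle^2}{|\xi|^2+k^2\langle t\rangle^2}|\widetilde{\Theta}|^2,
\]
yielding $\mathcal{E}_\Theta+\mathcal{B}_\Theta$ after using $|\dot A_k|A_k$ again. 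The remaining cubic and quartic pieces of $q$ from~\eqref{eq:q}, namely $2\partial_z^2\phi\cdot f$, $2(\partial_z^2\phi)^2$ and $2[(V_1+1)(\partial_v-t\partial_z)\partial_z\phi]^2$, are controlled in Gevrey-weighted $L^2$ by $\mathcal{E}_\Theta^{1/2}\mathcal{E}_f^{1/2}$ and $\mathcal{E}_\Theta\cdot\epsilon_1^2$, producing the $\epsilon_1^4$ contribution.

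The variable-coefficient corrections from $V_1$ and $V_2$ in~\eqref{eq:P} are commutators; combining $\mathcal{E}_{V_1}+\mathcal{B}_{V_1}\lesssim\epsilon_1^2$ from Lemma~\ref{lem:4.2JiaHao} with the weighted bilinear estimates for $A_k$ versus $A_R$ from Appendices~B and~C gives contributions of size $\epsilon_1(\mathcal{E}_P+\mathcal{B}_P)$, absorbable for $\epsilon_1\leq c(\delta)$. The $a$-dependent quasilinear terms $\partial_z(a\partial_zP)$ and $(V_1+1)(\partial_v-t\partial_z)\big(a(V_1+1)(\partial_v-t\partial_z)P\big)$ are precisely why $A_k^*$ was defined: the extra factor $(1+(k^2+|\xi|)/\langle t\rangle^2)^{1/2}$ in $A_k^*$ is engineered so that one derivative of $P$ weighted by the $\mathcal{E}_P$-symbol is compensated by the $A_k^*$-weight on $a$; combined with $\mathcal{E}_a+\mathcal{B}_a\lesssim\epsilon_1^2$ these yield $O(\epsilon_1)(\mathcal{E}_P+\mathcal{B}_P)$ and are also absorbed into the LHS.

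The main obstacle is the commutator bookkeeping in the previous paragraph: the $\mathcal{E}_P$-weight is much larger than the weights adapted to $a$ or $V_1$ near the resonance times $t\approx\xi/k$, so transferring it across a product requires the imbalanced transfer inequality announced in the introduction,
\begin{align*}
A_{k}(t,\xi)\,\frac{\langle t\rangle(\langle t\rangle+|(k,\xi)|)}{|k|\langle\xi/k\rangle^2}
&\lesssim_{\delta}A_l(t,\eta)A_{k-l}(t,\xi-\eta)\,\frac{|l|\langle t\rangle\langle t-\eta/l\rangle^2}{|\eta|+|l|\langle t\rangle}\\
&\quad\times\big(\langle l,\eta\rangle^{-2}+\langle k-l,\xi-\eta\rangle^{-2}\big),
\end{align*}
and verifying that the elliptic gain on $P$ is preserved in every frequency regime (output resonant but input non-resonant, or conversely) is where the bulk of the technical work lies.
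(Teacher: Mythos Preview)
Your overall strategy for the nonzero modes is correct and matches the paper: write the pressure equation as an elliptic equation with principal part $\partial_z^2+(\partial_v-t\partial_z)^2$, estimate $\|(\partial_z,\partial_v-t\partial_z)P_{\neq}\|$ in the weighted norm whose square is built into $\mathcal{E}_P+\mathcal{B}_P$, absorb the $V_1,V_2$ commutators and the $a$-quasilinear terms as $O(\epsilon_1)$ times the left-hand side, and bound the forcing $q_{\neq}$ by $\mathcal{E}_\Theta+\mathcal{B}_\Theta$ (linear piece $2\partial_z^2\phi$) plus $O(\epsilon_1^4)$ (quadratic pieces $Q_1,Q_2,Q_3$). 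The paper does exactly this, packaged as an abstract elliptic lemma (Lemma~\ref{lem:elliptic}) together with product estimates in the norms $\|\cdot\|_A,\|\cdot\|_B$ (Lemmas~\ref{lem:ABnorm-product}--\ref{lem:ABnorm-product-a}) and the $Q_j$ bounds (Lemma~\ref{lem:q0-nonzero}).

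Two points deserve correction. First, your remark about ``splitting $\partial_tA_k^2=2A_k\dot A_k$'' is misplaced: the pressure equation~\eqref{eq:P} is elliptic, not an evolution equation, so there is no time derivative to take. The $\mathcal{B}_P$ bound is obtained by repeating the same elliptic estimate at each fixed time $s$ with weight $|\dot A_k(s,\xi)|A_k(s,\xi)$ in place of $A_k(s,\xi)^2$, and then integrating in $s$; the paper handles this by building both pieces into the definitions of $\|\cdot\|_A,\|\cdot\|_B,\|\cdot\|_{A0}$.

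Second, and more importantly, your treatment of the zero mode is too vague and is not how the paper proceeds. Applying the elliptic equation~\eqref{eq:P} at $k=0$ gives $(V_1+1)^2\partial_v^2\overline{P}+\ldots=-\overline{q}$, so inverting the principal part introduces a factor $\xi^{-2}$ which, combined with the $\mathcal{E}_P$-weight $\langle t\rangle^2(\langle t\rangle^2+\xi^2)^2|\xi|^2/\langle\xi\rangle^4$, becomes singular at low frequency unless you first exhibit $\overline{q}$ as a $\partial_v$-derivative with good bounds. The paper bypasses this entirely: it goes back to the second component of the momentum equation and derives the first-order identity~\eqref{eq:Pi-zero-vz},
\[
\partial_v\overline{P}=-\partial_v\overline{(\partial_z\phi)^2}-\overline{a(\partial_v-t\partial_z)P},
\]
which gives $\|\partial_v\overline{P}\|_{A0}\lesssim_\delta\epsilon_1^2+\epsilon_1\big(\|(\partial_v-t\partial_z)P_{\neq}\|_A+\|\partial_v\overline{P}\|_{A0}\big)$ directly (Lemmas~\ref{lem:parxphi2}--\ref{lem:Improved-P-zero}). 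This zero-mode control is then fed back into the nonzero-mode estimate, because the $a$-quasilinear term $[a(V_1+1)(\partial_v-t\partial_z)P]_{\neq}$ couples to $\overline{(\partial_v-t\partial_z)P}=\partial_v\overline{P}$ through Lemma~\ref{lem:ABnorm-product-a}. You should replace your ``analogous $k=0$ multiplier'' by this momentum-equation route.
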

 \smallskip

 Let us first introduce the following weighted norms:
 \begin{align*}
   \|g\|_{A}^2=& \sum_{k\in \mathbb{Z}\setminus \{0\}}\int_{\mathbb{R}}A_k(t,\xi)^2\dfrac{\langle t\rangle^{2}(\langle t\rangle^2+|(k,\xi)|^2)\langle t-\xi/k\rangle^2}{\langle\xi/k\rangle^4}|\widetilde{g}(t,k,\xi)|^2\mathrm{d}\xi \nonumber\\
   &+\int_{1}^{t}\sum_{k\in \mathbb{Z}\setminus \{0\}}\int_{\mathbb{R}}|\dot{A}_k(s,\xi)|A_k(s,\xi)
   \dfrac{\langle s\rangle^{2}(\langle s\rangle^2+|(k,\xi)|^2)\langle s-\xi/k\rangle^2}{\langle \xi/k\rangle^4}|\widetilde{g}(s,k,\xi)|^2\mathrm{d}\xi\mathrm{d}s,
\end{align*}
and
\begin{align}\nonumber
\begin{aligned}
   \|g\|_{B}^2=& \sum_{k\in \mathbb{Z}\setminus \{0\}}\int_{\mathbb{R}}A_k(t,\xi)^2\dfrac{\langle t\rangle^{2}(\langle t\rangle^2+|(k,\xi)|^2)}{k^2\langle\xi/k\rangle^4} |\widetilde{g}(t,k,\xi)|^2\mathrm{d}\xi \\
   &+\int_{1}^{t}\sum_{k\in \mathbb{Z}\setminus \{0\}}\int_{\mathbb{R}}|\dot{A}_k(s,\xi)|A_k(s,\xi) \dfrac{\langle s\rangle^{2}(\langle s\rangle^2+|(k,\xi)|^2)}{k^2\langle \xi/k\rangle^4}|\widetilde{g}(s,k,\xi)|^2\mathrm{d}\xi\mathrm{d}s.
\end{aligned}
\end{align}
It is easy to see that $\|(\partial_z,\partial_v-t\partial_z)g\|_{B}\approx \|g\|_{A}$. For the  zero mode,  we introduce the norm
\begin{align}\nonumber
  \begin{aligned}
   \|{g}\|_{A0}^2=& \int_{\mathbb{R}}A_0(t,\xi)^2\dfrac{\langle t\rangle^{2}(\langle t\rangle^2+|\xi|^2)^2}{\langle\xi\rangle^4}|\widetilde{g}(t,0,\xi)|^2\mathrm{d}\xi \\
   &+\int_{1}^{t}\int_{\mathbb{R}}|\dot{A}_0(s,\xi)|A_0(s,\xi) \dfrac{\langle s\rangle^{2}(\langle s\rangle^2+|\xi|^2)^2}{\langle\xi\rangle^4}|\widetilde{g}(s,0,\xi)|^2\mathrm{d}\xi\mathrm{d}s.
\end{aligned}
\end{align}

 In this section, we always assume the bootstrap assumptions in Proposition \ref{prop:Bootstrap}.

\subsection{Some product estimates}

\begin{lemma}\label{lem:ABnorm-product}
 It holds that
  \begin{align}
     & \|V_1h_{\neq}\|_{B}+\|V_1^2h_{\neq}\|_{B}\lesssim_{\delta} \epsilon_1\|h_{\neq}\|_{B},\label{eq:ABnorm-product-1}\\
     & \|V_1h_{\neq}\|_{A}+\|V_1^2h_{\neq}\|_{A}\lesssim_{\delta} \epsilon_1\|h_{\neq}\|_{A},\label{eq:ABnorm-product-3}\\
     &\|V_2h_{\neq}\|_{B}\lesssim_{\delta} \epsilon_1\|h_{\neq}\|_{B}\lesssim \epsilon_1\|h_{\neq}\|_{A}.\label{eq:ABnorm-product-2}
  \end{align}
\end{lemma}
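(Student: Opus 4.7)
The plan is to establish \eqref{eq:ABnorm-product-1}--\eqref{eq:ABnorm-product-2} via the standard Bedrossian--Masmoudi/Ionescu--Jia paraproduct decomposition in the $v$-frequency variable. The central observation is that $V_1$ and $V_2$ depend only on $v$, so multiplication by them preserves the $z$-frequency $k$ and appears on the Fourier side as a one-dimensional convolution
$$\widetilde{V_1 h_{\neq}}(t,k,\xi)=\int_\R \widetilde{V_1}(t,\xi-\eta)\,\widetilde{h_{\neq}}(t,k,\eta)\,d\eta,\qquad k\neq 0.$$
I would split this integral into a low--high region $\{2|\xi-\eta|\le 1+|\eta|\}$ (where $V_1$ is the low-frequency factor) and a high--low region $\{2|\xi-\eta|>1+|\eta|\}$ (where $V_1$ carries the output frequency), and treat each separately. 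The CK (time-integral) pieces are handled by the same splitting together with the standard sub-additivity bound $|\dot A_k(t,\xi)|A_k(t,\xi)\lesssim |\dot A_k(t,\eta)|A_k(t,\eta)\,e^{c\langle\xi-\eta\rangle^{1/2}}+|\dot A_R(t,\xi-\eta)|A_R(t,\xi-\eta)\,e^{c\langle k,\eta\rangle^{1/2}}$ available from the BM/IJ framework.

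In the low--high region one has $|\xi|\simeq|\eta|$, so $\langle\xi/k\rangle\simeq\langle\eta/k\rangle$, $\langle t-\xi/k\rangle\simeq\langle t-\eta/k\rangle$, and $\langle t\rangle^2+|(k,\xi)|^2\simeq\langle t\rangle^2+|(k,\eta)|^2$; hence the $B$- (and $A$-) multiplier at $(k,\xi)$ is comparable to that at $(k,\eta)$. Lemma~\ref{lem:tr-1} applied to the Gevrey exponent gives $A_k(t,\xi)\lesssim A_k(t,\eta)\,e^{c\langle\xi-\eta\rangle^{1/2}}$, and the loss $e^{c\langle\xi-\eta\rangle^{1/2}}$ is absorbed by $A_R(t,\xi-\eta)$ acting on $\widetilde{V_1}$. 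Cauchy--Schwarz together with $\|A_R\widetilde{V_1}\|_{L^2_\xi}\lesssim_\delta\epsilon_1$ from Lemma~\ref{lem:4.2JiaHao} then closes this region with a contribution of at most $\epsilon_1\|h_{\neq}\|_B$ (resp.\ $\epsilon_1\|h_{\neq}\|_A$).

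In the high--low region $|\xi-\eta|\gtrsim 1+|\eta|$, Lemma~\ref{lem:tr-1} instead yields $A_k(t,\xi)\lesssim e^{c\langle k,\eta\rangle^{1/2}}A_R(t,\xi-\eta)$, with the Gevrey factor absorbed into $A_k(t,\eta)$. The key polynomial comparison
$$\frac{\langle t\rangle^2+|(k,\xi)|^2}{\langle\xi/k\rangle^4}\lesssim (1+|\xi-\eta|)^{M}\,\frac{\langle t\rangle^2+|(k,\eta)|^2}{\langle\eta/k\rangle^4}$$
for some $M>0$ (together with its analogue $\langle t-\xi/k\rangle^2\lesssim(1+|\xi-\eta|)^2\langle t-\eta/k\rangle^2$ used for the $A$-norm) follows from $\xi/k=(\xi-\eta)/k+\eta/k$ and $|(k,\xi)|^2\leq 2|(k,\eta)|^2+2|\xi-\eta|^2$; the polynomial loss $(1+|\xi-\eta|)^{M}$ is then swallowed by the Gevrey decay of $\widetilde{V_1}$, and Cauchy--Schwarz with Lemma~\ref{lem:4.2JiaHao} again closes this region.

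The $V_1^2 h_{\neq}$ bound in \eqref{eq:ABnorm-product-1} and \eqref{eq:ABnorm-product-3} follows by iterating the $V_1$ estimate (using the trivial $L^\infty_v$ bound $\|V_1\|_{L^\infty_v}\lesssim\epsilon_1$ to absorb one factor). For \eqref{eq:ABnorm-product-2} I would use \eqref{eq:V2V3} to write $V_2=(V_1+1)\partial_v V_1$; since Lemma~\ref{lem:4.2JiaHao} controls $\langle\partial_v\rangle^{-1}V_2$ on the $A_R$ side, the extra $v$-derivative introduces only a polynomial factor $|\xi-\eta|$ in the convolution, which is absorbed by the Gevrey decay of $\widetilde{V_1}$, and the previous argument applies verbatim. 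The trailing inequality $\|h_{\neq}\|_B\leq\|h_{\neq}\|_A$ is immediate from $k^2\langle t-\xi/k\rangle^2\geq 1$ for $k\neq 0$. The main obstacle will be the polynomial bookkeeping in the high--low region---specifically, verifying that no power of $\langle t\rangle$ is lost in the multiplier comparison---which relies on the fact that the outer $\langle t\rangle^2$-prefactor sits outside the $\xi$-dependent denominator $\langle\xi/k\rangle^4$ and is therefore unaffected by the frequency transfer from $\xi$ to $\eta$.
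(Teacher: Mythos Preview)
Your approach is essentially the same as the paper's --- reduce the product estimates to pointwise multiplier comparisons between the $(k,\xi)$ and $(k,\eta)$ weights, then absorb the loss into the Gevrey weight on $\widetilde{V_1}$ using Lemma~\ref{lem:4.2JiaHao}. The paper packages the argument differently: instead of doing the low--high/high--low split by hand, it invokes the abstract bilinear Lemma~\ref{lem:8.1JiaHao} and reduces directly to verifying a single multiplier inequality of the form
\[
A_k(t,\xi)\,\frac{\langle t\rangle(\langle t\rangle+|(k,\xi)|)}{|k|\langle\xi/k\rangle^2}\lesssim_\delta A_R(t,\xi-\eta)\,A_k(t,\eta)\,\frac{\langle t\rangle(\langle t\rangle+|(k,\eta)|)}{|k|\langle\eta/k\rangle^2}\,\big\{\langle\xi-\eta\rangle^{-2}+\langle k,\eta\rangle^{-2}\big\},
\]
together with its CK analogue, which then follow from Lemma~\ref{lem:8.3JiaHao} and the elementary comparison of the polynomial factors. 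This unifies your two regions into one line. For $V_1^2$ and $\langle\partial_v\rangle^{-1}V_2$ the paper does not iterate or expand $V_2=(V_1+1)\partial_v V_1$; it simply uses that Lemma~\ref{lem:4.2JiaHao} already gives the same $A_R$ bounds for these functions as for $V_1$, and for $V_2$ the multiplier bound carries an extra $\langle\xi-\eta\rangle^{-1}$ on the $A_R$ side.

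One genuine slip in your write-up: in the low--high region the claim $\langle t-\xi/k\rangle\simeq\langle t-\eta/k\rangle$ is false. Take $k=1$, $\eta=t$, $\xi-\eta=t/3$ (so $2|\xi-\eta|\le 1+|\eta|$ for $t$ large): then $\langle t-\eta/k\rangle=1$ while $\langle t-\xi/k\rangle\sim t/3$. The correct bound, valid in \emph{both} regions, is $\langle t-\xi/k\rangle\le\langle t-\eta/k\rangle+|\xi-\eta|/|k|\lesssim(1+|\xi-\eta|)\langle t-\eta/k\rangle$, with the polynomial absorbed by the Gevrey factor --- exactly as you already argue in the high--low region. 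With that fix your argument goes through.
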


\begin{proof}
For \eqref{eq:ABnorm-product-1}, in view of Lemma \ref{lem:8.1JiaHao} and \eqref{eq:boot-V1},  it suffices to prove that (for $k\neq 0$, $t>1$)
  \begin{align}\label{est:ABnorm-prove1}
  \begin{aligned}
     A_{k}(t,\xi)\dfrac{\langle t\rangle(\langle t\rangle+|(k,\xi)|)}{|k|\langle \xi/k\rangle^2} \lesssim_{\delta}& A_{R}(t,\xi-\eta)A_{k}(t,\eta) \dfrac{\langle t\rangle(\langle t\rangle+|(k,\eta)|)}{|k|\langle \eta/k\rangle^2} \\ &\times \left\{\langle\xi-\eta\rangle^{-2} +\langle k,\eta\rangle^{-2}\right\},
  \end{aligned}
  \end{align}
  and
  \begin{align}\label{est:ABnorm-prove2}
  \begin{aligned}
     |(\dot{A}_kA_{k})(t,\xi)|^{1/2}&\dfrac{\langle t\rangle(\langle t\rangle+|(k,\xi)|)}{|k|\langle \xi/k\rangle^2}  \lesssim_{\delta}\left[|(\dot{A}_{R}/A_{R})(t,\xi-\eta)|^{1/2} +|(\dot{A}_k/A_{k})(t,\eta)|^{1/2}\right]\\
     &\times A_{R}(t,\xi-\eta)A_{k}(t,\eta) \dfrac{\langle t\rangle(\langle t\rangle+|(k,\eta)|)}{|k|\langle \eta/k\rangle^2}  \left\{\langle\xi-\eta\rangle^{-2} +\langle k,\eta\rangle^{-2}\right\}.
  \end{aligned}
  \end{align}
 Notice that
   \begin{align}\label{est:ABnorm-prove3}
      &\dfrac{\langle t\rangle(\langle t\rangle+|(k,\xi)|)}{|k|\langle \xi/k\rangle^2}  \lesssim_{\delta} \dfrac{\langle t\rangle(\langle t\rangle+|(k,\eta)|)}{|k|\langle \eta/k\rangle^2} \mathrm{e}^{\delta \min(\langle\xi-\eta\rangle,\langle k,\eta\rangle)^{1/2}}.
  \end{align}
 Then \eqref{est:ABnorm-prove1}-\eqref{est:ABnorm-prove2} follow from \eqref{est:8.3JiaHao-1}, \eqref{est:8.3JiaHao-2} and \eqref{est:ABnorm-prove3}. This finishes the proof of \eqref{eq:ABnorm-product-1}.

 For \eqref{eq:ABnorm-product-3}, by Lemma \ref{lem:8.1JiaHao} and Lemma \ref{lem:4.2JiaHao}, we only need to prove that
    \begin{align*}
     A_{k}(t,\xi)&\dfrac{\langle t\rangle(\langle t\rangle+|(k,\xi)|)\langle t-\xi/k\rangle}{\langle \xi/k\rangle^2} \lesssim_{\delta}A_{R}(t,\xi-\eta)A_{k}(t,\eta) \dfrac{\langle t\rangle(\langle t\rangle+|(k,\eta)|)\langle t-\eta/k\rangle}{\langle \eta/k\rangle^2} \\
     &\times\left\{\langle\xi-\eta\rangle^{-2} +\langle k,\eta\rangle^{-2}\right\},
  \end{align*}
  and
  \begin{align*}
  \begin{aligned}
     |(\dot{A}_kA_{k})(t,\xi)|^{1/2}&\dfrac{\langle t\rangle(\langle t\rangle+|(k,\xi)|)\langle t-\xi/k\rangle}{\langle \xi/k\rangle^2} \lesssim_{\delta}\left[|(\dot{A}_{R}/A_{R})(t,\xi-\eta)|^{1/2} +|(\dot{A}_k/A_{k})(t,\eta)|^{1/2}\right]\\
     &\times A_{R}(t,\xi-\eta)A_{k}(t,\eta) \dfrac{\langle t\rangle(\langle t\rangle+|(k,\eta)|)\langle t-\eta/k\rangle}{\langle \eta/k\rangle^2} \left\{\langle\xi-\eta\rangle^{-2} +\langle k,\eta\rangle^{-2}\right\},
  \end{aligned}
  \end{align*}
  which can be deduced  from \eqref{est:8.3JiaHao-1}, \eqref{est:8.3JiaHao-2} and
  \begin{align*}
     & \dfrac{\langle t\rangle(\langle t\rangle+|(k,\xi)|)\langle t-\xi/k\rangle}{\langle \xi/k\rangle^2}\lesssim_{\delta} \dfrac{\langle t\rangle(\langle t\rangle+|(k,\eta)|)\langle t-\eta/k\rangle}{\langle \eta/k\rangle^2}\mathrm{e}^{\delta \min(\langle\xi-\eta\rangle,\langle k,\eta\rangle)^{1/2}}.
  \end{align*}

For \eqref{eq:ABnorm-product-2}, it suffices to prove that
  \begin{align}\label{est:ABnorm-prove5}
  \begin{aligned}
     A_{k}(t,\xi)\dfrac{\langle t\rangle(\langle t\rangle+|(k,\xi)|)}{\langle \xi/k\rangle^2} \lesssim_{\delta}& \dfrac{ A_{R}(t,\xi-\eta)}{\langle \xi-\eta\rangle} A_{k}(t,\eta) \dfrac{\langle t\rangle(\langle t\rangle+|(k,\eta)|)}{\langle \eta/k\rangle^2} \\
      &\times\left\{\langle\xi-\eta\rangle^{-2} +\langle k,\eta\rangle^{-2}\right\},
  \end{aligned}
  \end{align}
  and
  \begin{align}\label{est:ABnorm-prove6}
  \begin{aligned}
     |(\dot{A}_k&A_{k})(t,\xi)|^{1/2}\dfrac{\langle t\rangle(\langle t\rangle+|(k,\xi)|)}{\langle \xi/k\rangle^2} \lesssim_{\delta}\left[|(\dot{A}_{R}/A_{R})(t,\xi-\eta)|^{1/2} +|(\dot{A}_k/A_{k})(t,\eta)|^{1/2}\right]\\
     &\times\dfrac{ A_{R}(t,\xi-\eta)}{\langle\xi-\eta\rangle}A_{k}(t,\eta)\dfrac{\langle t\rangle(\langle t\rangle+|(k,\eta)|)}{\langle \eta/k\rangle^2} \left\{\langle\xi-\eta\rangle^{-2} +\langle k,\eta\rangle^{-2}\right\}.
  \end{aligned}\end{align}
  In view of \eqref{est:8.3JiaHao-2},  \eqref{est:ABnorm-prove5} and \eqref{est:ABnorm-prove6}  can be deduced from the bound
  \begin{align}\label{est:ABnorm-prove7}
     A_{k}(t,\xi)\dfrac{\langle t\rangle+|(k,\xi)|}{\langle \xi/k\rangle^2} \lesssim_{\delta} &\dfrac{ A_{R}(t,\xi-\eta)}{\langle \xi-\eta\rangle} A_{k}(t,\eta) \dfrac{\langle t\rangle+|(k,\eta)|}{\langle \eta/k\rangle^2}\mathrm{e}^{-20\sqrt{\delta}\min(\langle\xi-\eta\rangle ,\langle k,\eta\rangle)^{1/2}},
  \end{align}
 which is a consequence of  \eqref{est:8.3JiaHao-1} and
  \begin{align*}
     &\dfrac{\langle\eta/k\rangle}{\langle\xi/k\rangle} \dfrac{\langle t\rangle +|\xi|+|k|}{\langle t\rangle+|\eta|+|k|} \dfrac{\langle\eta/k\rangle\langle\xi-\eta\rangle}{\langle\xi/k\rangle}\lesssim_{\delta}\mathrm{e}^{(\lambda(t)/40)\min(\langle\xi-\eta\rangle ,\langle k,\eta\rangle)^{1/2}}.
  \end{align*}
\end{proof}

\begin{lemma}\label{lem:A0norm-product}
  It holds that
  \begin{align}
     & \|V_1\bar{h}\|_{A0}+\|V_1^2\bar{h}\|_{A0}\lesssim_{\delta} \epsilon_1\|\bar{h}\|_{A0},\label{eq:A0norm-product-1}
  \end{align}
\end{lemma}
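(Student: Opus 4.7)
The plan is to mirror the proof of Lemma \ref{lem:ABnorm-product}, now in the zero‐mode setting. Since $V_1=V_1(t,v)$ and $\bar h=\bar h(t,v)$ both lie at $k=0$, so does the product $V_1\bar h$, and its Fourier transform in $v$ is the usual convolution $\widetilde{V_1}*_\eta\widetilde{\bar h}$. The general product lemma \ref{lem:8.1JiaHao}, combined with the bootstrap bound $\|V_1\|_{A_R}+\|V_1^2\|_{A_R}\lesssim_\delta\epsilon_1$ from \eqref{eq:boot-V1}, reduces the desired estimate to two pointwise weight inequalities: for all $(t,\xi,\eta)$ with $t\ge1$,
\begin{align*}
A_0(t,\xi)\,\frac{\langle t\rangle(\langle t\rangle^2+|\xi|^2)}{\langle\xi\rangle^2}
&\lesssim_\delta A_R(t,\xi-\eta)\,A_0(t,\eta)\,\frac{\langle t\rangle(\langle t\rangle^2+|\eta|^2)}{\langle\eta\rangle^2}\,\bigl\{\langle\xi-\eta\rangle^{-2}+\langle\eta\rangle^{-2}\bigr\},
\end{align*}
together with the analogous estimate where $A_0(t,\xi)$ on the left is replaced by $|(\dot A_0 A_0)(t,\xi)|^{1/2}$ and the right‐hand side gains the factor $|(\dot A_R/A_R)(t,\xi-\eta)|^{1/2}+|(\dot A_0/A_0)(t,\eta)|^{1/2}$. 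This is exactly the structure of \eqref{est:ABnorm-prove1}--\eqref{est:ABnorm-prove2} in Lemma \ref{lem:ABnorm-product}, now with the zero‐mode polynomial weight $W_0(t,\xi):=\langle t\rangle(\langle t\rangle^2+|\xi|^2)/\langle\xi\rangle^2$ playing the role of $\langle t\rangle(\langle t\rangle+|(k,\xi)|)/(|k|\langle\xi/k\rangle^2)$.

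Once the reduction is made, the Gevrey weight factor is handled by \eqref{est:8.3JiaHao-1}--\eqref{est:8.3JiaHao-2} applied at $k=0$. What remains is the polynomial comparison
\begin{align*}
\frac{\langle t\rangle(\langle t\rangle^2+|\xi|^2)}{\langle\xi\rangle^2}
\lesssim_\delta
\frac{\langle t\rangle(\langle t\rangle^2+|\eta|^2)}{\langle\eta\rangle^2}
\,\mathrm{e}^{\delta\min(\langle\xi-\eta\rangle,\langle\eta\rangle)^{1/2}},
\end{align*}
analogous to \eqref{est:ABnorm-prove3}. The key elementary observation I would use is that the map $r\mapsto(\langle t\rangle^2+r)/(1+r)$ is non‐increasing for $\langle t\rangle\ge1$, so $W_0(t,\cdot)$ is non‐increasing in $|\xi|$. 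Hence, in the case $|\xi|\ge|\eta|$ the ratio $W_0(t,\xi)/W_0(t,\eta)\le1$, and the inequality is trivial. In the case $|\xi|\le|\eta|/2$ we have $|\xi-\eta|\ge|\eta|/2$, so $\min(\langle\xi-\eta\rangle,\langle\eta\rangle)\gtrsim\langle\eta\rangle$, and the only loss $W_0(t,\xi)/W_0(t,\eta)\le C\langle\eta\rangle^{2}$ is absorbed by the exponential gain $\mathrm{e}^{\delta\langle\eta\rangle^{1/2}}$. In the intermediate range $|\eta|/2\le|\xi|\le|\eta|$, $\langle\xi\rangle\asymp\langle\eta\rangle$ and the ratio is $\lesssim1$ directly.

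Combining this polynomial comparison with \eqref{est:8.3JiaHao-1}--\eqref{est:8.3JiaHao-2}, the small exponential factor $\mathrm{e}^{\delta\min(\langle\xi-\eta\rangle,\langle\eta\rangle)^{1/2}}$ is absorbed into $A_R(t,\xi-\eta)$ (since $\delta$ is taken sufficiently small relative to the Gevrey radius of $A_R$), yielding the two pointwise weight inequalities above. Plugging them into Lemma \ref{lem:8.1JiaHao} and using $\|V_1\|_{A_R}+\|V_1^2\|_{A_R}\lesssim_\delta\epsilon_1$ (from \eqref{eq:boot-V1}, valid for both $V_1$ and $V_1^2$ by Lemma \ref{lem:4.2JiaHao}) produces the claimed bound \eqref{eq:A0norm-product-1}.

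The main obstacle is the polynomial comparison step: $W_0(t,\xi)$ ranges between $\langle t\rangle$ and $\langle t\rangle^3$, so shifting between $\xi$ and $\eta$ can a priori cost a factor as large as $\langle t\rangle^2$, which is not small and is time‐growing. The saving grace is the monotonicity of $W_0(t,\cdot)$ and the fact that any frequency configuration for which the ratio is genuinely large forces $\langle\xi-\eta\rangle$ (or $\langle\eta\rangle$) to be correspondingly large, so that the Gevrey‐$\tfrac12$ character of $A_R$ can dominate the polynomial. All other ingredients, including the treatment of the CK term involving $\dot A_0$ and the passage from $V_1$ to $V_1^2$, are handled by the same algebraic manipulations as in Lemma \ref{lem:ABnorm-product}.
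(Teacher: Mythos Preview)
Your proposal is correct and follows essentially the same approach as the paper: reduce via Lemma \ref{lem:8.1JiaHao} and \eqref{eq:boot-V1} to the two pointwise weight inequalities, establish the polynomial comparison $W_0(t,\xi)\lesssim_\delta W_0(t,\eta)\,\mathrm{e}^{\delta\min(\langle\xi-\eta\rangle,\langle\eta\rangle)^{1/2}}$, and conclude using Lemma \ref{lem:8.3JiaHao}. The paper simply asserts the polynomial comparison \eqref{est:A0norm-prove3} without further comment, whereas you supply an explicit monotonicity argument for it; this is a helpful elaboration but not a different route.
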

\begin{proof}
   In view of Lemma \ref{lem:8.1JiaHao} and  \eqref{eq:boot-V1},  it suffices to prove that
   \begin{align}\label{est:A0norm-prove1}
      & A_{0}(t,\xi)\dfrac{\langle t\rangle \big(\langle t\rangle^2 +|\xi|^2\big)}{\langle\xi\rangle^2}\lesssim_{\delta} A_R(t,\xi-\eta)A_{0}(t,\eta)\dfrac{\langle t\rangle\big(\langle t\rangle^2 +|\eta|^2\big)}{\langle \eta\rangle^2}\left\{\langle \xi-\eta\rangle^{-2} +\langle \eta\rangle^{-2}\right\},
   \end{align}
   and
   \begin{align}\label{est:A0norm-prove2}
   \begin{aligned}
      |(\dot{A}_0A_{0})(t,\xi)|^{1/2}& \dfrac{\langle t\rangle \big(\langle t\rangle^2 +|\xi|^2\big)}{\langle\xi\rangle^2}\lesssim_{\delta} \big[|({\dot{A}_R/A_{R}})(t,\xi-\eta)|^{1/2}+|({\dot{A}_0/A_0})(t,\eta)|^{1/2}\big] \\
      &\times A_R(t,\xi-\eta)A_{0}(t,\eta)\dfrac{\langle t\rangle\big(\langle t\rangle^2 +|\eta|^2\big)}{\langle \eta\rangle^2}\left\{\langle \xi-\eta\rangle^{-2} +\langle \eta\rangle^{-2}\right\}.
   \end{aligned}
   \end{align}
   Notice that
   \begin{align}\label{est:A0norm-prove3}
      & \dfrac{\langle t\rangle \big(\langle t\rangle^2 +|\xi|^2\big)}{\langle\xi\rangle^2} \lesssim_{\delta}   \dfrac{\langle t\rangle \big(\langle t\rangle^2 +|\eta|^2\big)}{\langle\eta\rangle^2} \mathrm{e}^{\delta\min(\langle\xi-\eta\rangle,\langle\eta\rangle)^{1/2}}.
   \end{align}
 Then the desired bounds \eqref{est:A0norm-prove1} and \eqref{est:A0norm-prove2} follow from \eqref{est:A0norm-prove3} and Lemma \ref{lem:8.3JiaHao}.
\end{proof}

\begin{lemma}\label{lem:ABnorm-product-a}
 It holds that
  \begin{align}\label{eq:ABnorm-product-a}
     &\|[ah]_{\neq}\|_{A} +\|[a(V_1+1)h]_{\neq}\|_{A}+\|\overline{ah}\|_{A0}\lesssim_{\delta}\epsilon_1 (\|h_{\neq}\|_{A}+\|\bar{h}\|_{A0}).
  \end{align}
\end{lemma}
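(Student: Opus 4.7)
The plan is to reduce the lemma to pointwise weighted kernel inequalities for a paraproduct decomposition, in the spirit of Lemmas \ref{lem:ABnorm-product} and \ref{lem:A0norm-product}. Decompose
$$\widetilde{ah}(t,k,\xi)=\sum_{l\in\Z}\int_{\R}\widetilde{a}(t,k-l,\xi-\eta)\widetilde{h}(t,l,\eta)\,d\eta,$$
and split into the two paraproduct regimes according to whether $\langle k-l,\xi-\eta\rangle\ge\langle l,\eta\rangle$ (call it HL, the $a$-high regime) or the opposite (LH, the $h$-high regime). In each case I target a pointwise bound of the form
$$A_k(t,\xi)\,W^{1/2}(t,k,\xi)\lesssim_{\delta}A^{*}_{k-l}(t,\xi-\eta)\,A_l(t,\eta)\,W^{1/2}(t,l,\eta)\cdot\bigl\{\langle l,\eta\rangle^{-2}+\langle k-l,\xi-\eta\rangle^{-2}\bigr\},$$
where $W^{1/2}(t,k,\xi)=\langle t\rangle(\langle t\rangle^2+|(k,\xi)|^2)^{1/2}\langle t-\xi/k\rangle/\langle\xi/k\rangle^2$ is the extra $\|\cdot\|_A$-weight, together with its $\dot A_k A_k$-analogue via \eqref{est:8.3JiaHao-2}. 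Once such a bound is in hand, Schur / Cauchy--Schwarz with summable decay in the displayed curly bracket converts the convolution into the product $\|A^{*}_{\sigma}\widetilde{a}\|_{L^2}\cdot\|h\|_A\le\mathcal{E}_a(t)^{1/2}\|h\|_A\le\epsilon_1\|h\|_A$, proving the $[ah]_{\neq}$ estimate. The zero-output $\overline{ah}$ is treated by the same paraproduct, now with $A_0 W_0^{1/2}$ (where $W_0^{1/2}=\langle t\rangle(\langle t\rangle^2+|\xi|^2)/\langle\xi\rangle^2$) replacing $A_k W^{1/2}$ on the output side, following the template of Lemma \ref{lem:A0norm-product}.

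The LH regime is straightforward: since $\langle l,\eta\rangle$ dominates, the three constituents of $W^{1/2}$ satisfy $W^{1/2}(t,k,\xi)\lesssim_{\delta}W^{1/2}(t,l,\eta)\mathrm{e}^{\delta\langle k-l,\xi-\eta\rangle^{1/2}}$ by elementary inequalities, and the pointwise bound reduces to the $A$-ratio inequality \eqref{est:8.3JiaHao-1} with $\langle k-l,\xi-\eta\rangle^{-2}$ decay, combined with $A^{*}_{k-l}\ge A_{k-l}$. The HL regime is the heart of the argument: here the output weight $W^{1/2}(t,k,\xi)$ is comparable to $W^{1/2}$ evaluated at the $a$-side frequency rather than at $(l,\eta)$, and the excess over $W^{1/2}(t,l,\eta)$ must be absorbed by the improvement factor $(1+((k-l)^2+|\xi-\eta|)/\langle t\rangle^2)^{1/2}$ built into $A^{*}_{k-l}$. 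This is precisely the mechanism previewed in Section 2.1 that dictates the particular form of $A_k^*$: the combination of the Gevrey slack from \eqref{est:8.3JiaHao-1}--\eqref{est:8.3JiaHao-2}, the elementary comparisons between $\langle t-\xi/k\rangle$ and $\langle t-\eta/l\rangle$, and this improvement factor, is exactly enough to yield the same pointwise bound as in LH with $\langle l,\eta\rangle^{-2}$ decay instead.

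The remaining pieces of the lemma are routine adaptations. For $[a(V_1+1)h]_{\neq}$ I write $a(V_1+1)h=ah+a(V_1h)$; the first term is already handled, and the second is reduced to the first after applying Lemma \ref{lem:ABnorm-product} (or Lemma \ref{lem:A0norm-product} for the $\bar h$ part) to bound $V_1h$ in the relevant norm at the cost of an extra $\epsilon_1$ from \eqref{eq:boot-V1}. For $\overline{ah}$ the same paraproduct/regime split works, with the extra observation that a zero output frequency forces $k-l=-l$, so both factors carry comparable frequency and either regime may be used; the corresponding $A_0$-weight inequalities are exactly those proven inside Lemma \ref{lem:A0norm-product}. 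The main obstacle throughout is the HL weight inequality in the previous paragraph: establishing that the improvement in $A^{*}_{k-l}$ captures precisely the extra $(\langle t\rangle^2+|(k,\xi)|^2)^{1/2}$ and resonant factor $\langle t-\xi/k\rangle/\langle\xi/k\rangle^2$ produced at the output, and doing so uniformly across the resonant and non-resonant frequency regions distinguished in the construction of $A_k$. This is where the careful design of $A_k^*$ pays off, and every other step amounts to recycling the elliptic/paraproduct inequalities of the preceding two lemmas.
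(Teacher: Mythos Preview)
Your overall architecture---paraproduct split into LH and HL, with the $A^*$ enhancement absorbing the excess in HL---matches the paper. But there is a genuine gap in your LH analysis.

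You claim that in the LH regime the weight ratio $W^{1/2}(t,k,\xi)/W^{1/2}(t,l,\eta)$ is controlled by $e^{\delta\langle\sigma,\rho\rangle^{1/2}}$ ``by elementary inequalities''. This is false for the resonant factor $\langle t-\xi/k\rangle$: take $l=1$, $k=2$, $\xi=\eta=N$ large, $t=N$; then $(\sigma,\rho)=(1,0)$ but $\langle t-\xi/k\rangle/\langle t-\eta/l\rangle\approx N/2$. So the ratio blows up even though $|(\sigma,\rho)|=1$. Relatedly, \eqref{est:8.3JiaHao-1} that you cite only handles multiplication by a function of $v$ alone (the low factor sits in $A_R$, same $k$ on input and output); it does not apply here where $a$ carries nonzero $z$-frequency.

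What rescues the argument---and what the paper actually does---is to \emph{couple} $A_k$ with $\langle t-\xi/k\rangle$ into the modified weight $A_k^{(2)}(t,\xi)=A_k(t,\xi)\langle t-\xi/k\rangle$ of \eqref{eq:A2k}, and use the product inequality \eqref{est:8.3JiaHao-mod3} (Lemma \ref{lem:8.3JiaHao-mod1}) in place of \eqref{est:8.3JiaHao-1}. The point is that the resonant boost built into $A_k$ when $t\in I_{k,\xi}$ exactly compensates for the smallness of $\langle t-\xi/k\rangle$ there; in the counterexample above $A_2(N,N)/A_1(N,N)\approx_\delta 1/N$, so the combined ratio $A_k^{(2)}(t,\xi)/A_l^{(2)}(t,\eta)$ stays bounded. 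The same coupled weight is used in HL (Cases~2--3 of the paper's Step~1): applying \eqref{est:8.3JiaHao-mod3} throws a factor $\langle t-\rho/\sigma\rangle$ (or $\langle t\rangle+|\rho|$ when $\sigma=0$) onto the $a$-side, and only after a nontrivial chain of scalar inequalities does one isolate the factor $(1+\sigma^2/\langle t\rangle^2)^{1/2}$ that $A_\sigma^*$ absorbs. Your high-level description of HL is correct in spirit, but the mechanism again passes through $A_k^{(2)}$ rather than a separate treatment of $A_k$ and $W^{1/2}$. Finally, for $\overline{ah}$ the inequalities are not ``exactly those of Lemma \ref{lem:A0norm-product}'' (that lemma is again about $v$-only multipliers); one must repeat the $A_k^{(2)}$ argument with output index $0$, splitting on whether $l=k\neq 0$ or $l=k=0$.
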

\begin{proof}
  \textbf{Step 1.} We first estimate $\|[ah]_{\neq}\|_{A}$ and we will prove that
  \begin{align}\label{eq:product-(ah)neq}
    & \|[ah]_{\neq}\|_{A}\lesssim_{\delta} (\mathcal{E}_{a}(t)+\mathcal{B}_{a}(t))^{1/2}  (\|h_{\neq}\|_{A}+\|\bar{h}\|_{A0})\lesssim_{\delta} \epsilon_1 (\|h_{\neq}\|_{A}+\|\bar{h}\|_{A0}).
  \end{align} Let $(\sigma,\rho)=(k-l,\xi-\eta)$. Thanks to Lemma \ref{lem:8.1JiaHao} {and \eqref{est:par-t-A*-1}}, it suffices to prove that

  (i) for $k\neq 0,\ l\neq 0$, we have
  \begin{align}\label{eq:product-a-prove1}
  \begin{aligned}
      A_k(t,\xi)\dfrac{\langle t\rangle\big(\langle t\rangle+|(k,\xi)|\big)\langle t-\xi/k\rangle}{\langle\xi/k\rangle^2} \lesssim_{\delta} & A_{l}(t,\eta)\dfrac{\langle t\rangle\big(\langle t\rangle+|(l,\eta)|\big)\langle t-\eta/l\rangle}{\langle\eta/l\rangle^2} A^*_{\sigma}(t,\rho)\\
     &\times\left\{\langle l,\eta\rangle^{-2}+\langle \sigma,\rho\rangle^{-2}\right\},
  \end{aligned}
  \end{align}
  and
   \begin{align}\label{eq:product-a-prove2}
   \begin{aligned}
      |(\dot{A}_{k} A_k)(t,\xi)|^{\f12}&\dfrac{\langle t\rangle\big(\langle t\rangle+|(k,\xi)|\big)\langle t-\xi/k\rangle}{\langle\xi/k\rangle^2} \lesssim_{\delta} \big[|(\dot{A}_{l}/A_l)(t,\eta)|^{\f12}+|(\dot{A}_\sigma/A_{\sigma}) (t,\rho)|^{\f12}\big]\\
     &\times A_{l}(t,\eta)\dfrac{\langle t\rangle\big(\langle t\rangle+|(l,\eta)|\big)\langle t-\eta/l\rangle}{\langle\eta/l\rangle^2} A^*_{\sigma}(t,\rho)\left\{\langle l,\eta\rangle^{-2}+\langle \sigma,\rho\rangle^{-2}\right\}.
  \end{aligned}
  \end{align}

   (ii) for $k\neq0,\ l=0$, we have
  \begin{align}\label{eq:product-a-prove3}
  \begin{aligned}
      A_k(t,\xi)\dfrac{\langle t\rangle\big(\langle t\rangle+|(k,\xi)|\big)\langle t-\xi/k\rangle}{\langle\xi/k\rangle^2} \lesssim_{\delta} & A_{0}(t,\eta)\dfrac{\langle t\rangle\big(\langle t\rangle+|\eta|\big)^2}{\langle\eta\rangle^2} A^*_{k}(t,\rho)\\
     &\times\left\{\langle\eta\rangle^{-2}+\langle k,\rho\rangle^{-2}\right\},
  \end{aligned}
  \end{align}
  and
   \begin{align}\label{eq:product-a-prove4}
   \begin{aligned}
      |(\dot{A}_{k} A_k)(t,\xi)|^{\f12}&\dfrac{\langle t\rangle\big(\langle t\rangle+|(k,\xi)|\big)\langle t-\xi/k\rangle}{\langle\xi/k\rangle^2} \lesssim_{\delta} \big[|(\dot{A}_{0}/A_0)(t,\eta)|^{\f12}+|(\dot{A}_k/A_{k})(t,\rho)|^{\f12}\big]\\
     &\times A_{0}(t,\eta)\dfrac{\langle t\rangle\big(\langle t\rangle+|\eta|\big)^2}{\langle\eta\rangle^2} A^*_{k}(t,\rho)\left\{\langle\eta\rangle^{-2}+\langle k,\rho\rangle^{-2}\right\}.
  \end{aligned}
  \end{align}

 Let us first  prove \eqref{eq:product-a-prove1} and \eqref{eq:product-a-prove2}.\smallskip

 \textbf{Case 1.} $|(\sigma,\rho)|\leq |(l,\eta)|$. By \eqref{est:8.3JiaHao-mod3} with $j=2$, we have
  \begin{align}\label{eq:product-a-pr-1}
     &A_k(t,\xi)\langle t-\xi/k\rangle\lesssim_{\delta} A_l(t,\eta)\langle t-\eta/l\rangle A_{\sigma}(t,\rho) \mathrm{e}^{-(\lambda(t)/20)\langle \sigma\rho\rangle^{1/2}}.
  \end{align}
 Thanks to
  \begin{align*}
  & \dfrac{\langle t\rangle +|(k,\xi)|}{\langle t\rangle +|(l,\eta)|} \dfrac{\langle \eta/l\rangle^2}{\langle \xi/k\rangle^2}\lesssim \langle \sigma,\rho\rangle^3,
  \end{align*}
we get
  \begin{align}\label{eq:product-a-pr-2}
     & \dfrac{\langle t\rangle \big(\langle t\rangle +|(k,\xi)|\big)}{\langle\xi/k\rangle^2 }\lesssim_{\delta} \dfrac{\langle t\rangle \big(\langle t\rangle +|(l,\eta)|\big)}{\langle\eta/l\rangle^2 } \mathrm{e}^{\delta\langle \sigma ,\rho\rangle^{1/2}}.
  \end{align}
  By \eqref{eq:product-a-pr-1}, \eqref{eq:product-a-pr-2}, $A_{\sigma}(t,\rho)\leq A_{\sigma}^*(t,\rho)$ and \eqref{est:8.3JiaHao-mod2}, we deduce {\eqref{eq:product-a-prove1}} and \eqref{eq:product-a-prove2} for Case 1.\smallskip

  \textbf{Case 2.} $|(l,\eta)|\leq|(\sigma,\rho)|$ and $\sigma\neq 0$.  By \eqref{est:8.3JiaHao-mod3} with $j=2$, we have
  \begin{align}\label{eq:product-a-pr-3}
     & A_k(t,\xi)\langle t-\xi/k\rangle\lesssim_{\delta} A_l(t,\eta)A_{\sigma}(t,\rho)\langle t-\rho/\sigma\rangle \mathrm{e}^{-(\lambda(t)/20)\langle l,\eta\rangle^{1/2}},
  \end{align}
  Thanks to $\langle t\rangle +|(k,\xi)|\lesssim (\langle t\rangle +|k|)\langle\xi/k\rangle
  $, a direct calculation gives
  \begin{align*}
    \dfrac{\langle t\rangle \big(\langle t\rangle +|(k,\xi)|\big)\langle t-\rho/\sigma\rangle}{\langle\xi/k\rangle^2}  & \lesssim \left(1+\dfrac{|k|}{\langle t\rangle}\right)\dfrac{\langle t\rangle^2\langle t-\rho/k\rangle}{\langle\xi/k\rangle} \lesssim \left(1+\dfrac{|k|}{\langle t\rangle}\right)\left(\langle t\rangle^3 +\dfrac{\langle t\rangle^2|\rho|}{|k|\langle\xi/k\rangle}\right).
  \end{align*}
 Notice that
  \begin{align*}
     & \left(1+\dfrac{|k|}{\langle t\rangle}\right) \lesssim \left(1+\dfrac{|\sigma|}{\langle t\rangle}\right){ |l|},
  \end{align*}
  and  thanks to $\langle t-\eta/l\rangle \langle \eta/l\rangle \gtrsim \langle t\rangle$, we have
  \begin{align*}
     \langle t\rangle^3 \lesssim &\langle t\rangle \big(\langle t\rangle+|(l,\eta)|\big)\langle t-\eta/l\rangle\langle \eta/l\rangle\lesssim  \dfrac{\langle t\rangle\big(\langle t\rangle+|(l,\eta)|\big)\langle t-\eta/l\rangle}{\langle \eta/l\rangle^2} \langle l,\eta\rangle^3,
  \end{align*}
and
  \begin{align*}
     \dfrac{\langle t\rangle^2|\rho|}{|k|\langle\xi/k\rangle}& \lesssim \dfrac{\langle t\rangle^2 |\rho|}{\langle\xi\rangle}\lesssim \langle t\rangle^2 \langle \eta\rangle \lesssim  \dfrac{\langle t\rangle\big(\langle t\rangle+|(l,\eta)|\big)\langle t-\eta/l\rangle}{\langle \eta/l\rangle^2} \langle l,\eta\rangle^3.
  \end{align*}
  The above inequalities ensure that
  \begin{align*}
     \dfrac{\langle t\rangle \big(\langle t\rangle +|(k,\xi)|\big)\langle t-\rho/\sigma\rangle}{\langle\xi/k\rangle^2 } &\lesssim \dfrac{\langle t\rangle(\langle t\rangle+|(l,\eta)|)\langle t-\eta/l\rangle}{\langle \eta/l\rangle^2} \bigg(1+\dfrac{|\sigma|}{\langle t\rangle}\bigg)\langle l,\eta\rangle^4,
  \end{align*}
which gives
  \begin{align}\label{eq:product-a-pr-4}
     & \dfrac{\langle t\rangle \big(\langle t\rangle +|(k,\xi)|\big)}{\langle\xi/k\rangle^2}\lesssim_{\delta} \dfrac{|l|\langle t\rangle \big(\langle t\rangle +|(l,\eta)|\big)\langle t-\eta/l\rangle}{\langle\eta/l\rangle^2\langle t-\rho/\sigma\rangle}\left(1+ \dfrac{\sigma^2}{\langle t\rangle^2}\right)^{1/2}  \mathrm{e}^{\delta\langle l,\eta\rangle^{1/2}}.
  \end{align}
  By \eqref{eq:product-a-pr-3}, \eqref{eq:product-a-pr-4}, $A_{\sigma}(t,\rho)\left(1+\sigma^2/\langle t\rangle^2\right)^{1/2}\leq A_{\sigma}^*(t,\rho)$ and \eqref{est:8.3JiaHao-mod2}, we deduce \eqref{eq:product-a-prove1} and \eqref{eq:product-a-prove2} for Case 2.\smallskip

\textbf{Case 3.} $|(l,\eta)|\leq|\rho|$ and $\sigma=k-l= 0$. By \eqref{est:8.3JiaHao-mod3}  with $j=2$, we have
  \begin{align}\label{eq:product-a-pr-5}
     & A_k(t,\xi)\langle t-\xi/k\rangle\lesssim_{\delta} A_k(t,\eta)A_{0}(t,\rho)\big(\langle t\rangle+|\rho|\big) \mathrm{e}^{-(\lambda(t)/20)\langle k,\eta\rangle^{1/2}}.
  \end{align}
A  direct calculations gives
  \begin{align*}
     \dfrac{\langle t\rangle \big(\langle t\rangle +|(k,\xi)|\big)\big(\langle t\rangle+|\rho|\big)}{\langle\xi/k\rangle^2 } &\lesssim  \left(1+\dfrac{|k|}{\langle t\rangle}\right)\left(\langle t\rangle^3 +\dfrac{\langle t\rangle^2|\rho|}{\langle\xi/k\rangle}\right)\lesssim |k|\left(\langle t\rangle^3 +\dfrac{\langle t\rangle^2|\rho|}{\langle\xi/k\rangle}\right)\\
     &\lesssim |k|\langle t\rangle^3 +\dfrac{|k|^2\langle t\rangle^2 \langle \rho\rangle}{\langle\xi\rangle} \lesssim |k|^2\langle \eta\rangle\langle t\rangle^3.
  \end{align*}
Thanks to $\langle t-\eta/k\rangle \langle \eta/k\rangle \gtrsim \langle t\rangle$, we have
  \begin{align*}
     |k|^2\langle\eta\rangle\langle t\rangle^3 \lesssim &|k|^2\langle\eta\rangle\langle t\rangle \big(\langle t\rangle+|(k,\eta)|\big)\langle t-\eta/k\rangle\langle \eta/k\rangle \\
     \lesssim& \dfrac{\langle t\rangle(\langle t\rangle+|(k,\eta)|)\langle t-\eta/k\rangle}{\langle \eta/k\rangle^2} \langle k,\eta\rangle^6.
  \end{align*}
The above inequalities ensure that
  \begin{align*}
     \dfrac{\langle t\rangle \big(\langle t\rangle +|(k,\xi)|\big)\big(\langle t\rangle+|\rho|\big)}{\langle\xi/k\rangle^2 } &\lesssim \dfrac{\langle t\rangle(\langle t\rangle+|(k,\eta)|)\langle t-\eta/k\rangle}{\langle \eta/k\rangle^2} \langle k,\eta\rangle^6,
  \end{align*}
which gives
  \begin{align}\label{eq:product-a-pr-6}
     & \dfrac{\langle t\rangle \big(\langle t\rangle +|(k,\xi)|\big)}{\langle\xi/k\rangle^2 }\lesssim_{\delta} \dfrac{|k|\langle t\rangle \big(\langle t\rangle +|(k,\eta)|\big)\langle t-\eta/k\rangle}{\langle\eta/k\rangle^2\big(\langle t\rangle+|\rho|\big)} \mathrm{e}^{\delta\langle k,\eta\rangle^{1/2}}.
  \end{align}
By \eqref{eq:product-a-pr-5}, \eqref{eq:product-a-pr-6}, $A_{0}(t,\rho)\leq A_{0}^*(t,\rho)$ and \eqref{est:8.3JiaHao-mod2}, we deduce  \eqref{eq:product-a-prove1} and \eqref{eq:product-a-prove2} for Case 3.\smallskip

 Next we  prove  \eqref{eq:product-a-prove3} and \eqref{eq:product-a-prove4}.\smallskip

  \textbf{Case 1.} $|(k,\rho)|\leq |\eta|$. By \eqref{est:8.3JiaHao-mod3} with $j=2$, we have
  \begin{align}\label{eq:product-a-pr-7}
     &A_k(t,\xi)\langle t-\xi/k\rangle\lesssim_{\delta} A_0(t,\eta)\big(\langle t\rangle+|\eta|\big) A_{k}(t,\rho) \mathrm{e}^{-(\lambda(t)/20)\langle k,\rho\rangle^{1/2}}.
  \end{align}
 A  direct calculation shows
  \begin{align*}
     \dfrac{\langle t\rangle \big(\langle t\rangle +|(k,\xi)|\big)}{\langle\xi/k\rangle^2} &\lesssim   \dfrac{\langle t\rangle \big(\langle t\rangle +|\eta|\big)}{ \langle\eta\rangle^2} \dfrac{\langle \eta\rangle^2}{\langle \xi/k\rangle^2}\dfrac{\langle t\rangle +|(k,\xi)|}{\langle t\rangle +|\eta|}\\
     &\lesssim   \dfrac{\langle t\rangle \big(\langle t\rangle +|\eta|\big)}{ \langle\eta\rangle^2}  \dfrac{|k|^2\langle \eta\rangle^2}{\langle \xi\rangle^2}\dfrac{\langle t\rangle +|(k,\xi)|}{\langle t\rangle +|\eta|} \lesssim   \dfrac{\langle t\rangle \big(\langle t\rangle +|\eta|\big)}{ \langle\eta\rangle^2}\langle k,\rho\rangle^5,
  \end{align*}
which gives
\begin{align}\label{eq:product-a-pr-8}
     & \dfrac{\langle t\rangle \big(\langle t\rangle +|(k,\xi)|\big)}{\langle\xi/k\rangle^2 }\lesssim_{\delta} \dfrac{\langle t\rangle\big(\langle t\rangle+|\eta|\big)}{\langle \eta\rangle^2}\mathrm{e}^{\delta\langle k,\rho\rangle^{1/2}}.
  \end{align}
 Then  by \eqref{eq:product-a-pr-7}, \eqref{eq:product-a-pr-8}, $A_{k}(t,\rho)\leq A_{k }^*(t,\rho)$ and \eqref{est:8.3JiaHao-mod2}, we  deduce \eqref{eq:product-a-prove3} and \eqref{eq:product-a-prove4} for Case 1.\smallskip

  \textbf{Case 2.} $ |\eta|\leq |(k,\rho)|$. By \eqref{est:8.3JiaHao-mod3}  with $j=2$, we have
  \begin{align}\label{eq:product-a-pr-9}
     &A_k(t,\xi)\langle t-\xi/k\rangle\lesssim_{\delta} A_0(t,\eta) A_{k}(t,\rho)\langle t-\rho/k\rangle \mathrm{e}^{-(\lambda(t)/20)\langle \eta\rangle^{1/2}},
  \end{align}
 A direct calculations gives
  \begin{align*}
     \dfrac{\langle t\rangle \big(\langle t\rangle +|(k,\xi)|\big)\langle t-\rho/k\rangle}{\langle\xi/k\rangle^2} &\lesssim  \left(1+\dfrac{|k|}{\langle t\rangle}\right)\left(\langle t\rangle^3 +\dfrac{\langle t\rangle^2|\rho|}{|k|\langle\xi/k\rangle}\right) .
  \end{align*}
Notice that
\begin{align*}
    \langle t\rangle^3 +\dfrac{\langle t\rangle^2|\rho|}{|k|\langle\xi/k\rangle} \lesssim \langle t\rangle^3 +\dfrac{\langle t\rangle^2|\rho|}{\langle\xi\rangle}\lesssim \langle t\rangle^3\langle \eta \rangle\lesssim \dfrac{\langle t\rangle\big(\langle t\rangle+|\eta|\big)^2}{\langle \eta\rangle^2}\langle \eta\rangle^3.
  \end{align*}
 Then we have
  \begin{align}\nonumber
     \dfrac{\langle t\rangle \big(\langle t\rangle +|(k,\xi)|\big)}{\langle\xi/k\rangle^2}\lesssim& \dfrac{\langle t\rangle\big(\langle t\rangle+|\eta|\big)^2}{\langle \eta\rangle^2\langle t-\rho/k\rangle}\left(1+\dfrac{k^2}{\langle t\rangle^2}\right)^{1/2}\langle \eta\rangle^3\\ \label{eq:product-a-pr-10}
    \lesssim_{\delta}& \dfrac{\langle t\rangle\big(\langle t\rangle+|\eta|\big)^2}{\langle \eta\rangle^2\langle t-\rho/k\rangle}\left(1+\dfrac{k^2}{\langle t\rangle^2}\right)^{1/2}\mathrm{e}^{\delta\langle \eta\rangle^{1/2}}.
  \end{align}
  Then by \eqref{eq:product-a-pr-9}, \eqref{eq:product-a-pr-10},  $A_{k}(t,\rho)\left(1+k^2/\langle t\rangle^2\right)^{1/2}\leq A_{k }^*(t,\rho)$ and \eqref{est:8.3JiaHao-mod2}, we deduce  \eqref{eq:product-a-prove3} and \eqref{eq:product-a-prove4} for Case 2.\smallskip

 \noindent\textbf{Step 2.} We estimate $\|[a(V_1+1)h]_{\neq}\|_{A}$ and we will prove that
  \begin{align}\label{eq:product-(aV1h)neq}
    & \|[a(V_1+1)h]_{\neq}\|_{A}\lesssim_{\delta} (\mathcal{E}_{a}(t)+\mathcal{B}_{a}(t))^{1/2}  (\|h_{\neq}\|_{A}+\|\bar{h}\|_{A0})\lesssim_{\delta} \epsilon_1 (\|h_{\neq}\|_{A}+\|\bar{h}\|_{A0}).
  \end{align}
By Step 1, we have
\begin{align*}
   & \|[a(V_1+1)h]_{\neq}\|_{A}\lesssim_{\delta} (\mathcal{E}_{a}(t)+\mathcal{B}_{a}(t))^{1/2}  (\|[(V_1+1)h]_{\neq}\|_{A}+\|(V_1+1)\bar{h}\|_{A0}),
\end{align*}
By Lemma \ref{lem:ABnorm-product} and Lemma \ref{lem:A0norm-product}, we get
\begin{align*}
   & \|[(V_1+1)h]_{\neq}\|_{A}+\|(V_1+1)\bar{h}\|_{A0} \lesssim \|h_{\neq}\|_{A}+\|\bar{h}\|_{A0}.
\end{align*}
Then we finish the proof of \eqref{eq:product-(aV1h)neq}.\smallskip

\noindent\textbf{Step 3.} We estimate $\|\overline{ah}\|_{A0}$ and we will prove that
  \begin{align}\label{eq:product-(ah)0}
    & \|\overline{ah}\|_{A0}\lesssim_{\delta} (\mathcal{E}_{a}(t)+\mathcal{B}_{a}(t))^{1/2}  (\|h_{\neq}\|_{A}+\|\bar{h}\|_{A0})\lesssim_{\delta} \epsilon_1 (\|h_{\neq}\|_{A}+\|\bar{h}\|_{A0}).
  \end{align}

By Lemma \ref{lem:8.1JiaHao} {and \eqref{est:par-t-A*-1}}, it suffices to prove that \smallskip

  (i) for $k\neq 0$, we have
  \begin{align}\label{eq:product-a-A0prove1}
  \begin{aligned}
      A_0(t,\xi)\dfrac{\langle t\rangle\big(\langle t\rangle+|\xi|\big)^2}{\langle\xi\rangle^2} \lesssim_{\delta} & A_{k}(t,\eta)\dfrac{\langle t\rangle\big(\langle t\rangle+|(k,\eta)|\big)\langle t-\eta/k\rangle}{\langle\eta/k\rangle^2} A^*_{-k}(t,\rho)\\
     &\times\left\{\langle k,\eta\rangle^{-2}+\langle k,\rho\rangle^{-2}\right\},
  \end{aligned}
  \end{align}
  and
   \begin{align}\label{eq:product-a-A0prove2}
   \begin{aligned}
      |(\dot{A}_{0}& A_0)(t,\xi)|^{1/2}\dfrac{\langle t\rangle\big(\langle t\rangle+|\xi|\big)^2}{\langle\xi\rangle^2} \lesssim_{\delta} \big[|(\dot{A}_{k}/A_k)(t,\eta)|^{1/2}+|(\dot{A}_{-k}/A_{-k})(t,\rho)|^{1/2}\big]\\
     &\times A_{k}(t,\eta)\dfrac{\langle t\rangle\big(\langle t\rangle+|(k,\eta)|\big)\langle t-\eta/k\rangle}{\langle\eta/k\rangle^2} A^*_{-k}(t,\rho)\left\{\langle k,\eta\rangle^{-2}+\langle k,\rho\rangle^{-2}\right\}.
  \end{aligned}
  \end{align}

   (ii) for $k= 0$, we have
  \begin{align}\label{eq:product-a-A0prove3}
      A_0(t,\xi)\dfrac{\langle t\rangle\big(\langle t\rangle+|\xi|\big)^2}{\langle\xi\rangle^2} \lesssim_{\delta} & A_{0}(t,\eta)\dfrac{\langle t\rangle\big(\langle t\rangle+|\eta|\big)^2}{\langle\eta\rangle^2}  A^*_{0}(t,\rho)\left\{\langle \eta\rangle^{-2}+\langle \rho\rangle^{-2}\right\},
  \end{align}
  and
   \begin{align}\label{eq:product-a-A0prove4}
   \begin{aligned}
      |(\dot{A}_{0} A_0)(t,\xi)|^{1/2}&\dfrac{\langle t\rangle\big(\langle t\rangle+|\xi|\big)^2}{\langle\xi\rangle^2} \lesssim_{\delta} \big[|(\dot{A}_{0}/A_0)(t,\eta)|^{1/2}+|(\dot{A}_{0}/A_{0})(t,\rho)|^{1/2}\big]\\
     &\times A_{0}(t,\eta)\dfrac{\langle t\rangle\big(\langle t\rangle+|\eta|\big)^2}{\langle\eta\rangle^2}  A^*_{0}(t,\rho)\left\{\langle \eta\rangle^{-2}+\langle \rho\rangle^{-2}\right\}.
  \end{aligned}
  \end{align}

  Let us first  prove \eqref{eq:product-a-A0prove1} and \eqref{eq:product-a-A0prove2}.\smallskip

   \textbf{Case 1.} $|(k,\rho)|\leq |(k,\eta)|$. By \eqref{est:8.3JiaHao-mod3} with $j=2$, we have
  \begin{align}\label{eq:product-a-A0kk-1}
     &A_0(t,\xi)\big(\langle t\rangle+|\xi|\big) \lesssim_{\delta} A_k(t,\eta)\langle t-\eta/k\rangle A_{-k}(t,\rho) \mathrm{e}^{-(\lambda(t)/20)\langle k,\rho\rangle^{1/2}},
  \end{align}
 Notice that
  \begin{align*}
  & \dfrac{\langle t\rangle +|\xi|}{\langle t\rangle +|(k,\eta)|} \dfrac{\langle \eta/k\rangle^2}{\langle \xi\rangle^2}\lesssim  \langle k,\rho\rangle^3,
  \end{align*}
which gives
  \begin{align}\label{eq:product-a-A0kk-2}
     & \dfrac{\langle t\rangle \big(\langle t\rangle +|\xi|\big)}{ \langle\xi\rangle^2}\lesssim_{\delta} \dfrac{\langle t\rangle \big(\langle
      t\rangle+|(k,\eta)|\big)}{\langle\eta/k\rangle^2} \mathrm{e}^{\delta\langle k ,\rho\rangle^{1/2}}.
  \end{align}
  By \eqref{eq:product-a-A0kk-1}, \eqref{eq:product-a-A0kk-2}, $A_{-k}(t,\rho)\leq A_{-k}^*(t,\rho)$ and \eqref{est:8.3JiaHao-mod2}, we deduce  \eqref{eq:product-a-A0prove1} and \eqref{eq:product-a-A0prove2} for Case 1.\smallskip

 \textbf{Case 2.} $ |(k,\eta)|\leq |(k,\rho)|$. By \eqref{est:8.3JiaHao-mod3} in Lemma \ref{lem:8.3JiaHao-mod1} (taking $j=2$), we have
  \begin{align}\label{eq:product-a-A0kk-3}
     &A_0(t,\xi)\big(\langle t\rangle+|\xi|\big)\lesssim_{\delta} A_k(t,\eta) A_{-k}(t,\rho)\langle t+\rho/k\rangle \mathrm{e}^{-(\lambda(t)/20)\langle k, \eta\rangle^{1/2}}.
  \end{align}
 A  direct calculation gives
  \begin{align*}
     \dfrac{\langle t\rangle \big(\langle t\rangle +|\xi|\big)\langle t+\rho/k\rangle}{\langle\xi\rangle^2} &\lesssim  \langle t\rangle^3/\langle\xi\rangle^2 +\langle t\rangle^2\langle \rho/k\rangle/\langle\xi\rangle^2 +\langle t\rangle^2/\langle\xi\rangle +\langle t\rangle\langle \rho/k\rangle/\langle\xi\rangle\\
     &\lesssim\langle t\rangle^3\left(1+\dfrac{\langle\rho/k\rangle}{\langle\xi\rangle}\right)\lesssim \langle t\rangle^3\langle k,\eta\rangle.
  \end{align*}
   Thanks to $\langle t-\eta/k\rangle\langle \eta/k\rangle\gtrsim \langle t\rangle$, we have
  \begin{align*}
     \langle t\rangle^3\lesssim \langle t\rangle \big(\langle t\rangle+|(k,\eta)|\big)\langle t-\eta/k\rangle\langle \eta/k\rangle \lesssim \dfrac{\langle t\rangle \big(\langle
      t\rangle+|(k,\eta)|\big)\langle t-\eta/k\rangle}{\langle\eta/k\rangle^2} \langle k,\eta\rangle^3.
  \end{align*}
The above inequalities ensure that
  \begin{align*}
     \dfrac{\langle t\rangle \big(\langle t\rangle +|\xi|\big)\langle t+\rho/k\rangle}{\langle\xi\rangle^2}\lesssim \dfrac{\langle t\rangle \big(\langle
      t\rangle+|(k,\eta)|\big)\langle t-\eta/k\rangle}{\langle\eta/k\rangle^2} \langle k,\eta\rangle^4,
  \end{align*}
which gives
\begin{align}\label{eq:product-a-A0kk-4}
     & \dfrac{\langle t\rangle \big(\langle t\rangle +|\xi|\big)}{\langle\xi\rangle^2}\lesssim_{\delta} \dfrac{\langle t\rangle \big(\langle
      t\rangle+|(k,\eta)|\big)\langle t-\eta/k\rangle}{\langle\eta/k\rangle^2\langle t+\rho/k\rangle} \mathrm{e}^{\delta\langle k ,\eta\rangle^{1/2}}.
  \end{align}
   By \eqref{eq:product-a-A0kk-3}, \eqref{eq:product-a-A0kk-4}, $A_{-k}(t,\rho)\leq A_{-k}^*(t,\rho)$ and \eqref{est:8.3JiaHao-mod2}, we deduce \eqref{eq:product-a-A0prove1} and \eqref{eq:product-a-A0prove2} for Case 2.\smallskip

 Next we prove  \eqref{eq:product-a-A0prove3} and \eqref{eq:product-a-A0prove4}. By \eqref{est:8.3JiaHao-mod3}  with $j=1$ and $k=l=0$, we have
  \begin{align}\label{eq:product-a-A000-1}
     &A_0(t,\xi)\lesssim_{\delta} A_0(t,\eta) A_{0}(t,\rho) \mathrm{e}^{-(\lambda(t)/20)\min(\langle \rho\rangle,\langle \eta\rangle)^{1/2}},
  \end{align}
and we also have
  \begin{align}\label{eq:product-a-A000-2}
     \dfrac{\langle t\rangle\big(\langle t\rangle+|\xi|\big)^2}{\langle\xi\rangle^2}\lesssim_{\delta} \dfrac{\langle t\rangle\big(\langle t\rangle+|\eta|\big)^2}{\langle\eta\rangle^2} \mathrm{e}^{\delta\min(\langle \rho\rangle,\langle \eta\rangle)^{1/2}}.
  \end{align}
Then by \eqref{eq:product-a-A000-1}, \eqref{eq:product-a-A000-2}, $A_{0}(t,\rho)\leq A_{0}^*(t,\rho)$ and \eqref{est:8.3JiaHao-mod2}, we deduce  \eqref{eq:product-a-A0prove3} and \eqref{eq:product-a-A0prove4}.\smallskip

This finishes the proof of this lemma.
\end{proof}

\subsection{Elliptic estimates}
In this subsection, we prove the following elliptic estimates.

\begin{lemma}\label{lem:elliptic}
Let $h(t,z,v),\ H(t,z,v),\ H_1(t,z,v),\ H_2(t,z,v)$ satisfy
\begin{align*}
   &\partial_z^2h+
   (V_1+1)^2(\partial_v-t\partial_z)^2h+V_2(\partial_v-t\partial_z)h =H+\partial_zH_1+(V_1+1)(\partial_v-t\partial_z)H_2.
\end{align*}
There exists $c_1(\delta)>0$ such that if $\epsilon_1\leq c_1(\delta)$, then we have
\begin{align*}
   & \|(\partial_z,\partial_v-t\partial_z)h_{\neq}\|_{A}\lesssim_{\delta} \|H_{\neq}\|_{B}+\|H_{1,\neq}\|_{A} +\|H_{2,\neq}\|_{A}.
\end{align*}
\end{lemma}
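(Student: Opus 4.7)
The plan is to treat the given equation as a small-amplitude perturbation of the flat sheared Laplacian $L_0:=\partial_z^2+(\partial_v-t\partial_z)^2$ and to absorb the $V_1$- and $V_2$-dependent pieces via the smallness from \eqref{eq:boot-V1} together with the bilinear estimates in Lemma~\ref{lem:ABnorm-product}. Since $V_1,V_2$ depend only on $v$, they commute with the projector $\mathbb{P}_{\neq}$; projecting the equation onto nonzero modes and isolating $L_0$ gives
\begin{align*}
L_0 h_{\neq}=\,&H_{\neq}+\partial_z H_{1,\neq}+(V_1+1)(\partial_v-t\partial_z)H_{2,\neq}\\
&-(2V_1+V_1^2)(\partial_v-t\partial_z)^2 h_{\neq}-V_2(\partial_v-t\partial_z)h_{\neq}.
\end{align*}
Write $\nabla_L:=(\partial_z,\partial_v-t\partial_z)$ and recall from the discussion after the definition of $\|\cdot\|_B$ that $\|\nabla_L g_{\neq}\|_B\approx\|g_{\neq}\|_A$.

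The key first step is the \emph{flat} elliptic estimate $\|\nabla_L L_0^{-1}\phi_{\neq}\|_A\lesssim\|\phi_{\neq}\|_B$, together with
\[
\|\nabla_L L_0^{-1}\partial_z H_{1,\neq}\|_A+\|\nabla_L L_0^{-1}(\partial_v-t\partial_z)H_{2,\neq}\|_A\lesssim\|H_{1,\neq}\|_A+\|H_{2,\neq}\|_A.
\]
On nonzero modes the Fourier symbol of $L_0$ is $-(k^2+(\xi-tk)^2)\approx -k^2\langle t-\xi/k\rangle^2$, hence
\[
\bigl|\widetilde{\nabla_L L_0^{-1}\phi}(t,k,\xi)\bigr|\lesssim \frac{1}{|k|\langle t-\xi/k\rangle}\bigl|\widetilde\phi(t,k,\xi)\bigr|,\qquad k\neq 0.
\]
Comparing the definitions of $\|\cdot\|_A$ and $\|\cdot\|_B$, the factor $\langle t-\xi/k\rangle^2$ present in the $A$-weight and absent from the $B$-weight is exactly compensated by the square of $1/(|k|\langle t-\xi/k\rangle)$ (together with the $1/|k|^2$), and the claim follows pointwise in $(k,\xi)$; the same pointwise bound handles the CK integrand $|\dot A_k|A_k$ on both sides uniformly in $t$. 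For a derivative source $\partial_z H_{1,\neq}$ or $(\partial_v-t\partial_z)H_{2,\neq}$ the extra symbol factor $|k|$ or $|\xi-tk|$ upgrades the Green's function multiplier to $O(1)$, matching the $A$-weight directly.

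Applying the flat estimate to the decomposition above, together with the trivial symbol bound $\|(\partial_v-t\partial_z)g_{\neq}\|_B\lesssim\|g_{\neq}\|_A$ and the product estimates \eqref{eq:ABnorm-product-1}, \eqref{eq:ABnorm-product-2} of Lemma~\ref{lem:ABnorm-product}, one obtains
\begin{align*}
\|(V_1+1)(\partial_v-t\partial_z)H_{2,\neq}\|_B&\lesssim_{\delta}(1+\epsilon_1)\|H_{2,\neq}\|_A,\\
\|(2V_1+V_1^2)(\partial_v-t\partial_z)^2 h_{\neq}\|_B&\lesssim_{\delta}\epsilon_1\|(\partial_v-t\partial_z)h_{\neq}\|_A\lesssim\epsilon_1\|\nabla_L h_{\neq}\|_A,\\
\|V_2(\partial_v-t\partial_z)h_{\neq}\|_B&\lesssim_{\delta}\epsilon_1\|\nabla_L h_{\neq}\|_A.
\end{align*}
Combining these estimates yields
\[
\|\nabla_L h_{\neq}\|_A\lesssim_{\delta}\|H_{\neq}\|_B+\|H_{1,\neq}\|_A+\|H_{2,\neq}\|_A+C\epsilon_1\|\nabla_L h_{\neq}\|_A,
\]
and the final term is absorbed into the left-hand side as soon as $\epsilon_1\leq c_1(\delta)$ is chosen so that $C\epsilon_1\leq 1/2$. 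I expect the principal obstacle to be the first step: verifying that the sheared Green's function loss $1/(|k|\langle t-\xi/k\rangle)$ matches \emph{exactly} the algebraic gap built into the definitions of $\|\cdot\|_A$ and $\|\cdot\|_B$, and that this alignment carries over to the CK piece $|\dot A_k|A_k$. Once the flat multiplier bound is in hand, the perturbative step is a routine application of Lemma~\ref{lem:ABnorm-product}.
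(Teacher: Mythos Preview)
Your proposal is correct and follows essentially the same approach as the paper: isolate the flat operator $L_0=\partial_z^2+(\partial_v-t\partial_z)^2$, use the equivalence $\|L_0 h_{\neq}\|_B\approx\|\nabla_L h_{\neq}\|_A$ (which the paper states immediately after defining $\|\cdot\|_B$ and which is exactly your symbol computation), bound the $V_1$- and $V_2$-terms via Lemma~\ref{lem:ABnorm-product} to gain the factor $\epsilon_1$, and absorb. Your treatment of the source terms $\partial_z H_{1,\neq}$ and $(V_1+1)(\partial_v-t\partial_z)H_{2,\neq}$ is also the same as the paper's, just phrased in terms of the multiplier $\nabla_L L_0^{-1}$ rather than via $\|\partial_z H_{1,\neq}\|_B\le\|H_{1,\neq}\|_A$.
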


\begin{proof}
We rewrite the equation as
\begin{align*}
    \partial_z^2h_{\neq}+ (\partial_v-t\partial_z)^2h_{\neq} =&-(V_1^2+2V_1)(\partial_v-t\partial_z)^2 h_{\neq}{-}V_2(\partial_v-t\partial_z)h_{\neq}\\
    &+H_{\neq}+\partial_zH_{1,\neq}+(V_1+1)(\partial_v-t\partial_z)H_{2,\neq}.
\end{align*}
By the definitions of $\|\cdot\|_{A}$ and $\|\cdot\|_{B}$, we have
\begin{align*}
   & \|\partial_z^2h_{\neq}+ (\partial_v-t\partial_z)^2h_{\neq} \|_{B}\approx\|(\partial_z,\partial_v-t\partial_z)h_{\neq}\|_{A}.
\end{align*}
Then we infer that
\begin{align*}
     \|(\partial_z,\partial_v-t\partial_z)h_{\neq}\|_{A}\lesssim_{\delta} &\|(V_1^2+2V_1)(\partial_v-t\partial_z)^2 h_{\neq}\|_{B}+\|V_2(\partial_v-t\partial_z)h_{\neq}\|_{B}\\
     &+\|H_{\neq}\|_{B}+\|\partial_zH_{1,\neq}\|_{B} +\|(V_1+1)(\partial_v-t\partial_z)H_{2,\neq}\|_{B}.
\end{align*}
By Lemma \ref{lem:ABnorm-product}, we have
\begin{align*}
   & \|(V_1^2+2V_1)(\partial_v-t\partial_z)^2 h_{\neq}\|_{B} \lesssim_{\delta} \epsilon_1\|(\partial_v-t\partial_z)^2h_{\neq}\|_{B}\lesssim_{\delta} \epsilon_1 \|(\partial_v-t\partial_z)h_{\neq}\|_{A},\\
   &\|V_2(\partial_v-t\partial_z)h_{\neq}\|_{B}\lesssim_{\delta} \epsilon_1\|(\partial_v-t\partial_z)h_{\neq}\|_{A},\\
   &\|(V_1+1)(\partial_v-t\partial_z)H_{2,\neq}\|_{B} \lesssim_{\delta} (1+\epsilon_1)\|(\partial_v-t\partial_z)H_{2,\neq}\|_{B} \lesssim_{\delta} \|H_{2,\neq}\|_{A}.
\end{align*}
Then by taking $\epsilon_1>0$ sufficiently small(dependent on $\delta$), we obtain
\begin{align*}
   & \|(\partial_z,\partial_v-t\partial_z)h_{\neq}\|_{A}\lesssim_{\delta} \|H_{\neq}\|_{B}+\|\partial_zH_{1,\neq}\|_{B} +\|H_{2,\neq}\|_{A}.
\end{align*}
Thanks to $\|\partial_zH_{1,\neq}\|_{B} \leq \|H_{1,\neq}\|_{A}$, we finish the proof of this lemma.
\end{proof}

\subsection{Estimates for nonlinear term $q$}

Recall that $ q=2\partial_z^2\phi(1-f)+2(\partial_z^2\phi)^2 +2\big[(V_1+1)(\partial_v-t\partial_z)\partial_z\phi\big]^2.$ Let
\begin{align}\label{eq:Qj}
   &Q_1=-\partial_z^2\phi f,\quad Q_2=(\partial_z^2\phi)^2,\quad Q_3=\big[(V_1+1)(\partial_v-t\partial_z)\partial_z\phi\big]^2.
\end{align}

\begin{lemma}\label{lem:q0-nonzero}
 It holds that for $j\in\{1,2,3\}$,
   \begin{align}\label{est:q0}
   \begin{aligned}
      &\sum_{k\in \mathbb{Z}\setminus \{0\}}\int_{\mathbb{R}}A_k(t,\xi)^2\dfrac{\langle t\rangle^{2}(\langle t\rangle^2+|(k,\xi)|^2)}{k^2 \langle\xi/k\rangle^4}|\widetilde{Q_j}(t,k,\xi)|^2\mathrm{d}\xi \lesssim_{\delta}\epsilon_1^4,
   \end{aligned}
   \end{align}
   and
   \begin{align}\label{est:q1}
   \begin{aligned}
      &\sum_{k\in \mathbb{Z}\setminus \{0\}}\int_{1}^{t}\int_{\mathbb{R}}| (\dot{A}_{k} A_k)(s,\xi)|\dfrac{\langle s\rangle^{2}(\langle s\rangle^2+|(k,\xi)|^2)}{k^2\langle\xi/k\rangle^4}|\widetilde{Q_j}(s,k,\xi)|^2\mathrm{d}\xi\mathrm{d}s  \lesssim_{\delta}\epsilon_1^4.
   \end{aligned}
   \end{align}
\end{lemma}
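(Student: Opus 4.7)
The plan is to recognise that the two left-hand sides of \eqref{est:q0} and \eqref{est:q1} together reconstitute $\|Q_j\|_B^2$: the instantaneous integrand in \eqref{est:q0} matches the $A_k^2$-part of the $B$-norm weight $\frac{\langle t\rangle^2(\langle t\rangle^2+|(k,\xi)|^2)}{k^2\langle\xi/k\rangle^4}$, and \eqref{est:q1} matches its $|\dot A_kA_k|$-part. Hence it suffices to establish the single bound $\|Q_j\|_B\lesssim_\delta \epsilon_1^2$ for each $j\in\{1,2,3\}$, which is a bilinear (or quadratic) $B$-estimate in the spirit of Lemmas~\ref{lem:ABnorm-product}--\ref{lem:ABnorm-product-a}.

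First I would record how $\mathcal{E}_\Theta$ controls the building blocks appearing in $Q_1,Q_2,Q_3$. Since $\widetilde\Theta(t,k,\xi)=-\bigl(k^2+(\xi-tk)^2\bigr)\widetilde\phi(t,k,\xi)$, for $k\neq 0$ we have
\begin{align*}
|\widetilde{\partial_z^2\phi}(t,k,\xi)|\lesssim \langle t-\xi/k\rangle^{-2}|\widetilde\Theta(t,k,\xi)|,\qquad
\bigl|\widetilde{(\partial_v-t\partial_z)\partial_z\phi}(t,k,\xi)\bigr|\lesssim \langle t-\xi/k\rangle^{-1}|\widetilde\Theta(t,k,\xi)|.
\end{align*}
The central tool is the weight comparison already highlighted in the introduction to Section~2.1, namely, for $k,l\neq 0$ and $(\sigma,\rho)=(k-l,\xi-\eta)$,
\begin{align*}
A_k(t,\xi)\dfrac{\langle t\rangle(\langle t\rangle+|(k,\xi)|)}{|k|\langle\xi/k\rangle^2}\lesssim_\delta A_l(t,\eta)A_\sigma(t,\rho)\dfrac{|l|\langle t\rangle\langle t-\eta/l\rangle^2}{|\eta|+|l|\langle t\rangle}\bigl\{\langle l,\eta\rangle^{-2}+\langle\sigma,\rho\rangle^{-2}\bigr\},
\end{align*}
together with its $|\dot A_kA_k|^{1/2}$-variant (as in \eqref{est:ABnorm-prove2}). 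The right-hand factor $\frac{|l|\langle t\rangle\langle t-\eta/l\rangle^2}{|\eta|+|l|\langle t\rangle}$ is designed so that, once paired with $|\widetilde{\partial_z^2\phi}(l,\eta)|$ and simplified via the first Fourier identity above, it collapses to $\frac{|l|\langle t\rangle}{\sqrt{\eta^2+l^2\langle t\rangle^2}}|\widetilde\Theta(l,\eta)|$, the exact integrand of $\mathcal{E}_\Theta$.

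With this in hand, each $Q_j$ is handled via Fourier convolution, the weight comparison, and a Schur-test/Cauchy--Schwarz manipulation over the $\{\langle l,\eta\rangle^{-2}+\langle\sigma,\rho\rangle^{-2}\}$ prefactor. For $Q_1=-(\partial_z^2\phi)f$, the $\langle l,\eta\rangle^{-2}$ piece pairs $\partial_z^2\phi$ (controlled by $\mathcal{E}_\Theta^{1/2}\lesssim\epsilon_1$) with an $\ell^1_k L^1_\xi$-Cauchy--Schwarz sum of $f$ (controlled by $\mathcal{E}_f^{1/2}\lesssim\epsilon_1$), while $\langle\sigma,\rho\rangle^{-2}$ swaps the roles; each yields $\lesssim\epsilon_1^2$. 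For $Q_2=(\partial_z^2\phi)^2$, both factors feed through $\mathcal{E}_\Theta$, giving the same bound. For $Q_3=[(V_1+1)(\partial_v-t\partial_z)\partial_z\phi]^2$, expand $(V_1+1)^2=1+2V_1+V_1^2$ and peel the $V_1$, $V_1^2$ multipliers off using Lemma~\ref{lem:ABnorm-product}, leaving a copy of the $Q_2$-type argument applied to $[(\partial_v-t\partial_z)\partial_z\phi]^2$ via the second Fourier identity and the weight $\langle t-\xi/k\rangle^{-1}$. The time-integrated bound \eqref{est:q1} follows from the $|\dot A_kA_k|^{1/2}$-variant of the weight comparison, with $\mathcal{B}_\Theta,\mathcal{B}_f\lesssim\epsilon_1^2$ in place of the instantaneous energies.

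The main obstacle is the verification of the weight comparison inequality itself, which is only asserted schematically in Section~2.1. Mirroring the three-case split used in the proof of Lemma~\ref{lem:ABnorm-product-a}, I would separate the frequency space into $|(\sigma,\rho)|\leq|(l,\eta)|$, $|(l,\eta)|\leq|(\sigma,\rho)|$ with $\sigma\neq 0$, and $\sigma=0$. In each case, the $\langle t-\xi/k\rangle$ factor must be transferred to the $(l,\eta)$-side via \eqref{est:8.3JiaHao-mod3} at the cost of a $\min(\langle l,\eta\rangle,\langle\sigma,\rho\rangle)^{1/2}$-decay, which is then absorbed into the Gevrey weight using \eqref{est:8.3JiaHao-mod2}. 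Once both the $A$- and $\dot A$-versions of the comparison are in hand, the rest is routine bilinear bookkeeping of the kind already performed in Lemmas~\ref{lem:ABnorm-product}--\ref{lem:ABnorm-product-a}.
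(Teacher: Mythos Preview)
Your treatment of $Q_1$ and $Q_2$ is essentially the paper's: the key multiplier bound
\[
A_k(t,\xi)\dfrac{\langle t\rangle(\langle t\rangle+|(k,\xi)|)}{|k|\langle\xi/k\rangle^2}\lesssim_\delta A_l(t,\eta)A_\sigma(t,\rho)\dfrac{|l|\langle t\rangle\langle t-\eta/l\rangle^2}{|\eta|+|l|\langle t\rangle}\bigl\{\langle l,\eta\rangle^{-2}+\langle\sigma,\rho\rangle^{-2}\bigr\}
\]
together with its $|\dot A_kA_k|^{1/2}$-variant is exactly the paper's Step~1, and the reductions to $\mathcal E_\Theta,\mathcal B_\Theta,\mathcal E_f,\mathcal B_f$ are Steps~2--3.

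For $Q_3$ there is a genuine gap. The asymmetric multiplier above carries a factor $\langle t-\eta/l\rangle^2$ on \emph{one} side, which exactly cancels the $\langle t-\eta/l\rangle^{-2}$ decay of $\widetilde{\partial_z^2\phi}$. But $(\partial_v-t\partial_z)\partial_z\phi$ only has $\langle t-\xi/k\rangle^{-1}$ decay, so the ``$Q_2$-type argument'' you propose would leave a leftover $\langle t-\eta/l\rangle$ on the $(l,\eta)$ factor that is not controlled by $\mathcal E_\Theta$. What $Q_3$ actually requires is the \emph{symmetric} bound
\[
A_k(t,\xi)\dfrac{\langle t\rangle(\langle t\rangle+|(k,\xi)|)}{|k|\langle\xi/k\rangle^2}\lesssim_\delta A_l(t,\eta)A_\sigma(t,\rho)\dfrac{|l|\langle t\rangle\langle t-\eta/l\rangle}{|\eta|+|l|\langle t\rangle}\,\dfrac{|\sigma|\langle t\rangle\langle t-\rho/\sigma\rangle}{|\rho|+|\sigma|\langle t\rangle}\bigl\{\cdots\bigr\},
\]
with one power of $\langle t-\cdot\rangle$ on each factor. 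This is \emph{not} a corollary of the asymmetric bound: you cannot trade $\langle t-\eta/l\rangle$ for $\tfrac{|\sigma|\langle t\rangle\langle t-\rho/\sigma\rangle}{|\rho|+|\sigma|\langle t\rangle}$ in general (take $\rho=0$, $|\eta/l|\gg t$). The paper proves it separately (its Step~4) via a case split on whether $|t-\xi/k|$ and $|t-\rho/\sigma|$ are small compared to $|\xi/k|$ and $|\rho/\sigma|$; this is additional work you have not accounted for. Your reduction via Lemma~\ref{lem:ABnorm-product} (peeling off $V_1,V_1^2$) is fine, but it still lands you on $\bigl\|[(\partial_v-t\partial_z)\partial_z\phi]^2\bigr\|_B$, which needs the symmetric multiplier.
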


\begin{proof}
Let $(\sigma,\rho)=(k-l,\xi-\eta)$.\smallskip

\noindent\textbf{Step 1.} Let us claim that for $k,l\neq 0$,
\begin{align}\label{eq:q0-nonzero-mul1}
\begin{aligned}
     A_{k}(t,\xi)\dfrac{\langle t\rangle(\langle t\rangle+|(k,\xi)|)}{|k|\langle\xi/k\rangle^2} \lesssim_{\delta} A_l(t,\eta)&A_{\sigma}(t,\rho) \dfrac{|l|\langle t\rangle \langle t-\eta/l\rangle^2}{|\eta|+|l|\langle t\rangle} \\
     &\times \left\{\langle l,\eta\rangle^{-2}+ \langle \sigma,\rho\rangle^{-2}\right\},
\end{aligned}
\end{align}
and
\begin{align}\label{eq:q0-nonzero-mul11}
\begin{aligned}
    |(\dot{A}_{k} A_{k})(t,\xi)|^{1/2}&\dfrac{\langle t\rangle(\langle t\rangle+|(k,\xi)|)}{|k| \langle\xi/k\rangle^2} \lesssim_{\delta} \big[ |(\dot{A}_{l}/ A_{l})(t,\eta)|^{1/2} + |(\dot{A}_{\sigma} /A_{\sigma})(t,\rho)|^{1/2}\big]\\
     &\times A_l(t,\eta)A_{\sigma}(t,\rho) \dfrac{|l|\langle t\rangle \langle t-\eta/l\rangle^2}{|\eta|+|l|\langle t\rangle} \left\{\langle l,\eta\rangle^{-2}+ \langle \sigma,\rho\rangle^{-2}\right\}.
\end{aligned}
\end{align}
Thanks to
\begin{align*}
   &\langle t\rangle +|(k,\xi)|\lesssim |k|\langle \xi/k\rangle\langle t-\xi/k\rangle,\quad  \langle t\rangle+|\eta/l| \lesssim \langle t-\eta/l\rangle\langle \eta/l\rangle,
\end{align*}
we have
\begin{align}\label{eq1}
   & \dfrac{\langle t\rangle\big(\langle t\rangle +|(k,\xi)|\big)}{|k|\langle \xi/k\rangle^2\langle t-\xi/k\rangle} \dfrac{\langle t\rangle+|\eta/l|}{\langle t\rangle \langle t-\eta/l\rangle^2} \lesssim \dfrac{\langle \eta/l\rangle}{\langle \xi/k\rangle\langle t-\eta/l\rangle},
\end{align}
which implies
\begin{align*}
   & \dfrac{\langle t\rangle\big(\langle t\rangle +|(k,\xi)|\big)}{|k|\langle \xi/k\rangle^2\langle t-\xi/k\rangle} \dfrac{\langle t\rangle+|\eta/l|}{\langle t\rangle \langle t-\eta/l\rangle^2} \lesssim \dfrac{\langle \sigma,\rho\rangle^2}{\langle t-\eta/l\rangle}.
\end{align*}

For $\sigma\neq 0$, we have
\begin{align*}
   &\dfrac{\langle \eta/l\rangle\langle t-\rho/\sigma\rangle}{\langle \xi/k\rangle\langle t-\eta/l\rangle} \lesssim \dfrac{\langle \eta/l\rangle\big(\langle t\rangle+|\rho/\sigma|\big)}{\langle \xi/k\rangle\langle t-\eta/l\rangle} \lesssim \dfrac{\langle \eta/l\rangle\langle t\rangle}{\langle t-\eta/l\rangle} +\dfrac{\langle \eta/l\rangle\langle \rho/\sigma\rangle}{\langle \xi/k\rangle} \lesssim \langle l,\eta\rangle^2.
\end{align*}
Thus, we conclude that for $\sigma\neq 0$,
\begin{align}\label{eq:q0-nonzeromulprove-0}
    & \dfrac{\langle t\rangle\big(\langle t\rangle +|(k,\xi)|\big)}{|k|\langle \xi/k\rangle^2\langle t-\xi/k\rangle} \dfrac{\langle t\rangle+|\eta/l|}{\langle t\rangle \langle t-\eta/l\rangle^2} \lesssim_{\delta}\min\left( \dfrac{\mathrm{e}^{\delta \langle \sigma,\rho\rangle^{1/2}}}{\langle t-\eta/l\rangle} ,\dfrac{\mathrm{e}^{\delta \langle l,\eta\rangle^{1/2}}}{\langle t-\rho/\sigma\rangle}\right).
 \end{align}
 Then for $\sigma\neq0$, \eqref{eq:q0-nonzero-mul1} follows from \eqref{eq:q0-nonzeromulprove-0} and \eqref{est:8.3JiaHao-mod3} with $j=2$; \eqref{eq:q0-nonzero-mul11} follows from \eqref{eq:q0-nonzeromulprove-0}, \eqref{est:8.3JiaHao-mod3} with $j=2$ and \eqref{est:8.3JiaHao-mod2}.

 For $\sigma =0$, we have\begin{align*}
   & \dfrac{\langle \eta/l\rangle\big(\langle t\rangle+|\rho|\big)}{\langle \xi/k\rangle\langle t-\eta/l\rangle} \lesssim \dfrac{\langle \eta/l\rangle\langle t\rangle}{\langle t-\eta/l\rangle} +\dfrac{\langle \eta/l\rangle| \rho|}{\langle \xi/k\rangle} \lesssim \langle l,\eta\rangle^2.
\end{align*}Thus, we conclude that for $\sigma= 0$,
\begin{align}\label{eq:q0-nonzeromulprove-1}
    & \dfrac{\langle t\rangle\big(\langle t\rangle +|(k,\xi)|\big)}{|k|\langle \xi/k\rangle^2\langle t-\xi/k\rangle} \dfrac{\langle t\rangle+|\eta/l|}{\langle t\rangle \langle t-\eta/l\rangle^2} \lesssim_{\delta}\min\left( \dfrac{\mathrm{e}^{\delta \langle \sigma,\rho\rangle^{1/2}}}{\langle t-\eta/l\rangle} ,\dfrac{\mathrm{e}^{\delta \langle l,\eta\rangle^{1/2}}}{\langle t\rangle+|\rho|}\right).
 \end{align}
 Then for $\sigma=0$, \eqref{eq:q0-nonzero-mul1} follows from \eqref{eq:q0-nonzeromulprove-1} and \eqref{est:8.3JiaHao-mod3} with $j=2$; \eqref{eq:q0-nonzero-mul11} follows from \eqref{eq:q0-nonzeromulprove-1}, \eqref{est:8.3JiaHao-mod3} with $j=2$ and \eqref{est:8.3JiaHao-mod2}.
\if0 \begin{align*}
    & \dfrac{\langle t\rangle\big(\langle t\rangle +|(k,\xi)|\big)\big(\langle t-\xi/k\rangle +|\xi/k^2|\big)}{|k|\langle \xi/k\rangle^2\langle t-\xi/k\rangle} \dfrac{\langle t\rangle+|\eta/k|}{\langle t\rangle \langle t-\eta/k\rangle^2} \lesssim \dfrac{\langle \eta/k\rangle\big(\langle t-\xi/k\rangle +|\xi/k^2|\big)}{\langle \xi/k\rangle\langle t-\eta/k\rangle}.
 \end{align*}
  Notice that
 \begin{align*}
    & \dfrac{\langle \eta/k\rangle\big(\langle t-\xi/k\rangle +|\xi/k^2|\big)}{\langle \xi/k\rangle\langle t-\eta/k\rangle} \lesssim \dfrac{\langle \eta/k\rangle}{\langle
    \xi/k\rangle}\dfrac{\big(\langle t\rangle+|\xi/k| \big)}{\langle t-\eta/k\rangle\langle\eta/k\rangle}\langle\eta/k\rangle \lesssim \langle k,\eta\rangle^2,
 \end{align*}
 and
 \begin{align*}
     \dfrac{\langle \eta/k\rangle\big(\langle t-\xi/k\rangle +|\xi/k^2|\big)}{\langle \xi/k\rangle\langle t-\eta/k\rangle} \lesssim&\dfrac{\langle \eta/k\rangle\big(\langle t-\eta/k\rangle +|\eta/k^2|\big)}{\langle \xi/k\rangle\langle t-\eta/k\rangle} \dfrac{\langle t-\xi/k\rangle +|\xi/k^2|}{\langle t-\eta/k\rangle +|\eta/k^2|}\\
    \lesssim& \dfrac{\langle t-\eta/k\rangle +|\eta/k^2|}{\langle t-\eta/k\rangle}\langle \rho\rangle^2.
 \end{align*}
 Thus, we conclude that for $\sigma= 0$,
\begin{align}\label{eq:q0-nonzeromulprove-1}
\begin{aligned}
    &\dfrac{\langle t\rangle\big(\langle t\rangle +|(k,\xi)|\big)\big(\langle t-\xi/k\rangle +|\xi/k^2|\big)}{|k|\langle \xi/k\rangle^2\langle t-\xi/k\rangle} \dfrac{\langle t\rangle+|\eta/k|}{\langle t\rangle \langle t-\eta/k\rangle^2} \\ &\lesssim_{\delta}\min\left( \dfrac{\langle t-\eta/k\rangle +|\eta/k^2|}{\langle t-\eta/k\rangle}\mathrm{e}^{\delta \langle \rho\rangle^{1/2}} ,\mathrm{e}^{\delta \langle \eta\rangle^{1/2}}\right).
 \end{aligned}
 \end{align}
 Then for $\sigma=0$, \eqref{eq:q0-nonzero-mul1} follows from \eqref{eq:q0-nonzeromulprove-1} and \eqref{est:8.3JiaHao-mod3} with $j=1$;  \eqref{eq:q0-nonzero-mul11} follows from \eqref{eq:q0-nonzeromulprove-1}, \eqref{est:8.3JiaHao-mod3} with $j=1$) and \eqref{est:8.3JiaHao-mod2}.\fi
 \smallskip

\noindent\textbf{Step 2}. Estimate for $Q_1$.\smallskip

  By Lemma \ref{lem:8.1JiaHao}, the bootstrap assumption for $\Theta$ and $f$, $[\partial_z^2+(\partial_v-t\partial_z)^2]\phi=\Theta$, the estimate for $Q_1$ follows from \eqref{eq:q0-nonzero-mul1} and \eqref{eq:q0-nonzero-mul11}.\smallskip

\noindent\textbf{Step 3}. Estimate for $Q_2$.\smallskip

 Thanks to
  \begin{align*}
     & \dfrac{\langle t\rangle \langle t-\rho/\sigma\rangle^2}{\langle t\rangle +|\rho/\sigma|}\gtrsim \dfrac{\langle t\rangle \langle t-\rho/\sigma\rangle}{\langle t\rangle +|\rho/\sigma|}\gtrsim 1,
  \end{align*}
 we get by \eqref{eq:q0-nonzero-mul1} and \eqref{eq:q0-nonzero-mul11} that for $k,l,\sigma\neq 0$
  \begin{align*}
\begin{aligned}
     A_{k}(t,\xi)\dfrac{\langle t\rangle(\langle t\rangle+|(k,\xi)|)}{|k|\langle\xi/k\rangle^2} \lesssim_{\delta} A_l(t,\eta)&\dfrac{|l|\langle t\rangle \langle t-\eta/l\rangle^2}{|\eta|+|l|\langle t\rangle}A_{\sigma}(t,\rho) \dfrac{|\sigma|\langle t\rangle \langle t-\rho/\sigma\rangle^2}{|\rho|+|\sigma|\langle t\rangle} \\
     &\times \left\{\langle l,\eta\rangle^{-2}+ \langle \sigma,\rho\rangle^{-2}\right\},
\end{aligned}
\end{align*}
and
\begin{align*}
\begin{aligned}
    |(\dot{A}_{k} A_{k})(t,\xi)|^{1/2}&\dfrac{\langle t\rangle(\langle t\rangle+|(k,\xi)|)}{| k|\langle\xi/k\rangle^2} \lesssim_{\delta} \big[ |(\dot{A}_{l}/ A_{l})(t,\eta)|^{1/2} + |(\dot{A}_{\sigma} /A_{\sigma})(t,\rho)|^{1/2}\big]\\
     &\times A_l(t,\eta)\dfrac{|l|\langle t\rangle \langle t-\eta/l\rangle^2}{|\eta|+|l|\langle t\rangle}A_{\sigma}(t,\rho)  \dfrac{|\sigma|\langle t\rangle \langle t-\rho/\sigma\rangle^2}{|\rho|+|\sigma|\langle t\rangle} \left\{\langle l,\eta\rangle^{-2}+ \langle \sigma,\rho\rangle^{-2}\right\}.
\end{aligned}
\end{align*}
Thanks to  Lemma \ref{lem:8.1JiaHao} and the bootstrap assumption for $\Theta$, we deduce \eqref{est:q0} and \eqref{est:q1} for $Q_2$.\smallskip

\noindent\textbf{Step 4.} Estimate for  $Q_3$.\smallskip

We denote
\beno
Q_3^{(0)}= (\partial_v-t\partial_z)\partial_z\phi,\quad Q_3^{(1)}=V_1(\partial_v-t\partial_z)\partial_z\phi,\quad Q_3^{(2)}:=(V_1+1)(\partial_v-t\partial_z)\partial_z\phi.
\eeno
 We claim that for $j\in\{0,1,2\}$,
\begin{align}\label{eq:Q3-012}
  \begin{aligned}
     &\sum_{k\in\mathbb{Z}\setminus \{0\}} \int_{\mathbb{R}}A_{k}(t,\xi)^2\dfrac{|k|^2\langle t\rangle^2\langle t-\xi/k\rangle^2}{|\xi|^2+|k|^2\langle t\rangle^2}|\widetilde{Q^{(j)}_3}(t,k,\xi)|^2\mathrm{d}\xi\lesssim_{\delta}\epsilon_1^2,\\
     &\sum_{k\in\mathbb{Z}\setminus \{0\}} \int_{1}^{t}\int_{\mathbb{R}}|( \dot{A}_{k} A_{k})(s,\xi)| \dfrac{|k|^2\langle s\rangle^2\langle s-\xi/k\rangle^2}{|\xi|^2+|k|^2\langle s\rangle^2}|\widetilde{Q^{(j)}_3}(s,k,\xi)|^2\mathrm{d}\xi\mathrm{d}s\lesssim_{\delta}\epsilon_1^2.
  \end{aligned}
\end{align}
Due to $\Theta=[\partial_z^2+(\partial_v-t\partial_z)^2]\phi$, \eqref{eq:Q3-012} holds for $j=0$.
  In view of Lemma \ref{lem:8.3JiaHao} and the fact
  \begin{align*}
     & \dfrac{|k|\langle t\rangle \langle t-\xi/k\rangle}{|\xi|+|k|\langle t\rangle} \lesssim_{\delta} \dfrac{|k|\langle t\rangle \langle t-\eta/k\rangle}{|\eta|+|k|\langle t\rangle} \mathrm{e}^{\delta\min(\langle \xi-\eta\rangle,\langle k,\eta\rangle
     )^{1/2}},
  \end{align*}
  we have
  \begin{align}\label{est:q0-nonzero-prove1}
     A_{k}(t,\xi)\dfrac{|k|\langle t\rangle \langle t-\xi/k\rangle}{|\xi|+|k|\langle t\rangle} &\lesssim_{\delta}  A_{R}(t,\xi-\eta) \dfrac{|k|\langle t\rangle \langle t-\eta/k\rangle}{|\eta|+|k|\langle t\rangle}A_{k}(t,\eta)\big\{ \langle k,\eta\rangle^{-2} +\langle \xi-\eta\rangle^{-2}\big\},
  \end{align}
  and
  \begin{align}\label{est:q0-nonzero-prove2}
  \begin{aligned}
    |(\dot{A}_{k} A_{k})(t,\xi)|^{1/2}&\dfrac{|k|\langle t\rangle \langle t-\xi/k\rangle}{|\xi|+|k|\langle t\rangle}\lesssim_{\delta} \left[|(\dot{A}_{R}/ A_{R})(t,\xi-\eta)|^{1/2}+|(\dot{A}_{k} /A_{k})(t,\eta)|^{1/2}\right]\\& \times A_{R}(t,\xi-\eta)\cdot A_{k}(t,\eta)\dfrac{|k|\langle t\rangle \langle t-\eta/k\rangle}{|\eta|+|k|\langle t\rangle}\cdot\big\{\langle \xi-\eta\rangle^{-2}+\langle k,\eta\rangle^{-2}\big\}.
     \end{aligned}
  \end{align}
 Then by (ii) of Lemma \ref{lem:8.1JiaHao}, \eqref{est:q0-nonzero-prove1}, \eqref{est:q0-nonzero-prove2}, \eqref{eq:boot-V1} and \eqref{eq:Q3-012}($j=0$), we prove \eqref{eq:Q3-012} for $j=1$. Thanks to $Q_3^{(2)}=Q_3^{(0)}+Q_3^{(1)}$, \eqref{eq:Q3-012} holds for $j=2$.

  In view of Lemma \ref{lem:8.1JiaHao} again, thanks to $Q_3=Q_3^{(2)}\times Q_{3}^{(2)}$ and \eqref{eq:Q3-012},  it suffices to prove that for $k,l,\sigma\neq 0$,
  \begin{align}\label{est:q0-nonzero-prove3}
  \begin{aligned}
     A_{k}(t,\xi)\dfrac{\langle t\rangle(\langle t\rangle+|(k,\xi)|)}{|k|\langle \xi/k\rangle^2} \lesssim_{\delta} A_l(t,\eta)A_{\sigma}(t,\rho)& \dfrac{|l|\langle t\rangle \langle t-\eta/l\rangle}{|\eta|+|l|\langle t\rangle} \dfrac{|\sigma|\langle t\rangle \langle t-\rho/\sigma\rangle}{|\rho|+|\sigma|\langle t\rangle} \\
     &\times\left\{\langle l,\eta\rangle^{-2}+ \langle \sigma,\rho\rangle^{-2}\right\},
  \end{aligned}
  \end{align}
  and
  \begin{align}\label{est:q0-nonzero-prove4}
  \begin{aligned}
    |(\dot{A}_{k} &A_{k})(t,\xi)|^{1/2}\dfrac{\langle t\rangle(\langle t\rangle+|(k,\xi)|)}{|k|\langle \xi/k\rangle^2} \lesssim_{\delta} \big[ |(\dot{A}_{l} /A_{l})(t,\eta)|^{1/2}+  |(\dot{A}_{\sigma}/ A_{\sigma})(t,\rho)|^{1/2}\big]\\
     &\times A_l(t,\eta)A_{\sigma}(t,\rho)\dfrac{|l|\langle t\rangle \langle t-\eta/l\rangle}{|\eta|+|l|\langle t\rangle} \dfrac{|\sigma|\langle t\rangle \langle t-\rho/\sigma\rangle}{|\rho|+|\sigma|\langle t\rangle}\left\{\langle l,\eta\rangle^{-2}+ \langle \sigma,\rho\rangle^{-2}\right\},
  \end{aligned}
  \end{align}
 By symmetry, we only consider the case $|(\sigma,\rho)|\leq|(l,\eta)|$. By applying \eqref{est:8.3JiaHao-mod3} with $j=2$ and \eqref{est:8.3JiaHao-mod2}, it suffices to prove that
 \begin{align}\label{est:q0-nonzero-prove5}
    & \dfrac{\langle t\rangle(\langle t\rangle+|(k,\xi)|)}{|k|\langle \xi/k\rangle^2} \dfrac{|\eta/l|+\langle t\rangle}{\langle t\rangle \langle t-\eta/l\rangle} \dfrac{|\rho/\sigma|+\langle t\rangle}{\langle t\rangle \langle t-\rho/\sigma\rangle}\dfrac{\langle t-\eta/l\rangle}{\langle t-\xi/k\rangle} \lesssim \langle \sigma,\rho \rangle^3.
 \end{align}
First of all, we have
\begin{align}\label{est:q0-nonzero-1}
   & \dfrac{\langle t\rangle(\langle t\rangle+|(k,\xi)|)}{|k|\langle \xi/k\rangle^2} \dfrac{|\eta/l|+\langle t\rangle}{\langle t\rangle \langle t-\eta/l\rangle} \dfrac{|\rho/\sigma|+\langle t\rangle}{\langle t\rangle \langle t-\rho/\sigma\rangle}\dfrac{\langle t-\eta/l\rangle}{\langle t-\xi/k\rangle}\nonumber\\
   &= \dfrac{\langle t\rangle+|(k,\xi)|}{|k|\langle\xi/k\rangle^2} \dfrac{|\eta/l|+\langle t\rangle}{ \langle t-\xi/k\rangle\langle t-\rho/\sigma\rangle} \dfrac{|\rho/\sigma|+\langle t\rangle}{\langle t\rangle }\nonumber\\
  & \lesssim  \dfrac{\langle t\rangle+|(k,\xi)|}{|k|\langle \xi/k\rangle^2} \dfrac{|\eta/l|+\langle t\rangle}{ \langle t-\xi/k\rangle\langle t-\rho/\sigma\rangle}\langle \sigma,\rho\rangle\nonumber\\
  &\lesssim  \langle \sigma,\rho\rangle\left(\dfrac{|\eta/l|+\langle t\rangle}{\langle \xi/k\rangle \langle t-\xi/k\rangle}+ \dfrac{\langle t\rangle|\eta/l|}{|k|\langle \xi/k\rangle^2 \langle t-\xi/k\rangle} +\dfrac{\langle t\rangle^2}{|k|\langle\xi/k\rangle^2 \langle t-\xi/k\rangle\langle t-\rho/\sigma\rangle} \right).
\end{align}
We also have
\begin{align}
   & \dfrac{|\eta/l|+\langle t\rangle}{\langle \xi/k\rangle \langle t-\xi/k\rangle} \lesssim \dfrac{|\eta/l|}{\langle \xi/k\rangle}+\dfrac{\langle t\rangle}{\langle\xi/k\rangle\langle t-\xi/k\rangle}\lesssim\langle\sigma,\rho\rangle^2,\label{est:q0-nonzero-2}\\
   & \dfrac{\langle t\rangle|\eta/l|}{|k|\langle \xi/k\rangle^2 \langle t-\xi/k\rangle}= \dfrac{|\eta/l|}{|k|\langle\xi/k\rangle}\dfrac{\langle t\rangle}{\langle\xi/k\rangle \langle t-\xi/k\rangle} \lesssim \langle \sigma,\rho\rangle.\label{est:q0-nonzero-3}
\end{align}

For $\dfrac{\langle t\rangle^2}{|k|\langle \xi/k\rangle^2 \langle t-\xi/k\rangle\langle t-\rho/\sigma\rangle}$,
we estimate it by considering three cases: \smallskip

Case 1. $|t-\rho/\sigma|\leq |\rho/(10\sigma)|$. In this case, we have $t\approx \rho/\sigma$ and then
\begin{align*}
   &\dfrac{\langle t\rangle^2}{|k|\langle \xi/k\rangle^2 \langle t-\xi/k\rangle\langle t-\rho/\sigma\rangle}\lesssim \langle t\rangle^2 \lesssim \langle \rho/\sigma\rangle^2 \lesssim \langle \sigma,\rho\rangle^2.
\end{align*}

Case 2. $|t-\xi/k|\leq |\xi/(10k)|$. In this case, we have $t\approx \xi/k$ and then
\begin{align*}
   &\dfrac{\langle t\rangle^2}{|k|\langle \xi/k\rangle^2 \langle t-\xi/k\rangle\langle t-\rho/\sigma\rangle}\lesssim \dfrac{\langle t\rangle^2}{|k| \langle\xi/k\rangle^2} \lesssim 1.
\end{align*}

Case 3. $|t-\rho/\sigma|\geq |\rho/(10\sigma)|$ and $|t-\xi/k|\geq |\xi/(10k)|$. In this case, we have $|t-\rho/\sigma|\geq t/11$ and $|t-\xi/k|\geq t/11$ and then
\begin{align*}
   &\dfrac{\langle t\rangle^2}{|k|\langle \xi/k\rangle^2 \langle t-\xi/k\rangle\langle t-\rho/\sigma\rangle}\lesssim  \dfrac{\langle t\rangle^2}{\langle t-\xi/k\rangle\langle t-\rho/\sigma\rangle} \lesssim 1.
\end{align*}
Thus, we arrive at
\begin{align}\label{est:q0-nonzero-4}
   &\dfrac{\langle t\rangle^2}{|k|\langle \xi/k\rangle^2 \langle t-\xi/k\rangle\langle t-\rho/\sigma\rangle}\lesssim \langle \sigma,\rho\rangle^2.
\end{align}
Then \eqref{est:q0-nonzero-prove5} follows from \eqref{est:q0-nonzero-1}, \eqref{est:q0-nonzero-2}, \eqref{est:q0-nonzero-3} and \eqref{est:q0-nonzero-4}.
\end{proof}

\subsection{Proof of Proposition \ref{prop:Improved-P}}

{We first estimate the zero mode.}
Recall that
\begin{align*}
   &  \partial_tu+y\partial_xu+u\cdot\nabla u+\binom{u^2}{0}+(1+\tilde{a})\nabla \Pi=0,
\end{align*}
which implies
\begin{align*}
   & \overline{u\cdot\nabla u^2} +\partial_y\overline{\Pi}+ \overline{\tilde{a}\partial_y\Pi}=0.
\end{align*}
Thank to $u=(-\partial_y\varphi,\partial_x\varphi)$, we have
\begin{align*}
   u\cdot\nabla u^2&=-\partial_y\varphi\partial_x^2\varphi +\partial_x\varphi \partial_x\partial_y\varphi=-\partial_x(\partial_y\varphi\partial_x\varphi) +2\partial_x\varphi\partial_x\partial_y\varphi\\
   &=-\partial_x(\partial_y\varphi\partial_x\varphi) +\partial_y((\partial_x\varphi)^2),
\end{align*}
which gives
\begin{align}\label{eq:Pi-zero-xy}
   &\partial_y\overline{\big[(\partial_x\varphi)^2+\Pi\big]} +\overline{\tilde{a}\partial_y\Pi}=0.
\end{align}
In terms of $(t,z,v)$ variable, we may write
\begin{align}\label{eq:Pi-zero-vz}
   &\partial_v\overline{\big[(\partial_z\phi)^2+P\big]} +\overline{a(\partial_v-t\partial_z)P}=0.
\end{align}

\begin{lemma}\label{lem:parxphi2}
 It holds that
  \begin{align*}
     &\|\partial_v\overline{(\partial_z\phi)^2}\|_{A0}
     \lesssim_{\delta}\mathcal{E}_{\Theta}(t)+\mathcal{B}_{\Theta}(t)\lesssim_{\delta}\epsilon_1^2.
  \end{align*}
\end{lemma}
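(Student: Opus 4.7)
The plan is to reduce $\|\partial_v\overline{(\partial_z\phi)^2}\|_{A0}$ to a symmetric bilinear expression in $\widetilde{\Theta}$ and then close via the bootstrap control $\mathcal{E}_\Theta+\mathcal{B}_\Theta\lesssim\epsilon_1^2$. Denoting $F=\overline{(\partial_z\phi)^2}$, the zero-mode Fourier transform is (up to an absolute constant)
\begin{equation*}
\widetilde F(t,0,\xi)=\sum_{l\neq 0}\int_{\mathbb{R}}\widetilde{\partial_z\phi}(t,l,\eta)\,\widetilde{\partial_z\phi}(t,-l,\xi-\eta)\,\mathrm{d}\eta,
\end{equation*}
and from the defining identity \eqref{eq:Theta} we read off
\begin{equation*}
|l\,\widetilde\phi(t,l,\eta)|=\frac{|\widetilde{\Theta}(t,l,\eta)|}{|l|(1+(t-\eta/l)^2)}\lesssim \frac{|\widetilde{\Theta}(t,l,\eta)|}{|l|\langle t-\eta/l\rangle^2}\quad(l\neq 0),
\end{equation*}
with the analogous formula at $(-l,\xi-\eta)$ involving $\langle t+(\xi-\eta)/l\rangle^2$. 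Substituting into the definition of $\|\cdot\|_{A0}$ and absorbing the derivative $\partial_v$ as a factor of $|\xi|$, the lemma reduces to a pair of bilinear multiplier inequalities in $(l,\eta,\xi-\eta)$.

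Following the templates of Lemma \ref{lem:A0norm-product} and Step 1 of the proof of Lemma \ref{lem:q0-nonzero}, the core multiplier bounds to establish are
\begin{align*}
A_0(t,\xi)\,\frac{|\xi|\langle t\rangle(\langle t\rangle^2+|\xi|^2)}{\langle\xi\rangle^2\, l^2\langle t-\eta/l\rangle^2\langle t+(\xi-\eta)/l\rangle^2}&\lesssim_\delta M_l(t,\eta)\,M_{-l}(t,\xi-\eta)\\
&\quad\times\bigl\{\langle l,\eta\rangle^{-2}+\langle l,\xi-\eta\rangle^{-2}\bigr\},
\end{align*}
together with its CK analogue in which $|\dot{A}_0A_0|^{1/2}$ replaces $A_0$ on the left and $|\dot{A}_l/A_l|^{1/2}+|\dot{A}_{-l}/A_{-l}|^{1/2}$ appears as an extra factor on the right; here $M_l(t,\eta):=A_l(t,\eta)\,|l|\langle t\rangle/\sqrt{|\eta|^2+l^2\langle t\rangle^2}$ is precisely the $\mathcal{E}_\Theta$-weight. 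The Gevrey portion of both inequalities is handled by \eqref{est:8.3JiaHao-mod3} and \eqref{est:8.3JiaHao-mod2} (applied at $k=0$, $l\neq 0$, $\sigma=-l$), which absorb polynomial losses into a decaying factor of the form $\mathrm{e}^{-c\min(\langle l,\eta\rangle,\langle l,\xi-\eta\rangle)^{1/2}}$ and produce the $\langle\cdot\rangle^{-2}$ factors on the right.

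The remaining algebraic step exploits the elementary identity $(t-\eta/l)-(t+(\xi-\eta)/l)=-\xi/l$, which forces $\max\{\langle t-\eta/l\rangle,\langle t+(\xi-\eta)/l\rangle\}\gtrsim\langle\xi/l\rangle$ and thus yields a factor $(|\xi|^2+l^2)/l^2$ from one of the two resonance denominators on the left. The residual $\langle t\rangle^2$-loss is then absorbed by the product $M_lM_{-l}\approx 1$ in the doubly-resonant regime $|\eta|,|\xi-\eta|\lesssim |l|\langle t\rangle$, while the off-resonant regime is handled by the stronger decay $|l|\langle t\rangle/\sqrt{|\eta|^2+l^2\langle t\rangle^2}\ll 1$ on the right. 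The principal obstacle is the case analysis required to verify the multiplier inequality across the four regimes $|\xi|\lessgtr\langle t\rangle$ and $|(l,\eta)|\lessgtr|(l,\xi-\eta)|$, which mirrors almost verbatim Step 4 of the proof of Lemma \ref{lem:q0-nonzero}. Once these inequalities are in hand, Cauchy--Schwarz together with Lemma \ref{lem:8.1JiaHao} delivers $\|\partial_v\overline{(\partial_z\phi)^2}\|_{A0}^2\lesssim_\delta(\mathcal{E}_\Theta+\mathcal{B}_\Theta)^2$, and the bootstrap bound $\mathcal{E}_\Theta+\mathcal{B}_\Theta\lesssim\epsilon_1^2$ finishes the proof.
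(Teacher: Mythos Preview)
Your proposal is essentially correct and follows the paper's approach: reduce to a bilinear multiplier bound via Lemma~\ref{lem:8.1JiaHao}, handle the Gevrey factors through \eqref{est:8.3JiaHao-mod3} (with $j=2$, $k=0$) and \eqref{est:8.3JiaHao-mod2}, and close by an algebraic comparison of the residual polynomial weights. After unpacking your $M_l$, your target multiplier inequality is exactly the paper's \eqref{eq:parxphi2-prove1}--\eqref{eq:parxphi2-prove2}.

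The difference lies in the algebraic step. You propose to exploit the identity $(t-\eta/l)-(t+\rho/l)=-\xi/l$ and then run a four-regime case analysis mirroring Step~4 of Lemma~\ref{lem:q0-nonzero}. The paper instead dispatches the algebra in one shot: after applying \eqref{est:8.3JiaHao-mod3} (which already absorbs one factor of $\langle t-\eta/l\rangle$ on the larger-frequency side), it observes the crude bound
\[
\frac{\langle t\rangle(\langle t\rangle+|\xi|)|\xi|}{\langle\xi\rangle^2}\lesssim\langle t\rangle^2,
\]
and then verifies directly that
\[
\langle t\rangle^2\lesssim \frac{|l|^2\langle t\rangle\langle t+\rho/l\rangle^2}{|\rho|+|l|\langle t\rangle}\cdot\frac{|l|^2\langle t\rangle\langle t-\eta/l\rangle}{|\eta|+|l|\langle t\rangle}\cdot\langle\rho/l\rangle^2,
\]
with the polynomial loss $\langle\rho/l\rangle^2$ absorbed into the exponential decay from \eqref{est:8.3JiaHao-mod3}. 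This requires no case splitting beyond the single symmetry reduction $|(-l,\rho)|\le|(l,\eta)|$, and is considerably shorter than the route you sketch.
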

\begin{proof}
Let $\rho= \xi-\eta$. In view of Lemma \ref{lem:8.1JiaHao}, it suffices to prove that ($l\neq 0$)
  \begin{align}\label{eq:parxphi2-prove1}
  \begin{aligned}
     A_0(t,\xi)\dfrac{\langle t\rangle (\langle t\rangle^2 +|\xi|^2)|\xi|}{\langle \xi\rangle^2} \lesssim_{\delta}& A_{-l}(t,\rho)\dfrac{|l|^2\langle t\rangle\langle t+\rho/l\rangle^2}{|\rho|+|l|\langle t\rangle} A_{l}(t,\eta)\dfrac{|l|^2\langle t\rangle\langle t-\eta/l\rangle^2}{|\eta|+|l|\langle t\rangle}\\
     &\times \left\{\langle l,\rho\rangle^{-2}+ \langle l,\eta\rangle^{-2}\right\}.
  \end{aligned}
  \end{align}
  and
  \begin{align}\label{eq:parxphi2-prove2}
     \begin{aligned}
     |(\dot{A}_{0} &A_0)(t,\xi)|^{1/2}\dfrac{\langle t\rangle (\langle t\rangle^2 +|\xi|^2)|\xi|}{\langle \xi\rangle^2} \lesssim_{\delta} \big[|(\dot{A}_{-l}/A_{-l})(t,\rho)|^{1/2}+|(\dot{A}_{l}/A_{l})(t,\eta)|^{1/2} \big]\\
     &\times A_{-l}(t,\rho)\dfrac{|l|^2\langle t\rangle\langle t+\rho/l\rangle^2}{|\rho|+|l|\langle t\rangle} A_{l}(t,\eta)\dfrac{|l|^2\langle t\rangle\langle t-\eta/l\rangle^2}{|\eta|+|l|\langle t\rangle}\left\{\langle l,\rho\rangle^{-2}+ \langle l,\eta\rangle^{-2}\right\}.
  \end{aligned}
  \end{align}
  By the symmetry, we only need to consider the case of  $|(-l,\rho)|\leq |(l,\eta)|$. By Lemma \ref{lem:8.3JiaHao-mod1}, we have
  \begin{align}\label{eq:parxphi2-prove3}
     &A_{0}^{(2)}(t,\xi)\lesssim_{\delta}A_{-l}(t,\rho)A_l^{(2)}(t,\eta) \mathrm{e}^{-(\lambda(t)/20)\langle l,\rho\rangle^{1/2}},\nonumber\\
     \Leftrightarrow\quad& A_{0}(t,\xi)(\langle t\rangle+|\xi|) \lesssim_{\delta} A_{-l}(t,\rho)A_{l}(t,\eta)\langle t-\eta/l\rangle \mathrm{e}^{-(\lambda(t)/20)\langle l,\rho\rangle^{1/2}}.
  \end{align}
Notice that
\begin{align*}
   & \langle t\rangle \langle t+\rho/l\rangle^2\langle \rho/l\rangle^2 \gtrsim\left(\langle t\rangle +\langle \rho/l\rangle\right)^3\gtrsim \left(\langle t\rangle +\langle \rho/l\rangle\right)^2(|\rho| +|l|\langle t\rangle)/|l|,
\end{align*}
and $\langle t\rangle\langle t-\eta/l\rangle \gtrsim (|l|\langle t\rangle +\langle \eta\rangle)/|l|$. Then we infer that
\begin{align}\label{eq:parxphi2-prove4}
   &\dfrac{|l|^2\langle t\rangle
    \langle t+\rho/l\rangle^2}{|\rho|+|l|\langle t\rangle}\langle \rho/l\rangle^2 \gtrsim |l|(\langle t\rangle +\langle \rho/l\rangle)^2\gtrsim \langle t\rangle^2,\nonumber\\
   & \dfrac{|l|^2\langle t\rangle \langle t-\eta/l\rangle}{|\eta|+|l|\langle t\rangle}\gtrsim |l|\gtrsim 1,\nonumber\\
   \Rightarrow\quad& \dfrac{\langle t\rangle (\langle t\rangle +|\xi|)|\xi|}{\langle\xi\rangle^2} \lesssim \langle t\rangle^2 \lesssim \dfrac{|l|^2\langle t\rangle \langle t+\rho/l\rangle^2}{|\rho|+|l|\langle t\rangle}\dfrac{|l|^2\langle t\rangle\langle t-\eta/l\rangle}{|\eta|+|l|\langle t\rangle}\times \langle \rho/l\rangle^2.
\end{align}
Then the bounds \eqref{eq:parxphi2-prove1} and \eqref{eq:parxphi2-prove2} follow from \eqref{est:8.3JiaHao-mod2}  with $k=0$, \eqref{eq:parxphi2-prove3} and \eqref{eq:parxphi2-prove4}.
\end{proof}

\begin{lemma}\label{lem:Improved-P-zero}
It holds that
  \begin{align*}
     & \|\partial_v\overline{P}\|_{A0}\lesssim_{\delta} \epsilon_1^2.
  \end{align*}
\end{lemma}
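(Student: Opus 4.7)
The plan is to close the estimate by exploiting the identity \eqref{eq:Pi-zero-vz}, which rewrites the zero mode of $\partial_v P$ in terms of two controllable quantities. First I would use \eqref{eq:Pi-zero-vz} in the form
\begin{align*}
\partial_v\overline{P}=-\partial_v\overline{(\partial_z\phi)^2}-\overline{a(\partial_v-t\partial_z)P},
\end{align*}
so that it suffices to control the $A0$-norms of the two terms on the right-hand side. The first term is handled directly by Lemma \ref{lem:parxphi2}, which already gives $\|\partial_v\overline{(\partial_z\phi)^2}\|_{A0}\lesssim_\delta\epsilon_1^2$.

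For the second term I would apply Lemma \ref{lem:ABnorm-product-a} with $h=(\partial_v-t\partial_z)P$. Observing that $h_{\neq}=(\partial_v-t\partial_z)P_{\neq}$ and $\bar{h}=\partial_v\overline{P}$ (because $\overline{\partial_z P}=0$), the lemma yields
\begin{align*}
\|\overline{a(\partial_v-t\partial_z)P}\|_{A0}\lesssim_\delta\epsilon_1\bigl(\|(\partial_v-t\partial_z)P_{\neq}\|_A+\|\partial_v\overline{P}\|_{A0}\bigr).
\end{align*}
Next I would bound $\|(\partial_v-t\partial_z)P_{\neq}\|_A$ by $\mathcal{E}_P(t)^{1/2}+\mathcal{B}_P(t)^{1/2}$: comparing the multipliers in the definitions of $\|\cdot\|_A$ and $\mathcal{E}_P,\mathcal{B}_P$, one sees that the extra factor $(\xi-kt)^2=k^2(t-\xi/k)^2$ arising from the $(\partial_v-t\partial_z)$ derivative is exactly absorbed by the additional $k^2\langle t-\xi/k\rangle^2$ present in $\mathcal{E}_P,\mathcal{B}_P$ relative to the $A$-norm. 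The bootstrap hypothesis then gives $\|(\partial_v-t\partial_z)P_{\neq}\|_A\lesssim\epsilon_1$.

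Combining the above estimates,
\begin{align*}
\|\partial_v\overline{P}\|_{A0}\lesssim_\delta\epsilon_1^2+\epsilon_1\|\partial_v\overline{P}\|_{A0},
\end{align*}
and for $\epsilon_1$ sufficiently small (depending on $\delta$) I would absorb the last term into the left-hand side to conclude $\|\partial_v\overline{P}\|_{A0}\lesssim_\delta\epsilon_1^2$. The only delicate point is verifying the equivalence between the multiplier $|\xi|^2(\langle t\rangle^2+|\xi|^2)^2/\langle\xi\rangle^4$ appearing in the $A0$-norm applied to $\partial_v\overline{P}$ and the weight used in $\mathcal{E}_P$'s zero mode; this is a bookkeeping check, not a real obstacle. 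The truly substantive input — the product estimate Lemma \ref{lem:ABnorm-product-a} and the quadratic bound of Lemma \ref{lem:parxphi2} — has already been established.
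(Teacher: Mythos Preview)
Your proof is correct and follows essentially the same route as the paper: use \eqref{eq:Pi-zero-vz}, bound $\|\partial_v\overline{(\partial_z\phi)^2}\|_{A0}$ by Lemma~\ref{lem:parxphi2}, and apply Lemma~\ref{lem:ABnorm-product-a} with $h=(\partial_v-t\partial_z)P$ to get $\|\partial_v\overline{P}\|_{A0}\lesssim_\delta\epsilon_1^2+\epsilon_1(\|(\partial_v-t\partial_z)P_{\neq}\|_A+\|\partial_v\overline{P}\|_{A0})$. The only cosmetic difference is the endgame: you absorb $\epsilon_1\|\partial_v\overline{P}\|_{A0}$ into the left side, whereas the paper bounds both $\|(\partial_v-t\partial_z)P_{\neq}\|_A$ and $\|\partial_v\overline{P}\|_{A0}$ by $(\mathcal{E}_P+\mathcal{B}_P)^{1/2}\le\epsilon_1$ via the bootstrap (they observe $\|\partial_v\overline{P}\|_{A0}^2+\|(\partial_z,\partial_v-t\partial_z)P_{\neq}\|_A^2=\mathcal{E}_P+\mathcal{B}_P$); either closure is fine.
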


\begin{proof}
 Using \eqref{eq:Pi-zero-vz},  we get by \eqref{eq:ABnorm-product-a} and Lemma \ref{lem:parxphi2} that
 \begin{align*}
     \|\partial_v\overline{P}\|_{A0}&\lesssim \|\partial_v\overline{(\partial_z\phi)^2}\|_{A0}+ \|\overline{a(\partial_v-t\partial_z)P}\|_{A0} \\
     &\lesssim_{\delta} \epsilon_1^2+ \epsilon_1\big(\|(\partial_v-t\partial_z)P_{\neq}\|_{A}+ \|\overline{(\partial_v-t\partial_z)P}\|_{A0}\big)\\
     &\lesssim_{\delta} \epsilon_1^2+ \epsilon_1\big(\|(\partial_v-t\partial_z)P_{\neq}\|_{A}+ \|\partial_v\overline{P}\|_{A0}\big).
 \end{align*}
 This along with \eqref{eq:energy-EP} and \eqref{eq:energy-BP} gives
 \begin{align}\label{eq:p-zeromode}
    &\|\partial_v\overline{P}\|_{A0}\lesssim \epsilon_1^2+ \epsilon_1\big(\mathcal{E}_{P}(t)+\mathcal{B}_{P}(t)\big)^{1/2}.
 \end{align}
 Then the lemma follows from the bootstrap assumption.
 \end{proof}

\if0\subsubsection{Estimate for non-zero mode}

\begin{lemma}\label{lem:P1con-P2}
 If $\epsilon_1>0$ sufficiently small (dependent on $\delta$), then it holds that
  \begin{align*}
  \|(\partial_z,\partial_v-t\partial_z)P_{2,\neq}\|_{A} &\lesssim_{\delta} \epsilon_1^2+ \|(\partial_z,\partial_v-t\partial_z)P_{1,\neq}\|_{A} .
  \end{align*}
\end{lemma}\fi
\begin{proof}[Proof of Proposition \ref{prop:Improved-P}]
  Recall that  by \eqref{eq:P}
\begin{align*}
   \partial_z^2P+(V_1+1)^2(\partial_v-t\partial_z)^2P &+V_2(\partial_v-t\partial_z)P =-\partial_z(a\partial_zP)\\
   &-(V_1+1)(\partial_v-t\partial_z) \big(a(V_1+1)(\partial_v-t\partial_z)P\big)-q.
\end{align*}
Then by Lemma \ref{lem:ABnorm-product}, Lemma \ref{lem:ABnorm-product-a}, and Lemma \ref{lem:elliptic}, we have
\begin{align*}
   \|(\partial_z,\partial_v-t\partial_z)P_{\neq}\|_{A}&\lesssim_{\delta} \|[a\partial_zP]_{\neq}\|_{A}+\|[a(V_1+1)(\partial_v-t\partial_z)P]_{\neq}\|_{A}+\|q_{\neq}\|_{B}\\
   &\lesssim_{\delta} \epsilon_1\big(\|\partial_zP_{\neq}\|_{A}+\|(\partial_v-t\partial_z)P_{\neq}\|_{A} \big) +\epsilon_1\|\partial_v\bar{P}\|_{A0}+\|q_{\neq}\|_{B}.
\end{align*}
Then by taking $\epsilon_1>0$ sufficiently small, we get by Lemma \ref{lem:Improved-P-zero}  that
\begin{align*}
  \|(\partial_z,\partial_v-t\partial_z)P_{\neq}\|_{A} &\lesssim_{\delta} \epsilon_1\|\partial_v\bar{P}\|_{A0}+\|q_{\neq}\|_{B}
  \lesssim_{\delta} \epsilon_1^2+\|q_{\neq}\|_{B}.
\end{align*}
Then by Lemma \ref{lem:q0-nonzero} and the fact that $q=2\big(\partial^2_z\phi+Q_1+Q_2+Q_3)$, we have
\begin{align}\label{eq:P1-nonzero-prove1}
   & \|(\partial_z,\partial_v-t\partial_z)P_{\neq}\|_{A} \lesssim_{\delta} \epsilon_1^2+\|\partial_z^2\phi_{\neq}\|_{B}+\sum_{j=1,2,3}\|Q_{j,\neq}\|_{B}\lesssim_{\delta} \epsilon_1^2+\|\partial_z^2\phi_{\neq}\|_{B}.
\end{align}

\if0By the definition of norm $\|\cdot\|_{B}$  and Lemma , we have
\begin{align}\label{eq:P1-nonzero-prove2}
   & \sum_{j=1,2,3}\|Q_{j,\neq}\|_{B}\lesssim_{\delta} \epsilon_1^2.
\end{align}

If $|t-\xi/k|\leq |\xi/(10k)|$, we have $t\approx \xi/k$ and then
\begin{align*}
   & \big(\langle t\rangle +|(k,\xi)|\big)\big(|\xi/k|+\langle t\rangle\big) \lesssim     \big(\langle \xi/k\rangle +|(k,\xi)|\big)\langle \xi/k\rangle \lesssim |k|\langle t-\xi/k\rangle^2\langle \xi/k\rangle^2.
\end{align*}
While if $|t-\xi/k|\geq |\xi/(10k)|$, we have $\langle t-\xi/k\rangle\approx \langle t\rangle +|\xi/k|$ and then
\begin{align*}
      & \big(\langle t\rangle +|(k,\xi)|\big)\big(|\xi/k|+\langle t\rangle\big) \lesssim   \langle k, \xi\rangle\big(\langle t\rangle +|\xi/k|\big)\big(|\xi/k|+\langle t\rangle\big) \lesssim |k|\langle t-\xi/k\rangle^2\langle \xi/k\rangle^2.
\end{align*}
Thus, we conclude
\begin{align*}
   & \big(\langle t\rangle +|(k,\xi)|\big)\big(|\xi/k|+\langle t\rangle\big) \lesssim |k|\langle t-\xi/k\rangle^2\langle \xi/k\rangle^2,
\end{align*}
which gives\fi
By \eqref{eq1} with $(k,\xi)=(l,\eta)$, we have
\begin{align}\label{eq:P1-nonzero-prove3}
   & \dfrac{\langle t\rangle \big( \langle t\rangle+|(k,\xi)|\big)}{|k|\langle \xi/k\rangle^2\langle t-\xi/k\rangle^2}\lesssim \dfrac{\langle t\rangle }{|\xi/k|+\langle t\rangle}= \dfrac{|k|\langle t\rangle }{|\xi|+|k|\langle t\rangle}.
\end{align}
By the definition of norm $\|\cdot\|_{B}$ again and $(\partial_z^2+(\partial_v-t\partial_z)^2)\phi=\Theta$,  we get by \eqref{eq:P1-nonzero-prove3} that
\begin{align*}
  \|\partial_z^2\phi_{\neq}\|_{B}^2 =& \sum_{k\in \mathbb{Z}\setminus \{0\}}\int_{\mathbb{R}}A_k(t,\xi)^2\dfrac{k^2\langle t\rangle^{2}(\langle t\rangle^2+|(k,\xi)|^2)}{\langle\xi/k\rangle^4}|\widetilde{\phi}(t,k,\xi)|^2\mathrm{d}\xi \\
   &+\int_{1}^{t}\sum_{k\in \mathbb{Z}\setminus \{0\}}\int_{\mathbb{R}}|\dot{A}_k(s,\xi)|A_k(s,\xi) \dfrac{k^2\langle s\rangle^{2}(\langle s\rangle^2+|(k,\xi)|^2)}{\langle \xi/k\rangle^4}|\widetilde{\phi}(s,k,\xi)|^2\mathrm{d}\xi\mathrm{d}s\\
    =& \sum_{k\in \mathbb{Z}\setminus \{0\}}\int_{\mathbb{R}}A_k(t,\xi)^2\dfrac{\langle t\rangle^{2}(\langle t\rangle^2+|(k,\xi)|^2)}{k^2\langle\xi/k\rangle^4\langle t-\xi/k\rangle^4} |\widetilde{\Theta}(t,k,\xi)|^2\mathrm{d}\xi \\
   &+\int_{1}^{t}\sum_{k\in \mathbb{Z}\setminus \{0\}}\int_{\mathbb{R}}|\dot{A}_k(s,\xi)|A_k(s,\xi) \dfrac{\langle s\rangle^{2}(\langle s\rangle^2+|(k,\xi)|^2)}{k^2\langle \xi/k\rangle^4\langle s-\xi/k\rangle^4}|\widetilde{\Theta}(s,k,\xi)|^2\mathrm{d}\xi\mathrm{d}s\\
   \lesssim& \sum_{k\in \mathbb{Z}\setminus \{0\}}\int_{\mathbb{R}}A_k(t,\xi)^2\dfrac{|k|^2\langle t\rangle^2 }{|\xi|^2+|k|^2\langle t\rangle^2}|\widetilde{\Theta}(t,k,\xi)|^2\mathrm{d}\xi \\
   &+\int_{1}^{t}\sum_{k\in \mathbb{Z}\setminus \{0\}}\int_{\mathbb{R}}|\dot{A}_k(s,\xi)|A_k(s,\xi) \dfrac{|k|^2\langle s\rangle^2 }{|\xi|^2+|k|^2\langle s\rangle^2}|\widetilde{\Theta}(s,k,\xi)|^2\mathrm{d}\xi\mathrm{d}s\\
   =&\mathcal{E}_{\Theta}(t)+\mathcal{B}_{\Theta}(t).
\end{align*}
This along with \eqref{eq:P1-nonzero-prove1} and Lemma \ref{lem:Improved-P-zero} gives
\if0\begin{align*}
   &\|(\partial_z,\partial_v-t\partial_z)P_{\neq}\|_{A}\lesssim_{\delta} \epsilon_1^2 +\big(\mathcal{E}_{\Theta}(t)+\mathcal{B}_{\Theta}(t)\big)^{1/2}.
\end{align*}
This finishes the proof of the lemma.
\subsubsection{}. It follows from
Then by , we have Lemma \ref{lem:P1con-P2} and Lemma \ref{lem:Improved-P1-nonzero} that\fi
\begin{align*}
   &\|\partial_v\overline{P}\|_{A0}+ \|(\partial_z,\partial_v-t\partial_z)P_{\neq}\|_{A}
   \lesssim_{\delta} \epsilon_1^2 +\big(\mathcal{E}_{\Theta}(t)+\mathcal{B}_{\Theta}(t)\big)^{1/2}.
\end{align*}
Thanks to $\|\partial_v\overline{P}\|_{A0}^2+ \|(\partial_z,\partial_v-t\partial_z)P_{\neq}\|_{A}^2=\mathcal{E}_{P}(t)+\mathcal{ B}_{P}(t)$, we deduce by taking $\epsilon_1>0$ sufficiently small (depends on $\delta$) that
\begin{align*}
   & \mathcal{E}_P(t)+\mathcal{B}_{P}(t) \lesssim_{\delta}\epsilon_1^4  +\mathcal{E}_{\Theta}(t)+\mathcal{B}_{\Theta}(t).
\end{align*}

This finishes the proof of Proposition \ref{prop:Improved-P}.\end{proof}

 \section{Improved control of the vorticity}

 In this section, we prove an improved control for the vorticity part $f$ under the bootstrap assumptions in Proposition \ref{prop:Bootstrap}.

 \begin{proposition}\label{prop:Improved-f}
   With the definitions and assumptions in Proposition \ref{prop:Bootstrap}, we have
   \begin{align}\nonumber
      & \mathcal{E}_f(t)+\mathcal{B}_f(t)\lesssim_{\delta} \epsilon_1^3\quad \text{for any}\ t\in[1,T].
   \end{align}
 \end{proposition}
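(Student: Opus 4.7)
The plan is to perform a weighted Gevrey energy estimate on $\mathcal{E}_f(t)$, mirroring the framework of \cite{BM, IJ}, with the essentially new input being the treatment of the additional pressure/density nonlinearity $(1+V_1)\{P,a\}$ arising in equation \eqref{eq:f}. The dissipative term $-2\mathcal{B}_f(t)$ will appear in the usual way from the time derivative of the decreasing weight $A_k(t,\xi)$, and the goal is to show that each nonlinear contribution is bounded by a small multiple of $\mathcal{B}_f(t)$ plus a term of order $\epsilon_1^3$.

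Differentiating $\mathcal{E}_f$ in time and substituting \eqref{eq:f}, I obtain
\begin{align*}
\frac{d}{dt}\mathcal{E}_f(t)=-2\sum_k\int|\dot{A}_k|A_k|\widetilde{f}|^2\,d\xi+2\sum_k\int A_k^2\,\widetilde{f}\,\overline{\widetilde{N}}\,d\xi,
\end{align*}
with $N=-V_3\partial_vf+(1+V_1)\{\phi_{\neq},f\}-(1+V_1)\{P,a\}$. After integration on $[1,t]$ the first term yields $-2\mathcal{B}_f(t)$, and the contributions of the transport term $V_3\partial_v f$ and of the stream-function Poisson bracket are controlled exactly as in \cite{IJ}: the bilinear symbol inequalities of Lemma~\ref{lem:8.3JiaHao}, the coordinate estimates of Lemma~\ref{lem:4.2JiaHao}, and the elliptic relation $(\partial_z^2+(\partial_v-t\partial_z)^2)\phi=\Theta$ (up to $V_1, V_2$ corrections), combined with the bootstrap bounds on $V_1,\mathcal{H},\Theta$, produce a contribution of the form $\tfrac12\mathcal{B}_f(t)+C_\delta\epsilon_1^3$.

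The new step is the pressure/density bracket $(1+V_1)\{P,a\}$. Using Lemma~\ref{lem:ABnorm-product} the factor $V_1$ contributes only an $\epsilon_1$-small perturbation, and it suffices to bound
\begin{align*}
\mathcal{J}(t):=\int_1^t 2\sum_k\int A_k(s,\xi)^2\,\widetilde{f}(s,k,\xi)\,\overline{\widetilde{\{P,a\}}(s,k,\xi)}\,d\xi\,ds.
\end{align*}
Expanding the Poisson bracket on the Fourier side and distributing the weight $A_k(s,\xi)$ by the trilinear symbol inequalities of Lemma~\ref{lem:8.3JiaHao-mod1}, the scheme is parallel to the three cases carried out in the proof of Lemma~\ref{lem:ABnorm-product-a}: the weight $A_k$ acting on $f$ is split between the specific pressure multiplier entering $\mathcal{E}_P+\mathcal{B}_P$ and the density weight $A_k^*$ entering $\mathcal{E}_a+\mathcal{B}_a$. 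A CK-splitting of $A_k^2=(|\dot{A}_k/A_k|^{1/2}A_k)\cdot(|\dot{A}_k/A_k|^{-1/2}A_k)$ together with Cauchy--Schwarz in $s$ and $(k,\xi)$ then yields
\begin{align*}
|\mathcal{J}(t)|\lesssim_\delta(\mathcal{E}_f+\mathcal{B}_f)^{1/2}(t)\cdot(\mathcal{E}_a+\mathcal{B}_a)^{1/2}(t)\cdot(\mathcal{E}_P+\mathcal{B}_P)^{1/2}(t).
\end{align*}

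To conclude, Proposition~\ref{prop:Improved-P} gives $\mathcal{E}_P+\mathcal{B}_P\lesssim_\delta\epsilon_1^4+\mathcal{E}_\Theta+\mathcal{B}_\Theta\leq C\epsilon_1^2$, and the bootstrap assumption yields $\mathcal{E}_a+\mathcal{B}_a\leq\epsilon_1^2$; hence $|\mathcal{J}(t)|\lesssim_\delta \epsilon_1^2(\mathcal{E}_f+\mathcal{B}_f)^{1/2}$. Absorbing the half-dissipation factor on the left-hand side, invoking $\mathcal{E}_f(1)\lesssim\epsilon_1^3$, and collecting all estimates gives $\mathcal{E}_f(t)+\mathcal{B}_f(t)\lesssim_\delta\epsilon_1^3$, as claimed. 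The principal obstacle is the trilinear estimate for $\mathcal{J}(t)$: the weights $A_k^*$ and the particular pressure multiplier in $\mathcal{E}_P$ have been constructed precisely so that the Poisson bracket $\{P,a\}$, measured against the $A_k$-weight of $f$, closes against $(\mathcal{E}_a+\mathcal{B}_a)^{1/2}(\mathcal{E}_P+\mathcal{B}_P)^{1/2}$; verifying this tuning in all frequency regimes via the symbol inequalities is the technical heart of the argument.
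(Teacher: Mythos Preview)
Your strategy matches the paper's: differentiate $\mathcal{E}_f$, identify the nonlinear pieces, and show each contributes $\lesssim_\delta\epsilon_1^3$ (the paper bounds directly by $\epsilon_1^3$ via the bootstrap hypotheses rather than keeping $(\mathcal{E}_f+\mathcal{B}_f)^{1/2}$ explicit and absorbing, but this is cosmetic). Your identification of $(1+V_1)\{P,a\}$ as the new difficulty, and of the target trilinear bound closing against $(\mathcal{E}_a+\mathcal{B}_a)^{1/2}(\mathcal{E}_P+\mathcal{B}_P)^{1/2}$, is correct.

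The implementation of that trilinear estimate differs in detail from your sketch. The paper does not treat $\{P,a\}$ as a unit via Lemma~\ref{lem:ABnorm-product-a} (that lemma concerns the $\|\cdot\|_A$, $\|\cdot\|_{A0}$ norms of Section~3, not the $f$-energy); instead it splits into $\mathcal{N}_4=(1+V_1)\partial_vP\,\partial_za$ and $\mathcal{N}_5=-(1+V_1)\partial_zP\,\partial_va$ (Lemmas~\ref{lem:N4}, \ref{lem:N5}) and applies the weighted symbol bounds of Lemmas~\ref{lem:8.4JiaHao}, \ref{lem:8.5JiaHao} together with the new Lemma~\ref{lem:A*R0R1R2-1-R1}. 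These place CK factors $\sqrt{|A_k\dot A_k|}$ on $f$ and $\sqrt{|A_l^*\dot A_l^*|}$ on $a$ \emph{simultaneously}, with the residual weight on $(1+V_1)\partial_vP$ and $(1+V_1)\partial_zP$ matching exactly the multipliers of $\mathcal{E}_P$, $\mathcal{B}_P$ (via the auxiliary Lemmas~\ref{lem:par-v-P}, \ref{lem:par-z-P}). Your phrase ``CK-split $A_k^2$ then Cauchy--Schwarz'' is too crude taken literally, since it would leave a growing factor $|\dot A_k/A_k|^{-1/2}\sim\langle t\rangle^{(1+\sigma_0)/2}\langle k,\xi\rangle^{-1/4}$ on the bilinear piece; the actual mechanism is a genuine three-way distribution of the weight at the symbol level. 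Since you already flag this trilinear tuning as ``the technical heart,'' the plan is sound, but the tools you should point to are Lemmas~\ref{lem:8.4JiaHao}, \ref{lem:8.5JiaHao}, \ref{lem:A*R0R1R2-1-R1} rather than Lemmas~\ref{lem:8.3JiaHao-mod1} and \ref{lem:ABnorm-product-a}.
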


It is easy to find that
 \begin{align*}
    \dfrac{\mathrm{d}}{\mathrm{d}t}\mathcal{E}_f(t)=&\sum_{k\in\mathbb{Z}}
    \int_{\mathbb{R}}2\dot{A}_k(t,\xi)A_k(t,\xi)|\widetilde{f}(t,k,\xi)|^2\mathrm{d}\xi\\
    &+2\mathbf{Re}\sum_{k\in\mathbb{Z}}\int_{\mathbb{R}}A_k(t,\xi)^2\partial_t\widetilde{f} (t,k,\xi)\overline{\widetilde{f}(t,k,\xi)}\mathrm{d}\xi.
 \end{align*}
 Then we get by $\partial_tA_{k}\leq 0$ that for any $t\in[1,T]$, we have
 \begin{align}
    &\mathcal{E}_f(t)+2\mathcal{B}_{f}(t) =\mathcal{E}_{f}(1) +2\mathbf{Re}\int_{1}^{t}\sum_{k\in\mathbb{Z}}\int_{\mathbb{R}}A_k(s,\xi)^2 \partial_s\widetilde{f} (s,k,\xi)\overline{\widetilde{f}(s,k,\xi)}\mathrm{d}\xi\mathrm{d}s.
 \end{align}
Thus, it suffices to prove that
 \begin{align}\label{eq:nonlinear-vor}
   & \left|2\mathbf{Re}\int_{1}^{t}\sum_{k\in\mathbb{Z}}\int_{\mathbb{R}}A_k(s,\xi)^2\partial_s\widetilde{f} (s,k,\xi)\overline{\widetilde{f}(s,k,\xi)}\mathrm{d}\xi\mathrm{d}s\right| \lesssim_{\delta} \epsilon_1^3.
 \end{align}
 Recall that $ \partial_tf+V_3\partial_vf=(1+V_1)\{\mathbb{P}_{\neq 0}\phi,f\}-(1+V_1)\{P,a\}$, which gives
 \begin{align*}
    & \partial_sf=\mathcal{N}_1+\mathcal{N}_2+\mathcal{N}_3+\mathcal{N}_4+\mathcal{N}_5,
 \end{align*}
 where
 \begin{align}\label{eq:N1-N5}
 \begin{aligned}
    & \mathcal{N}_1=(V_1+1)\partial_v\mathbb{P}_{\neq 0}\phi
    \partial_zf,\quad \mathcal{N}_2=-(1+V_1)\partial_z\mathbb{P}_{\neq 0}\phi\partial_vf,\quad \mathcal{N}_3=-V_3\partial_vf,\\
    &\mathcal{N}_4=(V_1+1)\partial_vP\partial_za,\quad \mathcal{N}_5=-(1+V_1)\partial_zP\partial_va.
    \end{aligned}
 \end{align}
 Then the bound \eqref{eq:nonlinear-vor} follows from the following Lemma \ref{lem:N1}, Lemma \ref{lem:N2}, Lemma \ref{lem:N3}, Lemma \ref{lem:N4}
 and Lemma \ref{lem:N5}.\smallskip

 \subsection{Nonlinear estimate for $\mathcal{N}_1-\mathcal{N}_3$}

 The following two lemmas have been  proved in \cite{IJ}.
\begin{lemma}\label{lem:N1}
 It holds that for any $t\in[1,T]$, we have
  \begin{align*}
     & \left|2\mathbf{Re}\int_{1}^{t}\sum_{k\in\mathbb{Z}}\int_{\mathbb{R}}A_k(s,\xi)^2 \widetilde{\mathcal{N}_1}(s,k,\xi)\overline{\widetilde{f}(s,k,\xi)}\mathrm{d}\xi\mathrm{d}s\right| \lesssim_{\delta} \epsilon_1^3.
  \end{align*}
\end{lemma}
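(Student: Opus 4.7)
The plan is to follow the paraproduct scheme of \cite{BM, IJ} adapted to our weights. Writing
\[
\widetilde{\mathcal{N}_1}(s,k,\xi)=\sum_{l\in\mathbb Z}\int_{\mathbb R}\widetilde{(V_1+1)\partial_v\phi_{\neq}}(s,k-l,\xi-\eta)\,(i\eta)\widetilde{f}(s,l,\eta)\,\widehat{=}\,d\eta
\]
(up to the $il$ factor for $\partial_z$; we use $k$ for the output and $(l,\eta),(k-l,\xi-\eta)=(\sigma,\rho)$ for the two inputs), I would first absorb the factor $(V_1+1)$ using a version of Lemma \ref{lem:ABnorm-product}, together with the bound $\mathcal E_{V_1}+\mathcal B_{V_1}\lesssim \epsilon_1^2$ from Lemma \ref{lem:4.2JiaHao}, so that up to a loss of $O(\epsilon_1)$ we reduce to estimating the trilinear form with $\partial_v\phi_{\neq}$ in place of $(V_1+1)\partial_v\phi_{\neq}$.

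Next I would decompose the remaining bilinear convolution dyadically into three standard paraproduct pieces: a \emph{transport} term $T$ where the $\partial_v\phi_{\neq}$ factor is at much lower frequency than $f$; a \emph{reaction} term $R$ where $f$ is at lower frequency than the streamfunction; and a balanced \emph{remainder} $\mathcal R$. For each region one uses the appropriate weight-comparison inequality from Appendix B/C (analogous to \eqref{est:8.3JiaHao-mod3}) to transfer the outer weight $A_k(s,\xi)^2$ onto the product $A_l(s,\eta)\,A_\sigma(s,\rho)$, picking up a Gevrey decay $e^{-(\lambda(s)/20)\langle\min\rangle^{1/2}}$ from the smaller input, which (combined with the symbol ratios) gives an $\ell^2$-summable kernel. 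Cauchy--Schwarz and Plancherel then yield bounds in terms of $\mathcal E_f^{1/2}$ paired with $\mathcal E_\Theta^{1/2}$ (using $[\partial_z^2+(\partial_v-t\partial_z)^2]\phi=\Theta$ and the bound $\|\partial_z\phi_{\neq}\|_{\text{appropriate}}\lesssim\mathcal E_\Theta^{1/2}+\mathcal B_\Theta^{1/2}$ available via \eqref{eq:P1-nonzero-prove3}-type arithmetic).

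The main obstacle, as always in the nonlinear inviscid-damping framework, is the \emph{reaction} term $R$, because precisely when $t\approx\eta/l$ the weight $A_k(s,\xi)$ cannot be controlled by $A_l(s,\eta)A_\sigma(s,\rho)$ pointwise; one loses a power that behaves like $\langle t-\eta/l\rangle$. The remedy is the ``ghost'' correction built into $A_k$: on the resonant set one has
\[
A_k(s,\xi)^2\lesssim_\delta |\dot A_l(s,\eta)|A_l(s,\eta)\cdot A_\sigma(s,\rho)^2 \cdot(\text{Gevrey decay in }\langle\sigma,\rho\rangle),
\]
so the lost factor is compensated by the CK (Cauchy--Kowalevski)-type term $|\dot A_l|A_l$, whose time integral is exactly $\mathcal B_f$. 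Applying Cauchy--Schwarz one factor of $|\dot A_l|^{1/2}A_l^{1/2}|\widetilde f|$ goes into $\mathcal B_f^{1/2}$, the remaining $A_l^{1/2}|\widetilde f|^{1/2}$ is bounded by $\mathcal E_f^{1/2}$, and the $\phi$ factor is estimated by $\mathcal E_\Theta^{1/2}$ via the weight comparison \eqref{est:q0-nonzero-prove1}-type computation.

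Putting these pieces together one obtains
\[
\left|2\mathbf{Re}\int_1^t\sum_k\int_{\mathbb R}A_k^2\,\widetilde{\mathcal N_1}\overline{\widetilde f}\,d\xi\,ds\right|\lesssim_\delta (\mathcal E_f+\mathcal B_f)^{1/2}(\mathcal E_\Theta+\mathcal B_\Theta)^{1/2}(\mathcal E_{V_1}+\mathcal B_{V_1}+1)^{1/2}\cdot(\mathcal E_f+\mathcal B_f)^{1/2},
\]
and invoking the bootstrap assumptions $\mathcal E_f+\mathcal B_f\le\epsilon_1^2$, $\mathcal E_\Theta+\mathcal B_\Theta\le\epsilon_1^2$, and $\mathcal E_{V_1}\lesssim\epsilon_1^2\ll 1$, this yields the desired $\lesssim_\delta\epsilon_1^3$ bound. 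The only substantive task beyond routine paraproduct bookkeeping is verifying the weight inequalities in the reaction region, which is exactly the content already referenced from \cite{IJ} and recorded in the weight-comparison lemmas of Appendix B.
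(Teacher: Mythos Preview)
The paper defers this lemma to \cite{IJ}, and the argument there (mirrored in the paper's own proof of the structurally identical Lemma~\ref{lem:N6bound} for $\mathcal{N}_6$) hinges on a step your sketch omits: symmetrization. Since $H_2=(V_1+1)\partial_v\mathbb{P}_{\neq 0}\phi$ is real-valued, one rewrites
\[
2\mathbf{Re}\sum_{k,l}\int A_k(s,\xi)^2\,\widetilde{H_2}(s,\sigma,\rho)\,il\,\widetilde f(s,l,\eta)\,\overline{\widetilde f(s,k,\xi)}
=\sum_{k,l}\int\big[lA_k(s,\xi)^2-kA_l(s,\eta)^2\big]\widetilde{H_2}\,\widetilde f\,\overline{\widetilde f},
\]
and it is the \emph{difference} $|lA_k^2-kA_l^2|$ that Lemma~\ref{lem:8.4JiaHao}(i) controls on $R_1$. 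The individual term $|l|A_k^2$ is \emph{not} controlled there: the remark after \eqref{est:8.4JiaHao-1} explicitly restricts the unsymmetrized bound to $R_0$. Your plan treats the transport region ($|(\sigma,\rho)|\ll|(l,\eta)|$, hence $|l|\approx|k|$ large) as routine via weight transfer alone, but the $H_2$-weight of Lemma~\ref{lem:4.5JiaHao} paired with $\sqrt{|\dot A_k/A_k||\dot A_l/A_l|}\sim\langle k,\xi\rangle^{1/2}\langle t\rangle^{-1-\sigma_0}$ does not absorb the full factor $|l|$ without the commutator gain. This is the genuine gap.

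Two smaller points: your Fourier formula carries $(i\eta)$ rather than $(il)$ (you flag this, but the distinction matters --- it is precisely the $il$ that permits the $(k,\xi)\leftrightarrow(l,\eta)$ symmetrization); and the paper does not first ``absorb $(V_1+1)$'' and then work with $\partial_v\phi_{\neq}$, but rather works directly with $H_2$ and invokes Lemma~\ref{lem:4.5JiaHao}, which already incorporates the $V_1$ factor. Your treatment of the reaction region via the CK term $|\dot A_l|A_l$ is correct in spirit and matches Lemma~\ref{lem:8.4JiaHao}(ii).
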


\begin{lemma}\label{lem:N2}
It holds that for any $t\in[1,T]$, we have
  \begin{align*}
     &\left|2\mathbf{Re}\int_{1}^{t}\sum_{k\in\mathbb{Z}}\int_{\mathbb{R}}A_k(s,\xi)^2 \widetilde{\mathcal{N}_2}(s,k,\xi)\overline{\widetilde{f}(s,k,\xi)}\mathrm{d}\xi\mathrm{d}s\right| \lesssim_{\delta} \epsilon_1^3.
  \end{align*}
\end{lemma}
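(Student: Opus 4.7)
The plan follows the Fourier-side paraproduct framework of \cite{IJ}, since $\mathcal{N}_2=-(1+V_1)\partial_z\phi_{\neq}\partial_v f$ is a transport-type nonlinearity that does not involve the new pair $(a,P)$, so the structure is identical to the corresponding term in the classical 2-D Euler analysis. First I would split $\mathcal{N}_2=\mathcal{N}_2^{(0)}+\mathcal{N}_2^{(1)}$ with $\mathcal{N}_2^{(0)}=-\partial_z\phi_{\neq}\partial_v f$ and $\mathcal{N}_2^{(1)}=-V_1\partial_z\phi_{\neq}\partial_v f$. Passing to Fourier variables and using the elliptic identity $\widetilde\phi(s,l,\eta)=-\widetilde\Theta(s,l,\eta)/(l^2+(\eta-sl)^2)$ for $l\neq 0$, the contribution of $\mathcal{N}_2^{(0)}$ becomes a trilinear form
\[
\mathcal{I}_0=-\int_1^t\sum_{k,l}\int A_k(s,\xi)^2\,\frac{l\,(\xi-\eta)}{l^2+(\eta-sl)^2}\,\widetilde\Theta(s,l,\eta)\,\widetilde f(s,k-l,\xi-\eta)\,\overline{\widetilde f(s,k,\xi)}\,d\eta\,d\xi\,ds,
\]
in which the elliptic denominator produces the crucial gain $|\xi-\eta|/(|l|\langle s-\eta/l\rangle^2)$ on the $\Theta$-factor.

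Next I would perform the standard paraproduct splitting based on which factor carries the higher frequency. In the Transport regime $|(l,\eta)|\le|(k-l,\xi-\eta)|$, the multiplier estimate \eqref{est:8.3JiaHao-mod3} (with $j=2$) gives $A_k(s,\xi)\langle s-\xi/k\rangle\lesssim A_l(s,\eta)\langle s-\eta/l\rangle A_{k-l}(s,\xi-\eta)\,e^{-(\lambda(s)/20)\langle l,\eta\rangle^{1/2}}$; combining this with the elliptic gain, Cauchy-Schwarz in $(k,\xi)$, and Lemma \ref{lem:8.1JiaHao}, the integral is bounded by $\mathcal{E}_\Theta(t)^{1/2}\mathcal{B}_f(t)^{1/2}\mathcal{E}_f(t)^{1/2}\lesssim_\delta\epsilon_1^3$ after integration in $s$. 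In the symmetric Reaction regime $|(l,\eta)|\ge|(k-l,\xi-\eta)|$, the same lemma (with the roles of $(l,\eta)$ and $(k-l,\xi-\eta)$ swapped) yields the required CK-factor $|\dot A_l/A_l|^{1/2}+|\dot A_k/A_k|^{1/2}$, which produces an additional $\mathcal{B}_\Theta^{1/2}$ factor and reproduces the same trilinear bound. The contribution of $\mathcal{N}_2^{(1)}$ is reduced to the previous case by absorbing $V_1$ via Lemma \ref{lem:ABnorm-product} together with the bootstrap bound \eqref{eq:boot-V1} from Lemma \ref{lem:4.2JiaHao}, which costs an extra factor $\epsilon_1$ but preserves the trilinear structure.

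The principal obstacle is the Reaction regime near the resonant time $s\approx\eta/l$: precisely where the elliptic multiplier $1/\langle s-\eta/l\rangle^2$ concentrates, the weight $A_k$ undergoes its characteristic imbalanced growth. The saving mechanism is the design of $A_k$ from \cite{BM, IJ}: the CK derivative $\dot A_k$ is calibrated so that $|\dot A_k/A_k|^{1/2}$ supplies exactly the missing $\langle s-\eta/l\rangle^{-1}$ decay, and the potential loss is absorbed into the dissipation integrals $\mathcal{B}_f$ and $\mathcal{B}_\Theta$ rather than the energies $\mathcal{E}_f$ and $\mathcal{E}_\Theta$. Since $\mathcal{N}_2$ does not couple to the new density-pressure nonlinearity $\{P,a\}$, this analysis parallels verbatim the corresponding lemma in \cite{IJ}, as the authors explicitly note just before stating Lemma \ref{lem:N1} and Lemma \ref{lem:N2}.
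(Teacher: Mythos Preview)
The paper gives no proof of this lemma; it simply records that it was proved in \cite{IJ}. The analogous density term $\mathcal{N}_7=-(1+V_1)\partial_z\phi_{\neq}\partial_v a$, however, is treated in full in Lemma~\ref{lem:N7bound}, and that argument exhibits the mechanism \cite{IJ} uses for $\mathcal{N}_2$ as well. Compared with that, your sketch is missing the decisive first step: \emph{symmetrization}. Because $\mathcal{N}_2$ contains $\partial_v f$ and is tested against $f$, one swaps $(k,\xi)\leftrightarrow(l,\eta)$ and uses that $H_4=(1+V_1)\partial_z\phi_{\neq}$ is real-valued to replace the raw multiplier $\eta\,A_k(s,\xi)^2$ by the commutator $\eta\,A_k(s,\xi)^2-\xi\,A_l(s,\eta)^2$; the decomposition is then by the four sets $R_0,R_1,R_2,R_3$, and one invokes Lemma~\ref{lem:8.5JiaHao} together with Lemma~\ref{lem:4.7JiaHao} for $H_4$.

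Without that symmetrization your Transport regime does not close. In your convention $(l,\eta)$ carries $\Theta$ and $(k-l,\xi-\eta)$ carries the differentiated $f$, so in the Transport case $|\xi-\eta|\sim|\xi|$ is the top frequency. The product bound \eqref{est:8.3JiaHao-mod3} you invoke controls $A_k(s,\xi)$ by $A_l(s,\eta)A_{k-l}(s,\xi-\eta)$ with Gevrey gain in the \emph{low} frequency $(l,\eta)$, but gives nothing to absorb the extra factor $|\xi-\eta|$ landing on the high-frequency $f$; the elliptic gain $\langle s-\eta/l\rangle^{-2}$ sits on the $\Theta$ factor and does not help here. This is precisely the derivative loss that the commutator in Lemma~\ref{lem:8.5JiaHao}(i) is designed to cure on $R_1$. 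Your treatment of the $V_1$ correction and of the Reaction regime is in the right spirit, but the Transport estimate as written would fail.
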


For $\mathcal{N}_3$,  we need to give a proof  since $V_3$ has no compact support.

\begin{lemma} \label{lem:N3}
 It holds that for any $t\in[1,T]$, we have
  \begin{align}\label{eq:N3bound}
     &\left|2\mathbf{Re}\int_{1}^{t}\sum_{k\in\mathbb{Z}}\int_{\mathbb{R}}A_k(s,\xi)^2 \widetilde{\mathcal{N}_3}(s,k,\xi)\overline{\widetilde{f}(s,k,\xi)}\mathrm{d}\xi\mathrm{d}s\right| \lesssim_{\delta} \epsilon_1^3.
  \end{align}
\end{lemma}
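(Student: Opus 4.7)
The plan is to handle the absence of compact support for $V_3$ by Fourier-symmetrizing the integrand and reducing everything to weighted bounds on $\partial_v V_3$, for which Lemma \ref{lem:4.2JiaHao} provides the estimate \eqref{eq:boot-V3}. Writing $\widetilde{\mathcal{N}_3}(s,k,\xi)=-\int_{\mathbb{R}} \widetilde{V_3}(s,\xi-\eta)\,i\eta\,\widetilde{f}(s,k,\eta)\,d\eta$ and using that $V_3$ is real (so $\overline{\widetilde{V_3}(\alpha)}=\widetilde{V_3}(-\alpha)$), I swap $(\xi,\eta)$ in the complex conjugate to symmetrize:
\begin{align*}
2\mathbf{Re}\!\int_1^t\!\sum_k\!\int A_k^2(s,\xi)\widetilde{\mathcal{N}_3}\overline{\widetilde{f}}\,d\xi\,ds
=-\!\int_1^t\!\sum_k\!\iint\!\bigl[A_k^2(\xi)i\eta-A_k^2(\eta)i\xi\bigr]\widetilde{V_3}(\xi-\eta)\widetilde{f}(k,\eta)\overline{\widetilde{f}(k,\xi)}\,d\eta\,d\xi\,ds.
\end{align*}

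The algebraic identity $A_k^2(\xi)i\eta-A_k^2(\eta)i\xi=-A_k^2(\xi)\,i(\xi-\eta)+\bigl[A_k^2(\xi)-A_k^2(\eta)\bigr]i\xi$ splits this into two pieces $I_{\rm trans}+I_{\rm comm}$ after absorbing $i(\xi-\eta)\widetilde{V_3}(\xi-\eta)=\widetilde{\partial_v V_3}(\xi-\eta)$ in the first piece. For $I_{\rm trans}$, the derivative has already been transferred to $V_3$, and I would apply the bilinear weight inequality $A_k(\xi)\lesssim_\delta A_{NR}(\xi-\eta)A_k(\eta)\{\langle\xi-\eta\rangle^{-2}+\langle k,\eta\rangle^{-2}\}$ from Lemma \ref{lem:8.3JiaHao}, together with the paraproduct trick of splitting one factor of $f$ with $A_k$ and the other with $(\dot{A}_kA_k)^{1/2}$, so that Cauchy--Schwarz yields
$|I_{\rm trans}|\lesssim \int_1^t \bigl\|A_{NR}\langle s\rangle\widetilde{\partial_v V_3}\bigr\|_{L^2}\,\mathcal{B}_f(s)^{1/2}\,\mathcal{E}_f(s)^{1/2}\,\langle s\rangle^{-1}ds\lesssim \epsilon_1\cdot\epsilon_1\cdot\epsilon_1=\epsilon_1^3,$
where the $\langle s\rangle^{-1}$ comes from the $\langle t\rangle^2$ factor in \eqref{eq:boot-V3}.

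For $I_{\rm comm}$, since the commutator vanishes on $\xi=\eta$ I replace $\widetilde{V_3}(\xi-\eta)=\widetilde{\partial_v V_3}(\xi-\eta)/[i(\xi-\eta)]$, producing a factor $|\xi|/|\xi-\eta|$. The commutator $|A_k^2(\xi)-A_k^2(\eta)|\lesssim (A_k(\xi)+A_k(\eta))|A_k(\xi)-A_k(\eta)|$ is estimated using Lipschitz-type continuity of $A_k$ in $\xi$ (cf.\ the derivation of \eqref{est:8.3JiaHao-1}--\eqref{est:8.3JiaHao-mod3}): the difference gains a factor essentially $|\xi-\eta|A_R(\xi-\eta)$ with the usual polynomial loss $\langle\xi-\eta\rangle^{-2}+\langle k,\eta\rangle^{-2}$, which cancels the $1/|\xi-\eta|$. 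I would then split $|\xi|\leq|\xi-\eta|+|\eta|$ into a \emph{transport} subcase ($|\xi-\eta|\gtrsim|\eta|$, where $|\xi|$ is absorbed directly) and a \emph{reaction} subcase ($|\eta|\gtrsim|\xi-\eta|$, where the extra $|\eta|$ is consumed by an extra $A_k$-derivative on $\widetilde{f}(k,\eta)$, exactly as in the $\mathcal{N}_1,\mathcal{N}_2$ arguments of Lemma \ref{lem:N1}--\ref{lem:N2}). Both reduce to the same trilinear bound $\int_1^t\|A_{NR}\langle s\rangle\widetilde{\partial_v V_3}\|_{L^2}\,\mathcal{E}_f^{1/2}(\mathcal{E}_f+\mathcal{B}_f)^{1/2}\langle s\rangle^{-1}ds\lesssim_\delta \epsilon_1^3$.

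The main obstacle is precisely the commutator piece $I_{\rm comm}$: in the reaction regime $|\xi|\approx|\eta|\gg|\xi-\eta|$, the factor $|\xi|/|\xi-\eta|$ is large, and one must verify that the Lipschitz gain on the weight $A_k^2$ combined with the $\langle\xi-\eta\rangle^{-2}$ polynomial loss from Lemma \ref{lem:8.3JiaHao} is strong enough to beat it. This is the step that in the Ionescu--Jia proof was avoided because compactly supported $V_3$ allowed direct $A_R$-weighted $L^2$ control of $\widetilde{V_3}$ itself; here the commutator structure is essential because only $\partial_v V_3$ has the good weighted estimate \eqref{eq:boot-V3}.
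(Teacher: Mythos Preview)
Your symmetrization step is correct and is exactly how the paper begins. The gap is in what comes next: the further split
\[
\eta A_k^2(\xi)-\xi A_k^2(\eta)=-\rho\,A_k^2(\xi)+\xi\bigl[A_k^2(\xi)-A_k^2(\eta)\bigr]
\]
destroys the structure needed to close the estimate, and your treatment of $I_{\rm trans}$ does not work as written. After absorbing $i\rho\,\widetilde{V_3}$ into $\widetilde{\partial_vV_3}$, the remaining weight on the two copies of $f$ is $A_k^2(\xi)$. Lemma~\ref{lem:8.3JiaHao} then gives at best $A_k(\xi)A_R(\rho)A_k(\eta)$ (note: $A_R$, not $A_{NR}$, and only $A_{NR}$-weighted bounds on $\partial_vV_3$ are available in \eqref{eq:boot-V3}). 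Even ignoring this, you are left with $A_k$-weights on both $f$'s, i.e.\ $\mathcal{E}_f$, not $(\dot A_kA_k)^{1/2}$-weights; there is no legitimate way to ``split one factor of $f$ with $(\dot A_kA_k)^{1/2}$'' because $A_k\ge (\,|\dot A_k|A_k)^{1/2}$ in general. With $\|A_{NR}\widetilde{\partial_vV_3}\|_{L^2_\rho}\lesssim\epsilon_1\langle s\rangle^{-1}$ and $\mathcal E_f\lesssim\epsilon_1^2$, the time integral $\int_1^t\epsilon_1^3\langle s\rangle^{-1}ds$ diverges logarithmically. Your expression $\int_1^t\|\cdots\|\,\mathcal B_f^{1/2}\mathcal E_f^{1/2}\langle s\rangle^{-1}ds$ is not a valid Cauchy--Schwarz output since $\mathcal B_f$ already contains the time integral.

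The paper does \emph{not} decompose further: it keeps $|\eta A_k^2(\xi)-\xi A_k^2(\eta)|$ intact and invokes Lemma~\ref{lem:8.6JiaHao} (on the regions $\Sigma_0,\Sigma_1,\Sigma_2,\Sigma_3$) and Lemma~\ref{lem:rholeq1} (for $|\rho|\le1$). These lemmas are precisely engineered to output
\[
|\eta A_k^2(\xi)-\xi A_k^2(\eta)|\lesssim_\delta\bigl[\langle\rho\rangle\langle t\rangle+\langle\rho\rangle^{1/4}\langle t\rangle^{7/4}\bigr]\sqrt{|\dot A_kA_k|(\xi)}\sqrt{|\dot A_kA_k|(\eta)}\,A_{NR}(\rho)\,e^{-c\langle\rho\rangle^{1/2}},
\]
i.e.\ $(\dot A_kA_k)^{1/2}$ on \emph{both} $f$-factors (so both go into $\mathcal B_f$, which is $L^2_s$) and a prefactor that, after writing $|\rho|\,|\widetilde{V_3}(\rho)|=|\widetilde{\partial_vV_3}(\rho)|$, becomes exactly $[\langle s\rangle+\langle\rho\rangle^{-3/4}\langle s\rangle^{7/4}]\,|\rho|\,A_{NR}(\rho)$ --- matching the weighted norm of $\partial_vV_3$ in \eqref{eq:boot-V3}. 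The commutator-type gain you describe for $I_{\rm comm}$ is real, but it is the combination of that gain with the conversion to $(\dot A_kA_k)^{1/2}$-weights that Lemma~\ref{lem:8.6JiaHao} packages; invoking only Lemma~\ref{lem:8.3JiaHao} is not enough.
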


\begin{proof}
We write
\begin{align*}
   & \left|2\mathbf{Re}\int_{1}^{t}\sum_{k\in\mathbb{Z}}\int_{\mathbb{R}}A_k(s,\xi)^2 \widetilde{N_3}(s,k,\xi) \overline{\widetilde{f}(s,k,\xi)}\mathrm{d}\xi\mathrm{d}s\right|\\
   &=\left|2\mathbf{Re}\sum_{k\in\mathbb{Z}}\int_{1}^{t}\int_{\mathbb{R}^2}A_k(s,\xi)^2 \widetilde{V_3}(s,\xi-\eta)i\eta\widetilde{f}(s,k,\eta)\overline{\widetilde{f}(s,k,\xi)}\mathrm{d} \xi\mathrm{d}\eta\mathrm{d}s\right|\\
   &=\left| \sum_{k\in\mathbb{Z}}\int_{1}^{t}\int_{\mathbb{R}^2}\big[\eta
    A_k(s,\xi)^2-\xi A_k(s,\eta)^2\big] \widetilde{V_3}(s,\xi-\eta)\widetilde{f}(s,k,\eta)\overline{\widetilde{f}(s,k,\xi)}\mathrm{d} \xi\mathrm{d}\eta\mathrm{d}s\right|.
\end{align*}

For $i\in\{0,1,2,3\}$, we define
\begin{align}\label{eq:set-Sigma}
  \Sigma_{i}=\{\big((k,\xi),(l,\eta)\big)\in R_i:k=l\},
\end{align}
where $R_i$ are defined in \eqref{eq:R0}-\eqref{eq:R3}. Then we denote that for $i\in\{0,1,2,3\}$
\begin{align*}
   \mathcal{W}_i=\int_{1}^{t}\sum_{k\in\mathbb{Z}}\int_{\mathbb{R}^2} & \mathbf{1}_{|\rho|\geq 1} \mathbf{1}_{\Sigma_i}\big((k,\xi),(k,\eta)\big)|\eta A_k(s,\xi)^2-\xi A_k(s,\eta)^2| | \widetilde{V_3}(s,\xi-\eta)|\\
   &\times|\widetilde{f}(s,k,\eta)||\widetilde{f}(s,k,\xi)|\mathrm{d} \xi\mathrm{d}\eta\mathrm{d}s,
\end{align*}
and
\begin{align}\label{eq:W4}
\begin{aligned}
   \mathcal{W}_4=\int_{1}^{t}\sum_{k\in\mathbb{Z}}\int_{\mathbb{R}^2} & \mathbf{1}_{|\rho|\leq 1}|\eta A_k(s,\xi)^2-\xi A_k(s,\eta)^2| | \widetilde{V_3}(s,\xi-\eta)|\\
   &\times|\widetilde{f}(s,k,\eta)||\widetilde{f}(s,k,\xi)|\mathrm{d} \xi\mathrm{d}\eta\mathrm{d}s.
\end{aligned}
\end{align}

For $\mathcal{W}_i$, $i\in\{0,1\}$, we get by (i) of Lemma \ref{lem:8.6JiaHao} and \eqref{eq:boot-V3} that($\rho=\xi-\eta$)
\begin{align*}
  \mathcal{W}_i\lesssim_{\delta} &\int_{1}^{t}\sum_{k\in\mathbb{Z}}\int_{\mathbb{R}^2} \mathbf{1}_{|\rho|\geq 1}\big[\langle \rho\rangle\langle s\rangle+\langle \rho\rangle^{1/4}\langle s\rangle^{7/4}\big]A_{NR}(s,\rho)\mathrm{e}^{-(\delta_0/200)\langle \rho\rangle^{1/2}}|\widetilde{V_3}(s,\rho)|\\
  &\times\sqrt{|(A_k\dot{A}_k)(s,\eta)|}|\widetilde{f}(s,k,\eta)| \sqrt{|(A_k\dot{A}_k)(s,\xi)|}|\widetilde{f}(s,k,\xi)|\mathrm{d}\xi\mathrm{d}\eta\mathrm{d}s\\
  \lesssim_{\delta}&\left\|\big[\langle s\rangle+\langle \rho\rangle^{-3/4}\langle s\rangle^{7/4}\big]|\rho|A_{NR}(s,\rho)\mathrm{e}^{-(\delta_0/300)\langle \rho\rangle^{1/2}}\widetilde{V_3}(s,\rho)\right\|_{L^\infty_s L^2_{\rho}}\\
  &\times\left\|\sqrt{|(A_k\dot{A}_k)(s,\eta)|}\widetilde{f}(s,k,\eta)\right\|_{L^2_sL^2_{k,\eta}} \cdot\left\|\sqrt{|(A_k\dot{A}_k)(s,\xi)|}\widetilde{f}(s,k,\xi)\right\|_{L^2_sL^2_{k,\xi}}\\
  \lesssim_{\delta}&\epsilon_1^3.
\end{align*}
For $\mathcal{W}_2$,, we get by (ii) of Lemma \ref{lem:8.6JiaHao}  and \eqref{eq:boot-V3}  that
\begin{align*}
  {\mathcal{W}_2}\lesssim_{\delta} &\int_{1}^{t}\sum_{k\in\mathbb{Z}}\int_{\mathbb{R}^2} \mathbf{1}_{|\rho|\geq 1}\big[\langle \rho\rangle\langle s\rangle+\langle \rho\rangle^{1/4}\langle s\rangle^{7/4}\big]\sqrt{|(A_{NR}\dot{A}_{NR})(s,\rho)|}|\widetilde{V_3}(s,\rho)|\\
  &\times A_k(s,\eta)\mathrm{e}^{-(\delta_0/200)\langle k,\eta\rangle^{1/2}}|\widetilde{f}(s,k,\eta)| \sqrt{|(A_k\dot{A}_k)(s,\xi)|}|\widetilde{f}(s,k,\xi)|\mathrm{d}\xi\mathrm{d}\eta\mathrm{d}s\\
  \lesssim_{\delta}&\left\|\big[\langle s\rangle+\langle \rho\rangle^{-3/4}\langle s\rangle^{7/4}\big]|\rho|\sqrt{|(A_{NR}\dot{A}_{NR})(s,\rho)|} \widetilde{V_3}(s,\rho)\right\|_{L^2_s L^2_{\rho}}\\
  &\times\left\|A_k(s,\eta)\mathrm{e}^{-(\delta_0/300)\langle k,\eta\rangle^{1/2}}\widetilde{f}(s,k,\eta)\right\|_{L^\infty_sL^2_{k,\eta}} \cdot\left\|\sqrt{|(A_k\dot{A}_k)(s,\xi)|}\widetilde{f}(s,k,\xi)\right\|_{L^2_sL^2_{k,\xi}}\\
  \lesssim_{\delta}&\epsilon_1^3.
\end{align*}
The case of $i=3$ is the same as the case of $i=2$ by the symmetry.

For $\mathcal{W}_4$, we get by Lemma \ref{lem:rholeq1}, $\mathbf{1}_{|\rho|\leq1} A_{NR}(s,\rho)\gtrsim_{\delta} {\mathbf{1}_{|\rho|\leq1}}$ and \eqref{eq:boot-V3}   that
\begin{align*}
  \mathcal{W}_4\lesssim_{\delta} &\int_{1}^{t}\sum_{k\in\mathbb{Z}}\int_{\mathbb{R}^2} \mathbf{1}_{|\rho|\leq 1}\langle s\rangle^{7/4}|\rho|\widetilde{V}_3(s,\rho)\sqrt{|(A_k\dot{A}_k)(s,\eta)|} |\widetilde{f}(s,k,\eta)| \\
  &\times \sqrt{|(A_k\dot{A}_k)(s,\xi)|}|\widetilde{f}(s,k,\xi)|\mathrm{d}\xi\mathrm{d}\eta\mathrm{d}s\\
  \lesssim_{\delta}&\left\|\big[\langle s\rangle+\langle \rho\rangle^{-3/4}\langle s\rangle^{7/4}\big]|\rho|A_{NR}(s,\rho)\mathrm{e}^{-(\delta_0/300)\langle \rho\rangle^{1/2}}\widetilde{V_3}(s,\rho)\right\|_{L^\infty_s L^2_{\rho}}\\
  &\times\left\|\sqrt{|(A_k\dot{A}_k)(s,\eta)|}\widetilde{f}(s,k,\eta)\right\|_{L^2_sL^2_{k,\eta}} \cdot\left\|\sqrt{|(A_k\dot{A}_k)(s,\xi)|}\widetilde{f}(s,k,\xi)\right\|_{L^2_sL^2_{k,\xi}}\\
  \lesssim_{\delta}&\epsilon_1^3,
\end{align*}

 Thus, the desired bound \eqref{eq:N3bound} follows.
  \end{proof}

\subsection{Nonlinear estimate for $\mathcal{N}_4$}

\begin{lemma}\label{lem:N4}
It holds that for any $t\in[1,T]$, we have
  \begin{align}\label{est:N4bound}
     &\left|2\mathbf{Re}\int_{1}^{t}\sum_{k\in\mathbb{Z}}\int_{\mathbb{R}}A_k(s,\xi)^2\widetilde{\mathcal{N}_4}(s,k,\xi)\overline{\widetilde{f}(s,k,\xi)}
     \mathrm{d}\xi\mathrm{d}s\right| \lesssim_{\delta} \epsilon_1^3 .
  \end{align}
\end{lemma}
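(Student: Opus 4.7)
\medskip

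The plan is to bound the pairing in \eqref{est:N4bound} by the product $\sqrt{\mathcal{E}_P+\mathcal{B}_P}\cdot\sqrt{\mathcal{E}_a+\mathcal{B}_a}\cdot\sqrt{\mathcal{B}_f}\lesssim_\delta\epsilon_1^3$. We first split $(V_1+1)\partial_v P\partial_z a=\partial_v P\partial_z a+V_1\partial_v P\partial_z a$, and observe that by a variant of Lemma~\ref{lem:ABnorm-product} adapted to the weight $A_k^*$ (using that multiplication by $V_1$ costs only a factor $\epsilon_1$), it suffices to bound the main term
\[
   J(t):=\int_1^t\sum_{k\in\mathbb{Z}}\int_{\mathbb{R}}A_k(s,\xi)^2\,\widetilde{\partial_v P\cdot\partial_z a}(s,k,\xi)\,\overline{\widetilde f(s,k,\xi)}\,d\xi\,ds.
\]
After expanding in Fourier with $(k,\xi)=(l,\eta)+(k-l,\xi-\eta)$, so that the multipliers $i\eta$ and $i(k-l)$ are attached to $\widetilde P(l,\eta)$ and $\widetilde a(k-l,\xi-\eta)$ respectively, we split $A_k^2=A_k\cdot A_k$ and apply Cauchy--Schwarz in $(s,k,\xi)$, placing the factor $\sqrt{|\dot A_k|A_k(s,\xi)}$ on $\widetilde f$ (which contributes $\sqrt{\mathcal{B}_f(t)}\lesssim\epsilon_1^{3/2}$) and the remaining weight on the bilinear product $\eta(k-l)\widetilde P\,\widetilde a$.

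The core analytic input is a bilinear weighted multiplier inequality of the form
\[
   A_k(s,\xi)\,|\eta|\,|k-l|\lesssim_\delta W_P(s,l,\eta)\,A^*_{k-l}(s,\xi-\eta)\,\bigl\{\langle l,\eta\rangle^{-2}+\langle k-l,\xi-\eta\rangle^{-2}\bigr\},
\]
where
\[
   W_P(s,l,\eta):=A_l(s,\eta)\,\frac{|l|\,\langle s\rangle\,(\langle s\rangle^2+|(l,\eta)|^2)^{1/2}\,\langle s-\eta/l\rangle^2}{\langle\eta/l\rangle^2}
\]
is the square-root of the integrand in $\mathcal{E}_P$, together with its $\sqrt{|\dot A_k|A_k}$-counterpart obtained via \eqref{est:8.3JiaHao-mod2}. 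Once this is established in each of the standard paraproduct regions (low-high, high-low, and $l=0$), the frequency convolution is handled by Lemma~\ref{lem:8.1JiaHao} and one arrives at $|J(t)|\lesssim_\delta\epsilon_1^3$; the contribution of the $V_1$-piece is then identical, up to an additional $\epsilon_1$.

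The main difficulty is proving the multiplier inequality above, particularly in the range where both $(l,\eta)$ and $(k-l,\xi-\eta)$ are large and the resonant times $s=\xi/k,\, s=\eta/l$ differ, since the $\mathcal{E}_P$-weight carries the strong resonant enhancement $\langle s-\eta/l\rangle^4$ whereas the $\mathcal{E}_f$-weight contains only $A_k$. This will require a case analysis paralleling Step 1 of Lemma~\ref{lem:ABnorm-product-a}: the extra symbol $|\eta|$ from $\partial_v P$ is absorbed by the $|l|\langle s-\eta/l\rangle^2$ factor in $W_P$ (using $|\eta|\lesssim |l|\langle s\rangle+|l|\langle s-\eta/l\rangle\langle\eta/l\rangle$), while $|k-l|$ from $\partial_z a$ is absorbed either by the $(1+(k-l)^2/\langle s\rangle^2)^{1/2}$ enhancement built into $A^*_{k-l}$, or, in the high-high region, by the polynomial decay $\langle k-l,\xi-\eta\rangle^{-2}$ produced by Lemma~\ref{lem:8.3JiaHao-mod1}. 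This absorption mechanism is precisely the purpose for which the weights $A_k^*$ and $W_P$ were designed in Section~2.1, so once the case-by-case computation is carried out, the rest of the argument follows a standard template.
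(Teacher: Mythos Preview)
Your overall target—the product $\sqrt{\mathcal{B}_f}\cdot\sqrt{\mathcal{E}_P+\mathcal{B}_P}\cdot\sqrt{\mathcal{E}_a+\mathcal{B}_a}$—is what the paper obtains, but your route differs and, as written, has a mismatch. After the Cauchy--Schwarz step the residual weight on the bilinear piece is $A_k^{3/2}/|\dot A_k|^{1/2}$, not $A_k$; the multiplier inequality you display is for $A_k$, so it does not control what the Cauchy--Schwarz step actually leaves. More substantively, once $\widetilde f$ has been separated you still need, region by region, to decide whether the remaining $\sqrt{|\dot A|}$ (i.e.\ the $L^2_s$ slot) goes on $P$ or on $a$; this choice cannot be uniform—when $a$ sits at high frequency close to $(k,\xi)$ the resonant gain must come from $\sqrt{|\dot A^*_{l}A^*_{l}|}$ on $a$, while in the transposed region it must come from $P$. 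Your proposal does not explain how the single displayed bilinear bound accommodates both. Two smaller points: $\sqrt{\mathcal{B}_f}\lesssim\epsilon_1^{3/2}$ is circular (this is part of what the section is proving), though $\sqrt{\mathcal{B}_f}\le\epsilon_1$ from the bootstrap suffices; and your $W_P$ is only defined for $l\neq 0$, so the zero-mode contribution of $\partial_vP$ is not covered.

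The paper avoids all of this by \emph{not} applying Cauchy--Schwarz first. It sets $H_6=(V_1+1)\partial_vP$ and establishes weighted bounds on $H_6$ directly (Lemma~\ref{lem:par-v-P}, which is the precise form of your ``variant of Lemma~\ref{lem:ABnorm-product}''), then keeps the trilinear form $\sum\int A_k^2\,\widetilde{H_6}(\sigma,\rho)\,il\,\widetilde a(l,\eta)\,\overline{\widetilde f}(k,\xi)$ intact and bounds it via trilinear multiplier inequalities: Lemma~\ref{lem:8.4JiaHao} in $R_0$ and $R_2$, Lemma~\ref{lem:A*R0R1R2-1-R1} in $R_1$, and a separate argument for the zero mode $\sigma=0$. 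These trilinear bounds place $\sqrt{|\dot A|}$ on the pair $(f,a)$ in $R_0,R_1$ but on the pair $(f,H_6)$ in $R_2,R_3$, so each region gets the correct $L^2_s\times L^2_s\times L^\infty_s$ split without an intermediate Cauchy--Schwarz layer.
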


 Let
  \begin{align}\label{eq:H5H6}
     & H_5=\partial_vP,\quad H_6=(V_1+1)\partial_vP.
  \end{align}
  We first prove the following lemma.

\begin{lemma}\label{lem:par-v-P}
  For any $t\in[1,T]$ and $j\in\{5,6\}$, we have
  \begin{align}\label{est:par-v-P}
    \begin{aligned}
    &\sum_{k\in \mathbb{Z}\setminus\{0\}} \int_{\mathbb{R}}A_{k}(t,\xi)^2 \dfrac{\langle t\rangle^2 \big(\langle t\rangle^2 +|(k,\xi)|^2\big)\langle t-\xi/k\rangle^4}{\langle\xi/k\rangle^4\langle \xi\rangle^2/k^2}|\widetilde{H_j}(t,k,\xi)|^2\mathrm{d}\xi\lesssim_{\delta}\epsilon_{1}^2,\\
    &\int_{1}^{T}\sum_{k\in \mathbb{Z}\setminus\{0\}}\int_{\mathbb{R}}|(\dot{A}_{k}A_{k})(s,\xi)| \dfrac{\langle s\rangle^2 \big(\langle s\rangle^2 +|(k,\xi)|^2\big)\langle s-\xi/k\rangle^4}{\langle\xi/k\rangle^4\langle \xi\rangle^2/k^2}|\widetilde{H_j}(s,k,\xi)|^2\mathrm{d}\xi\mathrm{d}s\lesssim_{\delta}\epsilon_1^2,
    \end{aligned}
  \end{align}
and
\begin{align}\label{est:par-v-P-zero}
   \begin{aligned}
    &\int_{\mathbb{R}}A_{0}(t,\xi)^2\dfrac{\langle t\rangle^2 \big(\langle t\rangle^2 +|\xi|^2\big)^2}{\langle \xi\rangle^4}|\widetilde{H_j}(t,0,\xi)|^2\mathrm{d}\xi\lesssim_{\delta}\epsilon_{1}^2,\\
    &\int_{1}^{T}\int_{\mathbb{R}}|(\dot{A}_{0} A_{0})(s,\xi)| \dfrac{\langle s\rangle^2 \big(\langle
     s\rangle^2 +|\xi|^2\big)^2}{\langle \xi\rangle^4}|\widetilde{H_j}(s,0,\xi)|^2\mathrm{d}\xi\mathrm{d}s\lesssim_{\delta} \epsilon_1^2.
    \end{aligned}
\end{align}
\end{lemma}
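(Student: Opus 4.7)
The plan is to split the estimate into two pieces: first prove the bounds directly for $H_5=\partial_vP$ by recognizing the target weight as a pointwise multiple of the weight defining $\mathcal{E}_P+\mathcal{B}_P$, and then use a product estimate to pass from $H_5$ to $H_6=(V_1+1)\partial_vP=H_5+V_1H_5$.

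For $H_5$, write $|\widetilde{H_5}(t,k,\xi)|^2=\xi^2|\widetilde{P}(t,k,\xi)|^2$. For $k\neq 0$, the target integrand on the left of \eqref{est:par-v-P} equals
\[
A_k^2\,\frac{k^2\langle t\rangle^2(\langle t\rangle^2+|(k,\xi)|^2)\langle t-\xi/k\rangle^4}{\langle\xi/k\rangle^4}\cdot\frac{\xi^2}{\langle\xi\rangle^2}|\widetilde{P}|^2,
\]
which is pointwise bounded by the integrand defining $\mathcal{E}_P$ since $\xi^2/\langle\xi\rangle^2\leq 1$. For $k=0$, the target integrand
$A_0^2\frac{\langle t\rangle^2(\langle t\rangle^2+|\xi|^2)^2}{\langle\xi\rangle^4}\cdot\xi^2|\widetilde{P}(0,\xi)|^2$ matches exactly the zero-mode integrand of $\mathcal{E}_P$. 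The same pointwise comparisons work for the $\dot A_k$ (respectively $\dot A_0$) versions, so both \eqref{est:par-v-P} and \eqref{est:par-v-P-zero} for $H_5$ follow immediately from the bootstrap assumption $\mathcal{E}_P(t)+\mathcal{B}_P(t)\leq\epsilon_1^2$.

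Since $H_6=H_5+V_1H_5$, it remains to bound $V_1H_5$ in the same weighted norms. Denote the square root of the weight in \eqref{est:par-v-P} by
\[
M_k(t,\xi)=\frac{|k|\langle t\rangle\big(\langle t\rangle^2+|(k,\xi)|^2\big)^{1/2}\langle t-\xi/k\rangle^2}{\langle\xi/k\rangle^2\langle\xi\rangle}.
\]
Setting $\rho=\xi-\eta$, I would establish, for $k\neq 0$, the multiplier inequality
\[
A_k(t,\xi)M_k(t,\xi)\lesssim_\delta A_R(t,\rho)\,A_k(t,\eta)M_k(t,\eta)\big\{\langle\rho\rangle^{-2}+\langle k,\eta\rangle^{-2}\big\},
\]
together with its analogue involving $|\dot A_kA_k|^{1/2}$ on the left and $[|\dot A_R/A_R|^{1/2}+|\dot A_k/A_k|^{1/2}]$ factored on the right, mirroring \eqref{est:ABnorm-prove1}--\eqref{est:ABnorm-prove2}. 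The key ingredient is the slow-growth estimate $M_k(t,\xi)\lesssim_\delta M_k(t,\eta)\,e^{\delta\min(\langle\rho\rangle,\langle k,\eta\rangle)^{1/2}}$, which follows from the routine ratio bounds on $\langle t-\xi/k\rangle$, $\langle\xi/k\rangle$, $\langle\xi\rangle$ and $(\langle t\rangle^2+|(k,\xi)|^2)^{1/2}$ already used throughout Section 3.1. Combining this with \eqref{est:8.3JiaHao-1}--\eqref{est:8.3JiaHao-2} delivers the multiplier inequality, and then Lemma \ref{lem:8.1JiaHao} together with the $A_R$-control $\|V_1\|_{A_R}\lesssim\epsilon_1$ from \eqref{eq:boot-V1} yields the product bound $\|V_1H_5\|\lesssim_\delta\epsilon_1\|H_5\|$ in the $M$-weighted norm. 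The zero mode is handled analogously with $M_0(t,\xi)=\langle t\rangle(\langle t\rangle^2+|\xi|^2)/\langle\xi\rangle^2$ and the $A_0$-weight, using analogues of \eqref{est:A0norm-prove1}--\eqref{est:A0norm-prove2}.

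The main obstacle is verifying the slow-growth estimate for $M_k$ with sufficient margin so that the exponential gain from $A_R$ via Lemma \ref{lem:8.3JiaHao} absorbs the new factor $|k|/\langle\xi\rangle$; this extra factor contributes only a polynomial in $\langle k,\rho\rangle$ beyond what already appears in Lemma \ref{lem:ABnorm-product}, so no genuinely new mechanism is needed. The case split based on whether $|(k,\eta)|$ or $|\rho|$ dominates proceeds exactly as in Lemma \ref{lem:ABnorm-product-a}, and I expect the bookkeeping to be routine modulo these standard ratio bounds.
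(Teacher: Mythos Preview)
Your proposal is correct and follows essentially the same approach as the paper: the $H_5$ bounds are read off directly from the bootstrap assumption on $\mathcal{E}_P+\mathcal{B}_P$ (with the same $\xi^2/\langle\xi\rangle^2\le 1$ observation for $k\neq 0$ and exact match for $k=0$), and $H_6=H_5+V_1H_5$ is handled via the same multiplier inequalities you wrote, proved by combining the slow-growth bound on $M_k$ (obtained by the case split $|\rho|\lessgtr 10|(k,\eta)|$) with \eqref{est:8.3JiaHao-1}--\eqref{est:8.3JiaHao-2} and then invoking Lemma~\ref{lem:8.1JiaHao}(ii) and \eqref{eq:boot-V1}. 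The paper's argument is exactly this, with your $M_k$ appearing as $\langle t\rangle(\langle t\rangle+|(k,\xi)|)\langle t-\xi/k\rangle^2/(\langle\xi/k\rangle^2\langle\xi\rangle/|k|)$ in \eqref{est:par-v-P-prove-1}--\eqref{est:par-v-P-prove-4}.
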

\begin{proof}
  The bounds on $H_5$ follow directly from the bootstrap assumption on $\mathcal{E}_{P}(t)$ and $\mathcal{B}_{P}(t)$.

 Notice that $H_6=H_5+V_1H_5$. By Lemma \ref{lem:8.1JiaHao} (ii), \eqref{eq:boot-V1},  it suffices to prove the following multiplier bounds for $k\neq 0$,
  \begin{align}\label{est:par-v-P-prove-1}
  \begin{aligned}
     A_{k}(t,\xi)&\dfrac{\langle t\rangle(\langle t\rangle +|(k,\xi)|)\langle t-\xi/k\rangle^2}{\langle\xi/k\rangle^2\langle \xi\rangle/|k|}\lesssim_{\delta}A_{R}(t,\xi-\eta) A_{k}(t,\eta) \\& \times\dfrac{\langle t\rangle(\langle t\rangle +|(k,\eta)|)\langle t-\eta/k\rangle^2}{\langle\eta/k\rangle^2\langle \eta\rangle/|k|}\cdot\big\{\langle \xi-\eta\rangle^{-2}+\langle k,\eta\rangle^{-2}\big\}.
     \end{aligned}
  \end{align}
  and
   \begin{align}
    |(\dot{A}_{k} A_{k})(t,\xi)|^{\f12}&\dfrac{\langle t\rangle(\langle t\rangle +|(k,\xi)|)\langle t-\xi/k\rangle^2}{\langle\xi/k\rangle^2\langle \xi\rangle/|k|}\lesssim_{\delta} \left[|(\dot{A}_{R} /A_{R})(t,\xi-\eta)|^{\f12}+|(\dot{A}_{k}/ A_{k})(t,\eta)|^{\f12}\right]\nonumber\\
    & \times A_{R}(t,\xi-\eta) A_{k}(t,\eta)\dfrac{\langle t\rangle(\langle t\rangle +|(k,\eta)|)\langle t-\eta/k\rangle^2}{\langle\eta/k\rangle^2\langle \eta\rangle/|k|}\cdot\big\{\langle \xi-\eta\rangle^{-2}+\langle k,\eta\rangle^{-2}\big\},\label{est:par-v-P-prove-2}
  \end{align}
 and for zero mode (i.e., $k=0$)
  \begin{align}\label{est:par-v-P-prove-3}
  \begin{aligned}
     A_{0}(t,\xi)&\dfrac{\langle t\rangle(\langle t\rangle^2 +|\xi|^2)}{\langle \xi\rangle^2}\lesssim_{\delta}A_{R}(t,\xi-\eta)\cdot A_{0}(t,\eta)\dfrac{\langle t\rangle(\langle t\rangle
     ^2 +|\eta|^2)}{\langle \eta\rangle^2}\cdot\big\{\langle \xi-\eta\rangle^{-2}+\langle \eta\rangle^{-2}\big\}.
     \end{aligned}
  \end{align}
  and
   \begin{align}\label{est:par-v-P-prove-4}
  \begin{aligned}
    |(\dot{A}_{0} A_{0})(t,\xi)|^{1/2}&\dfrac{\langle t\rangle(t^2 +|\xi|^2)}{\langle \xi\rangle^2}\lesssim_{\delta} \left[|(\dot{A}_{R} / A_{R})(t,\xi-\eta)|^{1/2}+|(\dot{A}_{0}/ A_{0})(t,\eta)|^{1/2}\right]\\& \times A_{R}(t,\xi-\eta)\cdot A_{0}(t,\eta)\dfrac{\langle t\rangle(\langle t\rangle
    ^2 +|\eta|^2)}{\langle \eta\rangle^2}\cdot\big\{\langle \xi-\eta\rangle^{-2}+\langle \eta\rangle^{-2}\big\},
     \end{aligned}
  \end{align}

 By considering the cases $|\xi-\eta|\leq 10|(k,\eta)|$ and $|\xi-\eta|\geq 10|(k,\eta)|$, it is easy to see that ($k\neq 0$)
  \begin{align}\label{est:par-v-P-prove-5}
     & \dfrac{\langle t\rangle\big(\langle t\rangle +|(k,\xi)|\big)\langle t-\xi/k\rangle^2}{\langle\xi/k\rangle^2\langle \xi\rangle/|k|} \lesssim_{\delta} \dfrac{\langle t\rangle\big(\langle t\rangle +|(k,\eta)|\big)\langle t-\eta/k\rangle^2}{\langle\eta/k\rangle^2\langle \eta\rangle/|k|}\mathrm{e}^{\delta\min(\langle \xi-\eta\rangle,\langle k,\eta\rangle)^{1/2}},
  \end{align}
 and by considering the cases  $|\xi-\eta|\leq 10|\eta|$ and $|\xi-\eta|\geq 10|\eta|$, we have
  \begin{align}\label{est:par-v-P-prove-6}
   \dfrac{\langle t\rangle(\langle t\rangle^2 +|\xi|^2)}{\langle \xi\rangle^2} &  \lesssim_{\delta}\dfrac{\langle t\rangle(\langle t\rangle^2 +|\eta|^2)}{\langle \eta\rangle^2}\mathrm{e}^{\delta\min(\langle \xi-\eta\rangle,\langle \eta\rangle)^{1/2}}.
  \end{align}
 Then the bound \eqref{est:par-v-P-prove-1} follows from \eqref{est:8.3JiaHao-1} and \eqref{est:par-v-P-prove-5}; the bounds \eqref{est:par-v-P-prove-2} follows from \eqref{est:8.3JiaHao-1}, \eqref{est:8.3JiaHao-2} and \eqref{est:par-v-P-prove-5}; the bound \eqref{est:par-v-P-prove-3} follows from \eqref{est:8.3JiaHao-1} and \eqref{est:par-v-P-prove-6}; the bound \eqref{est:par-v-P-prove-4} follows from \eqref{est:8.3JiaHao-1}, \eqref{est:8.3JiaHao-2} and \eqref{est:par-v-P-prove-6}.
\end{proof}

We now turn to the proof of \eqref{est:N4bound}.

\begin{proof}
We write
\begin{align*}
   & \left|2\mathbf{Re}\int_{1}^{t}\sum_{k\in\mathbb{Z}}\int_{\mathbb{R}}A_k(s,\xi)^2\widetilde{N_4} (s,k,\xi)\overline{\widetilde{f}(s,k,\xi)}\mathrm{d}\xi\mathrm{d}s\right|\\
   &=2\left|\mathbf{Re}\sum_{k,l\in\mathbb{Z}}\int_{1}^{t}\int_{\mathbb{R}^2}A_k(s,\xi)^2\widetilde{H_6}(s,k-l,\xi-\eta)il\widetilde{a}(s,l,\eta)
   \overline{\widetilde{f}(s,k,\xi)}\mathrm{d} \xi\mathrm{d}\eta\mathrm{d}s\right|.
\end{align*}
We introduce the sets
\begin{align}
   R_0=\Big\{&\big((k,\xi),(l,\eta)\big)\in(\mathbb{Z}\times \mathbb{R})^2: \label{eq:R0}\\
   &\min(\langle k,\xi\rangle,\langle l,\eta\rangle,\langle k-l,\xi-\eta\rangle)\geq \dfrac{\langle k,\xi\rangle+\langle l,\eta\rangle+\langle k-l,\xi-\eta\rangle}{20}\Big\},\nonumber\\
   R_1=\Big\{&\big((k,\xi),(l,\eta)\big)\in(\mathbb{Z}\times \mathbb{R})^2: \langle k-l,\xi-\eta\rangle\leq \dfrac{\langle k,\xi\rangle+\langle l,\eta\rangle+\langle k-l,\xi-\eta\rangle}{10}\Big\},\label{eq:R1}\\
   R_2=\Big\{&\big((k,\xi),(l,\eta)\big)\in(\mathbb{Z}\times \mathbb{R})^2: \langle l,\eta\rangle\leq \dfrac{\langle k,\xi\rangle+\langle l,\eta\rangle+\langle k-l,\xi-\eta\rangle}{10}\Big\},\label{eq:R2}\\
   R_3=\Big\{&\big((k,\xi),(l,\eta)\big)\in(\mathbb{Z}\times \mathbb{R})^2: \langle k,\xi\rangle\leq \dfrac{\langle k,\xi\rangle+\langle l,\eta\rangle+\langle k-l,\xi-\eta\rangle}{10}\Big\}.\label{eq:R3}
\end{align}
Then we denote that for {$j=0,1,2,3$},
\begin{align*}
   \mathcal{U}^{(1)}_j=\int_{1}^{t}\sum_{k,l\in\mathbb{Z}}\int_{\mathbb{R}^2} & \mathbf{1}_{R_j}\big((k,\xi),(l,\eta)\big)|l| A_k(s,\xi)^2 | \widetilde{H_6}(s,k-l,\xi-\eta)|\\
   &\times\mathbf{1}_{k-l\neq 0}
   \cdot|\widetilde{a}(s,l,\eta)|\cdot|\widetilde{f}(s,k,\xi)|\mathrm{d} \xi\mathrm{d}\eta\mathrm{d}s,
\end{align*}
and
\begin{align*}
   \mathcal{U}^{(1)}_4=\int_{1}^{t}\sum_{k\in\mathbb{Z}}\int_{\mathbb{R}^2} & |k| A_k(s,\xi)^2 | \widetilde{H_6}(s,0,\xi-\eta)|\cdot |\widetilde{a}(s,k,\eta)|\cdot|\widetilde{f}(s,k,\xi)|\mathrm{d} \xi\mathrm{d}\eta\mathrm{d}s.
\end{align*}

Let $(\sigma,\rho)=(k-l,\xi-\eta)$. For $j=0$,  by Lemma \ref{lem:8.4JiaHao} (i) and \eqref{est:par-t-A*-1}, we have
\begin{align}\label{est:U0(1)}
  \mathcal{U}_0^{(1)} \lesssim_{\delta}&\int_{0}^{t}\sum_{k,l\in\mathbb{Z}}\int_{\mathbb{R}^2} \sqrt{|(A_k\dot{A}_{k})(s,\xi)|}|\widetilde{f}(s,k,\xi)| \sqrt{|(A_l\dot{A}_{l})(s,\eta)|}|\widetilde{a}(s,l,\eta)|\dfrac{\langle s\rangle}{|\rho/\sigma|+\langle s\rangle}\nonumber\\
  &\times \mathbf{1}_{\sigma\neq 0}\dfrac{\langle s-\rho/\sigma\rangle^2}{\langle\rho\rangle/\sigma^2}A_{\sigma}(s,\rho)| \widetilde{H_6}(s,\sigma,\rho)|\mathrm{e}^{-(\delta_0/200)\langle \sigma,\rho\rangle^{1/2}}\mathrm{d}\xi\mathrm{d}\eta\mathrm{d}s\nonumber\\
  \lesssim_{\delta}&\left\| \sqrt{|(A_k\dot{A}_{k})(s,\xi)|}\widetilde{f}(s,k,\xi)\right\|_{L_s^2L^2_{k,\xi}} \left\|\sqrt{|(A_l^*\dot{A}^*_{l})(s,\eta)|}\widetilde{a}(s,l,\eta)\right\|_{L_s^2L^2_{l,\eta}}\\
  &\times \left\| \mathbf{1}_{\sigma\neq 0}A_{\sigma}(s,\rho)\dfrac{\langle s\rangle}{|\rho/\sigma|+\langle s\rangle}\dfrac{\langle s-\rho/\sigma\rangle^2}{\langle\rho\rangle/\sigma^2} \widetilde{H_6}(s,\sigma,\rho)\mathrm{e}^{-(\delta_0/300)\langle \sigma,\rho\rangle^{1/2}}\right\|_{L^{\infty}_{s}L^2_{\sigma,\rho}}.\nonumber
\end{align}
Thanks to
\begin{align*}
   & \langle \rho/\sigma\rangle^2|\sigma|= \langle\rho/\sigma\rangle|(\sigma,\rho)|\leq \big(|\rho/\sigma|+\langle s\rangle\big)\big(\langle s\rangle +|(\sigma,\rho)|\big),
\end{align*}
we have
\begin{align}\label{est:N4bound-prove-1}
  \dfrac{\langle s\rangle}{|\rho/\sigma|+\langle s\rangle}\dfrac{\langle s-\rho/\sigma\rangle^2}{\langle\rho\rangle/\sigma^2} &\lesssim \dfrac{\langle s\rangle\big(\langle s\rangle+|(\sigma,\rho)|\big) \langle s-\rho/\sigma\rangle^2}{\langle\rho/\sigma\rangle^2\langle \rho\rangle/|\sigma|}.
\end{align}
which along with Lemma \ref{lem:par-v-P}  gives
\begin{align*}
  \left\| \mathbf{1}_{\sigma\neq 0}A_{\sigma}(s,\rho)\dfrac{\langle s\rangle}{|\rho/\sigma|+\langle s\rangle}\dfrac{\langle s-\rho/\sigma\rangle^2}{\langle\rho\rangle/\sigma^2} \widetilde{H_6}(s,\sigma,\rho)\mathrm{e}^{-(\delta_0/300)\langle \sigma,\rho\rangle^{1/2}}\right\|_{L^{\infty}_{s}L^2_{\sigma,\rho}} & \lesssim_{\delta}\epsilon_1,
\end{align*}
from which and \eqref{est:U0(1)}, we infer that
\begin{align*}
   \mathcal{U}_0^{(1)} &\lesssim_{\delta}  \epsilon_1^3.
\end{align*}

For $j=1$, we get by Lemma \ref{lem:A*R0R1R2-1-R1} and  Lemma \ref{lem:par-v-P} that
\begin{align*}
  \mathcal{U}_1^{(1)} \lesssim_{\delta}&\int_{0}^{t}\sum_{k,l\in\mathbb{Z}}\int_{\mathbb{R}^2} \sqrt{|(A_k\dot{A}_{k})(s,\xi)|}|\widetilde{f}(s,k,\xi)| \sqrt{|(A^*_l\dot{A}^*_{l})(s,\eta)|}|\widetilde{a}(s,l,\eta)|\\
  &\times \mathbf{1}_{\sigma\neq 0} \langle s\rangle^3 A_{\sigma}(s,\rho)| \widetilde{H_6}(s,\sigma,\rho)|\mathrm{e}^{-(\delta_0/200)\langle \sigma,\rho\rangle^{1/2}}\mathrm{d}\xi\mathrm{d}\eta\mathrm{d}s\\
  \lesssim_{\delta}&\left\| \sqrt{|(A_k\dot{A}_{k})(s,\xi)|}\widetilde{f}(s,k,\xi)\right\|_{L_s^2L^2_{k,\xi}} \left\|\sqrt{|(A_l^*\dot{A}^*_{l})(s,\eta)|}\widetilde{a}(s,l,\eta)\right\|_{L_s^2L^2_{l,\eta}}\\
  &\times \left\| \mathbf{1}_{\sigma\neq 0}A_{\sigma}(s,\rho)\dfrac{\langle s\rangle\big(\langle s\rangle+|(\sigma,\rho)|\big) \langle s-\rho/\sigma\rangle^2}{\langle\rho/\sigma\rangle^2\langle \rho\rangle/|\sigma|} \widetilde{H_6}(s,\sigma,\rho)\mathrm{e}^{-(\delta_0/300)\langle \sigma,\rho\rangle^{1/2}}\right\|_{L^{\infty}_{s}L^2_{\sigma,\rho}}\\
  \lesssim_{\delta}& \epsilon_1^3.
\end{align*}

Similarly, for $j=2$ we get by Lemma \ref{lem:8.4JiaHao} (ii), Lemma \ref{lem:par-v-P} and \eqref{est:N4bound-prove-1} that
\begin{align*}
  \mathcal{U}_2^{(1)} \lesssim_{\delta}&\int_{0}^{t}\sum_{k,l\in\mathbb{Z}}\int_{\mathbb{R}^2} \mathbf{1}_{\sigma\neq 0}\sqrt{|(A_{\sigma}\dot{A}_{\sigma})(s,\rho)|}\dfrac{\langle s\rangle}{|\rho/\sigma|+\langle s\rangle}\dfrac{\langle s-\rho/\sigma\rangle^2}{\langle\rho\rangle/\sigma^2} |\widetilde{H_6}(s,\sigma,\rho)| \\
  &\times \sqrt{|(A_k\dot{A}_{k})(s,\xi)|}|\widetilde{f}(s,k,\xi)|A_{l}(s,\eta) \mathrm{e}^{-(\delta_0/200)\langle l,\eta\rangle^{1/2}}|\widetilde{a}(s,l,\eta)|\mathrm{d}\xi\mathrm{d}\eta\mathrm{d}s\\
  \lesssim_{\delta}&\left\| \sqrt{|(A_k\dot{A}_{k})(s,\xi)|}\widetilde{f}(s,k,\xi)\right\|_{L_s^2L^2_{k,\xi}} \left\|A_l(s,\eta)\mathrm{e}^{-(\delta_0/300)\langle l,\eta\rangle^{1/2}}\widetilde{a}(s,l,\eta)\right\|_{L^{\infty}_sL^2_{l,\eta}}\\
  &\times \left\| \mathbf{1}_{\sigma\neq 0}\sqrt{|(A_{\sigma}\dot{A}_{\sigma})(s,\rho)|}\dfrac{\langle s\rangle}{|\rho/\sigma|+\langle s\rangle}\dfrac{\langle s-\rho/\sigma\rangle^2}{\langle\rho\rangle/\sigma^2}\widetilde{H_6}(s,\sigma,\rho)\right\|_{L_{s}^{2}L^2_{\sigma,\rho}}\\
  \lesssim_{\delta}&\left\| \sqrt{|(A_k\dot{A}_{k})(s,\xi)|}\widetilde{f}(s,k,\xi)\right\|_{L_s^2L^2_{k,\xi}} \left\|A_l^*(s,\eta)\mathrm{e}^{-(\delta_0/300)\langle l,\eta\rangle^{1/2}}\widetilde{a}(s,l,\eta)\right\|_{L^{\infty}_sL^2_{l,\eta}}\\
  &\times \left\| \mathbf{1}_{\sigma\neq 0}\sqrt{|(A_{\sigma}\dot{A}_{\sigma})(s,\rho)|}\dfrac{\langle s\rangle\big(\langle s\rangle+|(\sigma,\rho)|\big) \langle s-\rho/\sigma\rangle^2}{\langle\rho/\sigma\rangle^2\langle \rho\rangle/|\sigma|}\widetilde{H_6}(s,\sigma,\rho)\right\|_{L_{s}^{2}L^2_{\sigma,\rho}}\\
  \lesssim_{\delta}&\epsilon_1^3.
\end{align*}

The case $j=3$ is similar to the case $j=2$. Now we deal with the case of $j=4$. We get by Lemma \ref{lem:8.3JiaHao-mod} that
\begin{align}\label{est:Ak-A0-1}
   A_{k}(t,\xi) &\lesssim_{\delta} \left(\dfrac{\mathbf{1}_{t\in I_{k,\xi}} |\xi|/k^2}{\langle t-\xi/k\rangle}+1\right) A_{NR}(t,\rho)A_{k}(t,\eta)\mathrm{e}^{-(\lambda(t)/20)\min(\langle \rho\rangle,\langle k,\eta\rangle)^{1/2}},\nonumber\\
   &\lesssim_{\delta} \left(\dfrac{\mathbf{1}_{t\in I_{k,\xi}} \langle t\rangle/|k|}{\langle t-\xi/k\rangle}+1\right) A_{0}(t,\rho)A_{k}(t,\eta)\mathrm{e}^{-(\lambda(t)/20)\min(\langle \rho\rangle,\langle k,\eta\rangle)^{1/2}},
\end{align}
by noticing that $A_{0}(t,\rho)=\mathrm{e}^{\lambda(t)\langle\rho\rangle^{1/2}}+A_{NR}(t,\rho)$.

For $t\in I_{k,\xi}$, we get by \eqref{est:7.4JiaHao-2} and \eqref{est:7.4JiaHao-4} that
\begin{align*}
   &\dfrac{\langle\rho\rangle^2|k|}{\langle t\rangle\big(\langle t\rangle^{2}+|\rho|^2\big) }A_{k}(t,\xi)^2\\
   \lesssim_{\delta}&\dfrac{1}{\langle t-\xi/k\rangle} A_{0}(t,\rho)A_{k}(t,\xi)A_{k}(t,\eta)\mathrm{e}^{-(\lambda(t)/20)\min(\langle \rho\rangle,\langle k,\eta\rangle)^{1/2}}\\
   \lesssim_{\delta}&  \mathbf{1}_{\langle\rho\rangle\leq \langle k,\eta\rangle} A_{0}(t,\rho)\sqrt{|(\dot{A}_{k} A_{k})(t,\xi)|}\sqrt{|(\dot{A}_{k} A_{k})(t,\eta)|}\mathrm{e}^{-(\delta_0/200)\langle \rho\rangle^{1/2}}\\
   &+ \mathbf{1}_{ \langle k,\eta\rangle \leq\langle\rho\rangle} \sqrt{|( \dot{A}_0 A_{0})(t,\rho)|}\sqrt{|(\dot{A}_{k} A_{k})(t,\xi)|}A_{k}(t,\eta)\mathrm{e}^{-(\delta_0/200)\langle k,\eta\rangle^{1/2}}\\
   \lesssim_{\delta}&  \mathbf{1}_{\langle\rho\rangle\leq \langle k,\eta\rangle} A_{0}(t,\rho)\sqrt{|(\dot{A}_{k} A_{k})(t,\xi)|}\sqrt{|(\dot{A}^*_{k} A^*_{k})(t,\eta)|}\mathrm{e}^{-(\delta_0/200)\langle \rho\rangle^{1/2}}\\
   &+ \mathbf{1}_{ \langle k,\eta\rangle \leq\langle\rho\rangle} \sqrt{|( \dot{A}_0 A_{0})(t,\rho)|}\sqrt{|(\dot{A}_{k} A_{k})(t,\xi)|}A^*_{k}(t,\eta)\mathrm{e}^{-(\delta_0/200)\langle k,\eta\rangle^{1/2}},
\end{align*}
and for $t\notin I_{k,\xi}$, (as $(1+|k|/\langle t\rangle)A_{k}(t,\eta)\lesssim A_{k}^*(t,\eta)$)
\begin{align*}
   &\dfrac{\langle\rho\rangle^2|k|}{\langle t\rangle\big(\langle t\rangle^{2}+|\rho|^2\big) }A_{k}(t,\xi)^2\\
   \lesssim_{\delta}&  \left(\dfrac{\langle\rho\rangle^{1/4}|k|}{\langle t\rangle^{5/4}}\mathbf{1}_{ \langle k,\eta\rangle \leq\langle\rho\rangle}+\dfrac{\langle\rho\rangle^2|k|}{\langle t\rangle^3 }\mathbf{1}_{\langle\rho\rangle\leq \langle k,\eta\rangle}\right) A_{0}(t,\rho)A_k(t,\xi)A_{k}(t,\eta)\mathrm{e}^{-(\lambda(t)/20)\min(\langle \rho\rangle,\langle k,\eta\rangle)^{1/2}}\\
   \lesssim_{\delta}&  \left(\dfrac{\langle\rho\rangle^{1/4}}{\langle t\rangle^{5/4}}\mathbf{1}_{ \langle k,\eta\rangle \leq\langle\rho\rangle}+\dfrac{1}{\langle t\rangle^2 }\mathbf{1}_{\langle\rho\rangle\leq \langle k,\eta\rangle}\right) A_{0}(t,\rho)A_k(t,\xi)A_{k}^*(t,\eta)\mathrm{e}^{-(\delta_0/200)\min(\langle \rho\rangle,\langle k,\eta\rangle)^{1/2}}\\
   \lesssim_{\delta}&  \mathbf{1}_{ \langle k,\eta\rangle \leq\langle\rho\rangle}\sqrt{|( \dot{A}_0 A_{0})(t,\rho)|}\sqrt{|(\dot{A}_{k} A_{k})(t,\xi)|}A^*_{k}(t,\eta)\mathrm{e}^{-(\delta_0/200)\min(\langle \rho\rangle,\langle k,\eta\rangle)^{1/2}}\\&+
   \mathbf{1}_{\langle\rho\rangle\leq \langle k,\eta\rangle} A_{0}(t,\rho)\sqrt{|(\dot{A}_{k} A_{k})(t,\xi)|}\sqrt{|(\dot{A}^*_{k} A^*_{k})(t,\eta)|}\mathrm{e}^{-(\delta_0/200)\min(\langle \rho\rangle,\langle k,\eta\rangle)^{1/2}}.
\end{align*}
Then we arrive at
\begin{align*}
   &\dfrac{\langle\rho\rangle^2|k|}{\langle t\rangle\big(\langle t\rangle^{2}+|\rho|^2\big) }A_{k}(t,\xi)^2\\
   \lesssim_{\delta}&   A_{0}(t,\rho)\sqrt{|(\dot{A}_{k} A_{k})(t,\xi)|}\sqrt{|(\dot{A}^*_{k} A^*_{k})(t,\eta)|}\mathrm{e}^{-(\delta_0/200)\min(\langle \rho\rangle,\langle k,\eta\rangle)^{1/2}}\\
   &+ \sqrt{|( \dot{A}_0 A_{0})(t,\rho)|}\sqrt{|(\dot{A}_{k} A_{k})(t,\xi)|}A^*_{k}(t,\eta)\mathrm{e}^{-(\delta_0/200)\min(\langle \rho\rangle,\langle k,\eta\rangle)^{1/2}}.
\end{align*}
Thus, we obtain
\begin{align*}
  \mathcal{U}_4^{(1)} \lesssim_{\delta}&\int_{0}^{t}\sum_{k\in\mathbb{Z}}\int_{\mathbb{R}^2} \dfrac{\langle s\rangle\big(\langle s\rangle^{2}+|\rho|^2\big) }{\langle\rho\rangle^2}  A_{0}(s,\rho)|\widetilde{H}_6(s,0,\rho)| \sqrt{|(\dot{A}_k A_{k})(s,\xi)|}|\widetilde{f}(s,k,\xi)|\\
  &\times\sqrt{|(\dot{A}^*_{k} A^*_{k})(s,\eta)|} \mathrm{e}^{-(\delta_0/200)\min(\langle \rho\rangle,\langle k,\eta\rangle)^{1/2}}|\widetilde{a}(s,k,\eta)|\mathrm{d}\xi\mathrm{d}\eta\mathrm{d}s\\
  & + \int_{0}^{t}\sum_{k\in\mathbb{Z}}\int_{\mathbb{R}^2} \dfrac{\langle s\rangle\big(\langle s\rangle^{2}+|\rho|^2\big) }{\langle\rho\rangle^2} \sqrt{ (\dot{A}_{0}  A_{0})(s,\rho)|}|\widetilde{H}_6(s,0,\rho)| \sqrt{|(\dot{A}_k A_{k})(s,\xi)|}|\widetilde{f}(s,k,\xi)|\\
  &\times A^*_{k}(s,\eta) \mathrm{e}^{-(\delta_0/200)\min(\langle \rho\rangle,\langle k,\eta\rangle)^{1/2}}|\widetilde{a}(s,k,\eta)|\mathrm{d}\xi\mathrm{d}\eta\mathrm{d}s\\
  \lesssim_{\delta}&\left\| \sqrt{|(\dot{A}_k A_{k})(s,\xi)|}\widetilde{f}(s,k,\xi)\right\|_{L_s^2 L^2_{k,\xi}} \left\|\sqrt{|(\dot{A}^*_{k} A^*_{k})(s,\eta)|}\widetilde{a}(s,k,\eta)\right\|_{L^{2}_sL^2_{k,\eta}}\\
  &\times \left\| A_0(s,\rho)\dfrac{\langle s\rangle\big(\langle s\rangle^{2}+|\rho|^2\big) }{\langle\rho\rangle^2}\widetilde{H_6}(s,0,\rho) \right\|_{L_{s}^{\infty}L^2_{\rho}}\\
  +&\left\| \sqrt{|(\dot{A}_k A_{k})(s,\xi)|}\widetilde{f}(s,k,\xi)\right\|_{L_s^2 L^2_{k,\xi}} \left\| A^*_{k}(s,\eta)\widetilde{a}(s,k,\eta)\right\|_{L^{\infty}_sL^2_{k,\eta}}\\
  &\times \left\| \sqrt{|(\dot{A}_{0} A_{0})(s,\rho)|}\dfrac{\langle s\rangle\big(\langle s\rangle^{2}+|\rho|^2\big) }{\langle\rho\rangle^2}\widetilde{H_6}(s,0,\rho)\right\|_{L_{s}^{2}L^2_{\rho}}\\
   \lesssim_{\delta}&\epsilon_1^3.
\end{align*}

 This shows that  $\mathcal{U}^{(1)}_j\lesssim_{\delta}\epsilon_1^3$ for all $j\in\{0,1,2,3,4\}$, and the desired bound \eqref{est:N4bound} follows.
\end{proof}

\subsection{Nonlinear estimate for $\mathcal{N}_5$}

\begin{lemma}\label{lem:N5}
It holds that for any $t\in[1,T]$ we have
  \begin{align}\label{est:N5bound}
     &\left|2\mathbf{Re}\int_{1}^{t}\sum_{k\in\mathbb{Z}}\int_{\mathbb{R}}A_k(s,\xi)^2\widetilde{\mathcal{N}_5}(s,k,\xi)\overline{\widetilde{f}(s,k,\xi)}
     \mathrm{d}\xi\mathrm{d}s\right| \lesssim_{\delta} \epsilon_1^3 .
  \end{align}
\end{lemma}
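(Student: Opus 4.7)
The proof will mirror that of Lemma \ref{lem:N4} with two key modifications. Setting $H_7 := (1+V_1)\partial_z P$, in Fourier variables the quantity to bound is
\begin{align*}
\mathcal{I}_5 := \left|2\mathbf{Re}\sum_{k,l\in\mathbb{Z}}\int_{1}^{t}\int_{\mathbb{R}^2}A_k(s,\xi)^2\,\widetilde{H_7}(s,k-l,\xi-\eta)\,(-i\eta)\,\widetilde{a}(s,l,\eta)\,\overline{\widetilde{f}(s,k,\xi)}\,d\xi\,d\eta\,ds\right|.
\end{align*}
Since $V_1$ depends only on $v$ and $\overline{\partial_z P}=0$, we have $\widetilde{H_7}(s,0,\rho)\equiv 0$. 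Consequently the $\sigma=k-l=0$ contribution, which required the separate estimate $\mathcal{U}^{(1)}_4$ in the proof of Lemma \ref{lem:N4}, vanishes identically here, and only the four regions $R_0,R_1,R_2,R_3$ defined by \eqref{eq:R0}--\eqref{eq:R3} (restricted to $\sigma\neq 0$) need to be treated.

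The first new ingredient will be a counterpart of Lemma \ref{lem:par-v-P} for $H_7$. Since $\widetilde{\partial_z P}(k,\xi)=ik\,\widetilde{P}(k,\xi)$, the $k^2$ factor exactly cancels the $k^{-2}$ built into the $\mathcal{E}_P$ weight for nonzero modes, so the bootstrap control of $\mathcal{E}_P+\mathcal{B}_P$ combined with \eqref{eq:boot-V1} and Lemma \ref{lem:8.1JiaHao} will yield
\begin{align*}
\sum_{k\neq 0}\int_{\mathbb{R}}A_{k}(t,\xi)^2\,\dfrac{\langle t\rangle^{2}(\langle t\rangle^2+|(k,\xi)|^2)\langle t-\xi/k\rangle^4}{\langle\xi/k\rangle^4}\,|\widetilde{H_7}(t,k,\xi)|^2\,d\xi\;\lesssim_\delta\;\epsilon_1^2
\end{align*}
together with its integrated Cauchy--Kovalevskaya analogue. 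The needed multiplier inequalities follow from \eqref{est:8.3JiaHao-1}--\eqref{est:8.3JiaHao-2} and an analogue of \eqref{est:par-v-P-prove-5}. Crucially, this weight exceeds the one for $H_6$ in Lemma \ref{lem:par-v-P} by a factor of $\langle\xi\rangle^2/k^2\approx\langle\xi/k\rangle^2$; this gain is exactly what will be needed in the next step.

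The second modification handles the factor $|\eta|$ produced by $\partial_v a$, which replaces the tame $|l|$ from $\partial_z a$ in the proof of Lemma \ref{lem:N4}. From the definition $A_l^{*}(t,\eta)^2=A_l(t,\eta)^2(1+(l^2+|\eta|)/\langle t\rangle^2)$ one has the elementary pointwise bound
\begin{align*}
|\eta|^{1/2}\,A_l(t,\eta)\;\leq\;\langle t\rangle\,A_l^{*}(t,\eta),
\end{align*}
so that $|\eta|\,A_l\leq |\eta|^{1/2}\langle t\rangle\,A_l^{*}$. Splitting the remaining $|\eta|^{1/2}$ according to whether $l=0$ (trivial) or $l\neq 0$ (where $|\eta|^{1/2}\leq |l|^{1/2}\langle\eta/l\rangle^{1/2}$), the extra $\langle t\rangle\langle\eta/l\rangle^{1/2}$ is then absorbed by the $\langle\xi/k\rangle^2$-improvement in the $H_7$-weight established in Step~1. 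The key algebraic comparison is the same as in Lemma \ref{lem:N4} (namely $\langle\rho/\sigma\rangle^{2}|\sigma|\leq (|\rho/\sigma|+\langle s\rangle)(\langle s\rangle+|(\sigma,\rho)|)$), applied with one extra power of $\langle\xi/k\rangle$ to account for the $|\eta|$-factor.

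With these two ingredients, $\mathcal{I}_5$ will be decomposed into pieces $\mathcal{U}^{(2)}_j$ for $j=0,1,2,3$, each estimated as in the proof of Lemma \ref{lem:N4} via Lemma \ref{lem:8.4JiaHao}, Lemma \ref{lem:A*R0R1R2-1-R1}, and Lemma \ref{lem:8.3JiaHao-mod1}: for $R_0,R_1$ one pairs $\sqrt{|(A_k\dot{A}_k)|}\widetilde{f}\in L^2_sL^2_{k,\xi}$ with $\sqrt{|(A_l^{*}\dot{A}_l^{*})|}\widetilde{a}\in L^2_sL^2_{l,\eta}$ against $H_7$ in $L^\infty_sL^2_{\sigma,\rho}$; for $R_2$ the CK-weight is moved onto $H_7$ and $a$ is placed in $L^\infty_s$; for $R_3$ the roles of $f$ and $a$ in the $L^2_s/L^\infty_s$ pairing are exchanged. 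I expect the main obstacle to be the verification of the Step~2 multiplier comparisons in region $R_0$ at near-resonance, where $t\approx \xi/k\approx \eta/l\approx \rho/\sigma$ and the concentration factors $\langle t-\xi/k\rangle,\langle t-\eta/l\rangle,\langle t-\rho/\sigma\rangle$ are simultaneously small, so that the $\langle t\rangle$-growth carried by $|\eta|^{1/2}\langle t\rangle$ must be fully absorbed by the $\langle\xi/k\rangle^2$-gain on $H_7$ together with the CK-weights $\dot{A}_k,\dot{A}_l^{*}$; away from resonance the trade is comfortable and only routine bookkeeping remains.
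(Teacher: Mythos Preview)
Your overall architecture is correct — decompose into $R_0,\ldots,R_3$, pair $f$ and $a$ with CK weights, and put the pressure factor in $L^\infty_s$ or $L^2_s$ as appropriate; the observation that the $\sigma=0$ piece vanishes (because $\overline{\partial_z P}=0$) is also right. However, you are invoking the wrong bilinear multiplier lemma, and the detour this forces is the source of the ``obstacle'' you flag at the end.

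The paper does \emph{not} use Lemma~\ref{lem:8.4JiaHao} here. Lemma~\ref{lem:8.4JiaHao} is tailored to the factor $|l|$ produced by $\partial_z a$; for $\partial_v a$ the relevant factor is $|\eta|$, and the correct tool is Lemma~\ref{lem:8.5JiaHao}, which replaces $|l|$ by $|\eta|$ and simultaneously adjusts the prefactor on the $(\sigma,\rho)$ side (the factor $\tfrac{|\rho/\sigma|+\langle t\rangle}{\langle t\rangle}\tfrac{\langle\rho\rangle/\sigma^2}{\langle t-\rho/\sigma\rangle^2}$ becomes $\tfrac{|\rho/\sigma|^2+\langle t\rangle^2}{|\sigma|\langle t\rangle^2}\tfrac{1}{\langle t-\rho/\sigma\rangle^2}$). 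With this substitution the proof runs verbatim as for $\mathcal{N}_4$: for $R_0$ use Lemma~\ref{lem:8.5JiaHao}(i), for $R_1$ use Lemma~\ref{lem:A*R0R1R2-1-R1} (whose statement already covers $|\eta A_k^2|$), and for $R_2,R_3$ use Lemma~\ref{lem:8.5JiaHao}(ii). The matching inequality for the pressure weight becomes
\[
\langle s-\rho/\sigma\rangle^2\,\frac{|\sigma|\langle s\rangle^2}{|\rho/\sigma|^2+\langle s\rangle^2}\;\lesssim\;\frac{\langle s\rangle^2(\langle s\rangle+|(\sigma,\rho)|)\langle s-\rho/\sigma\rangle^2}{(\langle s\rangle+|\rho/\sigma|)\langle\rho/\sigma\rangle^2},
\]
which is exactly the weight in Lemma~\ref{lem:par-z-P} (the paper's $H_7,H_8$; your $H_7$). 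Note that the weight the paper actually proves in Lemma~\ref{lem:par-z-P} is \emph{weaker} than the one you write down — it carries an extra factor $\langle t\rangle^2/(\langle t\rangle^2+|\xi/k|^2)$ — because that is all Lemma~\ref{lem:8.5JiaHao} demands.

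Your proposed workaround — trading $|\eta|^{1/2}A_l\le \langle t\rangle A_l^*$ and then absorbing the residual $\langle t\rangle\langle\eta/l\rangle^{1/2}$ by a $\langle\rho/\sigma\rangle^2$ gain on the pressure side — is not obviously wrong, but the leftover factor depends on $(l,\eta)$ rather than $(\sigma,\rho)$, and converting between them uniformly across $R_0,R_1$ (in particular when $|l|$ is small or $l=0$) essentially amounts to reproving Lemma~\ref{lem:8.5JiaHao}. Using Lemma~\ref{lem:8.5JiaHao} directly dissolves the near-resonance worry you raise.
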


Let
\begin{align}\label{eq:H7H8}
  &H_7=\partial_zP,\quad H_8=(1+V_1)\partial_zP.
\end{align}
We first prove the following lemma.

\begin{lemma}\label{lem:par-z-P}
  For any $t\in[1,T]$ and $j\in\{7,8\}$ we have
  \begin{align}\label{est:par-z-P}
    \begin{aligned}
    &\sum_{k\in \mathbb{Z}\setminus\{0\}}\int_{\mathbb{R}}A_{k}(t,\xi)^2\dfrac{\langle t\rangle^4(\langle t\rangle^2+|(k,\xi)|^2
    ) \langle t-\xi/k\rangle^4}{(\langle t\rangle^2+|\xi/k|^2
    )\langle\xi/k\rangle^4}|\widetilde{H_j}(t,k,\xi)|^2\mathrm{d}\xi\lesssim_{\delta}\epsilon_{1}^2,\\
    &\int_{1}^{T}\sum_{k\in \mathbb{Z}\setminus\{0\}}\int_{\mathbb{R}}|(\dot{A}_{k}A_{k})(s,\xi)| \dfrac{\langle s\rangle^4\big(\langle s\rangle
    ^2+|(k,\xi)|^2\big)\langle s-\xi/k\rangle^4}{(\langle s\rangle^2+|\xi/k|^2)\langle\xi/k\rangle^4}|\widetilde{H_j}(s,k,\xi)|^2\mathrm{d}\xi\mathrm{d}s \lesssim_{\delta}\epsilon_1^2.
    \end{aligned}
  \end{align}
\end{lemma}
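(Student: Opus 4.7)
My plan is to follow the template of Lemma \ref{lem:par-v-P}, treating the two unknowns $H_7=\partial_zP$ and $H_8=(1+V_1)\partial_zP$ separately.

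\textbf{Step 1 (reduction of $H_7$ to $\mathcal{E}_P+\mathcal{B}_P$).} Since $\widetilde{H_7}(t,k,\xi)=ik\widetilde{P}(t,k,\xi)$ for $k\neq0$, it suffices to compare the desired symbol with the one defining $\mathcal{E}_P$ (see \eqref{eq:energy-EP}, \eqref{eq:energy-BP}). Because
\[
\frac{\langle t\rangle^{4}\langle t-\xi/k\rangle^{4}(\langle t\rangle^{2}+|(k,\xi)|^{2})}{(\langle t\rangle^{2}+|\xi/k|^{2})\langle\xi/k\rangle^{4}}\cdot k^{2} \;\le\; \frac{k^{2}\langle t\rangle^{2}\langle t-\xi/k\rangle^{4}(\langle t\rangle^{2}+|(k,\xi)|^{2})}{\langle\xi/k\rangle^{4}},
\]
using only $\langle t\rangle^{2}\le\langle t\rangle^{2}+|\xi/k|^{2}$, the required estimate for $H_7$ follows at once from the bootstrap bounds $\mathcal{E}_P(t)+\mathcal{B}_P(t)\lesssim\epsilon_1^{2}$.

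\textbf{Step 2 (bilinear multiplier for $H_8=H_7+V_1H_7$).} By Lemma \ref{lem:8.1JiaHao}(ii) and the $A_R$-bound for $V_1$ in \eqref{eq:boot-V1}, the bounds on $V_1H_7$ reduce to multiplier inequalities of the form (for $k\neq 0$)
\begin{align*}
A_k(t,\xi)\,W_k(t,\xi)\;\lesssim_\delta\;& A_R(t,\xi-\eta)\,A_k(t,\eta)\,W_k(t,\eta)\,\bigl\{\langle\xi-\eta\rangle^{-2}+\langle k,\eta\rangle^{-2}\bigr\},\\
|(\dot A_kA_k)(t,\xi)|^{1/2}W_k(t,\xi)\;\lesssim_\delta\;&\bigl[|(\dot A_R/A_R)(t,\xi-\eta)|^{1/2}+|(\dot A_k/A_k)(t,\eta)|^{1/2}\bigr]\\
&\times A_R(t,\xi-\eta)\,A_k(t,\eta)\,W_k(t,\eta)\,\bigl\{\langle\xi-\eta\rangle^{-2}+\langle k,\eta\rangle^{-2}\bigr\},
\end{align*}
where $W_k(t,\xi):=\frac{\langle t\rangle^{2}(\langle t\rangle^{2}+|(k,\xi)|^{2})^{1/2}\langle t-\xi/k\rangle^{2}}{(\langle t\rangle^{2}+|\xi/k|^{2})^{1/2}\langle\xi/k\rangle^{2}}$. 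In view of \eqref{est:8.3JiaHao-1}--\eqref{est:8.3JiaHao-2}, the whole task reduces to the pointwise comparison
\[
W_k(t,\xi)\;\lesssim_\delta\;W_k(t,\eta)\,e^{\delta\min(\langle\xi-\eta\rangle,\langle k,\eta\rangle)^{1/2}}.
\]

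\textbf{Step 3 (verification of the symbol comparison).} I will verify this by splitting into $|\xi-\eta|\le 10|(k,\eta)|$ and $|\xi-\eta|\ge 10|(k,\eta)|$, exactly as in \eqref{est:par-v-P-prove-5}. In the first regime, each factor $\langle t\rangle^{2}+|(k,\xi)|^{2}$, $\langle t-\xi/k\rangle$, $\langle t\rangle^{2}+|\xi/k|^{2}$, $\langle\xi/k\rangle$ differs from its $\eta$-counterpart by at most a polynomial in $\langle\xi-\eta\rangle$, which is absorbed by the exponential factor with any $\delta>0$. In the second regime one has $|(k,\xi)|\approx|\xi-\eta|$ and the same polynomial-in-$\langle k,\eta\rangle$ control applies. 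The one new feature compared with Lemma \ref{lem:par-v-P} is the denominator $(\langle t\rangle^{2}+|\xi/k|^{2})^{1/2}$; this is harmless since the trivial bound $(\langle t\rangle^{2}+|\xi/k|^{2})^{1/2}\asymp(\langle t\rangle^{2}+|\eta/k|^{2})^{1/2}$ modulo a polynomial in $\langle\xi-\eta\rangle$ holds by the same case analysis. I do not anticipate any real obstacle: the argument is routine once the comparison above is in place, and the dependence on $\epsilon_1$ comes solely through the bootstrap bounds on $V_1$ and on $\mathcal{E}_P+\mathcal{B}_P$. The only point that requires a small amount of care is keeping track of the new $(\langle t\rangle^{2}+|\xi/k|^{2})^{-1/2}$ factor when running the Fourier-support case split, but since this is monotone in $|\xi/k|$ it never causes a loss.
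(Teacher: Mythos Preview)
Your proposal is correct and follows essentially the same route as the paper: first bound $H_7$ directly via $\langle t\rangle^2\le\langle t\rangle^2+|\xi/k|^2$ and the bootstrap control on $\mathcal{E}_P+\mathcal{B}_P$, then handle $H_8=H_7+V_1H_7$ by reducing via Lemma~\ref{lem:8.1JiaHao}(ii) and \eqref{eq:boot-V1} to the multiplier comparison $W_k(t,\xi)\lesssim_\delta W_k(t,\eta)\,e^{\delta\min(\langle\xi-\eta\rangle,\langle k,\eta\rangle)^{1/2}}$, which is verified by the same two-case split as in \eqref{est:par-z-P-prove-3}. The paper's argument is identical in structure, with the only cosmetic difference being that it writes the weight as $\frac{\langle t\rangle^2(\langle t\rangle+|(k,\xi)|)\langle t-\xi/k\rangle^2}{(\langle t\rangle+|\xi/k|)\langle\xi/k\rangle^2}$ rather than your $W_k$.
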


\begin{proof}
  The bounds on $H_7$ follow directly from the bootstrap assumption on $\mathcal{E}_{P}(t)$ and $\mathcal{B}_{P}(t)$  and
   \begin{align*}
     & \dfrac{\langle t\rangle^4(\langle t\rangle^2+|(k,\xi)|^2
    ) \langle t-\xi/k\rangle^4}{(\langle t\rangle^2+|\xi/k|^2
    )\langle\xi/k\rangle^4}\leq \dfrac{\langle t\rangle^2(\langle t\rangle^2+|(k,\xi)|^2
    ) \langle t-\xi/k\rangle^4}{\langle\xi/k\rangle^4}.
  \end{align*}

  Notice that $H_8=H_7+V_1H_7$. By Lemma \ref{lem:8.1JiaHao} (ii) and \eqref{eq:boot-V1}, it suffices to prove the following multiplier bounds for $k\neq 0$,
  \begin{align}\label{est:par-z-P-prove-1}
  \begin{aligned}
    & A_{k}(t,\xi)\dfrac{\langle t\rangle^2(\langle t\rangle+|(k,\xi)|
    ) \langle t-\xi/k\rangle^2}{\big(\langle t\rangle+|\xi/k|
    \big)\langle\xi/k\rangle^2}\\&\lesssim_{\delta}A_{R}(t,\xi-\eta)\cdot A_{k}(t,\eta)\dfrac{\langle t\rangle^2(\langle t\rangle+|(k,\eta)|
    ) \langle t-\eta/k\rangle^2}{\big(\langle t\rangle+|\eta/k|
    \big)\langle\eta/k\rangle^2}\cdot\big\{\langle \xi-\eta\rangle^{-2}+\langle k,\eta\rangle^{-2}\big\}.
     \end{aligned}
  \end{align}
  and
   \begin{align}\label{est:par-z-P-prove-2}
    |(\dot{A}_{k} A_{k})(t,\xi)|^{\f12}&\dfrac{\langle t\rangle^2(\langle t\rangle+|(k,\xi)|
    ) \langle t-\xi/k\rangle^2}{\big(\langle t\rangle+|\xi/k|
    \big)\langle\xi/k\rangle^2}\lesssim_{\delta} \left[|(\dot{A}_{R}/ A_{R})(t,\xi-\eta)|^{\f12}+|(\dot{A}_{k} /A_{k})(t,\eta)|^{\f12}\right]\nonumber\\& \times A_{R}(t,\xi-\eta)\cdot A_{k}(t,\eta)\dfrac{\langle t\rangle^2(\langle t\rangle+|(k,\eta)|
    ) \langle t-\eta/k\rangle^2}{\big(\langle t\rangle+|\eta/k|
    \big)\langle\eta/k\rangle^2}\cdot\big\{\langle \xi-\eta\rangle^{-2}+\langle k,\eta\rangle^{-2}\big\},
  \end{align}

 By considering the cases $|\xi-\eta|\leq 10|(k,\eta)|$ and $|\xi-\eta|\geq 10|(k,\eta)|$, it is easy to see that ($k\neq 0$)
  \begin{align}\label{est:par-z-P-prove-3}
     & \dfrac{\langle t\rangle^2(\langle t\rangle+|(k,\xi)|
    ) \langle t-\xi/k\rangle^2}{\big(\langle t\rangle+|\xi/k|
    \big)\langle\xi/k\rangle^2} \lesssim_{\delta} \dfrac{\langle t\rangle^2(\langle t\rangle+|(k,\eta)|
    ) \langle t-\eta/k\rangle^2}{\big(\langle t\rangle+|\eta/k|
    \big)\langle\eta/k\rangle^2}\mathrm{e}^{\delta\min(\langle \xi-\eta\rangle,\langle k,\eta\rangle)^{1/2}}.
  \end{align}
Then the bound \eqref{est:par-z-P-prove-1} follows from \eqref{est:8.3JiaHao-1} and \eqref{est:par-z-P-prove-3}; the bound \eqref{est:par-z-P-prove-2} follows from \eqref{est:8.3JiaHao-1}, \eqref{est:8.3JiaHao-2} and \eqref{est:par-z-P-prove-3}.
\end{proof}

We now turn to the proof of \eqref{est:N5bound}.

\begin{proof}
We write
\begin{align*}
   & \left|2\mathbf{Re}\int_{1}^{t}\sum_{k\in\mathbb{Z}}\int_{\mathbb{R}}A_k(s,\xi)^2 \widetilde{N_5} (s,k,\xi)\overline{\widetilde{f}(s,k,\xi)}\mathrm{d}\xi\mathrm{d}s\right|\\
   &=2\left| \mathbf{Re}\sum_{k,l\in\mathbb{Z}}\int_{1}^{t}\int_{\mathbb{R}^2}A_k(s,\xi)^2\widetilde{H_8}(s,k-l,\xi-\eta)i\eta\widetilde{a}(s,l,\eta)
   \overline{\widetilde{f}(s,k,\xi)}\mathrm{d} \xi\mathrm{d}\eta\mathrm{d}s\right|.
\end{align*}

Let the sets $R_0,R_1,R_2,R_3$ be defined by \eqref{eq:R0}-\eqref{eq:R3} and we denote that for $j=0,1,2,3$,
\begin{align*}
   \mathcal{V}^{(1)}_j=\int_{1}^{t}\sum_{k,l\in\mathbb{Z}}\int_{\mathbb{R}^2} &\mathbf{1}_{R_j}\big((k,\xi),(l,\eta)\big)|\eta| A_k(s,\xi)^2 | \widetilde{H_8}(s,k-l,\xi-\eta)|\\
   &\times|\widetilde{a}(s,l,\eta)||\widetilde{f}(s,k,\xi)|\mathrm{d} \xi\mathrm{d}\eta\mathrm{d}s.
\end{align*}

Let $(\sigma,\rho)=(k-l,\xi-\eta)$. For $j=0$, we get by Lemma \ref{lem:8.5JiaHao} (i) and  \eqref{est:par-t-A*-1}  that
\begin{align}\label{est:V0(1)}
 { \mathcal{V}_0^{(1)}} \lesssim_{\delta}&\int_{0}^{t}\sum_{k,l\in\mathbb{Z}}\int_{\mathbb{R}^2} \sqrt{|(A_k\dot{A}_{k})(s,\xi)|}|\widetilde{f}(s,k,\xi)| \sqrt{|(A_l\dot{A}_{l})(s,\eta)|}|\widetilde{a}(s,l,\eta)|\langle s-\rho/\sigma\rangle^2\nonumber\\
  &\times \mathbf{1}_{\sigma\neq 0}\dfrac{|\sigma|\langle s\rangle^2}{|\rho/\sigma|^2+\langle s\rangle^2}A_{\sigma}(s,\rho)| \widetilde{H_8}(s,\sigma,\rho)|\mathrm{e}^{-(\delta_0/200)\langle \sigma,\rho\rangle^{1/2}}\mathrm{d}\xi\mathrm{d}\eta\mathrm{d}s\nonumber\\
  \lesssim_{\delta}&\left\| \sqrt{|(A_k\dot{A}_{k})(s,\xi)|}\widetilde{f}(s,k,\xi)\right\|_{L_s^2L^2_{k,\xi}} \left\|\sqrt{|(A_l^*\dot{A}^*_{l})(s,\eta)|}\widetilde{a}(s,l,\eta)\right\|_{L_s^2L^2_{l,\eta}}\\
  &\times \left\| \mathbf{1}_{\sigma\neq 0}A_{\sigma}(s,\rho)\langle s-\rho/\sigma\rangle
  ^2\dfrac{|\sigma|\langle s\rangle^2}{|\rho/\sigma|^2+\langle s\rangle^2} \widetilde{H_8}(s,\sigma,\rho)\mathrm{e}^{-(\delta_0/300)\langle \sigma,\rho\rangle^{1/2}}\right\|_{L^{\infty}_{s}L^2_{\sigma,\rho}}.\nonumber
\end{align}
Thanks to $|\sigma|\langle\rho/\sigma\rangle^2\leq \big(|\rho/\sigma|+\langle s\rangle\big) \big(\langle s\rangle+|(\sigma,\rho)|\big)$, we have
\begin{align}\label{est:N5bound-prove-1}
  \langle s-\rho/\sigma\rangle^2\dfrac{|\sigma|\langle s\rangle^2}{|\rho/\sigma|^2+\langle s\rangle^2} &\lesssim \dfrac{\langle s\rangle^2(\langle s\rangle+|(\sigma,\rho)|
    ) \langle s-\rho/\sigma\rangle^2}{\big(\langle s\rangle+|\rho/\sigma|
    \big)\langle\rho/\sigma\rangle^2}.
\end{align}
This along with  Lemma \ref{lem:par-z-P}  gives
\begin{align*}
  \left\| \mathbf{1}_{\sigma\neq 0}A_{\sigma}(s,\rho) \langle s-\rho/\sigma\rangle^2\dfrac{|\sigma|\langle s\rangle^2}{|\rho/\sigma|^2+\langle s\rangle^2} \widetilde{H_8}(s,\sigma,\rho)\mathrm{e}^{-(\delta_0/300)\langle \sigma,\rho\rangle^{1/2}}\right\|_{L^{\infty}_{s}L^2_{\sigma,\rho}} & \lesssim_{\delta}\epsilon_1,
\end{align*}
from which and \eqref{est:V0(1)}, we infer that
\begin{align*}
   \mathcal{V}_0^{(1)} &\lesssim_{\delta}  \epsilon_1^3.
\end{align*}

For $j=1$,  we get by Lemma \ref{lem:A*R0R1R2-1-R1} and Lemma \ref{lem:par-z-P} that
\begin{align*}
  &\mathcal{V}_1^{(1)} \lesssim_{\delta}\int_{0}^{t}\sum_{k,l\in\mathbb{Z}}\int_{\mathbb{R}^2} \sqrt{|(A_k\dot{A}_{k})(s,\xi)|}|\widetilde{f}(s,k,\xi)| \sqrt{|(A^*_l\dot{A}^*_{l})(s,\eta)|}|\widetilde{a}(s,l,\eta)|\\
  &\times \mathbf{1}_{\sigma\neq 0}\langle s\rangle^3 A_{\sigma}(s,\rho)| \widetilde{H_8}(s,\sigma,\rho)|\mathrm{e}^{-(\delta_0/200)\langle \sigma,\rho\rangle^{1/2}}\mathrm{d}\xi\mathrm{d}\eta\mathrm{d}s\\
  \lesssim_{\delta}&\left\| \sqrt{|(A_k\dot{A}_{k})(s,\xi)|}\widetilde{f}(s,k,\xi)\right\|_{L_s^2L^2_{k,\xi}} \left\|\sqrt{|(A_l^*\dot{A}^*_{l})(s,\eta)|}\widetilde{a}(s,l,\eta)\right\|_{L_s^2L^2_{l,\eta}}\\
  &\times \left\| \mathbf{1}_{\sigma\neq 0}A_{\sigma}(s,\rho)\dfrac{\langle s\rangle^2(\langle s\rangle+|(\sigma,\rho)|
    ) \langle s-\rho/\sigma\rangle^2}{\big(\langle s\rangle+|\rho/\sigma|
    \big)\langle\rho/\sigma\rangle^2} \widetilde{H_8}(s,\sigma,\rho)\mathrm{e}^{-(\delta_0/300)\langle \sigma,\rho\rangle^{1/2}}\right\|_{L^{\infty}_{s}L^2_{\sigma,\rho}}
  \lesssim_{\delta} \epsilon_1^3.
\end{align*}

Similarly, for $j=2$, we get by Lemma \ref{lem:8.5JiaHao} (ii), Lemma \ref{lem:par-z-P} and \eqref{est:N5bound-prove-1} that
\begin{align*}
  \mathcal{V}_2^{(1)} \lesssim_{\delta}&\int_{0}^{t}\sum_{k,l\in\mathbb{Z}}\int_{\mathbb{R}^2} \mathbf{1}_{\sigma\neq 0}\sqrt{|(A_{\sigma}\dot{A}_{\sigma})(s,\rho)|}\dfrac{|\sigma|\langle s\rangle^2}{|\rho/\sigma|^2+\langle s\rangle^2}\langle s-\rho/\sigma\rangle^2 |\widetilde{H_8}(s,\sigma,\rho)| \\
  &\times \sqrt{|(A_k\dot{A}_{k})(s,\xi)|}|\widetilde{f}(s,k,\xi)|A_{l}(s,\eta) \mathrm{e}^{-(\delta_0/200)\langle l,\eta\rangle^{1/2}}|\widetilde{a}(s,l,\eta)|\mathrm{d}\xi\mathrm{d}\eta\mathrm{d}s\\
  \lesssim_{\delta}&\left\| \sqrt{|(A_k\dot{A}_{k})(s,\xi)|}\widetilde{f}(s,k,\xi)\right\|_{L_s^2L^2_{k,\xi}} \left\|A_l(s,\eta)\mathrm{e}^{-(\delta_0/300)\langle l,\eta\rangle^{1/2}}\widetilde{a}(s,l,\eta)\right\|_{L^{\infty}_sL^2_{l,\eta}}\\
  &\times \left\| \mathbf{1}_{\sigma\neq 0}\sqrt{|(A_{\sigma}\dot{A}_{\sigma})(s,\rho)|}\dfrac{|\sigma|\langle s\rangle^2}{|\rho/\sigma|^2+\langle s\rangle^2}\langle s-\rho/\sigma\rangle^2\widetilde{H_8}(s,\sigma,\rho)\right\|_{L_{s}^{2}L^2_{\sigma,\rho}}\\
  \lesssim_{\delta}&\left\| \sqrt{|(A_k\dot{A}_{k})(s,\xi)|}\widetilde{f}(s,k,\xi)\right\|_{L_s^2L^2_{k,\xi}} \left\|A_l^*(s,\eta)\mathrm{e}^{-(\delta_0/300)\langle l,\eta\rangle^{1/2}}\widetilde{a}(s,l,\eta)\right\|_{L^{\infty}_sL^2_{l,\eta}}\\
  &\times \left\| \mathbf{1}_{\sigma\neq 0}\sqrt{|(A_{\sigma}\dot{A}_{\sigma})(s,\rho)|}\dfrac{\langle s\rangle^2(\langle s\rangle+|(\sigma,\rho)|
    ) \langle s-\rho/\sigma\rangle^2}{\big(\langle s\rangle+|\rho/\sigma|
    \big)\langle\rho/\sigma\rangle^2}\widetilde{H_8}(s,\sigma,\rho)\right\|_{L_{s}^{2}L^2_{\sigma,\rho}}  \lesssim_{\delta}\epsilon_1^3.
\end{align*}
The case $j=3$ is similar to the case $j=2$. Thus, $\mathcal{V}^{(1)}_j\lesssim_{\delta}\epsilon_1^3$ for all $j\in\{0,1,2,3\}$, and the desired bound \eqref{est:N5bound} follows.
\end{proof}

\section{Improved control of the stream function}

 In this section, we prove an improved control for the stream function under the bootstrap assumptions in Proposition \ref{prop:Bootstrap}.

\begin{proposition}\label{prop:Improved-Theta}
  With the definitions and assumptions in Proposition \ref{prop:Bootstrap}, we have
  \begin{align}\nonumber
     & \mathcal{E}_{\Theta}(t)+\mathcal{B}_{\Theta}(t)\lesssim_{\delta} \epsilon_1^3 \qquad \text{for any}\ t\in[1,T].
  \end{align}
\end{proposition}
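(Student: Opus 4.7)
The plan is to reduce the bound to the already-controlled quantity $\mathcal{E}_f + \mathcal{B}_f$ via the elliptic equation \eqref{eq:phi}. Expanding $(V_1+1)^2 = 1 + 2V_1 + V_1^2$ and isolating $(\partial_z^2 + (\partial_v-t\partial_z)^2)\phi = \Theta$ gives the static identity
\begin{align*}
\Theta = f - (2V_1 + V_1^2)(\partial_v - t\partial_z)^2\phi - V_2(\partial_v - t\partial_z)\phi,
\end{align*}
which expresses $\Theta$ as $f$ plus quadratic corrections in the coordinate-function quantities $V_1, V_1^2, V_2$ (each of size $\epsilon_1$ in the $A_R$ norm by Lemma \ref{lem:4.2JiaHao}) multiplied by $(\partial_v - t\partial_z)$-derivatives of $\phi$. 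Inserting this identity into \eqref{eq:energy-ETheta}--\eqref{eq:energy-BTheta}, the contribution of $f$ is immediately dominated by $\mathcal{E}_f(t) + \mathcal{B}_f(t)$ since the extra factor $k^2\langle t\rangle^2/(|\xi|^2 + k^2\langle t\rangle^2)$ in the $\Theta$-weights is $\le 1$.

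For the two bilinear error terms I will establish a weighted bilinear estimate in the spirit of Lemmas \ref{lem:ABnorm-product}--\ref{lem:A0norm-product}. The key simplification is that $V_1, V_2$ are zero modes in $z$, so the $k$-frequency is preserved under convolution and the required multiplier inequality reduces to the one-variable estimate
\begin{align*}
A_k(t,\xi)\frac{|k|\langle t\rangle}{(|\xi|^2 + k^2\langle t\rangle^2)^{1/2}} \lesssim_\delta A_R(t,\xi-\eta)\, A_k(t,\eta)\, \frac{|k|\langle t\rangle}{(|\eta|^2 + k^2\langle t\rangle^2)^{1/2}}\bigl\{\langle \xi-\eta\rangle^{-2} + \langle k,\eta\rangle^{-2}\bigr\},
\end{align*}
together with its $|\dot A_k A_k|^{1/2}$ analogue. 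Both follow from \eqref{est:8.3JiaHao-1}--\eqref{est:8.3JiaHao-2} combined with the elementary bound $(|\eta|^2+k^2\langle t\rangle^2)^{1/2}/(|\xi|^2+k^2\langle t\rangle^2)^{1/2} \lesssim \langle\xi-\eta\rangle$, which uses $k\langle t\rangle\ge 1$ for $k\ne 0$. The $\phi$-derivative factors are in turn reduced back to $\Theta$ via the Fourier-side identity $\widetilde{(\partial_v-t\partial_z)^j\phi}(t,k,\xi) = (i(\xi-kt))^j\widetilde{\Theta}(t,k,\xi)/(k^2+(\xi-kt)^2)$; the symbol $|\xi-kt|^j/(k^2+(\xi-kt)^2)$ is uniformly bounded for $j=1,2$ when $k\ne 0$, so the weighted norms of $(\partial_v-t\partial_z)^j\phi$ are dominated pointwise by those of $\Theta$ and hence by $(\mathcal{E}_\Theta + \mathcal{B}_\Theta)^{1/2}$.

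Combining these ingredients yields the absorptive inequality
\begin{align*}
\mathcal{E}_\Theta(t) + \mathcal{B}_\Theta(t) \lesssim_\delta \mathcal{E}_f(t) + \mathcal{B}_f(t) + \epsilon_1\bigl(\mathcal{E}_\Theta(t) + \mathcal{B}_\Theta(t)\bigr).
\end{align*}
Choosing $\epsilon_1$ small enough to absorb the last term on the right and invoking Proposition \ref{prop:Improved-f} to conclude $\mathcal{E}_f + \mathcal{B}_f \lesssim_\delta \epsilon_1^3$ gives the claim. The main technical obstacle is that the $\Theta$-weight $k^2\langle t\rangle^2/(|\xi|^2 + k^2\langle t\rangle^2)$ attenuates $\Theta$ precisely in the pre-resonant regime $|\xi/k| \gg \langle t\rangle$, and one must check that this attenuation is consistently propagated through the frequency shifts induced by the $V_1, V_2$ factors and that it precisely balances the gain from inverting the sheared Laplacian when estimating $(\partial_v-t\partial_z)^j\phi$. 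The elementary comparison above handles both points, but relies crucially on $k\ne 0$, which is exactly why only non-zero $z$-modes appear in the definition of $\mathcal{E}_\Theta$.
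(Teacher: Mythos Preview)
Your approach is essentially the same as the paper's, which simply defers to \cite{IJ} via Lemma~\ref{lem:5.3JiaHao}: decompose $\Theta=f+g_{11}+g_{12}$ from the elliptic equation, bound the $f$ piece trivially by $\mathcal{E}_f+\mathcal{B}_f$, and control the bilinear remainders by product estimates with the coordinate functions (there the $g_{1j}$ pieces are estimated directly as $\epsilon_1^2\cdot\epsilon_1^2=\epsilon_1^4\lesssim\epsilon_1^3$ using the bootstrap hypothesis on $\Theta$, which is equivalent to your absorption). Two minor points: Lemma~\ref{lem:4.2JiaHao} only controls $\langle\partial_v\rangle^{-1}V_2$ (not $V_2$) in the $A_R$ norm, so for $g_{12}$ you must either absorb the extra $\langle\xi-\eta\rangle$ into the exponential decay of \eqref{est:8.3JiaHao-1} as in the proof of \eqref{eq:ABnorm-product-2}, or use the sharper first-order symbol bound $|\xi-kt|/(k^2+(\xi-kt)^2)\le 1/(2|k|)$; and the absorptive factor in your displayed inequality should be $\epsilon_1^2$, not $\epsilon_1$, though this does not affect the conclusion.
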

Recall the elliptic equation \eqref{eq:phi}
\begin{align*}
  &\partial_z^2\phi+(V_1+1)^2(\partial_v-t\partial_z)^2\phi+V_2(\partial_v- t\partial_z)\phi=f,
\end{align*}
which gives
\begin{align}\nonumber
  & \partial_z^2\phi+(\partial_v-t\partial_z)^2\phi =f-(2V_1+V_1^2)(\partial_v-t\partial_z)^2\phi- V_2(\partial_v- t\partial_z)\phi.
\end{align}
Therefore, by the definition of \eqref{eq:Theta}, we get
\begin{align}\nonumber
  \Theta &=f-(2V_1+V_1^2)(\partial_v-t\partial_z)^2\phi- V_2(\partial_v- t\partial_z)\phi=f+g_{11}+g_{12},
\end{align}
where
\begin{align}\nonumber
   & g_{11}= -(2V_1+V_1^2)(\partial_v-t\partial_z)^2\phi,\qquad g_{12}=- V_2(\partial_v- t\partial_z)\phi.
\end{align}

Proposition \ref{prop:Improved-Theta} follows from the following lemma,  which has been proved in \cite{IJ}.

\begin{lemma}\label{lem:5.3JiaHao}
  For any $t\in[1,T]$ and $G\in\{f,g_{11},g_{1,2}\}$, we have
  \begin{align}\nonumber
    &\sum_{k\in\mathbb{Z}\setminus \{0\}}\int_{\mathbb{R}}A_{k}(t,\xi)^2\dfrac{k^2\langle t\rangle^2}{|\xi|^2+k^2\langle t\rangle^2}|\widetilde{G}(t,k,\xi)|^2\mathrm{d}\xi\lesssim_{\delta }\epsilon_1^3,
  \end{align}
  and
  \begin{align}\nonumber
    &
    \int_{1}^{T} \sum_{k\in\mathbb{Z}\setminus \{0\}}\int_{\mathbb{R}}|\dot{A}_{k}(s,\xi)|A_{k}(s,\xi)\dfrac{k^2\langle s\rangle^2}{|\xi|^2+k^2\langle s\rangle^2}|\widetilde{G}(s,k,\xi)|^2\mathrm{d}\xi \mathrm{d}s\lesssim_{\delta }\epsilon_1^3.
  \end{align}
\end{lemma}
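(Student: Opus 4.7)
For $G=f$, the bound is immediate: since $\frac{k^2\langle t\rangle^2}{|\xi|^2+k^2\langle t\rangle^2}\le 1$, the left-hand sides are dominated by $\mathcal{E}_f(t)$ and $\mathcal{B}_f(t)$, which are $\lesssim_\delta \epsilon_1^3$ by Proposition \ref{prop:Improved-f}.

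The substantive cases are $g_{11}$ and $g_{12}$. The plan is to treat these as binary products and distribute the weight via the paraproduct-style multiplier inequalities already used throughout the paper (Lemma \ref{lem:8.1JiaHao} combined with the commutator estimates from \eqref{est:8.3JiaHao-1}--\eqref{est:8.3JiaHao-2}). First I convert the $\phi$-factors into $\Theta$-factors via the elliptic relation
\[
\widetilde{(\partial_v-t\partial_z)\phi}(t,k,\xi)=\frac{i(\xi-tk)}{k^2+(\xi-tk)^2}\,\widetilde\Theta(t,k,\xi),\qquad \widetilde{(\partial_v-t\partial_z)^2\phi}=\frac{-(\xi-tk)^2}{k^2+(\xi-tk)^2}\,\widetilde\Theta,
\]
so that on the high-frequency ($k\ne 0$) side of each product the available control is precisely the $\mathcal{E}_\Theta$-weight $\frac{|k|\langle t\rangle}{(|\xi|^2+k^2\langle t\rangle^2)^{1/2}}$, enhanced by negative powers of $\langle t-\xi/k\rangle$. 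The coordinate factors $V_1$, $V_1^2$, and $V_2=(V_1+1)\partial_vV_1$ are zero-mode in $z$, so the frequency transfer $(k,\xi)\mapsto(k,\eta)+(0,\xi-\eta)$ preserves $k$, and the bounds \eqref{eq:boot-V1} on $V_1,V_1^2,\langle\partial_v\rangle^{-1}V_2$ from Lemma \ref{lem:4.2JiaHao} will furnish the $A_R$-controlled low-frequency factor.

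The key multiplier bounds I need to establish, for $k\ne 0$, are:
\begin{align*}
A_k(t,\xi)\frac{|k|\langle t\rangle}{(|\xi|^2+k^2\langle t\rangle^2)^{1/2}}\frac{(\xi-tk)^2}{k^2+(\xi-tk)^2}&\lesssim_\delta A_R(t,\xi-\eta)A_k(t,\eta)\frac{|k|\langle t\rangle}{(|\eta|^2+k^2\langle t\rangle^2)^{1/2}}\\
&\quad\times\frac{(\eta-tk)^2}{k^2+(\eta-tk)^2}\bigl\{\langle\xi-\eta\rangle^{-2}+\langle k,\eta\rangle^{-2}\bigr\},
\end{align*}
and the analogous version for $g_{12}$ (with one less power of $(\xi-tk)$ and an extra $\langle\xi-\eta\rangle$ to absorb the $\partial_v$ in $V_2$), together with their dissipative counterparts involving $|(\dot A_kA_k)|^{1/2}$ on the left and $|\dot A_R/A_R|^{1/2}+|\dot A_k/A_k|^{1/2}$ on the right. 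These reduce, via \eqref{est:8.3JiaHao-1}--\eqref{est:8.3JiaHao-2}, to the scalar inequality
\[
\frac{|k|\langle t\rangle}{(|\xi|^2+k^2\langle t\rangle^2)^{1/2}}\lesssim_\delta\frac{|k|\langle t\rangle}{(|\eta|^2+k^2\langle t\rangle^2)^{1/2}}\,\mathrm{e}^{\delta\min(\langle\xi-\eta\rangle,\langle k,\eta\rangle)^{1/2}},
\]
which is elementary (split into the cases $|\xi-\eta|\le 10|(k,\eta)|$ and the reverse), together with the uniformly bounded ratio $(\xi-tk)^2/\bigl(k^2+(\xi-tk)^2\bigr)\le 1$; a similar and slightly easier chain treats $g_{12}$.

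With these multiplier bounds in hand, Lemma \ref{lem:8.1JiaHao}(ii) converts the product estimate into an $L^2$-bound of the form $\|V_1 h\|_X+\|V_1^2 h\|_X\lesssim_\delta\epsilon_1\|h\|_X$, where $\|\cdot\|_X$ is the norm appearing in the statement of Lemma \ref{lem:5.3JiaHao} (for $h=(\partial_v-t\partial_z)^2\phi$ or $(\partial_v-t\partial_z)\phi$). By the elliptic conversion above, $\|h\|_X\lesssim(\mathcal{E}_\Theta+\mathcal{B}_\Theta)^{1/2}\le\epsilon_1$, and the factor of $\epsilon_1$ from $V_1$ together with the $\epsilon_1$ from $h$ yields an $\epsilon_1^2$ bound on $g_{11}$; absorbing this into $\epsilon_1^3$ is then automatic from the bootstrap hierarchy $\mathcal{E}_\Theta+\mathcal{B}_\Theta\lesssim_\delta\epsilon_1^3$ proved through the chain Proposition \ref{prop:Improved-P}–\ref{prop:Improved-f}. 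The main obstacle I anticipate is bookkeeping the two slightly different weights $\frac{|k|\langle t\rangle}{(|\xi|^2+k^2\langle t\rangle^2)^{1/2}}$ versus $\frac{k^2\langle t\rangle^2}{|\xi|^2+k^2\langle t\rangle^2}$ so that the commutator analysis transfers cleanly under the product decomposition, especially when the low-frequency coordinate factor $V_2$ (rather than $V_1$) contributes an extra derivative that must be absorbed into $\langle\xi-\eta\rangle$ on the $A_R$-side.
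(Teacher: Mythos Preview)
The paper does not prove this lemma; it simply cites \cite{IJ}, so there is no ``paper's own proof'' to compare against. Your sketch is essentially the argument one finds in \cite{IJ}: for $G=f$ the weight is bounded by $1$ and Proposition~\ref{prop:Improved-f} applies; for $g_{11},g_{12}$ one uses the bilinear scheme of Lemma~\ref{lem:8.1JiaHao}(ii) together with the multiplier comparison \eqref{est:8.3JiaHao-1}--\eqref{est:8.3JiaHao-2}, feeding in \eqref{eq:boot-V1} for $V_1,V_1^2,\langle\partial_v\rangle^{-1}V_2$ on the $A_R$-side and the bootstrap bound on $\Theta$ on the $A_k$-side.

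One correction: your closing sentence invokes ``the bootstrap hierarchy $\mathcal{E}_\Theta+\mathcal{B}_\Theta\lesssim_\delta\epsilon_1^3$ proved through the chain Proposition~\ref{prop:Improved-P}--\ref{prop:Improved-f}''. Neither of those propositions bounds $\mathcal{E}_\Theta+\mathcal{B}_\Theta$; that is precisely Proposition~\ref{prop:Improved-Theta}, which is what Lemma~\ref{lem:5.3JiaHao} is used to prove, so this reference is circular. Fortunately it is also unnecessary: you have already obtained $\|g_{1j}\|_X\lesssim_\delta\epsilon_1\cdot\epsilon_1=\epsilon_1^2$ from the bootstrap bounds alone, so the squared norm in the lemma is $\lesssim_\delta\epsilon_1^4\le\epsilon_1^3$. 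Drop the last clause and the argument is clean.
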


%
%

 \section{Improved control of the density}

  In this section, we prove an improved control for the density  $a$ under the bootstrap assumptions in Proposition \ref{prop:Bootstrap}.

 \begin{proposition}\label{prop:Improved-a}
   With the definitions and assumptions in Proposition \ref{prop:Bootstrap}, we have
   \begin{align}\nonumber
      & \mathcal{E}_a(t)+\mathcal{B}_a(t)\lesssim_{\delta} \epsilon_1^3\quad \text{for any}\ t\in[1,T].
   \end{align}
 \end{proposition}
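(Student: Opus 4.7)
The plan is to follow the template of Section~4 verbatim, exploiting the observation that the density equation \eqref{eq:a} has exactly the same transport/paraproduct structure as the vorticity equation \eqref{eq:f}, except that the problematic pressure commutator $\{P,a\}$ is absent from \eqref{eq:a}. Consequently, no analogues of Lemmas~\ref{lem:N4}--\ref{lem:N5} are required, and only the three transport nonlinearities need to be estimated. The first step is the energy identity: since $A_k^*(t,\xi)=A_k(t,\xi)(1+(k^2+|\xi|)/\langle t\rangle^2)^{1/2}$ is a product of two factors, each nonincreasing in $t$, the same monotonicity argument used for $\mathcal{E}_f$ yields
\begin{align*}
\mathcal{E}_a(t)+2\mathcal{B}_a(t)=\mathcal{E}_a(1)+2\mathbf{Re}\int_1^t\sum_{k\in\mathbb{Z}}\int_{\mathbb{R}}A_k^*(s,\xi)^2\,\partial_s\widetilde{a}(s,k,\xi)\,\overline{\widetilde{a}(s,k,\xi)}\,\mathrm{d}\xi\mathrm{d}s.
\end{align*}

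Second, using \eqref{eq:a} I would split $\partial_t a=\mathcal{M}_1+\mathcal{M}_2+\mathcal{M}_3$ with
\begin{align*}
\mathcal{M}_1=(1+V_1)\partial_v\mathbb{P}_{\neq0}\phi\,\partial_z a,\qquad \mathcal{M}_2=-(1+V_1)\partial_z\mathbb{P}_{\neq0}\phi\,\partial_v a,\qquad \mathcal{M}_3=-V_3\partial_v a,
\end{align*}
and reduce Proposition~\ref{prop:Improved-a} to the three bounds
\begin{align*}
\Big|2\mathbf{Re}\int_1^t\sum_k\int A_k^*(s,\xi)^2\widetilde{\mathcal{M}_i}\,\overline{\widetilde{a}}\,\mathrm{d}\xi\mathrm{d}s\Big|\lesssim_{\delta}\epsilon_1^3,\qquad i=1,2,3.
\end{align*}
Each of these is the starred counterpart of Lemmas~\ref{lem:N1}, \ref{lem:N2}, \ref{lem:N3}: one decomposes the bilinear interaction into the regions $R_0,R_1,R_2,R_3$ of \eqref{eq:R0}--\eqref{eq:R3}, uses $CK$ factors $\sqrt{|\dot A_k^*A_k^*|}$ extracted from the symbol, and closes with the bootstrap control on $\mathcal{E}_\Theta,\mathcal{B}_\Theta$ for $\phi_{\neq}$ (as in the $f$-estimate, via $\Theta=(\partial_z^2+(\partial_v-t\partial_z)^2)\phi$), the coordinate bounds \eqref{eq:boot-V1}--\eqref{eq:boot-V3} for $V_1,V_3$, and the bootstrap control on $\mathcal{E}_a,\mathcal{B}_a$ itself. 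The multiplier inequalities needed are precisely the starred versions of the paraproduct comparison lemmas assembled in Appendices~B and~C of the paper (for instance the estimate \eqref{est:par-t-A*-1} already invoked in Section~5, together with the $A^*$-versions of Lemmas~\ref{lem:8.4JiaHao} and~\ref{lem:8.5JiaHao}).

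The main obstacle will be the transport nonlinearity $\mathcal{M}_3=-V_3\partial_v a$, because $V_3$ is not compactly supported and the outer weight $A_k^*$ carries the extra factor $m_k(t,\xi):=(1+(k^2+|\xi|)/\langle t\rangle^2)^{1/2}$ that must be distributed between the two factors in the bilinear estimate. The strategy is the one already carried out for $\mathcal{N}_3$ in Lemma~\ref{lem:N3}: one splits $((k,\xi),(k,\eta))$ into the four regions $\Sigma_0,\ldots,\Sigma_3$ and the low-frequency piece $|\xi-\eta|\le1$, and applies the relevant $A_{NR}$-comparison lemma. The new point is that one must verify $m_k(t,\xi)\lesssim_{\delta} m_k(t,\eta)\,\mathrm{e}^{\delta\min(\langle\xi-\eta\rangle,\langle k,\eta\rangle)^{1/2}}$ (which is elementary since $k^2+|\xi|\le k^2+|\eta|+|\xi-\eta|$), so that the extra factor is absorbed into the $a$-side weight and controlled by $\mathcal{E}_a+\mathcal{B}_a$; and when the frequency of $V_3$ dominates, one uses $m_k(t,\xi)\lesssim 1+\langle\xi-\eta\rangle^{1/2}/\langle t\rangle\cdot(1+|k,\eta|^{1/2}/\langle t\rangle)$ and absorbs the $\langle t\rangle$-loss into the $\langle t\rangle^{7/2}/\langle\xi\rangle^{3/2}$ weight controlling $\widetilde{\partial_v V_3}$ in \eqref{eq:boot-V3}. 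The estimates for $\mathcal{M}_1$ and $\mathcal{M}_2$ are strictly easier since $\phi_{\neq}$ enjoys $\langle t\rangle^{-2}$ decay through $\Theta$, and the same reasoning used in Lemmas~\ref{lem:N1}--\ref{lem:N2} goes through with $A_k$ replaced by $A_k^*$ after invoking the starred multiplier bounds of Lemma~\ref{lem:ABnorm-product-a}. Choosing $\epsilon_1$ sufficiently small depending on $\delta$, the three contributions combine to give $\mathcal{E}_a(t)+\mathcal{B}_a(t)\lesssim_{\delta}\epsilon_1^3$ as claimed.
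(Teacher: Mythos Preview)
Your proposal is correct and follows essentially the same approach as the paper: the energy identity for $A_k^*$, the three-term split $\mathcal{N}_6,\mathcal{N}_7,\mathcal{N}_8$ (your $\mathcal{M}_1,\mathcal{M}_2,\mathcal{M}_3$), and the reduction to starred multiplier inequalities handled region by region on $R_0,\ldots,R_3$ (resp.\ $\Sigma_0,\ldots,\Sigma_3$ plus the low-frequency piece $|\rho|\le1$ for the $V_3$ term). One small correction: the relevant starred bilinear lemmas are not Lemma~\ref{lem:ABnorm-product-a} (which concerns products in the $\|\cdot\|_A$ norm used for the pressure) but rather Lemmas~\ref{lem:A*R0R1R2-1}, \ref{lem:A*R0R1R2-2}, \ref{lem:A*S0S1S2-1}, and \ref{lem:rholeq1} from Appendix~C, together with the stream-function bounds of Lemmas~\ref{lem:4.5JiaHao} and \ref{lem:4.7JiaHao}; the paper also uses the symmetrization trick (exploiting that $H_2,H_4,V_3$ are real-valued) to pass to the commutator forms $|lA_k^{*2}(\xi)-kA_l^{*2}(\eta)|$ and $|\eta A_k^{*2}(\xi)-\xi A_l^{*2}(\eta)|$, which is what makes the $R_1$/$\Sigma_1$ regions close.
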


 It is easy to find that
 \begin{align*}
    \dfrac{\mathrm{d}}{\mathrm{d}t}\mathcal{E}_a(t)=&\sum_{k\in\mathbb{Z}}
    \int_{\mathbb{R}}2\dot{A}^{*}_k(t,\xi)A_k^{*}(t,\xi)|\widetilde{a}(t,k,\xi)|^2\mathrm{d}\xi\\
    &+2\mathbf{Re}\sum_{k\in\mathbb{Z}}\int_{\mathbb{R}}A_k^*(t,\xi)^2 \partial_t\widetilde{a} (t,k,\xi)\overline{\widetilde{a}(t,k,\xi)}\mathrm{d}\xi,
 \end{align*}
 which gives
 \begin{align}
    &\mathcal{E}_a(t)+2\mathcal{B}_{a}(t) =\mathcal{E}_{a}(1) +2\mathbf{Re}\int_{1}^{t}\sum_{k\in\mathbb{Z}}\int_{\mathbb{R}}A^*_k(s,\xi)^2 \partial_s\widetilde{a} (s,k,\xi)\overline{\widetilde{a}(s,k,\xi)}\mathrm{d}\xi\mathrm{d}s.
 \end{align}
 Since  $\dot{A}_k^*(t,\xi)\le 0$,  it suffices to prove that
 \begin{align}
   & \left|2\mathbf{Re}\int_{1}^{t}\sum_{k\in\mathbb{Z}}\int_{\mathbb{R}}A_k^*(s,\xi)^2 \partial_s\widetilde{a} (s,k,\xi)\overline{\widetilde{a}(s,k,\xi)}\mathrm{d}\xi\mathrm{d}s\right| \lesssim_{\delta} \epsilon_1^3.
 \end{align}
Recall that
  \begin{align*}
    & \partial_sa=\mathcal{N}_6+\mathcal{N}_7+\mathcal{N}_8,
 \end{align*}
where
 \begin{align*}
    & \mathcal{N}_6=(1+V_1)\partial_v\mathbb{P}_{\neq 0}\phi
    \partial_za,\quad \mathcal{N}_7=-(1+V_1)\partial_z\mathbb{P}_{\neq 0}\phi\partial_va,\quad \mathcal{N}_8=-V_3\partial_va.
 \end{align*}

 Proposition \ref{prop:Improved-a} is a direct consequence of  Lemma \ref{lem:N6bound}, Lemma \ref{lem:N7bound} and
 Lemma \ref{lem:N8bound}.

\subsection{Nonlinear estimate for $\mathcal{N}_6$}

Let \begin{align}\label{eq:H12}
      &H_1=\partial_v\mathbb{P}_{\neq 0}\phi,\quad H_2=(1+V_1)\partial_v\mathbb{P}_{\neq0}\phi.
    \end{align}
 Recall the following lemma from \cite{IJ}.
\begin{lemma}\label{lem:4.5JiaHao}
  For any $t\in[1,T)$ and $j\in\{1,2\}$, we have
    \begin{align*}
   & \sum_{k\in\mathbb{Z}\setminus \{0\}}\int_{\mathbb{R}}A_k(t,\xi)^2\dfrac{\langle t\rangle^2}{|\xi/k|^2+\langle t\rangle^2}\dfrac{\langle t-\xi/k\rangle^4}{(\langle \xi\rangle/k^2)^2}|\widetilde{H_j}(t,k,\xi)|^2\mathrm{d}\xi \lesssim_{\delta} \epsilon_1^2,\\
   & \int_{1}^{t}\sum_{k\in\mathbb{Z}\setminus \{0\}}\int_{\mathbb{R}}|\dot{A}_{k}(s,\xi)|A_k(s,\xi)\dfrac{\langle s\rangle^2}{|\xi/k|^2+\langle s\rangle^2}\dfrac{\langle s-\xi/k\rangle^4}{(\langle \xi\rangle/k^2)^2}|\widetilde{H_j}(s,k,\xi)|^2\mathrm{d}\xi\mathrm{d}s \lesssim_{\delta} \epsilon_1^2.
\end{align*}
\end{lemma}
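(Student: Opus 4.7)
The plan is to reduce both bounds to the $\Theta$--energy $\mathcal E_\Theta+\mathcal B_\Theta$, which is controlled by $\epsilon_1^2$ under the bootstrap hypotheses of Proposition 2.2, and then transfer the estimate on $H_1$ to $H_2$ via Lemma 2.4 together with the $V_1$--bounds of Lemma 2.3. First I would treat $H_1=\partial_v\mathbb P_{\neq 0}\phi$. On the Fourier side, for $k\neq 0$ the definition \eqref{eq:Theta} gives
\[
\widetilde{\Theta}(t,k,\xi)=-\bigl(k^2+(\xi-tk)^2\bigr)\widetilde{\phi}(t,k,\xi)=-k^2\bigl(1+(t-\xi/k)^2\bigr)\widetilde{\phi}(t,k,\xi),
\]
so $|\widetilde{H_1}(t,k,\xi)|=|\xi|\,|\widetilde{\phi}(t,k,\xi)|\lesssim |\xi|/(k^2\langle t-\xi/k\rangle^2)\cdot|\widetilde{\Theta}(t,k,\xi)|$. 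Substituting into the weight of the lemma,
\[
\frac{\langle t\rangle^2}{|\xi/k|^2+\langle t\rangle^2}\frac{\langle t-\xi/k\rangle^4}{(\langle\xi\rangle/k^2)^2}|\widetilde{H_1}|^2\;\lesssim\;\frac{k^2\langle t\rangle^2}{|\xi|^2+k^2\langle t\rangle^2}\cdot\frac{|\xi|^2}{\langle\xi\rangle^2}|\widetilde{\Theta}|^2\;\leq\;\frac{k^2\langle t\rangle^2}{|\xi|^2+k^2\langle t\rangle^2}|\widetilde{\Theta}|^2.
\]
Multiplying by $A_k(t,\xi)^2$ and summing/integrating recovers $\mathcal E_\Theta(t)$, and the identical computation with the replacement $A_k^2\mapsto|\dot A_k|A_k$ recovers $\mathcal B_\Theta(t)$. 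Both are $\lesssim\epsilon_1^2$ by the bootstrap assumption.

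Next, for $H_2=(1+V_1)H_1=H_1+V_1H_1$, the $H_1$ piece is already handled, so the task reduces to controlling $V_1H_1$ with the same weight. Here I would introduce the paraproduct decomposition at frequency $\xi=\eta+(\xi-\eta)$, where $\eta$ is dual to $H_1$ and $\xi-\eta$ is dual to $V_1$, and reduce everything to two scalar multiplier inequalities of the form
\[
A_k(t,\xi)\frac{\langle t\rangle\langle t-\xi/k\rangle^2}{\sqrt{|\xi/k|^2+\langle t\rangle^2}\,\langle\xi\rangle/k^2}\;\lesssim_\delta\;A_R(t,\xi-\eta)A_k(t,\eta)\frac{\langle t\rangle\langle t-\eta/k\rangle^2}{\sqrt{|\eta/k|^2+\langle t\rangle^2}\,\langle\eta\rangle/k^2}\bigl\{\langle\xi-\eta\rangle^{-2}+\langle k,\eta\rangle^{-2}\bigr\},
\]
and the companion inequality with $A_k(t,\xi)$ replaced by $|\dot A_k A_k|^{1/2}(t,\xi)$ on the left and by $|\dot A_R/A_R|^{1/2}(t,\xi-\eta)+|\dot A_k/A_k|^{1/2}(t,\eta)$ on the right. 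The scalar ratio of time/frequency weights is easy: a case split on whether $|\xi-\eta|\le 10\langle k,\eta\rangle$ or not, exactly as in Lemma 5.2 for $H_5,H_6$, gives a bound by $\mathrm e^{\delta\min(\langle\xi-\eta\rangle,\langle k,\eta\rangle)^{1/2}}$, and then the key anisotropic weight inequalities \eqref{est:8.3JiaHao-1}--\eqref{est:8.3JiaHao-2} (used throughout Section 3) absorb this into the Gevrey weight loss. Applying these multiplier bounds and the convolution estimate of Lemma 8.1(ii), together with the Lemma 2.3 control $\int A_R^2|\widetilde{V_1}|^2+\int|\dot A_R|A_R|\widetilde{V_1}|^2\lesssim\epsilon_1^2$, gives
\[
\|V_1H_1\|_{\mathrm{weighted}}\lesssim_\delta\epsilon_1\cdot\|H_1\|_{\mathrm{weighted}}\lesssim_\delta\epsilon_1^2,
\]
in both the $A_k$ and $|\dot A_k|A_k$ norms, completing the estimate for $H_2$.

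The main obstacle is verifying the two multiplier inequalities above, and in particular checking that the extra factor $\langle t\rangle/\sqrt{|\xi/k|^2+\langle t\rangle^2}$ (which is not homogeneous in $\xi/k$) does not spoil the symmetry exploited in \eqref{est:8.3JiaHao-1}. This is precisely the type of bookkeeping already carried out in Lemma 5.2 for the pressure weight, and the argument there transposes directly: one checks the ratio pointwise on the regions $|\xi-\eta|\le 10\langle k,\eta\rangle$ and its complement, using $\langle t-\xi/k\rangle\lesssim\langle t-\eta/k\rangle+|\xi-\eta|/|k|$ and $\langle\xi\rangle\approx\langle\eta\rangle$ on the former, and absorbing the polynomial loss into the Gevrey gain on the latter. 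Everything else is routine application of the product estimates in Appendix B, and the implicit constants depend only on $\delta$ as required.
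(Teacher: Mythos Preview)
Your proposal is correct. The paper does not supply its own proof of this lemma---it simply recalls the statement from \cite{IJ}---but your argument is exactly the template the paper uses for the parallel results (Lemma~\ref{lem:par-v-P} for $H_5,H_6$ and Lemma~\ref{lem:par-z-P} for $H_7,H_8$): reduce $H_1$ to $\Theta$ by the pointwise Fourier identity $|\widetilde{H_1}|\lesssim|\xi|/(k^2\langle t-\xi/k\rangle^2)\,|\widetilde{\Theta}|$, which collapses the weight to that of $\mathcal E_\Theta$ and $\mathcal B_\Theta$; then pass from $H_1$ to $H_2=H_1+V_1H_1$ via the multiplier bounds \eqref{est:8.3JiaHao-1}--\eqref{est:8.3JiaHao-2} and Lemma~\ref{lem:8.1JiaHao}(ii) together with the $V_1$ bounds \eqref{eq:boot-V1}, after checking that the scalar weight ratio is controlled by $\mathrm e^{\delta\min(\langle\xi-\eta\rangle,\langle k,\eta\rangle)^{1/2}}$ via a case split on $|\xi-\eta|\lessgtr 10\langle k,\eta\rangle$. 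This is precisely how the proof in \cite{IJ} proceeds, and how the present paper handles the analogous pressure weights, so there is nothing to add.
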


We then prove  the following lemma.

\begin{lemma}\label{lem:N6bound}
 It holds that for any $t\in[1,T]$, we have
  \begin{align}\label{eq:N6bound}
     &\left|2\mathbf{Re}\int_{1}^{t}\sum_{k\in\mathbb{Z}}\int_{\mathbb{R}}A_k^{*}(s,\xi)^2 \widetilde{\mathcal{N}_6}(s,k,\xi)\overline{\widetilde{a}(s,k,\xi)}\mathrm{d}\xi\mathrm{d}s\right| \lesssim_{\delta} \epsilon_1^3.
  \end{align}
\end{lemma}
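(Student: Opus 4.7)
The plan is to adapt the strategy used for $\mathcal{N}_4$ in Lemma \ref{lem:N4} (which is itself the bilinear analogue of the $\mathcal{N}_1$ treatment in \cite{IJ}) to the present setting, where the output weight is $A_k^*$ instead of $A_k$. I write $\mathcal{N}_6=H_2\cdot\partial_z a$ with $H_2=(1+V_1)\partial_v\mathbb{P}_{\neq0}\phi$, and expand via Plancherel as
\begin{align*}
&\mathbf{Re}\int_{1}^{t}\sum_{k\in\mathbb{Z}}\int_{\mathbb{R}}A_k^{*}(s,\xi)^2\widetilde{\mathcal{N}_6}(s,k,\xi)\overline{\widetilde{a}(s,k,\xi)}\,d\xi\,ds \\
&\qquad=\mathbf{Re}\sum_{k,l\in\mathbb{Z}}\int_{1}^{t}\int_{\mathbb{R}^2}A_k^{*}(s,\xi)^2\widetilde{H_2}(s,k-l,\xi-\eta)\cdot il\cdot\widetilde{a}(s,l,\eta)\overline{\widetilde{a}(s,k,\xi)}\,d\xi\,d\eta\,ds.
\end{align*}
Then I split the domain of the $((k,\xi),(l,\eta))$-sum/integral into the four regions $R_0,R_1,R_2,R_3$ of \eqref{eq:R0}--\eqref{eq:R3}, producing contributions $\mathcal{U}^{(2)}_j$ that will be handled separately.

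For each region I establish a pointwise symbol bound transferring the output weight $A_k^{*}(s,\xi)\cdot|l|$ onto a product of $A_l^{*}(s,\eta)$ (with a Gevrey-$1/2$ gain in $\sigma=k-l,\rho=\xi-\eta$) and a symbol matching the $H_2$-norm of Lemma \ref{lem:4.5JiaHao}, together with a dual version weighted by the Cauchy--Kovalevskaya factor $|\dot{A}_{*}^{*}/A_{*}^{*}|^{1/2}$. In $R_0$ this uses Lemma \ref{lem:8.4JiaHao} combined with the time-derivative bound \eqref{est:par-t-A*-1}; in $R_1$ it uses Lemma \ref{lem:A*R0R1R2-1-R1}; and in $R_2,R_3$ the small-frequency factor is placed in $L^{\infty}_sL^2$ with its full weight. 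A Cauchy--Schwarz split then reduces each piece to a product of two copies of $\sqrt{\mathcal{E}_a(t)+\mathcal{B}_a(t)}$ and the $H_2$-norm controlled by Lemma \ref{lem:4.5JiaHao}, each $\lesssim_\delta\epsilon_1$ under the bootstrap, so that $|\mathcal{U}^{(2)}_j|\lesssim_\delta\epsilon_1^3$ in every region.

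The main obstacle is the extra factor $(1+(k^2+|\xi|)/\langle s\rangle^2)^{1/2}$ distinguishing $A_k^{*}$ from $A_k$. In $R_0$ and $R_1$ the frequency-comparability gives $A_k^{*}(s,\xi)/A_l^{*}(s,\eta)\approx 1$ up to a Gevrey-$1/2$ error absorbed by $A_{k-l}(s,\xi-\eta)$, so the argument essentially borrows the $\mathcal{N}_1$-type symbol transfer verbatim. In $R_2$, where $(l,\eta)$ is small and the extra factor cannot be transferred to $A_l^{*}(s,\eta)$, the strong non-resonant weight $\langle s-\xi/k\rangle^4/(\langle\xi\rangle/k^2)^2$ already built into the $H_2$-norm in Lemma \ref{lem:4.5JiaHao} is precisely what pays for it, providing additional powers of $\langle s\rangle$ to absorb $(k^2+|\xi|)/\langle s\rangle^2$ at the cost of negative powers of $\langle\xi-\eta\rangle$ that are harmless thanks to the region-cutoff. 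The region $R_3$ is handled symmetrically by swapping the roles of $(k,\xi)$ and $(l,\eta)$.
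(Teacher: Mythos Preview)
Your plan has a genuine gap in the region $R_1$, coming from the decision not to symmetrize.

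The paper's proof first uses that $H_2$ is real-valued to symmetrize the integrand, replacing $l\,A_k^{*}(s,\xi)^2$ by the commutator $l\,A_k^{*}(s,\xi)^2-k\,A_l^{*}(s,\eta)^2$, and then invokes Lemma~\ref{lem:A*R0R1R2-1}(i), which is exactly the starred analogue of Lemma~\ref{lem:8.4JiaHao}. In $R_1$ this commutator structure is essential: the ``Moreover'' clause of Lemma~\ref{lem:8.4JiaHao}(i) allows you to replace $|lA_k^2-kA_l^2|$ by $|lA_k^2|+|kA_l^2|$ \emph{only in $R_0$}, not in $R_1$. Without the cancellation, the best pointwise bound on $|l|A_k(t,\xi)^2$ in $R_1$ is \eqref{eq:A*R0R1bounds-R1} of Lemma~\ref{lem:A*R0R1R2-1-R1}, which carries the prefactor $\langle t\rangle^{-3}$.

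That prefactor is acceptable in the $\mathcal{N}_4$ argument you are imitating because the $H_6$ norm of Lemma~\ref{lem:par-v-P} contains the extra factor $(\langle t\rangle+|(\sigma,\rho)|)$: at bounded $(\sigma,\rho)$ the $H_6$ weight behaves like $\langle s\rangle^4$, which swallows $\langle s\rangle^3$. The $H_2$ norm of Lemma~\ref{lem:4.5JiaHao}, however, has no such factor; at bounded $(\sigma,\rho)$ its weight behaves only like $\langle s\rangle^2$. Concretely, after your proposed Cauchy--Schwarz split in $R_1$ you would need
\[
\big\|\,\mathbf{1}_{\sigma\neq 0}\,\langle s\rangle^{3}\,A_{\sigma}(s,\rho)\,\widetilde{H_2}(s,\sigma,\rho)\,\mathrm{e}^{-c\langle\sigma,\rho\rangle^{1/2}}\big\|_{L^\infty_s L^2_{\sigma,\rho}}\lesssim_\delta \epsilon_1,
\]
but Lemma~\ref{lem:4.5JiaHao} only gives the same statement with $\langle s\rangle^{3}$ replaced by $\dfrac{\langle s\rangle}{|\rho/\sigma|+\langle s\rangle}\dfrac{\langle s-\rho/\sigma\rangle^{2}}{\langle\rho\rangle/\sigma^{2}}$, which for $|\sigma|=1$, $|\rho|\le 1$ is $\sim\langle s\rangle^{2}$. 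You are therefore one full power of $\langle s\rangle$ short, and no polynomial in $(\sigma,\rho)$ can recover it.

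The remedy is to symmetrize first and then use Lemma~\ref{lem:A*R0R1R2-1} in place of the combination Lemma~\ref{lem:8.4JiaHao}$+$Lemma~\ref{lem:A*R0R1R2-1-R1}; your treatment of $R_0$, $R_2$, $R_3$ is otherwise along the right lines.
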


\begin{proof}
We write
\begin{align*}
   & \left|2\mathbf{Re}\int_{1}^{t}\sum_{k\in\mathbb{Z}}\int_{\mathbb{R}}A_k^*(s,\xi)^2 \widetilde{N_6} (s,k,\xi)\overline{\widetilde{a}(s,k,\xi)}\mathrm{d}\xi\mathrm{d}s\right|\\
   &=2\left| \mathbf{Re}\sum_{k,l\in\mathbb{Z}}\int_{1}^{t}\int_{\mathbb{R}^2}A_k^*(s,\xi)^2 \widetilde{H_2}(s,k-l,\xi-\eta)il\widetilde{a}(s,l,\eta)\overline{\widetilde{a}(s,k,\xi)}\mathrm{d} \xi\mathrm{d}\eta\mathrm{d}s\right|\\
   &=\left| \sum_{k,l\in\mathbb{Z}}\int_{1}^{t}\int_{\mathbb{R}^2}[l A_k^*(s,\xi)^2-kA_l^{*}(s,\eta)^2 ] \widetilde{H_2}(s,k-l,\xi-\eta)\widetilde{a}(s,l,\eta)\overline{\widetilde{a}(s,k,\xi)}\mathrm{d} \xi\mathrm{d}\eta\mathrm{d}s\right|,
\end{align*}
where the second identity used the symmetrization($H_2$ is real-valued).

Let the sets $R_0,R_1,R_2,R_3$ be defined by \eqref{eq:R0}-\eqref{eq:R3} and we denote that for $j=0, 1, 2,3$,
\begin{align*}
   \mathcal{U}_j=\int_{1}^{t}\sum_{k,l\in\mathbb{Z}}\int_{\mathbb{R}^2} & \mathbf{1}_{R_j}\big((k,\xi),(l,\eta)\big)|l A_k^*(s,\xi)^2-kA_l^{*}(s,\eta)^2 | | \widetilde{H_2}(s,k-l,\xi-\eta)|\\
   &\times|\widetilde{a}(s,l,\eta)||\widetilde{a}(s,k,\xi)|\mathrm{d} \xi\mathrm{d}\eta\mathrm{d}s.
\end{align*}

Let $(\sigma,\rho)=(k-l,\xi-\eta)$.  For $j=0,1$, we get by Lemma \ref{lem:A*R0R1R2-1} (i) and  Lemma \ref{lem:4.5JiaHao}  that
\begin{align*}
  \mathcal{U}_j \lesssim_{\delta}&\int_{0}^{t}\sum_{k,l\in\mathbb{Z}}\int_{\mathbb{R}^2} \sqrt{|(A_k^{*}\dot{A}_{k}^*)(s,\xi)|}|\widetilde{a}(s,k,\xi)| \sqrt{|(A_l^{*}\dot{A}_{l}^*)(s,\eta)|}|\widetilde{a}(s,l,\eta)|\dfrac{\langle s\rangle}{|\rho/\sigma|+\langle s\rangle}\\
  &\times \mathbf{1}_{\sigma\neq 0}\dfrac{\langle s-\rho/\sigma\rangle^2}{\langle\rho\rangle/\sigma^2}A_{\sigma}(s,\rho) |\widetilde{H_2}(s,\sigma,\rho)|\mathrm{e}^{-(\delta_0/201)\langle \sigma,\rho\rangle^{1/2}}\mathrm{d}\xi\mathrm{d}\eta\mathrm{d}s\\
  \lesssim_{\delta}&\left\| \sqrt{|(A_k^{*}\dot{A}_{k}^*)(s,\xi)|}\widetilde{a}(s,k,\xi)\right\|_{L_s^2L^2_{k,\xi}} \left\|\sqrt{|(A_l^{*}\dot{A}_{l}^*)(s,\eta)|}\widetilde{a}(s,l,\eta)\right\|_{L_s^2L^2_{l,\eta}}\\
  &\times \left\| \mathbf{1}_{\sigma\neq 0}A_{\sigma}(s,\rho)\dfrac{\langle s\rangle}{|\rho/\sigma|+\langle s\rangle}\dfrac{\langle s-\rho/\sigma\rangle^2}{\langle\rho\rangle/\sigma^2} \widetilde{H_2}(s,\sigma,\rho)\mathrm{e}^{-(\delta_0/300)\langle \sigma,\rho\rangle^{1/2}}\right\|_{L^{\infty}_{s}L^2_{\sigma,\rho}}
  \lesssim_{\delta}\epsilon_1^3.
\end{align*}
Similarly, for $j=2$, we get by Lemma \ref{lem:A*R0R1R2-1} (ii) and  Lemma \ref{lem:4.5JiaHao}  that
\begin{align*}
  \mathcal{U}_2 \lesssim_{\delta}&\int_{0}^{t}\sum_{k,l\in\mathbb{Z}}\int_{\mathbb{R}^2} \mathbf{1}_{\sigma\neq 0}\sqrt{|(A_{\sigma}\dot{A}_{\sigma})(s,\rho)|}\dfrac{\langle s\rangle}{|\rho/\sigma|+\langle s\rangle}\dfrac{\langle s-\rho/\sigma\rangle^2}{\langle\rho\rangle/\sigma^2} |\widetilde{H_2}(s,\sigma,\rho)| \\
  &\times \sqrt{|(A_k^{*}\dot{A}_{k}^*)(s,\xi)|}|\widetilde{a}(s,k,\xi)|A_{l}(s,\eta) \mathrm{e}^{-(\delta_0/201)\langle l,\eta\rangle^{1/2}}|\widetilde{a}(s,l,\eta)|\mathrm{d}\xi\mathrm{d}\eta\mathrm{d}s\\
  \lesssim_{\delta}&\left\| \sqrt{|(A_k^{*}\dot{A}_{k}^*)(s,\xi)|}\widetilde{a}(s,k,\xi)\right\|_{L_s^2L^2_{k,\xi}} \left\|A_l^{*}(s,\eta)\mathrm{e}^{-(\delta_0/300)\langle l,\eta\rangle^{1/2}}\widetilde{a}(s,l,\eta)\right\|_{L^{\infty}_sL^2_{l,\eta}}\\
  &\times \left\| \mathbf{1}_{\sigma\neq 0}\sqrt{|(A_{\sigma}\dot{A}_{\sigma})(s,\rho)|}\dfrac{\langle s\rangle}{|\rho/\sigma|+\langle s\rangle}\dfrac{\langle s-\rho/\sigma\rangle^2}{\langle\rho\rangle/\sigma^2}\widetilde{H_2}(s,\sigma,\rho)\right\|_{L_{s}^{2}L^2_{\sigma,\rho}}  \lesssim_{\delta}\epsilon_1^3.
\end{align*}
The case $j=3$ is identical to the case $j=2$ by the symmetry. Thus, $\mathcal{U}_j\lesssim_{\delta}\epsilon_1^3$ for all $j\in\{0,1,2,3\}$, and the desired bound \eqref{eq:N6bound} follows.
\end{proof}

\subsection{Nonlinear estimate for $\mathcal{N}_7$}

Let
\begin{align}\label{eq:H34}
   &H_3=\partial_z\mathbb{P}_{\neq 0}\phi,\qquad H_4=(1+V_1)\partial_z\mathbb{P}_{\neq 0}\phi.
\end{align}
Recall  the following lemma from \cite{IJ}.

\begin{lemma}\label{lem:4.7JiaHao}
  For any $t\in[1,T)$ and $j\in\{3,4\}$, we have
  \begin{align*}
   & \sum_{k\in\mathbb{Z}\setminus \{0\}}\int_{\mathbb{R}}A_k(t,\xi)^2\dfrac{k^2\langle t\rangle^4\langle t-\xi/k\rangle^4}{\big(|\xi/k|^2+\langle t\rangle^2\big)^2}|\widetilde{H_j}(t,k,\xi)|^2\mathrm{d}\xi \lesssim_{\delta} \epsilon_1^2,\\
   & \int_{1}^{t}\sum_{k\in\mathbb{Z}\setminus \{0\}}\int_{\mathbb{R}}|\dot{A}_{k}(s,\xi)|A_k(s,\xi)\dfrac{k^2\langle s\rangle^4\langle s-\xi/k\rangle^4}{\big(|\xi/k|^2+\langle s\rangle^2\big)^2}|\widetilde{H_j}(s,k,\xi)|^2\mathrm{d}\xi\mathrm{d}s \lesssim_{\delta} \epsilon_1^2.
\end{align*}
\end{lemma}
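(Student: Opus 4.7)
The plan is to reduce everything to bounds that are already in the bootstrap assumption on $\mathcal{E}_{\Theta}$ and $\mathcal{B}_{\Theta}$, together with the bootstrap on $V_1$ from Lemma \ref{lem:4.2JiaHao}. First I handle $H_3$ by a direct substitution using the elliptic identity, and then I bootstrap from $H_3$ to $H_4=H_3+V_1 H_3$ by a standard weighted product estimate.

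For $H_3=\partial_z\mathbb{P}_{\neq0}\phi$, note that $\widetilde{H_3}(t,k,\xi)=ik\widetilde{\phi}(t,k,\xi)$ and, from $\Theta=(\partial_z^2+(\partial_v-t\partial_z)^2)\phi$,
\begin{align*}
\widetilde{\Theta}(t,k,\xi)=-k^{2}\bigl(1+(t-\xi/k)^{2}\bigr)\widetilde{\phi}(t,k,\xi),
\end{align*}
so $|\widetilde{H_3}(t,k,\xi)|^{2}\approx |\widetilde{\Theta}(t,k,\xi)|^{2}/\bigl(k^{2}\langle t-\xi/k\rangle^{4}\bigr)$. Substituting this into the multiplier $k^{2}\langle t\rangle^{4}\langle t-\xi/k\rangle^{4}/(|\xi/k|^{2}+\langle t\rangle^{2})^{2}$ yields the expression $\langle t\rangle^{4}/(|\xi/k|^{2}+\langle t\rangle^{2})^{2}$ acting on $|\widetilde{\Theta}|^{2}$, and the pointwise inequality
\begin{align*}
\frac{\langle t\rangle^{4}}{(|\xi/k|^{2}+\langle t\rangle^{2})^{2}}\leq \frac{\langle t\rangle^{2}}{|\xi/k|^{2}+\langle t\rangle^{2}}=\frac{k^{2}\langle t\rangle^{2}}{|\xi|^{2}+k^{2}\langle t\rangle^{2}}
\end{align*}
reduces the target bound on $H_3$ to the bootstrap on $\mathcal{E}_{\Theta}$; the time-integrated bound reduces analogously to $\mathcal{B}_{\Theta}$.

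For $H_4=(1+V_1)\partial_z\mathbb{P}_{\neq0}\phi=H_3+V_1H_3$, the $H_3$ contribution is already controlled by the previous step. For the product $V_1H_3$ I would apply Lemma \ref{lem:8.1JiaHao}\,(ii) and the bootstrap estimates \eqref{eq:boot-V1} on $V_1$, so it is enough to prove the two multiplier transfer bounds (with $\rho=\xi-\eta$)
\begin{align*}
A_k(t,\xi)\,\frac{|k|\langle t\rangle^{2}\langle t-\xi/k\rangle^{2}}{|\xi/k|^{2}+\langle t\rangle^{2}}\lesssim_{\delta}\, A_R(t,\rho)\,A_k(t,\eta)\,\frac{|k|\langle t\rangle^{2}\langle t-\eta/k\rangle^{2}}{|\eta/k|^{2}+\langle t\rangle^{2}}\bigl\{\langle\rho\rangle^{-2}+\langle k,\eta\rangle^{-2}\bigr\},
\end{align*}
together with its $|(\dot A_k A_k)(t,\xi)|^{1/2}$ analogue (in which the extra factor $|(\dot A_R/A_R)(t,\rho)|^{1/2}+|(\dot A_k/A_k)(t,\eta)|^{1/2}$ appears on the right). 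By Lemma \ref{lem:8.3JiaHao} these reduce to the purely scalar estimate
\begin{align*}
\frac{\langle t\rangle^{2}\langle t-\xi/k\rangle^{2}}{|\xi/k|^{2}+\langle t\rangle^{2}}\lesssim_{\delta}\frac{\langle t\rangle^{2}\langle t-\eta/k\rangle^{2}}{|\eta/k|^{2}+\langle t\rangle^{2}}\,\mathrm{e}^{\delta\min(\langle\rho\rangle,\langle k,\eta\rangle)^{1/2}},
\end{align*}
which I would verify by splitting into the standard cases $|\rho|\leq 10|(k,\eta)|$ (where $\langle\xi/k\rangle\approx\langle\eta/k\rangle$ and $\langle t-\xi/k\rangle\lesssim\langle t-\eta/k\rangle+\langle\rho\rangle$) and $|\rho|\geq 10|(k,\eta)|$ (where the polynomial loss in $\langle\rho\rangle$ is absorbed by the Gevrey factor $\mathrm{e}^{\delta\langle\rho\rangle^{1/2}}$).

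The main obstacle is verifying the scalar weight inequality above; everything else is mechanical application of Lemma \ref{lem:8.1JiaHao}, Lemma \ref{lem:8.3JiaHao}, and the bootstrap bounds. Since the weight $\langle t\rangle^{2}\langle t-\xi/k\rangle^{2}/(|\xi/k|^{2}+\langle t\rangle^{2})$ is of exactly the same structural type as those already appearing in the paper's product calculus (cf.\ the estimates in the proof of Lemma \ref{lem:par-v-P} and of Lemma \ref{lem:par-z-P}), the case analysis is routine, and the lemma follows.
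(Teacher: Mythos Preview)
Your approach is correct. The paper itself does not prove this lemma; it quotes it verbatim from \cite{IJ} (``Recall the following lemma from \cite{IJ}''), so there is no in-paper proof to compare against. Your argument is precisely the standard one and mirrors what the paper does for the analogous lemmas on $H_5,H_6$ (Lemma~\ref{lem:par-v-P}) and $H_7,H_8$ (Lemma~\ref{lem:par-z-P}): first reduce $H_3$ directly to $\Theta$ via the Fourier identity $\widetilde{\Theta}=-k^2\langle t-\xi/k\rangle^2\widetilde{\phi}$ and the pointwise bound $\langle t\rangle^4/(|\xi/k|^2+\langle t\rangle^2)^2\le k^2\langle t\rangle^2/(|\xi|^2+k^2\langle t\rangle^2)$, then handle $H_4=H_3+V_1H_3$ by Lemma~\ref{lem:8.1JiaHao}\,(ii), the $V_1$ bootstrap \eqref{eq:boot-V1}, and the multiplier transfer of Lemma~\ref{lem:8.3JiaHao} combined with the scalar weight inequality. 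The scalar inequality you isolate is indeed routine: in the regime $\langle\rho\rangle\le\langle k,\eta\rangle$ one uses $|\xi/k-\eta/k|\le|\rho|$ to pick up at worst a polynomial $\langle\rho\rangle^{O(1)}$ loss, while in the regime $\langle k,\eta\rangle\le\langle\rho\rangle$ one uses the crude bound $\langle t-\xi/k\rangle^2/(|\xi/k|^2+\langle t\rangle^2)\lesssim 1$ together with $(|\eta/k|^2+\langle t\rangle^2)/\langle t-\eta/k\rangle^2\lesssim\langle k,\eta\rangle^2$, and in both cases the loss is absorbed by the Gevrey factor $\mathrm{e}^{-(\lambda(t)/20)\min(\langle\rho\rangle,\langle k,\eta\rangle)^{1/2}}$.
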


\begin{lemma}\label{lem:N7bound}
 It holds that for any $t\in[1,T]$, we have
  \begin{align}\label{eq:N7bound}
     &\left|2\mathbf{Re}\int_{1}^{t}\sum_{k\in\mathbb{Z}}\int_{\mathbb{R}}A_k^{*}(s,\xi)^2 \widetilde{\mathcal{N}_7}(s,k,\xi)\overline{\widetilde{a}(s,k,\xi)}\mathrm{d}\xi\mathrm{d}s\right| \lesssim_{\delta} \epsilon_1^3.
  \end{align}
\end{lemma}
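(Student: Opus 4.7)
The plan is to mirror the structure of the proof of Lemma \ref{lem:N6bound}, but with the roles of $\partial_z$ and $\partial_v$ in $\mathcal{N}_7=-(1+V_1)\partial_z\mathbb{P}_{\neq 0}\phi\,\partial_v a$ exchanged relative to $\mathcal{N}_6$. Writing the integral in Fourier variables using $H_4=(1+V_1)\partial_z\mathbb{P}_{\neq 0}\phi$ and exploiting that $H_4$ is real-valued, I would symmetrize in $((k,\xi),(l,\eta))$ to obtain
\begin{align*}
 &2\mathbf{Re}\int_1^t\sum_{k}\int A_k^*(s,\xi)^2\widetilde{\mathcal{N}_7}(s,k,\xi)\overline{\widetilde{a}(s,k,\xi)}\,\mathrm{d}\xi\mathrm{d}s\\
 &=\mathbf{Re}\sum_{k,l}\int_1^t\!\!\int\int i\bigl[\eta A_k^*(s,\xi)^2-\xi A_l^*(s,\eta)^2\bigr]\widetilde{H_4}(s,k-l,\xi-\eta)\widetilde{a}(s,l,\eta)\overline{\widetilde{a}(s,k,\xi)}\,\mathrm{d}\xi\mathrm{d}\eta\mathrm{d}s.
\end{align*}
This commutator form is the analogue of the one that appears in Lemma \ref{lem:N6bound}, except with $\eta,\xi$ replacing $l,k$ in the weight difference and with $H_4$ in place of $H_2$.

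Next, I would split the region of integration into $R_0,R_1,R_2,R_3$ from \eqref{eq:R0}--\eqref{eq:R3} and define the corresponding quantities $\mathcal{V}_j$ by replacing $|lA_k^{*2}-kA_l^{*2}|$ by $|\eta A_k^{*2}-\xi A_l^{*2}|$ and $H_2$ by $H_4$. On each $R_j$ the key is a weighted multiplier bound: for the $R_0,R_1$ cases one transfers one of the weights onto $H_4$ picking up a factor comparable to $\dfrac{|k|\langle s\rangle^2\langle s-\xi/k\rangle^2}{|\xi/k|^2+\langle s\rangle^2}$ (after using $\eta=\xi+(\eta-\xi)$ and frequency comparability on $R_0\cup R_1$), which matches exactly the norm in Lemma \ref{lem:4.7JiaHao}; for $R_2$ one transfers the weight onto $a(s,l,\eta)$ and the commutator factor onto $H_4$ via $\sqrt{\dot A_\sigma A_\sigma}$, and the case $R_3$ is symmetric to $R_2$. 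At this stage the desired $\mathcal{V}_j\lesssim_\delta \epsilon_1^3$ follows by Cauchy--Schwarz: the $a$-factors contribute $(\mathcal{E}_a+\mathcal{B}_a)^{1/2}\lesssim \epsilon_1$ or $\mathcal{B}_a^{1/2}\lesssim \epsilon_1$, and the $H_4$-factor contributes $\epsilon_1$ via Lemma \ref{lem:4.7JiaHao}.

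The main obstacle, as in the $\mathcal{N}_6$ estimate, will be supplying the required multiplier inequality for the symmetrized weight $|\eta A_k^*(s,\xi)^2-\xi A_l^*(s,\eta)^2|$ on each $R_j$. The companion statement for $|l A_k^{*2}-kA_l^{*2}|$ is exactly Lemma \ref{lem:A*R0R1R2-1}; here one needs the $\eta/\xi$ variant, with the weight on $H_4$ matched to Lemma \ref{lem:4.7JiaHao} rather than to Lemma \ref{lem:4.5JiaHao}. The extra factor $\dfrac{|k|\langle s\rangle^2\langle s-\xi/k\rangle^2}{|\xi/k|^2+\langle s\rangle^2}$ versus $\dfrac{\langle s\rangle\langle s-\xi/k\rangle^2}{(\langle s\rangle+|\xi/k|)\langle \xi\rangle/|k|}$ can be absorbed because $|\eta-\xi|\le |\sigma|\cdot\text{(low freq)}$ on $R_0,R_1$, so one gains a factor $|\sigma|\langle s\rangle/(|\rho/\sigma|+\langle s\rangle)$ on the $H_4$ side compared with the $H_2$ computation---this exactly cancels the mismatch in powers of $\langle s\rangle$ and $\langle \xi/k\rangle$ between Lemmas \ref{lem:4.5JiaHao} and \ref{lem:4.7JiaHao}. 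Once this multiplier identity is verified (by a case analysis of the three regions of $t$ relative to $\xi/k$ similar to \eqref{est:q0-nonzero-4}), the $L^2_sL^2-L^2_sL^2-L^\infty_sL^2$ Hölder trilinear estimate closes the argument and yields \eqref{eq:N7bound}.
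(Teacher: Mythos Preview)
Your proposal is correct and follows essentially the same approach as the paper: symmetrize using that $H_4$ is real-valued, split into $R_0,\dots,R_3$, and close each piece with the weighted bounds on $H_4$ from Lemma~\ref{lem:4.7JiaHao}. The ``main obstacle'' you identify---the multiplier inequality for $|\eta A_k^{*}(s,\xi)^2-\xi A_l^{*}(s,\eta)^2|$ with the $H_4$-matched weight $\dfrac{|\sigma|\langle s\rangle^2\langle s-\rho/\sigma\rangle^2}{|\rho/\sigma|^2+\langle s\rangle^2}$---is exactly Lemma~\ref{lem:A*R0R1R2-2} in the paper, so no additional case analysis is needed.
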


\begin{proof}

We write
\begin{align*}
   & \left|2\mathbf{Re}\int_{1}^{t}\sum_{k\in\mathbb{Z}}\int_{\mathbb{R}}A_k^*(s,\xi)^2 \widetilde{N_7}(s,k,\xi) (s,k,\xi)\overline{\widetilde{a}(s,k,\xi)}\mathrm{d}\xi\mathrm{d}s\right|\\
   &=2\left| \mathbf{Re}\sum_{k,l\in\mathbb{Z}}\int_{1}^{t}\int_{\mathbb{R}^2}A_k^*(s,\xi)^2 \widetilde{H_4}(s,k-l,\xi-\eta)i\eta\widetilde{a}(s,l,\eta)\overline{\widetilde{a}(s,k,\xi)}\mathrm{d} \xi\mathrm{d}\eta\mathrm{d}s\right|\\
   &=\left| \sum_{k,l\in\mathbb{Z}}\int_{1}^{t}\int_{\mathbb{R}^2}[\eta
    A_k^*(s,\xi)^2-\xi A_l^{*}(s,\eta)^2 ] \widetilde{H_4}(s,k-l,\xi-\eta)\widetilde{a}(s,l,\eta)\overline{\widetilde{a}(s,k,\xi)}\mathrm{d} \xi\mathrm{d}\eta\mathrm{d}s\right|,
\end{align*}
where the second identity used the symmetrization ($H_4$ is real-valued). We denote that for $j=0,1,2,3,$
\begin{align*}
   \mathcal{V}_j=\int_{1}^{t}\sum_{k,l\in\mathbb{Z}}\int_{\mathbb{R}^2} & \mathbf{1}_{R_j}\big((k,\xi),(l,\eta)\big)|\eta A_k^*(s,\xi)^2-\xi A_l^{*}(s,\eta)^2 | | \widetilde{H_4}(s,k-l,\xi-\eta)|\\
   &\times|\widetilde{a}(s,l,\eta)||\widetilde{a}(s,k,\xi)|\mathrm{d} \xi\mathrm{d}\eta\mathrm{d}s.
\end{align*}

Let $(\sigma,\rho)=(k-l,\xi-\eta)$. For $j=0,1$, we get by Lemma \ref{lem:A*R0R1R2-2} (i) and Lemma \ref{lem:4.7JiaHao}  that
\begin{align*}
  \mathcal{V}_j \lesssim_{\delta}&\int_{0}^{t}\sum_{k,l\in\mathbb{Z}}\int_{\mathbb{R}^2} \sqrt{|(A_k^{*}\dot{A}_{k}^*)(s,\xi)|}|\widetilde{a}(s,k,\xi)| \sqrt{|(A_l^{*}\dot{A}_{l}^*)(s,\eta)|}|\widetilde{a}(s,l,\eta)|\\
  &\times \mathbf{1}_{\sigma\neq 0}\cdot\dfrac{|\sigma|\langle s\rangle^2}{|\rho/\sigma|^2+ \langle s\rangle^2} \langle s-\rho/\sigma\rangle^2 A_{\sigma}(s,\rho)|\widetilde{H_4}(s,\sigma,\rho)|\mathrm{e}^{-(\delta_0/201)\langle \sigma,\rho\rangle^{1/2}}\mathrm{d}\xi\mathrm{d}\eta\mathrm{d}s\\
  \lesssim_{\delta}&\left\| \sqrt{|(A_k^{*}\dot{A}_{k}^*)(s,\xi)|}\widetilde{a}(s,k,\xi)\right\|_{L_s^2L^2_{k,\xi}} \cdot \left\|\sqrt{|(A_l^{*}\dot{A}_{l}^*)(s,\eta)|}\widetilde{a}(s,l,\eta)\right\|_{L_s^2L^2_{l,\eta}}\\
  &\times \left\| \mathbf{1}_{\sigma\neq 0}\cdot\dfrac{\sigma\langle s\rangle^2}{|\rho/\sigma|^2+ \langle s\rangle^2} \langle s-\rho/\sigma\rangle^2 \widetilde{H_4}(s,\sigma,\rho)\mathrm{e}^{-(\delta_0/300)\langle \sigma,\rho\rangle^{1/2}}\right\|_{L^{\infty}_{s}L^2_{\sigma,\rho}}
  \lesssim_{\delta}\epsilon_1^3.
\end{align*}
Similarly, for $j=2$ we get by Lemma \ref{lem:A*R0R1R2-2} (ii) and  Lemma \ref{lem:4.7JiaHao}  that
\begin{align*}
  \mathcal{V}_2 \lesssim_{\delta}&\int_{0}^{t}\sum_{k,l\in\mathbb{Z}}\int_{\mathbb{R}^2} \mathbf{1}_{\sigma\neq 0}\cdot\dfrac{|\sigma|\langle s\rangle^2}{|\rho/\sigma|^2+ \langle s\rangle^2} \langle s-\rho/\sigma\rangle^2\sqrt{|(A_{\sigma}\dot{A}_{\sigma})(s,\rho)|} |\widetilde{H_4}(s,\sigma,\rho)| \\
  &\times \sqrt{|(A_k^{*}\dot{A}_{k}^*)(s,\xi)|}|\widetilde{a}(s,k,\xi)|A_{l}(s,\eta) \mathrm{e}^{-(\delta_0/201)\langle l,\eta\rangle^{1/2}}|\widetilde{a}(s,l,\eta)|\mathrm{d}\xi\mathrm{d}\eta\mathrm{d}s\\
  \lesssim_{\delta}&\left\| \sqrt{|(A_k^{*}\dot{A}_{k}^*)(s,\xi)|}\widetilde{a}(s,k,\xi)\right\|_{L_s^2L^2_{k,\xi}} \cdot\left\|A_l^{*}(s,\eta)\mathrm{e}^{-(\delta_0/300)\langle l,\eta\rangle^{1/2}}\widetilde{a}(s,l,\eta)\right\|_{L^{\infty}_sL^2_{l,\eta}}\\
  &\times \left\| \mathbf{1}_{\sigma\neq 0}\cdot\dfrac{\sigma\langle s\rangle^2}{|\rho/\sigma|^2+ \langle s\rangle^2} \langle s-\rho/\sigma\rangle^2\sqrt{|(A_{\sigma}\dot{A}_{\sigma})(s,\rho)|} \widetilde{H_4}(s,\sigma,\rho)\right\|_{L_{s}^{2}L^2_{\sigma,\rho}}
  \lesssim_{\delta}\epsilon_1^3.
\end{align*}
The case $j=3$ is identical to the case $j=2$ by the symmetry. Thus, $\mathcal{V}_j\lesssim_{\delta}\epsilon_1^3$ for all $j\in\{0,1,2,3\}$, and the desired bound \eqref{eq:N7bound} follows.
\end{proof}

\subsection{Nonlinear estimate for $\mathcal{N}_8$}

\begin{lemma}\label{lem:N8bound}
It holds that for any $t\in[1,T]$, we have
  \begin{align}\label{eq:N8bound}
     &\left|2\mathbf{Re}\int_{1}^{t}\sum_{k\in\mathbb{Z}}\int_{\mathbb{R}}A_k^{*}(s,\xi)^2 \widetilde{\mathcal{N}_8}(s,k,\xi)\overline{\widetilde{a}(s,k,\xi)}\mathrm{d}\xi\mathrm{d}s\right| \lesssim_{\delta} \epsilon_1^3.
  \end{align}
\end{lemma}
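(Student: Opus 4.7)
The strategy is to mimic the proof of Lemma \ref{lem:N3} (which handled the analogous transport term $\mathcal{N}_3=-V_3\partial_vf$ in the vorticity equation), adapted to the starred weight $A_k^*$ and the density $a$ in place of $f$. The key structural fact is that $V_3=V_3(s,v)$ depends only on $v$, so in Fourier its action couples $(k,\xi)\to(k,\eta)$ with the same streamwise frequency $k$ and density-frequency shift $\rho=\xi-\eta$. Writing
\beno
\int A_k^*(s,\xi)^2\widetilde{\mathcal{N}_8}\overline{\widetilde{a}}\,d\xi
=-\sum_{k}\int A_k^*(s,\xi)^2\widetilde{V_3}(s,\xi-\eta)\,i\eta\,\widetilde{a}(s,k,\eta)\overline{\widetilde{a}(s,k,\xi)}\,d\xi\,d\eta,
\eeno
and using the reality of $V_3$ to symmetrize, the quantity to be controlled becomes
\beno
\Big|\sum_{k}\int_1^t\!\!\int_{\mathbb{R}^2}\big[\eta A_k^*(s,\xi)^2-\xi A_k^*(s,\eta)^2\big]\widetilde{V_3}(s,\xi-\eta)\widetilde{a}(s,k,\eta)\overline{\widetilde{a}(s,k,\xi)}\,d\xi\,d\eta\,ds\Big|.
\eeno

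Next I decompose the region of integration into the sets $\Sigma_0,\Sigma_1,\Sigma_2,\Sigma_3$ from \eqref{eq:set-Sigma} (all imposing $l=k$, which is automatic here) together with the low-frequency piece $|\rho|\leq 1$, exactly as in the definition of $\mathcal{W}_0,\dots,\mathcal{W}_4$. In each region I need the analog of Lemma \ref{lem:8.6JiaHao} for the modified multiplier $|\eta A_k^{*2}(\xi)-\xi A_k^{*2}(\eta)|$. Since $A_k^*=A_k\big(1+(k^2+|\xi|)/\langle s\rangle^2\big)^{1/2}$ differs from $A_k$ by a factor that is essentially bounded between the two frequencies when $\rho$ is small (which is the regime forced by the $V_3$ factor carrying a rapidly-decaying weight in $\rho$), the pointwise multiplier bounds from Lemma \ref{lem:8.6JiaHao} transfer: in regions $\Sigma_0,\Sigma_1$ one gets a bound of the form $[\langle\rho\rangle\langle s\rangle+\langle\rho\rangle^{1/4}\langle s\rangle^{7/4}]A_{NR}(s,\rho)e^{-(\delta_0/200)\langle\rho\rangle^{1/2}}\sqrt{|\dot A_k^* A_k^*|(s,\xi)}\sqrt{|\dot A_k^* A_k^*|(s,\eta)}$, while in $\Sigma_2,\Sigma_3$ one gets the corresponding bound with $\sqrt{|\dot A_{NR}A_{NR}|(s,\rho)}$ and one $A_k^*$ factor losing a derivative onto the other via the decay weight $e^{-(\delta_0/200)\langle k,\eta\rangle^{1/2}}$ (or $\langle k,\xi\rangle$). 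The low-frequency case $|\rho|\leq 1$ is handled exactly as in Lemma \ref{lem:rholeq1}, using that $A_{NR}(s,\rho)\gtrsim_\delta 1$ there.

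With these multiplier estimates in hand, each contribution $\mathcal{W}_i^{*}$ is controlled by a trilinear Cauchy--Schwarz: two factors of the form $\|\sqrt{|\dot A_k^* A_k^*|}\widetilde{a}\|_{L^2_sL^2_{k,\eta}}$ or $\|A_k^*e^{-(\delta_0/300)\langle k,\eta\rangle^{1/2}}\widetilde{a}\|_{L^\infty_sL^2_{k,\eta}}$, both $\lesssim_\delta\epsilon_1$ by the bootstrap bounds on $\mathcal{E}_a+\mathcal{B}_a$, times one factor involving $\widetilde{V_3}$, namely
\beno
\big\|[\langle s\rangle+\langle\rho\rangle^{-3/4}\langle s\rangle^{7/4}]|\rho|A_{NR}(s,\rho)e^{-(\delta_0/300)\langle\rho\rangle^{1/2}}\widetilde{V_3}(s,\rho)\big\|_{L^\infty_sL^2_\rho}
\eeno
or its $L^2_s$ variant weighted by $\sqrt{|\dot A_{NR}A_{NR}|}$, each $\lesssim_\delta\epsilon_1$ by Lemma \ref{lem:4.2JiaHao} (the $V_3$-bound \eqref{eq:boot-V3}). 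Multiplying gives $\lesssim_\delta\epsilon_1^3$ in each region, yielding \eqref{eq:N8bound}.

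The main obstacle I anticipate is verifying the multiplier transfer from $A_k$ to $A_k^*$: one must check that the extra factor $(1+(k^2+|\xi|)/\langle s\rangle^2)^{1/2}$ on both sides does not destroy the inequalities used for Lemma \ref{lem:8.6JiaHao}. The ratio $A_k^*(\xi)/A_k^*(\eta)$ differs from $A_k(\xi)/A_k(\eta)$ by at most $\big((1+(k^2+|\xi|)/\langle s\rangle^2)/(1+(k^2+|\eta|)/\langle s\rangle^2)\big)^{1/2}$, which is bounded by $(1+|\rho|/\langle s\rangle^2)^{1/2}\lesssim\langle\rho\rangle^{1/2}$ and hence absorbed by the exponential loss $e^{-(\delta_0/200)\langle\rho\rangle^{1/2}}$ on the $V_3$ side. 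A parallel remark handles the derivative version $|(\dot A_k^*A_k^*)(\xi)|^{1/2}$ versus $|(\dot A_kA_k)(\xi)|^{1/2}$ using \eqref{est:par-t-A*-1}. Once this transfer is in place, the argument is essentially verbatim that of Lemma \ref{lem:N3}, and no new ideas are required.
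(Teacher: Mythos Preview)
Your overall strategy matches the paper's proof exactly: symmetrize, decompose over $\Sigma_0,\dots,\Sigma_3$ and $\{|\rho|\le 1\}$, apply the appropriate $A_k^*$-multiplier bound in each region, then close by Cauchy--Schwarz against the bootstrap bounds for $a$ and the $V_3$ bound \eqref{eq:boot-V3}. The paper carries this out via the dedicated Lemma~\ref{lem:A*S0S1S2-1} (the $A_k^*$-analogue of Lemma~\ref{lem:8.6JiaHao}) and the starred half of Lemma~\ref{lem:rholeq1}.

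There is, however, a genuine gap in your ``transfer'' step. Your argument bounds the \emph{ratio} $A_k^*(s,\xi)/A_k^*(s,\eta)$ in terms of $A_k(s,\xi)/A_k(s,\eta)$ up to a $\langle\rho\rangle^{1/2}$ loss. That suffices in $\Sigma_0,\Sigma_2,\Sigma_3$, where Lemma~\ref{lem:8.6JiaHao} already bounds the \emph{sum} $|\eta A_k^2(\xi)|+|\xi A_k^2(\eta)|$. But in $\Sigma_1$ the whole point is the cancellation in the \emph{difference} $\eta A_k^2(\xi)-\xi A_k^2(\eta)$, and a ratio bound does not transfer a difference bound. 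Writing $w(\xi)=1+(k^2+|\xi|)/\langle s\rangle^2$, one has
\[
\eta A_k^{*2}(\xi)-\xi A_k^{*2}(\eta)
= w(\eta)\bigl[\eta A_k^2(\xi)-\xi A_k^2(\eta)\bigr]
+\eta A_k^2(\xi)\,\frac{|\xi|-|\eta|}{\langle s\rangle^2},
\]
and the second (commutator) term is not covered by your ratio argument. The paper treats this term explicitly: in the proof of Lemma~\ref{lem:A*S0S1S2-1} it is the piece $\mathcal{T}_2''$, bounded directly using the structure of $A_k$ (via $b_k$) together with \eqref{est:7.4JiaHao-2}; an analogous extra term ($\mathcal{T}_3''$) appears in the starred estimate \eqref{eq:rholeq1*} of Lemma~\ref{lem:rholeq1}. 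Once you supply this commutator estimate---or simply invoke Lemma~\ref{lem:A*S0S1S2-1} and \eqref{eq:rholeq1*} directly---your proof goes through verbatim.
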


\begin{proof}
As before, we write
\begin{align*}
   & \left|2\mathbf{Re}\int_{1}^{t}\sum_{k\in\mathbb{Z}}\int_{\mathbb{R}}A_k^*(s,\xi)^2 \widetilde{N_8}(s,k,\xi) \overline{\widetilde{a}(s,k,\xi)}\mathrm{d}\xi\mathrm{d}s\right|\\
   &=2\left| \mathbf{Re}\sum_{k\in\mathbb{Z}}\int_{1}^{t}\int_{\mathbb{R}^2}A_k^*(s,\xi)^2 \widetilde{V_3}(s,\xi-\eta)i\eta\widetilde{a}(s,k,\eta)\overline{\widetilde{a}(s,k,\xi)}\mathrm{d} \xi\mathrm{d}\eta\mathrm{d}s\right|\\
   &=\left| \sum_{k\in\mathbb{Z}}\int_{1}^{t}\int_{\mathbb{R}^2}[\eta
    A_k^*(s,\xi)^2-\xi A_k^{*}(s,\eta)^2 ] \widetilde{V_3}(s,\xi-\eta)\widetilde{a}(s,k,\eta)\overline{\widetilde{a}(s,k,\xi)}\mathrm{d} \xi\mathrm{d}\eta\mathrm{d}s\right|.
\end{align*}

For $i\in\{0,1,2,3\}$, we recall the sets $\Sigma_i$ defined by \eqref{eq:set-Sigma}:
\begin{align*}
  \Sigma_{i}=\{\big((k,\xi),(l,\eta)\big)\in R_i:k=l\}.
\end{align*}
We denote that for  $j\in \{0,1,2,3\}$,
\begin{align*}
   \mathcal{W}'_j=\int_{1}^{t}\sum_{k\in\mathbb{Z}}\int_{\mathbb{R}^2} & \mathbf{1}_{|\rho|\geq 1} \mathbf{1}_{\Sigma_j}\big((k,\xi),(k,\eta)\big)|\eta A_k^*(s,\xi)^2-\xi A_k^{*}(s,\eta)^2 | | \widetilde{V_3}(s,\xi-\eta)|\\
   &\times|\widetilde{a}(s,k,\eta)||\widetilde{a}(s,k,\xi)|\mathrm{d} \xi\mathrm{d}\eta\mathrm{d}s,
\end{align*}
and
\begin{align}\label{eq:W'4}
\begin{aligned}
   \mathcal{W}'_4=\int_{1}^{t}\sum_{k\in\mathbb{Z}}\int_{\mathbb{R}^2} & \mathbf{1}_{|\rho|\leq 1}|\eta A_k^*(s,\xi)^2-\xi A_k^{*}(s,\eta)^2 | | \widetilde{V_3}(s,\xi-\eta)|\\
   &\times|\widetilde{a}(s,k,\eta)||\widetilde{a}(s,k,\xi)|\mathrm{d} \xi\mathrm{d}\eta\mathrm{d}s.
\end{aligned}
\end{align}

Let $\rho=\xi-\eta$. For  $j\in\{0,1\}$, we get by  Lemma \ref{lem:A*S0S1S2-1} (i)  and \eqref{eq:boot-V3} that
\begin{align*}
  \mathcal{W}'_j\lesssim_{\delta} &\int_{1}^{t}\sum_{k\in\mathbb{Z}}\int_{\mathbb{R}^2} \mathbf{1}_{|\rho|\geq 1}\big[\langle \rho\rangle\langle s\rangle+\langle \rho\rangle^{1/4}\langle s\rangle^{7/4}\big]A_{NR}(s,\rho)\mathrm{e}^{-(\delta_0/201)\langle \rho\rangle^{1/2}}|\widetilde{V_3}(s,\rho)|\\
  &\times\sqrt{|(A_k^*\dot{A}_k^*)(s,\eta)|}|\widetilde{a}(s,k,\eta)| \sqrt{|(A_k^*\dot{A}^*_k)(s,\xi)|}|\widetilde{a}(s,k,\xi)|\mathrm{d}\xi\mathrm{d}\eta\mathrm{d}s\\
  \lesssim_{\delta}&\left\|\big[\langle s\rangle+\langle \rho\rangle^{-3/4}\langle s\rangle^{7/4}\big]|\rho|A_{NR}(s,\rho)\mathrm{e}^{-(\delta_0/300)\langle \rho\rangle^{1/2}}\widetilde{V_3}(s,\rho)\right\|_{L^\infty_s L^2_{\rho}}\\
  &\times\left\|\sqrt{|(A_k^*\dot{A}^*_k)(s,\eta)|}\widetilde{a}(s,k,\eta)\right\|_{L^2_sL^2_{k,\eta}} \cdot\left\|\sqrt{|(A_k^*\dot{A}_k^*)(s,\xi)|}\widetilde{a}(s,k,\xi)\right\|_{L^2_sL^2_{k,\xi}}\\
  \lesssim_{\delta}&\epsilon_1^3.
\end{align*}
For $j=2$, we get by  Lemma \ref{lem:A*S0S1S2-1} (ii) and  and \eqref{eq:boot-V3}  that
\begin{align*}
  \mathcal{W}'_j\lesssim_{\delta} &\int_{1}^{t}\sum_{k\in\mathbb{Z}}\int_{\mathbb{R}^2} \mathbf{1}_{|\rho|\geq 1}\big[\langle \rho\rangle\langle s\rangle+\langle \rho\rangle^{1/4}\langle s\rangle^{7/4}\big]\sqrt{|(A_{NR}\dot{A}_{NR})(s,\rho)|}|\widetilde{V_3}(s,\rho)|\\
  &\times A_k^*(s,\eta)\mathrm{e}^{-(\delta_0/201)\langle k,\eta\rangle^{1/2}}|\widetilde{a}(s,k,\eta)| \sqrt{|(A_k^*\dot{A}_k^*)(s,\xi)|}|\widetilde{a}(s,k,\xi)|\mathrm{d}\xi\mathrm{d}\eta\mathrm{d}s\\
  \lesssim_{\delta}&\left\|\big[\langle s\rangle+\langle \rho\rangle^{-3/4}\langle s\rangle^{7/4}\big]|\rho|\sqrt{|(A_{NR}\dot{A}_{NR})(s,\rho)|} \widetilde{V_3}(s,\rho)\right\|_{L^2_s L^2_{\rho}}\\
  &\times\left\|A_k^*(s,\eta)\mathrm{e}^{-(\delta_0/300)\langle k,\eta\rangle^{1/2}}\widetilde{a}(s,k,\eta)\right\|_{L^\infty_sL^2_{k,\eta}} \cdot\left\|\sqrt{|(A_k^*\dot{A}_k^*)(s,\xi)|}\widetilde{a}(s,k,\xi)\right\|_{L^2_sL^2_{k,\xi}}\\
  \lesssim_{\delta}&\epsilon_1^3.
\end{align*}
The case $j=3$ is identical to the case $j=2$ by the symmetry. For $\mathcal{W}'_4$, we get by Lemma \ref{lem:rholeq1} and  $ A_{NR}(s,\rho)\gtrsim_{\delta} 1$ that
\begin{align*}
  \mathcal{W}'_4\lesssim_{\delta} &\int_{1}^{t}\sum_{k\in\mathbb{Z}}\int_{\mathbb{R}^2} \mathbf{1}_{|\rho|\leq 1}\langle s\rangle^{7/4}|\rho|\widetilde{V}_3(s,\rho)\sqrt{|(A_k^*\dot{A}^*_k)(s,\eta)|}|\widetilde{a}(s,k,\eta)| \\
  &\times \sqrt{|(A^*_k\dot{A}^*_k)(s,\xi)|}|\widetilde{a}(s,k,\xi)|\mathrm{d}\xi\mathrm{d}\eta\mathrm{d}s\\
  \lesssim_{\delta}&\left\|\big[\langle s\rangle+\langle \rho\rangle^{-3/4}\langle s\rangle^{7/4}\big]|\rho|A_{NR}(s,\rho)\mathrm{e}^{-(\delta_0/300)\langle \rho\rangle^{1/2}}\widetilde{V_3}(s,\rho)\right\|_{L^\infty_s L^2_{\rho}}\\
  &\times\left\|\sqrt{|(A^*_k\dot{A}^*_k)(s,\eta)|}\widetilde{a}(s,k,\eta)\right\|_{L^2_sL^2_{k,\eta}} \cdot\left\|\sqrt{|(A^*_k\dot{A}^*_k)(s,\xi)|}\widetilde{a}(s,k,\xi)\right\|_{L^2_sL^2_{k,\xi}}\\
  \lesssim_{\delta}&\epsilon_1^3.
\end{align*}

Thus, $\mathcal{W}'_j\lesssim_{\delta}\epsilon_1^3$ for all $j\in\{0,1,2,3,4\}$, and the desired bound \eqref{eq:N8bound} follows.
 \end{proof}

\section{Improved control of the coordinate functions}

 In this section, we prove an improved control for the coordinate functions under the bootstrap assumptions in Proposition \ref{prop:Bootstrap}.

 \begin{proposition}\label{prop:Improved-V1H}
   With the definitions and assumptions in Proposition \ref{prop:Bootstrap}, we have
   \begin{align}\nonumber
      &\mathcal{E}_{V_1}(t)+\mathcal{E}_{\mathcal{H}}(t)+\mathcal{B}_{V_1}(t)+\mathcal{B}_{\mathcal{H}}(t)\leq \epsilon_1^2/20+ C\epsilon_1^3\quad \text{for any}\ t\in[1,T].
   \end{align}
 \end{proposition}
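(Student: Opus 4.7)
The plan is to adapt the $V_1$ and $\mathcal H$ energy estimates from \cite{IJ} to the inhomogeneous setting, with the only substantially new input being the contribution of $(1+V_1)\mathbb P_0\{P,a\}$ to the $\mathcal H$-equation. Starting from the identities
\begin{align*}
   & \partial_tV_1+V_3\partial_vV_1=\mathcal H/t,\\
   & \partial_t\mathcal H+V_3\partial_v\mathcal H+\mathcal H/t=-(1+V_1)\mathbb P_0\big(\{\phi_{\neq},f\}-\{P,a\}\big),
\end{align*}
I would write down the time derivatives of $\mathcal E_{V_1}$ and $\mathcal E_{\mathcal H}$. Since $\dot A_R,\dot A_{NR}\le 0$, these produce a non-positive term that is precisely $-2\mathcal B_{V_1}-2\mathcal B_{\mathcal H}$, plus $\mathcal E_{V_1}(1)+\mathcal E_{\mathcal H}(1)\le \epsilon_1^3$ from the local-existence interval, which supplies the $C\epsilon_1^3$ on the right. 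The remaining task is to show that all nonlinear contributions are bounded by $\epsilon_1^3$ and that the linear $\mathcal H/t$ term in $\partial_t\mathcal H$ is absorbed by the weight $(\langle t\rangle/\langle\xi\rangle)^{3/2}$.

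The standard pieces (transport by $V_3\partial_v$, the coupling $\mathcal H/t\mapsto V_1$, and the Euler-type term $(1+V_1)\mathbb P_0\{\phi_{\neq},f\}$) are treated exactly as in \cite{IJ}: split frequency interactions according to the regions $R_0,\ldots,R_3$, apply Lemmas~\ref{lem:8.3JiaHao-mod1} type weight decompositions, and use the weighted bilinear bounds $\|\sqrt{\dot A_R A_R}\cdot\|_{L^2_s L^2_\xi}$ together with the bootstrap bounds on $\mathcal E_f,\mathcal E_\Theta,\mathcal E_{V_1},\mathcal E_{\mathcal H}$. The prefactor $K_\delta^2$ in $\mathcal E_{\mathcal H}$ is the same device used in \cite{IJ} to let the linear contribution of $\mathcal H/t$ on the left, paired with the gain $\langle t\rangle^{3/2}/\langle\xi\rangle^{3/2}$, close via a Gronwall-type absorption.

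The genuinely new step is to estimate
\begin{align*}
   \int_1^t\int_{\mathbb R} A_{NR}(s,\xi)^2\big(\langle s\rangle/\langle\xi\rangle\big)^{3/2}\,\widetilde{(1+V_1)\mathbb P_0\{P,a\}}(s,\xi)\,\overline{\widetilde{\mathcal H}(s,\xi)}\,d\xi\,ds.
\end{align*}
Here I would use the product structure $\mathbb P_0\{P,a\}=\mathbb P_0(\partial_v P\,\partial_z a-\partial_z P\,\partial_v a)$, which is purely nonzero-mode-against-nonzero-mode, and perform a bilinear estimate in the spirit of Lemmas~\ref{lem:N4}--\ref{lem:N5}. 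The key weight comparison to verify reads, for $\sigma=k,\ \rho=\xi-\eta$ with $k\ne 0$,
\begin{align*}
   A_{NR}(t,\xi)(\langle t\rangle/\langle\xi\rangle)^{3/4}\lesssim_\delta A_k^*(t,\eta)\,A_{-k}(t,\rho)\,\frac{|k|\langle t\rangle\langle t+\rho/k\rangle^2}{|\rho|+|k|\langle t\rangle}\,\big\{\langle k,\eta\rangle^{-2}+\langle k,\rho\rangle^{-2}\big\},
\end{align*}
and the analogous inequality with $\dot A_{NR}/A_{NR}$ on the left. Once this is established, the bilinear integral factors into the $L^2_s$-norms associated with $\mathcal B_a$ and $\mathcal B_P$ (or their $L^\infty_s$-counterparts coming from $\mathcal E_a,\mathcal E_P$), and the bootstrap assumptions on $\mathcal E_a+\mathcal B_a$ and the improved control $\mathcal E_P+\mathcal B_P\lesssim_\delta \epsilon_1^4+\mathcal E_\Theta+\mathcal B_\Theta$ from Proposition~\ref{prop:Improved-P} yield the desired $\epsilon_1^3$ bound.

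The main obstacle I anticipate is the above weight comparison: the target weight $A_{NR}(t,\xi)(\langle t\rangle/\langle\xi\rangle)^{3/4}$ lives at the zero mode with a strong time gain, while on the right we must distribute this gain between the density weight (which already carries the extra $(1+(k^2+|\eta|)/\langle t\rangle^2)^{1/2}$ in $A_k^*$) and the pressure weight (which carries $\langle t\rangle(\langle t\rangle+|(k,\rho)|)\langle t-\rho/k\rangle^2/\langle\rho/k\rangle^2$). As in Lemma~\ref{lem:ABnorm-product-a}, splitting into the cases $|(k,\rho)|\le|(k,\eta)|$ and $|(k,\eta)|\le|(k,\rho)|$ and exploiting $\langle t-\eta/k\rangle\langle\eta/k\rangle\gtrsim\langle t\rangle$ should make the elementary inequality work; the key algebraic point is that the factor $(|\rho|+|k|\langle t\rangle)^{-1}$ coming from the pressure weight is exactly what is needed to convert $\partial_v P$ (which loses one $v$-derivative) into something controllable at the zero mode with the $\langle t\rangle^{3/2}$ weight. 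Once this weight comparison is in hand, the rest follows from the now-standard product lemmas (Lemma~\ref{lem:8.1JiaHao}) and the bootstrap assumptions.
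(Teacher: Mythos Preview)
Your overall strategy is correct and essentially matches the paper: the quadratic coupling $\mathcal H/t\leftrightarrow V_1$ and the transport/Euler-type cubic terms are handled exactly as in \cite{IJ} (Lemma~\ref{lem:6.2JiaHao} and the $F_1,G_1,G_2$ parts of Lemma~\ref{lem:6.3JiaHao-mod}), and the only genuinely new contribution is $G_4=(1+V_1)\mathbb P_0\{P,a\}$. Where your treatment of $G_4$ differs from the paper is in two tactical simplifications you omit.

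\emph{(1) Cauchy--Schwarz first.} Rather than a direct trilinear bound pairing $\mathcal H$ with $a$ and $P$, the paper applies Cauchy--Schwarz against $\mathcal B_{\mathcal H}$ to reduce the task to
\[
\int_1^t\!\!\int_{\mathbb R}|\dot A_{NR}|^{-1}A_{NR}^3\big(\langle s\rangle/\langle\xi\rangle\big)^{3/2}\,|\widetilde{G_4}(s,\xi)|^2\,d\xi\,ds\lesssim_\delta\epsilon_1^4,
\]
which decouples $\mathcal H$ entirely and leaves a pure bilinear estimate in $(a,P)$.

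\emph{(2) The divergence identity.} Instead of handling $\partial_vP\,\partial_za$ and $\partial_zP\,\partial_va$ separately as in Lemmas~\ref{lem:N4}--\ref{lem:N5}, the paper rewrites
\[
\mathbb P_0\{P,a\}=\mathbb P_0\big(\partial_z(a\,\partial_vP)-\partial_v(a\,\partial_zP)\big)=-\partial_v\,\mathbb P_0(a\,\partial_zP),
\]
so that only $\partial_zP$ appears and the outer $\partial_v$ becomes a harmless factor $|\xi|$ in the multiplier. The resulting weight comparison is then the clean single inequality \eqref{est:8.7JiaHao-mod}, involving the pressure weight on $\partial_zP$ and $A_{-k}^*$ on $a$, rather than two separate comparisons for $H_6$ and $H_8$.

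Your direct route is in principle workable, but the displayed weight inequality you propose carries the $\Theta$-type factor $|k|\langle t\rangle\langle t+\rho/k\rangle^2/(|\rho|+|k|\langle t\rangle)$ rather than the actual pressure weight, and without the divergence identity you would still have to separately close the $\partial_vP\,\partial_za$ piece at the zero mode, where the $(\langle t\rangle/\langle\xi\rangle)^{3/4}$ growth makes the comparison tighter than in the $f$-equation. The paper's two reductions sidestep exactly this bookkeeping.
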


\smallskip

  We denote
   \begin{align}\label{eq:L1}
    &\mathcal{L}_1(t)=2\mathbf{Re}\int_{1}^{t}\int_{\mathbb{R}}A_{R}^2(s,\xi)\partial_s \widetilde{V_1}(s,\xi)\overline{\widetilde{V_1}(s,\xi)}\mathrm{d}\xi\mathrm{d}s,
 \end{align}
 and
  \begin{align}\label{eq:L2}
  \begin{aligned}
   \mathcal{L}_2(t)=&K_{\delta}^2 2\mathbf{Re}\int_{1}^{t}\int_{\mathbb{R}}A_{NR}(s,\xi)^2\big( \langle s\rangle /\langle \xi\rangle\big)^{3/2}\partial_s \widetilde{\mathcal{H}}(s,\xi)\overline{\widetilde{\mathcal{H}}(s,\xi)}\mathrm{d}\xi\mathrm{d}s\\
    &+K^2_{\delta} \int_{1}^{t}\int_{\mathbb{R}}A_{NR}(s,\xi)^2\dfrac{3}{2}\big( s\langle s\rangle^{-1/2} \langle \xi\rangle\big)^{-3/2} |\widetilde{\mathcal{H}}(s,\xi)|^2\mathrm{d}\xi\mathrm{d}s.
 \end{aligned}
 \end{align}
As in \cite{IJ},  it suffices to prove that for any $t\in[1,T]$,
 \begin{align}\label{est:Improved-V1H-prove}
  -\mathcal{B}_{V_1}(t)-\mathcal{B}_{\mathcal{H}}(t)+\mathcal{L}_1(t)+ \mathcal{L}_{2}(t)\leq \epsilon_1^2/30.
 \end{align}

 Using the equations \eqref{eq:V1} and \eqref{eq:H}, we extract the quadratic components of $\mathcal{L}_1$ and $\mathcal{L}_2$. We define
\begin{align}\label{eq:L12}
  &\mathcal{L}_{1,2}(t)=2\mathbf{Re} \int_{1}^{t}\int_{\mathbb{R}} \dfrac{A_R(s,\xi)^2}{s}\widetilde{H}(s,\xi) \overline{\widetilde{V_1}(s,\xi)}\mathrm{d}\xi\mathrm{d}s,
\end{align}
and
\begin{align}\label{eq:L22}
  \mathcal{L}_{2,2}(t)&=K_{\delta}^2 \int_{1}^{t}\int_{\mathbb{R}}\left\{ -A_{NR}(s,\xi)^2\dfrac{2\langle s\rangle^{3/2}}{s\langle\xi\rangle^{3/2}}|\widetilde{H}(s,\xi)|^2 +A_{NR}(s,\xi)^2\dfrac{3s/2}{\langle s\rangle^{1/2} \langle\xi\rangle^{3/2}}|\widetilde{H}(s,\xi)|\right\}\mathrm{d}\xi\mathrm{d}s\nonumber\\
  &=-K_{\delta}^2 \int_{1}^{t}\int_{\mathbb{R}} A_{NR}(s,\xi)^2\dfrac{2+s^2/2 }{s\langle\xi\rangle^{3/2}\langle s\rangle^{1/2}}|\widetilde{H}(s,\xi)|^2 \mathrm{d}\xi\mathrm{d}s.
\end{align}
Then the desired bound \eqref{est:Improved-V1H-prove} follows from Lemma \ref{lem:6.2JiaHao} and Lemma \ref{lem:6.3JiaHao-mod}.

\begin{lemma}\cite{IJ}\label{lem:6.2JiaHao}
  For any $t\in[1,T]$, we have
  \begin{align*}
     & -\mathcal{B}_{V_1}(t)-\mathcal{B}_{\mathcal{H}}(t)+\mathcal{L}_{1,2}(t)+ \mathcal{L}_{2,2}(t)\leq \epsilon_1^2/40.
  \end{align*}
\end{lemma}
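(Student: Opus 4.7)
The plan is to bound the two cross--coupling quadratic terms $\mathcal{L}_{1,2}$ and $\mathcal{L}_{2,2}$ by the dissipative contributions $\mathcal{B}_{V_1}, \mathcal{B}_{\mathcal{H}}$ and, crucially, by the \emph{coercivity hidden inside $\mathcal{L}_{2,2}$ itself}. The structural observation, already visible from the computation in \eqref{eq:L22}, is that the choice of weight $(\langle t\rangle/\langle\xi\rangle)^{3/2}$ in \eqref{eq:energy-EH} is precisely tuned so that $\mathcal{L}_{2,2}\leq 0$ for all $s\geq 1$, and supplies strong negative mass of order $K_\delta^2\int A_{NR}^2 s^{1/2}/\langle\xi\rangle^{3/2}|\widetilde{\mathcal{H}}|^2$ for $s$ large. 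This sign-definite quadratic form in $\mathcal{H}$ is the only mechanism that can absorb the off-diagonal byproduct of $\mathcal{L}_{1,2}$.

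First, I would apply Young's inequality directly inside the integral defining $\mathcal{L}_{1,2}$, writing
\begin{equation*}
\mathcal{L}_{1,2}(t)\leq \mathcal{B}_{V_1}(t) + \int_{1}^{t}\!\!\int_{\mathbb{R}}\frac{A_R(s,\xi)^3}{s^2|\dot A_R(s,\xi)|}|\widetilde{\mathcal{H}}(s,\xi)|^2\,d\xi\,ds,
\end{equation*}
which absorbs the $V_1$-factor into $\mathcal{B}_{V_1}$ and reduces everything to a diagonal mass estimate in $\mathcal{H}$. The whole lemma then follows if I can establish the pointwise multiplier comparison
\begin{equation*}
\frac{A_R(s,\xi)^3}{s^2|\dot A_R(s,\xi)|}\leq \tfrac{1}{2}K_\delta^2 A_{NR}(s,\xi)^2\frac{2+s^2/2}{s\langle s\rangle^{1/2}\langle\xi\rangle^{3/2}} + K_\delta^2|\dot A_{NR}(s,\xi)|A_{NR}(s,\xi)(\langle s\rangle/\langle\xi\rangle)^{3/2},
\end{equation*}
uniformly for $s\geq s_0(\delta)$, with $K_\delta$ taken sufficiently large. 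Once this holds, the first term integrates to $-\tfrac{1}{2}\mathcal{L}_{2,2}$ and the second to $\mathcal{B}_{\mathcal{H}}$, so the $s\geq s_0$ contribution to $\mathcal{L}_{1,2}-\mathcal{B}_{V_1}-\mathcal{B}_{\mathcal{H}}+\mathcal{L}_{2,2}$ is nonpositive.

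The pointwise inequality is verified by splitting into non-resonant and resonant frequencies. Away from resonance one has $A_R\approx A_{NR}$ and $|\dot A_R|/A_R\gtrsim \dot\lambda(s)\langle\xi\rangle^{1/2}$, so the required bound reduces to the elementary estimate $1/(s^2\dot\lambda(s)\langle\xi\rangle^{1/2})\lesssim_{\delta} s/\langle\xi\rangle^{3/2}$ for $s\geq s_0(\delta)$. In the resonant window $A_R/A_{NR}$ grows polynomially, but $|\dot A_R|$ simultaneously acquires a matching resonant spike of size $\sim A_R\cdot |\xi|/(k^2\langle s-\xi/k\rangle)$; the weights of \cite{IJ} are constructed precisely so that these two effects cancel up to a factor that can be absorbed by choosing $K_\delta$ large. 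On the transitional range $s\in[1,s_0(\delta)]$ all weights are $O_\delta(1)$, so the $\mathcal{H}$-integral is dominated by $C_\delta\,\mathcal{E}_{\mathcal{H}}\leq C_\delta\,\epsilon_1^2$, which is $\leq\epsilon_1^2/40$ once $\epsilon_1$ is chosen small depending on $\delta$.

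The main obstacle is the resonant-regime multiplier comparison, which sits at the heart of the Ionescu--Jia construction: the $3/2$-power in $(\langle t\rangle/\langle\xi\rangle)^{3/2}$, the weight $A_{NR}$, and the resonant part of $|\dot A_R|/A_R$ all have to balance exactly, without losing any power of $s$ or $\langle\xi\rangle$, and $K_\delta$ must be chosen large only in terms of the weight parameters (independently of $\epsilon_1$) so that the resulting constant remains strictly less than $1$. The rest of the argument -- Young's inequality, integration in $(s,\xi)$, and the small boundary-in-time term from the bootstrap -- is then routine.
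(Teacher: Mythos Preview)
The paper itself does not prove this lemma; it is quoted directly from \cite{IJ}. Your overall strategy---apply Young's inequality with the $|\dot A_R|A_R$ weight on $V_1$, reduce to a pointwise multiplier comparison, and exploit both the sign of $\mathcal{L}_{2,2}$ and the large parameter $K_\delta$---is indeed the idea behind the Ionescu--Jia argument. However, your implementation has a real gap.

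The claimed pointwise inequality fails in the pre-resonant regime $1\le s\lesssim \delta^{-3/2}\langle\xi\rangle^{1/2}$ (large $\xi$, small $s$). There $A_R=A_{NR}$, so your left side is
\[
\frac{A_R^3}{s^2|\dot A_R|}\;\approx\;\frac{A_{NR}^2}{s^2\,|\dot A_{NR}/A_{NR}|}\;\lesssim\;\frac{A_{NR}^2}{s^{1-\sigma_0}\langle\xi\rangle^{1/2}},
\]
using only $|\dot A_{NR}/A_{NR}|\gtrsim \langle\xi\rangle^{1/2}/\langle s\rangle^{1+\sigma_0}$. Both terms on your right side carry the weight $\langle\xi\rangle^{-3/2}$, so the best you get from them at $s\sim 1$ is $K_\delta^2 A_{NR}^2\langle\xi\rangle^{-1}$. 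The ratio is $\sim \langle\xi\rangle^{1/2}/K_\delta^2$, which blows up as $\xi\to\infty$; no finite choice of $K_\delta$ closes it. Your ``elementary estimate'' $1/(s^2|\dot\lambda|\langle\xi\rangle^{1/2})\lesssim_\delta s/\langle\xi\rangle^{3/2}$ is equivalent to $\langle\xi\rangle\lesssim s^{2-\sigma_0}$ and therefore only covers the \emph{post}-resonant zone, not the pre-resonant one. The fallback claim that ``on $s\in[1,s_0(\delta)]$ all weights are $O_\delta(1)$'' is simply false: $A_R, A_{NR}\sim e^{\lambda(s)\langle\xi\rangle^{1/2}}$ are exponentially large in $\langle\xi\rangle^{1/2}$, and the relevant \emph{ratio} of weights still grows like $\langle\xi\rangle^{1/2}$.

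What the Ionescu--Jia argument actually uses to close the pre-resonant contribution is the identity $\mathcal H=-V_1-\mathbb P_0 f$, which yields the \emph{unweighted} space--time bound $\int_1^t\!\int|\dot A_{NR}|A_{NR}|\widetilde{\mathcal H}|^2\lesssim_\delta \epsilon_1^2$ (this is exactly the ``$1$'' in \eqref{eq:boot-H}). In the pre-resonant zone one checks $\frac{A_R^3}{s^2|\dot A_R|}\lesssim_\delta |\dot A_{NR}|A_{NR}$ pointwise (equivalent to $|\dot A_{NR}/A_{NR}|\gtrsim 1/s$, which holds there), so the bad contribution is $\lesssim_\delta\epsilon_1^2$. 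The constant is then beaten down below $1/40$ by the free factor $K_\delta^2$ sitting in front of $\mathcal B_{\mathcal H}$ and $|\mathcal L_{2,2}|$---this is precisely why $K_\delta$ was introduced. Your sketch never invokes \eqref{eq:boot-H} (equivalently, the relation $\mathcal H=-V_1-\mathbb P_0 f$), and without it the argument does not close.
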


We denote
\begin{align}\label{eq:F1G124}
  \begin{aligned}
  &F_1=-V_3\partial_vV_1, \quad G_1=-V_3\partial_v\mathcal{H},\\
  &G_2=(V_1+1) \big[- {\mathbb{P}_0(\partial_v\mathbb{P}_{\neq 0}\phi\partial_zf)} +{\mathbb{P}_0 (\partial_z\phi\partial_vf)}\big],\\
   &G_4=(V_1+1) \big[{\mathbb{P}_0 (\partial_vP\partial_za)} -{\mathbb{P}_0( \partial_zP\partial_va)}\big].
  \end{aligned}
\end{align}

The following lemma is devoted to the estimates for the cubic and higher order contributions.

\begin{lemma}\label{lem:6.3JiaHao-mod}
  For any $t\in[1,T]$, we have
  \begin{align}\label{est:F1bound}
     &\left|2\mathbf{Re}\int_{1}^{t}\int_{\mathbb{R}} A_R(s,\xi)^2\widetilde{F_1}(s,\xi)\overline{\widetilde{V_1}(s,\xi)}\mathrm{d}\xi \mathrm{d}s \right|\lesssim_{\delta}\epsilon_1^3,
  \end{align}
  and
  \begin{align}
     & \left|2\mathbf{Re}\int_{1}^{t}\int_{\mathbb{R}} A_{NR}(s,\xi)^2\big(\langle s\rangle/\langle\xi\rangle\big)^{3/2}\widetilde{G_j}(s,\xi) \overline{\widetilde{\mathcal{H}}(s,\xi)}\mathrm{d}\xi \mathrm{d}s \right|\lesssim_{\delta}\epsilon_1^3,\qquad j\in\{1,2\}\label{est:G12bound}\\
     & \left|2\mathbf{Re}\int_{1}^{t}\int_{\mathbb{R}} A_{NR}(s,\xi)^2\big(\langle s\rangle/\langle\xi\rangle\big)^{3/2}\widetilde{G_4}(s,\xi) \overline{\widetilde{\mathcal{H}}(s,\xi)}\mathrm{d}\xi \mathrm{d}s \right|\lesssim_{\delta}\epsilon_1^3.\label{est:G4bound}
  \end{align}
\end{lemma}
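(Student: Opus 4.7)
The bounds \eqref{est:F1bound} and \eqref{est:G12bound} are direct analogues of Lemma 6.3 in \cite{IJ}, so my plan is to follow the paraproduct decomposition there, using the frequency partition \eqref{eq:R0}--\eqref{eq:R3}. The only modification needed is to replace the compact support of $V_3$ by the quantitative bound \eqref{eq:boot-V3}, which supplies integrability through the $\langle s\rangle^2$ factor; the handling is exactly parallel to that of $\mathcal{N}_3$ in Lemma \ref{lem:N3}. For $G_2$, I would expand the Poisson bracket, split in $R_0,R_1,R_2,R_3$, and invoke the bootstrap bounds on $f$ and $\Theta$ together with the weighted bilinear estimates underpinning Lemmas \ref{lem:N1}--\ref{lem:N2}.

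For the new estimate \eqref{est:G4bound}, I first separate the factor $V_1+1$ via a standard paraproduct using Lemma \ref{lem:8.1JiaHao} and \eqref{eq:boot-V1}, which reduces the matter to the cubic quantity
\[
\int_{1}^{t}\!\!\int_{\R} A_{NR}(s,\xi)^2\Bigl(\frac{\langle s\rangle}{\langle\xi\rangle}\Bigr)^{3/2}\widetilde{\mathbb{P}_0\{P,a\}}(s,\xi)\,\overline{\widetilde{\mathcal{H}}(s,\xi)}\,d\xi\,ds,
\]
plus a quartic remainder that gains an extra $\epsilon_1$ factor from $V_1$. Writing the zero-mode bracket in Fourier,
\[
\widetilde{\mathbb{P}_0\{P,a\}}(s,\xi)=\sum_{l\neq 0}\int_{\R} l\xi\,\widetilde{P}(s,l,\eta)\,\widetilde{a}(s,-l,\xi-\eta)\,d\eta,
\]
my plan is to place $\widetilde{\mathcal{H}}$ in $L^\infty_s L^2_\xi$ with its natural weight $A_{NR}(s,\xi)(\langle s\rangle/\langle\xi\rangle)^{3/4}$ (controlled by $\sqrt{\mathcal{E}_{\mathcal{H}}}/K_\delta\lesssim \epsilon_1$), $\widetilde{P}$ in $L^2_s L^2_{l,\eta}$ against the $\mathcal{B}_P$-weight, and $\widetilde{a}$ in $L^2_s L^2$ against the $\mathcal{B}_a$-weight, so that the product yields $\epsilon_1^3$ after Cauchy--Schwarz in time.

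The crux will be establishing a trilinear multiplier bound of the form
\[
A_{NR}(s,\xi)\,\frac{|l||\xi|\langle s\rangle^{3/4}}{\langle\xi\rangle^{3/4}}\lesssim_\delta A_l(s,\eta)\,\frac{|l|\langle s\rangle\bigl(\langle s\rangle+|(l,\eta)|\bigr)\langle s-\eta/l\rangle^2}{\langle\eta/l\rangle^2}\,A^*_{-l}(s,\xi-\eta)\,\bigl\{\langle l,\eta\rangle^{-2}+\langle l,\xi-\eta\rangle^{-2}\bigr\},
\]
together with its dotted-weight analogue in which $A_l$ and $A^*_{-l}$ on the right are replaced by the corresponding $\sqrt{|\dot A\cdot A|}$ quantities. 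I would prove this by the usual case split $|(-l,\xi-\eta)|\leq|(l,\eta)|$ versus the reverse, mimicking the multiplier analyses of Lemma \ref{lem:ABnorm-product-a} and Lemma \ref{lem:q0-nonzero}: in the first case the derivative loss $|\xi|\lesssim |\eta|+|\xi-\eta|$ is absorbed by the growth factor $\langle s\rangle+|(l,\eta)|$ on the $P$-side, while in the second one exploits the extra $(l^2+|\xi-\eta|)^{1/2}/\langle s\rangle$ gain built into the $A^*$ weight on $a$. The Gevrey factor $A_{NR}(s,\xi)$ itself is distributed via Lemma \ref{lem:8.3JiaHao-mod1}, and the small exponential surplus $\mathrm{e}^{-c\min(\langle l,\eta\rangle,\langle l,\xi-\eta\rangle)^{1/2}}$ produced there provides the $\{\langle l,\eta\rangle^{-2}+\langle l,\xi-\eta\rangle^{-2}\}$ summability.

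The main obstacle will be verifying this multiplier bound uniformly in the resonant regime $|s-\eta/l|\ll \langle\eta/l\rangle$, where $A_{NR}(s,\xi)$ and $A_l(s,\eta)$ diverge most sharply. This step requires a careful inspection of the resonance intervals $I_{l,\eta}$ analogous to the pressure estimates of Section 3, but cut slightly differently because of the $(\langle s\rangle/\langle\xi\rangle)^{3/4}$ factor coming from the $\mathcal{H}$-energy. I expect this case analysis to be the most intricate part of the proof, though entirely in the spirit of what has already been carried out for $\mathcal{N}_4$ and $\mathcal{N}_5$ in Section 4.
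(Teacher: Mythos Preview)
Your treatment of $F_1$, $G_1$, and $G_2$ is correct and matches the paper (with the cosmetic remark that since $V_1,V_3,\mathcal{H}$ depend only on $v$, the relevant frequency partition is the one--dimensional $S_0,\dots,S_3$ of \eqref{eq:Si} rather than $R_0,\dots,R_3$; the argument is otherwise exactly the symmetrization you describe, combined with Lemma~\ref{lem:8.9JIaHao} and the bound \eqref{eq:boot-V3} on $\partial_v V_3$).

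For $G_4$ your route differs from the paper's. The paper does \emph{not} run a direct trilinear estimate with $\mathcal{H}\in L^\infty_s$ and both $P,a\in L^2_s$. Instead it first applies Cauchy--Schwarz against $\mathcal{B}_{\mathcal{H}}$, reducing the task to
\[
\int_1^t\!\!\int_{\R}|\dot A_{NR}(s,\xi)|^{-1}A_{NR}(s,\xi)^3\bigl(\langle s\rangle/\langle\xi\rangle\bigr)^{3/2}|\widetilde{G_4}(s,\xi)|^2\,d\xi\,ds\lesssim_\delta\epsilon_1^4,
\]
and then, writing $G_4^{(0)}=-\partial_v\mathbb{P}_0(\partial_zP\cdot a)$, proves this bilinear bound via the multiplier estimate
\[
\frac{A_{NR}^2(t,\xi)}{|\dot A_{NR}(t,\xi)|}\frac{\langle t\rangle^{3/4}}{\langle\xi\rangle^{3/4}}|\xi|\lesssim_\delta A_k(t,\eta)\frac{\langle t\rangle(\langle t\rangle+|(k,\eta)|)\langle t-\eta/k\rangle^2}{\langle\eta/k\rangle^2}A^*_{-k}(t,\xi-\eta)\bigl\{\langle\xi-\eta\rangle^{-2}+\langle\eta\rangle^{-2}\bigr\}.
\]
This is exactly \eqref{est:8.7JiaHao-mod}, obtained in two lines from Lemma~8.7 of \cite{IJ} together with $A_{-k}\le A^*_{-k}$; no fresh resonance analysis is needed. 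The factor $V_1+1$ is then handled by a second application of Lemma~\ref{lem:8.1JiaHao} with Lemma~\ref{lem:8.8JIaHao}.

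Your proposed trilinear bound with \emph{both} $\sqrt{|\dot A_lA_l|}$ and $\sqrt{|\dot A^*_{-l}A^*_{-l}|}$ on the right is morally equivalent to the paper's via the comparability $|\dot A_{NR}/A_{NR}|(t,\xi)\approx_\delta|\dot A_l/A_l|(t,\eta)\approx_\delta|\dot A^*_{-l}/A^*_{-l}|(t,\xi-\eta)$ from Lemma~\ref{lem:7.4JiaHao} and \eqref{est:par-t-A*-1}, so your plan would close; but it is a nonstandard formulation in this framework (the bilinear lemmas in Appendix~B and~C carry at most one dotted factor), and in particular the ``careful inspection of the resonance intervals'' you anticipate is already fully encapsulated in the existing lemma \eqref{est:8.7JiaHao-1}. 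The paper's organization buys you a direct appeal to off-the-shelf multiplier bounds from \cite{IJ} with essentially no new casework.
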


\begin{proof}

The bound  \eqref{est:G12bound} for $j=2$ was proved in \cite{IJ}(see Lemma 6.3).\smallskip

\noindent{\bf Step 1.} Proof of  \eqref{est:F1bound}.\smallskip

  We write
  \begin{align*}
     &\left|2\mathbf{Re}\int_{1}^{t}\int_{\mathbb{R}} A_R(s,\xi)^2\widetilde{F_1}(s,\xi)\overline{\widetilde{V_1}(s,\xi)}\mathrm{d}\xi \mathrm{d}s \right|\\
     &=\left|\int_{1}^{t}\int_{\mathbb{R}}\int_{\mathbb{R}} \big[\eta A_R(s,\xi)^2-\xi A_R(s,\eta)^2\big]\widetilde{V_3}(s,\xi-\eta)\widetilde{V_1}(s,\eta) \overline{\widetilde{V_1}(s,\xi)}\mathrm{d}\xi\mathrm{d}\eta \mathrm{d}s \right|.
  \end{align*}
  We define the sets
  \begin{align}\label{eq:Si}
  \begin{aligned}
   S_0=\Big\{&(k,\eta)\in\mathbb{R}^2: \min(\langle \xi\rangle,\langle \eta\rangle,\langle \xi-\eta\rangle)\geq \dfrac{\langle \xi\rangle+\langle \eta\rangle+\langle \xi-\eta\rangle}{20}\Big\},\\
   S_1=\Big\{&(k,\eta)\in\mathbb{R}^2: \langle \xi-\eta\rangle\leq \dfrac{\langle k,\xi\rangle+\langle l,\eta\rangle+\langle \xi-\eta\rangle}{10}\Big\},\\
   S_2=\Big\{&(k,\eta)\in\mathbb{R}^2: \langle \eta\rangle\leq \dfrac{\langle \xi\rangle+\langle \eta\rangle+\langle \xi-\eta\rangle}{10}\Big\},\\
   S_3=\Big\{&(k,\eta)\in\mathbb{R}^2: \langle \xi\rangle\leq \dfrac{\langle \xi\rangle+\langle \eta\rangle+\langle \xi-\eta\rangle}{10}\Big\},
  \end{aligned}
  \end{align}
  and then we denote that  for $i=0, 1, 2, 3$,
   \begin{align*}
      \mathcal{I}_i=\int_{1}^{t}\int_{\mathbb{R}^2} &  \mathbf{1}_{S_i}(\xi,\eta)|\eta A_R(s,\xi)^2-\xi A_R(s,\eta)^2| | \widetilde{V_3}(s,\xi-\eta)||\widetilde{V_1}(s,\eta)||\widetilde{V_1}(s,\xi)|\mathrm{d} \xi\mathrm{d}\eta\mathrm{d}s.
   \end{align*}
\if0   and
      \begin{align*}
      \mathcal{I}_4=\int_{1}^{t}\int_{\mathbb{R}^2} & \mathbf{1}_{|\xi-\eta|\leq1} |\eta A_R^2(s,\xi)-\xi A_R^{2}(s,\eta) | | \widetilde{V_3}(s,\xi-\eta)||\widetilde{V_1}(s,\eta)||\widetilde{V_1}(s,\xi)|\mathrm{d} \xi\mathrm{d}\eta\mathrm{d}s.
   \end{align*}\fi
   It suffices to prove that for $i\in\{0,1,2,3\}$,
   \begin{align}\label{est:F1bound-prove1}
      \mathcal{I}_i\lesssim_{\delta}\epsilon_1^3.
   \end{align}

   Let $\rho=\xi-\eta$. By \eqref{est:8.9JiaHao-1}, we have
   \begin{align*}
      &\mathbf{1}_{(\xi,\eta)\in S_0\cup S_1}\left|\eta A_{R}(s,\xi)^2- \xi A_{R}(s,\eta)^2\right|\\
     &\lesssim_{\delta} s^{1.6}|\rho| \sqrt{|(A_R\dot{A}_R)(s,\xi)|} \sqrt{|(A_R\dot{A}_R)(s,\eta)|} A_{NR}(s,\rho)\mathrm{e}^{-(\lambda(s)/40)\langle \rho\rangle^{1/2}}.
   \end{align*}
\if0 and we get by Lemma \ref{lem:8.9JIaHao-mod} and $\mathbf{1}_{|\rho|\leq 1}A_{NR}(t,\rho)\gtrsim_{\delta}1$ that
     \begin{align*}
      &\mathbf{1}_{|\rho|\leq 1}\left|\eta A^2_{R}(s,\xi)- \xi A_{R}^2(s,\eta) \right|\\
    & \lesssim_{\delta}\mathbf{1}_{|\rho|\leq 1}s^{1.6}|\rho| \sqrt{|(A_R\dot{A}_R)(s,\xi)|} \sqrt{|(A_R\dot{A}_R)(s,\eta)|} \\
    & \lesssim_{\delta} s^{1.6}|\rho| \sqrt{|(A_R\dot{A}_R)(s,\xi)|} \sqrt{|(A_R\dot{A}_R)(s,\eta)|} A_{NR}(s,\rho)\mathrm{e}^{-(\lambda(s)/40)\langle \rho\rangle^{1/2}}.
   \end{align*}\fi
   Therefore, for $j=0,1$, we have
   \begin{align*}
      \mathcal{I}_j\lesssim_{\delta}&\left\| \sqrt{|(A_R\dot{A}_R)(s,\xi)|} \widetilde{V_1}(s,\xi)\right\|_{L^2_sL^2_{\xi}} \left\| \sqrt{|(A_R\dot{A}_R)(s,\eta)|} \widetilde{V_1}(s,\eta)\right\|_{L^2_sL^2_{\eta}}\\
      &\times \left\| s^{1.6}A_{NR}(s,\rho)|\rho|\mathrm{e}^{-(\lambda(s)/50)\langle \rho\rangle^{1/2}}\widetilde{V_3}(s,\rho)\right\|_{L^\infty_sL^2_{\rho}},
  \end{align*}
  and the desired estimate \eqref{est:F1bound-prove1}($j=0,1$) follows from \eqref{eq:boot-V1} and \eqref{eq:boot-V3}.\smallskip

If $(\xi,\eta)\in S_2$, then $10\langle \eta\rangle\leq \langle \xi\rangle+\langle \eta\rangle+\langle \rho\rangle\leq 2(\langle \eta\rangle+\langle \rho\rangle)$, $\langle \rho\rangle\geq4 \langle \eta\rangle\geq4$, $ |\rho|\geq 3$, $\langle\xi\rangle\leq \langle \eta\rangle+\langle \rho\rangle\leq \frac{5}{4}\langle \rho\rangle\leq 2|\rho|$. By \eqref{est:8.9JiaHao-2}, we have
  \begin{align*}
      &\mathbf{1}_{(\xi,\eta)\in S_2}\left|\eta A_{R}(s,\xi)^2- \xi A_{R}(s,\eta)^2\right|\lesssim\mathbf{1}_{(\xi,\eta)\in S_2}\langle\eta\rangle A_{R}(s,\xi)^2\\
     &\lesssim_{\delta}s^{1.1}\langle\xi\rangle^{0.6} \sqrt{|(A_R\dot{A}_R)(s,\xi)|} \sqrt{|(A_{NR}\dot{A}_{NR})(s,\rho)|} A_{R}(s,\eta)\mathrm{e}^{-(\lambda(s)/40)\langle \eta\rangle^{1/2}}\\
     &\lesssim_{\delta}s^{1.1}|\rho|\langle\rho\rangle^{-0.4} \sqrt{|(A_R\dot{A}_R)(s,\xi)|} \sqrt{|(A_{NR}\dot{A}_{NR})(s,\rho)|} A_{R}(s,\eta)\mathrm{e}^{-(\lambda(s)/40)\langle \eta\rangle^{1/2}},
   \end{align*}
which gives
      \begin{align*}
      \mathcal{I}_2\lesssim_{\delta}&\left\| \sqrt{|(A_R\dot{A}_R)(s,\xi)|} \widetilde{V_1}(s,\xi)\right\|_{L^2_sL^2_{\xi}} \left\| s^{1.1}|\rho|\langle\rho\rangle^{-0.4}\sqrt{|(A_R\dot{A}_R)(s,\rho)|} \widetilde{V_3}(s,\rho)\right\|_{L^2_sL^2_{\rho}}\\
      &\times \left\| A_{R}(s,\eta)\mathrm{e}^{-(\lambda(s)/50)\langle \eta\rangle^{1/2}}\widetilde{V_1}(s,\eta)\right\|_{L^\infty_sL^2_{\eta}},
  \end{align*}
  and the desired estimate \eqref{est:F1bound-prove1}($j=2$) follows from \eqref{eq:boot-V1} and \eqref{eq:boot-V3}.
   The argument for  $j=3$ is similar to $j=2$.\smallskip

 \noindent{\bf Step 2.} Proof of  \eqref{est:G12bound} for $j=1$.\smallskip

  We write
  \begin{align*}
     &\left|2\mathbf{Re}\int_{1}^{t}\int_{\mathbb{R}} A_{NR}(s,\xi)^2\big(\langle s\rangle/\langle\xi\rangle\big)^{3/2}\widetilde{G_1}(s,\xi) \overline{\widetilde{\mathcal{H}}(s,\xi)}\mathrm{d}\xi \mathrm{d}s \right|= \Big| \int_{1}^{t}\int_{\mathbb{R}^2}\langle s\rangle^{3/2}\\
     &\quad\times \big[\eta A_{NR}(s,\xi)^2\langle\xi\rangle^{-3/2}-\xi A_{NR}(s,\eta)^2\langle\eta\rangle^{-3/2}\big]\widetilde{V_3}(s,\xi-\eta)
     \widetilde{\mathcal{H}}(s,\eta) \overline{\widetilde{\mathcal{H}}(s,\xi)}\mathrm{d}\xi\mathrm{d}\eta \mathrm{d}s \Big|.
  \end{align*}
  We denote that for $j=0,1,2,3$,
  \begin{align}\label{eq:Jn0123}
  \begin{aligned}
  \mathcal{J}_j=\int_{1}^{t}\int_{\mathbb{R}^2}\mathbf{1}_{(\xi,\eta)\in S_j} \langle s\rangle^{3/2}\big|&\eta A_{NR}(s,\xi)^2\langle\xi\rangle^{-3/2}-\xi A_{NR}(s,\eta)^2\langle\eta\rangle^{-3/2}\big|\\
  &\times|\widetilde{V_3}(s,\xi-\eta)||
     \widetilde{\mathcal{H}}(s,\eta) || \widetilde{\mathcal{H}}(s,\xi)|\mathrm{d}\xi\mathrm{d}\eta \mathrm{d}s.
  \end{aligned}
  \end{align}
\if0 and
    \begin{align}\label{eq:J4}
    \begin{aligned}
  \mathcal{J}_4=\int_{1}^{t}\int_{\mathbb{R}^2}\mathbf{1}_{|\xi-\eta|\leq 1} \langle s\rangle^{3/2}\big|&\eta A_{NR}^2(s,\xi)\langle\xi\rangle^{-3/2}-\xi A_{NR}^2(s,\eta)\langle\eta\rangle^{-3/2}\big|\\
  &\times|\widetilde{V_3}(s,\xi-\eta)||
     \widetilde{\mathcal{H}}(s,\eta) || \widetilde{\mathcal{H}}(s,\xi)|\mathrm{d}\xi\mathrm{d}\eta \mathrm{d}s.
  \end{aligned}
  \end{align}\fi
  Let $\rho=\xi-\eta$.  It suffices to prove that for $j\in\{0,1,2,3\}$
  \begin{align}\label{est:G12bound1-prove1}
     & \mathcal{J}_i\lesssim_{\delta} \epsilon_1^3.
  \end{align}

 It follows from \eqref{est:8.9JiaHao-1}(taking $\alpha=3/2$) that
       \begin{align*}
      \begin{aligned}
    &\mathbf{1}_{(\xi,\eta)\in S_0\cup S_1}\left|\eta A_{NR}^2(t,\xi)\langle\xi\rangle^{-3/2}- \xi A_{NR}(t,\eta)^2\langle\eta\rangle^{-3/2}\right|\\
    &\lesssim_{\delta}t^{1.6}|\rho|\dfrac{\sqrt{|(A_{NR}\dot{A}_{NR})(t,\xi)|} }{\langle\xi\rangle^{3/4}} \dfrac{\sqrt{|(A_{NR}\dot{A}_{NR})(t,\eta)|}}{\langle\eta\rangle^{3/4}}\cdot A_{NR}(t,\rho)\mathrm{e}^{-(\lambda(t)/40)\langle \rho\rangle^{1/2}}.
  \end{aligned}
  \end{align*}
\if0  and we get by Lemma \ref{lem:8.9JIaHao-mod}(taking $\alpha=3/2$) and $\mathbf{1}_{|\rho|\leq 1}A_{NR}(t,\rho)\gtrsim_{\delta}1$ that
     \begin{align*}
      &\mathbf{1}_{|\rho|\leq 1}\left|\eta A^2_{NR}(s,\xi)\langle\xi\rangle^{-3/2}- \xi A_{NR}^2(s,\eta)\langle\eta\rangle^{-3/2} \right|\\
     &\lesssim_{\delta}\mathbf{1}_{|\rho|\leq 1}s^{1.6}|\rho| \dfrac{\sqrt{|(A_{NR}\dot{A}_{NR})(s,\xi)|}}{\langle\xi\rangle^{3/4}} \dfrac{\sqrt{|(A_{NR}\dot{A}_{NR})(s,\eta)|}}{\langle\eta\rangle^{3/4}} \\
     &\lesssim_{\delta}s^{1.6}|\rho| \dfrac{\sqrt{|(A_{NR}\dot{A}_{NR})(s,\xi)|}}{\langle\xi\rangle^{3/4}} \dfrac{\sqrt{|(A_{NR}\dot{A}_{NR})(s,\eta)|}}{\langle\eta\rangle^{3/4}} A_{NR}(s,\rho)\mathrm{e}^{-(\lambda(s)/40)\langle \rho\rangle^{1/2}}.
   \end{align*}\fi
   Therefore, we deduce that for $j=0,1$,
  \begin{align*}
     \mathcal{J}_j\lesssim_{\delta}&  \left\| s^{3/4}\dfrac{\sqrt{|(A_{NR}\dot{A}_{NR})(t,\xi)|} }{\langle\xi\rangle^{3/4}}\widetilde{\mathcal{H}}(s,\xi)\right\|_{L^2_sL^2_\xi}\left\| s^{3/4}\dfrac{\sqrt{|(A_{NR}\dot{A}_{NR})(t,\eta)|}}{\langle\eta\rangle^{3/4}} \widetilde{\mathcal{H}}(s,\eta)\right\|_{L^2_sL^2_{\eta}}\\
     &\times\left\| s^{1.6} |\rho|A_{NR}(t,\rho)\mathrm{e}^{-(\lambda(t)/50)\langle \rho\rangle^{1/2}}\widetilde{V_3}(s,\rho)\right\|_{L^\infty_sL^2_{\rho}},
  \end{align*}
  and the desired conclusion \eqref{est:G12bound1-prove1}($j\in\{0,1\}$) follows from \eqref{eq:boot-H} and \eqref{eq:boot-V3}.

If $(\xi,\eta)\in S_2$, then $ |\rho|\geq 3$. It follows from \eqref{est:8.9JiaHao-3} that
\begin{align*}
   &\left|\eta A_{NR}(s,\xi)^2\langle\xi\rangle^{-3/2}- \xi A_{NR}(s,\eta)^2\langle\eta\rangle^{-3/2} \right| \lesssim_{\delta}\langle \eta\rangle A_{NR}(t,\xi)^2\langle\xi\rangle^{-3/2}\\
&\lesssim_{\delta}t^{1.1}\langle \xi\rangle^{-1.9}\sqrt{|(A_{NR}\dot{A}_{NR})(t,\xi)|} \sqrt{|(A_{NR}\dot{A}_{NR})(t,\rho)|} A_{NR}(t,\eta)\mathrm{e}^{-(\lambda(t)/40)\langle \eta\rangle^{1/2}}\\
&\lesssim_{\delta}t^{1.1}\sqrt{|(A_{NR}\dot{A}_{NR})(t,\xi)|} \sqrt{|(A_{NR}\dot{A}_{NR})(t,\rho)|} A_{NR}(t,\eta)\mathrm{e}^{-(\lambda(t)/40)\langle \eta\rangle^{1/2}}\\
&\lesssim_{\delta}t^{1.1}|\rho|\langle \rho\rangle^{-0.1}\sqrt{|(A_{NR}\dot{A}_{NR})(t,\xi)|} \sqrt{|(A_{NR}\dot{A}_{NR})(t,\rho)|} A_{NR}(t,\eta)\mathrm{e}^{-(\lambda(t)/41)\langle \eta\rangle^{1/2}}
\end{align*}
for any $(\xi,\eta)\in S_2$. Therefore, we arrive at
\begin{align*}
   \mathcal{J}_2\lesssim_{\delta}&  \left\| \sqrt{|(A_{NR}\dot{A}_{NR})(t,\xi)|} \widetilde{\mathcal{H}}(s,\xi)\right\|_{L^2_sL^2_\xi}\left\| s^{1.1} |\rho|\langle \rho\rangle^{-0.1} \sqrt{|(A_{NR}\dot{A}_{NR})(t,\rho)|}\widetilde{V_3}(s,\rho)\right\|_{L^2_sL^2_{\rho}}\\
     &\times\left\| A_{NR}(t,\eta) \mathrm{e}^{-(\lambda(t)/50)\langle \eta\rangle^{1/2}}\widetilde{\mathcal{H}}(s,\eta)\right\|_{L^\infty_sL^2_{\eta}},
\end{align*}
and the desired bound  \eqref{est:G12bound1-prove1}($j=2$) follows from \eqref{eq:boot-H} and \eqref{eq:boot-V3}.
    The proof for  $j=3$ is similar to $j=2$.\smallskip

\noindent{\bf Step 3.} Proof of \eqref{est:G4bound}.\smallskip

  By the Cauchy-{Schwarz} inequality, we have
  \begin{align*}
    \Big|\int_{1}^{t}\int_{\mathbb{R}} A_{NR}(s,\xi)^2 &\big(\langle s\rangle /\langle \xi\rangle\big)^{3/2}\widetilde{G_4}(s,\xi) \overline{\widetilde{\mathcal{H}}(s,\xi)}\mathrm{d}\xi\mathrm{d}s\Big|^2\\
    &\lesssim \mathcal{B}_{\mathcal{H}}(t)\int_{1}^{t}\int_{\mathbb{R}} |\dot{A}_{NR}(s,\xi)|^{-1}A_{NR}(s,\xi)^3\big(\langle s\rangle/\langle\xi\rangle\big)^{3/2}|\widetilde{G_4}(s,\xi)|^2\mathrm{d}\xi \mathrm{d}s.
  \end{align*}
  In view of the bootstrap assumption on $\mathcal{B}_{\mathcal{H}}(t)$, it suffices to prove that
  \begin{align}\label{est:G4bound-prove}
     \int_{1}^{t}\int_{\mathbb{R}} |\dot{A}_{NR}(s,\xi)|^{-1}A_{NR}(s,\xi)^3\big(\langle s\rangle/\langle\xi\rangle\big)^{3/2}|\widetilde{G_4}(s,\xi)|^2\mathrm{d}\xi \mathrm{d}s\lesssim _{\delta} \epsilon_1^4.
  \end{align}

  Let $G_4^{(0)}={\mathbb{P}_0(\partial_vP\partial_za) -\mathbb{P}_0 (\partial_zP\partial_va)=\mathbb{P}_0 \big[ \partial_z(\partial_vPa)-\partial_v(\partial_zPa)\big] =-\partial_v\mathbb{P}_0 (\partial_zPa)}$.
\if0Notice that
  \begin{align*}
    G_4^{(0)}(t,v) &=\dfrac{1}{2\pi}\int_{\mathbb{T}} \big[ \partial_vP(t,z,v)\partial_za(t,z,v) - \partial_zP(t,z,v)\partial_va(t,z,v)\big] \mathrm{d}z\\
    &=C\sum_{k\in \mathbb{Z}} \int_{\mathbb{R}} \mathrm{e}^{\mathrm{i}v(\rho+\eta)}  \big[ \widetilde{\partial_vP}(t,k,\rho)\widetilde{\partial_za}(t,-k,\eta) - \widetilde{\partial_zP}(t,k,\rho)\widetilde{\partial_va}(t,-k,\eta)\big] \mathrm{d}\rho\mathrm{d}\eta,
  \end{align*}
which gives
  \begin{align}\nonumber
  \begin{aligned}
     \widetilde{G_4^{(0)}}(t,\xi)& =C\sum_{k\in\mathbb{Z}}\int_{\mathbb{R}} k\xi\cdot\widetilde{P}(t,k,\xi-\eta)\widetilde{a}(t,-k,\eta)\mathrm{d}\eta\\
     &=C\sum_{k\in\mathbb{Z}\setminus\{0\}}\int_{\mathbb{R}} k\xi\cdot\widetilde{P}(t,k,\eta)\widetilde{a}(t,-k,\xi-\eta)\mathrm{d}\eta.
     \end{aligned}
  \end{align}\fi
We first prove that for any $t\in[1,T]$,
  \begin{align}\label{est:G40bound-prove1}
    & \int_{\mathbb{R}}|\dot{A}_{NR}|^{-2}A_{NR}(t,\xi)^4\big( \langle t\rangle/\langle \xi\rangle\big)^{3/2}|\widetilde{G_4^{(0)}}(t,\xi)|^2\mathrm{d}\xi \lesssim_{\delta} \epsilon_1^4,
  \end{align}
  and
  \begin{align}\label{est:G40bound-prove2}
    & \int_{1}^{t}\int_{\mathbb{R}}|\dot{A}_{NR}(s,\xi)|^{-1}A_{NR}(t,\xi)^3\big( \langle s\rangle/\langle \xi\rangle\big)^{3/2}|\widetilde{G_4^{(0)}}(s,\xi)|^2\mathrm{d}\xi \mathrm{d}s \lesssim_{\delta}\epsilon_1^4.
  \end{align}
 For this,  we use the following multiplier bounds
   \begin{align}\label{est:G4bound-mul-1}
   \begin{aligned}
     \dfrac{A_{NR}(t,\xi)^2}{|\dot{A}_{NR}(t,\xi)|}\dfrac{\langle t\rangle^{3/4}}{\langle\xi\rangle^{3/4}}|\xi|\lesssim_{\delta} & A_{k}(t,\eta)\dfrac{\langle t\rangle \big(\langle t\rangle+|(k,\eta)|\big)\langle t-\eta/k\rangle^2}{\langle\eta/k\rangle^2}\\
     &\times A_{-k}^{*}(t,\xi-\eta) \left\{\langle\xi-\eta\rangle^{-2}+\langle\eta\rangle^{-2}\right\},
   \end{aligned}
   \end{align}
   and
   \begin{align}\label{est:G4bound-mul-2}
   \begin{aligned}
     \dfrac{A_{NR}(t,\xi)^{3/2}}{|\dot{A}_{NR}(t,\xi)|^{1/2}}&\dfrac{\langle t\rangle^{3/4}}{\langle\xi\rangle^{3/4}}|\xi|\lesssim_{\delta} \left[|(\dot{A}_k/A_k)(t,\eta)|^{1/2} +|(\dot{A}^*_{-k}/A^*_{-k})(t,\xi-\eta)|^{1/2}\right]A_{k}(t,\eta)\\ & \times \dfrac{\langle t\rangle \big(\langle t\rangle+|(k,\eta)|\big)\langle t-\eta/k\rangle^2}{\langle \eta/k\rangle^2}A_{-k}^{*}(t,\xi-\eta) \left\{\langle\xi-\eta\rangle^{-2}+\langle\eta\rangle^{-2}\right\},
   \end{aligned}\end{align}
   for any $t\in[1,T],\ k\in\mathbb{Z}\setminus\{0\}$, and $\xi,\eta\in\mathbb{R}$. The estimate \eqref{est:G4bound-mul-1} follows from \eqref{est:8.7JiaHao-mod}, while the estimate \eqref{est:G4bound-mul-2} follows from \eqref{est:8.7JiaHao-mod}, \eqref{est:8.7JiaHao-2} and \eqref{est:par-t-A*-1}. By Lemma \ref{lem:8.1JiaHao} with $g=\partial_zP$, $h=a$, \eqref{est:G40bound-prove1} and \eqref{est:G40bound-prove2} follow from the multiplier bounds \eqref{est:G4bound-mul-1} and \eqref{est:G4bound-mul-2}.


Next we prove \eqref{est:G4bound-prove}.
 Notice that $G_4=V_1G_4^{(0)}+G_4^{(0)}$. In view of  Lemma \ref{lem:8.1JiaHao}, \eqref{est:G40bound-prove1} and \eqref{est:G40bound-prove2},
 it suffices to prove the following multiplier estimate
   \begin{align}\nonumber
   \begin{aligned}
     \dfrac{A_{NR}(t,\xi)^{3/2}}{|\dot{A}_{NR}(t,\xi)|^{1/2}}&\dfrac{\langle t\rangle^{3/4}}{\langle\xi\rangle^{3/4}}\lesssim_{\delta} \left[|(\dot{A}_{NR}/A_{NR})(t,\eta)|^{1/2} +|(\dot{A}_{R}/A_R)(t,\xi-\eta)|^{1/2}\right]\\ & \times \dfrac{A_{NR}(t,\eta)^2}{|\dot{A}_{NR}(t,\eta)|}\dfrac{\langle t\rangle^{3/4}}{\langle\eta\rangle^{3/4}}A_{R}(t,\xi-\eta) \left\{\langle\xi-\eta\rangle^{-2}+\langle\eta\rangle^{-2}\right\},
   \end{aligned}
   \end{align}
 which can be deduced by using  \eqref{est:8.8JiaHao}, \eqref{est:8.2JiaHao-2}, \eqref{est:7.4JiaHao-5} and $A_{NR}(t,\rho)\leq A_R(t,\rho)$.
 \end{proof}

\appendix

\section{Main weights and basic properties}\label{sec:mainweights}

\subsection{Main weights}

Let us recall some main weights in \cite{IJ}, which are introduced to match nonlinear transient growth of the system.

For fixed $\delta_0>0$, we define the function $\lambda :[0,\infty)\rightarrow [\delta_0,3\delta_0/2]$ by
\begin{align}\label{eq:lambda}
   &\lambda(0)=\dfrac{3\delta_0}{2},\quad \lambda'(t)=-\dfrac{\delta_0\sigma_0^2}{\langle t\rangle^{1+\sigma_0}},
\end{align}
for small positive constant $\sigma_0$.\if0 and define
\begin{align}\label{eq:sigma0}
   &\sigma_1=2\sigma_0 .
\end{align}\fi

Take small $\delta >0$ with $\delta\ll \delta_0$.  For $\eta >\delta^{-10}$, we define $k_0(\eta)=\lfloor \sqrt{\delta^3\eta}\rfloor$.
For $l\in\{1,...,k_0(\eta)\}$, we define
 \begin{align*}
    &t_{l,\eta}=\dfrac{1}{2}\Big(\dfrac{\eta}{l+1}+\dfrac{\eta}{l}\Big),\quad t_{0,\eta}=2\eta,\quad I_{l,\eta}=[t_{l,\eta},t_{l-1,\eta}].
 \end{align*}
  Notice that $|I_{l,\eta}|\approx \f{\eta}{l^2}$ and
 \begin{align}\label{eq:(7.2)JiaHao}
   &\delta^{-3/2}\sqrt{\eta}/2\leq t_{k_0(\eta),\eta}\leq ...\leq t_{l,\eta} \leq \eta/l \leq t_{l-1,\eta}\leq ...\leq t_{0,\eta}=2\eta.
 \end{align}

Now we define the weights $w_{NR}$ and $w_{R}$.  For $|\eta|\leq \delta^{-10}$,  we define
\begin{align}\nonumber
   &w_{NR}(t,\eta)=1,\quad w_{R}(t,\eta)=1;
 \end{align}
For $\eta >\delta^{-10}$, if $t\geq t_{0,\eta}=2\eta$, then we define
\begin{align}\nonumber
  w_{NR}(t,\eta)=1,\quad w_{R}(t,\eta)=1;
\end{align}
For $k\in\{1,...,k_{0}(\eta)\}$, we define
\begin{align}\nonumber
   \begin{aligned}
      &w_{NR}(t,\eta)=\left(\dfrac{1+\delta^2|t-\eta/k|}{1+\delta^2|t_{k-1,\eta}-\eta/k|} \right)^{\delta_{0}}w_{NR}(t_{k-1,\eta},\eta)\quad \text{if}\ t\in [\eta/k,t_{k-1,\eta}],\\
      &w_{NR}(t,\eta)=\left( \dfrac{1}{1+\delta^2|t-\eta/k|}\right)^{1+\delta_0} w_{NR}(\eta/l,\eta)\quad \text{if}\ t\in [t_{k,\eta},\eta/k],
   \end{aligned}
\end{align}
and
\begin{align}\nonumber
   w_{R}(t,\eta)=\left\{\begin{aligned}
   &w_{NR}(t,\eta)\dfrac{1+\delta^2|t-\eta/k|}{1+\delta^2\eta/(8k^2)}\quad \text{if} \ |t-\eta/k|\leq \eta/(8k^2),\\
   &w_{NR}(t,\eta)\qquad \text{if}\ t\in I_{k,\eta},\,|t-\eta/k|\geq \eta/(8k^2).
   \end{aligned}\right.
\end{align}
For $t\leq t_{k_0(\eta),\eta}$, we define
\begin{align}\label{eq:(7.11)JiaHao}
   w_{NR}(t,\eta)=w_{R}(t,\eta)=\big(\mathrm{e}^{-\delta\sqrt{\eta}}\big)^{\beta} w_{NR}(t_{k_0(\eta),\eta},\eta)^{1-\beta}
\end{align}
if $t=(1-\beta)t_{k_0(\eta),\eta}$, $\beta\in[0,1]$.

For $\eta<-\delta^{-10}$,  we define
\beno
w_{R}(t,\eta)=w_{R}(t,|\eta|),\quad w_{NR}(t,\eta)=w_{NR}(t,|\eta|),
\eeno
 and the resonant intervals $I_{k,\eta}=I_{-k,-\eta}$. Thus, the resonant intervals $I_{k,\eta}$ are defined for $(k,\eta)\in\mathbb{Z}\times\mathbb{R}$ satisfying $|\eta|>\delta^{-10},1\leq |k|\leq \sqrt{\delta^3|\eta|}$, and $\eta/k>0$. {{(Otherwise $I_{k,\eta}=\emptyset$)}}.\smallskip

 We have the following basic properties for the weights:  for $t\in I_{k,\eta}$, we have
\begin{align}\label{eq:(7.7)JiaHao}
   &w_{R}(t,\eta)\approx w_{NR}(t,\eta)\left[\dfrac{k^2}{\delta^2\eta}(1+\delta^2|t-\eta/k|)\right],
\end{align}
and
\begin{align}\label{eq:(7.8)JiaHao}
  & \dfrac{\partial_tw_{NR}(t,\eta)}{w_{NR}(t,\eta)}\approx  \dfrac{\partial_tw_{R}(t,\eta)}{w_{R}(t,\eta)}\approx \dfrac{\delta^2}{1+\delta^2|t-\eta/k|}.
\end{align}




Next we define the weights $w_k(t,\eta)$, which crucially distinguish the way resonant and nonresonant modes grow around the critical times $\eta/k$, by the formula
\begin{align}\label{eq:(7.13)JiaHao}
 w_k(t,\eta)=\left\{\begin{aligned}
 &w_{NR}(t,\eta)\quad \text{if}\quad t\notin I_{k,\eta},\\
 &w_{R}(t,\eta)\quad \text{if}\quad t\in I_{k,\eta}.
 \end{aligned}\right.
\end{align}

 Fix $\varphi:\mathbb{R}\rightarrow [0,1]$ an even smooth function supported in $[-8/5,6/5]$ and equal to $1$ in $[-5/4,5/4]$ and let $d_0=\int_{\mathbb{R}}\varphi(x)\mathrm{d}x$. For $k\in\mathbb{Z}$ and $Y\in\{NR,R,k\}$, let
\begin{align}\label{eq:(7.14)JiaHao}
\begin{aligned}
&b_{Y}(t,\xi)=\int_{\mathbb{R}}w_{Y}(t,\rho)\varphi \left(\dfrac{\xi-\rho}{L_{\kappa}(t,\xi)}\right)\dfrac{1}{d_0L_{\kappa}(t,\xi)}\mathrm{d}\rho,\\
&L_{\kappa}(t,\xi)=1+\dfrac{\kappa\langle\xi\rangle}{\langle\xi\rangle^{1/2}+\kappa t}.
\end{aligned}
\end{align}
Here $\kappa$ is a sufficiently small positive constant depending only on $\delta$.

Now we define
\begin{align}\label{eq:ARANR}
   &A_R(t,\xi)=\dfrac{\mathrm{e}^{\lambda(t)\langle\xi\rangle^{1/2}}}{b_R(t,\xi)} \mathrm{e}^{\sqrt{\delta}\langle\xi\rangle^{1/2}}, \quad A_{NR}(t,\xi)=\dfrac{\mathrm{e}^{\lambda(t)\langle\xi\rangle^{1/2}}}{b_{NR}(t,\xi)} \mathrm{e}^{\sqrt{\delta}\langle\xi\rangle^{1/2}},
\end{align}
and
\begin{align}\label{eq:Ak}
   & A_k(t,\xi)=\mathrm{e}^{\lambda(t)\langle k,\xi\rangle^{1/2}} \left( \dfrac{\mathrm{e}^{\sqrt{\delta}\langle\xi\rangle^{1/2}}}{b_k(t,\xi)} + \mathrm{e}^{\sqrt{\delta}|k|^{1/2}} \right).
\end{align}

\subsection{Basic bounds on the weights}

Let us first collect some basic bounds on the weights $w_{Y}, b_{Y}$ and $A_{Y}$ with $Y\in \{NR, R, k\}$ from \cite{IJ}.

\begin{lemma}\label{lem:7.1JiaHao}
  For all $t\geq 0$, $\xi,\eta\in\mathbb{R}$, and $k\in\mathbb{Z}$, we have
  \begin{align}\label{est:7.1JiaHao-1}
     &\dfrac{w_{NR}(t,\xi)}{w_{NR}(t,\eta)}+  \dfrac{w_{R}(t,\xi)}{w_{R}(t,\eta)}+\dfrac{w_{k}(t,\xi)}{w_{k}(t,\eta)} \lesssim_{\delta}\mathrm{e}^{\sqrt{\delta}|\eta-\xi|^{1/2}}.
  \end{align}
 Moreover, if $|\xi-\eta|\leq 10L_1(t,\eta)$, then we have the stronger bound
  \begin{align}\label{est:7.1JiaHao-2}
     &\dfrac{w_{NR}(t,\xi)}{w_{NR}(t,\eta)}+  \dfrac{w_{R}(t,\xi)}{w_{R}(t,\eta)}+\dfrac{w_{k}(t,\xi)}{w_{k}(t,\eta)} \lesssim_{\delta}1.
  \end{align}
\if0If $\min(|\xi|,|\eta|)\geq 2\delta^{-10}$, $|\xi-\eta|\leq \min(|\xi|,|\eta|)/3$, and $t\geq \max(t_{k_0(\xi)-4},t_{k_0(\eta)-4},\eta)$, then we also have the stronger bound
  \begin{align}\label{est:7.1JiaHao-3}
     &\max\left\{\dfrac{w_{NR}(t,\xi)}{w_{NR}(t,\eta)},  \dfrac{w_{R}(t,\xi)}{w_{R}(t,\eta)}, \dfrac{w_{k}(t,\xi)}{w_{k}(t,\eta)}\right\} \le \mathrm{e}^{\sqrt{\delta}|\eta-\xi|^{1/2}}.
  \end{align}\fi
\end{lemma}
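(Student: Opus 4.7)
\textbf{Proof proposal for Lemma~\ref{lem:7.1JiaHao}.} The plan is to prove the bounds for $w_{NR}$ in detail and then transfer to $w_R$ and $w_k$ by the comparison \eqref{eq:(7.7)JiaHao}: on $I_{k,\eta}$ one has $w_R(t,\eta)/w_{NR}(t,\eta) = (k^2/\delta^2\eta)(1+\delta^2|t-\eta/k|)$, a factor that on $I_{k,\eta}$ is bounded above and below by constants depending only on $\delta$ times polynomial corrections, so ratios at $\xi$ and $\eta$ are controlled by ratios of $w_{NR}$ up to a factor $\mathrm{e}^{\sqrt\delta|\xi-\eta|^{1/2}}$. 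First I would dispose of the trivial regimes: if $\max(|\xi|,|\eta|) \leq \delta^{-10}$ both weights equal $1$; if exactly one of $|\xi|,|\eta|$ exceeds $\delta^{-10}$ then $|\xi-\eta| \gtrsim \delta^{-10}$ so $\mathrm{e}^{\sqrt\delta|\xi-\eta|^{1/2}}$ dominates any $\delta$-dependent constant, and the crude bounds $\mathrm{e}^{-\delta\sqrt{|\eta|}} \leq w_{NR}(t,\eta) \leq 1$, which follow from \eqref{eq:(7.11)JiaHao} together with the iterative definition, suffice.

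The core case is $|\xi|,|\eta| > \delta^{-10}$ with $\xi,\eta$ of the same sign (reducing to positive by the evenness convention). The strategy is to write
\[
\log w_{NR}(t,\xi) - \log w_{NR}(t,\eta) = \int_{\eta}^{\xi} \partial_{\rho}\log w_{NR}(t,\rho)\,\mathrm{d}\rho,
\]
treating the finitely many $\rho$-values where the definition switches (namely $\rho = kt$, $\rho = (k+1)t$ or endpoints $t_{k,\rho}, t_{k-1,\rho}$ crossing $t$) as jump points. Away from these, a direct computation using the piecewise formulas shows $|\partial_\rho \log w_{NR}(t,\rho)| \lesssim_\delta \langle\rho\rangle^{-1/2}$: in the iterative zone each factor $(1+\delta^2|t-\rho/k|)^{\pm(1+\delta_0)}$ contributes $O_\delta(1/k) \leq O_\delta(\langle\rho\rangle^{-1/2})$ (since $k \leq \sqrt{\delta^3\rho}$), and in the zone $t \leq t_{k_0(\rho),\rho}$ one differentiates the explicit $\beta$-interpolation in \eqref{eq:(7.11)JiaHao}. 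Summing the jumps, which are each bounded in absolute value by an absolute constant, and integrating, yields $|\log (w_{NR}(t,\xi)/w_{NR}(t,\eta))| \lesssim_\delta 1 + |\xi-\eta|/\langle\min(\xi,\eta)\rangle^{1/2}$. Combining with the global bound $|\log w_{NR}(t,\rho)| \leq \delta\sqrt\rho$ gives the minimum of $C\delta\sqrt{\min(\xi,\eta)}$ and $C_\delta|\xi-\eta|/\sqrt{\min(\xi,\eta)}$, which is $\lesssim_\delta |\xi-\eta|^{1/2}$, and with a slightly larger implicit constant (absorbed by choosing $\delta$ small enough relative to the fixed $\sqrt\delta$ in the target) gives \eqref{est:7.1JiaHao-1}.

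For the stronger local bound \eqref{est:7.1JiaHao-2} under $|\xi-\eta|\leq 10 L_1(t,\eta)$, the same computation is combined with the observation that a frequency window of length $L_1(t,\eta)$ is, by the very design of $L_1$, shorter than the gap between two consecutive transition points of $\rho \mapsto w_{NR}(t,\rho)$ at this value of $t$: shifting $\eta$ to $\xi$ shifts the relevant critical time $\eta/k$ by at most $L_1/k \lesssim L_1$ and the resonant width $\eta/k^2$ by a relatively smaller amount, so at most $O(1)$ piecewise boundaries are crossed, each causing a bounded logarithmic jump. The derivative estimate then integrates to a contribution $\lesssim_\delta L_1(t,\eta)/\langle\eta\rangle^{1/2} \lesssim_\delta 1$, since $L_1(t,\eta) \lesssim 1 + \langle\eta\rangle^{1/2}$ uniformly in $t$.

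The principal obstacle is the matching near $t = t_{k_0(\eta),\eta}$, where $k_0(\eta) = \lfloor\sqrt{\delta^3\eta}\rfloor$ is itself a floor function and the two distinct regimes \eqref{eq:(7.11)JiaHao} (pure exponential) and the iterative product are glued. A small shift $\eta \to \xi$ may change the value of $k_0$, redefine the boundary $t_{k_0(\eta),\eta}$, and thus force a comparison between one regime's value at $\xi$ and the other's value at $\eta$. This is handled by checking continuity of $\log w_{NR}$ across $t = t_{k_0(\eta),\eta}$ in $\eta$ (which follows from the consistency of the two formulas at the junction, built into \eqref{eq:(7.11)JiaHao}) and then applying the argument above on each side. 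The smallness of $\delta$ relative to the fixed exponent $\sqrt\delta$ in the conclusion is used to absorb constants arising from the $(1+\delta_0)$ exponent in the iterative definition.
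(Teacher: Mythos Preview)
The paper does not prove Lemma~\ref{lem:7.1JiaHao}; it is quoted verbatim from Ionescu--Jia \cite{IJ} (see the opening of Section~\ref{sec:mainweights}: ``Let us first collect some basic bounds on the weights $\ldots$ from \cite{IJ}''). So there is no in-paper proof to compare your proposal against.

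On the merits of your proposal: the differentiation-in-$\rho$ strategy is reasonable and can be made to work, but two points need sharpening. First, the decisive quantitative fact is that the constant in $|\partial_\rho\log w_{NR}(t,\rho)|\le C_\delta\langle\rho\rangle^{-1/2}$ satisfies $C_\delta\lesssim\sqrt\delta$, not merely $C_\delta\lesssim_\delta 1$: summing the $\partial_\rho$-contributions across the levels $j\le k$ gives $O(k/\rho)\le\delta^{3/2}/\sqrt\rho$ since $k\le k_0(\rho)=\lfloor\sqrt{\delta^3\rho}\rfloor$, and in the zone $t\le t_{k_0(\rho),\rho}$ one gets $O(\delta/\sqrt\rho)$ from \eqref{eq:(7.11)JiaHao}. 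Your phrase ``absorbed by choosing $\delta$ small enough'' is misleading because $\delta$ is fixed; what you need (and what holds) is that the computed constant is already $\le\sqrt\delta$. Second, your ``minimum of two bounds'' step is misstated: the global bound $|\log w_{NR}(t,\rho)|\le\delta\sqrt\rho$ yields $\delta(\sqrt\xi+\sqrt\eta)\lesssim\delta\sqrt{\max(\xi,\eta)}$, not $\delta\sqrt{\min}$. The clean split is: if $\min(\xi,\eta)\le|\xi-\eta|$ then $\max\le2|\xi-\eta|$ and the global bound gives $\lesssim\delta\sqrt{|\xi-\eta|}\le\sqrt\delta\sqrt{|\xi-\eta|}$; if $\min\ge|\xi-\eta|$ then integrate the derivative bound directly, using $\int\rho^{-1/2}\,\mathrm{d}\rho=2|\sqrt\xi-\sqrt\eta|\le2\sqrt{|\xi-\eta|}$.

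The transfer to $w_R$ and $w_k$ via \eqref{eq:(7.7)JiaHao} also needs more than you indicate: the correction factor $(k^2/\delta^2\eta)(1+\delta^2|t-\eta/k|)$ ranges over $[k^2/(\delta^2\eta),O_\delta(1)]$ on $I_{k,\eta}$, so the ratio of this factor at $\xi$ and $\eta$ can be as large as $\delta^2\eta/k^2$; you must then check that this is dominated by $e^{\sqrt\delta|\xi-\eta|^{1/2}}$, which holds because such a large ratio forces $|\xi-\eta|\gtrsim\eta/k$ and $k\le\sqrt{\delta^3\eta}$ makes $\sqrt{|\xi-\eta|}\gtrsim\eta^{1/4}/\delta^{3/4}\gg\log(\eta/k^2)$.
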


\begin{lemma}\label{lem:7.2JiaHao}

  For $t\geq 0,\xi\in\mathbb{R},k\in \mathbb{Z}$, and $Y\in\{NR,R,k\}$, we have
  \begin{align}
    &b_{Y}(t,\xi)\approx_{\delta}w_Y(t,\xi),\label{est:7.2JiaHao-1} \\
    &|\partial_{\xi}b_{Y}(t,\xi)|\lesssim_{\delta} b_Y(t,\xi)\dfrac{1}{L_{\kappa}(t,\xi)},\label{est:7.2JiaHao-2} \\
    &\dfrac{ b_{Y}(t,\xi)}{b_{Y}(t,\eta)}\lesssim_{\delta} \mathrm{e}^{\sqrt{\delta}|\eta-\xi|^{1/2}}.\label{est:7.2JiaHao-3}
  \end{align}

 \if0 (ii) For $t\geq 0$, let
   \begin{align*}
     I_{t}^{*} &=\big\{(k,\xi)\in\mathbb{Z}\times\mathbb{R}: 1\leq |k|\leq \delta^2t\ \text{and}\ |\xi-tk|\leq t/6\big\},\\
     I_{t}^{**} &=\big\{(k,\xi)\in\mathbb{Z}\times\mathbb{R}: 1\leq |k|\leq \delta^4t\ \text{and}\ |\xi-tk|\leq t/12\big\}.
   \end{align*}
   Then it holds that
   \begin{align}
     &w_{k}(t,\xi)=w_{NR}(t,\xi)\ \text{and}\ b_{k}(t,\xi)=b_{NR}(t,\xi)\quad \text{if}\ (k,\xi)\notin I_{t}^{*},\label{est:7.2JiaHao-4}\\
      &b_{k}(t,\xi)\approx_{\delta} w_{k}(t,\xi)\approx_{\delta} w_{R}(t,\xi)\approx_{\delta}b_{R}(t,\xi)\quad \text{if}\ (k,\xi)\in I_{t}^{*},\label{est:7.2JiaHao-5}\\
       &b_{k}(t,\xi)\approx_{\delta} w_{k}(t,\xi)\approx_{\delta} w_{NR}(t,\xi)\approx_{\delta}b_{NR}(t,\xi)\quad \text{if}\ (k,\xi)\notin I_{t}^{**},\label{est:7.2JiaHao-6}
   \end{align}\fi
\end{lemma}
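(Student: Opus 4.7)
The three claims \eqref{est:7.2JiaHao-1}--\eqref{est:7.2JiaHao-3} all concern how the mollified weight $b_Y$ compares to $w_Y$, so the plan is to treat $b_Y$ as a convolution-type average of $w_Y$ against a bump of width $L_\kappa(t,\xi)$ and reduce everything to Lemma~\ref{lem:7.1JiaHao}. For \eqref{est:7.2JiaHao-1} I would substitute $u=(\xi-\rho)/L_\kappa(t,\xi)$ to rewrite
\[
b_Y(t,\xi)=\frac{1}{d_0}\int_{\mathbb{R}}w_Y\!\left(t,\xi-uL_\kappa(t,\xi)\right)\varphi(u)\,\mathrm{d}u,
\]
and observe that $u\in\mathrm{supp}\,\varphi\subset[-8/5,6/5]$ forces $|\xi-\rho|\le 2L_\kappa(t,\xi)\le 2L_1(t,\xi)$ (since $\kappa\le 1$). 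A short calculation using $L_1(t,\xi)\ge 1$ and the explicit definition of $L_\kappa$ shows that $L_1(t,\rho)\approx L_1(t,\xi)$ on this set, so the strengthened ratio bound \eqref{est:7.1JiaHao-2} applies and gives $w_Y(t,\rho)\approx_\delta w_Y(t,\xi)$ uniformly in $\rho$. Pulling $w_Y(t,\xi)$ out of the integral and using $\int_{\mathbb{R}}\varphi=d_0$ yields $b_Y(t,\xi)\approx_\delta w_Y(t,\xi)$.

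For \eqref{est:7.2JiaHao-2} I would differentiate the original integral representation under the integral sign and use the product rule on $L_\kappa(t,\xi)^{-1}\varphi\bigl((\xi-\rho)/L_\kappa(t,\xi)\bigr)$. One obtains
\[
\partial_\xi b_Y(t,\xi)=-\frac{\partial_\xi L_\kappa(t,\xi)}{L_\kappa(t,\xi)}b_Y(t,\xi) +\frac{1}{d_0 L_\kappa(t,\xi)}\int_{\mathbb{R}} w_Y(t,\rho)\,\varphi'\!\left(\frac{\xi-\rho}{L_\kappa(t,\xi)}\right)\! \left[\frac{1}{L_\kappa(t,\xi)} -\frac{(\xi-\rho)\partial_\xi L_\kappa(t,\xi)}{L_\kappa(t,\xi)^2}\right]\mathrm{d}\rho.
\]
A direct differentiation of $L_\kappa(t,\xi)=1+\kappa\langle\xi\rangle/(\langle\xi\rangle^{1/2}+\kappa t)$ gives $|\partial_\xi L_\kappa(t,\xi)|\lesssim\kappa\lesssim 1$, and since $L_\kappa(t,\xi)\ge 1$ the prefactor $|\partial_\xi L_\kappa|/L_\kappa$ is $\lesssim 1/L_\kappa$. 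For the remaining integral, $\varphi'$ has the same support properties as $\varphi$, so repeating the argument of step one produces another $w_Y$-averaged quantity that is $\approx_\delta w_Y(t,\xi)\approx_\delta b_Y(t,\xi)$, and the bracketed factor is $\lesssim 1/L_\kappa$. Combining these estimates gives $|\partial_\xi b_Y(t,\xi)|\lesssim_\delta b_Y(t,\xi)/L_\kappa(t,\xi)$.

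For \eqref{est:7.2JiaHao-3} I would simply chain \eqref{est:7.2JiaHao-1} with \eqref{est:7.1JiaHao-1}:
\[
\frac{b_Y(t,\xi)}{b_Y(t,\eta)}\approx_\delta \frac{w_Y(t,\xi)}{w_Y(t,\eta)}\lesssim_\delta \mathrm{e}^{\sqrt{\delta}|\xi-\eta|^{1/2}}.
\]

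The only nontrivial point is making sure that the localisation scale $L_\kappa(t,\xi)$ used in defining $b_Y$ is compatible with the scale $L_1(t,\eta)$ that appears in the strengthened ratio bound of Lemma~\ref{lem:7.1JiaHao}; this is really the place where the smallness of $\kappa$ is used. Beyond that the argument is a routine mollification computation, so I expect no serious obstacle. The subtlest piece is keeping track of $\partial_\xi L_\kappa$ and verifying that its size does not destroy the $1/L_\kappa$ gain in \eqref{est:7.2JiaHao-2}, but once one notes $L_\kappa\ge 1$ and $|\partial_\xi L_\kappa|\lesssim 1$ this is automatic.
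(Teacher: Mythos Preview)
The paper does not actually prove this lemma; it is quoted from \cite{IJ} in the appendix as one of the ``basic bounds on the weights'' collected from that reference. So there is no in-paper proof to compare against. That said, your outline is the standard and correct way to establish these facts: \eqref{est:7.2JiaHao-1} follows because $b_Y$ is an average of $w_Y$ over a window of width $\lesssim L_\kappa(t,\xi)\le L_1(t,\xi)$, on which Lemma~\ref{lem:7.1JiaHao} gives $w_Y(t,\rho)\approx_\delta w_Y(t,\xi)$; \eqref{est:7.2JiaHao-2} follows by differentiating the mollifier and using $|\partial_\xi L_\kappa|\lesssim\kappa\le 1$ together with $L_\kappa\ge 1$; and \eqref{est:7.2JiaHao-3} is just \eqref{est:7.2JiaHao-1} combined with \eqref{est:7.1JiaHao-1}. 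Your identification of the one genuine technical point---that $|\xi-\rho|\le 2L_\kappa(t,\xi)$ must imply $|\xi-\rho|\le 10L_1(t,\rho)$ so that \eqref{est:7.1JiaHao-2} is applicable, which is where $\kappa\le 1$ enters---is exactly right, and the verification is routine once one notes $L_\kappa\le L_1$ and $|\partial_\xi L_1|\le 1$.
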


\begin{lemma}\label{lem:7.3JiaHao}

(i) Assume that $t \in[0,\infty),\ k\in\mathbb{Z}$, and $Y\in\{NR,R,k\}$. Then for any $\xi,\eta\in\mathbb{R}$ satisfying $|\eta|\geq |\xi|/8$ (or $|(k,\eta)|\geq |(k,\xi)|/8$ if $Y=k$), we have
  \begin{align}\label{est:7.3JiaHao-1}
    & \dfrac{A_{Y}(t,\xi)}{A_Y(t,\eta)}\lesssim_{\delta}\mathrm{e}^{0.9\lambda(t) |\xi-\eta|^{1/2}}.
  \end{align}

 \noindent (ii) Assume that $t \in[0,\infty),\ k,l\in\mathbb{Z}$, $\xi,\eta\in\mathbb{R}$ satisfying $|(l,\eta)|\geq |(k,\xi)|/8$. If $t\notin I_{k,\xi}$ or if $t\in I_{k,\xi}\cap I_{l,\eta}$, then
  \begin{align}\label{est:7.3JiaHao-2}
     & \dfrac{A_{k}(t,\xi)}{A_l(t,\eta)}\lesssim_{\delta}\mathrm{e}^{0.9\lambda(t) |(k-l,\xi-\eta)|^{1/2}}.
  \end{align}
  If $t\in I_{k,\xi}$ and $t\notin I_{l,\eta}$, then
  \begin{align}\label{est:7.3JiaHao-3}
     & \dfrac{A_{k}(t,\xi)}{A_l(t,\eta)}\lesssim_{\delta} \dfrac{|\xi|}{k^2}\dfrac{1}{1+|t-\xi/k|}\mathrm{e}^{0.9\lambda(t) |(k-l,\xi-\eta)|^{1/2}}.
  \end{align}
\end{lemma}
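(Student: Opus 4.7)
The plan is to decompose each weight $A_Y$ into a Gevrey exponential times a reciprocal $1/b_Y$ (plus, in the $A_k$ case, the extra additive term $e^{\sqrt\delta|k|^{1/2}}$), and to bound $A_Y(t,\xi)/A_Y(t,\eta)$ (for part (i)) or $A_k(t,\xi)/A_l(t,\eta)$ (for part (ii)) by the product of the Gevrey factor ratio and the $b$-factor ratio. The extra summand $e^{\sqrt\delta|k|^{1/2}}$ inside $A_k$ is handled by the elementary inequality $(U_1+U_2)/(V_1+V_2)\leq U_1/V_1+U_2/V_2$ for positive reals, so the two summands can be estimated separately.

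For the Gevrey exponentials, the sharpening of the coefficient from $\lambda(t)$ to $0.9\lambda(t)$ comes from Lemma \ref{lem:tr-1} with $\beta=0.04$ (so that $1-\sqrt\beta/2=0.9$). In part (i), the hypothesis $|\eta|\geq|\xi|/8$ (or $|(k,\eta)|\geq|(k,\xi)|/8$ when $Y=k$) forces a dichotomy: either $\langle\eta\rangle\geq 0.04\langle\xi-\eta\rangle$, in which case \eqref{eq:tr-3} gives directly $\langle\xi\rangle^{1/2}\leq\langle\eta\rangle^{1/2}+0.9\langle\xi-\eta\rangle^{1/2}$; or $\langle\eta\rangle<0.04\langle\xi-\eta\rangle$, in which case the comparability $|\xi|\leq 8|\eta|$ forces $\langle\xi\rangle^{1/2}$ itself to be bounded by (say) $0.6\langle\xi-\eta\rangle^{1/2}+C$, which trivially yields the claim. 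Part (ii) runs the same dichotomy with $(l,\eta)$ replacing $\eta$ and $(k-l,\xi-\eta)$ replacing $\xi-\eta$. For the residual $b_Y$-ratio in part (i), \eqref{est:7.2JiaHao-3} gives a factor $e^{\sqrt\delta|\xi-\eta|^{1/2}}$, which is absorbed into the $0.9\lambda(t)$ exponent since $\sqrt\delta\ll\delta_0\leq\lambda(t)$.

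The substance of the argument lies in part (ii), which requires a case split based on whether $t$ lies in the resonant intervals $I_{k,\xi}$ and $I_{l,\eta}$. The key facts, extracted from \eqref{eq:(7.13)JiaHao}--\eqref{eq:(7.8)JiaHao} together with \eqref{est:7.1JiaHao-1} and \eqref{est:7.2JiaHao-1}, are that $w_l(t,\eta)\leq C_\delta w_{NR}(t,\eta)$ always (since $w_R\leq w_{NR}$ inside the resonant intervals), and that when $t\in I_{k,\xi}$ we have the precise comparison $w_{NR}(t,\xi)/w_R(t,\xi)\approx\delta^2|\xi|/[k^2(1+\delta^2|t-\xi/k|)]$. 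Combining these we obtain: if $t\notin I_{k,\xi}$, then $b_l/b_k\lesssim_\delta b_{NR}(t,\eta)/b_{NR}(t,\xi)\lesssim_\delta e^{\sqrt\delta|\xi-\eta|^{1/2}}$; if $t\in I_{k,\xi}\cap I_{l,\eta}$, then $b_l/b_k\lesssim_\delta w_R(t,\eta)/w_R(t,\xi)\lesssim_\delta e^{\sqrt\delta|\xi-\eta|^{1/2}}$; and if $t\in I_{k,\xi}$ but $t\notin I_{l,\eta}$, then $b_l/b_k\lesssim_\delta w_{NR}(t,\eta)/w_R(t,\xi)\lesssim_\delta [|\xi|/(k^2(1+|t-\xi/k|))]\, e^{\sqrt\delta|\xi-\eta|^{1/2}}$, producing exactly the extra factor in \eqref{est:7.3JiaHao-3}.

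The main obstacle is the bookkeeping in the third subcase of part (ii): one must track the asymptotic constants in \eqref{eq:(7.7)JiaHao} carefully enough to recover the factor $|\xi|/[k^2(1+|t-\xi/k|)]$ cleanly (rather than with spurious powers of $\delta$), and match it with the approximate identity $b_Y\approx_\delta w_Y$ without spoiling the Gevrey gain. Throughout, the smallness assumption $\sqrt\delta\ll\delta_0\leq\lambda(t)$ is used to absorb every residual $e^{\sqrt\delta\langle\cdot\rangle^{1/2}}$ loss into the main exponential factor, preserving the stated coefficient $0.9\lambda(t)$.
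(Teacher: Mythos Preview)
The paper does not give its own proof of this lemma; it is quoted verbatim from \cite{IJ} (see the sentence introducing Lemmas~\ref{lem:7.1JiaHao}--\ref{lem:7.4JiaHao}). Your reconstruction is the standard one and is essentially correct: split $A_Y$ into the Gevrey factor and the $1/b_Y$ factor, control the Gevrey ratio via the triangle-type inequality \eqref{eq:tr-3}, control the $b$-ratio via \eqref{est:7.2JiaHao-3} and \eqref{est:7.1JiaHao-1}, and in part~(ii) use the relation $w_k=w_R$ on $I_{k,\xi}$, $w_k=w_{NR}$ off $I_{k,\xi}$, together with \eqref{eq:(7.7)JiaHao} to extract the resonant factor $|\xi|/[k^2(1+|t-\xi/k|)]$ in the third subcase.

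One small numerical looseness: with $\beta=0.04$ in \eqref{eq:tr-3} you hit exactly $0.9$ in your Case~A, leaving no slack to absorb the subsequent $e^{C\sqrt{\delta}\langle\xi-\eta\rangle^{1/2}}$ losses coming from \eqref{est:7.2JiaHao-3} and the $e^{\sqrt{\delta}\langle\xi\rangle^{1/2}}$ prefactor in $A_{NR},A_R$. Take instead $\beta$ slightly larger (e.g.\ $\beta=0.05$, giving $1-\sqrt{\beta}/2\approx 0.888$); your Case~B then still yields a coefficient below $0.7$, and the residual $O(\sqrt{\delta})$ losses fit under $0.9\lambda(t)$ since $\sqrt{\delta}\ll\delta_0\le\lambda(t)$. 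Also note, in the third subcase of part~(ii), that the second summand $e^{\sqrt{\delta}(|k|^{1/2}-|l|^{1/2})}$ in your $(U_1+U_2)/(V_1+V_2)$ splitting does not carry the factor $|\xi|/[k^2(1+|t-\xi/k|)]$; this is harmless because that factor is $\gtrsim_\delta 1$ whenever $t\in I_{k,\xi}$ (indeed $k^2\le\delta^3|\xi|$ and $|t-\xi/k|\lesssim|\xi|/k^2$), so the bound \eqref{est:7.3JiaHao-3} absorbs it.
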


\begin{lemma}\label{lem:7.4JiaHao}
(i) For all $t\geq 0,\xi\in \mathbb{R}$, and $Y\in\{NR,R\}$, we have
\begin{align}\label{est:7.4JiaHao-1}
   & -\dfrac{\partial_tA_Y(t,\xi)}{A_Y(t,\xi)}\approx_{\delta}\left[\dfrac{\langle k,\xi\rangle^{1/2}}{\langle t\rangle^{1+\sigma_0} }+\dfrac{\partial_tw_{Y}(t,\xi)}{w_{Y}(t,\xi)}\right],
\end{align}
  and for any $k\in\mathbb{Z}$, we have
  \begin{align}\label{est:7.4JiaHao-2}
     &-\dfrac{\partial_tA_k(t,\xi)}{A_k(t,\xi)}\approx_{\delta}\left[\dfrac{\langle k,\xi\rangle^{1/2}}{\langle t\rangle^{1+\sigma_0} }+\dfrac{\partial_tw_{k}(t,\xi)}{w_{k}(t,\xi)} \dfrac{1}{1+\mathrm{e}^{\sqrt{\delta}(|k|^{1/2}-\langle \xi\rangle^{1/2})}w_k(t,\xi)}\right].
  \end{align}

\noindent  (ii) For all $t\geq 0,\xi\in \mathbb{R}$, and $Y\in\{NR,R\}$, we have
  \begin{align}\label{est:7.4JiaHao-3}
     & \big|(\dot{A}_{Y}/A_{Y})(t,\xi)\big|\lesssim_{\delta} \big|(\dot{A}_{Y}/A_{Y})(t,\eta)\big|\mathrm{e}^{4\sqrt{\delta}| \xi-\eta|^{1/2}}.
  \end{align}
  Moreover, if $k,l\in\mathbb{Z}$ then
  \begin{align}\label{est:7.4JiaHao-4}
     & \big|(\dot{A}_{k}/A_{k})(t,\xi)\big|\lesssim_{\delta} \big|(\dot{A}_{l}/A_{l})(t,\eta)\big|\mathrm{e}^{4\sqrt{\delta}| k-l,\xi-\eta|^{1/2}}.
  \end{align}
  Finally, if $\xi\in\mathbb{R}$ and $k\in \mathbb{Z}$ satisfy $|k|\leq \langle\xi\rangle+10$, then
  \begin{align}\label{est:7.4JiaHao-5}
     & \big|(\dot{A}_{k}/A_{k})(t,\xi)\big|\approx_{\delta} \big|(\dot{A}_{NR}/A_{NR})(t,\xi)\big| \approx_{\delta}\big|(\dot{A}_{R}/A_{R})(t,\xi)\big|.
  \end{align}
\end{lemma}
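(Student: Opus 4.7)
The plan is to take the logarithmic time derivative of each weight directly from the definitions \eqref{eq:ARANR} and \eqref{eq:Ak}, and then match each resulting term, up to $\delta$-dependent constants, with a summand on the right-hand side of \eqref{est:7.4JiaHao-1}--\eqref{est:7.4JiaHao-2}. Concretely, for $Y\in\{NR,R\}$ a direct calculation from \eqref{eq:ARANR} gives
$$-\frac{\partial_t A_Y(t,\xi)}{A_Y(t,\xi)} = -\lambda'(t)\langle\xi\rangle^{1/2} + \frac{\partial_t b_Y(t,\xi)}{b_Y(t,\xi)}.$$
Since $\lambda'(t)=-\delta_0\sigma_0^2/\langle t\rangle^{1+\sigma_0}$, the first term equals $\delta_0\sigma_0^2\langle\xi\rangle^{1/2}/\langle t\rangle^{1+\sigma_0}\approx_{\delta}\langle\xi\rangle^{1/2}/\langle t\rangle^{1+\sigma_0}$. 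For the second term I will use the convolution definition \eqref{eq:(7.14)JiaHao} together with Lemma \ref{lem:7.2JiaHao} to show $\partial_t b_Y/b_Y\approx_{\delta}\partial_t w_Y/w_Y$; the mild $t$-dependence of $L_\kappa(t,\xi)$ produces only a harmless lower-order contribution that can be absorbed in the polynomial summand.

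For $A_k$, an analogous differentiation of \eqref{eq:Ak} yields
$$-\frac{\partial_t A_k(t,\xi)}{A_k(t,\xi)} = -\lambda'(t)\langle k,\xi\rangle^{1/2} + \frac{\partial_t b_k(t,\xi)}{b_k(t,\xi)}\cdot\frac{1}{1+e^{\sqrt{\delta}(|k|^{1/2}-\langle\xi\rangle^{1/2})}b_k(t,\xi)},$$
where the last factor comes from the product-rule weighting between the two terms of $A_k$. Using $b_k\approx_{\delta} w_k$ (Lemma \ref{lem:7.2JiaHao}) in both numerator and denominator then produces \eqref{est:7.4JiaHao-2}.

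For part (ii), I will invoke part (i) at each of the two frequencies and transfer the pointwise bounds using Lemma \ref{lem:7.1JiaHao} together with the elementary inequality $\langle\xi\rangle^{1/2}\leq\langle\eta\rangle^{1/2}+|\xi-\eta|^{1/2}$. The polynomial factor $\langle\xi\rangle^{1/2}/\langle t\rangle^{1+\sigma_0}$ shifts across frequencies at cost $e^{|\xi-\eta|^{1/2}}$; the ratio $\partial_t w_Y(t,\xi)/w_Y(t,\xi)$ shifts at the cost predicted by Lemma \ref{lem:7.1JiaHao} combined with the explicit formula \eqref{eq:(7.8)JiaHao}. For \eqref{est:7.4JiaHao-4} a case split is needed depending on whether $t\in I_{k,\xi}\cap I_{l,\eta}$, $t$ is in exactly one of these intervals, or in neither: one reduces via \eqref{eq:(7.13)JiaHao} to comparing either two $w_R$'s, two $w_{NR}$'s, or one of each, and then applies \eqref{est:7.4JiaHao-3}. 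For \eqref{est:7.4JiaHao-5} the hypothesis $|k|\leq\langle\xi\rangle+10$ gives $\langle k,\xi\rangle\approx\langle\xi\rangle$, and the comparability $w_k\approx w_R\approx w_{NR}$ up to bounded ratios in this regime (available from \eqref{eq:(7.7)JiaHao} and the definitions) handles the remaining $\partial_t w/w$ contributions.

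The main obstacle will be \eqref{est:7.4JiaHao-4} in the case when $(k,\xi)$ and $(l,\eta)$ lie in very different resonant regimes: if $t$ sits deep inside $I_{k,\xi}$ while $t\notin I_{l,\eta}$, then by \eqref{eq:(7.7)JiaHao} the ratio $w_R(t,\xi)/w_{NR}(t,\eta)$ can be as large as $|\xi|/k^2$, and the exponential factor $e^{4\sqrt\delta|(k-l,\xi-\eta)|^{1/2}}$ must absorb this growth. The resolution will exploit the fact that such a large discrepancy in resonance status forces either $|k-l|$ or $|\xi-\eta|$ to be comparable to $|\xi|/k^2$ (or larger), a quantitative form of the separation between distinct resonant intervals. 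A parallel but milder issue in \eqref{est:7.4JiaHao-3} near the boundary $\partial I_{k,\xi}$ is handled directly by the stronger bound \eqref{est:7.1JiaHao-2} available under small frequency shifts $|\xi-\eta|\leq 10L_1(t,\eta)$.
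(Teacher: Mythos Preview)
The paper does not give its own proof of this lemma: it is stated in Appendix~A among results ``collected \ldots\ from \cite{IJ}'' (Ionescu--Jia), so there is no in-paper argument to compare against. Your outline for part~(i) is the natural one and is correct; the logarithmic differentiation of \eqref{eq:Ak} does produce exactly the weighting factor $1/(1+e^{\sqrt{\delta}(|k|^{1/2}-\langle\xi\rangle^{1/2})}b_k)$, and replacing $b_k$ by $w_k$ via Lemma~\ref{lem:7.2JiaHao} gives \eqref{est:7.4JiaHao-2}.

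For part~(ii), however, your diagnosis of the ``main obstacle'' is off target. The quantity you need to transfer in \eqref{est:7.4JiaHao-4} is the \emph{logarithmic derivative} $\partial_t w_k/w_k$, not the weight ratio $w_R/w_{NR}$; by \eqref{eq:(7.8)JiaHao} the former equals $\delta^2/(1+\delta^2|t-\xi/k|)$ in resonant intervals and is uniformly $\lesssim\delta^2$, so the factor $|\xi|/k^2$ from \eqref{eq:(7.7)JiaHao} never enters. The actual dichotomy is: either $(l,\eta)$ is close enough to $(k,\xi)$ that $t$ also lies in (or near) a resonant interval $I_{m,\eta}$ for some $m$, in which case $\partial_t w_l(t,\eta)/w_l(t,\eta)\approx\delta^2/(1+\delta^2|t-\eta/m|)$ is comparable to the left side after paying $e^{C\sqrt{\delta}|(k-l,\xi-\eta)|^{1/2}}$; or else the separation $|(k-l,\xi-\eta)|$ is at least of polylogarithmic size in $|\xi|$, which already suffices (since $\partial_t w_k/w_k\le\delta^2$ and the right side is bounded below by $\langle l,\eta\rangle^{1/2}/\langle t\rangle^{1+\sigma_0}$ with $t\le 2|\xi|$). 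Your claimed resolution, that the separation must be comparable to $|\xi|/k^2$, is both unnecessary and not generally true.
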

As a consequence of \eqref{est:7.4JiaHao-2}, we have (see (8.40) in \cite{IJ}, more precisely arXiv:1808.04026v1)
\begin{align}\label{est:8.40}
     &\left|\dfrac{\partial_tA_{\sigma}(t,\rho)}{A_{\sigma}(t,\rho)}\right|\gtrsim_{\delta}\frac{1}{\langle t-\rho/\sigma\rangle},\quad \text{if}\ 0<t<2|\rho/\sigma|,\ \sigma\neq0.
  \end{align}

\if0The following lemma is a small modification of Lemma \ref{lem:7.3JiaHao}.

\begin{lemma}\label{lem:A*xi/A*eta-0}
(i) Assume that $t \in[0,\infty),\ k\in\mathbb{Z}$. Then for any $\xi,\eta\in\mathbb{R}$ satisfying $|(k,\eta)|\geq |(k,\xi)|/8$, we have
  \begin{align}\label{est:A*xi/A*eta-1}
    & \dfrac{A_{k}^*(t,\xi)}{A_k^{*}(t,\eta)}\lesssim_{\delta}\mathrm{e}^{0.91\lambda(t) |\xi-\eta|^{1/2}}.
  \end{align}

\noindent(ii) Assume that $t\in[0,\infty),\ k,l\in\mathbb{Z}$, $\xi,\eta\in\mathbb{R}$ satisfying $|(l,\eta)|\geq |(k,\xi)|/8$. If $t\notin I_{k,\xi}$ or if $t\in I_{k,\xi}\cap I_{l,\eta}$, then
  \begin{align}\label{est:A*xi/A*eta-2}
     & \dfrac{A_{k}^*(t,\xi)}{A_l^{*}(t,\eta)}\lesssim_{\delta}\mathrm{e}^{0.91\lambda(t) |(k-l,\xi-\eta)|^{1/2}}.
  \end{align}
  If $t\in I_{k,\xi}$ and $t\notin I_{l,\eta}$, then
  \begin{align}\label{est:A*xi/A*eta-3}
     & \dfrac{A_{k}^*(t,\xi)}{A_l^{*}(t,\eta)}\lesssim_{\delta} \dfrac{|\xi|}{k^2}\dfrac{1}{1+|t-\xi/k|}\mathrm{e}^{0.91\lambda(t) |(k-l,\xi-\eta)|^{1/2}}.
  \end{align}
\end{lemma}\fi

Notice that for any $k,l\in\mathbb{Z}$, $\xi,\eta\in\mathbb{R}$, we have $\dfrac{k^2+|\xi|}{\langle t\rangle^2}\lesssim\dfrac{l^2+|\eta|}{\langle t\rangle^2} +\dfrac{|k-l|^2+|\xi-\eta|}{\langle t\rangle^2}$, \if0
  \begin{align*}
     & \dfrac{1+\dfrac{k^2}{\langle t\rangle^2}+\dfrac{\xi^2}{\langle t\rangle^4+\langle t\rangle^{\sigma_1}|\xi|}}{\left(1+\dfrac{l^2}{\langle t\rangle^2}+\dfrac{\eta^2}{\langle t\rangle^4+\langle t\rangle^{\sigma_1}|\eta|}\right)} \\
     &\lesssim  1+ \dfrac{\bigg|\dfrac{k^2}{\langle t\rangle^2}-\dfrac{l^2}{\langle t\rangle^2}\bigg|}{1+\dfrac{l^2}{\langle t\rangle^2}} +\dfrac{\left|\dfrac{\xi^2}{\langle t\rangle^4+\langle t\rangle^{\sigma_1}|\xi|}-\dfrac{\eta^2}{\langle t\rangle^4+\langle t\rangle^{\sigma_1}|\eta|}\right|}{1+\dfrac{\eta^2}{\langle t\rangle^4+\langle t\rangle^{\sigma_1}|\eta|}} \\
     &\lesssim 1+\dfrac{|k-l|^2}{\langle t\rangle^2}+ \dfrac{ |\xi^2-\eta^2|\langle t\rangle^4 +|\xi\eta|\langle t\rangle^{\sigma_1}|\xi-\eta|}{(\langle t\rangle^4+\langle t\rangle^{\sigma_1}|\eta|+\eta^2)(\langle t\rangle^4+\langle t\rangle^{\sigma_1}|\xi|)} \\
    & \lesssim 1+\dfrac{|k-l|^2}{\langle t\rangle^2}+ \dfrac{ |\eta||\xi-\eta|}{\langle t\rangle^4+\langle t\rangle^{\sigma_1}|\eta|+\eta^2}+ \dfrac{|\xi-\eta|^2\langle t\rangle^4}{(\langle t\rangle^4+\langle t\rangle^{\sigma_1}|\eta|+\eta^2)(\langle t\rangle^4+\langle t\rangle^{\sigma_1}|\xi|)} \\
     & \lesssim 1+\dfrac{|k-l|^2}{\langle t\rangle^2}+\dfrac{|\xi-\eta|}{\langle t\rangle^2}+ \dfrac{|\xi-\eta|^2}{\langle t\rangle^4+\langle t\rangle^{\sigma_1}|\xi|},
  \end{align*}\fi
which gives
  \begin{align}\label{est:weight-more-1}
    \dfrac{\left(1+\dfrac{k^2+|\xi|}{\langle t\rangle^2}\right)^{\f12}}{\left(1+\dfrac{l^2+|\eta|}{\langle t\rangle^2} \right)^{\f12}}
    \lesssim& \left(1+\dfrac{|k-l|^2+|\xi-\eta|}{\langle t\rangle^2}\right)^{\f12} \lesssim\left\{\begin{array}{l}
                                                                                                    \langle k-l,\xi-\eta\rangle, \\
                                                                                                    1+\dfrac{|k-l|^2+|\xi-\eta|}{\langle t\rangle^2}.
                                                                                                  \end{array}
\right.
  \end{align}

Similar to Lemma \ref{lem:7.4JiaHao}, we have

\begin{lemma}\label{lem:par-t-A*}
  For any $t\ge 0, k\in\mathbb{Z},\xi\in \R$, we have
  \begin{align}
     &-\dfrac{\partial_tA^{*}_k(t,\xi)}{A^{*}_k(t,\xi)}\approx_{\delta}\left[\dfrac{\langle k,\xi\rangle^{1/2}}{\langle t\rangle^{1+\sigma_0} }+\dfrac{\partial_tw_{k}(t,\xi)}{w_{k}(t,\xi)} \dfrac{1}{1+\mathrm{e}^{\sqrt{\delta}(|k|^{1/2}-\langle \xi\rangle^{1/2})}w_k(t,\xi)}\right],\label{est:par-t-A*}\\
     &-\dot{A}^{*}_{k}(t,\xi)\approx_{\delta} -\dot{A}_k(t,\xi)\left(1+\dfrac{k^2+|\xi|}{\langle t\rangle^2}\right)^{\f12}.\label{est:par-t-A*-1}
  \end{align}
  Moreover, for any $t\ge 0, k,l\in \Z, \xi,\eta\in \R$,  we have
  \begin{align}\label{est:par-t-A*-2}
     & \big|(\dot{A}_{k}^{*}/A_{k}^*)(t,\xi)\big|\lesssim_{\delta} \big|(\dot{A}_{l}^{*}/A_{l}^*)(t,\eta)\big|\mathrm{e}^{4\sqrt{\delta}| k-l,\xi-\eta|^{1/2}}.
  \end{align}
\end{lemma}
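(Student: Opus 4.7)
The plan is to differentiate the product definition $A_k^*(t,\xi)=A_k(t,\xi)\,M(t,k,\xi)$ where $M(t,k,\xi):=\bigl(1+(k^2+|\xi|)/\langle t\rangle^2\bigr)^{1/2}$, and to check that the two monotone-decreasing factors give time derivatives of comparable size in the sense that the contribution from $\partial_t M$ is dominated by that from $\partial_t A_k$. Explicitly, the product rule yields
\begin{align*}
-\partial_t A_k^*(t,\xi)=-\dot{A}_k(t,\xi)\,M(t,k,\xi)+A_k(t,\xi)\,\frac{t(k^2+|\xi|)}{\langle t\rangle^4\,M(t,k,\xi)},
\end{align*}
with both summands non-negative. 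Hence \eqref{est:par-t-A*-1} is equivalent to the upper bound
\begin{align*}
\frac{t(k^2+|\xi|)}{\langle t\rangle^4\,M(t,k,\xi)^2}\;\lesssim_\delta\;\bigl|\dot{A}_k/A_k\bigr|(t,\xi),
\end{align*}
which, by the lower bound $|\dot{A}_k/A_k|\gtrsim_\delta \langle k,\xi\rangle^{1/2}/\langle t\rangle^{1+\sigma_0}$ from \eqref{est:7.4JiaHao-2}, reduces to verifying
\begin{align*}
\frac{t(k^2+|\xi|)}{\langle t\rangle^4+\langle t\rangle^2(k^2+|\xi|)}\;\lesssim\;\frac{\langle k,\xi\rangle^{1/2}}{\langle t\rangle^{1+\sigma_0}}.
\end{align*}

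The main task is to verify this elementary inequality. I would split on whether $k^2+|\xi|\le\langle t\rangle^2$ or $k^2+|\xi|\ge\langle t\rangle^2$. In the first regime, the left side is at most $(k^2+|\xi|)/\langle t\rangle^3$; using $\langle k,\xi\rangle\ge(k^2+|\xi|)^{1/2}/\sqrt{2}$ and $k^2+|\xi|\le\langle t\rangle^2$, the inequality reduces to $(k^2+|\xi|)^{3/4}\lesssim\langle t\rangle^{2-\sigma_0}$, which holds provided $\sigma_0<1/2$. In the second regime, the left side simplifies to $O(1/\langle t\rangle)$, so the estimate becomes $\langle t\rangle^{\sigma_0}\lesssim\langle k,\xi\rangle^{1/2}$; since $\langle k,\xi\rangle\ge(k^2+|\xi|)^{1/2}\ge\langle t\rangle$ in this regime, this is immediate for small $\sigma_0$. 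This confirms \eqref{est:par-t-A*-1}.

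The remaining two bounds are then quick consequences. For \eqref{est:par-t-A*}, I divide \eqref{est:par-t-A*-1} through by $A_k^*=A_k\,M$ to obtain $|\partial_tA_k^*/A_k^*|\approx_\delta|\dot{A}_k/A_k|$, and quote \eqref{est:7.4JiaHao-2} for the right-hand side. For \eqref{est:par-t-A*-2}, the same comparability gives
\begin{align*}
\bigl|(\dot{A}_k^*/A_k^*)(t,\xi)\bigr|\;\approx_\delta\;\bigl|(\dot{A}_k/A_k)(t,\xi)\bigr|\;\lesssim_\delta\;\bigl|(\dot{A}_l/A_l)(t,\eta)\bigr|\mathrm{e}^{4\sqrt{\delta}\langle k-l,\xi-\eta\rangle^{1/2}}\;\approx_\delta\;\bigl|(\dot{A}_l^*/A_l^*)(t,\eta)\bigr|\mathrm{e}^{4\sqrt{\delta}\langle k-l,\xi-\eta\rangle^{1/2}},
\end{align*}
where the middle inequality is \eqref{est:7.4JiaHao-4}. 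The only genuine obstacle is the elementary two-case argument above, and the rest is bookkeeping.
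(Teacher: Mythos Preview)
Your proposal is correct and follows essentially the same line as the paper's proof: differentiate the product $A_k^*=A_k\cdot M$, show the contribution from $\partial_t M$ is dominated by $|\dot A_k/A_k|$ using the lower bound $|\dot A_k/A_k|\gtrsim_\delta\langle k,\xi\rangle^{1/2}/\langle t\rangle^{1+\sigma_0}$ from \eqref{est:7.4JiaHao-2}, and then read off \eqref{est:par-t-A*} and \eqref{est:par-t-A*-2} from the resulting comparability $|\dot A_k^*/A_k^*|\approx_\delta|\dot A_k/A_k|$ together with \eqref{est:7.4JiaHao-4}. The only cosmetic difference is in the elementary inequality: the paper bounds the correction term directly by $\langle k\rangle^{1/2}/\langle t\rangle^{3/2}+\langle\xi\rangle^{1/2}/\langle t\rangle^2$ (treating the $k^2$ and $|\xi|$ pieces separately), while you do a two-case split on $k^2+|\xi|\lessgtr\langle t\rangle^2$; both arguments require only $\sigma_0\le 1/2$, which is assumed.
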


\begin{proof}
Recall that
\begin{align*}
   A_k^*(t,\xi)&=A_k(t,\xi) \left(1+\dfrac{k^2+|\xi|}{\langle t\rangle^2}\right)^{\f12}.
\end{align*}
Then we have
  \begin{align}\label{est:pa-t-weight-1}
   \dfrac{\dot{A}^*_k(t,\xi)}{A^*_{k}(t,\xi)}&=\dfrac{\dot{A}_{k}(t,\xi)}{A_{k}(t,\xi)}+ \dfrac{\partial_t\left(1+\dfrac{k^2+|\xi|}{\langle t\rangle^2}\right)^{\f12}}{\left(1 +\dfrac{k^2+|\xi|}{\langle t\rangle^2}\right)^{\f12}}=\dfrac{\dot{A}_{k}(t,\xi)}{A_{k}(t,\xi)}-
   \dfrac{\dfrac{2t(k^2+|\xi|)}{\langle t\rangle^4}}{2\left(1 +\dfrac{k^2+|\xi|}{\langle t\rangle^2}\right)}.
 \end{align}
 It is easy to verify that
 \begin{align}\label{est:pa-t-weight-2}
       \dfrac{\dfrac{2t(k^2+|\xi|)}{\langle t\rangle^4}}{2\left(1 +\dfrac{k^2+|\xi|}{\langle t\rangle^2}\right)}
&\lesssim \dfrac{\langle k\rangle^{1/2}}{\langle t\rangle^{3/2}}+ \dfrac{\langle\xi\rangle^{1/2}}{\langle t\rangle^2} \lesssim_{\delta}\dfrac{\langle k,\xi\rangle^{1/2}}{\langle t\rangle^{1+\sigma_0}}.
 \end{align}
 By \eqref{est:7.4JiaHao-2}, we know
 \begin{align}\label{eq:JH-par-t-A}
    & -\dfrac{\partial_tA_{k}(t,\xi)}{A_k(t,\xi)}\approx_{\delta} \left[\dfrac{\langle k,\xi\rangle^{1/2}}{\langle t\rangle^{1+\sigma_0} }+\dfrac{\partial_tw_{k}(t,\xi)}{w_{k}(t,\xi)} \dfrac{1}{1+\mathrm{e}^{\sqrt{\delta}(|k|^{1/2}-\langle \xi\rangle^{1/2})}w_k(t,\xi)}\right].
 \end{align}
 This along with \eqref{est:pa-t-weight-1} and \eqref{est:pa-t-weight-2} gives  \eqref{est:par-t-A*}.

By \eqref{eq:JH-par-t-A} again, we get
 \begin{align*}
    &-\dfrac{\partial_tA_{k}(t,\xi)}{A_k(t,\xi)}\approx_{\delta} -\dfrac{\partial_tA^*_{k}(t,\xi)}{A^*_k(t,\xi)},
 \end{align*}
 which gives
 \begin{align*}
    & -\partial_tA^*_{k}(t,\xi)\approx_{\delta}  -\partial_tA_{k}(t,\xi)\dfrac{A^*_k(t,\xi)}{A_k(t,\xi)} \approx_{\delta} -\dot{A}_k(t,\xi)\left(1+\dfrac{k^2+|\xi|}{\langle t\rangle^2}\right)^{\f12}.
 \end{align*}
 This proves \eqref{est:par-t-A*-1}.

 Thanks to the fact that
 \begin{align*}
    & -\dfrac{\partial_tA_{k}(t,\xi)}{A_k(t,\xi)}\approx_{\delta} -\dfrac{\partial_tA^*_{k}(t,\xi)}{A^*_k(t,\xi)},\quad -\dfrac{\partial_tA_{l}(t,\eta)}{A_l(t,\eta)}\approx_{\delta} -\dfrac{\partial_tA^*_{l}(t,\eta)}{A^*_l(t,\eta)},
 \end{align*}
we get by \eqref{est:7.4JiaHao-4}  that
 \begin{align*}
    &\big|(\dot{A}_{k}^{*}/A_{k}^*)(t,\xi)\big|\approx_{\delta} \big|(\dot{A}_{k}/A_{k})(t,\xi)\big|\\
    &\lesssim  \big|(\dot{A}_{l}/A_{l})(t,\eta)\big|\mathrm{e}^{4\sqrt{\delta}|k-l,\xi-\eta|^{1/2}} \approx_{\delta} \big|(\dot{A}^*_{l}/A^*_{l})(t,\eta)\big|\mathrm{e}^{4\sqrt{\delta}|k-l,\xi-\eta|^{1/2}},
 \end{align*}
which gives \eqref{est:par-t-A*-2}.
\end{proof}

\section{Weighted bilinear estimates(I)}
The following lemmas from \cite{IJ}(see Lemma 8.1-8.6) are devoted to nonlinear estimates.

\begin{lemma}\label{lem:8.1JiaHao}
  (i) Assume that $m,m_1,m_2: \mathbb{R}\rightarrow \mathbb{C}$ are symbols satisfying
  \begin{align}\label{con:8.1JiaHao-1}
    |m(\xi)|& \lesssim|m_1(\xi-\eta)||m_2(\eta)|\big\{\langle \xi-\eta\rangle^{-2}+ \langle \eta\rangle^{-2}\big\}
  \end{align}
  for any $\xi,\eta\in\mathbb{R}$. If $M,M_1,M_2$ are the operators defined by these symbols, then
  \begin{align}\label{est:8.1JiaHao-1}
     &\|M(gh)\|_{L^2(\mathbb{R})}\lesssim  \|M_1g\|_{L^2(\mathbb{R})}\|M_2h\|_{L^2(\mathbb{R})}.
  \end{align}

 \noindent (ii) Assume that $m,m_2:\mathbb{Z}\times\mathbb{R}\rightarrow \mathbb{C}$ and $m_1: \mathbb{R}\rightarrow \mathbb{C}$ are symbols satisfying
  \begin{align}\label{con:8.1JiaHao-2}
    |m(k,\xi)|& \lesssim |m_1(\xi-\eta)||m_2(k,\eta)|\big\{\langle \xi-\eta\rangle^{-2}+ \langle k,\eta\rangle^{-2}\big\}
  \end{align}
  for any $\xi,\eta\in\mathbb{R},$ $k\in \mathbb{Z}$. If $M,M_1,M_2$ are the operators defined by these symbols, then
  \begin{align}\label{est:8.1JiaHao-2}
     &\|M(gh)\|_{L^2(\mathbb{T}\times\mathbb{R})}\lesssim  \|M_1g\|_{L^2(\mathbb{R})}\|M_2h\|_{L^2(\mathbb{T}\times\mathbb{R})}.
  \end{align}

\noindent  (iii) Assume that $m,m_1,m_2:\mathbb{Z}\times\mathbb{R}\rightarrow \mathbb{C}$ are symbols satisfying
  \begin{align}\label{con:8.1JiaHao-3}
    |m(k,\xi)|& \lesssim |m_1(k-l,\xi-\eta)||m_2(l,\eta)|\big\{\langle k-l,\xi-\eta\rangle^{-2}+ \langle l,\eta\rangle^{-2}\big\}
  \end{align}
  for any $\xi,\eta\in\mathbb{R},$ $l,k\in \mathbb{Z}$. If $M,M_1,M_2$ are the operators defined by these symbols, then
  \begin{align}\label{est:8.1JiaHao-3}
     &\|M(gh)\|_{L^2(\mathbb{T}\times\mathbb{R})}\lesssim  \|M_1g\|_{L^2(\mathbb{T}\times\mathbb{R})}\|M_2h\|_{L^2(\mathbb{T}\times\mathbb{R})}.
  \end{align}
\end{lemma}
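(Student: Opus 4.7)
\textbf{Proof plan for Lemma \ref{lem:8.1JiaHao}.} The plan is to prove all three parts by the same Schur-type convolution argument on the Fourier side; only the underlying group ($\mathbb{R}$ versus $\mathbb{Z}\times\mathbb{R}$) and the location of the discrete variable change from case to case. The decisive observation is that the weight $\langle\cdot\rangle^{-2}$ appearing in the hypotheses is square-integrable on both $\mathbb{R}$ and $\mathbb{Z}\times\mathbb{R}$, and this square-integrability is precisely what is needed to pay for one application of Cauchy-Schwarz against an $L^2$ factor.

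I will concentrate on part (iii), which is the most involved; (i) and (ii) follow by the same argument. Starting from the convolution identity
\begin{equation*}
  \mathcal{F}(gh)(k,\xi)=\sum_{l\in\mathbb{Z}}\int_{\mathbb{R}} \widetilde{g}(k-l,\xi-\eta)\widetilde{h}(l,\eta)\,d\eta,
\end{equation*}
I set $F(k,\xi)=|m_1(k,\xi)\widetilde{g}(k,\xi)|$ and $G(k,\xi)=|m_2(k,\xi)\widetilde{h}(k,\xi)|$. Inserting hypothesis \eqref{con:8.1JiaHao-3} pointwise bounds $|m(k,\xi)\mathcal{F}(gh)(k,\xi)|$ by
\begin{equation*}
  \sum_{l}\int \bigl[\langle k-l,\xi-\eta\rangle^{-2}+\langle l,\eta\rangle^{-2}\bigr] F(k-l,\xi-\eta)\, G(l,\eta)\,d\eta,
\end{equation*}
which splits as $(\widetilde{F}*G)(k,\xi)+(F*\widetilde{G})(k,\xi)$ with $\widetilde{F}=\langle\cdot\rangle^{-2}F$, $\widetilde{G}=\langle\cdot\rangle^{-2}G$, and $*$ denoting convolution on the LCA group $\mathbb{Z}\times\mathbb{R}$. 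Young's inequality gives $\|\widetilde{F}*G\|_{L^2}\leq\|\widetilde{F}\|_{L^1}\|G\|_{L^2}$, and Cauchy-Schwarz then yields $\|\widetilde{F}\|_{L^1}\leq\|\langle\cdot\rangle^{-2}\|_{L^2(\mathbb{Z}\times\mathbb{R})}\|F\|_{L^2}$. The weight is square-integrable since $\int_{\mathbb{R}}(1+k^2+\xi^2)^{-2}\,d\xi\lesssim(1+k^2)^{-3/2}$ is summable in $k$. The second convolution is handled symmetrically. Plancherel then converts $\|F\|_{L^2}\|G\|_{L^2}$ back into $\|M_1 g\|_{L^2}\|M_2 h\|_{L^2}$, yielding \eqref{est:8.1JiaHao-3}.

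Parts (i) and (ii) are handled identically with the underlying convolution taking place on $\mathbb{R}$ (part (i)) or in the mixed setting where $m_1$ acts only on the continuous variable (part (ii)); one uses $\langle\xi\rangle^{-2}\in L^2(\mathbb{R})$ in place of $\langle k,\xi\rangle^{-2}\in L^2(\mathbb{Z}\times\mathbb{R})$ as appropriate. There is no substantive obstacle: the lemma is essentially a bilinear $L^2\times L^2\to L^2$ Schur estimate dressed up with arbitrary symbols, and the only care needed is bookkeeping about which variable the convolution acts on and confirming square-integrability of the relevant weight in that variable.
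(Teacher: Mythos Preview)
Your argument is correct and is the standard proof of this bilinear estimate. Note, however, that the paper does not actually prove this lemma: it is quoted from \cite{IJ} (see the sentence ``The following lemmas from \cite{IJ} (see Lemma 8.1--8.6)\ldots'' at the start of Appendix B), so there is no proof in the paper to compare against. Your Young/Cauchy--Schwarz argument via the square-integrability of $\langle k,\xi\rangle^{-2}$ on $\mathbb{Z}\times\mathbb{R}$ is exactly the intended one.
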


\begin{lemma}\label{lem:8.2JiaHao}
  For any $t\geq1,\ \alpha\in[0,4],\ \xi,\eta\in\mathbb{R}$, and $Y\in\{NR,R\}$, we have
  \begin{align}\label{est:8.2JiaHao-1}
    \langle\xi\rangle^{-\alpha}A_{Y}(t,\xi)\lesssim_{\delta} \langle\xi-\eta\rangle^{-\alpha} A_{Y}(t,\xi-\eta)\langle \eta\rangle^{-\alpha} A_{Y}(t,\eta)\mathrm{e}^{-(\lambda(t)/20)\min(\langle \xi-\eta\rangle,\langle\eta\rangle)^{1/2}},
  \end{align}
  and
  \begin{align}\label{est:8.2JiaHao-2}
     &|(\dot{A}_{Y}/A_{Y})(t,\xi)|\lesssim_{\delta}\left\{|(\dot{A}_{Y}/A_{Y})(t,\xi-\eta)| +|(\dot{A}_{Y}/A_{Y})(t,\eta)| \right\}\mathrm{e}^{4\sqrt{\delta}\min(\langle\xi-\eta\rangle,\langle\eta\rangle)^{1/2}}.
  \end{align}
\end{lemma}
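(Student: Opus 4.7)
\smallskip

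\textbf{Plan of proof.} Both inequalities are ``local'' multiplier estimates whose main input is already available: Lemma \ref{lem:7.3JiaHao} for the first, and Lemma \ref{lem:7.4JiaHao}(ii) for the second. The only work is a symmetry reduction to get the $\min(\langle\xi-\eta\rangle,\langle\eta\rangle)$ in the exponent, plus absorbing a polynomial loss into the exponential room.

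\smallskip

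For \eqref{est:8.2JiaHao-1}, observe that both sides are symmetric under the swap $\eta\leftrightarrow \xi-\eta$, so I may assume without loss of generality that $\langle\xi-\eta\rangle\le \langle\eta\rangle$. When all three of $\langle\xi\rangle,\langle\eta\rangle,\langle\xi-\eta\rangle$ are bounded by a constant, $A_Y$ and the polynomial weights are all comparable to constants depending on $\delta$, so the bound is trivial. In the remaining regime, the reduction $\langle\xi-\eta\rangle\le\langle\eta\rangle$ forces $|\xi|\le |\eta|+|\xi-\eta|\le 2|\eta|+1$, hence $|\eta|\ge |\xi|/8$. Applying \eqref{est:7.3JiaHao-1} gives
\[
A_Y(t,\xi)\lesssim_\delta A_Y(t,\eta)\,\mathrm{e}^{0.9\lambda(t)\langle\xi-\eta\rangle^{1/2}}.
\]
Next I use the trivial pointwise lower bound $A_Y(t,\xi-\eta)\gtrsim_\delta \mathrm{e}^{\lambda(t)\langle\xi-\eta\rangle^{1/2}}$, which follows from $A_Y=\mathrm{e}^{\lambda(t)\langle\cdot\rangle^{1/2}}\mathrm{e}^{\sqrt{\delta}\langle\cdot\rangle^{1/2}}/b_Y$ together with $b_Y\approx_\delta w_Y\le 1$ coming straight from the construction in Section \ref{sec:mainweights}. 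This lets me rewrite
\[
\mathrm{e}^{0.9\lambda(t)\langle\xi-\eta\rangle^{1/2}}\lesssim_\delta A_Y(t,\xi-\eta)\,\mathrm{e}^{-0.1\lambda(t)\langle\xi-\eta\rangle^{1/2}}.
\]
Combining these and using $|\eta|\ge|\xi|/8$ (so $\langle\xi\rangle^{-\alpha}\lesssim \langle\eta\rangle^{-\alpha}$) together with $\langle\xi-\eta\rangle^{-\alpha}\le 1$, the desired estimate reduces to the polynomial-vs-exponential bound
\[
\langle\xi-\eta\rangle^{\alpha}\lesssim_\delta \mathrm{e}^{0.05\,\lambda(t)\langle\xi-\eta\rangle^{1/2}},
\]
which holds for $\alpha\in[0,4]$ since $\lambda(t)\ge\delta_0$. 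The spare factor $0.1\lambda-\lambda/20=\lambda/20+0.05\lambda$ accounts for both the target decay $\mathrm{e}^{-\lambda(t)\langle\xi-\eta\rangle^{1/2}/20}$ and this polynomial loss.

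\smallskip

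For \eqref{est:8.2JiaHao-2}, the bound is again symmetric under $\eta\leftrightarrow \xi-\eta$, so I assume $\langle\xi-\eta\rangle\le\langle\eta\rangle$. Applying \eqref{est:7.4JiaHao-3} directly yields
\[
|(\dot{A}_Y/A_Y)(t,\xi)|\lesssim_\delta |(\dot{A}_Y/A_Y)(t,\eta)|\,\mathrm{e}^{4\sqrt{\delta}|\xi-\eta|^{1/2}}\le \big\{|(\dot{A}_Y/A_Y)(t,\xi-\eta)|+|(\dot{A}_Y/A_Y)(t,\eta)|\big\}\,\mathrm{e}^{4\sqrt{\delta}\min(\langle\xi-\eta\rangle,\langle\eta\rangle)^{1/2}},
\]
which is exactly what was claimed.

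\smallskip

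I expect no serious obstacle: the only subtlety is checking that the symmetry reduction $\langle\xi-\eta\rangle\le\langle\eta\rangle$ is compatible with the hypothesis $|\eta|\ge|\xi|/8$ of Lemma \ref{lem:7.3JiaHao}, which I handled above, and verifying there is enough exponential room in the constant $0.9\lambda(t)$ versus $\lambda(t)/20$ to absorb the polynomial factor $\langle\xi-\eta\rangle^{\alpha}$ for $\alpha\le 4$; both are routine given that $\lambda(t)\ge\delta_0$ throughout the evolution.
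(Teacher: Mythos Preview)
Your argument is correct. The paper does not actually prove this lemma: it is quoted verbatim from \cite{IJ} (see the sentence preceding Lemma~\ref{lem:8.1JiaHao}, ``The following lemmas from \cite{IJ} (see Lemma 8.1--8.6) are devoted to nonlinear estimates''), so there is no in-paper proof to compare against. Your derivation is the natural one: reduce by symmetry to $\langle\xi-\eta\rangle\le\langle\eta\rangle$, invoke \eqref{est:7.3JiaHao-1} and the trivial lower bound $A_Y\gtrsim_\delta \mathrm{e}^{\lambda(t)\langle\cdot\rangle^{1/2}}$ for \eqref{est:8.2JiaHao-1}, and invoke \eqref{est:7.4JiaHao-3} directly for \eqref{est:8.2JiaHao-2}. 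The check that $\langle\xi-\eta\rangle\le\langle\eta\rangle$ implies $|\eta|\ge|\xi|/8$ and the absorption of the polynomial loss $\langle\xi-\eta\rangle^\alpha$ into the spare exponential $\mathrm{e}^{0.05\lambda(t)\langle\xi-\eta\rangle^{1/2}}$ are both handled correctly.
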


\begin{lemma}\label{lem:8.3JiaHao}
  For any $t\geq1,\ \xi,\eta\in\mathbb{R}$, and $k\in\mathbb{Z}$, we have
  \begin{align}\label{est:8.3JiaHao-1}
    A_{k}(t,\xi)\lesssim_{\delta}  A_{R}(t,\xi-\eta) A_{k}(t,\eta)\mathrm{e}^{-(\lambda(t)/20)\min(\langle \xi-\eta\rangle,\langle k,\eta\rangle)^{1/2}},
  \end{align}
  and
  \begin{align}\label{est:8.3JiaHao-2}
     &|(\dot{A}_{k}/A_{k})(t,\xi)|\lesssim_{\delta}\left\{|(\dot{A}_{R}/A_{R})(t,\xi-\eta)| +|(\dot{A}_{k}/A_{k})(t,\eta)| \right\}\mathrm{e}^{12\sqrt{\delta}\min(\langle\xi-\eta\rangle,\langle\eta\rangle)^{1/2}}.
  \end{align}
\end{lemma}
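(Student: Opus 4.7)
\textbf{Proof plan for Lemma \ref{lem:8.3JiaHao}.}

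The plan is to establish both \eqref{est:8.3JiaHao-1} and \eqref{est:8.3JiaHao-2} by splitting into the two regimes $\langle k,\eta\rangle \ge \langle \xi-\eta\rangle$ and $\langle \xi-\eta\rangle \ge \langle k,\eta\rangle$, reducing each estimate to the pointwise weight comparisons already stored in Lemmas \ref{lem:7.1JiaHao}--\ref{lem:7.4JiaHao}. The core point is that the Gevrey weights are sub-additive, i.e.\ $\langle k,\xi\rangle^{1/2} \le \langle k,\eta\rangle^{1/2}+\langle \xi-\eta\rangle^{1/2}$, and that the extra $e^{\sqrt{\delta}\langle\cdot\rangle^{1/2}}$ factor built into $A_R$ and the ``1'' term in the definition \eqref{eq:Ak} of $A_k$ furnish slack. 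Since $\lambda(t)\le 3\delta_0/2$ and $\sqrt{\delta}\ll\delta_0$, a loss of $\lambda(t)/20\cdot \min(\langle \xi-\eta\rangle,\langle k,\eta\rangle)^{1/2}$ can always be absorbed.

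For \eqref{est:8.3JiaHao-1} in the regime $\langle k,\eta\rangle \ge \langle \xi-\eta\rangle$, the triangle inequality gives $|(k,\eta)|\ge |(k,\xi)|/8$, so either \eqref{est:7.3JiaHao-2} or \eqref{est:7.3JiaHao-3} of Lemma \ref{lem:7.3JiaHao} yields $A_k(t,\xi)/A_k(t,\eta)\lesssim_{\delta} e^{0.9\lambda(t)\langle \xi-\eta\rangle^{1/2}}$ times (possibly) the extra factor $|\xi|k^{-2}(1+|t-\xi/k|)^{-1}$ arising when $t\in I_{k,\xi}\setminus I_{k,\eta}$. In the latter subcase one checks that $(k,\eta)$ lying outside the resonant interval forces $|\xi-\eta|\gtrsim \eta/k^2$, so this extra factor is dominated by a power of $\langle \xi-\eta\rangle$, which is in turn absorbed into the $e^{\sqrt{\delta}\langle \xi-\eta\rangle^{1/2}}$ part of $A_R(t,\xi-\eta)$. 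The final loss $e^{-(\lambda(t)/20)\langle \xi-\eta\rangle^{1/2}}$ is paid for by the mismatch between $0.9\lambda(t)$ and $(\lambda(t)+\sqrt{\delta})$ in the effective Gevrey growth rate of $A_R$. In the opposite regime $\langle \xi-\eta\rangle\ge \langle k,\eta\rangle$, one splits $e^{\lambda(t)\langle k,\xi\rangle^{1/2}} \le e^{\lambda(t)\langle k,\eta\rangle^{1/2}}\,e^{\lambda(t)\langle \xi-\eta\rangle^{1/2}}$, absorbs the second factor into $A_R(t,\xi-\eta)$, and bounds the $b_k$ quotient using \eqref{est:7.2JiaHao-3}, i.e.\ $b_k(t,\xi)/b_k(t,\eta)\lesssim_{\delta} e^{\sqrt{\delta}\langle \xi-\eta\rangle^{1/2}}$; the surplus again swallows the $\lambda(t)/20 \langle k,\eta\rangle^{1/2}$ loss.

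For \eqref{est:8.3JiaHao-2}, one side, namely $|\dot A_k/A_k|(t,\xi)\lesssim_{\delta}|\dot A_k/A_k|(t,\eta)\,e^{4\sqrt{\delta}\langle \xi-\eta\rangle^{1/2}}$, is an immediate application of \eqref{est:7.4JiaHao-4} with $l=k$. The other side---comparison with $|\dot A_R/A_R|(t,\xi-\eta)$---is obtained by decomposing the logarithmic derivative via \eqref{est:7.4JiaHao-2}. The dispersive piece $\langle k,\xi\rangle^{1/2}/\langle t\rangle^{1+\sigma_0}$ is bounded by the corresponding piece at $\xi-\eta$ (or at $\eta$) via $\langle k,\xi\rangle^{1/2}\le \langle k,\eta\rangle^{1/2}+\langle \xi-\eta\rangle^{1/2}$. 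The $w$-piece is handled case-by-case: when $|k|\le \langle \xi-\eta\rangle+10$ one invokes \eqref{est:7.4JiaHao-5} to pass from $\dot A_k/A_k$ at $\xi$ to $\dot A_R/A_R$ at $\xi-\eta$ plus the shift cost from Lemma \ref{lem:7.1JiaHao}; when instead $|k|$ dominates, the $w_k$ contribution is in any case controlled by $|\dot A_k/A_k|(t,\eta)$ through \eqref{est:7.4JiaHao-4}. The full bound then follows by taking the minimum.

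The principal obstacle is the bookkeeping in the first estimate when $(k,\xi)$ lies in a resonant interval while $(k,\eta)$ does not: one must verify that the potentially large ``resonant correction'' $|\xi|/k^2(1+|t-\xi/k|)^{-1}$ from \eqref{est:7.3JiaHao-3} is absorbed by the $A_R(t,\xi-\eta)$ factor, which amounts to quantitative lower bounds on $|\xi-\eta|$ when $t\in I_{k,\xi}\setminus I_{k,\eta}$. Once this localization argument is executed using the construction of $I_{k,\xi}$ from \eqref{eq:(7.2)JiaHao}--\eqref{eq:(7.11)JiaHao}, the remainder is purely calculational.
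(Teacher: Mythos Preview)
The paper does not actually prove this lemma; it simply records it as Lemma~\ref{lem:8.3JiaHao} among the estimates imported from \cite{IJ}. So there is no in-paper argument to compare against, only the closely related Lemma~\ref{lem:8.3JiaHao-mod} (with $A_{NR}$ in place of $A_R$), whose proof the paper does give.

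Your plan is largely sound, and \eqref{est:8.3JiaHao-2} is indeed immediate from \eqref{est:7.4JiaHao-4}--\eqref{est:7.4JiaHao-5}. The one genuine weak spot is the resonant subcase in your treatment of \eqref{est:8.3JiaHao-1}: the assertion ``$(k,\eta)$ lying outside the resonant interval forces $|\xi-\eta|\gtrsim \eta/k^2$'' is false as written. Take $k=1$, $\xi=\eta-1$ with $\eta$ large, and $t$ just below $t_{1,\eta}=3\eta/4$; then $t\in I_{1,\xi}\setminus I_{1,\eta}$ while $|\xi-\eta|=1\ll\eta$. What \emph{is} true is the dichotomy: either $|\xi-\eta|\gtrsim|\eta|/|k|$, in which case the factor $|\xi|k^{-2}(1+|t-\xi/k|)^{-1}\le|\xi|/k^2\le|\xi|/|k|\lesssim\langle\xi-\eta\rangle$ is absorbed as you say; or $|\xi-\eta|\lesssim|\eta|/|k|$, in which case the shift $|\xi/k-\eta/k|$ is small compared with $|I_{k,\eta}|\approx|\eta|/k^2$, forcing $|t-\xi/k|\gtrsim|\eta|/k^2$ and hence the factor is already $O(1)$. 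With this repair your argument goes through, but you should also check separately the sub-subcases where $I_{k,\eta}=\emptyset$ (sign mismatch, $|\eta|\le\delta^{-10}$, or $|k|>k_0(\eta)$).

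A shorter route, parallel to the paper's proof of Lemma~\ref{lem:8.3JiaHao-mod}, avoids Lemma~\ref{lem:7.3JiaHao} and the resonant case analysis entirely. The definitions \eqref{eq:(7.7)JiaHao} and \eqref{eq:(7.13)JiaHao} give $w_k(t,\xi)\ge w_R(t,\xi)$ for all $(t,k,\xi)$, hence $b_k(t,\xi)\gtrsim_\delta b_R(t,\xi)$ by \eqref{est:7.2JiaHao-1}. Combined with \eqref{est:7.2JiaHao-3} this yields
\[
\frac{1}{b_k(t,\xi)}\;\lesssim_\delta\;\frac{1}{b_R(t,\xi)}\;\lesssim_\delta\;\frac{e^{\sqrt{\delta}\langle\eta\rangle^{1/2}}}{b_R(t,\xi-\eta)}\quad\text{and}\quad \frac{1}{b_k(t,\xi)}\;\lesssim_\delta\;\frac{e^{\sqrt{\delta}\langle\xi-\eta\rangle^{1/2}}}{b_k(t,\eta)},
\]
and since $b_R,b_k\lesssim_\delta 1$ one gets $b_k(t,\xi)^{-1}\lesssim_\delta b_R(t,\xi-\eta)^{-1}b_k(t,\eta)^{-1}e^{\sqrt{\delta}\min(\langle\xi-\eta\rangle,\langle k,\eta\rangle)^{1/2}}$. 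The Gevrey exponentials are then handled exactly by \eqref{eq:tr-3} as in the proof of Lemma~\ref{lem:8.3JiaHao-mod}, and no resonant correction ever appears---this is precisely what distinguishes $A_R$ from $A_{NR}$ in \eqref{est:8.3JiaHao-1} versus Lemma~\ref{lem:8.3JiaHao-mod}.
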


The following stronger estimates hold for the case when $\big((k,\xi),(l,\eta)\big)$ belongs to some range.

\begin{lemma}\label{lem:8.4JiaHao}
Assume $t\ge 1$ and let the sets $R_0,R_1,R_2, R_3$ be defined by \eqref{eq:R0}-\eqref{eq:R3} and $(\sigma,\rho)=(k-l,\xi-\eta)$. Assume  $\sigma\neq 0$. Then it holds that

  \begin{itemize}
    \item[(i)]  If $\big((k,\xi),(l,\eta)\big)\in R_0\bigcup R_1$, then
    \begin{align}\label{est:8.4JiaHao-1}
      \dfrac{|\rho/\sigma|+\langle t\rangle}{\langle t\rangle}\dfrac{\langle\rho\rangle/\sigma^2}{\langle t-\rho/\sigma\rangle^2} &|l A_k^2(t,\xi)-kA_l^2(t,\eta) |\\
      &\lesssim_{\delta}\sqrt{|(A_k\dot{A}_{k})(t,\xi)|} \sqrt{|(A_l\dot{A}_{l})(t,\eta)|}A_{\sigma}(t,\rho)
      \mathrm{e}^{-(\delta_0/200)\langle\sigma,\rho\rangle^{1/2}}.\nonumber
    \end{align}
    Moreover, if $\big((k,\xi),(l,\eta)\big)\in R_0$, the term $|l A_k^2(t,\xi)-kA_l^2(t,\eta) |$ in the above inequality can be replaced by $|l A_k^2(t,\xi)|+|kA_l^2(t,\eta) |$.
    \item[(ii)]
    If $\big((k,\xi),(l,\eta)\big)\in R_2$, then
    \begin{align}\label{est:8.4JiaHao-2}
      \dfrac{|\rho/\sigma|+\langle t\rangle}{\langle t\rangle}\dfrac{\langle\rho\rangle/\sigma^2}{\langle t-\rho/\sigma\rangle^2} &\big(|l A_k^2(t,\xi)|+|kA_l^{2}(t,\eta) |\big)\\
      &\lesssim_{\delta}\sqrt{|(A_k\dot{A}_{k})(t,\xi)|} \sqrt{|(A_{\sigma}\dot{A}_{\sigma})(t,\rho)|}A_{l}(t,\eta)
      \mathrm{e}^{-(\delta_0/200)\langle l,\eta\rangle^{1/2}}.\nonumber
    \end{align}
  \end{itemize}
\end{lemma}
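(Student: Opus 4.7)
The plan is to reduce the estimate, in each of the three sub-regions $R_0,R_1,R_2$, to the basic weight comparison bounds from Lemma \ref{lem:7.3JiaHao} and the CR-derivative identity \eqref{est:7.4JiaHao-2}, exploiting algebraic cancellation in the symbol $lA_k^2(t,\xi)-kA_l^2(t,\eta)$ when $(k,\xi)\approx(l,\eta)$. The starting point is the factorization
\begin{align*}
lA_k(t,\xi)^2-kA_l(t,\eta)^2
=l\bigl[A_k(t,\xi)+A_l(t,\eta)\bigr]\bigl[A_k(t,\xi)-A_l(t,\eta)\bigr]+\sigma A_l(t,\eta)^2,
\end{align*}
which turns the problem into bounding the two pieces separately.

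For case (i) with $\big((k,\xi),(l,\eta)\big)\in R_0\cup R_1$, the frequencies are comparable, so by \eqref{est:7.3JiaHao-2} we have $A_k(t,\xi)\approx A_l(t,\eta)$ up to the harmless factor $e^{0.9\lambda(t)\langle\sigma,\rho\rangle^{1/2}}$. The second piece $\sigma A_l(t,\eta)^2$ already carries the desired $|\sigma|$-gain, and one writes $A_l^2(t,\eta)=\sqrt{|A_l\dot A_l|(t,\eta)}\cdot A_l^2(t,\eta)/\sqrt{|A_l\dot A_l|(t,\eta)}$ and uses \eqref{est:8.40} (which gives $|\dot A_l/A_l|(t,\eta)\gtrsim_\delta\langle t-\eta/l\rangle^{-1}$) together with $A_\sigma(t,\rho)\gtrsim 1$ to absorb the multiplier $\tfrac{(|\rho/\sigma|+\langle t\rangle)\langle\rho\rangle/\sigma^2}{\langle t\rangle\langle t-\rho/\sigma\rangle^2}$. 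The first piece requires bounding $A_k(t,\xi)-A_l(t,\eta)$: one uses that this difference, after invoking \eqref{est:7.3JiaHao-2}, is dominated by $A_k(t,\xi)\big(e^{C\sqrt{\delta}\langle\sigma,\rho\rangle^{1/2}}-1\big)$, which is polynomially controlled by $\langle\sigma,\rho\rangle$ times $A_\sigma(t,\rho)e^{-(\lambda(t)/20)\langle\sigma,\rho\rangle^{1/2}}$ after pulling out exponential decay. Combining everything, the multiplier on the left converts, through \eqref{est:7.4JiaHao-4} and \eqref{est:8.40}, into precisely $\sqrt{|\dot A_kA_k|}\sqrt{|\dot A_lA_l|}\,A_\sigma$ with exponential loss. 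In the sub-case $R_0$, all three frequencies are comparable, so no cancellation is needed, and the single-term bound $|lA_k^2(\xi)|+|kA_l^2(\eta)|$ follows directly by bounding each term through Lemma \ref{lem:7.3JiaHao}.

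For case (ii) with $\big((k,\xi),(l,\eta)\big)\in R_2$, the pair $(l,\eta)$ is small compared with $(k,\xi)$ and $(\sigma,\rho)$, so no cancellation is available; one estimates the two terms $|lA_k^2(\xi)|$ and $|kA_l^2(\eta)|$ separately. For the first, one uses \eqref{est:7.3JiaHao-3} to get $A_k(t,\xi)\lesssim (\mathbf 1_{t\in I_{k,\xi}}|\xi|/k^2)\langle t-\xi/k\rangle^{-1}A_\sigma(t,\rho)A_l(t,\eta)e^{-(\lambda(t)/20)\langle l,\eta\rangle^{1/2}}+\text{(off-resonant piece)}$, and the excess CR factor is absorbed into $\sqrt{|\dot A_\sigma A_\sigma|(t,\rho)}$ using \eqref{est:7.4JiaHao-5} and the identification $t-\rho/\sigma\approx t-\xi/k$ valid on $R_2$; similarly for $|kA_l^2(\eta)|$. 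Matching the multiplier $\tfrac{(|\rho/\sigma|+\langle t\rangle)\langle\rho\rangle/\sigma^2}{\langle t\rangle\langle t-\rho/\sigma\rangle^2}$ to $\sqrt{|\dot A_kA_k|(t,\xi)}\sqrt{|\dot A_\sigma A_\sigma|(t,\rho)}$ is then a direct calculation using $|\dot A/A|\gtrsim \langle t-\cdot\rangle^{-1}$.

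The main obstacle, and the reason Lemma \ref{lem:8.4JiaHao} is nontrivial, is the interplay between the resonant singular factor $\langle t-\rho/\sigma\rangle^{-2}$ in the left-hand multiplier and the asymmetry between $t\in I_{k,\xi}$ and $t\notin I_{l,\eta}$ (or vice versa), which is exactly the situation where \eqref{est:7.3JiaHao-3} yields the worst ratio $|\xi|/(k^2\langle t-\xi/k\rangle)$. One must carefully extract \emph{two} copies of $\sqrt{|\dot A/A|}$ (one for $\xi$, one for $\eta$), not just one, by splitting the CR-weight as $|\dot A/A|=|\dot A/A|^{1/2}\cdot|\dot A/A|^{1/2}$ and distributing via \eqref{est:7.4JiaHao-4}; this is where the polynomial prefactor $\tfrac{\langle\rho\rangle/\sigma^2}{\langle t-\rho/\sigma\rangle^2}$ is exactly compensated, leaving only the exponentially small remainder $e^{-(\delta_0/200)\langle\sigma,\rho\rangle^{1/2}}$.
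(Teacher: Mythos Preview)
The paper does not prove this lemma; it is quoted verbatim from Ionescu--Jia \cite{IJ} (see the sentence preceding Lemma~\ref{lem:8.1JiaHao}). So there is no in-paper argument to compare against, but your sketch still has a genuine gap that would prevent it from closing.

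The problem is in the $R_1$ case, specifically your treatment of the difference $A_k(t,\xi)-A_l(t,\eta)$. You write that, by \eqref{est:7.3JiaHao-2}, this difference is dominated by $A_k(t,\xi)\bigl(e^{C\sqrt{\delta}\langle\sigma,\rho\rangle^{1/2}}-1\bigr)$. But \eqref{est:7.3JiaHao-2} is only a one-sided ratio bound with an implicit constant $C_\delta$: it says $A_k/A_l\le C_\delta\,e^{0.9\lambda(t)\langle\sigma,\rho\rangle^{1/2}}$, not that $A_k/A_l$ is close to $1$. Even when $(\sigma,\rho)$ is very small, the ratio bound only gives $|A_k-A_l|\lesssim (C_\delta-1)A_l$, a constant multiple of $A_l$ with no smallness whatsoever. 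After the factorization, the piece $l(A_k+A_l)(A_k-A_l)$ is then of size $|l|A_l^2$, which on $R_1$ is much larger than the piece $|\sigma|A_l^2$ you successfully handle; your argument produces no gain over the trivial bound $|lA_k^2|+|kA_l^2|$, and that trivial bound is \emph{false} on $R_1$ (this is exactly why cancellation is needed there). Moreover, \eqref{est:7.3JiaHao-2} does not even apply in the critical sub-case $t\in I_{k,\xi}$, $t\notin I_{l,\eta}$, where \eqref{est:7.3JiaHao-3} introduces the extra loss $|\xi|/(k^2\langle t-\xi/k\rangle)$.

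The actual mechanism, visible in the analogous proofs carried out in this paper (see Lemma~\ref{lem:rholeq1} for the $k=l$, $|\rho|\le 1$ model case, and the decompositions in Lemmas~\ref{lem:A*R0R1R2-1}--\ref{lem:A*R0R1R2-2}), is to expand $A_k$ explicitly via \eqref{eq:Ak} and split the symmetrized symbol into pieces of the form \eqref{eq:T1''}--\eqref{eq:T2''}. The genuine smallness in the difference then comes from the derivative bound \eqref{est:7.2JiaHao-2} on $b_k$, which yields a factor $|\rho|/L_\kappa(t,\xi)$ rather than the useless $e^{C\sqrt\delta\langle\sigma,\rho\rangle^{1/2}}-1$. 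Your sketch for $R_0$ and $R_2$ is closer to correct in spirit, but the $R_1$ argument as written does not go through.
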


\begin{lemma}\label{lem:8.5JiaHao}
Assume $t\ge 1$ and let the sets $R_0,R_1,R_2, R_3$ be defined by \eqref{eq:R0}-\eqref{eq:R3} and $(\sigma,\rho)=(k-l,\xi-\eta)$. Assume  $\sigma\neq 0$. Then it holds that

  \begin{itemize}
    \item[(i)]  If $\big((k,\xi),(l,\eta)\big)\in R_0\bigcup R_1$, then
    \begin{align}\label{est:8.5JiaHao-1}
      \dfrac{|\rho/\sigma|^2+\langle t\rangle^2}{|\sigma|\langle t\rangle^2}\dfrac{1}{\langle t-\rho/\sigma\rangle^2} &|\eta A_k^2(t,\xi)-\xi A_l^{2}(t,\eta) |\\
      &\lesssim_{\delta}\sqrt{|(A_k\dot{A}_{k})(t,\xi)|} \sqrt{|(A_l\dot{A}_{l})(t,\eta)|}A_{\sigma}(t,\rho)
      \mathrm{e}^{-(\delta_0/200)\langle\sigma,\rho\rangle^{1/2}}.\nonumber
    \end{align}
    Moreover, if $\big((k,\xi),(l,\eta)\big)\in R_0$, the term $|\eta A_k^2(t,\xi)-\xi A_l^2(t,\eta) |$ in the above inequality can be replaced by $|\eta A_k^2(t,\xi)|+|\xi A_l^2(t,\eta) |$.
    \item[(ii)]
    If $\big((k,\xi),(l,\eta)\big)\in R_2$, then
    \begin{align}\label{est:8.5JiaHao-2}
      \dfrac{|\rho/\sigma|^2+\langle t\rangle^2}{|\sigma|\langle t\rangle^2}\dfrac{1}{\langle t-\rho/\sigma\rangle^2} &\big(|\eta A_k^2(t,\xi)|+|\xi A_l^{2}(t,\eta) |\big)\\
      &\lesssim_{\delta}\sqrt{|(A_k\dot{A}_{k})(t,\xi)|} \sqrt{|(A_{\sigma}\dot{A}_{\sigma})(t,\rho)|}A_{l}(t,\eta)
      \mathrm{e}^{-(\delta_0/200)\langle l,\eta\rangle^{1/2}}.\nonumber
    \end{align}
  \end{itemize}
\end{lemma}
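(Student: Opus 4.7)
The plan is to adapt the proof of the companion Lemma~\ref{lem:8.4JiaHao} essentially verbatim, tracking the different pre-factor
\[
P_{*}(t,\sigma,\rho):=\dfrac{|\rho/\sigma|^{2}+\langle t\rangle^{2}}{|\sigma|\langle t\rangle^{2}\langle t-\rho/\sigma\rangle^{2}}
\]
that arises here in place of the one used there. The starting point is the algebraic symmetrization
\[
\eta A_{k}^{2}(t,\xi)-\xi A_{l}^{2}(t,\eta)=\eta\bigl(A_{k}^{2}(t,\xi)-A_{l}^{2}(t,\eta)\bigr)-\rho A_{l}^{2}(t,\eta),
\]
together with its symmetric counterpart obtained by writing $\xi$ instead of $\eta$ and $-\rho$ instead of $\rho$; this splits the estimate into a \emph{transport} piece (the $\rho A_{l}^{2}$ term) and a \emph{difference} piece, which will be handled separately and with different weight decompositions.

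For the transport piece in $R_{0}\cup R_{1}$, I would apply Lemma~\ref{lem:8.3JiaHao} to move a factor of $A_{l}(t,\eta)$ onto $A_{-\sigma}(t,-\rho)A_{k}(t,\xi)$, gaining a factor $\mathrm{e}^{-(\lambda(t)/20)\langle\sigma,\rho\rangle^{1/2}}$, then rewrite $|\rho|=|\sigma||\rho/\sigma|$ and observe that
\[
|\rho|\,P_{*}(t,\sigma,\rho)\lesssim \dfrac{|\rho/\sigma|\bigl(|\rho/\sigma|^{2}+\langle t\rangle^{2}\bigr)}{\langle t\rangle^{2}\langle t-\rho/\sigma\rangle^{2}}\lesssim \langle\rho/\sigma\rangle,
\]
after which the residual $\langle\rho/\sigma\rangle$ is absorbed by the exponential. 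The two required factors $\sqrt{|(A_{k}\dot A_{k})(t,\xi)|}$ and $\sqrt{|(A_{l}\dot A_{l})(t,\eta)|}$ are then produced by writing $A_{k}=A_{k}^{1/2}\cdot A_{k}^{1/2}$ and using \eqref{est:8.40}, together with the transferral \eqref{est:7.4JiaHao-4} of time-derivative size between nearby frequencies; the $\langle t-\rho/\sigma\rangle^{-2}$ factor in $P_{*}$ is exactly matched by the product $|\dot A_{k}/A_{k}|^{1/2}(t,\xi)|\dot A_{l}/A_{l}|^{1/2}(t,\eta)\gtrsim_{\delta}\langle t-\rho/\sigma\rangle^{-1}$ after using \eqref{est:7.4JiaHao-4} between $(k,\xi)$ and $(l,\eta)$, which are $\mathcal{O}(\langle\sigma,\rho\rangle^{1/2})$-close in Gevrey norm. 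For the difference piece, I would factor $A_{k}^{2}-A_{l}^{2}=(A_{k}+A_{l})(A_{k}-A_{l})$ and bound $|A_{k}(t,\xi)-A_{l}(t,\eta)|$ using the same imbalanced weight comparisons from Lemma~\ref{lem:7.3JiaHao} and Lemma~\ref{lem:7.4JiaHao}; the remaining factor $|\eta|$ is controlled by $|\eta|\lesssim |\rho|+|l||\eta/l|\lesssim|\sigma||\rho/\sigma|+|(l,\eta)|$, and each term is then treated as in the transport case. For part~(ii), in $R_{2}$ we have $\langle l,\eta\rangle$ small, so $|\eta|\lesssim\langle l,\eta\rangle$ is absorbed by $\mathrm{e}^{-(\delta_{0}/200)\langle l,\eta\rangle^{1/2}}$, while $|\xi|\lesssim |(\sigma,\rho)|+|(l,\eta)|$; Lemma~\ref{lem:8.3JiaHao} then distributes the weight between $A_{k}(t,\xi)$ and $A_{\sigma}(t,\rho)$ with the time-derivative transferred to $A_{\sigma}$ via \eqref{est:7.4JiaHao-4}.

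The main obstacle will be the resonant case $t\in I_{k,\xi}\setminus I_{l,\eta}$ (and its mirror), where Lemma~\ref{lem:7.3JiaHao}(iii) introduces an extra factor $|\xi|/\bigl(k^{2}(1+|t-\xi/k|)\bigr)$ that behaves like $\langle t\rangle/|k|$ near the critical time $t\approx\xi/k$. Verifying that this resonant loss is fully compensated by the extra $\langle t-\rho/\sigma\rangle^{-2}$ in $P_{*}$ together with $|\dot A_{k}/A_{k}|^{1/2}(t,\xi)\gtrsim_{\delta}\langle t-\xi/k\rangle^{-1/2}$ from \eqref{est:8.40} (and, crucially, that the extra $|\sigma|^{-1}$ in $P_{*}$ relative to the pre-factor of Lemma~\ref{lem:8.4JiaHao} is paid for by the extra power of $|(l,\eta)|$ coming from the $\partial_{v}$ derivative rather than $\partial_{z}$) is the delicate bookkeeping; the case split is parallel to that in Lemma~\ref{lem:8.4JiaHao} and should produce the same decay $\mathrm{e}^{-(\delta_{0}/200)\langle\sigma,\rho\rangle^{1/2}}$ in part~(i) and $\mathrm{e}^{-(\delta_{0}/200)\langle l,\eta\rangle^{1/2}}$ in part~(ii).
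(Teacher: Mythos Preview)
This lemma is not proved in the present paper: it is one of the results quoted verbatim from \cite{IJ} in Appendix~B (see the sentence ``The following lemmas from \cite{IJ} (see Lemma 8.1--8.6)\ldots'' preceding Lemma~\ref{lem:8.1JiaHao}). There is therefore no proof here against which to compare your proposal; for the actual argument you must consult \cite{IJ}.

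That said, your outline is in the right spirit and broadly parallels the argument for Lemma~\ref{lem:8.4JiaHao} in \cite{IJ}. One point that is not correct as written: you claim that the factor $\langle t-\rho/\sigma\rangle^{-2}$ in $P_{*}$ is ``exactly matched'' by
\[
\sqrt{|(\dot A_{k}/A_{k})(t,\xi)|}\,\sqrt{|(\dot A_{l}/A_{l})(t,\eta)|}\gtrsim_{\delta}\langle t-\rho/\sigma\rangle^{-1}
\]
via \eqref{est:8.40} and \eqref{est:7.4JiaHao-4}. This is problematic on two counts. First, \eqref{est:8.40} gives a lower bound at the frequency $(\sigma,\rho)$, not at $(k,\xi)$ or $(l,\eta)$, and only under the constraint $0<t<2|\rho/\sigma|$; transferring it to $(k,\xi)$ or $(l,\eta)$ through \eqref{est:7.4JiaHao-4} costs a factor $\mathrm{e}^{4\sqrt{\delta}\langle k-\sigma,\xi-\rho\rangle^{1/2}}=\mathrm{e}^{4\sqrt{\delta}\langle l,\eta\rangle^{1/2}}$, which in $R_{0}\cup R_{1}$ is as large as the main frequency and cannot be absorbed. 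Second, even granting your lower bound, it supplies only one power of $\langle t-\rho/\sigma\rangle^{-1}$, not two. In practice one must split into the regimes $t\gtrsim|\rho/\sigma|$ (where $\langle t-\rho/\sigma\rangle\approx\langle t\rangle$ and the CK gain comes from the $\langle k,\xi\rangle^{1/2}/\langle t\rangle^{1+\sigma_0}$ part of \eqref{est:7.4JiaHao-2}) and $t\lesssim|\rho/\sigma|$ (where one power of $\langle t-\rho/\sigma\rangle^{-1}$ is absorbed algebraically and the other is recovered from \eqref{est:8.40} at $(\sigma,\rho)$ and then transferred). The paper carries out exactly this case split in the proof of the closely related Lemma~\ref{lem:A*R0R1R2-1-R1}; that is the template to follow.
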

\begin{lemma}\label{lem:8.6JiaHao}
Assume $t\ge 1$ and let the sets $\Sigma_0, \Sigma_1, \Sigma_2, \Sigma_3$ be defined by \eqref{eq:set-Sigma} and $\rho=\xi-\eta$. It holds that
  \begin{itemize}
    \item[(i)]  If $\big((k,\xi),(l,\eta)\big)\in \Sigma_0\bigcup \Sigma_1$, then
    \begin{align}\label{est:8.6JiaHao-1}
      \dfrac{1}{\langle \rho\rangle\langle t\rangle+\langle \rho\rangle^{1/4}\langle t\rangle^{7/4}} &|\eta A_k^2(t,\xi)-\xi A_k^{2}(t,\eta) |\\
      &\lesssim_{\delta}\sqrt{|(A_k\dot{A}_{k})(t,\xi)|} \sqrt{|(A_k\dot{A}_{k})(t,\eta)|}A_{NR}(t,\rho)
      \mathrm{e}^{-(\delta_0/200)\langle\rho\rangle^{1/2}}.\nonumber
    \end{align}
    Moreover, if $\big((k,\xi),(l,\eta)\big)\in \Sigma_0$, the term $|\eta A_k^2(t,\xi)-\xi A_k^2(t,\eta) |$ in the above inequality can be replaced by $|\eta A_k^2(t,\xi)|+|\xi A_l^2(t,\eta) |$.
    \item[(ii)]
    If $\big((k,\xi),(l,\eta)\big)\in \Sigma_2$, then
    \begin{align}\label{est:8.6JiaHao-2}
      \dfrac{1}{\langle \rho\rangle\langle t\rangle+\langle \rho\rangle^{1/4}\langle t\rangle^{7/4}} &\big(|\eta A_k^2(t,\xi)|+|\xi A_k^2(t,\eta) |\big)\\
      &\lesssim_{\delta}\sqrt{|(A_k\dot{A}_{k})(t,\xi)|} \sqrt{|(A_{NR}\dot{A}_{NR})(t,\rho)|}A_{k}(t,\eta)
      \mathrm{e}^{-(\delta_0/200)\langle k,\eta\rangle^{1/2}}.\nonumber
    \end{align}
  \end{itemize}
\end{lemma}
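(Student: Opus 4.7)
}
The plan is to mimic the proofs of the analogous Lemmas \ref{lem:8.4JiaHao}--\ref{lem:8.5JiaHao} (which correspond to $\sigma\neq 0$), now with $\sigma=k-l=0$. First I would symmetrize the symbol: write
\begin{align*}
\eta A_k(t,\xi)^2-\xi A_k(t,\eta)^2
=-\rho\,\frac{A_k(t,\xi)^2+A_k(t,\eta)^2}{2}
+\frac{\xi+\eta}{2}\bigl(A_k(t,\xi)^2-A_k(t,\eta)^2\bigr),
\end{align*}
so that each of the two pieces on the right either carries an explicit factor of $\rho$ or admits, via the fundamental theorem of calculus in $\xi$ and the estimates \eqref{est:7.2JiaHao-2}, a factor of $|\rho|\bigl(\lambda(t)\langle k,\zeta\rangle^{-1/2}+L_\kappa(t,\zeta)^{-1}\bigr)A_k(t,\zeta)^2$ along the segment $\zeta=s\xi+(1-s)\eta$. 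This converts the entire symbol into one controlled by $|\rho|$ times $A_k^2$ evaluated at a frequency comparable to $\xi$ or $\eta$.

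For part (i), when $((k,\xi),(k,\eta))\in\Sigma_0\cup\Sigma_1$, the two frequencies $(k,\xi),(k,\eta)$ are comparable, so \eqref{est:7.3JiaHao-2} (applied with $l=k$) gives $A_k(t,\xi)\approx_\delta A_k(t,\eta)\,e^{0.9\lambda(t)\langle\rho\rangle^{1/2}}$ and $|\xi+\eta|\lesssim \langle k,\eta\rangle+|\rho|$. The key mechanism is then the lower bound from \eqref{est:7.4JiaHao-2},
\begin{align*}
\sqrt{|(A_k\dot A_k)(t,\zeta)|}\gtrsim_\delta A_k(t,\zeta)\,\frac{\langle k,\zeta\rangle^{1/4}}{\langle t\rangle^{(1+\sigma_0)/2}},
\end{align*}
so that the product on the right of \eqref{est:8.6JiaHao-1} dominates (up to constants) $\langle t\rangle^{-(1+\sigma_0)}A_k(t,\xi)A_k(t,\eta)\langle k,\xi\rangle^{1/4}\langle k,\eta\rangle^{1/4}A_{NR}(t,\rho)\,e^{-c\langle\rho\rangle^{1/2}}$. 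Matching this with the left side is then a direct consequence of the two elementary numerical inequalities
\begin{align*}
|\rho|\langle t\rangle+|\rho|^{1/4}\langle t\rangle^{7/4}\gtrsim \langle t\rangle^{1+\sigma_0}\quad\text{when }|\rho|\gtrsim 1,
\qquad
A_{NR}(t,\rho)\,e^{-c\langle\rho\rangle^{1/2}}\gtrsim_\delta 1,
\end{align*}
together with the dissipation bound \eqref{est:8.40} that supplies the missing power of $\langle t\rangle$ for the piece carrying $(\xi+\eta)/2$. For the special subcase $\Sigma_0$, the symmetrization is unnecessary and one can simply dominate $|\eta A_k^2(\xi)|+|\xi A_k^2(\eta)|$ by $\langle\rho\rangle(A_k^2(\xi)+A_k^2(\eta))$.

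For part (ii), when $((k,\xi),(k,\eta))\in\Sigma_2$, the small frequency is $\langle k,\eta\rangle$, and one has $\langle\rho\rangle\approx\langle k,\xi\rangle$ and $|\xi|\lesssim|\rho|$, so $A_k(t,\xi)^2$ controls the symbol. Invoking Lemma \ref{lem:8.3JiaHao} with the roles set so that the small frequency is $(0,\rho)$ exchanged for $(k,\eta)$ (i.e.\ \eqref{est:8.3JiaHao-1} reorganized), one obtains
\begin{align*}
A_k(t,\xi)\lesssim_\delta A_{NR}(t,\rho)A_k(t,\eta)\,e^{-(\lambda(t)/20)\langle k,\eta\rangle^{1/2}}.
\end{align*}
Combining this with the symmetrization and using \eqref{est:7.4JiaHao-5} (allowing one of the two $A_{NR}$ factors in $A_{NR}(t,\rho)^2$ to be traded for $\sqrt{|(A_{NR}\dot A_{NR})(t,\rho)|}$ at the cost of a power of the derivative ratio) produces the right-hand side of \eqref{est:8.6JiaHao-2}; absorbing the factor $|\rho|/(\langle\rho\rangle\langle t\rangle+\langle\rho\rangle^{1/4}\langle t\rangle^{7/4})$ uses the lower bound $|\dot A_{NR}/A_{NR}|\gtrsim_\delta\langle\rho\rangle^{1/2}/\langle t\rangle^{1+\sigma_0}$ from \eqref{est:7.4JiaHao-1}.

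The main obstacle I anticipate is bookkeeping in case (ii): the weight $A_k(t,\xi)$ may sit inside a resonant interval $I_{k,\xi}$ even though $(k,\eta)$ does not, and then \eqref{est:8.3JiaHao-3} rather than \eqref{est:8.3JiaHao-2} applies, producing the extra factor $|\xi|/(k^2\langle t-\xi/k\rangle)$. Verifying that this factor is absorbed by the denominator $\langle\rho\rangle\langle t\rangle+\langle\rho\rangle^{1/4}\langle t\rangle^{7/4}$ after pairing with the lower bounds for $\dot A_{NR}$ and $\dot A_k$ is the delicate step; it should work because $t\in I_{k,\xi}$ implies $|\xi/k|\approx t$, and then a case split according to whether $|t-\xi/k|\lesssim|\xi/k^2|$ or not reduces matters to the same geometric-mean inequality $|\rho|\langle t\rangle+|\rho|^{1/4}\langle t\rangle^{7/4}\gtrsim\langle t\rangle^{11/8}|\rho|^{5/8}$ already used in part (i).
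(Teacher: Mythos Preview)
The paper does not contain a proof of this lemma: it is quoted verbatim from \cite{IJ} (see the sentence preceding Lemma~\ref{lem:8.1JiaHao}, ``The following lemmas from \cite{IJ} (see Lemma 8.1--8.6)\ldots''), so there is no in-paper argument to compare against. Your overall strategy --- symmetrize to extract a factor of $|\rho|$, then pair one copy of $A_k$ with the $\dot A_k$ lower bound from \eqref{est:7.4JiaHao-2} and absorb the remaining exponential into $A_{NR}(t,\rho)$ --- is exactly the mechanism used in \cite{IJ} for this class of estimates, and matches how the companion results (Lemma~\ref{lem:A*S0S1S2-1}, Lemma~\ref{lem:rholeq1}) are proved in this paper.

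Two specific points in your sketch need correction. First, you invoke \eqref{est:8.40} in part~(i), but that bound requires $\sigma\neq 0$; here $\sigma=k-l=0$, so it is unavailable. You do not actually need it: after symmetrizing, the piece $\tfrac{\xi+\eta}{2}(A_k(t,\xi)^2-A_k(t,\eta)^2)$ is controlled, via \eqref{est:7.2JiaHao-2} and the mean-value theorem, by $|\rho|\,\langle k,\xi\rangle\,L_\kappa(t,\xi)^{-1}A_k(t,\xi)A_k(t,\eta)$ (up to $e^{c\sqrt\delta\langle\rho\rangle^{1/2}}$), and since $L_\kappa(t,\xi)^{-1}\lesssim(\langle\xi\rangle^{1/2}+t)/\langle\xi\rangle$ the factor $\langle k,\xi\rangle/L_\kappa$ is bounded by $\langle k,\xi\rangle^{1/2}+t$; both alternatives are then absorbed by the denominator $\langle\rho\rangle\langle t\rangle+\langle\rho\rangle^{1/4}\langle t\rangle^{7/4}$ together with the two $\langle k,\cdot\rangle^{1/4}/\langle t\rangle^{(1+\sigma_0)/2}$ factors coming from \eqref{est:7.4JiaHao-2}. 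Second, in part~(ii) your handling of the resonant subcase $t\in I_{k,\xi}$, $t\notin I_{k,\eta}$ is correct in spirit but the bookkeeping you describe is not quite the one that closes: the extra factor $|\xi|/(k^2\langle t-\xi/k\rangle)$ from \eqref{est:7.3JiaHao-3} should be paired with the resonant lower bound $|\dot A_k/A_k|(t,\xi)\gtrsim_\delta\langle t-\xi/k\rangle^{-1}$ (from \eqref{eq:(7.8)JiaHao} and \eqref{est:7.4JiaHao-2}), not with $\dot A_{NR}$; once that is done the remaining factor $|\xi|/k^2\lesssim t/|k|$ is absorbed by the $\langle\rho\rangle\langle t\rangle$ term in the denominator since $\langle\rho\rangle\approx\langle k,\xi\rangle\gtrsim|\xi|$ in $\Sigma_2$.
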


\begin{lemma}\label{lem:8.7JiaHao}
  For any $t\geq 1,\ k\in \mathbb{Z}\setminus\{0\}$, and $\xi,\eta\in \mathbb{R}$, we have
  \begin{align}\label{est:8.7JiaHao-1}
     \dfrac{A_{NR}^2(t,\xi)}{|\dot{A}_{NR}(t,\xi)|}\langle t\rangle^{3/4}\langle\xi\rangle^{1/4} \lesssim_{\delta} A_k(t,\eta) \dfrac{\langle t\rangle\langle t-\eta/k\rangle^2}{\langle t\rangle+|\eta/k|}A_{-k}(t,\rho)\mathrm{e}^{-(\lambda(t)/20)[\min( \langle\rho\rangle,\langle\eta\rangle)+|k|]^{1/2}},
  \end{align}
  and
  \begin{align}\label{est:8.7JiaHao-2}
     |(\dot{A}_{NR}/A_{NR})(t,\xi)|\lesssim_{\delta}\left\{ |(\dot{A}_{k}/A_{k})(t,\eta)| +(\dot{A}_{-k}/A_{-k})(t,\rho)\right\} \mathrm{e}^{12\sqrt{\delta}[\min( \langle\rho\rangle,\langle\eta\rangle)+|k|]^{1/2}},
  \end{align}
  where $\rho=\xi-\eta$.
\end{lemma}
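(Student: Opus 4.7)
Both inequalities are pointwise multiplier estimates comparing the zero-mode weight $A_{NR}(t,\xi)$ at frequency $\xi=\eta+\rho$ to the product $A_k(t,\eta)\,A_{-k}(t,\rho)$ of nonzero-mode weights. My strategy is to split each inequality into a Gevrey exponential comparison and a polynomial/$b$-weight comparison, then case-split according to whether $t$ belongs to the resonant intervals $I_{k,\eta}$ and $I_{-k,\rho}$.

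For the Gevrey part I will unwind definitions \eqref{eq:ARANR}--\eqref{eq:Ak} and apply Lemma \ref{lem:tr-1} with $a=\xi$ and $b$ the larger (in modulus) of $\eta,\rho$, using also $\langle k,\cdot\rangle^{1/2}\geq |k|^{1/2}$, to obtain
\[
\lambda(t)\bigl[\langle k,\eta\rangle^{1/2}+\langle k,\rho\rangle^{1/2}\bigr] \geq \lambda(t)\langle\xi\rangle^{1/2} + (\lambda(t)/10)\bigl[\min(\langle\eta\rangle,\langle\rho\rangle)+|k|\bigr]^{1/2}.
\]
Since $\sqrt\delta\ll\delta_0\leq\lambda(t)$, the extra $\sqrt\delta\langle\xi\rangle^{1/2}$ factor in $A_{NR}$ is absorbed, yielding the advertised Gevrey decay. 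For the polynomial part of \eqref{est:8.7JiaHao-1}, I rewrite $A_{NR}^2/|\dot A_{NR}| = A_{NR}/|\dot A_{NR}/A_{NR}|$ and invoke the lower bound $|\dot A_{NR}/A_{NR}|\gtrsim_\delta \langle\xi\rangle^{1/2}/\langle t\rangle^{1+\sigma_0}$ from Lemma \ref{lem:7.4JiaHao}, reducing the task to the $b$-weight estimate
\[
\frac{b_k(t,\eta)\,b_{-k}(t,\rho)}{b_{NR}(t,\xi)}\cdot\frac{\langle t\rangle^{7/4+\sigma_0}}{\langle\xi\rangle^{1/4}} \lesssim_\delta \frac{\langle t\rangle\langle t-\eta/k\rangle^2}{\langle t\rangle+|\eta/k|}.
\]
Off the resonant intervals, all three $b$'s agree up to constants (Lemmas \ref{lem:7.1JiaHao}--\ref{lem:7.2JiaHao}), and the right-hand side is $\gtrsim\langle t\rangle^2$, which dominates the left. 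On $I_{k,\eta}$, the key relation \eqref{eq:(7.7)JiaHao} gives $w_R/w_{NR}\approx k^2(1+\delta^2|t-\eta/k|)/(\delta^2|\eta|)$; combined with $\langle t\rangle\approx|\eta/k|$ there and the constraint $|k|\leq\sqrt{\delta^3|\eta|}$ coming from $k\in\{1,\dots,k_0(\eta)\}$, the inequality reduces to an elementary polynomial comparison in $\langle t-\eta/k\rangle$. The case $t\in I_{-k,\rho}$ is symmetric since $I_{-k,\rho}=I_{k,-\rho}$.

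For the second inequality \eqref{est:8.7JiaHao-2}, I will apply Lemma \ref{lem:7.4JiaHao}(i) to decompose $|\dot A_{NR}/A_{NR}|(t,\xi)$ into a Sobolev-derivative piece $\langle\xi\rangle^{1/2}/\langle t\rangle^{1+\sigma_0}$, controlled by $\langle\xi\rangle^{1/2}\leq\langle k,\eta\rangle^{1/2}+\langle k,\rho\rangle^{1/2}$, and a weight-derivative piece $\partial_t w_{NR}/w_{NR}$, controlled via \eqref{eq:(7.8)JiaHao} by the analogous quantities for $w_k,w_{-k}$ on the corresponding resonant intervals; the Gevrey slack $\mathrm{e}^{12\sqrt\delta[\min+|k|]^{1/2}}$ absorbs the loss, exactly as in \eqref{est:8.2JiaHao-2}--\eqref{est:8.3JiaHao-2}. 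The hard part will be the polynomial comparison on $I_{k,\eta}$, and its intersection with $I_{-k,\rho}$, where the factor $k^2/|\eta|$ from $w_R/w_{NR}$ must be precisely matched against $\langle t-\eta/k\rangle^2/(\langle t\rangle+|\eta/k|)$ while tracking $\langle t-\eta/k\rangle$ and $\langle t+\rho/k\rangle$ simultaneously; this is the same delicate accounting of critical times that drives Lemmas \ref{lem:8.4JiaHao}--\ref{lem:8.6JiaHao}, and I expect my proof to follow that blueprint, with the gain coming once more from $|k|^2\leq\delta^3|\eta|$.
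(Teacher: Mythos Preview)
The paper does not give its own proof of this lemma: it is quoted from \cite{IJ} (along with Lemmas \ref{lem:8.1JiaHao}--\ref{lem:8.9JIaHao}) in Appendix~B and used as a black box, for instance in the proof of Lemma~\ref{lem:8.7JiaHao-mod}. So there is no in-paper argument to compare your proposal against.

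That said, your outline is the right shape and matches how the surrounding lemmas (e.g.\ Lemmas \ref{lem:8.3JiaHao-mod} and \ref{lem:8.3JiaHao-mod1}) are proved here: separate the Gevrey part via \eqref{eq:tr-3}, reduce $A_{NR}^2/|\dot A_{NR}|$ to $A_{NR}\cdot\langle t\rangle^{1+\sigma_0}/\langle\xi\rangle^{1/2}$ using \eqref{est:7.4JiaHao-1}, and then compare $b$-weights case by case on and off the resonant intervals with \eqref{eq:(7.7)JiaHao}. One caution: your claim that ``off the resonant intervals the right-hand side is $\gtrsim\langle t\rangle^2$'' is not true pointwise (take e.g.\ $|k|>k_0(\eta)$ with $t$ close to $\eta/k$), so the polynomial inequality you wrote cannot hold without the Gevrey slack. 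What actually saves you is that after your Gevrey step you retain a factor $e^{c\lambda(t)[\min(\langle\eta\rangle,\langle\rho\rangle)+|k|]^{1/2}}$ of room, and any polynomial deficit in the $b$-ratio or in $\langle t\rangle^{7/4+\sigma_0}/\langle\xi\rangle^{1/4}$ versus $\langle t\rangle\langle t-\eta/k\rangle^2/(\langle t\rangle+|\eta/k|)$ is at worst a power of $\langle k,\eta\rangle$ or $\langle k,\rho\rangle$, hence absorbed. If you make that dependence on the Gevrey reserve explicit in your reduced inequality, the case analysis you describe goes through.
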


\begin{lemma}\label{lem:8.8JIaHao}
For any $t\geq1$ and $\xi,\eta\in\mathbb{R}$, we have
  \begin{align} \label{est:8.8JiaHao}
    \dfrac{A_{NR}^2(t,\xi)}{|\dot{A}_{NR}(t,\xi)|}\dfrac{\langle t\rangle^{3/4}}{\langle\xi\rangle^{3/4}}\lesssim_{\delta} \dfrac{A_{NR}^2(t,\eta)}{|\dot{A}_{NR}(t,\eta)|}\dfrac{\langle t\rangle^{3/4}}{\langle\eta\rangle^{3/4}}A_{NR}(t,\rho)\mathrm{e}^{- (\lambda(t)/40)\min(\langle\rho\rangle,\langle\eta\rangle)^{1/2}},
  \end{align}
  where $\rho=\xi-\eta$.
  \end{lemma}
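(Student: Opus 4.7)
The plan is to prove this symbol bound by a direct decomposition and reduction to previously established weight estimates. Writing $\rho = \xi-\eta$ and using the identity $\frac{A_{NR}^2(t,\xi)}{|\dot A_{NR}(t,\xi)|} = \frac{A_{NR}(t,\xi)}{|(\dot A_{NR}/A_{NR})(t,\xi)|}$, the desired inequality is equivalent to
\begin{align*}
\frac{A_{NR}(t,\xi)/\langle\xi\rangle^{3/4}}{A_{NR}(t,\eta)/\langle\eta\rangle^{3/4}} \cdot \frac{|(\dot A_{NR}/A_{NR})(t,\eta)|}{|(\dot A_{NR}/A_{NR})(t,\xi)|} \lesssim_\delta A_{NR}(t,\rho)\, \mathrm{e}^{-(\lambda(t)/40)\min(\langle\rho\rangle,\langle\eta\rangle)^{1/2}},
\end{align*}
so it suffices to bound the two factors on the left separately. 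The $\langle\xi\rangle^{3/4}$ in the denominator of the $A_{NR}^2/|\dot A_{NR}|$ weight is the reason we choose to absorb it into the Gevrey bound via a weighted version of Lemma~\ref{lem:8.2JiaHao}.

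For the first ratio, I would apply Lemma~\ref{lem:8.2JiaHao} \eqref{est:8.2JiaHao-1} with $Y=NR$ and $\alpha = 3/4$, which yields
\begin{align*}
\frac{A_{NR}(t,\xi)}{\langle\xi\rangle^{3/4}} \lesssim_\delta \frac{A_{NR}(t,\rho)}{\langle\rho\rangle^{3/4}} \cdot \frac{A_{NR}(t,\eta)}{\langle\eta\rangle^{3/4}}\, \mathrm{e}^{-(\lambda(t)/20)\min(\langle\rho\rangle,\langle\eta\rangle)^{1/2}}.
\end{align*}
The crucial feature is the extra factor $\langle\rho\rangle^{-3/4}$ on the right, which will be used to soak up polynomial growth in $\langle\rho\rangle$. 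For the second ratio, Lemma~\ref{lem:7.4JiaHao} \eqref{est:7.4JiaHao-3} gives $\frac{|(\dot A_{NR}/A_{NR})(t,\eta)|}{|(\dot A_{NR}/A_{NR})(t,\xi)|} \lesssim_\delta \mathrm{e}^{4\sqrt\delta\langle\rho\rangle^{1/2}}$. Combining the two, the ratio of LHS to RHS of Lemma~\ref{lem:8.8JIaHao} is majorized by
\begin{align*}
\langle\rho\rangle^{-3/4}\, \mathrm{e}^{4\sqrt\delta\langle\rho\rangle^{1/2} - (\lambda(t)/40)\min(\langle\rho\rangle,\langle\eta\rangle)^{1/2}}.
\end{align*}

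It remains to verify that this expression is $\lesssim_\delta 1$. Since $\sqrt\delta\ll\delta_0\leq\lambda(t)$, we may assume $4\sqrt\delta \leq \lambda(t)/80$. In the regime $\min(\langle\rho\rangle,\langle\eta\rangle) = \langle\rho\rangle$, the exponent reduces to $(4\sqrt\delta-\lambda(t)/40)\langle\rho\rangle^{1/2} \leq -(\lambda(t)/80)\langle\rho\rangle^{1/2}$, which combines with $\langle\rho\rangle^{-3/4}$ to give a uniform bound. In the complementary regime $\min = \langle\eta\rangle \leq \langle\rho\rangle$, I further split according to whether $\langle\rho\rangle \leq C_0\langle\eta\rangle$ or $\langle\rho\rangle > C_0\langle\eta\rangle$ for a large constant $C_0=C_0(\delta_0,\delta)$. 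The first sub-case is handled as before after choosing $\delta$ small enough that $4\sqrt{C_0\delta} \leq \lambda(t)/80$.

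The main obstacle will be the second sub-case $\langle\rho\rangle \gg \langle\eta\rangle$, where the naive bound $\mathrm{e}^{4\sqrt\delta\langle\rho\rangle^{1/2}}$ is not controlled by the decay $\mathrm{e}^{-(\lambda/40)\langle\eta\rangle^{1/2}}$. The resolution is to observe that in this regime $\xi = \eta+\rho$ satisfies $\langle\xi\rangle \geq \langle\rho\rangle/2$ (as the $\rho$ term dominates), so Lemma~\ref{lem:7.4JiaHao} \eqref{est:7.4JiaHao-1} provides the sharper polynomial lower bound $|(\dot A_{NR}/A_{NR})(t,\xi)| \gtrsim_\delta \langle\xi\rangle^{1/2}/\langle t\rangle^{1+\sigma_0} \gtrsim_\delta \langle\rho\rangle^{1/2}/\langle t\rangle^{1+\sigma_0}$. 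Combined with the upper bound $|(\dot A_{NR}/A_{NR})(t,\eta)| \lesssim_\delta \langle\eta\rangle^{1/2}/\langle t\rangle^{1+\sigma_0} + \partial_t w_{NR}(t,\eta)/w_{NR}(t,\eta)$ and the fact that $\partial_t w_{NR}/w_{NR} \lesssim \delta^2$ by \eqref{eq:(7.8)JiaHao}, one replaces the $\mathrm{e}^{4\sqrt\delta\langle\rho\rangle^{1/2}}$ loss by the polynomial quotient $(\langle\eta\rangle^{1/2}+\langle t\rangle^{1+\sigma_0})/\langle\rho\rangle^{1/2}$, which the factor $\langle\rho\rangle^{-3/4}$ and the exponential of Step~1 (now not yet spent fully, since $\min = \langle\eta\rangle$ is small) together absorb. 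The bookkeeping is tedious but routine.
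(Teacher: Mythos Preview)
The paper does not prove this lemma itself; it is quoted from \cite{IJ}. Your overall strategy---factor the ratio as $\frac{A_{NR}(t,\xi)/\langle\xi\rangle^{3/4}}{A_{NR}(t,\eta)/\langle\eta\rangle^{3/4}}\cdot\frac{|(\dot A_{NR}/A_{NR})(t,\eta)|}{|(\dot A_{NR}/A_{NR})(t,\xi)|}$, handle the first factor by \eqref{est:8.2JiaHao-1} with $\alpha=3/4$, and the second by the log-derivative comparison---is a natural and workable route. But there is a genuine gap in your sub-case~2b.

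In that sub-case you bound the log-derivative ratio by $(\langle\eta\rangle^{1/2}+\langle t\rangle^{1+\sigma_0})/\langle\rho\rangle^{1/2}$ and then assert that $\langle\rho\rangle^{-3/4}$ together with $\mathrm{e}^{-(\lambda/40)\langle\eta\rangle^{1/2}}$ absorb it. The term $\langle\rho\rangle^{-5/4}\langle t\rangle^{1+\sigma_0}\mathrm{e}^{-(\lambda/40)\langle\eta\rangle^{1/2}}$ is \emph{not} bounded uniformly in $t$: nothing in your argument ties $t$ to $\rho$ or $\eta$, and one may take $t$ arbitrarily large with $\eta,\rho$ fixed. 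The missing ingredient is that $\partial_t w_{NR}(t,\eta)/w_{NR}(t,\eta)\neq 0$ only when $|\eta|>\delta^{-10}$ and $t\le 2|\eta|$ (by the construction of $w_{NR}$: for $t\ge 2|\eta|$ one has $w_{NR}\equiv 1$). Hence whenever the $\delta^2$ contribution to $|(\dot A_{NR}/A_{NR})(t,\eta)|$ is present at all, one has $\langle t\rangle^{1+\sigma_0}\lesssim\langle\eta\rangle^{1+\sigma_0}$, and then the exponential in $\langle\eta\rangle^{1/2}$ does absorb it. With this observation your argument closes; without it the ``routine bookkeeping'' does not.

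A minor point: you write $C_0=C_0(\delta_0,\delta)$ and then require $4\sqrt{C_0\delta}\le\lambda/80$, which is circular. In fact $C_0$ is only needed to guarantee $\langle\xi\rangle\gtrsim\langle\rho\rangle$ when $\langle\rho\rangle>C_0\langle\eta\rangle$, for which an absolute constant such as $C_0=4$ suffices; with that the smallness condition on $\delta$ is just $\delta\ll\delta_0^2$, consistent with the standing hypothesis $\delta\ll\delta_0$.
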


  \begin{lemma}\label{lem:8.9JIaHao}
 Assume $t\ge 1$ and let the sets $S_0,S_1,S_2$ defined by \eqref{eq:Si}. It holds that
\begin{itemize}
  \item[(i)] If $(\xi,\eta)\in S_0\cup S_1, \alpha \in[0,4]$, and $Y\in\{NR,R\}$ then, with $\rho=\xi-\eta$,
      \begin{align}\label{est:8.9JiaHao-1}
      \begin{aligned}
    &\left|\eta A^2_{Y}(t,\xi)\langle\xi\rangle^{-\alpha}- \xi A_{Y}^2(t,\eta)\langle\eta\rangle^{-\alpha}\right|\\
    &\lesssim_{\delta}t^{1.6}|\rho|\dfrac{\sqrt{|(A_Y\dot{A}_Y)(t,\xi)|}}{\langle\xi\rangle^{\alpha/2}} \dfrac{\sqrt{|(A_Y\dot{A}_Y)(t,\eta)|}}{\langle\eta\rangle^{\alpha/2}}\cdot A_{NR}(t,\rho)\mathrm{e}^{-(\lambda(t)/40)\langle \rho\rangle^{1/2}}.
  \end{aligned}
  \end{align}
  \item[(ii)] If $(\xi,\eta)\in  S_2$, then
  \begin{align}\label{est:8.9JiaHao-2}
      \begin{aligned}
   \langle \eta\rangle {{A_{R}^2}}(t,\xi)\lesssim_{\delta}&t^{1.1}\langle \xi\rangle^{0.6}\sqrt{|(A_R\dot{A}_R)(t,\xi)|} \sqrt{|(A_{NR}\dot{A}_{NR})(t,\rho)|}\\
   &\times A_{R}(t,\eta)\mathrm{e}^{-(\lambda(t)/40)\langle \eta\rangle^{1/2}},
  \end{aligned}
  \end{align}
  and
    \begin{align}\label{est:8.9JiaHao-3}
      \begin{aligned}
   \langle \eta\rangle A_{NR}^2(t,\xi)\lesssim_{\delta}&t^{1.1}\langle \xi\rangle^{-0.4}\sqrt{|(A_{NR}\dot{A}_{NR})(t,\xi)|} \sqrt{|(A_{NR}\dot{A}_{NR})(t,\rho)|}\\
   &\times A_{NR}(t,\eta)\mathrm{e}^{-(\lambda(t)/40)\langle \eta\rangle^{1/2}}.
  \end{aligned}
  \end{align}
\end{itemize}

\end{lemma}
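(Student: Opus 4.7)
The plan is to treat the two parts separately, with both resting on the weighted product bounds of Lemma \ref{lem:8.2JiaHao} together with the sharp lower bound \eqref{est:7.4JiaHao-1} on $|\dot A_Y/A_Y|$. First I would reduce part (i) to a ``difference in frequency'' estimate by the algebraic identity
\begin{equation*}
\eta A_Y(t,\xi)^{2}\langle\xi\rangle^{-\alpha}-\xi A_Y(t,\eta)^{2}\langle\eta\rangle^{-\alpha}
=\rho\cdot A_Y(t,\eta)^{2}\langle\eta\rangle^{-\alpha}+\eta\bigl[A_Y(t,\xi)^{2}\langle\xi\rangle^{-\alpha}-A_Y(t,\eta)^{2}\langle\eta\rangle^{-\alpha}\bigr],
\end{equation*}
with $\rho=\xi-\eta$. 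On $S_{0}\cup S_{1}$ one has $\langle\xi\rangle\approx\langle\eta\rangle$ and $\langle\rho\rangle\le\frac15(\langle\xi\rangle+\langle\eta\rangle)$, so $|\eta|\lesssim|\xi|$ and the pointwise bounds \eqref{est:7.1JiaHao-2} and \eqref{est:7.2JiaHao-2} give both $A_Y(t,\xi)\approx A_Y(t,\eta)$ and the Lipschitz-type control $|\partial_{\zeta}[A_Y(t,\zeta)^{2}\langle\zeta\rangle^{-\alpha}]|\lesssim_{\delta}A_Y(t,\zeta)^{2}\langle\zeta\rangle^{-\alpha}/L_\kappa(t,\zeta)$. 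Combining these on the segment between $\eta$ and $\xi$ (using the fundamental theorem of calculus for the bracketed term) produces a uniform factor $|\rho|$ in front.

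The remaining task in part (i) is then to absorb one factor $A_Y^{2}\langle\cdot\rangle^{-\alpha}$ into $\sqrt{|(A_Y\dot A_Y)(t,\xi)|\,|(A_Y\dot A_Y)(t,\eta)|}$ at the cost of the claimed power of $t$. Using \eqref{est:7.4JiaHao-1} in the form
$|\dot A_Y/A_Y|\gtrsim_{\delta}\langle\zeta\rangle^{1/2}\langle t\rangle^{-(1+\sigma_0)}$
(with $\sigma_0$ small), together with $A_Y(t,\xi)\approx A_Y(t,\eta)$ and $\langle\xi\rangle\approx\langle\eta\rangle$, gives
\begin{equation*}
\frac{A_Y(t,\xi)A_Y(t,\eta)}{\sqrt{|(A_Y\dot A_Y)(t,\xi)|\,|(A_Y\dot A_Y)(t,\eta)|}}\lesssim_{\delta}\frac{\langle t\rangle^{1+\sigma_0}}{\langle\xi\rangle^{1/2}}\lesssim\langle t\rangle^{1.6}\langle\xi\rangle^{-1/2},
\end{equation*}
and the surplus factor $\langle\xi\rangle^{-1/2}$ is compensated by the overall $|\eta|\lesssim\langle\xi\rangle$ arising from the $\eta$ in front of the bracket (or the factor $\rho$ gaining $\langle\rho\rangle$). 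Finally, the exponential factor $A_{NR}(t,\rho)\mathrm{e}^{-(\lambda(t)/40)\langle\rho\rangle^{1/2}}$ is produced by splitting one copy of $A_Y(t,\xi)\approx A_Y(t,\eta)\cdot(A_Y(t,\xi)/A_Y(t,\eta))$ and estimating the ratio via \eqref{est:8.2JiaHao-1}.

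For part (ii), $(\xi,\eta)\in S_{2}$ forces $\langle\eta\rangle\le\tfrac15(\langle\xi\rangle+\langle\eta\rangle+\langle\rho\rangle)$, hence $\langle\xi\rangle\approx\langle\rho\rangle\gg\langle\eta\rangle$. I would apply the product inequality \eqref{est:8.2JiaHao-1} to split $A_Y(t,\xi)^{2}$ into $A_Y(t,\xi)\cdot[A_Y(t,\rho)A_Y(t,\eta)\mathrm{e}^{-(\lambda(t)/20)\langle\eta\rangle^{1/2}}]$ (using $A_Y=A_{NR}$ for $Y=NR$, or combining $A_R(t,\xi)\lesssim_{\delta}A_{NR}(t,\rho)A_R(t,\eta)\cdots$ via Lemma \ref{lem:7.1JiaHao}), and then convert one $A_Y$ factor at each of $\xi$ and $\rho$ into $\sqrt{|(A_Y\dot A_Y)|}$ using \eqref{est:7.4JiaHao-1}. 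This gives
\begin{equation*}
A_Y(t,\xi)^{2}\lesssim_{\delta}\frac{\langle t\rangle^{1+\sigma_0}}{\langle\xi\rangle^{1/2}}\sqrt{|(A_Y\dot A_Y)(t,\xi)|}\cdot\frac{\langle t\rangle^{1+\sigma_0}}{\langle\rho\rangle^{1/2}}\sqrt{|(A_{NR}\dot A_{NR})(t,\rho)|}\cdot A_Y(t,\eta)\mathrm{e}^{-(\lambda(t)/30)\langle\eta\rangle^{1/2}},
\end{equation*}
and multiplying by $\langle\eta\rangle$ and using $\langle\eta\rangle\le\langle\rho\rangle^{1/2}\langle\eta\rangle^{1/2}$ (absorbing $\langle\eta\rangle^{1/2}$ into the Gevrey exponential) yields a factor $\langle t\rangle^{2+2\sigma_0}\langle\xi\rangle^{-1/2}\langle\rho\rangle^{-1/2}\langle\eta\rangle$. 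The bookkeeping of powers of $\langle\xi\rangle$ here is what produces $\langle\xi\rangle^{0.6}$ for $Y=R$ (where the $b_R$ ratio contributes an extra factor) versus $\langle\xi\rangle^{-0.4}$ for $Y=NR$, and the power $t^{1.1}=t^{1+\sigma_0}\cdot t^{\sigma_0}$ emerges after one such application.

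The main obstacle is keeping the precise time exponents $t^{1.6}$ and $t^{1.1}$ while passing through the weight comparisons: the constants $1.6$ and $1.1$ are not scale-invariant and depend on whether one is free to insert an additional factor $\langle\xi\rangle^{1/2}/t^{1+\sigma_0}$ (from \eqref{est:7.4JiaHao-1}). This requires a split between the case $t\in I_{k,\xi}$ for some resonant interval (where $\partial_t w_R/w_R$ dominates the lower bound) and $t$ away from all resonant intervals (where the $\langle\xi\rangle^{1/2}\langle t\rangle^{-(1+\sigma_0)}$ term dominates); the resonant case is the delicate one because the gap between $A_R$ and $A_{NR}$ must be exploited through \eqref{eq:(7.7)JiaHao} and \eqref{eq:(7.8)JiaHao}. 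Once these book-keeping issues are settled, the remaining estimates follow from the Gevrey loss \eqref{est:8.2JiaHao-1} and the elementary inequalities already used in Lemmas \ref{lem:8.4JiaHao}--\ref{lem:8.6JiaHao}.
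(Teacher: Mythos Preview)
The paper does not prove this lemma: it is quoted verbatim from Ionescu--Jia \cite{IJ} in Appendix~B, so there is no in-paper proof to compare against. Your outline follows the standard template used in \cite{IJ} (commutator/derivative decomposition on $S_0\cup S_1$; product splitting via \eqref{est:8.2JiaHao-1} on $S_2$; conversion of $A_Y$ into $\sqrt{|A_Y\dot A_Y|}$ through the lower bound \eqref{est:7.4JiaHao-1}), and this is indeed the right skeleton.

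Two concrete issues in the write-up. First, the assertion ``$A_Y(t,\xi)\approx A_Y(t,\eta)$'' on $S_0\cup S_1$ is not correct as stated, and \eqref{est:7.1JiaHao-2} cannot be invoked since its hypothesis $|\xi-\eta|\le 10L_1(t,\eta)$ need not hold there. What you actually get from \eqref{est:7.2JiaHao-3} and the definition of $A_Y$ is $A_Y(t,\xi)/A_Y(t,\eta)\lesssim_\delta e^{C\sqrt\delta\langle\rho\rangle^{1/2}}$, and this exponential loss is what is absorbed by the factor $A_{NR}(t,\rho)e^{-(\lambda(t)/40)\langle\rho\rangle^{1/2}}$ on the right. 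The derivative bound you want is $|\partial_\zeta(A_Y^2\langle\zeta\rangle^{-\alpha})|\lesssim_\delta A_Y^2\langle\zeta\rangle^{-\alpha}(\langle\zeta\rangle^{1/2}+t)/\langle\zeta\rangle$, coming from \eqref{est:7.2JiaHao-2} and $|\partial_\zeta e^{\lambda\langle\zeta\rangle^{1/2}}|\lesssim e^{\lambda\langle\zeta\rangle^{1/2}}\langle\zeta\rangle^{-1/2}$; this, combined with $|\eta|\lesssim\langle\xi\rangle$ and one application of \eqref{est:7.4JiaHao-1} at each of $\xi$ and $\eta$, produces $t^{1+\sigma_0}\le t^{1.6}$.

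Second, in part~(ii) your displayed inequality overstates the time loss: converting $A_Y(t,\zeta)$ into $\sqrt{|(A_Y\dot A_Y)(t,\zeta)|}$ costs $\bigl(\langle t\rangle^{1+\sigma_0}/\langle\zeta\rangle^{1/2}\bigr)^{1/2}$, not $\langle t\rangle^{1+\sigma_0}/\langle\zeta\rangle^{1/2}$. With this correction the two conversions at $\xi$ and $\rho$ together give $t^{1+\sigma_0}\langle\xi\rangle^{-1/4}\langle\rho\rangle^{-1/4}\approx t^{1+\sigma_0}\langle\xi\rangle^{-1/2}$ (using $\langle\xi\rangle\approx\langle\rho\rangle$ on $S_2$), which is $\le t^{1.1}\langle\xi\rangle^{-0.4}$ after absorbing $\langle\eta\rangle\langle\xi\rangle^{-0.1}$ into the Gevrey exponential. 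The distinction between $\langle\xi\rangle^{0.6}$ for $Y=R$ and $\langle\xi\rangle^{-0.4}$ for $Y=NR$ comes from the extra factor $w_{NR}/w_R\lesssim_\delta\langle\xi\rangle$ (see \eqref{eq:(7.7)JiaHao}) needed when splitting $A_R(t,\xi)$ against $A_{NR}(t,\rho)$; you flag this but it should be made explicit.
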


\if0
We also need the following new bounds on $A_Y$ with $Y\in \{NR,R,k\}$.

\begin{lemma}\label{lem:8.9JIaHao-mod}
 Let $\rho=\xi-\eta$, $Y\in\{NR,R\}$, $t\geq 1$. If $|\rho|\leq 1$, then it holds that
 \begin{align}\nonumber
    &\left|\eta A^2_{Y}(t,\xi)\langle\xi\rangle^{-\alpha}- \xi A_{Y}^2(t,\eta)\langle\eta\rangle^{-\alpha}\right|\lesssim_{\delta}t^{1.6}|\rho|\dfrac{\sqrt{|(A_Y\dot{A}_Y)(t,\xi)|}}{\langle\xi\rangle^{\alpha/2}} \dfrac{\sqrt{|(A_Y\dot{A}_Y)(t,\eta)|}}{\langle\eta\rangle^{\alpha/2}} .
  \end{align}
\end{lemma}

\begin{proof}
  Notice that by \eqref{est:7.2JiaHao-2},
    \begin{align*}
     &\dfrac{\big|\partial_{\xi}A_{Y}(t,\xi)\big|}{A_{Y}(t,\xi)}\lesssim_{\delta} \dfrac{\langle\xi\rangle^{1/2}+t}{\langle \xi\rangle +t}.
  \end{align*}
 And by \eqref{est:7.2JiaHao-3}, we have $A_{Y}(t,\xi)/A_{Y}(t,\eta)\approx_{\delta}1$ if $|\rho|\leq 1$. Then we get
  \begin{align}\label{est:8.9JiaHao-mod-prove1}
     & \left|\eta A^2_{Y}(t,\xi)\langle\xi\rangle^{-\alpha}- \xi A_{Y}^2(t,\eta)\langle\eta\rangle^{-\alpha}\right| \lesssim_{\delta}|\rho| \langle \xi\rangle^{1-\alpha}\dfrac{\langle\xi\rangle^{1/2}+t}{\langle \xi\rangle +t} A_{Y}(t,\xi)A_{Y}(t,\eta).
  \end{align}
  By \eqref{est:7.4JiaHao-1}, we have
  \begin{align}\label{est:8.9JiaHao-mod-prove2}
     & \sqrt{|(\dot{A}_Y/A_Y)(t,\xi)|} \sqrt{|(\dot{A}_Y/A_{Y})(t,\eta)|}\gtrsim_{\delta} \dfrac{\langle\xi\rangle^{1/2}}{\langle t\rangle^{1+\sigma_0}}.
  \end{align}
  Then the desired bound  follows from \eqref{est:8.9JiaHao-mod-prove1} and \eqref{est:8.9JiaHao-mod-prove2}.
\end{proof}\fi

 \begin{lemma}\label{lem:8.7JiaHao-mod}
  For any $t\geq 1,\ k\in \mathbb{Z}\setminus\{0\}$, and $\xi,\eta\in \mathbb{R}$, we have, with $\rho=\xi-\eta$,
  \begin{align}\label{est:8.7JiaHao-mod}
  \begin{aligned}
     \dfrac{A_{NR}^2(t,\xi)}{|\dot{A}_{NR}(t,\xi)|}\langle t\rangle^{3/4}\langle\xi\rangle^{1/4} \lesssim_{\delta}& A_k(t,\eta) \dfrac{\langle t\rangle\big(\langle t\rangle +|(k,\eta)|\big)\langle t-\eta/k\rangle^2}{\langle \eta/k\rangle^2}A^{*}_{-k}(t,\rho)\\
     &\times\mathrm{e}^{-(\lambda(t)/20)[\min( \langle\rho\rangle,\langle\eta\rangle)+|k|]^{1/2}}.
  \end{aligned}
  \end{align}
  \end{lemma}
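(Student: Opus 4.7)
\emph{Proof proposal.} The plan is to deduce Lemma~\ref{lem:8.7JiaHao-mod} directly from the original Lemma~\ref{lem:8.7JiaHao} in \cite{IJ} by showing that the right-hand side of the new bound dominates the right-hand side of the old bound, up to an absolute constant. This reduces the whole argument to a short pointwise comparison of multipliers; the heavy lifting (matching the exponential decay factor $\mathrm{e}^{-(\lambda(t)/20)[\min(\langle\rho\rangle,\langle\eta\rangle)+|k|]^{1/2}}$ against the growth of $A_{NR}^2/|\dot A_{NR}|$ through resonant times) is already accomplished by Lemma~\ref{lem:8.7JiaHao}.

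Concretely, the LHS and the exponential factor $E=\mathrm{e}^{-(\lambda(t)/20)[\min(\langle\rho\rangle,\langle\eta\rangle)+|k|]^{1/2}}$ are identical in the two statements, and the common factor $A_k(t,\eta)\langle t\rangle\langle t-\eta/k\rangle^2$ appears on the right of both. Since by definition
\[
A^{*}_{-k}(t,\rho)=A_{-k}(t,\rho)\Bigl(1+\tfrac{k^2+|\rho|}{\langle t\rangle^2}\Bigr)^{1/2}\ge A_{-k}(t,\rho),
\]
it will suffice to establish the elementary multiplier bound
\[
\frac{\langle t\rangle+|(k,\eta)|}{\langle \eta/k\rangle^{2}}\;\gtrsim\;\frac{1}{\langle t\rangle+|\eta/k|}\qquad\text{for all }t\geq 1,\ k\in\mathbb{Z}\setminus\{0\},\ \eta\in\mathbb{R},
\]
or equivalently $(\langle t\rangle+|(k,\eta)|)(\langle t\rangle+|\eta/k|)\gtrsim \langle\eta/k\rangle^{2}$.

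I would verify this by splitting into two cases. When $|\eta/k|\leq 1$, one has $\langle\eta/k\rangle\lesssim 1$, while the left-hand side is $\geq \langle t\rangle^{2}\geq 1$, so the inequality is immediate. When $|\eta/k|\geq 1$, one has $\langle\eta/k\rangle\approx |\eta/k|$ and $\langle t\rangle+|\eta/k|\gtrsim \langle\eta/k\rangle$, and one uses the elementary estimate
\[
|(k,\eta)|=\sqrt{k^{2}+\eta^{2}}\;\gtrsim\;|k|+|\eta|\;=\;|k|(1+|\eta/k|)\;\gtrsim\;|k|\,\langle\eta/k\rangle
\]
valid for $|k|\geq 1$. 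Multiplying the two lower bounds gives $(\langle t\rangle+|(k,\eta)|)(\langle t\rangle+|\eta/k|)\gtrsim |k|\langle\eta/k\rangle^{2}\geq \langle\eta/k\rangle^{2}$, as required. Combining this with Lemma~\ref{lem:8.7JiaHao} and $A^{*}_{-k}\geq A_{-k}$ yields \eqref{est:8.7JiaHao-mod}.

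No genuine obstacle is anticipated; the only point requiring care is the use of $|k|\geq 1$ in the second case to convert $|(k,\eta)|$ into a multiple of $\langle\eta/k\rangle$, which is precisely why the statement excludes $k=0$.
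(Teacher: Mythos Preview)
Your proposal is correct and follows essentially the same approach as the paper: both reduce the claim to Lemma~\ref{lem:8.7JiaHao} via the two observations $A_{-k}\le A_{-k}^*$ and $\dfrac{1}{\langle t\rangle+|\eta/k|}\lesssim\dfrac{\langle t\rangle+|(k,\eta)|}{\langle\eta/k\rangle^2}$. The paper simply asserts the latter inequality without the case split, while you spell it out; either way the argument is the same.
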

  \begin{proof}
    Thanks to the facts that
    \begin{align*}
       & \dfrac{\langle t\rangle\langle t-\eta/k\rangle^2}{\langle t\rangle+|\eta/k|} \lesssim \dfrac{\langle t\rangle\big(\langle t\rangle +|(k,\eta)|\big)\langle t-\eta/k\rangle^2}{\langle\eta/k\rangle^2},\\
       &A_{-k}(t,\rho)\leq A_{-k}(t,\rho)\left(1+\dfrac{k^2+|\rho|}{\langle t\rangle^2}\right)^{1/2}=A^{*}_{-k}(t,\rho),
    \end{align*}
  the desired bound  follows from \eqref{est:8.7JiaHao-1}.
  \end{proof}

  \begin{lemma}\label{lem:8.3JiaHao-mod}
    For any $t\geq 1$, $\xi,\eta\in\mathbb{R}$ and $k\in\mathbb{Z}$, we have
    \begin{align}\nonumber
       & A_{k}(t,\xi)\lesssim_{\delta} \left(\dfrac{\mathbf{1}_{t\in I_{k,\xi}} |\xi|/k^2}{\langle t-\xi/k\rangle}+1\right) A_{NR}(t,\xi-\eta)A_{k}(t,\eta)\mathrm{e}^{-(\lambda(t)/20)\min (\langle\xi-\eta\rangle,\langle k,\eta\rangle)^{1/2}}.
    \end{align}
  \end{lemma}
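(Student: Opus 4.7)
The plan is to derive this lemma from Lemma \ref{lem:8.3JiaHao}, which already provides
$$A_k(t,\xi)\lesssim_\delta A_R(t,\rho)\,A_k(t,\eta)\,e^{-(\lambda(t)/20)\min(\langle \rho\rangle,\langle k,\eta\rangle)^{1/2}},\qquad \rho:=\xi-\eta,$$
by pointwise comparing $A_R(t,\rho)$ and $A_{NR}(t,\rho)$. Using $b_Y\approx_\delta w_Y$ from \eqref{est:7.2JiaHao-1} together with \eqref{eq:(7.7)JiaHao}, one has $A_R(t,\rho)=A_{NR}(t,\rho)$ off all resonant intervals, while on the unique $I_{k',\rho}$ containing $t$ (if any),
$$\frac{A_R(t,\rho)}{A_{NR}(t,\rho)}\approx_\delta \frac{\delta^2|\rho|/{k'}^2}{1+\delta^2|t-\rho/k'|}\lesssim_\delta \frac{|\rho|/{k'}^2}{\langle t-\rho/k'\rangle}.$$
Thus the task reduces to absorbing the pointwise factor $\mathbf{1}_{t\in I_{k',\rho}}|\rho|/{k'}^2/\langle t-\rho/k'\rangle$ into $\mathbf{1}_{t\in I_{k,\xi}}|\xi|/k^2/\langle t-\xi/k\rangle+1$, possibly at the cost of spending a small portion of the exponential decay already provided by Lemma \ref{lem:8.3JiaHao}.

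I would split according to which of $\langle\rho\rangle$ and $\langle k,\eta\rangle$ realises the minimum in the exponential. If $\langle\rho\rangle\leq\langle k,\eta\rangle$, then the ratio $|\rho|/{k'}^2\leq|\rho|$ is polynomial in $\langle\rho\rangle$ and can be absorbed into a small slice of $e^{-(\lambda(t)/20)\langle\rho\rangle^{1/2}}$, leaving only a $\delta$-dependent constant, which is handled by the ``$+1$'' term on the right. If $\langle k,\eta\rangle\leq\langle\rho\rangle$, then $|\eta|\lesssim\langle k,\eta\rangle\leq\langle\rho\rangle$, whence $|\xi|\approx|\rho|$ once $|\rho|\gtrsim_\delta 1$ (the opposite case is trivial, since then the left indicator vanishes).

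In this remaining regime the key step is to match the $\rho$-resonance index $k'$ with the $\xi$-index $k$. If $k'=k$, then $I_{k,\rho}$ and $I_{k,\xi}$ are translates by $\eta/k$; under $|\eta|\lesssim\langle k,\eta\rangle$, small compared with the window length $\approx|\rho|/k^2$, one has $t\in I_{k,\xi}$ with $\langle t-\xi/k\rangle\approx\langle t-\rho/k\rangle$ and $|\xi|/k^2\approx|\rho|/{k'}^2$, exactly reproducing the resonant term on the right. If $k'\neq k$, then $|\rho/k'-\xi/k|\gtrsim|\rho|/(kk')$ forces one of $\langle t-\rho/k'\rangle$, $\langle t-\xi/k\rangle$ to be at least comparable to $|\rho|/(kk')$, giving $|\rho|/{k'}^2/\langle t-\rho/k'\rangle\lesssim k\leq k_0(|\rho|)\leq\sqrt{\delta^3|\rho|}$, which is absorbed into a sliver of the remaining exponential $e^{-(\lambda(t)/20)\langle\rho\rangle^{1/2}}$. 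The main obstacle will be the careful bookkeeping of the $k'\neq k$ branch, together with the borderline case $|\eta/k|\sim|I_{k,\rho}|$ where the translation of the resonant interval is of the same order as its length; resolving these will require the disjointness and geometric scaling properties of the family $\{I_{k,\eta}\}$ encoded in \eqref{eq:(7.2)JiaHao}.
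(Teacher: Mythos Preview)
Your approach has a genuine gap. You start from Lemma~\ref{lem:8.3JiaHao}, which gives
\[
A_k(t,\xi)\lesssim_\delta A_R(t,\rho)\,A_k(t,\eta)\,e^{-(\lambda(t)/20)\min(\langle\rho\rangle,\langle k,\eta\rangle)^{1/2}},
\]
and then try to bound $A_R(t,\rho)/A_{NR}(t,\rho)$ by the desired factor. But the exponential rate $\lambda(t)/20$ appears identically in Lemma~\ref{lem:8.3JiaHao} and in the target---there is no slack. Your absorption arguments (``a small slice of the exponential'', ``absorbed into a sliver'') would degrade the rate to something like $\lambda(t)/21$, which does not match the stated conclusion.

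Concretely: take $\eta=0$, $\xi=\rho$ large, $t=\rho$, and $k$ with $t\notin I_{k,\xi}$ (for instance $k>k_0(|\xi|)$, or $k=0$). Then $t\in I_{1,\rho}$, so $A_R(t,\rho)/A_{NR}(t,\rho)\approx|\rho|$, while the target factor is just $1$. Your $k'\neq k$ branch does not rescue this: the separation $|\rho/k'-\xi/k|\gtrsim|\rho|/(kk')$ only tells you that \emph{one} of $\langle t-\rho/k'\rangle$, $\langle t-\xi/k\rangle$ is large; here it is $\langle t-\xi/k\rangle$, not $\langle t-\rho/k'\rangle$, so the ratio $|\rho|/{k'}^2/\langle t-\rho/k'\rangle$ remains of order $|\rho|$.

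The paper's proof avoids the detour through $A_R(t,\rho)$ entirely. It works directly with $b_k(t,\xi)$ and observes that
\[
\frac{b_{NR}(t,\xi)}{b_k(t,\xi)}\approx_\delta \frac{\mathbf{1}_{t\in I_{k,\xi}}\,|\xi|/k^2}{\langle t-\xi/k\rangle}+1
\]
straight from the definition of $w_k$ (equal to $w_R$ on $I_{k,\xi}$, to $w_{NR}$ off it) together with \eqref{eq:(7.7)JiaHao} and \eqref{est:7.2JiaHao-1}. One then passes from $b_{NR}(t,\xi)$ to $b_{NR}(t,\rho)$ and from $b_k(t,\xi)$ to $b_k(t,\eta)$ via \eqref{est:7.2JiaHao-3}, picking up only $e^{\sqrt{\delta}(\cdots)^{1/2}}$ losses; the exponential part is handled last via \eqref{eq:tr-3}, which actually yields $\lambda(t)/15$ and so has enough headroom to absorb these losses and still output $\lambda(t)/20$. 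The point is that the prefactor in the statement is exactly the ratio $b_{NR}(t,\xi)/b_k(t,\xi)$, an object indexed by $(k,\xi)$---not by any resonant index of $\rho$---so no matching of resonant intervals is needed.
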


  \begin{proof}
If $t\notin I_{k,\xi}$, then ${b_{k}(t,\xi)\thickapprox_{\delta} w_k(t,\xi)=w_{NR}(t,\xi)\thickapprox_{\delta} b_{NR}(t,\xi)}$ and by \eqref{est:7.2JiaHao-3},
    \begin{align*}
       &\dfrac{ b_{NR}(t,\xi-\eta)}{b_{k}(t,\xi)}\thickapprox_{\delta} \dfrac{ b_{NR}(t,\xi-\eta)}{b_{NR}(t,\xi)}\lesssim_{\delta}  \mathrm{e}^{\sqrt{\delta} |\eta|^{1/2}}.
    \end{align*}
    If $t\in I_{k,\xi}$. By \eqref{eq:(7.7)JiaHao} and \eqref{est:7.2JiaHao-1}, we have $b_{k}(t,\xi)\thickapprox_{\delta} b_{R}(t,\xi)$ and
    \beno
    b_{R}(t,\xi)\approx_{\delta}b_{NR}(t,\xi)\dfrac{\langle t-\xi/k\rangle}{|\xi|/k^2}.
    \eeno
     Hence, we get by \eqref{est:7.2JiaHao-3} that
    \begin{align*}
       & \dfrac{ b_{NR}(t,\xi-\eta)}{b_{k}(t,\xi)}\approx_{\delta} \dfrac{ b_{NR}(t,\xi-\eta)}{b_{R}(t,\xi)}\approx_{\delta} \dfrac{|\xi|/k^2}{\langle t-\xi/k\rangle}\dfrac{ b_{NR}(t,\xi-\eta)}{b_{NR}(t,\xi)}\lesssim_{\delta}  \dfrac{|\xi|/k^2}{\langle t-\xi/k\rangle} \mathrm{e}^{\sqrt{\delta} |\eta|^{1/2}}.
    \end{align*}
   Combining two cases, we arrive at
    \begin{align}\label{est:mod8.3-1}
      & \dfrac{ b_{NR}(t,\xi-\eta)}{b_{k}(t,\xi)} \lesssim_{\delta} \left(\dfrac{\mathbf{1}_{t\in I_{k,\xi}} |\xi|/k^2}{\langle t-\xi/k\rangle}+1\right)\mathrm{e}^{\sqrt{\delta} |\eta|^{1/2}}.
    \end{align}
    By \eqref{est:7.2JiaHao-3}, we have
    \begin{align}\label{est:mod8.3-2}
       & \dfrac{b_{k}(t,\eta)}{b_{k}(t,\xi)}\lesssim_{\delta} \mathrm{e}^{\sqrt{\delta} |\xi-\eta|^{1/2}}.
    \end{align}
    By \eqref{est:mod8.3-1}, \eqref{est:mod8.3-2} and the fact that $b_k(t,\eta), b_{NR}(t,\xi-\eta)\lesssim_{\delta} 1$, it holds that
    \begin{align}\label{est:mod8.3-3}
       & \dfrac{1}{b_{k}(t,\xi)}\lesssim_{\delta} \left(\dfrac{\mathbf{1}_{t\in I_{k,\xi}} |\xi|/k^2}{\langle t-\xi/k\rangle}+1\right) \dfrac{1}{b_{NR}(t,\xi-\eta)}\dfrac{1}{b_{k}(t,\eta)}\mathrm{e}^{\sqrt{ \delta} \min(\langle\xi-\eta\rangle,\langle \eta\rangle)^{1/2}}.
    \end{align}
By \eqref{eq:tr-3}, we have
    \begin{align*}
       &\mathrm{e}^{\lambda(t)\langle k,\xi\rangle^{\f12}+\sqrt{\delta}|k|^{\f12}} \lesssim_{\delta} \mathrm{e}^{\lambda(t)\langle \xi-\eta\rangle^{\f12}} \mathrm{e}^{\lambda(t)\langle k,\eta\rangle^{\f12}+\sqrt{\delta}|k|^{\f12}} \mathrm{e}^{-\f{\lambda(t)}{15}\min (\langle\xi-\eta\rangle,\langle k,\eta\rangle)^{\f12}},\\
       &\mathrm{e}^{\lambda(t)\langle k,\xi\rangle^{\f12}}\mathrm{e}^{\sqrt{\delta}\langle\xi\rangle^{\f12}} \lesssim_{\delta}\mathrm{e}^{\lambda(t)\langle\xi-\eta\rangle^{\f12}}\mathrm{e}^{\sqrt{\delta}\langle\xi-\eta\rangle^{1/2}} \cdot \mathrm{e}^{\lambda(t)\langle k,\eta\rangle^{\f12}}\mathrm{e}^{\sqrt{\delta}\langle\eta\rangle^{\f12}}  \mathrm{e}^{-\f{\lambda(t)}{15}\min (\langle\xi-\eta\rangle,\langle k,\eta\rangle)^{\f12}},
    \end{align*}
which along with \eqref{est:mod8.3-3} and the definition of $A_k$ and $A_{NR}$ implies
    \begin{align*}
       & A_{k}(t,\xi)\lesssim_{\delta} \left(\dfrac{\mathbf{1}_{t\in I_{k,\xi}} |\xi|/k^2}{\langle t-\xi/k\rangle}+1\right) A_{NR}(t,\xi-\eta)A_{k}(t,\eta)\mathrm{e}^{-(\lambda(t)/20)\min (\langle\xi-\eta\rangle,\langle k,\eta\rangle)^{1/2}}.
    \end{align*}
   \end{proof}

    \begin{lemma}\label{lem:A12k}
    Let
    \begin{align}
       & A_{k}^{(1)}(t,\xi)=\dfrac{ A_{k}(t,\xi)\langle t-\xi/k\rangle}{\langle t-\xi/k\rangle+|\xi/k^2|}\quad \text{for}\ k\neq 0;\quad A_{0}^{(1)}(t,\xi)=A_{0}(t,\xi),\label{eq:A1k}\\
       &A_{k}^{(2)}(t,\xi)= A_{k}(t,\xi)\langle t-\xi/k\rangle\quad \text{for}\ k\neq 0;\quad A_{0}^{(2)}(t,\xi)=A_{0}(t,\xi)(t+|\xi|). \label{eq:A2k}
    \end{align}
 If $|(l,\eta)|\geq |(k,\xi)|/8$, then it holds that for $j=1,2$,
    \begin{align}\nonumber
       & \dfrac{A^{(j)}_k(t,\xi)}{A_{l}^{(j)}(t,\eta)}\lesssim_{\delta} \mathrm{e}^{0.9\lambda(t)\langle k-l,\xi-\eta\rangle^{1/2}}.
    \end{align}
  \end{lemma}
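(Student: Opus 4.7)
}
The plan is to reduce the claim to Lemma \ref{lem:7.3JiaHao} by writing $A^{(j)}_k(t,\xi)=A_k(t,\xi)\Lambda^{(j)}_k(t,\xi)$, where for $k\neq 0$
\[
\Lambda^{(1)}_k(t,\xi)=\frac{\langle t-\xi/k\rangle}{\langle t-\xi/k\rangle+|\xi/k^2|},\qquad \Lambda^{(2)}_k(t,\xi)=\langle t-\xi/k\rangle,
\]
and $\Lambda^{(1)}_0\equiv 1$, $\Lambda^{(2)}_0(t,\eta)=t+|\eta|$. Then
$\tfrac{A^{(j)}_k(t,\xi)}{A^{(j)}_l(t,\eta)}=\tfrac{A_k(t,\xi)}{A_l(t,\eta)}\cdot\tfrac{\Lambda^{(j)}_k(t,\xi)}{\Lambda^{(j)}_l(t,\eta)}$, so the task is to track the correction ratio in the same two cases that appear in Lemma \ref{lem:7.3JiaHao}(ii).

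First I would treat \textbf{Case A}: $t\notin I_{k,\xi}$, or $t\in I_{k,\xi}\cap I_{l,\eta}$. By \eqref{est:7.3JiaHao-2}, $A_k(t,\xi)/A_l(t,\eta)\lesssim_{\delta}\mathrm{e}^{0.9\lambda(t)|(k-l,\xi-\eta)|^{1/2}}$. Using that when $t\notin I_{p,\zeta}$ one has $\langle t-\zeta/p\rangle\gtrsim |\zeta|/p^2$ (when $1\leq|p|\leq\sqrt{\delta^3|\zeta|}$, from the definition of the resonant intervals) or $|\zeta|/p^2\lesssim_\delta 1$ (when $p$ is out of range or zero), and when $t\in I_{p,\zeta}$ that $\langle t-\zeta/p\rangle\lesssim |\zeta|/p^2$, one checks directly that $\Lambda^{(j)}_k(t,\xi)/\Lambda^{(j)}_l(t,\eta)$ is dominated by a polynomial in $\langle k-l,\xi-\eta\rangle$. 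This polynomial is absorbed into the slack between $|(k-l,\xi-\eta)|^{1/2}$ and $\langle k-l,\xi-\eta\rangle^{1/2}$ in the exponential (trivial when $(k,\xi)=(l,\eta)$, and exploiting $|k-l|\geq 1$ when $k\neq l$ to get $|(k-l,\xi-\eta)|\geq 1$ and hence a positive gap).

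Next I would handle the resonant \textbf{Case B}: $t\in I_{k,\xi}$ and $t\notin I_{l,\eta}$. Lemma \ref{lem:7.3JiaHao} now produces the troublesome factor $\frac{|\xi|/k^2}{1+|t-\xi/k|}$. This is precisely what $\Lambda^{(j)}_k$ is designed to kill: for $j=1$, $\Lambda^{(1)}_k(t,\xi)=\frac{\langle t-\xi/k\rangle}{\langle t-\xi/k\rangle+|\xi/k^2|}$ combines with the resonant factor to give $\frac{|\xi|/k^2}{\langle t-\xi/k\rangle+|\xi/k^2|}\leq 1$; for $j=2$, $\Lambda^{(2)}_k(t,\xi)\cdot \frac{|\xi|/k^2}{1+|t-\xi/k|}\lesssim |\xi|/k^2$. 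Then $\Lambda^{(j)}_l(t,\eta)$ in the denominator is controlled from below using $t\notin I_{l,\eta}$, as in Case A for $j=1$, and (for $j=2$) via a further subdivision comparing the intervals $I_{k,\xi}$ and $I_{l,\eta}$: either $t$ is separated from $\eta/l$ by at least the width $|\eta|/l^2$ (giving $\langle t-\eta/l\rangle\gtrsim |\eta|/l^2$), or $|\xi|/k^2\lesssim |\eta|/l^2$ directly, or $|l|\gg|k|$ which forces $|k-l|\gtrsim|l|$ so that the exponential absorbs the residual polynomial. The subcase $l=0$ uses $t\gtrsim|\xi|/|k|$ (from $t\in I_{k,\xi}$) to convert $A^{(2)}_0(t,\eta)=A_0(t,\eta)(t+|\eta|)$ into the right lower bound.

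The main obstacle is the $j=2$ case when $l=0$ or when $|l|\ll|k|$: the denominator correction $\langle t-\eta/l\rangle$ (or $t+|\eta|$) is not manifestly comparable to $|\xi|/k^2\approx t/|k|$, and no single inequality controls $\frac{|\xi|/k^2}{\langle t-\eta/l\rangle}$ cleanly. Here one must carefully exploit the hypothesis $|(l,\eta)|\geq|(k,\xi)|/8$ together with the integer-valuedness of $k,l$ and the structure $t\in I_{k,\xi}\subset[t_{k,|\xi|},t_{k-1,|\xi|}]$ to extract a lower bound on $\langle k-l,\xi-\eta\rangle^{1/2}$ strong enough to absorb the remaining polynomial growth via the $0.9$ constant in the exponent (which leaves the needed $0.1\lambda(t)$ of slack). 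Once this subcase is handled, the remaining cases are routine verifications.
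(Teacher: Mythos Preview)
Your strategy of factoring $A_k^{(j)}=A_k\,\Lambda_k^{(j)}$ and then bounding $A_k/A_l$ via Lemma~\ref{lem:7.3JiaHao} separately from $\Lambda_k^{(j)}/\Lambda_l^{(j)}$ breaks down in a subcase that you have silently placed inside Case~A: the case $t\notin I_{k,\xi}$ but $t\in I_{l,\eta}$. Take for instance $k=2$, $l=1$, $\xi=\eta$ large, and $t=\eta$. Then $t\in I_{1,\eta}\setminus I_{2,\eta}$, and one has $\Lambda_2^{(1)}(t,\xi)\approx 1$ while $\Lambda_1^{(1)}(t,\eta)\approx\langle t-\eta\rangle/|\eta|\approx 1/|\eta|$, so the correction ratio is of order $|\eta|$; yet $\langle k-l,\xi-\eta\rangle=\langle 1,0\rangle$ is a fixed constant. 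Lemma~\ref{lem:7.3JiaHao} only supplies $A_k/A_l\lesssim_\delta e^{0.9\lambda(t)|(k-l,\xi-\eta)|^{1/2}}$ in this configuration, with no compensating small factor, so your product blows up. There \emph{is} a cancellation between the two ratios (since $t\in I_{l,\eta}$ forces $b_l(t,\eta)/b_{NR}(t,\eta)$ small by exactly the reciprocal amount), but treating Lemma~\ref{lem:7.3JiaHao} as a black box and bounding $\Lambda_k^{(1)}/\Lambda_l^{(1)}$ afterwards cannot see it.

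The paper's proof avoids this entirely by first establishing the structural identity
\[
w_k(t,\xi)\,\frac{|\xi|/k^2+\langle t-\xi/k\rangle}{\langle t-\xi/k\rangle}\;\approx_\delta\; w_{NR}(t,\xi)
\]
for all $k\neq 0$ and all $t,\xi$, which says precisely that the correction $\Lambda_k^{(1)}$ converts $w_k$ back into $w_{NR}$. This lets one rewrite $A_k^{(1)}$ as (up to $\approx_\delta$) a sum of two explicit pieces built from $w_{NR}$ rather than $w_k$; the ratio $A_k^{(1)}/A_l^{(1)}$ is then bounded directly via Lemma~\ref{lem:7.1JiaHao} on $w_{NR}$ together with \eqref{eq:tr-3}, with no resonant/non-resonant case split at all, obtaining the exponent $0.85\lambda(t)$. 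For $j=2$ one factors $A_k^{(2)}/A_l^{(2)}=(A_k^{(1)}/A_l^{(1)})\cdot(A_k^{(2)}/A_k^{(1)})\cdot(A_l^{(1)}/A_l^{(2)})$ and verifies the last two factors together are $\lesssim\langle k-l,\xi-\eta\rangle^3$ by a short case check in $(k,l)$; this polynomial is absorbed by the $0.05\lambda(t)$ slack between $0.85$ and the claimed $0.9$.
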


 \begin{proof}
Recall the definition of $w_{R}(t,\xi)$ and $w_{k}(t,\xi)$:
\begin{align*}
   &w_{k}(t,\xi)=\left\{
   \begin{aligned}
   &w_{NR}(t,\xi)\dfrac{1+\delta^2|t-\xi/k|}{1+\delta^2|\xi|/(8k^2)},\quad \text{if}\ t\in I_{k,\xi}\ \text{and}\ |t-\xi/k|\leq |\xi|/(8k^2),\\
   &w_{NR}(t,\xi),\qquad\qquad\qquad\qquad \text{otherwise}.
   \end{aligned}\right.
\end{align*}
Hence, if $|\xi|\geq \delta^{-10},\ 1\leq|k|\leq \lfloor\sqrt{\delta^3|\xi|}\rfloor,\ \xi/k\geq 0$, we have
\begin{align*}
   w_{k}(t,\xi)\dfrac{|\xi|/k^2+\langle t-\xi/k\rangle}{\langle t-\xi/k\rangle}\approx_{\delta} w_{NR}(t,\xi).
\end{align*}
On the other hands, if  {$|\xi|\leq \delta^{-10}$} or $|k|\geq \lfloor\sqrt{\delta^3|\xi|}\rfloor+1$ or $\xi/k\leq 0$,  it holds that
\begin{align*}
   & w_{k}(t,\xi)=w_{NR}(t,\xi),\quad \dfrac{|\xi|/k^2+\langle t-\xi/k\rangle}{\langle t-\xi/k\rangle}\approx_{\delta} 1,
\end{align*}
which gives
\begin{align*}
w_{k}(t,\xi)\dfrac{|\xi|/k^2+\langle t-\xi/k\rangle}{\langle t-\xi/k\rangle}\approx_{\delta} w_{NR}(t,\xi).
\end{align*}
Thus, it always holds that
 \begin{align}\label{est:A12k-prove-wk}
  w_{k}(t,\xi)\dfrac{|\xi|/k^2+\langle t-\xi/k\rangle}{\langle t-\xi/k\rangle}\approx_{\delta} w_{NR}(t,\xi).
\end{align}

Thanks to the definition of $A_{NR}$ and $A_k$, we get by  \eqref{est:A12k-prove-wk} and \eqref{est:7.2JiaHao-1} that
\begin{align*}
   &A_{k}^{(1)}(t,\xi)= A_{k}(t,\xi)\dfrac{\langle t-\xi/k\rangle}{|\xi|/k^2+\langle t-\xi/k\rangle}\\
   &\approx_{\delta}\mathrm{e}^{\lambda(t)\langle k,\xi\rangle^{1/2}}
   \left(\dfrac{\mathrm{e}^{\sqrt{\delta}\langle\xi\rangle^{1/2}}}{w_{k}(t,\xi)}\dfrac{|\xi|/k^2+\langle t-\xi/k\rangle}{\langle t-\xi/k\rangle}+ \mathrm{e}^{\sqrt{\delta}|k|^{1/2}}\dfrac{\langle t-\xi/k\rangle}{|\xi|/k^2+\langle t-\xi/k\rangle}\right)\\
   &\approx_{\delta}
   \dfrac{\mathrm{e}^{\lambda(t)\langle k,\xi\rangle^{1/2}+\sqrt{\delta}\langle\xi\rangle^{1/2}}}{w_{NR}(t,\xi)}+ \dfrac{\mathrm{e}^{\lambda(t)\langle k,\xi\rangle^{1/2}+\sqrt{\delta}|k|^{1/2}}\langle t-\xi/k\rangle}{|\xi|/k^2+\langle t-\xi/k\rangle}\\
   &\approx_{\delta}
   \dfrac{\mathrm{e}^{\lambda(t)\langle k,\xi\rangle^{1/2}+\sqrt{\delta}\langle\xi\rangle^{1/2}}}{w_{NR}(t,\xi)}+ \dfrac{\mathrm{e}^{\lambda(t)\langle k,\xi\rangle^{1/2}+\sqrt{\delta}|k|^{1/2}}\langle t-\xi/k\rangle\mathbf{1}_{\{\langle\xi\rangle\leq |k|\}}}{|\xi|/k^2+\langle t-\xi/k\rangle},
\end{align*}
as $w_{NR}(t,\xi)\leq 1 $. Hence, if $|(l,\eta)|\geq |(k,\xi)|/8$, by using \eqref{est:7.1JiaHao-1} and \eqref{eq:tr-3}, we have ($k,l\neq 0$)
\begin{align*}
   \dfrac{A_{k}^{(1)}(t,\xi)}{A_{l}^{(1)}(t,\eta)}\lesssim_{\delta}& \dfrac{\mathrm{e}^{\lambda(t)\langle k,\xi\rangle^{1/2}+\sqrt{\delta}\langle\xi\rangle^{1/2}}}{\mathrm{e}^{ \lambda(t)\langle l,\eta\rangle^{1/2}+\sqrt{\delta}\langle\eta\rangle^{1/2}}} \dfrac{w_{NR}(t,\eta)}{w_{NR}(t,\xi)} \nonumber\\
   &+\dfrac{\mathrm{e}^{\lambda(t)\langle k,\xi\rangle^{1/2}+\sqrt{\delta}|k|^{1/2}}}{\mathrm{e}^{\lambda(t)\langle l,\eta\rangle^{1/2}+\sqrt{\delta}|l|^{1/2}}}\dfrac{\langle t-\xi/k\rangle\mathbf{1}_{\{\langle\xi\rangle\leq |k|\}}}{|\xi|/k^2+\langle t-\xi/k\rangle}\dfrac{|\eta|/l^2+\langle t-\eta/l\rangle}{\langle t-\eta/l\rangle}\nonumber\\
   \lesssim_{\delta}&\mathrm{e}^{0.85\lambda(t)\langle k-l,\xi-\eta\rangle^{1/2}},
\end{align*}
here we used the fact that
\begin{align*}
   & \dfrac{\langle t-\xi/k\rangle\mathbf{1}_{\{\langle\xi\rangle\leq |k|\}}}{|\xi|/k^2+\langle t-\xi/k\rangle}\dfrac{|\eta|/l^2+\langle t-\eta/l\rangle}{\langle t-\eta/l\rangle}
   \leq \mathbf{1}_{\{\langle\xi\rangle\leq |k|\}}(|\eta|/l^2+1)\leq
   \dfrac{|\eta|/l^2}{\langle\xi\rangle/k^2}+1\lesssim \langle k-l,\xi-\eta\rangle^{3}.
\end{align*}

Since
\begin{align*}
   A_0(t,\xi)=&\mathrm{e}^{\lambda(t)\langle\xi\rangle^{1/2}} \left(\dfrac{\mathrm{e}^{\sqrt{\delta}\langle\xi\rangle^{1/2}}}{b_{NR}
   (t,\xi)}+1\right)\approx_{\delta} \mathrm{e}^{\lambda(t)\langle\xi\rangle^{1/2}} \dfrac{\mathrm{e}^{\sqrt{\delta}\langle\xi\rangle^{1/2}}}{b_{NR}
   (t,\xi)}=\dfrac{\mathrm{e}^{(\lambda(t)+\sqrt{\delta})\langle \xi\rangle^{1/2}}}{w_{NR}(t,\xi)},
\end{align*}
we get by \eqref{est:7.1JiaHao-1} and \eqref{eq:tr-3} (if $|\eta|\geq |\xi/8|$) that
\begin{align*}
  \dfrac{A_{0}^{(1)}(t,\xi)}{A_{0}^{(1)}(t,\eta)}\lesssim_{\delta} & \mathrm{e}^{(\lambda(t)+\sqrt{\delta})(\langle \xi\rangle^{1/2}-\langle\eta\rangle^{1/2})}
  \dfrac{w_{NR}(t,\eta)}{w_{NR}(t,\xi)} \lesssim_{\delta} \mathrm{e}^{0.85\lambda(t)\langle\xi-\eta\rangle^{1/2}}.
\end{align*}
By \eqref{est:7.1JiaHao-1} and \eqref{eq:tr-3} again, we have (for $l\neq 0$ and $|(l,\eta)|\geq |\xi/8|$)
\begin{align*}
   \dfrac{A_{0}^{(1)}(t,\xi)}{A_{l}^{(1)}(t,\eta)}\lesssim_{\delta}& \mathrm{e}^{\lambda(t)(\langle \xi\rangle^{1/2}-\langle l,\eta\rangle^{1/2})}\mathrm{e}^{\sqrt{\delta}(\langle \xi\rangle^{1/2}-\langle \eta\rangle^{1/2})}
  \dfrac{w_{NR}(t,\eta)}{w_{NR}(t,\xi)}\lesssim_{\delta} \mathrm{e}^{0.85\lambda(t)\langle l,\xi-\eta\rangle^{1/2}}.
\end{align*}
For $k\neq 0$ and $|\eta|\geq |(k,\xi)|/8$, we get by using \eqref{est:7.1JiaHao-1}, \eqref{eq:tr-3} and $w_{NR}(t,\eta)\leq 1$ that
\begin{align*}
   \dfrac{A_{k}^{(1)}(t,\xi)}{A_{0}^{(1)}(t,\eta)}\lesssim_{\delta}& \mathrm{e}^{\lambda(t)(\langle k,\xi\rangle^{1/2}-\langle \eta\rangle^{1/2})}\mathrm{e}^{\sqrt{\delta}(\langle \xi\rangle^{1/2}-\langle \eta\rangle^{1/2})}
  \dfrac{w_{NR}(t,\eta)}{w_{NR}(t,\xi)}\\
  &+\mathrm{e}^{\lambda(t)(\langle k,\xi\rangle^{1/2}-\langle\eta\rangle^{1/2})} \mathrm{e}^{ \sqrt{\delta}(|k|^{1/2}-\langle\eta\rangle^{1/2})}w_{NR}(t,\eta)\\
  \lesssim_{\delta} &\mathrm{e}^{0.85\lambda(t)\langle k,\xi-\eta\rangle^{1/2}}.
\end{align*}
Therefore, if $k,l\in\mathbb{Z}$, $|(l,\eta)|\geq |(k,\xi)|/8$, we arrive at
 \begin{align}\label{eq:A1k/A1l}
   \dfrac{A_{k}^{(1)}(t,\xi)}{A_{l}^{(1)}(t,\eta)}\lesssim_{\delta}
   &\mathrm{e}^{0.85\lambda(t)\langle k-l,\xi-\eta\rangle^{1/2}}.
\end{align}
This finishes the proof  for the case of $j=1$.

Notice that
\begin{align*}
   & \dfrac{A_{k}^{(2)}(t,\xi)}{A_{l}^{(2)}(t,\eta)} =\dfrac{A_{k}^{(1)}(t,\xi)}{A_{l}^{(1)}(t,\eta)}\times \dfrac{A_{k}^{(2)}(t,\xi)}{A_{k}^{(1)}(t,\xi)}\times \dfrac{A_{l}^{(1)}(t,\eta)}{A_{l}^{(2)}(t,\eta)}.
\end{align*}
Thus, it suffices to prove that
\begin{align}\label{eq:A1k/A1l-1}
  \dfrac{A_{k}^{(2)}(t,\xi)}{A_{k}^{(1)}(t,\xi)}\times \dfrac{A_{l}^{(1)}(t,\eta)}{A_{l}^{(2)}(t,\eta)} \lesssim\langle k-l,\xi-\eta\rangle^{3}\lesssim_{\delta}\mathrm{e}^{0.05\lambda(t)\langle k-l,\xi-\eta\rangle^{1/2}},
\end{align}
for $k,l\in\mathbb{Z}$, $|(l,\eta)|\geq |(k,\xi)|/8$.

For $k=l=0$, we have
\begin{align*}
   \dfrac{A_{0}^{(2)}(t,\xi)}{A_{0}^{(1)}(t,\xi)}\times \dfrac{A_{0}^{(1)}(t,\eta)}{A_{0}^{(2)}(t,\eta)}\lesssim \dfrac{t+|\xi|}{t+|\eta|}\lesssim \langle \xi-\eta\rangle.
\end{align*}
For $k=0,\ l\neq 0$, we have
\begin{align*}
   \dfrac{A_{0}^{(2)}(t,\xi)}{A_{0}^{(1)}(t,\xi)}\times \dfrac{A_{l}^{(1)}(t,\eta)}{A_{l}^{(2)}(t,\eta)}&\lesssim \dfrac{t+|\xi|}{\langle t-\eta/l\rangle +|\eta/l^2|}\lesssim |l|+l^2|\xi|/\langle\eta\rangle \lesssim \langle l,\xi-\eta\rangle^3.
\end{align*}
For $k\neq0,\ l= 0$,  we have
\begin{align*}
   \dfrac{A_{k}^{(2)}(t,\xi)}{A_{k}^{(1)}(t,\xi)}\times \dfrac{A_{0}^{(1)}(t,\eta)}{A_{0}^{(2)}(t,\eta)}&\lesssim  \dfrac{\langle t-\xi/k\rangle +|\xi/k^2|}{t+|\eta|} \lesssim \dfrac{t +\langle \xi/k\rangle +|\xi/k^2|}{t+|\eta|}\\
   &\lesssim 1+|\xi|/\langle\eta\rangle \lesssim \langle \xi-\eta\rangle.
\end{align*}
For $k\neq 0,l\neq 0$, we have
\begin{align*}
  \dfrac{A_{k}^{(2)}(t,\xi)}{A_{k}^{(1)}(t,\xi)}\times \dfrac{A_{l}^{(1)}(t,\eta)}{A_{l}^{(2)}(t,\eta)} =&\dfrac{\langle t-\xi/k\rangle+|\xi/k^2|}{\langle t-\eta/l\rangle +|\eta/l^2|}\lesssim \dfrac{\langle t-\xi/k\rangle}{\langle t-\eta/l\rangle +|\eta/l^2|} +\dfrac{|\xi|}{\langle\eta\rangle}\dfrac{l^2}{k^2} \\
  \lesssim& \dfrac{|\xi/k-\eta/l|}{\langle t-\eta/l\rangle +|\eta/l^2|} +\left(\dfrac{|\xi-\eta|}{\langle\eta\rangle}+1\right) \left(\dfrac{|k-l|}{|k|}+1\right)^2\\
  \lesssim& \dfrac{|\xi-\eta||l|+|k-l||\eta|}{|kl| \big(\langle t-\eta/l\rangle +|\eta/l^2|\big)} +\langle k-l,\xi-\eta\rangle^3\\
  \lesssim & \langle k-l,\xi-\eta\rangle^3.
\end{align*}

Combining the above cases, we prove \eqref{eq:A1k/A1l-1}. Thus, we prove the case of $j=2$.
\end{proof}

\begin{lemma}\label{lem:8.3JiaHao-mod1}
    Let $A_{k}^{(j)}$($j=1,2$) be defined by \eqref{eq:A1k} and \eqref{eq:A2k}. For any $t\geq 1$, $\xi,\eta\in\mathbb{R}$ and $k,l\in\mathbb{Z}$, we have
\begin{align}\label{est:8.3JiaHao-mod3}
    \begin{aligned}
       A_{k}^{(j)}(t,\xi)\lesssim_{\delta}& \mathbf{1}_{\{|(k-l,\xi-\eta)|\leq|(l,\eta)|\}} A_{k-l}(t,\xi-\eta)A_{l}^{(j)}(t,\eta)\mathrm{e}^{-(\lambda(t)/20) \langle k-l,\xi-\eta\rangle^{1/2}}\\
       &+\mathbf{1}_{\{|(l,\eta)|\leq|(k-l,\xi-\eta)|\} } A_{k-l}^{(j)}(t,\xi-\eta)A_{l}(t,\eta)\mathrm{e}^{-(\lambda(t)/20) \langle l,\eta\rangle^{1/2}},
    \end{aligned}
\end{align}
    and
\begin{align}\label{est:8.3JiaHao-mod2}
       &|(\dot{A}_k/A_k)(t,\xi)|\lesssim_{\delta}\big(|(\dot{A}_{k-l}/A_{k-l}) (t,\xi-\eta)| +|(\dot{A}_l/A_l)(t,\eta)|\big)\mathrm{e}^{ 4\sqrt{\delta}\min(|l,\eta|,|k-l,\xi-\eta|)^{1/2}}.
\end{align}
\end{lemma}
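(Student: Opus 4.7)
The plan is to reduce Lemma \ref{lem:8.3JiaHao-mod1} to Lemma \ref{lem:A12k} (for the weight estimate) and to Lemma \ref{lem:7.4JiaHao} (for the derivative estimate), by splitting into the two cases in which one of the frequencies $(l,\eta)$ or $(k-l,\xi-\eta)$ dominates the other. The observation driving this reduction is the triangle inequality: if $|(k-l,\xi-\eta)|\leq |(l,\eta)|$, then $|(k,\xi)|\leq |(l,\eta)|+|(k-l,\xi-\eta)|\leq 2|(l,\eta)|$, so $|(l,\eta)|\geq |(k,\xi)|/8$, which is exactly the hypothesis needed to apply Lemma \ref{lem:A12k}. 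The symmetric implication holds in the other case.

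First I would prove \eqref{est:8.3JiaHao-mod3}. In the case $|(k-l,\xi-\eta)|\leq |(l,\eta)|$, Lemma \ref{lem:A12k} gives
\[
A_k^{(j)}(t,\xi)\lesssim_{\delta} A_l^{(j)}(t,\eta)\,e^{0.9\lambda(t)\langle k-l,\xi-\eta\rangle^{1/2}}.
\]
Since by the definition \eqref{eq:Ak} we have $A_{k-l}(t,\xi-\eta)\geq e^{\lambda(t)\langle k-l,\xi-\eta\rangle^{1/2}}$, the factor $e^{0.9\lambda\langle \cdot\rangle^{1/2}}$ is absorbed by $A_{k-l}(t,\xi-\eta)$ with a surplus $e^{0.1\lambda\langle\cdot\rangle^{1/2}}$, which comfortably contains the desired decay $e^{-(\lambda/20)\langle k-l,\xi-\eta\rangle^{1/2}}$. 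In the opposite case $|(l,\eta)|\leq |(k-l,\xi-\eta)|$, I apply Lemma \ref{lem:A12k} with the pair $(k-l,\xi-\eta)$ playing the dominant role, obtaining
\[
A_k^{(j)}(t,\xi)\lesssim_{\delta} A_{k-l}^{(j)}(t,\xi-\eta)\,e^{0.9\lambda(t)\langle l,\eta\rangle^{1/2}},
\]
and then use $A_l(t,\eta)\geq e^{\lambda(t)\langle l,\eta\rangle^{1/2}}$ to convert the exponential loss into the product $A_{k-l}^{(j)}(t,\xi-\eta)A_l(t,\eta)\,e^{-(\lambda/20)\langle l,\eta\rangle^{1/2}}$.

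For the derivative bound \eqref{est:8.3JiaHao-mod2}, the argument proceeds by direct invocation of \eqref{est:7.4JiaHao-4}. In the case $|(k-l,\xi-\eta)|\leq |(l,\eta)|$ (so that $\min(\langle l,\eta\rangle,\langle k-l,\xi-\eta\rangle)=\langle k-l,\xi-\eta\rangle$), \eqref{est:7.4JiaHao-4} applied with $(k,\xi)$ and $(l,\eta)$ yields $|(\dot A_k/A_k)(t,\xi)|\lesssim_{\delta}|(\dot A_l/A_l)(t,\eta)|e^{4\sqrt{\delta}\langle k-l,\xi-\eta\rangle^{1/2}}$; in the reverse case, \eqref{est:7.4JiaHao-4} applied with $(k,\xi)$ and $(k-l,\xi-\eta)$ yields the analogous bound in terms of $|(\dot A_{k-l}/A_{k-l})(t,\xi-\eta)|$ with exponent $4\sqrt{\delta}\langle l,\eta\rangle^{1/2}$. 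In either case the right-hand side is dominated by the sum of the two derivative quotients times the minimum exponential factor.

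The main obstacle, such as it is, lies in verifying the exponent bookkeeping: the loss of $0.9\lambda$ from Lemma \ref{lem:A12k} must fit strictly inside the gain of $\lambda$ provided by $A_{k-l}$ (or $A_l$) with enough slack to produce a $\lambda/20$ decay. This is a tight but comfortable inequality $0.9+1/20<1$, so it closes. Beyond this, the argument is a bookkeeping exercise with the triangle inequality and the weight transfer lemmas already proved in Appendix A; no new analytic input is required.
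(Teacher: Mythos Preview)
Your proposal is correct and follows essentially the same approach as the paper: split into the two cases according to which of $|(l,\eta)|$ and $|(k-l,\xi-\eta)|$ dominates, apply Lemma~\ref{lem:A12k} in each case to bound the ratio of $A^{(j)}$ weights, absorb the $e^{0.9\lambda\langle\cdot\rangle^{1/2}}$ loss into the lower bound $A_m(t,\cdot)\geq e^{\lambda(t)\langle\cdot\rangle^{1/2}}$, and deduce \eqref{est:8.3JiaHao-mod2} directly from \eqref{est:7.4JiaHao-4}. The bookkeeping check $0.9+1/20<1$ is exactly what makes the exponent arithmetic close.
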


\begin{proof}
The inequality \eqref{est:8.3JiaHao-mod2} follows from \eqref{est:7.4JiaHao-4}. If $|(l,\eta)|\geq |(k-l,\xi-\eta)|$, then we have $|(l,\eta)|\geq |(k,\xi)|/2$. Thus, we get by Lemma \ref {lem:A12k} that
    \begin{align*}
       &\dfrac{A_{k}^{(j)}(t,\xi)}{A_{l}^{(j)}(t,\eta)}\lesssim_{\delta} \mathrm{e}^{0.9\lambda(t)|(k-l,\xi-\eta)|^{1/2}}.
    \end{align*}
    Thanks to $A_{k-l}(t,\xi-\eta)\geq \mathrm{e}^{\lambda(t)\langle k-l,\xi-\eta\rangle^{1/2}}$, we get
\begin{align*}
       A_{k}^{(j)}(t,\xi)\lesssim_{\delta}& A_{k-l}(t,\xi-\eta)A_{l}^{(j)}(t,\eta)\mathrm{e}^{-(\lambda(t)/20)\langle k-l,\xi-\eta\rangle^{1/2}}.
\end{align*}
If $|(k-l,\xi-\eta)|\geq |(l,\eta)|$, we can similarly obtain
\begin{align*}
    A_{k}^{(j)}(t,\xi)\lesssim_{\delta} A_{k-l}^{(j)}(t,\xi-\eta)A_{l}(t,\eta)\mathrm{e}^{-(\lambda(t)/20)\langle l,\eta\rangle^{1/2}}.
\end{align*}
This proves  \eqref{est:8.3JiaHao-mod3}.
\end{proof}

\section{Weighted bilinear estimates(II)}

In this appendix, we establish some weighted bilinear estimates on the new weight $A^*_k$.
Let us recall the following notations:
\begin{align*}
   R_0=\Big\{&\big((k,\xi),(l,\eta)\big)\in(\mathbb{Z}\times \mathbb{R})^2: \min(\langle k,\xi\rangle,\langle l,\eta\rangle,\langle k-l,\xi-\eta\rangle)\geq\\& \dfrac{\langle k,\xi\rangle+\langle l,\eta\rangle+\langle k-l,\xi-\eta\rangle}{20}\Big\},\nonumber\\
   R_1=\Big\{&\big((k,\xi),(l,\eta)\big)\in(\mathbb{Z}\times \mathbb{R})^2: \langle k-l,\xi-\eta\rangle\leq \dfrac{\langle k,\xi\rangle+\langle l,\eta\rangle+\langle k-l,\xi-\eta\rangle}{10}\Big\},\\
   R_2=\Big\{&\big((k,\xi),(l,\eta)\big)\in(\mathbb{Z}\times \mathbb{R})^2: \langle l,\eta\rangle\leq \dfrac{\langle k,\xi\rangle+\langle l,\eta\rangle+\langle k-l,\xi-\eta\rangle}{10}\Big\},
\end{align*}
and for $i\in \{0,1,2,3\}$,
\begin{align*}
  \Sigma_{i}=\big\{\big((k,\xi),(l,\eta)\big)\in R_i: k=l\big\}.
\end{align*}

The following lemma is an analogue of Lemma \ref{lem:8.4JiaHao}.

\begin{lemma}\label{lem:A*R0R1R2-1}
Assume $t\ge 1$ and let  $(\sigma,\rho)=(k-l,\xi-\eta)$. Assume $\sigma\neq 0$. It holds that
  \begin{itemize}
    \item[(i)]  If $\big((k,\xi),(l,\eta)\big)\in R_0\bigcup R_1$, then
    \begin{align}\label{eq:A*R0R1bounds}
      \dfrac{|\rho/\sigma|+\langle t\rangle}{\langle t\rangle}\dfrac{\langle\rho\rangle/\sigma^2}{\langle t-\rho/\sigma\rangle^2} &\big|l A_k^*(t,\xi)^2-kA_l^{*}(t,\eta)^2\big|\\
      &\lesssim_{\delta}\sqrt{|(A_k^{*}\dot{A}_{k}^{*})(t,\xi)|} \sqrt{|(A_l^{*}\dot{A}_{l}^{*})(t,\eta)|}A_{\sigma}(t,\rho)
      \mathrm{e}^{-(\delta_0/201)\langle\sigma,\rho\rangle^{1/2}}.\nonumber
    \end{align}
    \item[(ii)]
    If $\big((k,\xi),(l,\eta)\big)\in R_2$, then
    \begin{align}\label{eq:A*R2bounds}
      \dfrac{|\rho/\sigma|+\langle t\rangle}{\langle t\rangle}\dfrac{\langle\rho\rangle/\sigma^2}{\langle t-\rho/\sigma\rangle^2} &\big(|l A_k^*(t,\xi)^2|+|kA_l^{*}(t,\eta)^2 |\big)\\
      &\lesssim_{\delta}\sqrt{|(A_k^{*}\dot{A}_{k}^{*})(t,\xi)|} \sqrt{|(A_{\sigma}\dot{A}_{\sigma})(t,\rho)|}A_{l}^*(t,\eta)
      \mathrm{e}^{-(\delta_0/201)\langle l,\eta\rangle^{1/2}}.\nonumber
    \end{align}
  \end{itemize}
\end{lemma}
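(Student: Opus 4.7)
My plan is to reduce Lemma \ref{lem:A*R0R1R2-1} to its companion Lemma \ref{lem:8.4JiaHao} by exploiting the multiplicative factorization $A_k^{*2}(t,\xi) = A_k^2(t,\xi)(1 + T_{k,\xi})$ with $T_{k,\xi} := (k^2+|\xi|)/\langle t\rangle^2$, and then absorbing the extra factor $(1+T_{k,\xi})^{1/2}$ by means of \eqref{est:par-t-A*-1} and the pointwise comparison \eqref{est:weight-more-1}.

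For part (i), I begin with the algebraic identity
\[ l A_k^{*2}(t,\xi) - k A_l^{*2}(t,\eta) = (lA_k^2 - kA_l^2)(1+T_{k,\xi}) + kA_l^2\,(T_{k,\xi} - T_{l,\eta}), \]
apply Lemma \ref{lem:8.4JiaHao}(i) to the first piece, and split the factor $(1+T_{k,\xi}) = (1+T_{k,\xi})^{1/2}\cdot(1+T_{k,\xi})^{1/2}$. By \eqref{est:par-t-A*-1} one copy upgrades $\sqrt{|A_k\dot A_k|(t,\xi)}$ to $\sqrt{|A_k^*\dot A_k^*|(t,\xi)}$, while by \eqref{est:weight-more-1} the other copy is dominated by $(1+T_{l,\eta})^{1/2}\langle\sigma,\rho\rangle$; the $(1+T_{l,\eta})^{1/2}$ merges with $\sqrt{|A_l\dot A_l|(t,\eta)}$ to give $\sqrt{|A_l^*\dot A_l^*|(t,\eta)}$, and the polynomial $\langle\sigma,\rho\rangle$ is absorbed into the Gevrey decay $e^{-(\delta_0/200)\langle\sigma,\rho\rangle^{1/2}}$ at the cost of weakening the exponent to $\delta_0/201$. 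The remainder $kA_l^2(T_{k,\xi}-T_{l,\eta})$ is then controlled using the elementary bound $|T_{k,\xi}-T_{l,\eta}| \lesssim ((|l|+|\sigma|)|\sigma|+|\rho|)/\langle t\rangle^2$ and matched against the prefactor via Lemma \ref{lem:8.3JiaHao}.

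For part (ii) I treat $|lA_k^{*2}|$ and $|kA_l^{*2}|$ separately. The region $R_2$ forces $\langle l,\eta\rangle$ to be bounded, whence $T_{l,\eta}\lesssim_\delta 1$ and $A_l^*(t,\eta)\approx_\delta A_l(t,\eta)$; Lemma \ref{lem:8.4JiaHao}(ii) then handles $|kA_l^{*2}|$ directly. For $|lA_k^{*2}| = |l|A_k^2(1+T_{k,\xi})$, applying Lemma \ref{lem:8.4JiaHao}(ii) together with \eqref{est:par-t-A*-1} absorbs one $(1+T_{k,\xi})^{1/2}$ into $\sqrt{|A_k^*\dot A_k^*|(t,\xi)}$, leaving a residual $(1+T_{k,\xi})^{1/2}$ to dispose of.

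The principal obstacle is precisely this residual factor: in $R_2$, $(1+T_{k,\xi})^{1/2}$ can be as large as $\langle k,\xi\rangle$, which is comparable to $\langle\sigma,\rho\rangle$ and unrelated to the available decay $e^{-(\delta_0/200)\langle l,\eta\rangle^{1/2}}$. My plan is to exploit the Gevrey slack in $\lambda(t)\geq\delta_0$: via the triangle inequality $\langle k,\xi\rangle^{1/2}\leq\langle l,\eta\rangle^{1/2}+\langle\sigma,\rho\rangle^{1/2}$ I split $\langle k,\xi\rangle\lesssim e^{\varepsilon\langle l,\eta\rangle^{1/2}}e^{\varepsilon\langle\sigma,\rho\rangle^{1/2}}$ for any $\varepsilon>0$, and swallow each exponential into the Gevrey margin of $A_l$ and $A_\sigma$, respectively. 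The net cost is exactly the downgrade from $\delta_0/200$ to $\delta_0/201$ in the decay constant, which is why the statement of Lemma \ref{lem:A*R0R1R2-1} tolerates this small loss.
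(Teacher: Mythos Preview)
Your overall strategy for part (i) --- decompose $lA_k^{*2}-kA_l^{*2}$ into a main term times $(1+T)$ plus a remainder carrying $T_{k,\xi}-T_{l,\eta}$, then invoke Lemma \ref{lem:8.4JiaHao}(i) on the main term and redistribute $(1+T)$ via \eqref{est:par-t-A*-1} and \eqref{est:weight-more-1} --- is exactly what the paper does. However, your treatment of the remainder ``via Lemma \ref{lem:8.3JiaHao}'' is too vague: that lemma involves $A_R$, not $A_\sigma$, and does not by itself yield the needed factor $\sqrt{|A_k\dot A_k|}\sqrt{|A_l^*\dot A_l^*|}$. The paper isolates this remainder as a separate auxiliary bound (Lemma \ref{lem:A*R0R1R2-1-R1}, estimate \eqref{eq:A*R0R1bounds-R1}), whose proof uses the refined weights $A_k^{(1)}$ of Lemma \ref{lem:A12k} and Lemma \ref{lem:8.3JiaHao-mod1}.

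In part (ii) there are two genuine errors. First, your claim that ``$R_2$ forces $\langle l,\eta\rangle$ to be bounded'' is false: the definition of $R_2$ only gives $\langle l,\eta\rangle\lesssim\langle k,\xi\rangle+\langle\sigma,\rho\rangle$, so $\langle l,\eta\rangle$ can be arbitrarily large and $A_l^*\approx_\delta A_l$ need not hold. The paper instead uses $1+T_{l,\eta}\lesssim(1+T_{l,\eta})^{1/2}(1+T_{k,\xi})^{1/2}\langle l,\eta\rangle$ and absorbs the polynomial $\langle l,\eta\rangle$ into the exponential decay.

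Second, and more seriously, your plan to dispose of the residual $(1+T_{k,\xi})^{1/2}$ in the $|lA_k^{*2}|$ term fails. You propose $(1+T_{k,\xi})^{1/2}\lesssim\langle k,\xi\rangle\lesssim e^{\varepsilon\langle l,\eta\rangle^{1/2}}e^{\varepsilon\langle\sigma,\rho\rangle^{1/2}}$ and then to ``swallow'' $e^{\varepsilon\langle\sigma,\rho\rangle^{1/2}}$ into a Gevrey margin of $A_\sigma$. But the target inequality \eqref{eq:A*R2bounds} carries $\sqrt{|A_\sigma\dot A_\sigma|(t,\rho)}$ on the right exactly as Lemma \ref{lem:8.4JiaHao}(ii) does, with no exponential slack whatsoever in the $(\sigma,\rho)$ variable. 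Concretely, take $(l,\eta)$ small and $|(\sigma,\rho)|\gg\langle t\rangle$: your argument would then require $|(\sigma,\rho)|/\langle t\rangle\lesssim_\delta 1$, which is false. The paper's fix is to write $(1+T_{k,\xi})\lesssim(1+T_{k,\xi})^{1/2}(1+T_{l,\eta})^{1/2}(1+T_{\sigma,\rho})^{1/2}$ via \eqref{est:weight-more-1}, route the first two factors into $A_k^*$ and $A_l^*$ as you do, and then prove a \emph{strengthened} version of Lemma \ref{lem:8.4JiaHao}(ii) carrying the extra factor $(1+T_{\sigma,\rho})^{1/2}$ on the left (estimate \eqref{eq:A*R0R1bounds-R1-1} in Lemma \ref{lem:A*R0R1R2-1-R1}). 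That strengthening requires case analysis on whether $k^2+|\xi|$ and $\sigma^2+|\rho|$ exceed $\langle t\rangle^2$, and in the hard case exploits the refined weights $A_k^{(1)}$ together with the resonance estimate \eqref{est:8.40}. None of this machinery is present in your sketch.
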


\begin{proof}
  \textbf{Step 1.} Assume that $((k,\xi),(l,\eta))\in R_0$. \smallskip

  In this case, we have $\langle k,\xi\rangle+\langle l,\eta\rangle\lesssim\langle \sigma,\rho\rangle$.
   We get by Lemma \ref{lem:8.4JiaHao} that
  \begin{align*}
     &\dfrac{|\rho/\sigma|+\langle t\rangle}{\langle t\rangle}\dfrac{\langle\rho\rangle/\sigma^2}{\langle t-\rho/\sigma\rangle^2}(|l A_k(t,\xi)^2|+|kA_l(t,\eta)^2|) |\\
     &\lesssim_{\delta} \sqrt{|(A_k\dot{A}_{k})(t,\xi)|} \sqrt{|(A_l\dot{A}_{l})(t,\eta)|}A_{\sigma}(t,\rho)
      \mathrm{e}^{-(\delta_0/200)\langle\sigma,\rho\rangle^{1/2}}.
  \end{align*}
  Then by $A_j\leq A_j^*$, $ |\dot{A}_{j}|\lesssim|\dot{A}_{j}^*|$ for $j=k,l$ (see \eqref{est:par-t-A*-1}) and $\langle k,\xi\rangle+\langle l,\eta\rangle\lesssim\langle \sigma,\rho\rangle$, we have
  \begin{align*}
     &\dfrac{|\rho/\sigma|+\langle t\rangle}{\langle t\rangle}\dfrac{\langle\rho\rangle/\sigma^2}{\langle t-\rho/\sigma\rangle^2}\big(|l A_k^*(t,\xi)^2|+|kA_l^{*}(t,\eta)^2 |\big)\\
     &\leq \dfrac{|\rho/\sigma|+\langle t\rangle}{\langle t\rangle}\dfrac{\langle\rho\rangle/\sigma^2}{\langle t-\rho/\sigma\rangle^2}(|l A_k^2(t,\xi)|+|kA_l^2(t,\eta)|) \left(1+\dfrac{l^2+|\eta|}{\langle t\rangle^2}+\dfrac{k^2+|\xi|}{\langle t\rangle^2}\right)\\
     &\lesssim_{\delta}\sqrt{|(A_k\dot{A}_{k})(t,\xi)|} \sqrt{|(A_l\dot{A}_{l})(t,\eta)|}A_{\sigma}(t,\rho)
      \mathrm{e}^{-(\delta_0/200)\langle\sigma,\rho\rangle^{1/2}}(\langle k,\xi\rangle^2+\langle l,\eta\rangle^2)\\
      &\lesssim_{\delta}\sqrt{|(A_k^{*}\dot{A}_{k}^{*})(t,\xi)|} \sqrt{|(A_l^{*}\dot{A}_{l}^{*})(t,\eta)|}A_{\sigma}(t,\rho)
      \mathrm{e}^{-(\delta_0/201)\langle\sigma,\rho\rangle^{1/2}}.
  \end{align*}

  \noindent\textbf{Step 2.} Assume that $\big((k,\xi),(l,\eta)\big)\in R_1$. \smallskip

  We write
  \begin{align*}
     & \big[l A_k^*(t,\xi)^2-kA_l^{*}(t,\eta)^2 \big]=\mathcal{M}_1+\mathcal{M}_2 ,
  \end{align*}
  with
  \begin{align*}
   \mathcal{M}_1  &=\big[l A_k^2(t,\xi)-kA_l^2(t,\eta) \big] \left(1+\dfrac{l^2+|\eta|}{\langle t\rangle^2}\right),\\
   \mathcal{M}_2 &=l A_k^2(t,\xi) \left[\dfrac{k^2-l^2}{\langle t\rangle^2}+\dfrac{|\xi|-|\eta|}{\langle t\rangle^2} \right].
  \end{align*}
  In order to prove Lemma \ref{lem:A*R0R1R2-2}, we also define
    \begin{align*}
     & \mathcal{M}_2'=\eta A_k^2(t,\xi) \left[\dfrac{k^2-l^2}{\langle t\rangle^2}+\dfrac{|\xi|-|\eta|}{\langle t\rangle^2} \right].
  \end{align*}
Then it suffices to prove that for $i=1,2$,
  \begin{align}\dfrac{|\rho/\sigma|+\langle t\rangle}{\langle t\rangle}\dfrac{\langle\rho\rangle/\sigma^2\cdot|\mathcal{M}_i| }{\langle t-\rho/\sigma\rangle^2} \lesssim_{\delta} \sqrt{|(A_k^{*}\dot{A}_{k}^{*})(t,\xi)|} \sqrt{|(A_l^{*}\dot{A}_{l}^{*})(t,\eta)|}A_{\sigma}(t,\rho)
      \mathrm{e}^{-(2\delta_0/401)\langle\sigma,\rho\rangle^{\f12}}\label{eq:Tibound-0}.
  \end{align}
  Moreover, we  have
    \begin{align}\label{eq:Tibound-0M2}
   \dfrac{1}{\langle t\rangle^2}|\mathcal{M}_2'| &\lesssim_{\delta} \sqrt{|(A_k^{*}\dot{A}_{k}^{*})(t,\xi)|} \sqrt{|(A_l^{*}\dot{A}_{l}^{*})(t,\eta)|}A_{\sigma}(t,\rho)
      \mathrm{e}^{-(\delta_0/200)\langle\sigma,\rho\rangle^{1/2}}.
  \end{align}
  For $\mathcal{M}_1$, we get by Lemma \ref{lem:8.4JiaHao} that
  \begin{align*}
     &\dfrac{|\rho/\sigma|+\langle t\rangle}{\langle t\rangle}\dfrac{\langle\rho\rangle/\sigma^2}{\langle t-\rho/\sigma\rangle^2}|l A_k(t,\xi)^2-kA_l(t,\eta)^2 |\\
     &\lesssim_{\delta} \sqrt{|(A_k\dot{A}_{k})(t,\xi)|} \sqrt{|(A_l\dot{A}_{l})(t,\eta)|}A_{\sigma}(t,\rho)
      \mathrm{e}^{-(\delta_0/200)\langle\sigma,\rho\rangle^{1/2}}.
  \end{align*}
  Then by \eqref{est:par-t-A*-1} and \eqref{est:weight-more-1}, we have
  \begin{align*}
     &\dfrac{|\rho/\sigma|+\langle t\rangle}{\langle t\rangle}\dfrac{\langle\rho\rangle/\sigma^2}{\langle t-\rho/\sigma\rangle^2}|\mathcal{M}_1|\\
     &\leq \dfrac{|\rho/\sigma|+\langle t\rangle}{\langle t\rangle}\dfrac{\langle\rho\rangle/\sigma^2}{\langle t-\rho/\sigma\rangle^2}\big|l A_k(t,\xi)^2-kA_l(t,\eta)^2 \big| \left(1+\dfrac{l^2+|\eta|}{\langle t\rangle^2}\right)\\
     &\lesssim_{\delta}\dfrac{|\rho/\sigma|+\langle t\rangle}{\langle t\rangle}\dfrac{\langle\rho\rangle/\sigma^2}{\langle t-\rho/\sigma\rangle^2}\big|l A_k(t,\xi)^2-kA_l(t,\eta)^2 \big| \left(1+\dfrac{k^2+|\xi|}{\langle t\rangle^2}\right)^{\f12}\left(1+\dfrac{l^2+|\eta|}{\langle t\rangle^2}\right)^{\f12}\langle \sigma, \rho\rangle\\
     &\lesssim_{\delta} \sqrt{|(A_k\dot{A}_{k})(t,\xi)|} \left(1+\dfrac{k^2+|\xi|}{\langle t\rangle^2}\right)^{\f12} \sqrt{|(A_l\dot{A}_{l})(t,\eta)|}A_{\sigma}(t,\rho) \left(1+\dfrac{l^2+|\eta|}{\langle t\rangle^2}\right)^{\f12}\\ &\quad\times
      \langle \sigma, \rho\rangle\mathrm{e}^{-(\delta_0/200)\langle\sigma,\rho\rangle^{1/2}}\\
     & \lesssim_{\delta}\sqrt{|(A_k^{*}\dot{A}_{k}^{*})(t,\xi)|} \sqrt{|(A_l^{*}\dot{A}_{l}^{*})(t,\eta)|}A_{\sigma}(t,\rho)
      \mathrm{e}^{-(2\delta_0/401)\langle\sigma,\rho\rangle^{1/2}}.
  \end{align*}
This proves \eqref{eq:Tibound-0} for $i=1$.

 It is easy to see that
  \begin{align}\label{est:weight-more-2}
     &\left|\dfrac{k^2-l^2}{\langle t\rangle^2}+\dfrac{|\xi|-|\eta|}{\langle t\rangle^2} \right|\lesssim \dfrac{(k-l)^2+|k||k-l|}{\langle t\rangle^2}+\dfrac{|\xi-\eta|}{\langle t\rangle^2}\nonumber\\
     &\lesssim \langle k-l,\xi-\eta\rangle^2 \dfrac{ \langle k\rangle}{\langle t\rangle^2}\lesssim\langle k-l,\xi-\eta\rangle^2\left( 1+\dfrac{k^2+|\xi|}{\langle t\rangle^2}\right)^{\frac{1}{2}} \langle t\rangle^{-1} .
  \end{align}
  Then by \eqref{est:par-t-A*-1}, \eqref{est:weight-more-2} and \eqref{eq:A*R0R1bounds-R1}, we have
  \begin{align*}
    &\dfrac{1}{\langle t\rangle^2}(|\mathcal{M}_2|+|\mathcal{M}_2'|)\\
    &\lesssim \dfrac{1}{\langle t\rangle^2}(|l A_k(t,\xi)^2|+|\eta A_k(t,\xi)^2|)\langle k-l,\xi-\eta\rangle^2\left( 1+\dfrac{k^2+|\xi|}{\langle t\rangle^2}\right)^{\frac{1}{2}} \langle t\rangle^{-1}\\
    &\lesssim_{\delta} \sqrt{|(A_k\dot{A}_{k})(t,\xi)|} \sqrt{|(A_l^*\dot{A}_{l}^*)(t,\eta)|}A_{\sigma}(t,\rho)
      \mathrm{e}^{-(\delta_0/100)\langle\sigma,\rho\rangle^{1/2}}\langle\sigma,\rho\rangle^{2}\left( 1+\dfrac{k^2+|\xi|}{\langle t\rangle^2}\right)^{\frac{1}{2}}\\
    &\lesssim_{\delta} \sqrt{|(A_k^*\dot{A}_{k}^*)(t,\xi)|} \sqrt{|(A_l^*\dot{A}_{l}^*)(t,\eta)|}A_{\sigma}(t,\rho)
      \mathrm{e}^{-(\delta_0/110)\langle\sigma,\rho\rangle^{1/2}},
  \end{align*}
  Then the bounds \eqref{eq:Tibound-0} and \eqref{eq:Tibound-0M2} follow from the fact that
  \beno
  \frac{|\rho/\sigma|+\langle t\rangle}{\langle t\rangle}\frac{\langle\rho\rangle/\sigma^2}{\langle t-\rho/\sigma\rangle^2}\lesssim \langle t\rangle^{-2}\mathrm{e}^{(\delta_0/300)\langle\sigma,\rho\rangle^{1/2}}.
  \eeno

  \smallskip

\noindent\textbf{Step 3.}  Assume that $\big((k,\xi),(l,\eta)\big)\in R_2$. \smallskip

By Lemma \ref{lem:8.4JiaHao}, we have
  \begin{align}\label{eq:Ak-8.4-1}
     \dfrac{|\rho/\sigma|+\langle t\rangle}{\langle t\rangle}\dfrac{\langle\rho\rangle/\sigma^2}{\langle t-\rho/\sigma\rangle^2} &\big(|l A_k(t,\xi)^2|+|kA_l(t,\eta)^2|\big)\\
      &\lesssim_{\delta}\sqrt{|A_k\dot{A}_{k}(t,\xi)|} \sqrt{|A_{\sigma}\dot{A}_{\sigma}(t,\rho)|}A_{l}(t,\eta)
      \mathrm{e}^{-(\delta_0/200)\langle l,\eta\rangle^{1/2}}.\nonumber
  \end{align}
  It is obvious that
  \begin{align*}
     &1+\dfrac{l^2+|\eta|}{\langle t\rangle^2}\lesssim\left( 1+\dfrac{l^2+|\eta|}{\langle t\rangle^2}\right)^{\f12}\langle l,\eta\rangle\leq\left( 1+\dfrac{l^2+|\eta|}{\langle t\rangle^2}\right)^{\f12}\left(1+\dfrac{k^2+|\xi|}{\langle t\rangle^2}\right)^{\f12}\langle l,\eta\rangle.
  \end{align*}
  This along with \eqref{est:par-t-A*-1} and  \eqref{eq:Ak-8.4-1}  gives
  \begin{align}\label{eq:A*R2bounds-A*l}
  \begin{aligned}
   \dfrac{|\rho/\sigma|+\langle t\rangle}{\langle t\rangle}\dfrac{\langle\rho\rangle/\sigma^2}{\langle t-\rho/\sigma\rangle^2} |kA^*_l(t,\eta)^2|\lesssim_{\delta}&\sqrt{|(A^*_k\dot{A}^*_{k})(t,\xi)|} \sqrt{|(A_{\sigma}\dot{A}_{\sigma})(t,\rho)|}\\
   &\times A^*_{l}(t,\eta)
      \mathrm{e}^{-(\delta_0/201)\langle l,\eta\rangle^{\f12}}.
  \end{aligned}
\end{align}
Thus, it remains to prove that
  \begin{align}\label{eq:A*R2bounds-prove}
   &\dfrac{|\rho/\sigma|+\langle t\rangle}{\langle t\rangle}\dfrac{\langle\rho\rangle/\sigma^2}{\langle t-\rho/\sigma\rangle^2} |lA^*_k(t,\xi)^2|\nonumber\\
   &\lesssim_{\delta}\sqrt{|(A^*_k\dot{A}^*_{k})(t,\xi)|} \sqrt{|(A_{\sigma}\dot{A}_{\sigma})(t,\rho)|}A^*_{l}(t,\eta)
      \mathrm{e}^{-(\delta_0/201)\langle l,\eta\rangle^{1/2}}.
  \end{align}
If $k^2+|\xi|\leq\langle t\rangle^2$, then
\begin{align}\label{eq:<1}
     & 1+\dfrac{k^2+|\xi|}{\langle t\rangle^2}\lesssim 1\lesssim\left( 1+\dfrac{k^2+|\xi|}{\langle t\rangle^2}\right)^{\f12}
  \left( 1+\dfrac{l^2+|\eta|}{\langle t\rangle^2}\right)^{\f12},
  \end{align}
  and \eqref{eq:A*R2bounds-prove} follows from \eqref{eq:Ak-8.4-1} and \eqref{est:par-t-A*-1}. Now we assume $k^2+|\xi|\geq\langle t\rangle^2$. We get by \eqref{est:weight-more-1} that
  \begin{align}\label{eq:A*R2bounds-A*k}
     & 1+\dfrac{k^2+|\xi|}{\langle t\rangle^2}\lesssim\left( 1+\dfrac{k^2+|\xi|}{\langle t\rangle^2}\right)^{\f12}
  \left( 1+\dfrac{l^2+|\eta|}{\langle t\rangle^2}\right)^{\f12}\left( 1+\dfrac{\rho^2+|\sigma|}{\langle t\rangle^2}\right)^{\f12}.
  \end{align}
  Thanks to \eqref{est:par-t-A*-1} and \eqref{eq:A*R2bounds-A*k}, we only need to prove that
 \begin{align}\label{eq:A*R2bounds-prove-11}
   &\bigg\langle \dfrac{|\sigma|+|\rho|^{1/2}}{\langle t\rangle}\bigg\rangle\dfrac{|\rho/\sigma|+\langle t\rangle}{\langle t\rangle}\dfrac{\langle\rho\rangle/\sigma^2}{\langle t-\rho/\sigma\rangle^2} |lA_k(t,\xi)^2|\nonumber\\
   &\lesssim_{\delta}\sqrt{|(A_k\dot{A}_{k})(t,\xi)|} \sqrt{|(A_{\sigma}\dot{A}_{\sigma})(t,\rho)|}A_{l}(t,\eta)
      \mathrm{e}^{-(\delta_0/200)\langle l,\eta\rangle^{1/2}}.
  \end{align}
  If $\rho^2+|\sigma|\leq\langle t\rangle^2 $, then \eqref{eq:A*R2bounds-prove-11} follows from \eqref{eq:Ak-8.4-1}. If $\rho^2+|\sigma|\geq\langle t\rangle^2 $, then \eqref{eq:A*R2bounds-prove-11} follows from $k^2+|\xi|\geq\langle t\rangle^2$, $\langle\rho\rangle/\sigma^2\leq\langle\rho/\sigma\rangle/|\sigma|$ and \eqref{eq:A*R0R1bounds-R1-1}.
\end{proof}

\begin{lemma}\label{lem:A*R0R1R2-1-R1}
Assume $t\ge 1$ and let $(\sigma,\rho)=(k-l,\xi-\eta)$. Suppose $\sigma \neq 0$. If $\big((k,\xi),(l,\eta)\big)\in R_1$, then it holds that
\begin{align}\label{eq:A*R0R1bounds-R1}
      \dfrac{|l A_k(t,\xi)^2|+|\eta A_k(t,\xi)^2|}{\langle t\rangle^3}
\lesssim_{\delta}&\sqrt{|(A_k\dot{A}_{k})(t,\xi)|} \sqrt{|(A_l^{*}\dot{A}_{l}^{*})(t,\eta)|}A_{\sigma}(t,\rho)
      \mathrm{e}^{-(\delta_0/100)\langle\sigma,\rho\rangle^{1/2}}.
    \end{align}
If $\big((k,\xi),(l,\eta)\big)\in R_2$, $ k^2+|\xi|\geq \langle t\rangle^2$, $ \sigma^2+|\rho|\geq \langle t\rangle^2$, then
    \begin{align}\label{eq:A*R0R1bounds-R1-1}
     &\dfrac{|\sigma|+|\rho|^{1/2}}{\langle t\rangle}\dfrac{|\rho/\sigma|+\langle t\rangle}{\langle t\rangle|\sigma|}\dfrac{\langle\rho/\sigma\rangle}{\langle t-\rho/\sigma\rangle^2}( |lA_k(t,\xi)^2|+|\eta A_k(t,\xi)^2|)\\
   &\lesssim_{\delta}\sqrt{|(A_k\dot{A}_{k})(t,\xi)|} \sqrt{|(A_{\sigma}\dot{A}_{\sigma})(t,\rho)|}A_{l}(t,\eta)
      \mathrm{e}^{-(\delta_0/100)\langle l,\eta\rangle^{1/2}}\nonumber
    \end{align}
\end{lemma}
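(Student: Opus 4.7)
The plan is to derive both estimates from Lemma \ref{lem:8.4JiaHao} by exploiting the definition $A_l^{*}(t,\eta)=A_l(t,\eta)(1+(l^2+|\eta|)/\langle t\rangle^2)^{1/2}$ together with \eqref{est:par-t-A*-1}, Lemma \ref{lem:7.4JiaHao}, and the mode-shifting estimate \eqref{est:8.3JiaHao-mod3}. In both cases the target bound has $A_l^{*}$ on the RHS (rather than $A_l$), so at heart what needs to be supplied is a single additional factor of $(1+(l^2+|\eta|)/\langle t\rangle^2)^{1/2}$ beyond what Lemma \ref{lem:8.4JiaHao} already gives us, and this factor will be either trivially $\gtrsim 1$ or else manufactured from the explicit $\langle t\rangle^{-3}$ or $(|\sigma|+|\rho|^{1/2})/\langle t\rangle$ sitting on the LHS.

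For \eqref{eq:A*R0R1bounds-R1} I first use the defining property of $R_1$, namely $\langle\sigma,\rho\rangle\ll \langle l,\eta\rangle+\langle k,\xi\rangle$, to get $\langle l,\eta\rangle\approx\langle k,\xi\rangle$ and hence $|l|+|\eta|\lesssim\langle k,\xi\rangle$. Thus $|lA_k^2|+|\eta A_k^2|\lesssim\langle k,\xi\rangle A_k(t,\xi)^2$. Since $\langle\sigma,\rho\rangle\leq\langle l,\eta\rangle$ in $R_1$, the first case of \eqref{est:8.3JiaHao-mod3} gives
\begin{equation*}
A_k(t,\xi)\lesssim_{\delta} A_\sigma(t,\rho)A_l(t,\eta)\mathrm{e}^{-(\lambda(t)/20)\langle\sigma,\rho\rangle^{1/2}},
\end{equation*}
so $A_k(t,\xi)^2\lesssim A_k A_\sigma A_l \mathrm{e}^{-c\langle\sigma,\rho\rangle^{1/2}}$. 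Next, from \eqref{est:7.4JiaHao-2} and \eqref{est:par-t-A*-1} we have the lower bounds $\sqrt{|A_k\dot A_k|(t,\xi)}\gtrsim_\delta A_k\langle k,\xi\rangle^{1/4}/\langle t\rangle^{(1+\sigma_0)/2}$ and $\sqrt{|A_l^{*}\dot A_l^{*}|(t,\eta)}\gtrsim_\delta A_l^{*}\langle l,\eta\rangle^{1/4}/\langle t\rangle^{(1+\sigma_0)/2}$. After cancellation the required inequality reduces to
\begin{equation*}
\frac{\langle k,\xi\rangle^{1/2}}{\langle t\rangle^{2-\sigma_0}}\lesssim\frac{A_l^{*}(t,\eta)}{A_l(t,\eta)}=\Bigl(1+\frac{l^2+|\eta|}{\langle t\rangle^2}\Bigr)^{1/2},
\end{equation*}
which I verify by splitting into the cases $\langle k,\xi\rangle\leq\langle t\rangle^2$ (trivial, since the LHS is $\leq\langle t\rangle^{-1+\sigma_0}\lesssim 1$) and $\langle k,\xi\rangle\geq\langle t\rangle^2$ (then the RHS is $\approx \langle l,\eta\rangle^{1/2}/\langle t\rangle\approx\langle k,\xi\rangle^{1/2}/\langle t\rangle$, which beats the LHS since $\langle t\rangle^{2-\sigma_0}\geq\langle t\rangle$).

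For \eqref{eq:A*R0R1bounds-R1-1} I compare with Lemma \ref{lem:8.4JiaHao} (ii). The two differences are the replacement of $\langle\rho\rangle/\sigma^2$ by $\langle\rho/\sigma\rangle/|\sigma|$ (note $\langle\rho/\sigma\rangle/|\sigma|=(\sigma^2+\rho^2)^{1/2}/\sigma^2$) and the extra factor $(|\sigma|+|\rho|^{1/2})/\langle t\rangle$. Their product relative to the Lemma \ref{lem:8.4JiaHao} LHS is
\begin{equation*}
\frac{(|\sigma|+|\rho|^{1/2})(\sigma^2+\rho^2)^{1/2}}{\langle t\rangle\,\langle\rho\rangle}.
\end{equation*}
The plan is to use \eqref{est:8.3JiaHao-mod3} in the other case ($\langle l,\eta\rangle\leq\langle\sigma,\rho\rangle$, which holds in $R_2$) to obtain $A_k\lesssim A_\sigma A_l\mathrm{e}^{-c\langle l,\eta\rangle^{1/2}}$, and to absorb the extra polynomial factor above using the improved lower bounds on $\sqrt{|A_k\dot A_k|}\sqrt{|A_\sigma\dot A_\sigma|}\gtrsim_\delta A_k A_\sigma\langle k,\xi\rangle^{1/4}\langle\sigma,\rho\rangle^{1/4}/\langle t\rangle^{1+\sigma_0}$, which under $\langle k,\xi\rangle\approx\langle\sigma,\rho\rangle$ (valid in $R_2$) is $\approx A_kA_\sigma\langle\sigma,\rho\rangle^{1/2}/\langle t\rangle^{1+\sigma_0}$. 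The two standing hypotheses $\sigma^2+|\rho|\geq\langle t\rangle^2$ and $k^2+|\xi|\geq\langle t\rangle^2$ guarantee that $(|\sigma|+|\rho|^{1/2})/\langle t\rangle\lesssim\langle\sigma,\rho\rangle^{1/2}/\langle t\rangle$ so the overall ratio can be absorbed provided $\langle t\rangle\geq 1$, with the polynomial factors in $\langle l,\eta\rangle$ coming from $|l|+|\eta|\lesssim\langle l,\eta\rangle$ swallowed by $\mathrm{e}^{-c\langle l,\eta\rangle^{1/2}}$.

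The main obstacle I anticipate is the sub-case analysis needed for \eqref{eq:A*R0R1bounds-R1-1}: one must carefully distribute the factor $(\sigma^2+\rho^2)^{1/2}/\langle\rho\rangle$ between regimes $|\rho|\geq 1$ and $|\rho|\leq 1$, and track how $\langle\rho/\sigma\rangle/\langle t-\rho/\sigma\rangle^2$ interacts with the $(|\rho/\sigma|+\langle t\rangle)/\langle t\rangle|\sigma|$ prefactor when $|\sigma|$ is large. The argument is otherwise structurally parallel to the proof of Lemma \ref{lem:8.4JiaHao} and uses no genuinely new ingredient beyond the $A_k^{*}$-versus-$A_k$ relation captured in \eqref{est:par-t-A*-1}.
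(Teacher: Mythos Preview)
Your argument for \eqref{eq:A*R0R1bounds-R1} has a genuine gap. The estimate \eqref{est:8.3JiaHao-mod3} is stated for the \emph{modified} weights $A_k^{(1)}$ and $A_k^{(2)}$ of Lemma~\ref{lem:A12k}, not for $A_k$ itself; the bound $A_k(t,\xi)\lesssim_\delta A_\sigma(t,\rho)A_l(t,\eta)\mathrm{e}^{-c\langle\sigma,\rho\rangle^{1/2}}$ that you write down simply fails when $t\in I_{k,\xi}$ and $t\notin I_{l,\eta}$ (see \eqref{est:7.3JiaHao-3}). In that regime the ratio $A_k/A_k^{(1)}$ is of size $|\xi|/(k^2\langle t-\xi/k\rangle)$, and this factor must be tracked. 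The paper handles it by restricting to $|\xi/k|\le 2t$ (otherwise the factor is harmless) and then showing $|(l,\eta)|^{1/2}|\xi/k^2|\lesssim\langle\sigma,\rho\rangle^{1/2}\langle t\rangle^{3/2}$, which combines with the $\langle t\rangle^{-3}$ on the left to produce exactly the $(1+(l^2+|\eta|)/\langle t\rangle^2)^{1/2}$ needed. Your reduction to $\langle k,\xi\rangle^{1/2}/\langle t\rangle^{2-\sigma_0}\lesssim(1+(l^2+|\eta|)/\langle t\rangle^2)^{1/2}$ is correct but insufficient, because you have silently dropped this resonant factor.

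For \eqref{eq:A*R0R1bounds-R1-1} your sketch also breaks down in the near-resonant regime $|t-\rho/\sigma|\le|\rho|/(10|\sigma|)$. There the prefactor on the left is of size $1/\langle t-\rho/\sigma\rangle$, and the generic lower bound $\sqrt{|\dot A_k/A_k|}\sqrt{|\dot A_\sigma/A_\sigma|}\gtrsim\langle\sigma,\rho\rangle^{1/2}/\langle t\rangle^{1+\sigma_0}$ that you invoke is too weak: take $\sigma=N$, $\rho=N^2$, $t=N$ (so $t=\rho/\sigma$, all hypotheses satisfied) and observe that your right-hand side is $\approx N^{-\sigma_0}$, while you must beat $1/\langle t-\rho/\sigma\rangle=1$. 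The paper resolves this case by using the resonant lower bound \eqref{est:8.40}, namely $|\dot A_\sigma/A_\sigma|\gtrsim 1/\langle t-\rho/\sigma\rangle$, together with \eqref{est:7.4JiaHao-4}; this is the key ingredient your plan is missing. (In the complementary case $|t-\rho/\sigma|\ge|\rho|/(10|\sigma|)$ the prefactor is bounded by $\langle\rho\rangle^{1/2}/\langle t\rangle^2$ and the generic bound does suffice.) Note also that the hypothesis $k^2+|\xi|\ge\langle t\rangle^2$ is precisely what forces $|\xi/k^2|\lesssim 1$ (cf.\ \eqref{eq2} in the paper), so that in this part the $A_k\lesssim A_\sigma A_l\mathrm{e}^{-c}$ bound you want \emph{is} valid---but you do not explain why.
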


\begin{proof}
  If $\big((k,\xi),(l,\eta)\big)\in R_1$, then $|(k-l,\xi-\eta)|\leq|(l,\eta)|$, and by \eqref{est:8.3JiaHao-mod3}
   for $j=1$, we have
\begin{align*}
       A_{k}^{(1)}(t,\xi)\lesssim_{\delta}& A_{\sigma}(t,\rho)A_{l}^{(1)}(t,\eta)\mathrm{e}^{-(\lambda(t)/20) \langle\sigma,\rho\rangle^{1/2}}.
\end{align*}
By \eqref{eq:A1k}, we have $A_{l}^{(1)}(t,\eta)\leq A_{l}(t,\eta)$ and
\begin{align}\label{eq:Ak1}
  A_{k}(t,\xi)=A_{k}^{(1)}(t,\eta)
\bigg(1+\mathbf{1}_{\{k\neq0\}}\dfrac{|\xi|/k^2}{\langle t-\xi/k\rangle}\bigg)\lesssim A_{k}^{(1)}(t,\eta)
\big(1+\mathbf{1}_{\{k\neq0,|\xi/k|\leq 2t\}}|\xi/k^2|\big).
\end{align}
Thus,
\begin{align*}
       &A_{k}(t,\xi)\lesssim_{\delta} A_{\sigma}(t,\rho)A_{l}(t,\eta)\mathrm{e}^{-(\lambda(t)/20) \langle\sigma,\rho\rangle^{1/2}}(1+\mathbf{1}_{\{k\neq0,|\xi/k|\leq 2t\}}|\xi/k^2|),\\
       &\dfrac{|l A_k(t,\xi)^2|+|\eta A_k(t,\xi)^2|}{\langle t\rangle^3}\\ &\lesssim_{\delta}  \frac{|(l,\eta)|}{\langle t\rangle^3}A_{k}(t,\xi)A_{l}(t,\eta)A_{\sigma}(t,\rho)\mathrm{e}^{-(\lambda(t)/20)\langle\sigma,\rho\rangle^{1/2}}\big(1+\mathbf{1}_{\{k\neq0,|\xi/k|\leq 2t\}}|\xi/k^2|\big).
\end{align*}
Note that if $k\neq0, |\xi/k|\leq 2t$, then
\begin{align*}
       &{|(l,\eta)|^{1/2}}|\xi/k^2|\lesssim\langle\sigma,\rho\rangle^{1/2}{|( k,\xi)|^{1/2}}|\xi/k^2|\\&=\langle\sigma,\rho\rangle^{1/2}\langle\xi/k\rangle^{1/2}|k|^{1/2}|\xi/k||k|^{-1} \leq\langle\sigma,\rho\rangle^{1/2}\langle\xi/k\rangle^{3/2}\lesssim\langle\sigma,\rho\rangle^{1/2}\langle t\rangle^{3/2}.
\end{align*}We have\begin{align*}
       &\frac{|(l,\eta)|^{1/2}}{\langle t\rangle^{3/2}}(1+\mathbf{1}_{\{k\neq0,|\xi/k|\leq 2t\}}|\xi/k^2|)\lesssim\frac{|(l,\eta)|^{1/2}}{\langle t\rangle^{3/2}}+\langle\sigma,\rho\rangle^{1/2}\leq \bigg(\frac{|(l,\eta)|^{1/2}}{\langle t\rangle}+1\bigg)\langle\sigma,\rho\rangle^{1/2}\\ &\lesssim\bigg(\frac{|l|+|\eta|}{\langle t\rangle^2}+1\bigg)^{1/2}\langle\sigma,\rho\rangle^{1/2}\leq\bigg(\frac{|l|^2+|\eta|}{\langle t\rangle^2}+1\bigg)^{1/2}\langle\sigma,\rho\rangle^{1/2},
\end{align*}
and
\begin{align*}
       &\dfrac{|l A_k(t,\xi)^2|+|\eta A_k(t,\xi)^2|}{\langle t\rangle^3}\\ &\lesssim_{\delta}  \frac{|(l,\eta)|^{1/2}}{\langle t\rangle^{3/2}}A_{k}(t,\xi)A_{l}(t,\eta)A_{\sigma}(t,\rho)\mathrm{e}^{-(\lambda(t)/20)\langle\sigma,\rho\rangle^{1/2}}\bigg(\frac{|l|^2+|\eta|}{\langle t\rangle^2}+1\bigg)^{1/2}\langle\sigma,\rho\rangle^{1/2}\\&=\frac{|(l,\eta)|^{1/2}}{\langle t\rangle^{3/2}}A_{k}(t,\xi)A_{l}^*(t,\eta)A_{\sigma}(t,\rho)\mathrm{e}^{-(\lambda(t)/20)\langle\sigma,\rho\rangle^{1/2}}\langle\sigma,\rho\rangle^{1/2}
       \\ &\lesssim_{\delta}\frac{\langle l,\eta\rangle^{1/4}\langle k,\xi\rangle^{1/4}}{\langle t\rangle^{3/2}}A_{k}(t,\xi)A_{l}^*(t,\eta)A_{\sigma}(t,\rho)\mathrm{e}^{-(\lambda(t)/21)\langle\sigma,\rho\rangle^{1/2}}.
\end{align*}
\if0
\noindent\textbf{Case 1.} $(k,\xi)\notin I_{t}^{**}$ or $(k,\xi),(l,\eta)\in I_t^{*}$. By \eqref{est:bl/bk-1}, we have
   \begin{align*}
      |lA_{k}(t,\xi)|& =|l|\mathrm{e}^{\lambda(t)\langle k,\xi\rangle^{1/2}} \left(\dfrac{\mathrm{e}^{\sqrt{\delta}\langle \xi\rangle^{1/2}}}{b_{k}(t,\xi)}+\mathrm{e}^{\sqrt{\delta}|k|^{1/2}}\right)\\
      &\lesssim_{\delta} |l|\mathrm{e}^{\lambda(t)\langle k,\xi\rangle^{1/2}} \left(\dfrac{\mathrm{e}^{\sqrt{\delta}\langle \eta\rangle^{1/2}}}{b_{l}(t,\eta)}+\mathrm{e}^{\sqrt{\delta}|l|^{1/2}}\right) \mathrm{e}^{8\sqrt{\delta}\langle\sigma,\rho\rangle^{1/2}}\\
       &\lesssim_{\delta} |l|\mathrm{e}^{\lambda(t)\langle l,\eta\rangle^{1/2}} \left(\dfrac{\mathrm{e}^{\sqrt{\delta}\langle \eta\rangle^{1/2}}}{b_{l}(t,\eta)}+\mathrm{e}^{\sqrt{\delta}|l|^{1/2}}\right) \mathrm{e}^{(1-\lambda(t)/20)\langle\sigma,\rho\rangle^{1/2}},
   \end{align*}
   here  we use \eqref{eq:tr-3} with $\beta=1/2$ in the last inequality.
   Then we get by $A_{\sigma}(t,\rho)\geq \mathrm{e}^{\lambda(t)\langle \sigma,\rho\rangle^{1/2}}$ that
    \begin{align*}
       &\dfrac{|l A_k^2(t,\xi)|+|\eta A_k^2(t,\xi)|}{\langle t\rangle^3}\\ \lesssim_{\delta}&  \frac{|(l,\eta)|}{\langle t\rangle^3}A_{k}(t,\xi)A_{l}(t,\eta)A_{\sigma}(t,\rho)\mathrm{e}^{-(\lambda(t)/20)\langle\sigma,\rho\rangle^{1/2}}(1+\mathbf{1}_{\{k\neq0,|\xi/k|\leq 2t\}}|\xi/k^2|)\\
       \lesssim_{\delta}&\big(|k|+|\sigma|\big)^{1/4}|l|^{1/4} \times \langle t\rangle^{1/2} \left(\dfrac{|l|}{\langle t\rangle}\right)^{1/2} A_{k}(t,\xi)A_{l}(t,\eta)A_{\sigma}(t,\rho) \mathrm{e}^{-(\lambda(t)/20)\langle\sigma,\rho\rangle^{1/2}}\\
       \lesssim_{\delta}&\langle t\rangle^{1/2} \langle k\rangle^{1/4}\langle l\rangle^{1/4} A_{k}(t,\xi)A^{*}_{l}(t,\eta)A_{\sigma}(t,\rho) \mathrm{e}^{-(\lambda(t)/21)\langle\sigma,\rho\rangle^{1/2}}.
    \end{align*}
  Thanks to $\dfrac{\langle \rho/\sigma\rangle^2}{\langle t\rangle \big(\langle t\rangle+|\rho|\big)\langle t-\rho/\sigma\rangle} \lesssim \dfrac{\langle\rho\rangle^{2}}{\langle t\rangle^{2}}$, we have
    \begin{align}\label{eq:A*R0R1bounds-R1-prove1}
       & \dfrac{\langle \rho/\sigma\rangle^2|lA_{k}^2(t,\xi)|}{\langle t\rangle \big(\langle t\rangle+|\rho|\big)\langle t-\rho/\sigma\rangle} \lesssim_{\delta} \dfrac{\langle k\rangle^{1/4}}{\langle t\rangle^{3/4}}\cdot \dfrac{\langle l\rangle^{1/4}}{\langle t\rangle^{3/4}} A_{k}(t,\xi)A^{*}_{l}(t,\eta)A_{\sigma}(t,\rho) \mathrm{e}^{-(\lambda(t)/22)\langle\sigma,\rho\rangle^{1/2}}.
    \end{align}\fi
Then the bound \eqref{eq:A*R0R1bounds-R1} follows from \eqref{est:7.4JiaHao-2} and \eqref{est:par-t-A*}.

Now we assume   $\big((k,\xi),(l,\eta)\big)\in R_2$, $ k^2+|\xi|\geq \langle t\rangle^2$, $ \sigma^2+|\rho|\geq \langle t\rangle^2$. Then $|(l,\eta)|\leq|(k-l,\xi-\eta)|$, and by \eqref{est:8.3JiaHao-mod3}
   for $j=1$, we have
\begin{align*}
       A_{k}^{(1)}(t,\xi)\lesssim_{\delta}& A_{\sigma}^{(1)}(t,\rho)A_{l}(t,\eta)\mathrm{e}^{-(\lambda(t)/20) \langle l,\eta\rangle^{1/2}}.
\end{align*}
By \eqref{eq:A1k}, we have $A_{\sigma}^{(1)}(t,\rho)\leq A_{\sigma}(t,\rho)$ and \eqref{eq:Ak1}.
Note that if $k\neq0,|\xi/k|\leq 2t $, as $ k^2+|\xi|\geq \langle t\rangle^2$, then $(k^2+|\xi|)|\xi/k^2|=|\xi|+|\xi/k|^2\leq|\xi|+|2t|^2$,
\begin{align}\label{eq2}
       &|\xi/k^2|\leq\frac{|\xi|+|2t|^2}{k^2+|\xi|}\leq 1+\frac{|2t|^2}{k^2+|\xi|}\leq 1+\frac{|2t|^2}{\langle t\rangle^2}\leq 5,
\end{align}and \eqref{eq:Ak1} becomes $A_{k}(t,\xi)\lesssim A_{k}^{(1)}(t,\eta)$.
Thus,
\begin{align}\nonumber
       &A_{k}(t,\xi)\lesssim_{\delta} A_{\sigma}(t,\rho)A_{l}(t,\eta)\mathrm{e}^{-(\lambda(t)/20) \langle l,\eta\rangle^{1/2}},\\ \label{eq:Ak2}&|l A_k(t,\xi)^2|+|\eta A_k(t,\xi)^2| \lesssim_{\delta}  {|(l,\eta)|}A_{k}(t,\xi)A_{l}(t,\eta)A_{\sigma}(t,\rho)\mathrm{e}^{-(\lambda(t)/20)\langle l,\eta\rangle^{1/2}}.
\end{align}

 \textbf{Case 1.} $ |t-\rho/\sigma|\geq \frac{|\rho|}{10|\sigma|}$. Then $ \langle t-\rho/\sigma\rangle \thickapprox \langle t\rangle+|\rho/\sigma|,$ and \begin{align*}
     &\dfrac{|\sigma|+|\rho|^{1/2}}{\langle t\rangle}\dfrac{|\rho/\sigma|+\langle t\rangle}{\langle t\rangle|\sigma|}\dfrac{\langle\rho/\sigma\rangle}{\langle t-\rho/\sigma\rangle^2}\lesssim\dfrac{|\sigma|+|\rho|^{1/2}}{\langle t\rangle}\dfrac{|\rho/\sigma|+\langle t\rangle}{\langle t\rangle|\sigma|}\dfrac{\langle\rho/\sigma\rangle}{(|\rho/\sigma|+\langle t\rangle)^2}\\ &\leq\dfrac{|\sigma|+|\rho|^{1/2}}{\langle t\rangle}\dfrac{1}{\langle t\rangle|\sigma|}=\dfrac{|\sigma|+|\rho|^{1/2}}{\langle t\rangle^2|\sigma|}\lesssim\dfrac{\langle\rho\rangle^{1/2}}{\langle t\rangle^2}.
    \end{align*}
    Then by \eqref{eq:Ak2}, we have
    \begin{align*}
     &\dfrac{|\sigma|+|\rho|^{1/2}}{\langle t\rangle}\dfrac{|\rho/\sigma|+\langle t\rangle}{\langle t\rangle|\sigma|}\dfrac{\langle\rho/\sigma\rangle}{\langle t-\rho/\sigma\rangle^2}( |lA_k(t,\xi)^2|+|\eta A_k(t,\xi)^2|)\\
   &\lesssim_{\delta}\dfrac{\langle\rho\rangle^{1/2}}{\langle t\rangle^2}{|(l,\eta)|}A_{k}(t,\xi)A_{l}(t,\eta)A_{\sigma}(t,\rho)\mathrm{e}^{-(\lambda(t)/20)\langle l,\eta\rangle^{1/2}}\\
   &\lesssim_{\delta}\dfrac{\langle\rho\rangle^{1/4}\langle\xi\rangle^{1/4}}{\langle t\rangle^2}A_{k}(t,\xi)A_{l}(t,\eta)A_{\sigma}(t,\rho)\mathrm{e}^{-(\lambda(t)/21)\langle l,\eta\rangle^{1/2}}.
    \end{align*}
  Thus, the bound \eqref{eq:A*R0R1bounds-R1-1} follows from \eqref{est:7.4JiaHao-2}.\smallskip

  \textbf{Case 2.} $ |t-\rho/\sigma|\leq \frac{|\rho|}{10|\sigma|}$. Then $0<t<2|\rho/\sigma|<4t$. Similar to the proof of \eqref{eq2}, as $ \sigma^2+|\rho|\geq \langle t\rangle^2$, we have $|\rho/\sigma^2|\leq 5$. Then\begin{align*}
     &\dfrac{|\sigma|+|\rho|^{1/2}}{\langle t\rangle}\dfrac{|\rho/\sigma|+\langle t\rangle}{\langle t\rangle|\sigma|}\dfrac{\langle\rho/\sigma\rangle}{\langle t-\rho/\sigma\rangle^2}\lesssim\dfrac{|\sigma|+|\rho|^{1/2}}{\langle t\rangle}\dfrac{\langle t\rangle}{\langle t\rangle|\sigma|}\dfrac{\langle t\rangle}{\langle t-\rho/\sigma\rangle^2}\\ &=\dfrac{|\sigma|+|\rho|^{1/2}}{|\sigma|}\dfrac{1}{\langle t-\rho/\sigma\rangle^2}\lesssim\dfrac{1}{\langle t-\rho/\sigma\rangle^2}\leq\dfrac{1}{\langle t-\rho/\sigma\rangle}.
    \end{align*}
    Then by \eqref{eq:Ak2}, we have
    \begin{align*}
     &\dfrac{|\sigma|+|\rho|^{1/2}}{\langle t\rangle}\dfrac{|\rho/\sigma|+\langle t\rangle}{\langle t\rangle|\sigma|}\dfrac{\langle\rho/\sigma\rangle}{\langle t-\rho/\sigma\rangle^2}( |lA_k(t,\xi)^2|+|\eta A_k(t,\xi)^2|)\\
   &\lesssim_{\delta}\dfrac{1}{\langle t-\rho/\sigma\rangle}{|(l,\eta)|}A_{k}(t,\xi)A_{l}(t,\eta)A_{\sigma}(t,\rho)\mathrm{e}^{-(\lambda(t)/20)\langle l,\eta\rangle^{1/2}}.
    \end{align*}Then the bound \eqref{eq:A*R0R1bounds-R1-1} follows from \eqref{est:8.40} and \eqref{est:7.4JiaHao-4}.
\if0
   Now we come to prove \eqref{eq:A*R0R1bounds-R1-1}. Similarly, we have
    \begin{align*}
      |\eta A_{k}(t,\xi)|& \lesssim_{\delta} |\eta|\mathrm{e}^{\lambda(t)\langle l,\eta\rangle^{1/2}} \left(\dfrac{\mathrm{e}^{\sqrt{\delta}\langle \eta\rangle^{1/2}}}{b_{l}(t,\eta)}+\mathrm{e}^{\sqrt{\delta}|l|^{1/2}}\right) \mathrm{e}^{(1-\lambda(t)/20)\langle\sigma,\rho\rangle^{1/2}}.
   \end{align*}
 By using the fact  $A_{\sigma}(t,\rho)\geq \mathrm{e}^{\lambda(t)\langle \sigma,\rho\rangle^{1/2}}$ and
   \begin{align*}
      |\eta|\lesssim & \langle t\rangle |\eta|^{1/2}\left(1+\dfrac{l^2}{\langle t\rangle^2}+\dfrac{\eta^2}{\langle t\rangle^4+\langle t\rangle^{\sigma_1}|\eta|}\right)^{\f12},
   \end{align*}
   we  infer that
    \begin{align*}
        |\eta A_{k}^2(t,\xi)|\lesssim_{\delta}&\langle t\rangle \langle \xi\rangle^{1/4}\langle \eta\rangle^{1/4} A_{k}(t,\xi)A^{*}_{l}(t,\eta)A_{\sigma}(t,\rho) \mathrm{e}^{-(\lambda(t)/21)\langle\sigma,\rho\rangle^{1/2}}.
    \end{align*}
   Thanks to (by considering $|t-\rho/\sigma|\geq|\rho/(10\sigma)|$ and $|t-\rho/\sigma|\leq |\rho/(10\sigma)|$)
   \begin{align}\label{eq:A*R0R1bounds-R1-rho1}
      &  \dfrac{(\langle t\rangle +|\rho/\sigma|)\langle \rho\rangle/|\sigma|}{\langle t\rangle^2(\langle t\rangle+|\rho|)\langle t-\rho/\sigma\rangle}\lesssim \dfrac{\langle \sigma,\rho \rangle^2}{\langle t\rangle^3},
   \end{align}
   it holds that
   \begin{align}\label{eq:A*R0R1bounds-R1-prove3}
      &\dfrac{(\langle t\rangle +|\rho/\sigma|)\langle \rho\rangle/|\sigma|}{\langle t\rangle^2(\langle t\rangle+|\rho|)\langle t-\rho/\sigma\rangle}|\eta A_{k}^2(t,\xi)|\\
      \lesssim_{\delta}&\dfrac{\langle \xi\rangle^{1/4}}{\langle t\rangle}\dfrac{\langle \eta\rangle^{1/4}}{\langle t\rangle} A_{k}(t,\xi)A^{*}_{l}(t,\eta)A_{\sigma}(t,\rho) \mathrm{e}^{-(\lambda(t)/22)\langle\sigma,\rho\rangle^{1/2}}.\nonumber
   \end{align}
Then the bound \eqref{eq:A*R0R1bounds-R1-1} follows from \eqref{eq:A*R0R1bounds-R1-prove3}, \eqref{est:7.4JiaHao-2} and \eqref{est:par-t-A*}.
\smallskip

\noindent\textbf{Case 2.} $(k,\xi)\in I_{t}^{**}$ and $(l,\eta)\notin I_t^{**}$. By \eqref{est:bl/bk-2}, we have
    \begin{align*}
       |lA_{k}(t,\xi)|&\lesssim_{\delta} |l|\dfrac{|\xi|/k^2}{\langle t-\xi/k\rangle}\mathrm{e}^{\lambda(t)\langle l,\eta\rangle^{1/2}} \left(\dfrac{\mathrm{e}^{\sqrt{\delta}\langle \eta\rangle^{1/2}}}{b_{l}(t,\eta)}+\mathrm{e}^{\sqrt{\delta}|l|^{1/2}}\right) \mathrm{e}^{(1-\lambda(t)/20)\langle\sigma.\rho\rangle^{1/2}},
    \end{align*}
    Thanks to $|l|\dfrac{|\xi|/k^2}{\langle t-\xi/k\rangle} \lesssim |\xi/k|(1+|\sigma/k|)$, $t\approx|\xi/k|$, we get
    \begin{align*}
       |lA_{k}^2(t,\xi)|\lesssim_{\delta}&\langle t\rangle^{1/2} \langle\xi \rangle^{1/2} A_{k}(t,\xi)A_{l}(t,\eta)A_{\sigma}(t,\rho) \mathrm{e}^{-(\lambda(t)/21)\langle\sigma,\rho\rangle^{1/2}}\\
       \lesssim_{\delta}&\langle t\rangle^{1/2} \langle\xi \rangle^{1/2} A_{k}(t,\xi)A_{l}^*(t,\eta)A_{\sigma}(t,\rho) \mathrm{e}^{-(\lambda(t)/21)\langle\sigma,\rho\rangle^{1/2}},
    \end{align*}
  and applying $\dfrac{\langle \rho/\sigma\rangle^2}{\langle t\rangle \big(\langle t\rangle+|\rho|\big)\langle t-\rho/\sigma\rangle} \lesssim \dfrac{\langle\rho\rangle^{2}}{\langle t\rangle^{2}}$, we have
        \begin{align}\label{eq:A*R0R1bounds-R1-prove2}
       & \dfrac{\langle \rho/\sigma\rangle^2|lA_{k}^2(t,\xi)|}{\langle t\rangle \big(\langle t\rangle+|\rho|\big)\langle t-\rho/\sigma\rangle} \lesssim_{\delta} \dfrac{\langle\xi\rangle^{1/4}}{\langle t\rangle^{3/4}} \dfrac{\langle \eta\rangle^{1/4}}{\langle t\rangle^{3/4}} A_{k}(t,\xi)A^{*}_{l}(t,\eta)A_{\sigma}(t,\rho) \mathrm{e}^{-(\lambda(t)/22)\langle\sigma,\rho\rangle^{1/2}}.
    \end{align}
 Then the bound \eqref{eq:A*R0R1bounds-R1} follows from \eqref{eq:A*R0R1bounds-R1-prove2}, \eqref{est:7.4JiaHao-2} and \eqref{est:par-t-A*} as before.

     Now we come to prove \eqref{eq:A*R0R1bounds-R1-1}. Similarly, we get by \eqref{est:bl/bk-2} that
     \begin{align}\label{eq:A*R0R1bounds-R1-prove4}
      |\eta A_{k}(t,\xi)|& \lesssim_{\delta} |\eta|\dfrac{|\xi|/k^2}{\langle t-\xi/k\rangle}\mathrm{e}^{\lambda(t)\langle l,\eta\rangle^{1/2}} \left(\dfrac{\mathrm{e}^{\sqrt{\delta}\langle \eta\rangle^{1/2}}}{b_{l}(t,\eta)}+\mathrm{e}^{\sqrt{\delta}|l|^{1/2}}\right) \mathrm{e}^{(1-\lambda(t)/20)\langle\sigma,\rho\rangle^{1/2}}.
   \end{align}

    Using \eqref{eq:A*R0R1bounds-R1-rho1} and $t\approx |\xi/k|$, we have
    \begin{align*}
       & \dfrac{(\langle t\rangle +|\rho/\sigma|)\langle \rho\rangle/|\sigma|}{\langle t\rangle^2(\langle t\rangle+|\rho|)\langle t-\rho/\sigma\rangle}|\eta| \dfrac{|\xi|/k^2}{\langle t-\xi/k\rangle}
       \lesssim \dfrac{\langle \sigma,\rho\rangle^2}{\langle t\rangle^3} \times  (|\xi|+|\rho|) \times \dfrac{|\xi|/k^2}{\langle t-\xi/k\rangle}\\
       &\lesssim \dfrac{\langle \sigma,\rho\rangle^3}{\langle t\rangle}\dfrac{1}{\langle t-\xi/k\rangle}\lesssim \dfrac{\langle\sigma,\rho\rangle^3}{\langle t-\xi/k\rangle}.
    \end{align*}
     This along with \eqref{eq:A*R0R1bounds-R1-prove4} and the fact $A_{\sigma }(t,\rho)\geq \mathrm{e}^{\lambda(t)\langle \sigma,\rho\rangle^{1/2}}$ gives
   \begin{align*}
      \dfrac{(\langle t\rangle +|\rho/\sigma|)\langle \rho\rangle/|\sigma|}{\langle t\rangle^2(\langle t\rangle+|\rho|)\langle t-\rho/\sigma\rangle}|\eta A_{k}^2(t,\xi)|\lesssim_{\delta}&\dfrac{1}{\langle t-\xi/k\rangle} A_{k}(t,\xi)A_{l}(t,\eta)A_{\sigma}(t,\rho) \mathrm{e}^{-(\lambda(t)/22)\langle\sigma,\rho\rangle^{1/2}}.
   \end{align*}
   Then by \eqref{est:7.4JiaHao-2}, \eqref{est:7.4JiaHao-4} and \eqref{eq:(7.8)JiaHao} (if $(k,\xi)\in I_{t}^{**}$, then $t\in I_{k,\xi}$, \eqref{eq:(7.8)JiaHao} applies), we get
    \begin{align*}
       &\dfrac{(\langle t\rangle +|\rho/\sigma|)\langle \rho\rangle/|\sigma|}{\langle t\rangle^2(\langle t\rangle+|\rho|)\langle t-\rho/\sigma\rangle}|\eta A_{k}^2(t,\xi)|\lesssim_{\delta} \sqrt{|(A_k\dot{A}_{k})(t,\xi)|} \sqrt{|(A_l\dot{A}_{l})(t,\eta)|}A_{\sigma}(t,\rho)
      \mathrm{e}^{-(\delta_0/200)\langle\sigma,\rho\rangle^{\f12}}.
    \end{align*}
    By \eqref{est:par-t-A*-1}, we have $\sqrt{|(A^*_l\dot{A}^*_{l})(t,\eta)|} \approx_{\delta}\sqrt{|(A_l\dot{A}_{l})(t,\eta)|}\left(1+\frac {l^2}{\langle t\rangle^2}+\frac{|\eta|^2}{\langle t\rangle^4+\langle t\rangle^{\sigma_1}|\eta|}\right)^{\f12}$.
   Thus, the desired bound \eqref{eq:A*R0R1bounds-R1-1} follows.\fi
\end{proof}

The following lemma is an analogue of Lemma \ref{lem:8.5JiaHao}.

\begin{lemma}\label{lem:A*R0R1R2-2}
Assume $t\ge 1$ and  let  $(\sigma,\rho)=(k-l,\xi-\eta)$. Suppose $\sigma\neq 0$. Then it holds that
  \begin{itemize}
    \item[(i)]  If $\big((k,\xi),(l,\eta)\big)\in R_0\bigcup R_1$, then
    \begin{align}\label{eq:A*R0R1bounds-1}
    \begin{aligned}
      \dfrac{|\rho/\sigma|^2+\langle t\rangle^2}{|\sigma|\langle t\rangle^2}\dfrac{1}{\langle t-\rho/\sigma\rangle^2} &|\eta A_k^*(t,\xi)^2-\xi A_l^{*}(t,\eta)^2 |\\
      &\lesssim_{\delta}\sqrt{|(A_k^{*}\dot{A}_{k}^{*})(t,\xi)|} \sqrt{|(A_l^{*}\dot{A}_{l}^{*})(t,\eta)|}A_{\sigma}(t,\rho)
      \mathrm{e}^{-(\delta_0/201)\langle\sigma,\rho\rangle^{1/2}}.
   \end{aligned}
    \end{align}
    \item[(ii)]
    If $\big((k,\xi),(l,\eta)\big)\in R_2$, then
    \begin{align}\label{eq:A*R2bounds-1}
    \begin{aligned}
      \dfrac{|\rho/\sigma|^2+\langle t\rangle^2}{|\sigma|\langle t\rangle^2}\dfrac{1}{\langle t-\rho/\sigma\rangle^2} &\big(|\eta A_k^*(t,\xi)^2|+|\xi A_l^{*}(t,\eta)^2 |\big)\\
      &\lesssim_{\delta}\sqrt{|(A_k^{*}\dot{A}_{k}^{*})(t,\xi)|} \sqrt{|(A_{\sigma}\dot{A}_{\sigma})(t,\rho)|}A_{l}^*(t,\eta)
      \mathrm{e}^{-(\delta_0/201)\langle l,\eta\rangle^{1/2}}.
    \end{aligned}
    \end{align}
  \end{itemize}
\end{lemma}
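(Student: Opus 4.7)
The plan is to mimic the three-step strategy used in the proof of Lemma \ref{lem:A*R0R1R2-1}, replacing the role of Lemma \ref{lem:8.4JiaHao} by its ``velocity'' counterpart Lemma \ref{lem:8.5JiaHao}, and exploiting the two auxiliary bounds \eqref{eq:Tibound-0M2} and \eqref{eq:A*R0R1bounds-R1-1} that were established there precisely with this lemma in mind. Throughout, the key structural inputs are \eqref{est:par-t-A*-1}, which lets us upgrade $A_k$ to $A_k^*$ at the price of a factor $(1+(k^2+|\xi|)/\langle t\rangle^2)^{1/2}$, and \eqref{est:weight-more-1}, which says that this upgrade factor can be distributed across the three frequencies $(k,\xi),(l,\eta),(\sigma,\rho)$ modulo a polynomial loss in $\langle\sigma,\rho\rangle$ that is absorbed by the exponential decay.

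\textbf{Step 1 ($R_0$).} Here $\langle k,\xi\rangle+\langle l,\eta\rangle\lesssim \langle\sigma,\rho\rangle$. Start from the unstarred bound \eqref{est:8.5JiaHao-1} of Lemma \ref{lem:8.5JiaHao}(i) for $|\eta A_k(t,\xi)^2|+|\xi A_l(t,\eta)^2|$. Multiplying by the prefactor on the left-hand side and using $A_j\le A_j^*$, $|\dot A_j|\lesssim|\dot A_j^*|$ via \eqref{est:par-t-A*-1}, we pick up an extra factor $1+(k^2+|\xi|)/\langle t\rangle^2+(l^2+|\eta|)/\langle t\rangle^2\lesssim \langle k,\xi\rangle^2+\langle l,\eta\rangle^2\lesssim\langle\sigma,\rho\rangle^4$, which is absorbed by reducing the exponent in $e^{-(\delta_0/200)\langle\sigma,\rho\rangle^{1/2}}$ slightly to $e^{-(\delta_0/201)\langle\sigma,\rho\rangle^{1/2}}$.

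\textbf{Step 2 ($R_1$).} Decompose
\begin{align*}
\eta A_k^*(t,\xi)^2-\xi A_l^*(t,\eta)^2 = \mathcal{N}_1+\mathcal{N}_2,
\end{align*}
with
\begin{align*}
\mathcal{N}_1 &= \bigl[\eta A_k(t,\xi)^2-\xi A_l(t,\eta)^2\bigr]\Bigl(1+\tfrac{l^2+|\eta|}{\langle t\rangle^2}\Bigr),\\
\mathcal{N}_2 &= \eta A_k(t,\xi)^2\Bigl[\tfrac{k^2-l^2}{\langle t\rangle^2}+\tfrac{|\xi|-|\eta|}{\langle t\rangle^2}\Bigr].
\end{align*}
For $\mathcal{N}_1$, apply Lemma \ref{lem:8.5JiaHao}(i) to the bracket, and bound the scalar factor by $(1+(k^2+|\xi|)/\langle t\rangle^2)^{1/2}(1+(l^2+|\eta|)/\langle t\rangle^2)^{1/2}\langle\sigma,\rho\rangle$ via \eqref{est:weight-more-1}; then \eqref{est:par-t-A*-1} converts both $A_k,A_l$ into their starred versions. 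For $\mathcal{N}_2$, observe that this is precisely the quantity $\mathcal{M}_2'$ introduced in the proof of Lemma \ref{lem:A*R0R1R2-1}, and \eqref{eq:Tibound-0M2} already gives
\begin{align*}
\langle t\rangle^{-2}|\mathcal{N}_2|\lesssim_\delta \sqrt{|(A_k^*\dot A_k^*)(t,\xi)|}\sqrt{|(A_l^*\dot A_l^*)(t,\eta)|}A_\sigma(t,\rho)e^{-(\delta_0/200)\langle\sigma,\rho\rangle^{1/2}}.
\end{align*}
The remaining prefactor is $\frac{|\rho/\sigma|^2+\langle t\rangle^2}{|\sigma|\langle t\rangle^2\langle t-\rho/\sigma\rangle^2}\cdot\langle t\rangle^2$, which, using $|\sigma|\langle\rho/\sigma\rangle^2\lesssim (|\rho/\sigma|+\langle t\rangle)(\langle t\rangle+|(\sigma,\rho)|)$ as in \eqref{est:N5bound-prove-1}, is controlled by $\langle\sigma,\rho\rangle^C$ times $\langle t\rangle^{-2}$-safe factors, and is absorbed by slightly relaxing the exponential. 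Combining gives \eqref{eq:A*R0R1bounds-1}.

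\textbf{Step 3 ($R_2$).} This is the main obstacle. The $\xi A_l^*(t,\eta)^2$ piece is routine: $\langle l,\eta\rangle$ is small, so $1+(l^2+|\eta|)/\langle t\rangle^2\lesssim\langle l,\eta\rangle\cdot(1+(k^2+|\xi|)/\langle t\rangle^2)^{1/2}(1+(l^2+|\eta|)/\langle t\rangle^2)^{1/2}$, and after invoking Lemma \ref{lem:8.5JiaHao}(ii) on $\xi A_l(t,\eta)^2$ and upgrading via \eqref{est:par-t-A*-1}, the extra $\langle l,\eta\rangle$ is absorbed by the exponential decay $e^{-(\delta_0/200)\langle l,\eta\rangle^{1/2}}$. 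For $\eta A_k^*(t,\xi)^2$, split into the case $k^2+|\xi|\le\langle t\rangle^2$, where the upgrade factor is bounded and Lemma \ref{lem:8.5JiaHao}(ii) together with \eqref{est:par-t-A*-1} closes the estimate, and the case $k^2+|\xi|\ge\langle t\rangle^2$, where we further split according to whether $\sigma^2+|\rho|\le\langle t\rangle^2$ or $\ge\langle t\rangle^2$. In the first sub-case the upgrade factor on $(k,\xi)$ is absorbed by the one on $(l,\eta)$ through \eqref{est:weight-more-1} with only a $\langle\sigma,\rho\rangle$ loss, and Lemma \ref{lem:8.5JiaHao}(ii) applies. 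In the second sub-case, \eqref{est:weight-more-1} forces us to distribute the upgrade across all three frequencies, and precisely here we invoke \eqref{eq:A*R0R1bounds-R1-1} of Lemma \ref{lem:A*R0R1R2-1-R1}, which was established exactly for this configuration and already packages $\eta A_k(t,\xi)^2$ against $\sqrt{|\dot A_\sigma A_\sigma|}$ with the required exponential decay in $\langle l,\eta\rangle$. Reassembling these three sub-cases yields \eqref{eq:A*R2bounds-1}.

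The technical heart of the argument is Step 3 in the second sub-case, since there all three frequencies contribute non-trivially to the upgrade from $A$ to $A^*$ and only the sharp bound \eqref{eq:A*R0R1bounds-R1-1} (which involves the delicate prefactor $\frac{|\sigma|+|\rho|^{1/2}}{\langle t\rangle}\frac{|\rho/\sigma|+\langle t\rangle}{\langle t\rangle|\sigma|}\frac{\langle\rho/\sigma\rangle}{\langle t-\rho/\sigma\rangle^2}$ matching exactly what \eqref{eq:A*R2bounds-1} demands) makes the estimate close; everything else is bookkeeping around \eqref{est:par-t-A*-1} and \eqref{est:weight-more-1}.
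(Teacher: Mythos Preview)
Your proposal is correct and follows essentially the same approach as the paper: the same three-step split into $R_0$, $R_1$, $R_2$, the same decomposition $\mathcal{N}_1+\mathcal{N}_2$ in Step~2 (the paper calls these $\mathcal{M}_1',\mathcal{M}_2'$), and in Step~3 the same case splitting on $k^2+|\xi|\lessgtr\langle t\rangle^2$ and then $\sigma^2+|\rho|\lessgtr\langle t\rangle^2$, with the crucial sub-case handled by \eqref{eq:A*R0R1bounds-R1-1}. Two minor points: for $\mathcal{N}_2$ the paper simply uses the cleaner bound $\tfrac{|\rho/\sigma|^2+\langle t\rangle^2}{|\sigma|\langle t\rangle^2\langle t-\rho/\sigma\rangle^2}\lesssim_\delta \langle t\rangle^{-2}e^{\delta\langle\sigma,\rho\rangle^{1/2}}$ rather than invoking \eqref{est:N5bound-prove-1}, and in the first sub-case of Step~3 the extra factor $\langle(|\sigma|+|\rho|^{1/2})/\langle t\rangle\rangle$ is already $\lesssim 1$, so \eqref{eq:Ak-8.5-1} applies directly without any $\langle\sigma,\rho\rangle$ loss.
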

\begin{proof}

  \textbf{Step 1.} If $\big((k,\xi),(l,\eta)\big)\in R_0$, the proof is exactly as in \textbf{Step 1} of the proof of \eqref{eq:A*R0R1bounds}. We omit the details.\smallskip

\noindent\textbf{Step 2.} Assume that $\big((k,\xi),(l,\eta)\big)\in R_1$ and we now prove \eqref{eq:A*R0R1bounds-1}. \smallskip

We write
  \begin{align*}
      &\eta A_k^*(t,\xi)^2-\xi A_l^{*}(t,\eta)^2 =\mathcal{M}_1'+\mathcal{M}_2',
   \end{align*}
   where
   \begin{align*}
      & \mathcal{M}_1'=\left(\eta A_k^2(t,\xi)-\xi A_l^{2}(t,\eta)\right)\left(1 +\dfrac{l^2+|\eta|}{\langle \eta\rangle^2}\right),\\
      &\mathcal{M}_2'=\eta A^2_k(t,\xi)\left[\dfrac{k^2-l^2}{\langle t\rangle^2} +\dfrac{|\xi|-|\eta|}{\langle t\rangle^2}\right].
   \end{align*}
 It suffices to prove that  for $i=1,2$,
    \begin{align}\label{eq:A*R0R1bounds-M12'}
      & \dfrac{|\rho/\sigma|^2+\langle t\rangle^2}{|\sigma|\langle t\rangle^2}\dfrac{|\mathcal{M}_i'|}{\langle t-\rho/\sigma\rangle^2}\lesssim_{\delta} \sqrt{|(A_k^{*}\dot{A}_{k}^{*})(t,\xi)|} \sqrt{|(A_l^{*}\dot{A}_{l}^{*})(t,\eta)|}A_{\sigma}(t,\rho)
      \mathrm{e}^{-(\delta_0/201)\langle\sigma,\rho\rangle^{\f12}}.
   \end{align}
  By Lemma \ref{lem:8.5JiaHao}, we have
  \begin{align*}
     &\dfrac{|\rho/\sigma|^2+\langle t\rangle^2}{|\sigma|\langle t\rangle^2}\dfrac{1}{\langle t-\rho/\sigma\rangle^2} |\eta A_k(t,\xi)^2- \xi A_l(t,\eta)^2|\\
     &\lesssim_{\delta} \sqrt{|(A_k\dot{A}_{k})(t,\xi)|} \sqrt{|(A_l\dot{A}_{l})(t,\eta)|}A_{\sigma}(t,\rho)
      \mathrm{e}^{-(\delta_0/200)\langle\sigma,\rho\rangle^{1/2}}.
  \end{align*}
  Then by  \eqref{est:weight-more-1} and \eqref{est:par-t-A*-1} in Lemma \ref{lem:par-t-A*}, we have
  \begin{align*}
     &\dfrac{|\rho/\sigma|^2+\langle t\rangle^2}{|\sigma|\langle t\rangle^2}\dfrac{1}{\langle t-\rho/\sigma\rangle^2} |\mathcal{M}_1'|\\
     &\leq \dfrac{|\rho/\sigma|^2+\langle t\rangle^2}{|\sigma|\langle t\rangle^2}\dfrac{1}{\langle t-\rho/\sigma\rangle^2} \big|\eta A_k(t,\xi)^2- \xi A_l(t,\eta)^2 \big| \left(1+\dfrac{l^2+|\eta|}{\langle t\rangle^2}\right)\\
     &\lesssim_{\delta}\dfrac{|\rho/\sigma|^2+\langle t\rangle^2}{|\sigma|\langle t\rangle^2}\dfrac{1}{\langle t-\rho/\sigma\rangle^2} \big|\eta A_k(t,\xi)^2-\xi A_l(t,\eta)^2\big| \left(1+\dfrac{k^2+|\xi|}{\langle t\rangle^2}\right)^{\f12}\left(1+\dfrac{l^2+|\eta|}{\langle t\rangle^2}\right)^{\f12}\langle \sigma, \rho\rangle\\
     &\lesssim_{\delta} \sqrt{|(A_k\dot{A}_{k})(t,\xi)|} \left(1+\dfrac{k^2+|\xi|}{\langle t\rangle^2}\right)^{\f12} \sqrt{|(A_l\dot{A}_{l})(t,\eta)|}A_{\sigma}(t,\rho) \left(1+\dfrac{l^2+|\eta|}{\langle t\rangle^2}\right)^{\f12}
      \\
     &\quad\times\langle \sigma, \rho\rangle\mathrm{e}^{-(\delta_0/200)\langle\sigma,\rho\rangle^{1/2}}\\
      &\lesssim_{\delta}\sqrt{|(A_k^{*}\dot{A}_{k}^{*})(t,\xi)|} \sqrt{|(A_l^{*}\dot{A}_{l}^{*})(t,\eta)|}A_{\sigma}(t,\rho)
      \mathrm{e}^{-(2\delta_0/401)\langle\sigma,\rho\rangle^{1/2}}.
  \end{align*}
This proves \eqref{eq:A*R0R1bounds-M12'} for $i=1$.

  The inequality \eqref{eq:A*R0R1bounds-M12'} for $i=2$ can be deduced from \eqref{eq:Tibound-0M2} and the fact
   \begin{align*}
      & \dfrac{|\rho/\sigma|^2+\langle t\rangle^2}{|\sigma|\langle t\rangle^2}\dfrac{1}{\langle t-\rho/\sigma\rangle^2} \lesssim_{\delta} \dfrac{1}{\langle t\rangle^2}\mathrm{e}^{\delta\langle \sigma,\rho\rangle^{1/2}}.
   \end{align*}

\noindent\textbf{Step 3.} Assume that $\big((k,\xi),(l,\eta)\big)\in R_2$ and  we  prove \eqref{eq:A*R2bounds-1}.\smallskip

 By Lemma \ref{lem:8.5JiaHao}, we have
  \begin{align}\label{eq:Ak-8.5-1}
     \dfrac{|\rho/\sigma|^2+\langle t\rangle^2}{|\sigma|\langle t\rangle^2}\dfrac{1}{\langle t-\rho/\sigma\rangle^2}&\big(|\eta A_k(t,\xi)^2|+|\xi A_l(t,\eta)^2|\big)\\
      &\lesssim_{\delta}\sqrt{|(A_k\dot{A}_{k})(t,\xi)|} \sqrt{|(A_{\sigma}\dot{A}_{\sigma})(t,\rho)|}A_{l}(t,\eta)
      \mathrm{e}^{-(\delta_0/200)\langle l,\eta\rangle^{1/2}}.\nonumber
  \end{align}
  It is obvious that
  \begin{align*}
     &1+\dfrac{l^2+|\eta|}{\langle t\rangle^2}\lesssim\left( 1+\dfrac{l^2+|\eta|}{\langle t\rangle^2}\right)^{\f12}\langle l,\eta\rangle\leq\left( 1+\dfrac{l^2+|\eta|}{\langle t\rangle^2}\right)^{\f12}\left(1+\dfrac{k^2+|\xi|}{\langle t\rangle^2}\right)^{\f12}\langle l,\eta\rangle.
  \end{align*}
  This along with \eqref{est:par-t-A*-1}, \eqref{eq:Ak-8.5-1} and the definition of $A_{l}^*(t,\eta)$ gives
  \begin{align}\label{eq:xiA*R2bounds-A*l}
   \dfrac{|\rho/\sigma|^2+\langle t\rangle^2}{|\sigma|\langle t\rangle^2}\dfrac{|\xi A^*_l(t,\eta)^2|}{\langle t-\rho/\sigma\rangle^2} &\lesssim_{\delta}\sqrt{|(A^*_k\dot{A}^*_{k})(t,\xi)|} \sqrt{|(A_{\sigma}\dot{A}_{\sigma})(t,\rho)|}A^*_{l}(t,\eta)
      \mathrm{e}^{-(\delta_0/201)\langle l,\eta\rangle^{\f12}}.
  \end{align}

Therefore, for \eqref{eq:A*R2bounds-1}, it remains to prove that
  \begin{align}\label{eq:xiA*R2bounds-prove}
   &\dfrac{|\rho/\sigma|^2+\langle t\rangle^2}{|\sigma|\langle t\rangle^2}\dfrac{|\eta A^*_k(t,\xi)^2|}{\langle t-\rho/\sigma\rangle^2}
   \lesssim_{\delta}\sqrt{|(A^*_k\dot{A}^*_{k})(t,\xi)|} \sqrt{|(A_{\sigma}\dot{A}_{\sigma})(t,\rho)|}A^*_{l}(t,\eta)
      \mathrm{e}^{-(\delta_0/201)\langle l,\eta\rangle^{\f12}}.
  \end{align}
If $k^2+|\xi|\leq\langle t\rangle^2$, then \eqref{eq:xiA*R2bounds-prove} follows from  \eqref{eq:Ak-8.5-1}, \eqref{eq:<1} and \eqref{est:par-t-A*-1}. Now we assume $k^2+|\xi|\geq\langle t\rangle^2$.
  Thanks to \eqref{est:par-t-A*-1} and \eqref{eq:A*R2bounds-A*k}, we only need to prove that
  \begin{align}\label{eq:xiA*R2bounds-prove-11}
  \begin{aligned}
   &\bigg\langle \dfrac{|\sigma|+|\rho|^{1/2}}{\langle t\rangle}\bigg\rangle\dfrac{|\rho/\sigma|^2+\langle t\rangle^2}{|\sigma|\langle t\rangle^2}\dfrac{|\eta A_k(t,\xi)^2|}{\langle t-\rho/\sigma\rangle^2}\\
   &\lesssim_{\delta}\sqrt{|(A_k\dot{A}_{k})(t,\xi)|} \sqrt{|(A_{\sigma}\dot{A}_{\sigma})(t,\rho)|}A_{l}(t,\eta)
      \mathrm{e}^{-(\delta_0/200)\langle l,\eta\rangle^{1/2}}.
  \end{aligned}
  \end{align}
  If $\rho^2+|\sigma|\leq\langle t\rangle^2$, then \eqref{eq:xiA*R2bounds-prove-11} follows from  \eqref{eq:Ak-8.5-1}. If $\rho^2+|\sigma|\geq\langle t\rangle^2$, then \eqref{eq:xiA*R2bounds-prove-11} follows from $k^2+|\xi|\geq\langle t\rangle^2$, \eqref{eq:A*R0R1bounds-R1-1} and
  \beno
  \dfrac{|\rho/\sigma|^2+\langle t\rangle^2}{|\sigma|\langle t\rangle^2}\leq \dfrac{|\rho/\sigma|+\langle t\rangle}{|\sigma|\langle t\rangle}\dfrac{|\rho/\sigma|+\langle t\rangle}{\langle t\rangle}\lesssim \dfrac{|\rho/\sigma|+\langle t\rangle}{|\sigma|\langle t\rangle}\langle\rho/\sigma\rangle.
 \eeno
\if0
  \textbf{Case 1.} Assume that
  \begin{align*}
     &|t-\rho/\sigma|\geq \dfrac{|\rho|}{10|\sigma|}.
  \end{align*}
  Then $\langle t-\rho/\sigma\rangle\approx (\langle t\rangle+|\rho/\sigma|)$ and
  \begin{align}\label{eq:xirho/sigma-Case1}
     & \bigg\langle \dfrac{\sigma}{\langle t\rangle},\dfrac{|\rho|^{1/2}}{\langle t\rangle},\dfrac{|\rho|}{\langle t\rangle^2+\langle t\rangle^{\sigma_0}|\xi|^{1/2}}\bigg\rangle\dfrac{|\rho/\sigma|^2+\langle t\rangle^2}{|\sigma|\langle t\rangle^2\langle t-\rho/\sigma\rangle^2}\nonumber\\
     &\lesssim_{\delta}\dfrac{1}{|\sigma|\langle t\rangle^2}+\dfrac{1}{\langle t\rangle^2}+ \dfrac{\langle\rho\rangle^{1/2}}{|\sigma|\langle t\rangle^{3}} +\dfrac{\langle \rho\rangle}{|\sigma|\big(\langle t\rangle^4+\langle t\rangle^{2+\sigma_0}|\xi|^{1/2}\big)}\lesssim_{\delta} \langle \eta\rangle^{1/2}\dfrac{\langle \rho\rangle^{1/2}}{\langle t\rangle^{2}}.
  \end{align}

  If $t\notin I_{k,\xi}$ or if $t\in I_{k,\xi}\cap I_{\sigma,\rho}$ then \eqref{eq:xiA*R2bounds-prove-11} follows from \eqref{est:7.3JiaHao-2}, \eqref{est:7.4JiaHao-2} and \eqref{eq:xirho/sigma-Case1}. On the other hand, if $t\in I_{k,\xi}$ and $t\notin I_{\sigma,\rho}$ then
  \begin{align}\label{eq:xiA*R0R1R2-case1}
     &t\approx |\xi/k|,\quad |\xi|\geq \delta^{-10},\quad 1\leq k^2\leq \delta^3|\xi|,\quad |t-\xi/k|\lesssim |\xi/k^2|.
  \end{align}
  Then we get
  \begin{align}\label{eq:xiA*R0R1R2-case1-0}
      &\dfrac{|\xi|/k^2}{|\sigma|\langle t\rangle^2}\bigg\langle \dfrac{\sigma}{\langle t\rangle},\dfrac{\rho}{\langle t\rangle^2}\bigg\rangle \lesssim \dfrac{1}{|k\sigma|} \bigg\langle \dfrac{\sigma}{\langle t\rangle},\dfrac{\rho}{\langle t\rangle^2}\bigg\rangle\lesssim \dfrac{\langle l,\eta\rangle}{\langle t\rangle}.
   \end{align}
  Using \eqref{est:7.3JiaHao-3} and \eqref{eq:xiA*R0R1R2-case1-0}  and  $A_{l}(t,\eta)\geq \mathrm{e}^{\lambda(t)\langle l,\eta\rangle^{1/2}}$, we infer that
  \begin{align}\label{eq:xiA*R2bounds-prove-2}
     \bigg\langle \dfrac{\sigma}{\langle t\rangle},\dfrac{\rho}{\langle t\rangle^2}&\bigg\rangle\dfrac{|\rho/\sigma|^2+\langle t\rangle^2}{|\sigma|\langle t\rangle^2\langle t-\rho/\sigma\rangle^2} |\eta A_k^2(t,\xi)|\nonumber\\
     &\lesssim_{\delta} \dfrac{1}{|\sigma|\langle t\rangle^2}\dfrac{|\xi|/k^2}{\langle t-\xi/k\rangle}\bigg\langle \dfrac{\sigma}{\langle t\rangle},\dfrac{\rho}{\langle t\rangle^2}\bigg\rangle
     A_{k}(t,\xi)A_{\sigma}(t,\rho) |\eta|\mathrm{e}^{0.9\lambda(t)\langle l,\eta\rangle^{1/2}}\nonumber\\
     &\lesssim_{\delta} \dfrac{1}{\langle t\rangle\langle t-\xi/k\rangle}
     A_{k}(t,\xi)A_{\sigma}(t,\rho) \langle l,\eta\rangle^2\mathrm{e}^{0.9\lambda(t)\langle l,\eta\rangle^{1/2}}\nonumber\\
     &\lesssim_{\delta} \dfrac{1}{\langle t-\xi/k\rangle}
     A_{k}(t,\xi)A_{\sigma}(t,\rho) A_{l}(t,\eta)\mathrm{e}^{-(\lambda(t)/20)\langle l,\eta\rangle^{1/2}}.
  \end{align}
  Then the bound \eqref{eq:xiA*R2bounds-prove-1} follows from \eqref{eq:xiA*R2bounds-prove-2}, \eqref{est:7.4JiaHao-2} and \eqref{est:7.4JiaHao-4} and $\dfrac{\partial_tw_k(t,\xi)}{w_k(t,\xi)}\approx_{\delta} \langle t-\xi/k\rangle^{-1}$ due to \eqref{eq:(7.8)JiaHao}.

  \textbf{Case 2.} Assume that
  \begin{align*}
     & |t-\rho/\sigma|\leq \dfrac{|\rho|}{10|\sigma|}.
  \end{align*}
  Then we have $t\approx |\rho/\sigma|$ and
  \begin{align}\label{eq:xiA*R2bounds-prove-case2-1}
     &  \bigg\langle \dfrac{\sigma}{\langle t\rangle},\dfrac{\rho}{\langle t\rangle^2}\bigg\rangle\dfrac{|\rho/\sigma|^2+\langle t\rangle^2}{|\sigma|\langle t\rangle^2\langle t-\rho/\sigma\rangle^2} \\
     &\lesssim \bigg(1+ \dfrac{|\sigma|}{\langle t\rangle}+ \dfrac{|\rho/t|}{\langle t\rangle} \bigg)\dfrac{1}{|\sigma|\langle t-\rho/\sigma\rangle^2}
     \lesssim \dfrac{1}{\langle t-\rho/\sigma\rangle^2} \lesssim \dfrac{1}{\langle t-\rho/\sigma\rangle} . \nonumber
  \end{align}
  If $t\notin I_{k,\xi}$ then we estimate, using \eqref{est:7.3JiaHao-2} and \eqref{eq:xiA*R2bounds-prove-case2-1},
  \begin{align*}
     \bigg\langle \dfrac{\sigma}{\langle t\rangle},\dfrac{\rho}{\langle t\rangle^2}\bigg\rangle\dfrac{|\rho/\sigma|^2+\langle t\rangle^2}{|\sigma|\langle t\rangle^2\langle t-\rho/\sigma\rangle^2}|\eta A_k^2(t,\xi)|  \lesssim_{\delta} \dfrac{1}{\langle t-\rho/\sigma\rangle} A_k(t,\xi)A_{\sigma }(t,\rho)A_{l}(t,\eta)\mathrm{e}^{-(\lambda(t)/20)\langle l,\eta\rangle^{\f12}}.
  \end{align*}
  Then the bound \eqref{eq:xiA*R2bounds-prove-1} then follows from \eqref{est:7.4JiaHao-2},  \eqref{est:7.4JiaHao-4} and \eqref{eq:(7.8)JiaHao}.

  On the other hand, if $t\in I_{k,\xi}$ and $|t-\rho/\sigma|\geq |\rho/(10\sigma^2)|$, then we estimate, using \eqref{est:7.3JiaHao-2}, \eqref{est:7.3JiaHao-3} and \eqref{eq:xiA*R2bounds-prove-case2-1}
  \begin{align*}
     \bigg\langle \dfrac{\sigma}{\langle t\rangle},\dfrac{\rho}{\langle t\rangle^2}&\bigg\rangle\dfrac{|\rho/\sigma|^2+\langle t\rangle^2}{|\sigma|\langle t\rangle^2\langle t-\rho/\sigma\rangle^2} |\eta A_k^2(t,\xi)|\lesssim \dfrac{1}{\langle t-\rho/\sigma\rangle}|\eta A_{k}(t,\xi)^2|\\
     &\lesssim_{\delta} \dfrac{\sigma^2}{\langle \rho\rangle}\cdot\dfrac{|\xi|}{k^2}\dfrac{1}{\langle t-\xi/k\rangle}
     A_k(t,\xi)A_{\sigma }(t,\rho)A_{l}(t,\eta)\mathrm{e}^{-(\lambda(t)/20)\langle l,\eta\rangle^{1/2}}\\
     &\lesssim_{\delta} \dfrac{1}{\langle t-\xi/k\rangle}
     A_k(t,\xi)A_{\sigma }(t,\rho)A_{l}(t,\eta)\mathrm{e}^{-(\lambda(t)/30)\langle l,\eta\rangle^{1/2}},
  \end{align*}
  here we used $\dfrac{\sigma^2}{\langle \rho\rangle}\cdot\dfrac{|\xi|}{k^2}\lesssim \langle l\rangle^2\langle \eta\rangle$.
  The bound \eqref{eq:xiA*R2bounds-prove-1} then follows from \eqref{est:7.4JiaHao-2}, \eqref{est:7.4JiaHao-4} and \eqref{eq:(7.8)JiaHao}.

Finally, assume that $t\in I_{k,\xi}$ and $|t-\rho/\sigma|\leq |\rho/(10\sigma^2)|$. We estimate using \eqref{est:7.3JiaHao-2},
\begin{align*}
   \bigg\langle \dfrac{\sigma}{\langle t\rangle},\dfrac{\rho}{\langle t\rangle^2}&\bigg\rangle\dfrac{|\rho/\sigma|^2+\langle t\rangle^2}{|\sigma|\langle t\rangle^2\langle t-\rho/\sigma\rangle^2} |\eta A_k^2(t,\xi)|  \lesssim_{\delta} \dfrac{1}{\langle t-\rho/\sigma\rangle^2} A_k(t,\xi)A_{\sigma }(t,\rho)A_{l}(t,\eta)\mathrm{e}^{-(\lambda(t)/20)\langle l,\eta\rangle^{\f12}}.
\end{align*}
The bound \eqref{eq:xiA*R2bounds-prove-1} then follows as before.\fi
\end{proof}

The following lemma is an analogue of Lemma \ref{lem:8.6JiaHao}.

\begin{lemma}\label{lem:A*S0S1S2-1}
  Assume  $t\geq1$ and let $\rho=\xi-\eta$. It holds that
  \begin{itemize}
    \item[(i)]  If $\big((k,\xi),(l,\eta)\big)\in \Sigma_0\bigcup \Sigma_1$, then
    \begin{align}\label{eq:A*S0S1bounds-1}
      \dfrac{1}{\langle \rho\rangle\langle t\rangle+\langle \rho\rangle^{1/4}\langle t\rangle^{7/4}} &|\eta A_k^*(t,\xi)^2-\xi A_k^{*}(t,\eta)^2 |\\
      &\lesssim_{\delta}\sqrt{|(A_k^{*}\dot{A}_{k}^{*})(t,\xi)|} \sqrt{|(A_k^{*}\dot{A}_{k}^{*})(t,\eta)|}A_{NR}(t,\rho)
      \mathrm{e}^{-(\delta_0/201)\langle\rho\rangle^{1/2}}.\nonumber
    \end{align}
    \item[(ii)]
    If $\big((k,\xi),(l,\eta)\big)\in \Sigma_2$, then
    \begin{align}\label{eq:A*S2bounds-1}
      \dfrac{1}{\langle \rho\rangle\langle t\rangle+\langle \rho\rangle^{1/4}\langle t\rangle^{7/4}} &\big(|\eta A_k^*(t,\xi)^2|+|\xi A_k^{*}(t,\eta)^2 |\big)\\
      &\lesssim_{\delta}\sqrt{|(A_k^{*}\dot{A}_{k}^{*})(t,\xi)|} \sqrt{|(A_{NR}\dot{A}_{NR})(t,\rho)|}A_{k}^*(t,\eta)
      \mathrm{e}^{-(\delta_0/201)\langle k,\eta\rangle^{1/2}}.\nonumber
    \end{align}
  \end{itemize}
\end{lemma}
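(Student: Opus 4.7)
The plan is to mimic the strategy already used for Lemma~\ref{lem:A*R0R1R2-1} and Lemma~\ref{lem:A*R0R1R2-2}, namely to split the ``starred'' difference into a main piece that reduces to the already-established Lemma~\ref{lem:8.6JiaHao} plus an error coming from the extra factor $(1+(k^2+|\xi|)/\langle t\rangle^2)^{1/2}$ hidden in $A_k^*$. Since here $\sigma=k-l=0$, the decomposition simplifies considerably compared with the $R_j$ setting: we write
\begin{equation*}
\eta A_k^*(t,\xi)^2-\xi A_k^*(t,\eta)^2=\mathcal{M}_1+\mathcal{M}_2,
\end{equation*}
with
\begin{equation*}
\mathcal{M}_1=\bigl[\eta A_k(t,\xi)^2-\xi A_k(t,\eta)^2\bigr]\Bigl(1+\tfrac{k^2+|\eta|}{\langle t\rangle^2}\Bigr),\qquad \mathcal{M}_2=\eta A_k(t,\xi)^2\cdot\tfrac{|\xi|-|\eta|}{\langle t\rangle^2}.
\end{equation*}
Note that the term proportional to $k^2-l^2$ vanishes because $k=l$, which is the one technical simplification over the $R_j$ case.

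For part (i), the estimate of $\mathcal{M}_1$ proceeds by first applying Lemma~\ref{lem:8.6JiaHao}(i) to bound the bracket, and then converting the extra weight $(1+(k^2+|\eta|)/\langle t\rangle^2)$ by means of \eqref{est:weight-more-1} (which gives the useful factor $\langle\rho\rangle$) together with \eqref{est:par-t-A*-1}. This yields
\begin{equation*}
\tfrac{|\mathcal{M}_1|}{\langle\rho\rangle\langle t\rangle+\langle\rho\rangle^{1/4}\langle t\rangle^{7/4}}\lesssim_{\delta} \sqrt{|(A_k^*\dot{A}_k^*)(t,\xi)|}\sqrt{|(A_k^*\dot{A}_k^*)(t,\eta)|}A_{NR}(t,\rho)\langle\rho\rangle\,\mathrm{e}^{-(\delta_0/200)\langle\rho\rangle^{1/2}},
\end{equation*}
and the $\langle\rho\rangle$ is absorbed into the exponential at the cost of replacing $\delta_0/200$ by $\delta_0/201$. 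For $\mathcal{M}_2$, we bound $||\xi|-|\eta||\le|\rho|$ so that $|\mathcal{M}_2|\lesssim |\eta||\rho|\langle t\rangle^{-2}A_k(t,\xi)^2$; combining with the prefactor gives a gain of $\langle t\rangle^{-3}$, and then the same trick as in the proof of \eqref{eq:Tibound-0M2} — using \eqref{est:weight-more-2}, \eqref{est:par-t-A*-1}, and the fact that on $\Sigma_0\cup\Sigma_1$ we have $\langle\eta\rangle\lesssim\langle k,\xi\rangle$ so $A_k^2(t,\xi)$ splits off into the product of $A_k^*(t,\xi)A_k^*(t,\eta)$ up to polynomial losses absorbed in $\mathrm{e}^{-(\delta_0/201)\langle\rho\rangle^{1/2}}$ — closes the estimate. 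Finally, the $A_{NR}(t,\rho)\ge 1$ factor on $\Sigma_1$ (where $\langle\rho\rangle$ is bounded by a constant times $\langle k,\eta\rangle$) is trivial.

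For part (ii), we are in the regime where $\langle k,\eta\rangle$ is small compared with $\langle k,\xi\rangle\approx\langle\rho\rangle$, so we lose the freedom of the symmetric splitting and instead use Lemma~\ref{lem:8.6JiaHao}(ii) on the unstarred quantities $|\eta A_k(t,\xi)^2|+|\xi A_k(t,\eta)^2|$ directly. The plan is then identical to Step~3 in the proof of Lemma~\ref{lem:A*R0R1R2-1}: we distinguish the sub-cases $k^2+|\xi|\le\langle t\rangle^2$ (where the $A_k^*/A_k$ ratio is bounded, so the inequality is immediate from \eqref{eq:<1} and \eqref{est:par-t-A*-1}) and $k^2+|\xi|\ge\langle t\rangle^2$ (where \eqref{est:weight-more-1} supplies the needed factor $(1+(\rho^2+|\sigma|)/\langle t\rangle^2)^{1/2}$, which for $\sigma=0$ reduces to $(1+|\rho|/\langle t\rangle^2)^{1/2}$ and is absorbed into $\sqrt{|(A_{NR}\dot{A}_{NR})(t,\rho)|}$ by \eqref{est:7.4JiaHao-1}).

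The main obstacle I foresee is the sub-case of part~(ii) with $k^2+|\xi|\ge\langle t\rangle^2$: there the polynomial factor coming from converting $A_k$ to $A_k^*$ on the $\xi$ side does not have an obvious matching $A_{NR}$ companion on the $\rho$ side, because $\sigma=0$ kills the usual $A_\sigma$ mechanism that was available in Lemma~\ref{lem:A*R0R1R2-1}. One has to exploit both the decay of the prefactor $(\langle\rho\rangle\langle t\rangle+\langle\rho\rangle^{1/4}\langle t\rangle^{7/4})^{-1}$ and the bound $A_{NR}(t,\rho)^2/|\dot{A}_{NR}(t,\rho)|\lesssim_\delta \langle t\rangle^{1+\sigma_0}\langle\rho\rangle^{-1/2}A_{NR}(t,\rho)$ (a consequence of \eqref{est:7.4JiaHao-1}) to absorb the excess; this is where the asymmetric $\langle t\rangle^{7/4}\langle\rho\rangle^{1/4}$ weighting in the denominator is crucial and must be used carefully.
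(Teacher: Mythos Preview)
Your decomposition $\mathcal{M}_1+\mathcal{M}_2$ and the treatment of $\mathcal{M}_1$ in part~(i) match the paper exactly. For $\mathcal{M}_2$, however, your reference to ``the same trick as in the proof of \eqref{eq:Tibound-0M2}'' is not quite right: that proof invokes \eqref{eq:A*R0R1bounds-R1} from Lemma~\ref{lem:A*R0R1R2-1-R1}, whose statement and proof assume $\sigma\neq 0$ (the argument uses $A_\sigma(t,\rho)$ and $\langle t-\rho/\sigma\rangle$). The paper instead bounds $\mathcal{T}_2''$ by a direct computation in terms of the $b_k$ weights, using \eqref{est:7.2JiaHao-3} to replace $b_k(t,\xi)$ by $b_k(t,\eta)$ at the cost of $e^{\sqrt{\delta}\langle\rho\rangle^{1/2}}$ uniformly in resonant/non-resonant cases; this sidesteps the $\sigma\neq 0$ issue entirely.

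The more serious gap is in part~(ii). Your plan is to copy Step~3 of Lemma~\ref{lem:A*R0R1R2-1}, but in the sub-case $k^2+|\xi|\ge\langle t\rangle^2$ that step ultimately rests on \eqref{eq:A*R0R1bounds-R1-1}, which again needs $\sigma\neq 0$. You correctly flag this as the obstacle, but the workaround you sketch (using only the lower bound $|\dot{A}_{NR}/A_{NR}|\gtrsim\langle\rho\rangle^{1/2}/\langle t\rangle^{1+\sigma_0}$ and the prefactor) is not sufficient by itself: Lemma~\ref{lem:8.6JiaHao}(ii) has already cashed in that $\dot{A}_{NR}$ gain, so you cannot use it a second time to absorb the extra $(1+|\rho|/\langle t\rangle^2)^{1/2}$. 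What is missing is a way to factor $A_k(t,\xi)$ against $A_{NR}(t,\rho)A_k(t,\eta)$ without reference to $\sigma$. The paper supplies exactly this via Lemma~\ref{lem:8.3JiaHao-mod}: after reducing to showing
\[
\frac{\langle\rho\rangle}{\langle t\rangle^2}\cdot\frac{|\eta A_k(t,\xi)^2|}{\langle\rho\rangle\langle t\rangle+\langle\rho\rangle^{1/4}\langle t\rangle^{7/4}}\le\frac{|\eta A_k(t,\xi)^2|}{\langle t\rangle^3}\lesssim_\delta \sqrt{|(A_k\dot A_k)(t,\xi)|}\sqrt{|(A_{NR}\dot A_{NR})(t,\rho)|}A_k(t,\eta)e^{-(\delta_0/201)\langle k,\eta\rangle^{1/2}},
\]
it applies Lemma~\ref{lem:8.3JiaHao-mod} to get $A_k(t,\xi)\lesssim_\delta(\text{bounded factor})A_{NR}(t,\rho)A_k(t,\eta)e^{-(\lambda(t)/20)\langle k,\eta\rangle^{1/2}}$ and closes with \eqref{est:7.4JiaHao-1}--\eqref{est:7.4JiaHao-2}. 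You should invoke Lemma~\ref{lem:8.3JiaHao-mod} here rather than trying to force the $R_2$ machinery through.
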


\begin{proof}
  \textbf{Step 1.} Assume that $\big((k,\xi),(l,\eta)\big)\in\Sigma_0\cup\Sigma_1$ and we  prove \eqref{eq:A*S0S1bounds-1}.
  \smallskip

{By the symmetry, we assume $|\xi|\leq|\eta|$}. We write
  \begin{align*}
     & [\eta A_k^*(t,\xi)^2-\xi A_k^{*}(t,\eta)^2 ]=\mathcal{T}_1''+\mathcal{T}_2'',
  \end{align*}
  where
  \begin{align*}
     \mathcal{T}_1''= [\eta A_k(t,\xi)^2-\xi A_k(t,\eta)^2 ]\left(1+\dfrac{k^2+|\eta|}{\langle t\rangle^2}\right),
     \quad\mathcal{T}_2''=\eta A^2_k(s,\xi)\dfrac{|\xi|-|\eta|}{\langle t\rangle^2}.
  \end{align*}
  For $\mathcal{T}_1''$, we get by Lemma \ref{lem:8.6JiaHao}, \eqref{est:weight-more-1} and \eqref{est:par-t-A*-1} that
  \begin{align*}
     \dfrac{|\mathcal{T}_1''|}{\langle \rho\rangle\langle t\rangle+\langle \rho\rangle^{1/4}\langle t\rangle^{7/4}}\lesssim_{\delta}&\sqrt{|(A_k\dot{A}_{k})(t,\xi)|} \sqrt{|(A_k\dot{A}_{k})(t,\eta)|}A_{NR}(t,\rho)
      \mathrm{e}^{-(\delta_0/200)\langle\rho\rangle^{1/2}}\\
      &\times \left(1+\dfrac{k^2+|\eta|}{\langle t\rangle^2}\right)\\
      \lesssim_{\delta}&\sqrt{|(A_k\dot{A}_{k})(t,\xi)|} \sqrt{|(A_k\dot{A}_{k})(t,\eta)|}A_{NR}(t,\rho)
      \mathrm{e}^{-(\delta_0/201)\langle\rho\rangle^{1/2}}\\
      &\times \left(1+\dfrac{k^2+|\eta|}{\langle t\rangle^2}\right)^{\f12}\left(1+\dfrac{k^2+|\xi|}{\langle t\rangle^2}\right)^{\f12} \\
      \lesssim_{\delta}&\sqrt{|(A^*_k\dot{A}^*_{k})(t,\xi)|} \sqrt{|(A^*_k\dot{A}^*_{k})(t,\eta)|}A_{NR}(t,\rho)
      \mathrm{e}^{-(\delta_0/201)\langle\rho\rangle^{1/2}}.
  \end{align*}

  Thus, it suffices to prove that
  \begin{align}\label{eq:A*S0S1bounds-prove-1}
     &\dfrac{1}{\langle t\rangle^{7/4}}|\mathcal{T}_2''|\lesssim_{\delta} \sqrt{|(A^*_k\dot{A}^*_{k})(t,\xi)|} \sqrt{|(A^*_k\dot{A}^*_{k})(t,\eta)|}A_{NR}(t,\rho)
      \mathrm{e}^{-(\delta_0/201)\langle\rho\rangle^{1/2}}.
  \end{align}

\if0Similar to \eqref{est:weight-more-2}, we have
\begin{align}\label{est:weight-more-3}
  \left|\dfrac{\xi^2}{\langle t\rangle^4+\langle t\rangle^{\sigma_1}|\xi|} -\dfrac{\eta^2}{\langle t\rangle^4+\langle t\rangle^{\sigma_1}|\eta|}\right|& \lesssim_{\delta} \langle \rho\rangle^{2}\langle \xi\rangle^{-1}\left(1+\dfrac{k^2}{\langle t\rangle^2}+\dfrac{\xi^2}{\langle t\rangle^4+\langle t\rangle^{\sigma_1}|\xi|}\right).
\end{align}
  If $(k,\xi)\notin I_{t}^{**}$ or $(k,\xi),(k,\eta)\in I_t^{*}$,\fi
  We get by  \eqref{est:7.2JiaHao-3} that
  \begin{align*}
    |\mathcal{T}_2''| =&|\eta|\mathrm{e}^{2\lambda(t)\langle k,\xi\rangle^{1/2}}\left[\dfrac{\mathrm{e}^{\sqrt{\delta}\langle \xi\rangle^{1/2}}}{b_k(t,\xi)}+\mathrm{e}^{\sqrt{\delta}|k|^{1/2}}\right]^2 \left|\dfrac{|\xi|-|\eta|}{\langle t\rangle^2}\right|\\
    \lesssim_{\delta} &|\eta|\mathrm{e}^{2\lambda(t)\langle k,\xi\rangle^{1/2}}\left[\dfrac{\mathrm{e}^{\sqrt{\delta}\langle \xi\rangle^{1/2}}}{b_k(t,\xi)}+
    \mathrm{e}^{\sqrt{\delta}|k|^{1/2}}\right] \left[\dfrac{\mathrm{e}^{\sqrt{\delta}\langle \xi\rangle^{1/2}}}{b_k(t,\eta)}+
    \mathrm{e}^{\sqrt{\delta}|k|^{1/2}}\right]\mathrm{e}^{2\sqrt{\delta}\langle \rho\rangle^{1/2}}\dfrac{|\rho|}{\langle t\rangle^2}\\
    \lesssim_{\delta} &\langle \eta\rangle^{1/2}\langle \xi\rangle^{1/2}\mathrm{e}^{2\lambda(t)\langle k,\xi\rangle^{1/2}}\left[\dfrac{\mathrm{e}^{\sqrt{\delta}\langle \xi\rangle^{1/2}}}{b_k(t,\xi)}+\mathrm{e}^{\sqrt{\delta}|k|^{1/2}}\right] \left[\dfrac{\mathrm{e}^{\sqrt{\delta}\langle
     \xi\rangle^{1/2}}}{b_k(t,\eta)}+\mathrm{e}^{\sqrt{\delta}|k|^{1/2}}\right] \frac{\mathrm{e}^{3\sqrt{\delta}\langle \rho\rangle^{1/2}}}{\langle t\rangle^2}\\
    \lesssim_{\delta} &\mathrm{e}^{2\lambda(t)\langle k,\xi\rangle^{1/2}}\left[\dfrac{\mathrm{e}^{\sqrt{\delta}\langle \xi\rangle^{1/2}}}{b_k(t,\xi)}+\mathrm{e}^{\sqrt{\delta}|k|^{1/2}}\right] \left[\dfrac{\mathrm{e}^{\sqrt{\delta}\langle \xi\rangle^{1/2}}}{b_k(t,\eta)}+\mathrm{e}^{\sqrt{\delta}|k|^{1/2}}\right] \mathrm{e}^{3\sqrt{\delta}\langle \rho\rangle^{1/2}}\\
    &\times \left(1+\dfrac{k^2+|\xi|}{\langle t\rangle^2}\right)^{1/2}\left(1+\dfrac{k^2+|\eta|}{\langle t\rangle^2}\right)^{1/2}.
  \end{align*}
  Using  $A_{NR}(t,\rho)\geq \mathrm{e}^{\lambda(t)\langle \rho\rangle^{1/2}}$, {$|\xi|\leq|\eta|$} and the definition of $A^*_{k}(t,\xi),\ A_{k}^*(t,\eta)$, we infer that
  \begin{align*}
     & \dfrac{1}{\langle t\rangle^{7/4}}|\mathcal{T}_2''|\lesssim_{\delta}\dfrac{1}{\langle t\rangle^{7/4}}A_{k}^*(t,\xi)A_{k}^*(t,\eta)A_{NR}(t,\rho)\mathrm{e}^{-(\lambda (t)/20)\langle \rho\rangle^{1/2}}.
  \end{align*}
The bound \eqref{eq:A*S0S1bounds-prove-1} then follows from {\eqref{est:par-t-A*}}.
\smallskip

\noindent \textbf{Step 2.} Assume that $\big((k,\xi),(l,\eta)\big)\in \Sigma_2$ and  we prove {\eqref{eq:A*S2bounds-1}}.
\smallskip

 Assuming {$\sigma\in\mathbb{Z}\setminus\{0\}$ and} $\big((k,\xi),(l,\eta)\big)\in\Sigma_2$, we get by Lemma \ref{lem:8.6JiaHao} that
  \begin{align*}
     \dfrac{|\xi A_k^{*}(t,\eta)^2 |}{\langle \rho\rangle\langle t\rangle+\langle \rho\rangle^{1/4}\langle t\rangle^{7/4}} \lesssim_{\delta}&\sqrt{|(A_k\dot{A}_{k})(t,\xi)|} \sqrt{|(A_{NR}\dot{A}_{NR})(t,\rho)|}A_{k}(t,\eta)\mathrm{e}^{-(\delta_0/200)\langle k,\eta\rangle^{1/2}}\\
     &\times \left(1+\dfrac{k^2+|\eta|}{\langle t\rangle^{2}}\right)\\
     \lesssim_{\delta}&\sqrt{|(A_k\dot{A}_{k})(t,\xi)|} \sqrt{|(A_{NR}\dot{A}_{NR})(t,\rho)|}A_{k}(t,\eta)\mathrm{e}^{-(\delta_0/200)\langle k,\eta\rangle^{1/2}}\\
     &\times \langle k,\eta\rangle^2\\
     \lesssim_{\delta}&\sqrt{|(A_k\dot{A}_{k})(t,\xi)|} \sqrt{|(A_{NR}\dot{A}_{NR})(t,\rho)|}A_{k}(t,\eta)\mathrm{e}^{-(\delta_0/201)\langle k,\eta\rangle^{1/2}}.
  \end{align*}
  Thanks to \eqref{est:par-t-A*-1}, we deduce that $A_k\leq A_k^*$, $|\dot{A}_{k}|\lesssim_{\delta}|\dot{A}^*_{k}|$, and
  \begin{align*}
     \dfrac{|\xi A_k^{*}(t,\eta)^2 |}{\langle \rho\rangle\langle t\rangle+\langle \rho\rangle^{1/4}\langle t\rangle^{7/4}} \lesssim_{\delta}&
     \sqrt{|(A^*_k\dot{A}^*_{k})(t,\xi)|} \sqrt{|(A_{NR}\dot{A}_{NR})(t,\rho)|}A^*_{k}(t,\eta)\mathrm{e}^{-(\delta_0/201)\langle k,\eta\rangle^{1/2}}.
\end{align*}

It remains to prove the harder inequality:
\begin{align}\label{eq:A*S2bounds-prove-1}
  \dfrac{|\eta A_k^{*}(t,\xi)^2 |}{\langle \rho\rangle\langle t\rangle+\langle \rho\rangle^{1/4}\langle t\rangle^{7/4}} \lesssim_{\delta}&\sqrt{|(A^*_k\dot{A}^*_{k})(t,\xi)|} \sqrt{|(A_{NR}\dot{A}_{NR})(t,\rho)|}A^*_{k}(t,\eta)\mathrm{e}^{-(\delta_0/201)\langle k,\eta\rangle^{1/2}}.
\end{align}
By {Lemma \ref{lem:8.6JiaHao}}, \eqref{est:weight-more-1}
and \eqref{est:par-t-A*-1}, it suffices to prove that
\begin{align}\label{eq:A*S2bounds-prove-2}
  \dfrac{\langle \rho\rangle}{\langle t\rangle^2}\dfrac{|\eta A_k(t,\xi)^2 |}{\langle \rho\rangle\langle t\rangle+\langle \rho\rangle^{1/4}\langle t\rangle^{7/4}} \lesssim_{\delta}&\sqrt{|(A_k\dot{A}_{k})(t,\xi)|} \sqrt{|(A_{NR}\dot{A}_{NR})(t,\rho)|}A_{k}(t,\eta)\mathrm{e}^{-\f{\delta_0}{201}\langle k,\eta\rangle^{\f12}}.
\end{align}
By Lemma \ref{lem:8.3JiaHao-mod} (see also \eqref{est:Ak-A0-1}), we have
\begin{align*}
    \dfrac{\langle \rho\rangle}{\langle t\rangle^2}\dfrac{|\eta A_k(t,\xi)^2 |}{\langle \rho\rangle\langle t\rangle+\langle \rho\rangle^{1/4}\langle t\rangle^{7/4}}\leq\dfrac{|\eta A_k(t,\xi)^2 |}{\langle t\rangle^3}\lesssim_{\delta}\dfrac{1}{\langle t\rangle^{2}}A_k(t,\xi)A_{NR}(t,\rho)A_{k}(t,\eta)\mathrm{e}^{-(\lambda(t)/20)\langle k,\eta\rangle^{1/2}},
\end{align*}
and \eqref{eq:A*S2bounds-prove-2} follows from \eqref{est:7.4JiaHao-1} and \eqref{est:7.4JiaHao-2}.
\end{proof}

 The following lemma shows the estimates on weights when $|\rho|$ is small.

\begin{lemma}\label{lem:rholeq1}
  Assume $t\ge 1$ and let $\rho=\xi-\eta$. If $|\rho|\leq 1$, then it holds that
  \begin{align}\label{eq:rholeq1}
      \dfrac{1}{\langle t\rangle^{7/4}} &|\eta A_k(t,\xi)^2-\xi A_k(t,\eta)^2| \lesssim_{\delta} |\rho|\sqrt{|(A_k\dot{A}_{k})(t,\xi)|} \sqrt{|(A_k\dot{A}_{k})(t,\eta)|},
    \end{align}
    and
    \begin{align}\label{eq:rholeq1*}
      \dfrac{1}{\langle t\rangle^{7/4}} &|\eta A_k^{*}(t,\xi)^2-\xi A_k^*(t,\eta)^{2}| \lesssim_{\delta}|\rho|\sqrt{|(A_k^*\dot{A}^*_{k})(t,\xi)|} \sqrt{|(A_k^*\dot{A}^*_{k})(t,\eta)|}.
    \end{align}
\end{lemma}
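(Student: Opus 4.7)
The plan is to apply the Mean Value Theorem in $\xi$ to extract the factor of $|\rho|$, exploiting the fact that $|\rho|\le 1$ places $\xi$ and $\eta$ in a regime where all weights are comparable. I decompose
\[
\eta A_k(t,\xi)^2 - \xi A_k(t,\eta)^2 \;=\; \eta\bigl(A_k(t,\xi)^2 - A_k(t,\eta)^2\bigr) \;-\; \rho\, A_k(t,\eta)^2.
\]
For the second piece $-\rho A_k(t,\eta)^2$, Lemma~\ref{lem:7.1JiaHao}(ii) gives $A_k(t,\eta) \approx_\delta A_k(t,\xi)$, and Lemma~\ref{lem:7.4JiaHao}(ii) yields $|(\dot A_k/A_k)(t,\xi)|\approx_\delta |(\dot A_k/A_k)(t,\eta)|$. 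The universal lower bound \eqref{est:7.4JiaHao-2} then implies $|(\dot A_k/A_k)(t,\eta)|\gtrsim_\delta \langle t\rangle^{-1-\sigma_0}$, and since $\sigma_0<3/4$ we absorb $\langle t\rangle^{-7/4} \lesssim_\delta |(\dot A_k/A_k)(t,\eta)|$, giving the desired bound for this piece.

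For the first piece, I use the Fundamental Theorem of Calculus: $A_k(t,\xi)^2-A_k(t,\eta)^2 = 2\rho\, A_k(t,\tau)\partial_\tau A_k(t,\tau)$ for some $\tau$ between $\eta$ and $\xi$. A direct computation from the definition of $A_k$ shows $|\partial_\xi \ln A_k(t,\xi)| \lesssim_\delta 1$: the contribution from $\lambda(t)\langle k,\xi\rangle^{1/2}$ is bounded by $\lambda(t)/\langle k,\xi\rangle^{1/2}\le \delta_0$, the contribution from $\sqrt{\delta}\langle\xi\rangle^{1/2}$ is bounded by $\sqrt{\delta}$, and $|\partial_\xi \ln b_k(t,\xi)|\lesssim_\delta 1/L_{\kappa}(t,\xi)\le 1$ by \eqref{est:7.2JiaHao-2}. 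This gives $|A_k^2(t,\xi)-A_k^2(t,\eta)|\lesssim_\delta |\rho| A_k(t,\xi)A_k(t,\eta)$, hence
\[
|\eta(A_k^2(t,\xi)-A_k^2(t,\eta))| \lesssim_\delta |\rho|\langle\eta\rangle\, A_k(t,\xi)A_k(t,\eta).
\]
It therefore remains to verify the pointwise multiplier bound $\langle\eta\rangle/\langle t\rangle^{7/4} \lesssim_\delta |(\dot A_k/A_k)(t,\eta)|$. For $\langle\eta\rangle \lesssim \langle t\rangle^{3/2-2\sigma_0}$ this is immediate from \eqref{est:7.4JiaHao-2}. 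In the complementary regime, one uses the resonant contribution $(\partial_t w_k)/w_k$ in \eqref{est:7.4JiaHao-2}: for the (essentially unique) integer $k_\star$ with $t\in I_{k_\star,\eta}$ the bound \eqref{eq:(7.8)JiaHao} yields a lower bound of order $\delta^2/(1+\delta^2|t-\eta/k_\star|)$, which combined with $|t-\eta/k_\star|\lesssim |\eta|/k_\star^2$ is large enough to close the inequality; a short case analysis covers the remaining $k$'s using the comparability \eqref{est:7.4JiaHao-4}.

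Finally, the starred version \eqref{eq:rholeq1*} is reduced to \eqref{eq:rholeq1} by the same decomposition together with the observation that for $|\rho|\le 1$ the auxiliary factor $(1+(k^2+|\xi|)/\langle t\rangle^2)^{1/2}$ appearing in $A_k^*$ is comparable at $\xi$ and $\eta$, its $\xi$-derivative is $O(\langle t\rangle^{-2})$, and by \eqref{est:par-t-A*-1} and Lemma~\ref{lem:par-t-A*} one has $-\dot A_k^*\approx_\delta (-\dot A_k)(1+(k^2+|\xi|)/\langle t\rangle^2)^{1/2}$, so the reduction absorbs only bounded factors. The main obstacle is the large-$|\eta|$ verification of the multiplier inequality in the previous paragraph: the naive lower bound $|(\dot A_k/A_k)|\gtrsim \langle k,\eta\rangle^{1/2}/\langle t\rangle^{1+\sigma_0}$ is insufficient and one genuinely needs to invoke the resonance term in \eqref{est:7.4JiaHao-2}, performing a careful case split according to whether $t$ falls in a resonant interval $I_{k',\eta}$.
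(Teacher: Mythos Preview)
Your decomposition and the treatment of the second piece $-\rho\,A_k(t,\eta)^2$ are fine. The genuine gap is in the first piece: after bounding $|\partial_\xi\ln A_k|\lesssim_\delta 1$ you are left needing the pointwise multiplier inequality
\[
\frac{\langle\eta\rangle}{\langle t\rangle^{7/4}}\;\lesssim_\delta\;\Bigl|\tfrac{\dot A_k}{A_k}(t,\eta)\Bigr|,
\]
and this is simply \emph{false}. Take $|\eta|\sim t^2$ with $t$ large: the left side is $\sim t^{1/4}\to\infty$, while by \eqref{est:7.4JiaHao-2} the right side is $\lesssim_\delta \langle\eta\rangle^{1/2}/\langle t\rangle^{1+\sigma_0}+\big(\partial_tw_k/w_k\big)\lesssim_\delta t^{-\sigma_0}+O_\delta(1)$. (For such $\eta$ one has $t<t_{k_0(\eta),\eta}\approx\delta^{-3/2}\sqrt{|\eta|}$, so you are in the pre-resonant regime \eqref{eq:(7.11)JiaHao}, where $\partial_tw_{NR}/w_{NR}$ is $O_\delta(1)$ uniformly; there is no resonant interval $I_{k_\star,\eta}$ containing $t$ to appeal to.) The ``careful case split according to whether $t$ falls in a resonant interval'' cannot repair this, because the resonance contribution in \eqref{eq:(7.8)JiaHao} is itself bounded by $\delta^2$.

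The loss comes from discarding the factor $L_\kappa(t,\xi)$ in \eqref{est:7.2JiaHao-2}. The paper keeps it: writing $A_k=e^{\lambda\langle k,\xi\rangle^{1/2}}\bigl[e^{\sqrt\delta\langle\xi\rangle^{1/2}}/b_k+e^{\sqrt\delta|k|^{1/2}}\bigr]$ and splitting the difference according to which factor is varied, the bracket contributes a derivative of size $\lesssim_\delta 1/L_\kappa(t,\xi)+\langle\xi\rangle^{-1/2}$, so that after multiplying by $|\xi|$ one gets $\langle\xi\rangle\tfrac{\langle\xi\rangle^{1/2}+\langle t\rangle}{\langle\xi\rangle+\langle t\rangle}\lesssim\langle\xi\rangle^{1/2}\langle t\rangle^{1/2}$ rather than your $\langle\xi\rangle$. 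This extra saving of $\langle\xi\rangle^{1/2}/\langle t\rangle^{1/2}$ is exactly what is needed: the resulting multiplier $\langle\xi\rangle^{1/2}/\langle t\rangle^{5/4}$ is then absorbed by the \emph{first} term $\langle k,\xi\rangle^{1/2}/\langle t\rangle^{1+\sigma_0}$ of \eqref{est:7.4JiaHao-2} alone (using $\sigma_0\le 1/4$), with no appeal to resonances.
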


\begin{proof}
We {first} prove \eqref{eq:rholeq1}. We write
  \begin{align*}
     &\eta A_k(t,\xi)^2-\xi A_k(t,\eta)^2=\mathcal{T}_1''+\mathcal{T}_2'',
  \end{align*}
  where
  \begin{align}
     & \mathcal{T}''_1= \left(\eta \mathrm{e}^{2\lambda(t)\langle k,\xi\rangle^{1/2}}-\xi \mathrm{e}^{2\lambda(t)\langle k,\eta\rangle^{1/2}}\right)\left[\dfrac{\mathrm{e}^{\sqrt{\delta}\langle\xi\rangle^{1/2}} }{b_{k}(t,\xi)}+\mathrm{e}^{\sqrt{\delta}|k|^{1/2}}\right]^2,\label{eq:T1''}\\
     &\mathcal{T}''_2=\xi\mathrm{e}^{2\lambda(t)\langle k,\eta\rangle^{1/2}}\left[\dfrac{\mathrm{e}^{\sqrt{\delta}\langle\xi\rangle^{1/2}} }{b_{k}(t,\xi)}- \dfrac{\mathrm{e}^{\sqrt{\delta}\langle\eta\rangle^{1/2}} }{b_{k}(t,\eta)}\right] \left[\dfrac{\mathrm{e}^{\sqrt{\delta}\langle\xi\rangle^{1/2}} }{b_{k}(t,\xi)}+ \dfrac{\mathrm{e}^{\sqrt{\delta}\langle\eta\rangle^{1/2}} }{b_{k}(t,\eta)}+2\mathrm{e}^{\sqrt{\delta}|k|^{1/2}}\right].  \label{eq:T2''}
  \end{align}
  Thus, it remains to prove that for $i=1,2$,
  \begin{align}\label{eq:Ti''bound}
     & \dfrac{|\mathcal{T}_i''|}{\langle t\rangle^{7/4}}  \lesssim_{\delta} |\rho|\sqrt{|(A_k\dot{A}_{k})(t,\xi)|} \sqrt{|(A_k\dot{A}_{k})(t,\eta)|}.
  \end{align}

 We first estimate $\mathcal{T}_1''$. We have
  \begin{align*}
     & \left|\eta \mathrm{e}^{2\lambda(t)\langle k,\xi\rangle^{1/2}}-\xi \mathrm{e}^{2\lambda(t)\langle k,\eta\rangle^{1/2}}\right| \lesssim |\rho| \left(1+\dfrac{|\xi|}{\langle k,\xi\rangle^{1/2}}\right)\left[ \mathrm{e}^{2\lambda(t)\langle k,\xi\rangle^{1/2}}+ \mathrm{e}^{2\lambda(t)\langle k,\eta\rangle^{1/2}}\right],
  \end{align*}
which gives
  \begin{align}\label{eq:T1''-prove1}
      &  \dfrac{|\mathcal{T}_1''|}{\langle t\rangle^{\f74}} \lesssim_{\delta} \dfrac{|\rho|}{\langle t\rangle^{\f74}} \left(1+\dfrac{|\xi|}{\langle k,\xi\rangle^{\f12}}\right)\left[ \mathrm{e}^{2\lambda(t)\langle k,\xi\rangle^{\f12}}+ \mathrm{e}^{2\lambda(t)\langle k,\eta\rangle^{\f12}}\right]\left[\dfrac{\mathrm{e}^{\sqrt{\delta}\langle\xi\rangle^{ \f12}} }{b_{k}(t,\xi)}+ 2\mathrm{e}^{\sqrt{\delta}|k|^{\f12}}\right]^2.
   \end{align}
   Then we  get by  \eqref{est:7.2JiaHao-3} that
   \begin{align}\label{eq:T1''-prove2}
       \dfrac{|\mathcal{T}_1''|}{\langle t\rangle^{\f74}}&\lesssim_{\delta} \dfrac{|\rho|}{\langle t\rangle^{\f74}} \left(1+\dfrac{|\xi|}{\langle k,\xi\rangle^{\f12}}\right)\left[ \mathrm{e}^{2\lambda(t)\langle k,\xi\rangle^{\f12}}+ \mathrm{e}^{2\lambda(t)\langle k,\eta\rangle^{\f12}}\right]\left[\dfrac{\mathrm{e}^{\sqrt{\delta}\langle\xi\rangle^{\f12}} }{b_{k}(t,\xi)}+ 2\mathrm{e}^{\sqrt{\delta}|k|^{\f12}}\right]\nonumber\\
       &\qquad \times \left[\dfrac{\mathrm{e}^{\sqrt{\delta}\langle\xi\rangle^{\f12}} }{b_{k}(t,\eta)}+ 2\mathrm{e}^{\sqrt{\delta}|k|^{\f12}}\right]\nonumber\\
       &\lesssim_{\delta} |\rho|\dfrac{\langle\xi\rangle^{1/2} }{\langle t\rangle^{7/4}} A_{k}(t,\xi)A_{k}(t,\eta).
   \end{align}
   Then the bound \eqref{eq:Ti''bound} for $i=1$ follows from \eqref{eq:T1''-prove2} and \eqref{est:7.4JiaHao-2}.

\if0   On the other hand, if $(k,\xi)\in I_t^{**}$ then we use \eqref{est:bl/bk-2} and \eqref{eq:T1''-prove1}
      \begin{align}\label{eq:T1''-prove3}
       \dfrac{|\mathcal{T}_1''|}{\langle t\rangle^{\f74}}
       &\lesssim_{\delta} |\rho|\dfrac{\langle\xi\rangle^{1/2} }{\langle t\rangle^{7/4}} \dfrac{|\xi/k^2|}{\langle t-\xi/k\rangle} A_{k}(t,\xi)A_{k}(t,\eta) \lesssim_{\delta} \dfrac{|\rho|}{\langle t\rangle^{1/4}\langle t-\xi/k\rangle} A_{k}(t,\xi)A_{k}(t,\eta).
   \end{align}
   Then the bound \eqref{eq:Ti''bound}($i=1$) follows from \eqref{eq:T1''-prove3}, \eqref{est:7.4JiaHao-2}, \eqref{est:7.4JiaHao-4} and \eqref{eq:(7.8)JiaHao} as before. \fi\smallskip

Now we estimate $\mathcal{T}_2''$. By \eqref{est:7.2JiaHao-2} and \eqref{est:7.2JiaHao-3}, we have (for $|\rho|\leq 1$)
\begin{align*}
   |\xi|\left|\dfrac{\mathrm{e}^{\sqrt{\delta}\langle\xi\rangle^{1/2}} }{b_{k}(t,\xi)}- \dfrac{\mathrm{e}^{\sqrt{\delta}\langle\eta\rangle^{1/2}} }{b_{k}(t,\eta)}\right| &\lesssim_{\delta}|\rho|\left(\dfrac{\langle\xi\rangle}{L_{\kappa}(t,\xi)}+\langle\xi\rangle^{1/2}\right)\cdot \min\left(\dfrac{\mathrm{e}^{\sqrt{\delta}\langle\eta\rangle^{1/2}}}{b_k(t,\eta)}, \dfrac{\mathrm{e}^{\sqrt{\delta}\langle\xi\rangle^{1/2}}}{b_k(t,\xi)}\right)
   \\
   & \lesssim_{\delta}|\rho|\left(\langle \xi\rangle\dfrac{\langle \xi\rangle^{1/2}+\langle t\rangle}{\langle\xi\rangle +\langle t\rangle}\right)\min\left(\dfrac{\mathrm{e}^{\sqrt{\delta}\langle\eta\rangle^{1/2}}}{b_k(t,\eta)}, \dfrac{\mathrm{e}^{\sqrt{\delta}\langle\xi\rangle^{1/2}}}{b_k(t,\xi)}\right),
\end{align*}
  then, thank to $|\rho|\leq 1$, we get
  \begin{align}\label{eq:T2''-prove1}
     \dfrac{|\mathcal{T}_2''|}{\langle t\rangle^{\f74}} \lesssim_{\delta} & \dfrac{|\rho|\mathrm{e}^{2\lambda(t)\langle k,\xi\rangle^{\f12}}}{\langle t\rangle^{7/4}}\left(\langle \xi\rangle\dfrac{\langle \xi\rangle^{1/2}+\langle t\rangle}{\langle\xi\rangle +\langle t\rangle}\right)\left[\dfrac{\mathrm{e}^{\sqrt{\delta}\langle\xi\rangle^{\f12}} }{b_{k}(t,\xi)}+ 2\mathrm{e}^{\sqrt{\delta}|k|^{\f12}}\right]  \left[\dfrac{\mathrm{e}^{\sqrt{\delta}\langle\xi\rangle^{\f12}} }{b_{k}(t,\eta)}+ 2\mathrm{e}^{\sqrt{\delta}|k|^{\f12}}\right]\nonumber\\
    \lesssim_{\delta} & \dfrac{|\rho|\langle \xi\rangle^{1/2}}{\langle t\rangle^{5/4}} A_k(t,\xi)A_{k}(t,\eta).
   \end{align}
   Then the desired bound \eqref{eq:Ti''bound} for $i=2$ follows from \eqref{eq:T2''-prove1} and  \eqref{est:7.4JiaHao-2}.
   \smallskip

 Next we prove \eqref{eq:rholeq1*}. We write
  \begin{align*}
     &\eta A_k^*(t,\xi)^2-\xi A_k^*(t,\eta)^{2}=\mathcal{T}_1''\left(1+\dfrac{k^2}{\langle t\rangle^2}\right)+\mathcal{T}_3''+\mathcal{T}_2''\left(1+\dfrac{k^2+|\eta|}{\langle t\rangle^2}\right),
  \end{align*}
  where $\mathcal{T}''_1$ and $\mathcal{T}''_2$ are defined by \eqref{eq:T1''} and \eqref{eq:T2''} and
  \begin{align}
     & \mathcal{T}''_3= \xi\eta\left(\dfrac{\mathop{\text{sgn}}(\xi)\mathrm{e}^{2\lambda(t)\langle k,\xi\rangle^{1/2}}}{\langle t\rangle^2}-
     \dfrac{\mathop{\text{sgn}}(\eta)\mathrm{e}^{2\lambda(t)\langle k,\eta\rangle^{1/2}}}{\langle t\rangle^2}\right)
     \left[\dfrac{\mathrm{e}^{\sqrt{\delta}\langle\xi\rangle^{1/2}} }{b_{k}(t,\xi)}+
     \mathrm{e}^{\sqrt{\delta}|k|^{1/2}}\right]^2.\nonumber
  \end{align}
   Let
    \begin{align*}
     & \mathcal{T}''_4= \mathcal{T}_1''\left(1+\dfrac{k^2}{\langle t\rangle^2}\right),\quad \mathcal{T}_5''=\mathcal{T}_2''
     \left(1+\dfrac{k^2+|\eta|}{\langle t\rangle^2}\right).
  \end{align*}
  Then by Lemma \eqref{lem:par-t-A*}, \eqref{est:weight-more-1} and $|\rho|\leq 1$, it remains to prove that for $i=3,4,5$,
    \begin{align}\label{eq:Ti''bound345}
     & \dfrac{|\mathcal{T}_i''|}{\langle t\rangle^{7/4}}  \lesssim_{\delta} |\rho|\sqrt{|(A_k\dot{A}_{k})(t,\xi)|} \sqrt{|(A_k\dot{A}_{k})(t,\eta)|}\left(1+\dfrac{k^2+|\eta|}{\langle t\rangle^2}\right).
  \end{align}

    Thanks to (as $\rho=\xi-\eta$, if $|\xi|> |\rho|$ then $\mathop{\text{sgn}}(\xi)=\mathop{\text{sgn}}(\eta) $)
  \begin{align*}
\left|\dfrac{\mathop{\text{sgn}}(\xi)\mathrm{e}^{2\lambda(t)\langle k,\xi\rangle^{1/2}}}{\langle t\rangle^2}-
     \dfrac{\mathop{\text{sgn}}(\eta)\mathrm{e}^{2\lambda(t)\langle k,\eta\rangle^{1/2}}}{\langle t\rangle^2}\right| &\lesssim_{\delta} \left[ \mathrm{e}^{2\lambda(t)\langle k,\xi\rangle^{1/2}}+ \mathrm{e}^{2\lambda(t)\langle k,\eta\rangle^{1/2}}\right]\\&\quad\times\big(\mathbf{1}_{\{|\xi|\leq |\rho|\}}+
     |\rho|/\langle k,\xi\rangle^{\f12}\big)/{\langle t\rangle^2},
  \end{align*}
  we have\big(using $ |\xi\eta|(\mathbf{1}_{\{|\xi|\leq |\rho|\}}+
     |\rho|/\langle k,\xi\rangle^{\f12})\leq |\rho|(1+
     |\xi|/\langle k,\xi\rangle^{\f12})|\eta|$\big)
 \begin{align*}
      \dfrac{|\mathcal{T}_3''|}{\langle t\rangle^{\f74}} \lesssim_{\delta}& \dfrac{|\rho|}{\langle t\rangle^{\f74}} \left(1+\dfrac{|\xi|}{\langle k,\xi\rangle^{\f12}}\right)\left[ \mathrm{e}^{2\lambda(t)\langle k,\xi\rangle^{\f12}}+ \mathrm{e}^{2\lambda(t)\langle k,\eta\rangle^{\f12}}\right]\left[\dfrac{\mathrm{e}^{\sqrt{\delta}\langle\xi\rangle^{ \f12}} }{b_{k}(t,\xi)}+ 2\mathrm{e}^{\sqrt{\delta}|k|^{\f12}}\right]^2 \dfrac{|\eta|}{\langle t\rangle^2}.
  \end{align*}
 Then the desired bound \eqref{eq:Ti''bound345} ($i=3$) can be proved in a similar way as in the proof of \eqref{eq:Ti''bound} for $i=1$. The bound \eqref{eq:Ti''bound345} ($i=4$) directly follows from \eqref{eq:Ti''bound}($i=1$) and the definition of $\mathcal{T}''_4$.
   The  bound \eqref{eq:Ti''bound345} ($i=5$) directly follows from \eqref{eq:Ti''bound}($i=2$) and the definition of $\mathcal{T}''_5$.
\end{proof}

\section*{Acknowledgments}
P. Zhang is supported by National Key R$\&$D Program of China under grant
  2021YFA1000800 and K. C. Wong Education Foundation.
 He is also partially supported by National Natural Science Foundation of China under Grants  12288201 and 12031006.
 Z. Zhang is partially supported by  NSF of China  under Grant 12171010 and 12288101.

\end{CJK*}
\end{document}